\definecolor{colorlinks}{RGB}{0, 24, 168}
\definecolor{colorcites}{RGB}{124, 10, 2}
\numberwithin{equation}{section}
\theoremstyle{plain}
    \newtheorem{theorem}[equation]{Theorem}
    \newtheorem{proposition}[equation]{Proposition}
    \newtheorem{lemma}[equation]{Lemma}
    \newtheorem{corollary}[equation]{Corollary}
    \newtheorem*{convention*}{Convention}
    \newtheorem*{proposition*}{Proposition}
    \newtheorem*{theorem*}{Theorem}
\theoremstyle{definition}
    \newtheorem{definition}[equation]{Definition}
    \newtheorem*{remark*}{Remark}
    \newtheorem*{remarks*}{Remarks}
\newcommand{\R}{\mathbb{R}}  
\newcommand{\Z}{\mathbb{Z}}
\newcommand{\N}{\mathbb{N}}
\def\PB{\operatorname{PB}}
\def\FB{\operatorname{FB}}
\newcommand\numberthis{\addtocounter{equation}{1}\tag{\theequation}}
\titleformat{\chapter}[display]              
  {\usefont{T1}{qhv}{b}{n}\selectfont\Huge}  
  {\chaptertitlename~\thechapter}            
  {.5em}                                     
  {}                                         
  [\vspace{.25em}{\titlerule[1pt]}]          
\titleformat{\section}[hang]                 
  {\usefont{T1}{qhv}{b}{n}\selectfont\Large} 
  {\thesection}                              
  {.5em}                                     
  {\optionalindent}                          
\titleformat{\subsection}[hang]              
  {\usefont{T1}{qhv}{b}{n}\selectfont\large} 
  {\thesubsection}                           
  {.5em}                                     
  {\optionalindent}                          
\titleformat{\subsubsection}[hang]                
  {\usefont{T1}{qhv}{b}{n}\selectfont\normalsize} 
  {\thesubsubsection}                             
  {.5em}                                          
  {\optionalindent}                               
\newcommand\optionalindent{}
\begin{document}

\title{Macroscopic behavior of Lipschitz random surfaces}
\author{Piet Lammers\textsuperscript* \and Martin Tassy\textsuperscript\textdagger}
\date{}
\maketitle

\vfill

\begin{abstract}
The motivation for this article
is to derive strict convexity of the surface tension
for Lipschitz random surfaces, that is, for models of random Lipschitz functions from $\mathbb Z^d$
to $\mathbb Z$ or $\mathbb R$.
An essential innovation is that random surface models
with long- and infinite-range interactions
are included in the analysis.
More specifically, we cover at least:
uniformly random graph homomorphisms from $\mathbb Z^d$
to a $k$-regular tree for any $k\geq 2$
and Lipschitz potentials which satisfy the FKG lattice condition.
The latter includes
perturbations of dimer- and six-vertex models
and of Lipschitz simply attractive potentials
introduced by Sheffield.
The main result is that we prove strict convexity of the surface
tension---which implies uniqueness for the
limiting macroscopic profile---if
the model of interest is monotone in the boundary conditions.
This solves a conjecture of Menz and Tassy,
and answers a question posed by Sheffield.
Auxiliary to this,
we prove several results which may be of independent interest,
and which do not rely on the model being monotone.
This includes existence and topological properties of
the specific free energy,
as well as a characterization of its minimizers.
We also prove a general large deviations principle which describes both the macroscopic
profile and the local statistics of the height functions.
This work is inspired by, but independent of,
\emph{Random Surfaces} by Sheffield.

\end{abstract}

\vfill
\vfill
\vfill
\vfill

\newcommand\symbolspace{\tabto{\parindent}}

\noindent
\symbolspace
2020 \emph{Mathematics Subject Classification}.
Primary
82B20, 
82B41, 
60F10, 
82B30. 

\smallskip

\noindent
\symbolspace
\emph{Keywords.}
Surface tension,
limit shapes,
variational principle,
large deviations principle,
gradient Gibbs measures,
entropy minimizers,
ergodicity,
stochastic monotonicity,
six-vertex model,
Lipschitz functions.

\smallskip

\noindent
\textsuperscript*
\symbolspace
Statistical Laboratory, Centre for Mathematical Sciences, University of Cambridge\\
\texttt{p.g.lammers@statslab.cam.ac.uk}

\smallskip

\noindent
\textsuperscript\textdagger
\symbolspace
Department of Mathematics, Dartmouth College\\
\texttt{mtassy@math.dartmouth.edu}

\newpage

\setcounter{tocdepth}{2}
\tableofcontents
\renewcommand{\listfigurename}{List of figures}
\listoffigures

\newpage

\renewcommand\optionalindent{\tabto{1.5cm}}


\section{Introduction}
\subsection{Preface}

We study the macroscopic behavior of models
of \emph{Lipschitz random surfaces},
that is,
random Lipschitz functions from $\mathbb Z^d$
to $\mathbb Z$ or $\mathbb R$.
Examples of such models include
 height functions of dimer models
and six-vertex models
and uniformly random $K$-Lipschitz functions.
One studies in particular the local Gibbs measures,
subject to boundary conditions.
It is generally expected that the macroscopic limit of a random surface under the influence of boundary conditions is governed by a \emph{variational principle}.
This variational principle asserts that, under suitable boundary conditions on a bounded domain $D\subset\mathbb R^d$, the asymptotic macroscopic profile $f^*$ must
concentrate on any neighborhood of the set of
minimizers of
the integral
\begin{equation}
\label{eq:fundamental_integral}
\int_D \sigma(\nabla f(x)) dx
\end{equation}
over all those functions $f$ that match these boundary conditions.

The convex function $\sigma$, which is called the \emph{surface tension}, is specific to the model and encodes the free energy density of gradient Gibbs measures which are constrained to a certain slope.
Sheffield proves
in his seminal work \emph{Random Surfaces}~\cite{S05}
that this variational principle can be generalized into a large deviation principle that governs not only the macroscopic profile, but also the local statistics of a random surface over
macroscopic regions.
These results apply to a significant number of models.
The fundamental integral in~\eqref{eq:fundamental_integral} connects the large deviations principle
and the variational principle:
it appears as the rate function in the large deviations principle,
which implies the asserted concentration.
When $\sigma$ is strictly convex,
the rate function of the large deviations principle
has a unique minimizer $f^*$ and the random functions concentrate around
this unique minimizer
(see~\cite{de2010minimizers} for a proof that
strict convexity of $\sigma$ implies uniqueness of the minimizer
of the integral).
This also implies that the model is stable under microscopic changes in the boundary conditions.
On the other hand, when $\sigma$ fails to be strictly convex,
simulations have suggested that microscopic changes to boundary
conditions might have macroscopic effects,
and (more generally) that random surfaces might be macroscopically
disordered.
To illustrate this point, we refer to Figure~\ref{fig:5vertex}
for two samples from the $5$-vertex model,
one with parameters which make $\sigma$ strictly convex,
and one with parameters for which $\sigma$
is not strictly convex.
 The difference in the macroscopic appearance of these
 two figures is striking.
 This dichotomy underlines the pivotal role played by the surface tension in the study of the asymptotic behavior of random surfaces.

In the last thirty years, there have been various models in statistical physics for which strict convexity
of the surface tension has been derived.
The two most famous are probably the dimer model~\cite{CKP01} for $\mathbb Z$-valued random surfaces and the Ginzburg-Landau $\nabla\phi$-interface under suitable conditions~\cite{funaki1997motion,MR2227242} for $\mathbb R$-valued random surfaces.
In either case, the strategy employed to demonstrate strict convexity of the surface tension relies heavily on particular properties of the model under consideration.
For dimer models,
one is able to calculate $\sigma$ due
to exact integrability of the model~\cite{CKP01};
for the Ginzburg-Landau $\nabla\phi$-interface,
the strategy relies on the fact that the potentials considered are almost Gaussian~\cite{funaki1997motion}.
A decisive breakthrough was made in~\cite{S05} in the pursuit of a more general approach.
In this work, Sheffield proves that statistical physics models associated with simply attractive potentials---that is, convex potentials for which the interactions are exclusively between pairs of points---must have a strictly convex surface tension.
Beyond the surprising generality of the result,
this work also distinguishes itself by the method that was used to prove strict convexity of the surface tension.
Rather than using direct computational arguments, the author reasons by contradiction:
if there is a line segment on which the surface tension is affine,
then the minimizing measures corresponding to either endpoint are used to construct
a new measure which minimizes the specific free energy, but is not a Gibbs measure.
This is then shown to be impossible.

\begin{figure}
	\centering
	\begin{subfigure}{.5\textwidth}
		\centering
		\includegraphics[width=.8\linewidth]{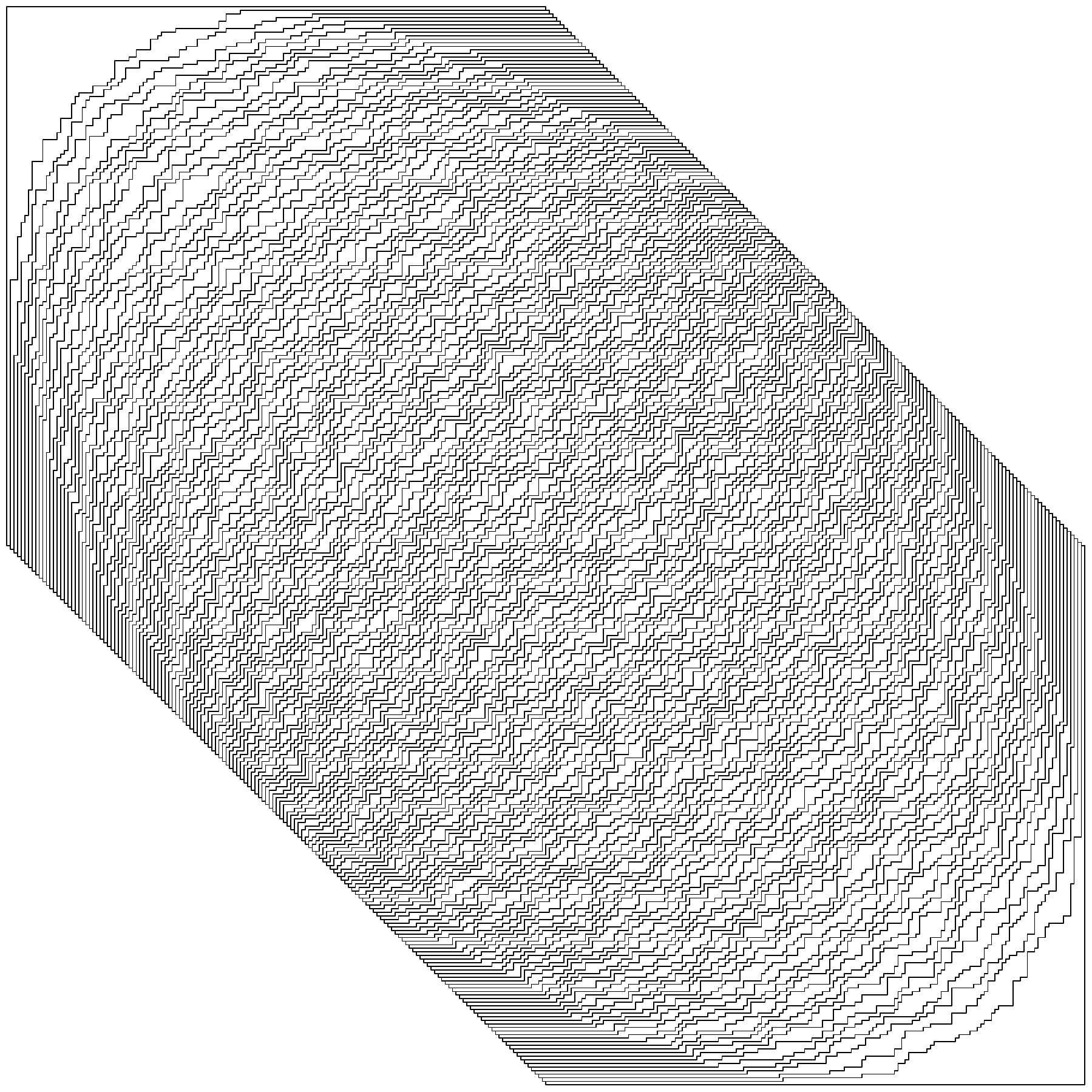}
		\caption*{Monotone parameters}

	\end{subfigure}%
	\begin{subfigure}{.5\textwidth}
		\centering
		\includegraphics[width=.8\linewidth]{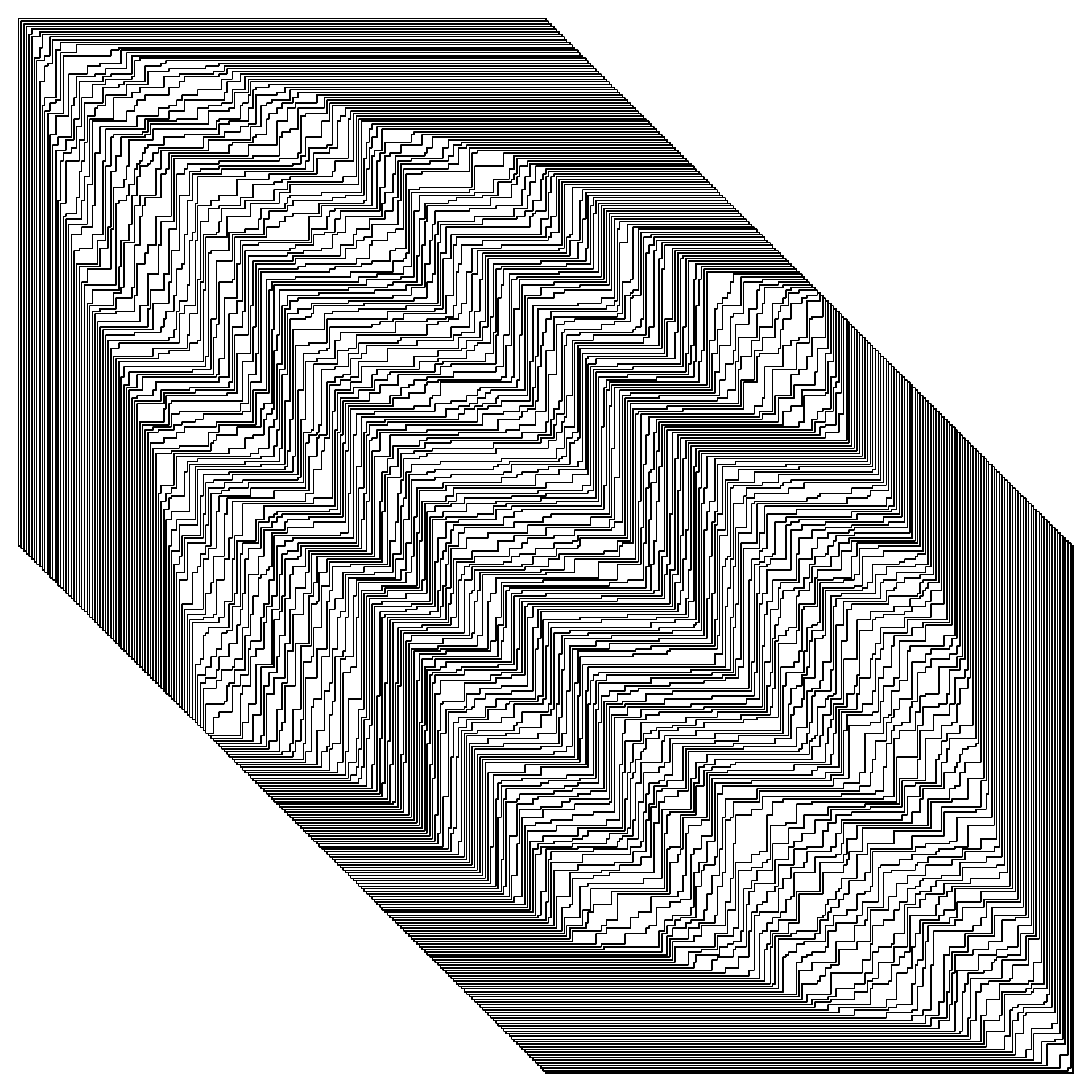}
		\caption*{Non-monotone parameters}
	\end{subfigure}
	\caption[Limiting behavior of the $5$-vertex model]{Limiting behavior of the $5$-vertex model for different parameters}\label{fig:5vertex}
\end{figure}

Despite this significant progress, the techniques used in~\cite{S05} rely heavily on the interactions being between pairs of points only---they cannot capture what happens for models with
interactions involving larger clusters of points.
The purpose of this article is to dramatically increase the class of models for which
strict convexity of the surface tension can be derived.
We do so by providing a new approach which does not rely on a particular formalism of the model in terms of a potential,
but instead on \emph{stochastic monotonicity}.
Notably, the new class includes all Lipschitz models
for which the interaction potential satisfies the Fortuin–Kasteleyn–Ginibre (FKG) lattice condition.
Such potentials are also called \emph{submodular},
and form a natural generalization of the class of
simply attractive potentials.
Moreover, the new class also covers interaction
potentials which assign a weight to each level set of the height function,
in the spirit of the random-cluster model.
Such models have infinite-range interactions,
and we use them to derive strict convexity of the surface tension for
the tree-valued graph homomorphisms studied in~\cite{MT16}.

There are several ideas which suggest that stochastic monotonicity
is a suitable starting point for studying the macroscopic
behavior of random surfaces.
First, for general percolation models, such as
independent percolation and Fortuin-Kasteleyn percolation,
the FKG inequality
is essential to the understanding of the
macroscopic behavior of the model:
most, if not all, modern techniques in percolation theory
 rely on this crucial observation.
 It appears that stochastic monotonicity is
 the most general equivalent of the FKG inequality
 in the context of random height functions.
Second, when the height functions of interest are also Lipschitz,
the Azuma-Hoeffding inequality implies immediately
that the random surface concentrates in some precise sense;
the picture on the right in Figure~\ref{fig:5vertex}
is therefore instantaneously ruled out.
Third,
it turns out that for this $5$-vertex model,
stochastic monotonicity (which depends on the choice of parameters),
is in fact \emph{equivalent} to strict convexity of $\sigma$.

Finally, stochastic monotonicity does not depend on any formalism of potentials.
This is a significant difference with the class of simply attractive
models in~\cite{S05}, which depends on a particular representation of the model
in terms of an underlying interaction potential.
Stochastic monotonicity is thus practical:
it suffices to check the Holley criterion.
For discrete finite-range models, this is particularly efficient, as it amounts
to evaluating a finite number of cases.
%

\subsection{Description of the main results}

Let us now broadly describe the main results of this article. Precise statements of the corresponding theorems are to be found in Section~\ref{sec:main}.
Write $\Omega$ for the set of \emph{height functions},
that is, functions $\phi$
from $\mathbb Z^d$ to $E$,
where the choice of $d$ and $E\in\{\mathbb Z,\mathbb R\}$
depends on the model of interest.
Write $\Lambda\subset\subset\mathbb Z^d$
if the former is a finite subset of the latter;
the model of interest is formalized in terms of a specification
$\gamma=(\gamma_\Lambda)_{\Lambda\subset\subset\mathbb Z^d}$ which allows one to forget about the values of $\phi$
on $\Lambda$ and resample those values according to the model.
The measure $\gamma_\Lambda(\cdot,\phi)$ is also
called the \emph{local Gibbs measure} in $\Lambda$
with boundary conditions $\phi$.
This model must be invariant by some full-rank sublattice
$\mathcal L$ of $\mathbb Z^d$ if any convergent
macroscopic behavior is to be expected.
We impose two key restrictions on $\gamma$
for the main results to apply:
that $\gamma_\Lambda(\cdot,\phi)$ is supported
on height functions which are suitably Lipschitz whenever $\phi$
is Lipschitz,
and that $\gamma_\Lambda(\cdot,\phi)\preceq
\gamma_\Lambda(\cdot,\psi)$ whenever $\phi\leq \psi$.
Models satisfying the former condition are called \emph{Lipschitz},
if they satisfy the latter then they are called \emph{stochastically monotone}.
Finally, for the thermodynamical
formalism, we require that the specification $\gamma$
is generated by some \emph{interaction potential} $\Phi$
which encodes the interactions of the values of $\phi$
at different vertices.
We shall see that the heart of the proof does
not rely on the formalism of potentials as it
is expressed directly in terms of the specification.
As a consequence, we are able to incorporate potentials
$\Phi$ belonging to a very large class
which is described in detail in Section~\ref{section:class_of_models}.
Informally, we allow any potential $\Phi$ which
decomposes as the sum of two potentials $\Psi$ and
$\Xi$,
where $\Psi$ is a potential of finite range which enforces
the Lipschitz property (by assigning infinite potential
to functions which are not Lipschitz),
and where $\Xi$ is potentially an infinite-range potential
whose intensity decays fast enough for the
specific free energy to be well-defined.

%

While the finite-range part $\Psi$ of the potential encompasses all common finite-range models in statistical physics, the infinite-range part $\Xi$ is tailored to fit long-range interaction potentials such as
those associated with the random-cluster model or the Loop $O(n)$ model.
We demonstrate in Subsection~\ref{subsection:applications_tree} that this formalism can even be used to prove a conjecture on the limiting behavior of uniformly random graph homomorphisms
from $\mathbb Z^d$ to a $k$-regular tree for $k\geq 2$.

Let us now introduce a few notions before describing
the main results.
Write $\mathcal P_\mathcal L(\Omega,\mathcal F^\nabla)$
for the collection of $\mathcal L$-invariant
gradient measures on $\Omega$.
Any measure $\mu\in\mathcal P_\mathcal L(\Omega,\mathcal F^\nabla)$
has an associated \emph{slope} $S(\mu)$
which is the unique linear functional $u\in(\mathbb R^d)^*$
such that
\[
 u(x)=\mu(\phi(x)-\phi(0))
\]
for all $x\in\mathcal L$.
The \emph{specific free energy} of $\mu$
is defined by the limit
\[
 \mathcal H(\mu|\Phi):=\lim_{n\to\infty}n^{-d}\mathcal H_{\Pi_n}(\mu|\Phi),
\]
where $\Pi_n\subset\subset\mathbb Z^d$
denotes a box of sides $n$,
and where $\mathcal H_{\Lambda}(\mu|\Phi)$
denotes the \emph{free energy}
 of $\mu$ over $\Lambda$
 with respect to the interior Hamiltonian generated by $\Phi$;
 this quantity is introduced formally in Section~\ref{section:formal_setting}.
 The \emph{surface tension} is
 the function $\sigma:(\mathbb R^d)^*\to\mathbb R\cup\{\infty\}$
 defined by
 \[
 \sigma(u):=\inf_{\text{$\mu\in\mathcal P_\mathcal L(\Omega,\mathcal F^\nabla)$ with $S(\mu)=u$}}\mathcal H(\mu|\Phi).
 \]
 This function is automatically convex as
 $S(\cdot)$ and $\mathcal H(\cdot|\Phi)$
 are affine over $\mathcal P_\mathcal L(\Omega,\mathcal F^\nabla)$---as will be shown---and
 we write $U_\Phi$ for the topological interior of the set $\{\sigma<\infty\}\subset(\mathbb R^d)^*$.
 Finally, call a shift-invariant measure $\mu\in\mathcal P_\mathcal L(\Omega,\mathcal F^\nabla)$
 a \emph{minimizer} if $\mu$ satisfies the equation
 \[
 \mathcal H(\mu|\Phi)=\sigma(S(\mu))<\infty.
 \]

Let us start with the motivating result of this article.

 \begin{theorem*}[strict convexity of the surface tension]
 Let $\Phi$ denote a potential which decomposes
 as described above, and such that
 the induced specification $\gamma^\Phi$ is monotone.
 \begin{enumerate}
 \item If $E=\mathbb R$,
 then $\sigma$ is strictly convex on $U_\Phi$.
 \item If $E=\mathbb Z$,
 then $\sigma$ is strictly convex on $U_\Phi$
 if for any affine map $h:(\mathbb R^d)^*\to\mathbb R$
 with $h\leq \sigma$,
 the set $\{h=\sigma\}\cap\partial U_\Phi$
 is convex.
 In particular, $\sigma$ is strictly convex on $U_\Phi$
 if $E=\mathbb Z$ and at least one of the following conditions is satisfied:
 \begin{enumerate}
 \item $\sigma$ is affine on $\partial U_\Phi$,
 but not on $\bar U_\Phi$,
 \item $\sigma$ is not affine on $[u_1,u_2 ]$ for any distinct $u_1,u_2 \in \partial U_\Phi$ such that $[u_1,u_2 ] \not\subset \partial U_\Phi$.
 \end{enumerate}
 \end{enumerate}
 \end{theorem*}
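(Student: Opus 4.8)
The plan is to argue by contradiction, following the strategy pioneered in \cite{S05}: suppose $\sigma$ agrees with an affine map $h$ on a nondegenerate segment $[u_1,u_2]\subset U_\Phi$, produce minimizers $\mu_1,\mu_2$ of slopes $u_1,u_2$, and build from them a single measure of slope $u=(u_1+u_2)/2$ which still minimizes the specific free energy (because $\sigma$ is affine on the segment and $\cH(\cdot|\Phi)$, $S(\cdot)$ are affine, so any mixture along the segment is a minimizer), yet which fails to be a gradient Gibbs measure for $\gamma^\Phi$. The contradiction comes from the characterization of minimizers stated in the excerpt: every minimizer \emph{is} a gradient Gibbs measure. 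The novelty over \cite{S05} is that the ``gluing'' of $\mu_1$ and $\mu_2$ into a non-Gibbsian minimizer is carried out using stochastic monotonicity rather than the simply-attractive potential formalism — concretely, I would use the monotone coupling of the local Gibbs measures $\gamma_\Lambda(\cdot,\phi)\preceq\gamma_\Lambda(\cdot,\psi)$ for $\phi\le\psi$ to sandwich height functions between profiles of the two extremal slopes and thereby fabricate a measure whose conditional distributions on finite windows differ from the specification. I expect this construction to be set up in a preceding section and invoked here; the role of the present theorem is to organize the case analysis and the passage to the boundary.

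For part (1), $E=\mathbb R$: here the segment $[u_1,u_2]$ can be taken strictly inside $U_\Phi$ (since $U_\Phi$ is open and we only need $\sigma$ affine on \emph{some} nondegenerate subsegment of wherever it fails strict convexity, and affineness propagates), so the interior construction applies verbatim and yields the contradiction. For part (2), $E=\mathbb Z$: the obstruction is that the set where $\sigma$ meets a supporting affine map $h$ may ``escape to the boundary'' $\partial U_\Phi$ — there the slope-$u$ gradient Gibbs measures may degenerate (the Lipschitz constraint is saturated, so there is no room to perform the monotone gluing), and the interior argument breaks down. The hypothesis that $\{h=\sigma\}\cap\partial U_\Phi$ is convex is exactly what is needed: I would show that if $\sigma=h$ on a nondegenerate segment, then by convexity of $\{h=\sigma\}$ the full face $F:=\{h=\sigma\}$ is itself convex, and if $F\cap U_\Phi\ne\emptyset$ then $F$ contains a nondegenerate segment meeting $U_\Phi$, reducing to the interior case; whereas if $F\subset\partial U_\Phi$ then... this is the genuinely delicate point (see below).

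The main obstacle is precisely the case $F=\{h=\sigma\}\subset\partial U_\Phi$ with $F$ a nondegenerate segment lying entirely in the boundary. The claim to establish is that this cannot happen under either of the sufficient conditions (a) or (b). Under (b) — ``$\sigma$ is not affine on $[u_1,u_2]$ for distinct $u_1,u_2\in\partial U_\Phi$ with $[u_1,u_2]\not\subset\partial U_\Phi$'' — one notes that if $F\subset\partial U_\Phi$ is a segment, then picking its endpoints $u_1,u_2$ we either have $[u_1,u_2]\subset\partial U_\Phi$, in which case $\sigma$ is affine \emph{along the boundary} and we argue separately that $\sigma$ restricted to (a flat portion of) $\partial U_\Phi$ affine forces $\sigma$ affine on a neighborhood inside $U_\Phi$ by convexity/continuity of $\sigma$ on $\bar U_\Phi$ (here condition (a)'s hypothesis ``$\sigma$ affine on $\partial U_\Phi$ but not on $\bar U_\Phi$'' is the contrapositive of what one wants to rule out), or $[u_1,u_2]\not\subset\partial U_\Phi$, directly contradicting (b). So the real content is: \emph{if $\sigma$ is affine on a nondegenerate boundary segment of $\partial U_\Phi$, then it is affine on a nondegenerate segment meeting $U_\Phi$.} I would prove this using lower semicontinuity/continuity of $\sigma$ on its domain together with convexity: a supporting hyperplane $h\le\sigma$ touching along a boundary face, combined with $\sigma<\infty$ nearby inside $U_\Phi$, pins $\sigma=h$ on the convex hull of that face with nearby interior points where equality must also hold by a convexity squeeze — the detailed verification that this squeeze works (rather than $\sigma$ peeling away from $h$ as one moves inward) is the step I expect to require the most care, and is where the structure of $U_\Phi$ and the explicit $\mathbb Z$-valued Lipschitz constraint enter.
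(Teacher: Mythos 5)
Your proposal captures only the outermost shell of the argument (argue by contradiction from an affine stretch of $\sigma$, produce a minimizer that violates the minimizer characterization), and it misses the machinery that actually carries the proof in this paper. Three concrete gaps.

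\emph{(i) The contradiction object and the property it violates.} You propose to fabricate a minimizer of slope $u=(u_1+u_2)/2$ ``whose conditional distributions on finite windows differ from the specification'' and to contradict ``every minimizer is a gradient Gibbs measure.'' The paper does not go this way, and for a good reason: with the infinite-range potentials allowed here, the specification need not be quasilocal, so minimizers need not be Gibbs measures at all (Theorem~\ref{thm_mr_minimizers} only gives an almost-Gibbs statement, conditionally). The contradiction in the paper is instead with \emph{finite energy} (Theorem~\ref{thm_main_minimizers_finite_energy}), which is the robust consequence of minimization that survives in the non-quasilocal regime. Your route would need an additional argument to recover the Gibbsian conclusion for infinite-range $\Xi$, and such an argument is not available in general.

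\emph{(ii) The actual construction.} The paper works in the product space $\Omega\times\Omega$, forms the difference function $\xi$ of two independent samples, and uses the reflection principle together with the moats apparatus (Section~\ref{section:moats_heart}) to show (Theorem~\ref{thm_beyond_moats}) that affineness of $\sigma$ on $[u_1,u_2]$ forces a shift-invariant product minimizer of slope $(u,u)$ in which, with positive probability, the level set $\{\xi=0\}$ has at least three infinite connected components. The contradiction then comes from a Burton--Keane uniqueness-of-infinite-cluster argument (Lemma~\ref{lemma_application_of_BK}) applied to this level set, which is incompatible with finite energy. Your sketch of ``sandwich height functions between profiles of the two extremal slopes via the monotone coupling'' does not produce or even point toward this structure; in particular, monotonicity is used in the paper solely through the reflection principle (Lemma~\ref{lemma:reflection}) to estimate the probability of seeing many nested moats, not to directly compare conditional laws to the specification.

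\emph{(iii) The $E=\mathbb Z$ boundary case.} Your discussion of the convexity hypothesis on $\{h=\sigma\}\cap\partial U_\Phi$ is off in two ways. First, you remark that ``$F:=\{h=\sigma\}$ is itself convex,'' which is automatic and gives you nothing; the hypothesis is specifically about the intersection with $\partial U_\Phi$, and what it buys is that this intersection sits inside the boundary of a single half-space $H(p)$ coming from the facial description of $\bar U_q$ in Lemma~\ref{lemma_lipschitz_q_and_norm}. Second, the reduction to the interior case does not proceed through a ``convexity squeeze'' that forces $\sigma=h$ to propagate inward (indeed $\sigma$ can be affine on a boundary face without being affine on any inward segment). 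Instead the paper exploits the rigidity that a shift-invariant measure with finite specific free energy and slope in $\partial H(p)$ must have deterministic height differences along the cycle-lift path $p$, and then chooses $x\in\mathcal L$ with $(u_1-u_2)(x)\neq 0$ so that non-constancy of $\xi$ on $y+\mathbb Z x$ (supplied by Theorem~\ref{thm_beyond_moats}) forces at least one ergodic component's marginal slope off $\partial H(p)$ and hence into $U_\Phi$, which is exactly the precondition needed for Lemma~\ref{lemma_application_of_BK}. This mechanism is absent from your proposal.
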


 See Theorem~\ref{thm_main_main}
 for the formal statement of this theorem.
 The extra condition for $E=\mathbb Z$
 is necessary to control the behavior of
 ergodic measures whose slope is extremal.
 %
 It is shown in the last part of this article
 that this condition holds true for all classical models.
 What happens in general is that measures whose slope lies in $\partial U_\Phi$
 have zero combinatorial entropy, which makes it straightforward to derive the inequalities
 required for satisfying the extra condition.
 However, it is possible to design exotic models for which it is not known if the condition holds true or not,
 and consequently we cannot rule out the existence of an affine part of the surface tension for such exotic models.

Our second main result concerns a characterization of minimizers, for potentials which decompose as described above. This generalizes the results of~\cite{lammers2019variational}
to the gradient setting.
It is valid even if $\gamma^\Phi$
fails to be monotone, and if $\sigma$ fails to be strictly convex.
However, if $\sigma$
is strictly convex,
then there exists an ergodic minimizer
of slope $u$ for any $u\in U_\Phi$.

\begin{theorem*}[minimizers of the specific free energy]
	Consider a potential $\Phi$ which decomposes as described,
 as well as a minimizer $\mu\in\mathcal P_\mathcal L(\Omega,\mathcal F^\nabla)$.
 Then $\mu$ has finite energy in the sense of Burton and Keane,
 which means that any local configuration
 that is Lipschitz, has a positive
 density (if $E=\mathbb R$) or probability (if $E=\mathbb Z$)
 of occurring.
 Moreover, if the specification $\gamma^\Phi$
 is quasilocal, then $\mu$ is a Gibbs measure,
 and if $\gamma^\Phi$ is not quasilocal
 but if $\mu$ is supported on its points of
 quasilocality,
 then $\mu$ is an almost Gibbs measure---which implies
 in particular that $\mu=\mu\gamma^\Phi_\Lambda$
 for any $\Lambda\subset\subset\mathbb Z^d$.
 Finally, if $\mu$ is not supported
 on the points of quasilocality of $\gamma^\Phi$,
 then we obtain results on the
 regular conditional probability distributions of $\mu$
 which are similar in spirit to those obtained in
\cite{lammers2019variational}.
\end{theorem*}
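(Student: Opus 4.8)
The plan is to establish the DLR equation $\mu=\mu\gamma^\Phi_\Lambda$ for every $\Lambda\subset\subset\mathbb Z^d$ and then to extract all assertions of the theorem from this. I would run a periodized resampling argument in the spirit of \cite{S05}. Fix $\Lambda$ and choose a full-rank sublattice $\mathcal L'\subset\mathcal L$ coarse enough that the translates $\Lambda+z$, $z\in\mathcal L'$, are pairwise disjoint and separated by more than the range of the finite-range part $\Psi$. Let $\gamma'$ be the kernel that resamples simultaneously on $\bigcup_{z\in\mathcal L'}(\Lambda+z)$ according to $\gamma^\Phi$, and set $\mu_\Lambda:=[\mathcal L:\mathcal L']^{-1}\sum_w\theta_w(\mu\gamma')$, averaging over coset representatives $w$ of $\mathcal L'$ in $\mathcal L$ with $\theta_w$ the corresponding shift. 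Since the model is Lipschitz, $\gamma^\Phi$ sends Lipschitz height functions to Lipschitz height functions, so $\mu_\Lambda\in\mathcal P_{\mathcal L}(\Omega,\mathcal F^\nabla)$ is supported on Lipschitz functions; and because $\mathcal L'$ is coarse, the complement of $\bigcup_z(\Lambda+z)$ is connected and, for each $v\in\mathcal L$, contains a path joining two points that differ by $v$, so the height increment over $v$ is a sum of gradients untouched by the resampling and hence $S(\mu_\Lambda)=S(\mu)$.

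The analytic ingredient I would prove is $\mathcal H(\mu_\Lambda|\Phi)\le\mathcal H(\mu|\Phi)$ together with an exact accounting of the deficit. Using the interior Hamiltonian on $\Pi_n$, the monotonicity of relative entropy under a probability kernel, and the fact that $\gamma^\Phi_\Delta(\cdot,\phi)$ is the minimizer of the finite-volume free energy in $\Delta$ with boundary data $\phi$, one gets
\[
\mathcal H_{\Pi_n}(\mu|\Phi)-\mathcal H_{\Pi_n}(\mu\gamma'|\Phi)=\sum_{z\in\mathcal L':\,\Lambda+z\subset\Pi_n}\E_\mu\big[H\big(\mu(\cdot\,|\,\mathcal F_{(\Lambda+z)^c})\,\|\,\gamma^\Phi_{\Lambda+z}\big)\big]+o(n^d).
\]
Independence of the distant resamplings handles the finite-range part $\Psi$, while the summability of the infinite-range part $\Xi$ forces the cross-terms and the boundary-layer terms into the $o(n^d)$; the properties of the specific free energy needed here---its affineness, its shift-invariance, and the finite-volume comparison estimates---are those established earlier in the paper. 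Dividing by $n^d$ and sending $n\to\infty$ yields $\mathcal H(\mu|\Phi)-\mathcal H(\mu_\Lambda|\Phi)=c\,\E_\mu\big[H\big(\mu(\cdot\,|\,\mathcal F_{\Lambda^c})\,\|\,\gamma^\Phi_\Lambda\big)\big]\ge 0$ for a constant $c>0$ depending only on $\Lambda$ and $\mathcal L'$.

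If $\mu$ is a minimizer, then $S(\mu_\Lambda)=S(\mu)$ forces $\mathcal H(\mu_\Lambda|\Phi)\ge\sigma(S(\mu))=\mathcal H(\mu|\Phi)$, so the deficit vanishes; nonnegativity of relative entropy then gives $\mu(\cdot\,|\,\mathcal F_{\Lambda^c})=\gamma^\Phi_\Lambda$ for $\mu$-a.e.\ configuration, which is the DLR equation $\mu=\mu\gamma^\Phi_\Lambda$, and $\Lambda$ was arbitrary. Finite energy in the sense of Burton and Keane follows immediately: for a Lipschitz local configuration $C$ on a finite set $\Delta$, pick $\Lambda\supset\Delta$ with enough room to interpolate $C$ to a Lipschitz function matching any prescribed Lipschitz boundary data on $\partial\Lambda$; then $\mu(\{\psi|_\Delta=C\})=\int\gamma^\Phi_\Lambda(\{\psi|_\Delta=C\},\phi)\,d\mu(\phi)$, and for $\mu$-almost every (Lipschitz) $\phi$ the local Gibbs measure charges the nonempty set of Lipschitz functions on $\Lambda$ that agree with $C$ on $\Delta$, with positive probability if $E=\mathbb Z$ and positive Lebesgue density if $E=\mathbb R$. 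If $\gamma^\Phi$ is quasilocal, the DLR equations for all $\Lambda$ are by definition the statement that $\mu$ is a Gibbs measure. If $\gamma^\Phi$ is not quasilocal but $\mu$ concentrates on its points of quasilocality, these two facts together constitute the almost Gibbs property, which in particular retains $\mu=\mu\gamma^\Phi_\Lambda$. If $\mu$ charges points of non-quasilocality, one cannot pass to a topological Gibbs statement, but the conclusion that $\gamma^\Phi_\Lambda$ is a version of the regular conditional probability distribution of $\mu$ given $\mathcal F_{\Lambda^c}$ survives, in the spirit of the results of \cite{lammers2019variational}.

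The step I expect to be the main obstacle is the exact deficit accounting: controlling the infinite-range part $\Xi$ uniformly through the periodization and the limit $n\to\infty$ so that its contribution is genuinely $o(n^d)$, and keeping $\mu_\Lambda$ an $\mathcal L$-invariant gradient measure of the same slope as $\mu$ throughout, so that the minimality of $\mu$ can be brought to bear. Matching the resulting DLR equations to the Gibbs, almost Gibbs, and regular-conditional-probability formulations is comparatively routine bookkeeping.
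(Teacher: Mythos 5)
Your proposal takes a different route from the paper's, and the step you already flag as the likely obstacle is indeed where it breaks. The paper does not prove the DLR equation $\mu=\mu\gamma^\Phi_\Lambda$ for a minimizer; it proves the strictly weaker statement that the regular conditional probability $\mu^\phi\pi_\Lambda$ lies in the set $\mathcal A_{\Lambda,\phi}=\cap_\Delta\mathcal C(\mathcal A_{\Lambda,\Delta,\phi})$, and only upgrades this to the DLR equation when $\mu$ is supported on points of quasilocality of $\gamma^\Phi$. For infinite-range $\Xi$ the specification is generally not quasilocal and the full DLR equation is not a theorem; the paper's Definition~\ref{definition_quasilocality_aGm} and the citation of~\cite{lammers2019variational} are there precisely to formulate what survives. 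Your last paragraph, asserting that ``the conclusion that $\gamma^\Phi_\Lambda$ is a version of the regular conditional probability distribution of $\mu$ given $\mathcal F_{\Lambda^c}$ survives,'' overstates the result: what survives is containment in $\mathcal A_{\Lambda,\phi}$, not equality with $\gamma^\Phi_\Lambda(\cdot,\phi)\pi_\Lambda$.

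The concrete technical failure is in the claimed identity
\[
\mathcal H_{\Pi_n}(\mu|\Phi)-\mathcal H_{\Pi_n}(\mu\gamma'|\Phi)=\sum_{z}\mathbb E_\mu\bigl[\mathcal H\bigl(\mu(\cdot\,|\,\mathcal F_{(\Lambda+z)^c})\,\big|\,\gamma^\Phi_{\Lambda+z}\bigr)\bigr]+o(n^d).
\]
With a \emph{fixed} window $\Lambda$ and a lattice $\mathcal L'$ of bounded index, the number of translates $\Lambda+z\subset\Pi_n$ is of order $n^d$, while the mismatch per translate between the interior Hamiltonian used in $\mathcal H_{\Pi_n}(\cdot|\Phi)$ and the full Hamiltonian used by $\gamma^\Phi_{\Lambda+z}$ is controlled only by $e^-(\Lambda)$, a fixed positive constant for $\Xi\neq 0$. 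The cross-terms and external boundary terms therefore accumulate to $\Theta(n^d)$, not $o(n^d)$, and the argument gives no information about $\mu(\cdot|\mathcal F_{\Lambda^c})$ for fixed $\Lambda$. The remedy is to let the resampling boxes grow with $n$, so that the per-box error $e^-(\Pi_m)=o(m^d)$ beats the number of boxes; but then the conclusion is no longer DLR at a fixed $\Lambda$---it becomes precisely the paper's statement about $\mathcal A_{\Lambda,\phi}$. The paper's proof does exactly this: it introduces the functional $K_\mu(\Lambda,\Delta)$, shows by a periodized resampling on \emph{large} boxes $\Pi_n$ (using the partial-Hamiltonian kernel $\gamma^*_{\Pi_n}$, chosen so that the error $2e^-(\Pi_n)$ can be absorbed into the hypothesis $K_\mu\neq 0$) that a minimizer has $K_\mu\equiv 0$, and then passes through max-diameter compactness to conclude $\mu^\phi\pi_\Lambda\in\mathcal A_{\Lambda,\phi}$. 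Finite energy then follows not from DLR but from mutual absolute continuity of all measures in $\mathcal A_{\Lambda,\phi}$ (Proposition~\ref{propo_all_abs_cts_wrt_one_another}), so your derivation of finite energy would also need to be rerouted through that weaker statement.
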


See Theorem~\ref{thm_mr_minimizers}
for the formal statement of this theorem.

The third main result of this paper is a large deviations principle.
This large deviations principle concerns both the macroscopic profile of a height function, as well as the local statistics of the height function within a region of macroscopic size.
Its formal description requires a significant amount of
technical constructions,
for which we refer to Sections~\ref{sec:main} and~\ref{sec:LDP}.
One can also consider the large deviations principle
on macroscopic profiles only, and the rate
function so appearing is given by~\eqref{eq:fundamental_integral}
up to an additive constant so that its minimum equals zero.
This immediately implies the classical variational principle of~\cite{CKP01}.
The formal statements are included in Theorem~\ref{thm:sec_mr_ldp},
Corollary~\ref{cor_vp}, and Theorem~\ref{p_LDP}.

\begin{theorem*}[variational principle]
 Consider a potential $\Phi$
 which decomposes as above.
	Let $(D_n, b_n)_{n\in \mathbb N}$ denote
 a sequence of pairs of discrete regions $D_n\subset\subset\mathbb Z^d$ and boundary conditions $b_n\in\Omega$ which, after rescaling, suitably approximates some continuous region
 $D\subset\mathbb R^d$ endowed with some boundary
 function $b:\partial D\to\mathbb R$.
 Then
 the random function $f_n$ obtained by sampling a configuration from $\gamma^\Phi_{D_n}(\cdot,b_n)$ and rescaling,
 is contained with high probability as $n\to\infty$
 in any neighborhood of the set of minimizers $f^*$
of the integral
\[
 \int_D\sigma(\nabla f(x))dx
\]
over all functions $f:\bar D\to\mathbb R$
which equal $b$ on $\partial D$.
If $\sigma$ is strictly convex,
then this minimizer $f^*$ is unique, in
which case $f_n\to f^*$ in probability
as $n\to\infty$.
\end{theorem*}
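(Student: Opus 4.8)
The plan is to obtain this statement as a corollary of the full large deviations principle (Theorem~\ref{p_LDP}, in the setup of Theorem~\ref{thm:sec_mr_ldp}), which governs simultaneously the macroscopic profile and the local empirical field of a sample from $\gamma^\Phi_{D_n}(\cdot,b_n)$ at speed $n^d$. The first step is a contraction: forgetting the local-statistics coordinate, the image of the full LDP under the map to the macroscopic profile $f_n$ is again an LDP, whose rate function at a profile $f$ is the infimum of the full rate function over all admissible lifts of $f$. Because the full rate function integrates a local free-energy density against the slope field of $f$, and because $\sigma(u)$ is by definition the infimum of $\cH(\cdot\,|\,\Phi)$ over $\cL$-invariant gradient measures of slope $u$, this contracted rate function is exactly $f\mapsto\int_D\sigma(\nabla f(x))\,dx$ (understood to be $+\infty$ unless $f$ is Lipschitz in the model norm, equals $b$ on $\partial D$, and has $\nabla f\in\{\sigma<\infty\}$ almost everywhere), up to subtracting its infimum so that the normalized rate function $I$ satisfies $\inf I=0$; this is the content of Corollary~\ref{cor_vp}.

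The second step is to check that $I$ is a good rate function on the relevant state space, namely the space of $K$-Lipschitz functions $\bar D\to\R$ with the uniform topology, where $K$ is the Lipschitz constant enforced by the finite-range part $\Psi$ of the potential. Once the value at one point is fixed, this space is compact by Arzel\`a--Ascoli, so it suffices to show that $I$ is lower semicontinuous; this follows from convexity and lower semicontinuity of $\sigma$ (established earlier) together with a standard lower-semicontinuity result for convex integral functionals under weak-$*$ convergence of gradients. The admissible set $\{f:\bar D\to\R\ \text{Lipschitz},\ f|_{\partial D}=b\}$ is a closed subset of this compact space, and it is nonempty with $I$ finite on it for the given boundary data --- one takes a piecewise-linear interpolant with slopes in $U_\Phi$, which exists precisely because $(D_n,b_n)$ is assumed to approximate $(D,b)$ suitably. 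Hence the infimum of $I$ is attained on a nonempty compact set $\mathcal M$ of minimizers, and $I>0$ on the complement of every open neighborhood $\mathcal U$ of $\mathcal M$.

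The third step is the concentration itself. By construction the rescaled sample $f_n$ lies within $o(1)$ of the admissible set in the uniform norm, since a configuration drawn from $\gamma^\Phi_{D_n}(\cdot,b_n)$ is Lipschitz and agrees with $b_n$ on the boundary while $(D_n,b_n)\to(D,b)$; applying the large deviations upper bound to the closed set $\mathcal U^c$ then gives $\P(f_n\in\mathcal U^c)\le\exp(-n^d(\inf_{\mathcal U^c}I-o(1)))\to 0$, so $\P(f_n\in\mathcal U)\to 1$ for every neighborhood $\mathcal U$ of $\mathcal M$, which is the first assertion. For the last assertion, if $\sigma$ is strictly convex on $U_\Phi$ then the convex integral functional $f\mapsto\int_D\sigma(\nabla f)$ has at most one minimizer subject to fixed Dirichlet data --- this is exactly the implication proved in~\cite{de2010minimizers} --- so $\mathcal M=\{f^*\}$ is a singleton and the concentration statement becomes $f_n\to f^*$ in probability in the uniform topology.

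I expect the genuinely delicate point to be not the abstract contraction but the bookkeeping in the second and third steps tying the continuum objects to the discrete ones: showing that $I$ is finite on the admissible class (equivalently, that the discrete data $b_n$ extend to the interior by Lipschitz configurations of interior Hamiltonian bounded uniformly in $n$), and that the discretization error $D_n\to D$, $b_n\to b$ does not destroy the matching of boundary values in the limit. Both are ultimately encoded in the hypotheses of Theorem~\ref{p_LDP} about what \emph{suitably approximates} means, but they are where the Lipschitz assumption on $\Psi$ and the decay assumption on $\Xi$ are really used: the former for compactness and for the existence of admissible interpolants, the latter so that the specific free energy, and hence $\sigma$ and $I$, are finite and well-behaved on $U_\Phi$.
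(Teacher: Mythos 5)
Your proof is correct and follows essentially the same route as the paper: starting from the full large deviations principle (Theorem~\ref{p_LDP}) on $\operatorname{Lip}(\bar D)\times\mathcal M^D$, applying the contraction principle along the projection onto the macroscopic profile, identifying the contracted rate function with $f\mapsto\int_D\sigma(\nabla f)\,dx-P_\Phi(D,b)$ via the minimization in Lemma~\ref{lemma_rate_fcts}, and then invoking~\cite{de2010minimizers} for uniqueness of the minimizer under strict convexity. The paper compresses the contraction and bookkeeping into the one-line remark after Theorem~\ref{p_LDP} and the statement of Corollary~\ref{cor_vp}, whereas you spell them out; the only minor imprecision is that $\operatorname{Lip}(\bar D)$ itself is not compact until one restricts to functions matching $b$ on the boundary (which is how exponential tightness is actually proved in the paper), but you acknowledge this by pinning a value, so the argument goes through.
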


In the final part of this article, we provide several applications of our results.

Sheffield conjectured that similar
results to those obtained in~\cite{S05}
apply to finite-range submodular potentials,
that is, finite-range potentials which satisfy the FKG lattice condition.
We prove that our framework applies to
submodular Lipschitz potentials,
and we prove that the extra condition for
$E=\mathbb Z$ is automatically satisfied
if the model of interest is $\mathcal L$-invariant
for $\mathcal L$ equal to the full lattice $\mathbb Z^d$.
In fact, we do not even require that the
submodular potential of interest has finite range.
See Theorem~\ref{thm:main_results_submodular} for the corresponding
formal statements.

We furthermore consider the model of uniformly random
graph homomorphisms from $\mathbb Z^d$
to a $k$-regular tree.
Remark that $k$-regular trees are also Cayley
graphs of finitely generated free groups.
We confirm the conjecture in~\cite{MT16}, which asserts that the surface tension associated with this model is strictly convex:
see Theorem~\ref{thm:main_tree_valued_statement}.
This is remarkable because our theory is phrased in terms
of $\mathbb R$- or $\mathbb Z$-valued functions only.


\subsection{Ideas and strategy of the proof}

The proof of the main results splits into two parts.
The first part develops a range of thermodynamical machinery
for the class of potentials under consideration.
The line of thought motivating these results
and proofs was already present in the literature,
most notably in the work of Georgii~\cite{G11},
Sheffield~\cite{S05}, and a previous work of the authors~\cite{lammers2019variational}.
However, it requires significant effort to adapt these existing tools
to the generality of our setting.
The second part provides a proof of strict convexity
of the surface tension, if the potential of interest
furthermore induces a specification that is stochastically monotone.
This is where we break new ground.
Sheffield~\cite{S05} proves that the surface tension
is strictly convex
by employing the following general strategy:
\begin{enumerate}
 \item Suppose that $\sigma$
 is affine on a line segment $[u_1,u_2]$
 for $u_1,u_2\in U_\Phi$ distinct,
 \item Construct a shift-invariant gradient measure
 in $\mathcal P_\mathcal L(\Omega,\mathcal F^\nabla)$
 of slope $u=(u_1+u_2)/2$ with minimal specific free
 energy and which does not have finite energy,
 \item Conclude that this contradicts the characterization
 of the minimizers of the specific free energy,
 as mentioned earlier in this introduction.
\end{enumerate}
The same strategy is employed here,
but the construction of the gradient measure, as well as the heuristic that this construction is based on, are entirely original.
The remainder of this subsection gives an overview of this construction.

First, the surface tension $\sigma(u)$
at some slope $u$ can be expressed
in terms of the asymptotic behavior of the partition function
of $\gamma_{\Pi_n}(\cdot,\phi^u)$
where $\phi^u$ approximates $u$ in some precise sense:
this is a consequence of the large deviations principle.
We then consider the product measure $\mu:=\gamma_{\Pi_n}(\cdot,\phi^u)\times\gamma_{\Pi_n}(\cdot,\phi^u)$;
write $(\phi_1,\phi_2)$
for the random pair of height functions in $\mu$,
and write $f$ for the difference $\phi_1-\phi_2$.
One can use the fact that $\sigma$
is affine on the line segment $[u_1,u_2]$
to derive that the function $f$ deviates macroscopically---that is,
at scale $n$---from $0$
with log probability of order $o(n^d)$ as $n\to\infty$.
We then use monotonicity of the specification
$\gamma$ to compare the probability of a macroscopic deviation
of $f$ to the probability that the set $\{f\in [a,b]\}\subset\Pi_n\subset\subset\mathbb Z^d$
has many large connected components for fixed $0<a<b<\infty$.
This requires the development of an essential and original
geometrical construction. The connected components
of $\{f\in [a,b]\}$ of interest are called
\emph{moats}.
Finally, we randomly shift the functions $\phi_1$ and $\phi_2$
by a vector in $\Pi_n\cap\mathcal L$
and take limits to produce a shift-invariant measure
on the product space, such that each marginal has slope $u$.
The two lower bounds on probabilities imply an upper
bound on the specific free energy of this product measure.
We show that the moats---the large connected components
of
$\{f\in [a,b]\}$---grow to be distinct infinite components
in this limiting procedure.
This contradicts that for a shift-invariant measure
with finite energy, the random set $\{f\in [a,b]\}$
cannot have more than one infinite component due
to the argument of Burton and Keane:
the desired contradiction.

Let us finally elaborate briefly on the geometrical construction involving
 moats.
The goal is to find a lower bound on the
probability that $\{f\in[a,b]\}$ has many large level set,
in terms of the probability that $f$ deviates macroscopically from
$0$.
Write $c_n:=(\lfloor n/2\rfloor,\dots,\lfloor n/2\rfloor)\in\Pi_n$
for the center vertex of $\Pi_n$,
and suppose, by means of illustration,
that $f(c_n)>\varepsilon n$ for some $\varepsilon>0$.
If $\phi_1$ and $\phi_2$ are $K$-Lipschitz for some $K\in (0,\infty)$,
then $f$ is $2K$-Lipschitz.
Choose $a=4K$ and $b=8K$.
Since $f(c_n)$ is large and since $f$ equals $0$
on the complement of $\Pi_n$,
we observe that $\{f\in [a,b]\}$
must contain a connected component
which is contained in $\Pi_n$ and
surrounds the vertex $c_n$ in some precise sense.
This connected component is called a \emph{moat}.
Now fix an arbitrary connected set $M\subset \Pi_n$,
and condition on the event that $M$ is a moat,
and that $f$ is larger than $b$ directly inside $M$.
Equipped with monotonicity,
it is straightforward to demonstrate that
it is more likely (in this conditioned measure)
that $f(c_n)\leq -\varepsilon n+10K$,
than that $f(c_n)\geq \varepsilon n$.
But if $f(c_n)\leq -\varepsilon n+10K$
and if $f$ is larger than $b$ directly on the inside
of $M$,
then $\{f\in[a,b]\}$ must have another connected component
which surrounds $c_n$,
and which is in turn surrounded by the original moat $M$.
One can continue this procedure to generate a
sequence of moats of length $\lfloor\varepsilon n/10K\rfloor$,
such that each moat surrounds the moat that succeeds it.
It is important that the union of all moats occupy a
uniformly positive proportion of $\Pi_n$ as $n\to\infty$,
so that they do not disappear in the limiting procedure
after rerandomizing the position of the origin;
this is indeed the case because of the lower bound on the number
of moats.

\subsection{Open questions}

The first natural question which is left open in this work is to decide if it is possible to drop the requirement that random functions are Lipschitz.
We believe that it is indeed the case, a significant clue being that this requirement does not appear in~\cite{S05}.
Finding a way around this restriction would open the main result to a whole new class of interactions.
However, the geometrical
construction involving the moats
relies heavily on the Lipschitz
property.

Secondly, it would be interesting
to study how the requirement of stochastic
monotonicity can be relaxed.
Results on strict convexity of the surface tension have been obtained for some non-monotone models
for a class of non-convex
potentials~\cite{cotar2009strict,AIHPB_2012__48_3_819_0,adams2016strict},
and for small non-monotone perturbations of dimer models~\cite{MR3606736,
  GMT2019}.
In the simulation on the
right in Figure~\ref{fig:5vertex},
macroscopic disorder is
explained by a heuristic.
For this simulation, the parameters
of the model are chosen such that
straight lines are much preferred
over corners.
This means that the random surface
is able to build
\emph{momentum}:
deviations from the mean reinforce
each other.
This is the exact opposite of
stochastic monotonicity.
However, there are more subtle (and potentially more local) ways
in which stochastic monotonicity might fail.
A simple example would be to consider
random $1$-Lipschitz functions
from $\mathbb Z^d$
to $\mathbb Z$,
where the potential discourages
neighboring vertices
from taking the exact same value.
It is easy to show that this model is not monotone,
but there is no heuristic of momentum building which would imply
macroscopic disorder.
Perhaps it would be possible to prove that this model is stochastically
monotone in some relaxed sense, in which case the results on moats
could be adapted to fit this model.

\section{The thermodynamical formalism}
\label{section:formal_setting}

The interest is in distributions of
the random function $\phi$ which assigns a value $\phi(x)$
from $E$ to each vertex $x\in\mathbb Z^d$,
where $d\geq 2$ and---depending on the model of interest---$E$ denotes
either $\mathbb Z$ or $\mathbb R$.
Such distributions are studied in relation to an underlying model,
which encodes the interactions that exist between
the function values of $\phi$ at different vertices in $\mathbb Z^d$.
At the very least, the underlying model must give rise to a functional,
which assigns a real number---the specific free energy---to any shift-invariant distribution of $\phi$.
In the non-gradient setting there are at least three ways to characterize
the model of interest:
\begin{enumerate}
 \item Through a reference measure on $E$ and an interaction potential,
 \item Through a reference distribution of $\phi$,
 \item Directly through the specification.
\end{enumerate}
Each formulation has slightly different properties,
but they all generate a suitable entropy functional whenever
the correct conditions are imposed. See~\cite{lammers2019variational} for an overview.
In the gradient setting of this paper we must be more careful,
and it seems that only the first formulation generates
a suitable entropy functional.
The goal of this section is to efficiently describe
the standard objects for the formal framework of gradient models on $\mathbb Z^d$.

Subsection~\ref{subsection:formal_setting:gradient_formalism} introduces the
necessary objects and symmetries for the shift-invariant gradient setting.
The same subsection also introduces the key restrictions
on the model: that the specification is monotone,
and that it produces Lipschitz functions.
Subsection~\ref{subsection:formal_setting:potentials_specifications} describes the formalism of potentials.
Subsection~\ref{subsection:formal_setting:surface_tension} introduces
the specific free energy and the surface tension.
The specific free energy is well-defined
for all potentials $\Phi$ in the class $\mathcal S_\mathcal L+\mathcal W_\mathcal L$ which is introduced in Section~\ref{section:class_of_models};
we prove existence of the specific free energy
in Section~\ref{section:specific_free_energy}.
All definitions in the current section are standard.

\subsection{The gradient formalism}
\label{subsection:formal_setting:gradient_formalism}

\subsubsection{Height functions}

We are interested in distributions of the random
function $\phi$, which assigns values from the measure space
$(E,\mathcal E,\lambda)$ to
the vertices of the  square lattice
$\mathbb Z^d$.
Here $E$ refers to either $\mathbb Z$ or $\mathbb R$, depending on the context,
$\mathcal E$ is the Borel $\sigma$-algebra,
and $\lambda$ denotes the counting measure (if $E=\mathbb Z$)
or the Lebesgue measure (if $E=\mathbb R$).
The choice of $E$ is considered fixed throughout the entire work.
The set of all functions $\phi$ from $\mathbb Z^d$ to $E$ is denoted by $\Omega$.
Functions in $\Omega$ are called \emph{samples} or \emph{height functions}.
For $\Lambda\subset\mathbb Z^d$
and $\phi\in\Omega$,
write $\phi_\Lambda\in E^\Lambda$
for the restriction $\phi|_\Lambda$.
If furthermore $\Delta\subset\mathbb Z^d$
and $\psi\in\Omega$ with $\Lambda$ and $\Delta$ disjoint,
then write $\phi_\Lambda\psi_\Delta\in E^{\Lambda\cup\Delta}$
for the unique function that restricts to $\phi$ on $\Lambda$
and to $\psi$ on $\Delta$.

\subsubsection{Subsets of $\mathbb Z^d$}

Write $\Lambda\subset\subset\mathbb Z^d$ if
$\Lambda$ is a finite subset of $\mathbb Z^d$.
Throughout this article, we shall reserve
the notation $(\Pi_n)_{n\in\mathbb N}$
for the sequence of subsets of $\mathbb Z^d$
defined by $\Pi_n:=[0,n)^d\subset\subset\mathbb Z^d$
for each $n\in\mathbb N$.
Remark that $|\Pi_n|=n^d$ for any $n\in\mathbb N$.

Next,  introduce two notions of boundary for
subsets $\Lambda$ of $\mathbb Z^d$.
Write $\partial\Lambda$ for set of the vertices
 which are
adjacent to $\Lambda$ in the square lattice.
Write $\partial^n\Lambda$
for the set of vertices in $\Lambda$
which are at $d_1$-distance at most $n$
from $\mathbb Z^d\smallsetminus\Lambda$,
for any $n\in\mathbb Z_{\geq 0}$; here $d_1$ is the graph metric corresponding to the square lattice.
Write also $\Lambda^{-n}$ for $ \Lambda\smallsetminus\partial^n\Lambda$.
If $D\subset\mathbb R^d$, then write
$\Lambda(D):=D\cap\mathbb Z^d$ and $\Lambda^{-n}(D):=(\Lambda(D))^{-n}$.

Now let $(\Lambda_n)_{n\in\mathbb N}$
denote a sequence of subsets of $\mathbb Z^d$.
If all sets $\Lambda_n$ are finite with
$|\Lambda_n|\to\infty$
and $|\partial\Lambda_n|/|\Lambda_n|\to 0$
as $n\to\infty$,
then $(\Lambda_n)_{n\in\mathbb N}$ is called a \emph{Van Hove sequence}.
We write $(\Lambda_n)_{n\in\mathbb N}\uparrow\mathbb Z^d$
to mean that $(\Lambda_n)_{n\in\mathbb N}$ is a Van Hove sequence.
The sequence $(\Pi_n)_{n\in\mathbb N}$
is an example of a Van Hove sequence.

\subsubsection{$\sigma$-Algebras and random fields}

If $(X,\mathcal X)$ is any measurable space,
then write $\mathcal P(X,\mathcal X)$ for the set of probability measures on it,
and $\mathcal M(X,\mathcal X)$ for the set of $\sigma$-finite measures.
Define the following $\sigma$-algebras on $\Omega$ for any $\Lambda\subset\mathbb Z^d$:
\begin{alignat*}{4}
  & \mathcal F       && :=\sigma(\phi(x):x\in \mathbb Z^d),
 \qquad
  && \mathcal F_\Lambda        && :=\sigma(\phi(x):x\in \Lambda),
\\
  & \mathcal F^\nabla    && :=\sigma(\phi(y)-\phi(x):x,y\in \mathbb Z^d),\qquad
  && \mathcal F^\nabla_\Lambda && :=\sigma(\phi(y)-\phi(x):x,y\in \Lambda).
\end{alignat*}
A \emph{random field} is a probability measure in $\mathcal P(\Omega,\mathcal A)$
for some $\sigma$-algebra $\mathcal A\subset\mathcal F$.
We introduce the gradient $\sigma$-algebra $\mathcal F^\nabla$ because
it is often not possible to measure the height $\phi(x)$
directly; only the height differences $\phi(y)-\phi(x)$ are measurable.
Note that, with the above definitions, $\mathcal F^\nabla_\Lambda=\mathcal F^\nabla\cap\mathcal F_\Lambda$.
For $\Lambda\subset\mathbb Z^d$,
write $\pi_\Lambda$ for the natural probability kernel
from $(\Omega,\mathcal F)$ to $(E^\Lambda,\mathcal E^\Lambda)$
which restricts random fields to $\Lambda$.

A \emph{cylinder set} is a measurable subset of $\Omega$
which is contained in $\mathcal F_\Lambda$
for some $\Lambda\subset\subset\mathbb Z^d$;
a \emph{cylinder function} is a function $\Omega\to\mathbb R$
which is $\mathcal F_\Lambda$-measurable for some $\Lambda\subset\subset\mathbb Z^d$.
A cylinder function is called \emph{continuous} if it is continuous with
respect to the topology of uniform convergence on $\Omega$.
Note that all cylinder functions are continuous whenever $E=\mathbb Z$.

Define the further $\sigma$-algebras on $\Omega$ for any $\Lambda\subset\mathbb Z^d$:
\[
 \mathcal T_\Lambda:=\mathcal F_{\mathbb Z^d\smallsetminus\Lambda},
 \quad
 \mathcal T:=\cap_{\Delta\subset\subset\mathbb Z^d}\mathcal T_\Delta,\quad
 \mathcal T^\nabla_\Lambda:=\mathcal T_\Lambda\cap\mathcal F^\nabla,\quad
 \mathcal T^\nabla:=\mathcal T\cap\mathcal F^\nabla.
\]
Sets in $\mathcal T$ are called \emph{tail-measurable}.

\subsubsection{The topology of (weak) local convergence}
\label{subsubsec:topo_local_conv}

The \emph{topology of local convergence} is the coarsest
topology on $\mathcal P(\Omega,\mathcal F^\nabla)$ that makes the map $\mu\mapsto\mu(f)$
continuous for any bounded cylinder function $f$.
The \emph{topology of weak local convergence} is the coarsest
topology on $\mathcal P(\Omega,\mathcal F^\nabla)$ that makes the map $\mu\mapsto\mu(f)$
continuous for any bounded continuous cylinder function $f$.
Note that the two topologies coincide whenever $E=\mathbb Z$.
Section~\ref{section_limit_equalities} uses a particular basis $\mathcal B$
for the topology of weak local convergence on
$\mathcal P(\Omega,\mathcal F^\nabla)$.
This basis $\mathcal B$ is defined such that it contains exactly all sets
$B\subset\mathcal P(\Omega,\mathcal F^\nabla)$
which can be written as finite intersections
of open sets of the form
$\{\mu:a<\mu(f)<b\}$, where $a,b\in\mathbb R$
and where $f$ is a continuous bounded cylinder function.

\subsubsection{Shift-invariance and ergodicity}

To see convergence of the model at a macroscopic scale it is important that the model exhibits shift-invariance.
For $x\in\mathbb Z^d$,
write $\theta_x:\mathbb Z^d\to\mathbb Z^d,\,y\mapsto y+x$.
Throughout this paper, the letter $\mathcal L$ denotes a fixed full-rank sublattice of $\mathbb Z^d$,
and $\Theta=\Theta(\mathcal L)=\{\theta_x:x\in\mathcal L\}$ is the corresponding group of translations of $\mathbb Z^d$.
If $\phi\in\Omega$ and $\theta\in\Theta$, then $\theta\phi$ denotes the
unique height function satisfying $(\theta\phi)(x)=\phi(\theta x)$ for all $x$.
Similarly, define
\[
  \theta A:=\{\theta\phi:\phi\in A\},\quad
  \theta\mathcal A:=\{\theta A:A\in\mathcal A\},\quad
  \theta\mu:\theta\mathcal A\to [0,\infty],\,
  \theta\mu(\theta A)\mapsto \mu(A)
\]
for $A\subset\Omega$,
for $\mathcal A$ a sub-$\sigma$-algebra of $\mathcal F$,
and
for $\mu$ a measure on $\mathcal A$.
Any of these three objects is called \emph{$\mathcal L$-invariant}
if they are invariant under $\theta$
for any $\theta\in\Theta$.
If $\mathcal A$ is an $\mathcal L$-invariant $\sigma$-algebra on $\Omega$,
then write
$\mathcal P_\mathcal L(\Omega,\mathcal A)$ for the collection of $\mathcal L$-invariant probability measures on $(\Omega,\mathcal A)$.
Note that $\mathcal P_\mathcal L(\Omega,\mathcal A)$ is
the set of probability measures on $(\Omega,\mathcal A)$
such that $\phi$ and $\theta\phi$ have the same distribution for any $\theta\in\Theta$.

Define finally
\[
\mathcal I_\mathcal L:=\{A\in\mathcal F:\text{$A=\theta A$ for all $\theta\in\Theta$}\},
\qquad
\mathcal I_\mathcal L^\nabla:=\mathcal I_\mathcal L\cap\mathcal F^\nabla.
\]
A gradient measure $\mu\in\mathcal P(\Omega,\mathcal F^\nabla)$
is called \emph{ergodic} if $\mu$ is $\mathcal L$-invariant and trivial on $\mathcal I_\mathcal L^\nabla$.
Write $\operatorname{ex \mathcal P_\mathcal L}(\Omega,\mathcal F^\nabla)$
for the set of all such ergodic gradient measures.
Write $e(\operatorname{ex \mathcal P_\mathcal L}(\Omega,\mathcal F^\nabla))$
for the smallest $\sigma$-algebra
that makes the map $A\mapsto \mu(A)$ measurable for all $A\in\mathcal F^\nabla$.

\subsubsection{Specifications}

A \emph{specification} is a family $\gamma=(\gamma_\Lambda)_{\Lambda\subset\subset\mathbb Z^d}$
of probability kernels, such that
\begin{enumerate}
 \item $\gamma_\Lambda$ is a probability kernel from
       $(\Omega,\mathcal T_\Lambda)$
       to $(\Omega,\mathcal F)$ for each $\Lambda\subset\subset\mathbb Z^d$,
 \item $\mu\gamma_\Lambda(A)=\mu(A)$
       for any $\Lambda\subset\subset\mathbb Z^d$, $A\in\mathcal T_\Lambda$, and $\mu\in\mathcal P(\Omega,\mathcal F)$,
 \item $\gamma_\Lambda\gamma_\Delta=\gamma_\Lambda$
       for any $\Delta\subset\Lambda\subset\subset\mathbb Z^d$.
\end{enumerate}
The specification defines the local behavior of the model,
and we think of $\gamma_\Lambda(\cdot,\phi)$
as the \emph{local Gibbs measure} in $\Lambda\subset\subset\mathbb Z^d$
with boundary conditions $\phi\in\Omega$.
A specification $\gamma$ is called \emph{$\mathcal L$-invariant}
if $\gamma_\Lambda(\cdot,\theta\phi)=\theta\gamma_{\theta\Lambda}(\cdot,\phi)$ for any $\Lambda\subset\subset\mathbb Z^d$, $\phi\in\Omega$,
and $\theta\in\Theta$.
Call $\gamma$ a \emph{gradient specification}
if the distribution of $\psi+a$
in $\gamma_\Lambda(\cdot,\phi)$
equals that of $\psi$ in  $\gamma_\Lambda(\cdot,\phi+a)$
for any $\Lambda\subset\subset\mathbb Z^d$, $\phi\in\Omega$,
and $a\in E$,
where $\psi$ denotes the random height function in
each local Gibbs measure.
Note that each kernel $\gamma_\Lambda$ restricts
to a kernel from $(\Omega,\mathcal T_\Lambda^\nabla)$
to $(\Omega,\mathcal F^\nabla)$ whenever $\gamma$ is a gradient specification.

\subsubsection{Monotonicity}

An event $A\in\mathcal F$ is called \emph{increasing}
if $\phi\in A$ and $\psi\geq\phi$
implies $\psi\in A$.
Consider two measures $\mu_1,\mu_2\in\mathcal P(\Omega,\mathcal F)$.
Say that $\mu_2$ \emph{stochastically dominates} $\mu_1$,
and write $\mu_1\preceq\mu_2$,
if $\mu_1(A)\leq \mu_2(A)$
for any increasing event $A$.
This is equivalent to asking
that there exists a coupling between the two measures such that
$\phi_1\leq \phi_2$ almost surely, where the distributions of $\phi_1$ and $\phi_2$
are prescribed by the measures $\mu_1$ and $\mu_2$ respectively.
A specification $\gamma$ is called \emph{monotone}
if for each $\Lambda\subset\subset\mathbb Z^d$,
the kernel $\gamma_\Lambda$
preserves the partial order $\preceq$ on $\mathcal P(\Omega,\mathcal F)$.
Now consider a fixed measurable set $A\in\mathcal F$,
and use---in this definition---the shorthand $\mathcal P_A$ for the set $\{\mu\in\mathcal P(\Omega,\mathcal F):\mu(A)=1\}$.
The specification $\gamma$
is called \emph{monotone over $A$}
if
 $\mu\gamma_\Lambda\in\mathcal P_A$ for any
 $\Lambda\subset\subset\mathbb Z^d$ and
 $\mu\in\mathcal P_A$,
and
if $\gamma_\Lambda$ preserves the partial order $\preceq$ on $\mathcal P_A$.
The assumption that $\gamma$ is monotone
over a suitable set of Lipschitz functions is
crucial to the proof of strict convexity
of the surface tension.

\subsubsection{The Lipschitz property}

Consider some fixed constant $K\in [0,\infty)$.
A height function is called
\emph{$K$-Lipschitz} if that height function is $K$-Lipschitz with
respect to the graph metric $d_1$ on the square lattice $\mathbb Z^d$.
A measure is called \emph{$K$-Lipschitz} if it is supported on $K$-Lipschitz functions.
The Lipschitz property is further refined in Subsection~\ref{subsection:formal_setting:lipschitz}.


\subsubsection{The slope}

Consider $\mu\in\mathcal P_\mathcal L(\Omega,\mathcal F^\nabla)$.
If $\phi(y)-\phi(x)$ is $\mu$-integrable for any
$x,y\in\mathbb Z^d$,
then $\mu$ is said to have \emph{finite slope}.
If $\mu$ has finite slope,
then shift-invariance of $\mu$ implies that the function
\[\mathcal L\to\mathbb R,\,x\mapsto \mu(\phi(x)-\phi(0))\]
is additive.
In particular, this means that there is a unique linear functional
$u\in(\mathbb R^d)^*$ such
that
\[
 u(x)=\mu(\phi(x)-\phi(0))
\]
for any $x\in\mathcal L\subset\mathbb R^d$.
This linear functional $u$ is called the \emph{slope}
of $\mu$,
and we write $S(\mu)$ for it.
The map $S$ is affine: it is clear that
$S((1-t)\mu+t\nu)=(1-t)S(\mu)+tS(\nu)$
for any $t\in[0,1]$
and for any $\mu,\nu\in\mathcal P_\mathcal L(\Omega,\mathcal F^\nabla)$
with finite slope.

If we restrict to $K$-Lipschitz measures in $\mathcal P_\mathcal L(\Omega,\mathcal F^\nabla)$ for fixed $K\in [0,\infty)$,
then all measures have finite slope,
and the map $\mu\mapsto S(\mu)$ is then
continuous with respect to the topology of (weak) local convergence.

\subsection{Interaction potentials, reference measures, and specifications}
\label{subsection:formal_setting:potentials_specifications}

\subsubsection{Interaction potentials}

The model of interest is formalized in terms of an \emph{interaction potential} $\Phi=(\Phi_\Lambda)_{\Lambda\subset\subset\mathbb Z^d}$, which is
a family
of \emph{potential functions} $\Phi_\Lambda:\Omega\to\mathbb R\cup\{\infty\}$
where each function $\Phi_\Lambda$ is required to be measurable with respect to $\mathcal F_\Lambda$.
The potential $\Phi$ is called a \emph{gradient potential} if each function $\Phi_\Lambda$ is in addition $\mathcal F_\Lambda^\nabla$-measurable.
The potential $\Phi$ is furthermore called \emph{$\mathcal L$-invariant}
or \emph{periodic} if
$\Phi_{\theta\Lambda}(\phi)=\Phi_{\Lambda}(\theta\phi)$ for all $\theta\in\Theta$
and for any $\phi\in\Omega$.
In the sequel, $\Phi$ shall always denote a fixed
periodic gradient potential.
It is always conventionally assumed that $\Phi_\Lambda\equiv 0$ whenever $\Lambda$ is a singleton or empty
because the $\sigma$-algebra $\mathcal F_\Lambda^\nabla$ is then trivial.

Next, introduce the Hamiltonian. For $\Lambda\subset\subset\mathbb Z^d$ and $\Delta\subset\mathbb Z^d$ containing
$\Lambda$, let $H_{\Lambda,\Delta}$ denote the $\mathcal F^\nabla_\Delta$-measurable function from $\Omega$ to $\mathbb R\cup\{\infty\}$ defined by
\[H_{\Lambda,\Delta}:=\sum\nolimits_{\text{$\Gamma\subset\subset\mathbb Z^d$ with $\Gamma\subset\Delta$ and with $\Gamma$ intersecting $\Lambda$}}\Phi_\Gamma.\]
In particular, we write $H_\Lambda:=H_{\Lambda,\mathbb Z^d}$ and $H_\Lambda^0:=H_{\Lambda,\Lambda}$.
We shall soon introduce further conditions on $\Phi$
which ensure that the sum in the display is always well-defined and bounded below.
The function $H_\Lambda$ is called the \emph{Hamiltonian} of $\Lambda$
and $H_\Lambda^0$ is called the \emph{interior Hamiltonian} of $\Lambda$.
We
add a superscript $\Phi$ to this notation whenever multiple interaction potentials
are considered and confusion might possibly arise.

\subsubsection{Reference measures}

For any fixed nonempty $\Lambda\subset\subset\mathbb Z^d$,
there exist natural reference measures on
the measurable spaces
$(\Omega,\mathcal F_\Lambda)$ and $(\Omega,\mathcal F^\nabla_\Lambda)$,
in terms of the previously introduced reference measure $\lambda$
on $(E,\mathcal E)$.
In the non-gradient setting this is straightforward:
the map $\phi\mapsto \phi_\Lambda$
extends to a bijection from $\mathcal F_\Lambda$
to $\mathcal E^\Lambda$,
and $\lambda^\Lambda$ is a measure on $(E^\Lambda,\mathcal E^\Lambda)$.
With only slight abuse of notation,
we write also $\lambda^\Lambda$
for the unique measure
on $(\Omega,\mathcal F_\Lambda)$
that makes the map $\phi\mapsto\phi_\Lambda$ into
a measure-preserving projection from
$(\Omega,\mathcal F_\Lambda,\lambda^\Lambda)$
to $(E^\Lambda,\mathcal E^\Lambda,\lambda^\Lambda)$.
We must be more subtle in the gradient setting:
we cannot measure the height of $\phi$ directly,
and so we cannot pullback the measure $\lambda^\Lambda$.
Fix therefore some reference point $x\in\Lambda$
and set $\Lambda':=\Lambda\smallsetminus\{x\}$,
and consider instead the map
$\phi\mapsto \phi_{\Lambda'}-\phi(x)$.
This map extends to a bijection from $\mathcal F^\nabla_\Lambda$
to $\mathcal E^{\Lambda'}$.
Abuse notation again by writing $\lambda^{\Lambda-1}$
for the unique measure on $(\Omega,\mathcal F^\nabla_\Lambda)$
that turns the map $\phi\mapsto \phi_{\Lambda'}-\phi(x)$ into a measure-preserving projection from
$(\Omega,\mathcal F^\nabla_\Lambda,\lambda^{\Lambda-1})$
to $(E^{\Lambda'},\mathcal E^{\Lambda'},\lambda^{\Lambda'})$.
The notation $\lambda^{\Lambda-1}$ bears no reference
to the choice of $x\in\Lambda$,
as the resulting measure $\lambda^{\Lambda-1}$ is indeed independent of this arbitrary choice.
The gradient reference measures
$\lambda^{\Lambda-1}$ are not used in the definition of the specification that $\Phi$ generates; they will first appear in the definition of the specific free energy.

\subsubsection{The specification generated by a potential}

The potential $\Phi$ generates a specification
$\gamma^\Phi=(\gamma_\Lambda^\Phi)_{\Lambda\subset\subset\mathbb Z^d}$
defined by
\[
 \gamma_\Lambda^\Phi(A,\phi):=\frac{1}{Z_\Lambda^\Phi(\phi)}\int_{E^\Lambda}
 1_A(\psi\phi_{\mathbb Z^d\smallsetminus\Lambda})
 e^{-H_\Lambda^\Phi(\psi\phi_{\mathbb Z^d\smallsetminus\Lambda})}
 d\lambda^\Lambda(\psi),
\]
for any $\Lambda\subset\subset\mathbb Z^d$, $\phi\in\Omega$,
and $A\in\mathcal F$,
where $Z_\Lambda^\Phi(\phi)$ is the normalizing constant
\[
 Z_\Lambda^\Phi(\phi):=\int_{E^\Lambda}
 e^{-H_\Lambda^\Phi(\psi\phi_{\mathbb Z^d\smallsetminus\Lambda})}
 d\lambda^\Lambda(\psi).
\]
We drop the superscript $\Phi$ in this notation unless the choice of
potential is ambiguous.
Of course, $\gamma_\Lambda(\cdot,\phi)$ is a well-defined probability measure
on $(\Omega,\mathcal F)$
only if
$Z_\Lambda(\phi)\in(0,\infty)$.
Say that $\phi$ has \emph{finite energy} if $\Phi_\Lambda(\phi)<\infty$
for any $\Lambda\subset\subset\mathbb Z^d$,
and say that $\phi$ is \emph{admissible} if
it has finite energy and $Z_\Lambda(\phi)\in(0,\infty)$ for any $\Lambda\subset\subset\mathbb Z^d$.
To draw a sample $\psi$ from $\gamma_\Lambda(\cdot,\phi)$,
set first $\psi$ equal to $\phi$ on the complement of $\Lambda$,
then sample $\psi_\Lambda$ proportional to $e^{-H_\Lambda}\lambda^\Lambda$.
Similarly, if $\mu$ is a probability measure on $(\Omega,\mathcal T_\Lambda)$
supported on admissible height functions,
then $\mu\gamma_\Lambda$ is a probability measure on $(\Omega,\mathcal F)$;
to sample from $\mu\gamma_\Lambda$ one first obtains an auxiliary sample $\phi$ from $\mu$;
then one draws the final sample $\psi$ from $\gamma_\Lambda(\cdot,\phi)$.

It is important to observe that $\gamma$ is a gradient specification.
This is due to the fact that $\Phi$ is a gradient potential which makes
$H_\Lambda$ measurable with respect to $\mathcal F^\nabla$,
and because the reference measures $\lambda$ and $\lambda^\Lambda$ are invariant under translations.

\subsection{The surface tension}
\label{subsection:formal_setting:surface_tension}

\subsubsection{Relative entropy}

Recall first the relative entropy.
If $(X,\mathcal X,\nu)$ is an arbitrary $\sigma$-finite
measure space and $\mu$ another probability measure on $(X,\mathcal X)$,
then the \emph{relative entropy} of $\mu$ with respect to $\nu$
is defined by
\[
 \mathcal H(\mu|\nu):=
 \begin{cases}
  \mu(\log f)=\nu(f\log f) & \text{if $\mu\ll\nu$ where $f=d\mu/d\nu$,} \\
  \infty                   & \text{otherwise}.
 \end{cases}
\]
Remark that $\mathcal H(\mu|\nu)\in\mathbb R\cup\{-\infty,\infty\}$
in general,
and that
$\mathcal H(\mu|\nu)\geq -\log\nu(X)$.
If $\nu$ is a finite measure, then we have equality
if and only if $\mu$ is a scalar multiple of $\nu$.
If $\mathcal A$ is a sub-$\sigma$-algebra of $\mathcal X$,
then use the shorthand $\mathcal H_\mathcal A(\mu|\nu)$
for $\mathcal H(\mu|_\mathcal A|\nu|_\mathcal A)$.

\subsubsection{The free energy}

We are now ready to introduce the free energy.
This already requires the presence of some gradient potential
$\Phi$, although we do not yet impose any condition on it.
Consider also some gradient random field $\mu\in\mathcal P(\Omega,\mathcal F^\nabla)$,
and some finite set $\Lambda\subset\subset\mathbb Z^d$.
Then the \emph{free energy} of $\mu$ in $\Lambda$ with respect to $\Phi$
is defined by
\[
 \mathcal H_\Lambda(\mu|\Phi):=
 \mathcal H_{\mathcal F_\Lambda^\nabla}(\mu|e^{-H_{\Lambda}^{0,\Phi}}\lambda^{\Lambda-1})
 =
 \mathcal H_{\mathcal F_\Lambda^\nabla}(\mu|\lambda^{\Lambda-1})
 +
 \mu(H_{\Lambda}^{0,\Phi}).
\]
The free energy is sometimes decomposed into the \emph{entropy}
and the \emph{energy} of $\mu$ in $\Lambda$---the two terms in the rightmost expression in the display respectively. (For the final equality,
we adopt the convention that $\infty-\infty=\infty$.)

\subsubsection{The specific free energy}

The \emph{specific free energy} of a shift-invariant random field $\mu\in\mathcal P_\mathcal L(\Omega,\mathcal F^\nabla)$ with respect to $\Phi$ is defined by the limit
\begin{equation*}
 \mathcal H(\mu|\Phi):=\lim_{n\to\infty}n^{-d}\mathcal H_{\Pi_n}(\mu|\Phi).
\end{equation*}
The specific free energy thus describes the asymptotic of the normalized
free energy of $\mu$ with respect to $\Phi$ over a large box.
In Section~\ref{section:specific_free_energy} we prove that the limit converges
for all $\Phi$ in the class $\mathcal S_\mathcal L+\mathcal W_\mathcal L$
which is described in Section~\ref{section:class_of_models}.
It is also shown in Section~\ref{section:specific_free_energy} that $\mathcal H(\cdot|\Phi)$
is affine and bounded below.

\subsubsection{The surface tension}

Consider a potential $\Phi$ in our class $\mathcal S_\mathcal L+\mathcal W_\mathcal L$,
which implies that the specific free energy is well-defined, affine and bounded below.
The \emph{surface tension} is the function $\sigma:(\mathbb R^d)^*\to\mathbb R\cup\{\infty\}$
defined by
\[
 \sigma(u):=\inf_{\text{$\mu\in\mathcal P_\mathcal L(\Omega,\mathcal F^\nabla)$ with $S(\mu)=u$}}\mathcal H(\mu|\Phi).
\]
The function $\sigma$ must be convex because both $S(\cdot)$ and $\mathcal H(\cdot|\Phi)$
are affine.
We shall write $U_\Phi$ for the interior of the convex set
$\{\sigma<\infty\}\subset (\mathbb R^d)^*$.
Slopes in $U_\Phi$ are called \emph{allowable}.
The major contribution of this article is that
we show that $\sigma$ is strictly convex on $U_\Phi$
whenever
$\gamma^\Phi$ is monotone over
the set of admissible height functions
and if
$\Phi$ is in our class $\mathcal S_\mathcal L+\mathcal W_\mathcal L$
(and under an additional condition whenever $E=\mathbb Z$).


\section{The class of models under consideration}
\label{section:class_of_models}
In the following four subsections, we describe the conditions
which are imposed on the model of interest: these are specific
to this article, and this is where we broaden the class of models
for which strict convexity of the surface tension can be derived.
Subsection~\ref{subsection:formal_setting:lipschitz} describes the Lipschitz setting in more detail.
We take great care in formulating the Lipschitz condition:
this is not necessary for the arguments to work,
but it rather minimizes the restrictions imposed on the class of models.
Let us now consider the potential which generates the model.
The potential of the model of interest must decompose as the sum of
two potentials,
where the first component is a strong, local potential which---at the very least---enforces
the Lipschitz condition (Subsection~\ref{subsection:formal_setting:strong}),
and where the second component is a weak interaction of infinite range
(Subsection~\ref{subsection:formal_setting:weak}).
The word \emph{weak} here is only relative to the word \emph{strong}
that was used to describe the first potential:
in particular, we do not mean to imply that
the second component demonstrates any sort of decay over long distances.
It is the second potential that allows us to assign energy to
large geometric objects, such as level sets.
Subsection~\ref{subsection:formal_setting:overview} finally gives
an overview of the objects describing the model of interest,
and which are considered fixed throughout most of the analysis.

\subsection{Local Lipschitz constraints}
\label{subsection:formal_setting:lipschitz}

We require that a height function has finite energy
if and only if it is Lipschitz with respect to the correct quasimetric.
We shall allow quasimetrics (subject to certain necessary constraints) in order to be as
general as possible.
The Lipschitz constraint must be enforced locally by the potential,
due to the nature of the arguments that we use to derive the main result.
This means that for each vertex $x\in\mathbb Z^d$ we are allowed
to enforce a Lipschitz constraint between $x$ and only finitely many
other vertices $y\in\mathbb Z^d$.
In other words,
what we have in mind is a set $A\subset\mathbb Z^d\times\mathbb Z^d\times\mathbb R$,
such that a height function $\phi$ is Lipschitz
if and only if $\phi(y)-\phi(x)\leq a$ for any $(x,y,a)\in A$,
and such that $A$ becomes a finite set
once we identify
each triple of the form $(x,y,a)$
with all triples of the form $(\theta x,\theta y,a)$
as $\theta$ ranges over $\Theta$.
The local Lipschitz constraint also enforces that the functions are globally Lipschitz
with respect to the correct quasimetric.
This is formalized as follows.

\begin{definition}[local Lipschitz constraint]
  \label{def_llc}
 Call an edge set $\mathbb A$ on $\mathbb Z^d$ an \emph{admissible graph} if
 $\mathbb A$ is $\mathcal L$-invariant and makes $(\mathbb Z^d,\mathbb A)$
 a connected graph of bounded degree.
 Call a function $q:\mathbb Z^d\times\mathbb Z^d\to \mathbb R$
 an \emph{admissible quasimetric} if
 \begin{enumerate}
  \item $q(x,x)=0$ for any $x\in\mathbb Z^d$,
  \item $q(x,y)+q(y,x)>0$ for any $x,y\in\mathbb Z^d$ distinct,
  \item $q(x,z)\leq q(x,y)+q(y,z)$ for any $x,y,z\in\mathbb Z^d$,
  \item $q(\theta x,\theta y)=q(x,y)$ for any $x,y\in\mathbb Z^d$ and $\theta\in\Theta$.

 \end{enumerate}
 Such a function is called
 \emph{integral} if it takes integral values.
 A \emph{local Lipschitz constraint} is a pair $(\mathbb A,q)$
 where
 \begin{enumerate}
  \item $\mathbb A$ is an admissible graph,
  \item $q$ is an admissible quasimetric,
  \item $q$ is maximal among all admissible quasimetrics that equal $q$ on $\mathbb A$,
  in the sense that $p\leq q$
  for any admissible quasimetric $p$   with $p(x,y)\leq q(x,y)$
  for all $\{x,y\}\in\mathbb A$.
 \end{enumerate}
 If $(\mathbb A,q)$ is a local Lipschitz constraint
 and $\varepsilon\geq 0$ a sufficiently small constant,
then write $q_\varepsilon$ for the largest admissible quasimetric
subject to $q_\varepsilon(x,y)\leq q(x,y)-\varepsilon$ for all $\{x,y\}\in\mathbb A$.
(It is demonstrated in Proposition~\ref{propo_qqq} that this is indeed well-defined for $\varepsilon>0$ sufficiently small.)
 Note that the resulting pair $(\mathbb A,q_\varepsilon)$ is also a local Lipschitz constraint.
\end{definition}

\begin{remarks*}
  \begin{enumerate}
    \item   The last condition in the definition of a local Lipschitz constraint
      guarantees that $q$ is fully determined by its
      values on the edges in $\mathbb A$.
\item   We shall sometimes omit the reference to $\mathbb A$
  and simply call $q$ the \emph{local Lipschitz constraint}.
  If $(\mathbb A,q)$ is a local Lipschitz constraint
  and $\mathbb B$ another admissible graph on $\mathbb Z^d$,
  then the pair $(\mathbb A\cup\mathbb B,q)$ is also a local Lipschitz constraint
  producing the same quasimetric $q$.
  We shall always assume, without loss of generality, that
  $\mathbb A$ contains the edges of the square lattice.
\item   If $q$ is a local Lipschitz constraint,
  then there is a constant $K<\infty$
  such that $Kd_1\geq q$.
\item   We do not impose that $q$ takes values in $[0,\infty)$. This restriction is not necessary to make the
  arguments work.
  \end{enumerate}
\end{remarks*}

From now on,
we shall always have in mind a
fixed local Lipschitz constraint $(\mathbb A,q)$.

\begin{definition}[$q$-Lipschitz]
 A function $\phi:\mathbb Z^d\to\mathbb R$ is called \emph{$q$-Lipschitz} if, for every $x,y\in\mathbb Z^d$,
 \[
  \phi(y)-\phi(x)\leq q(x,y).
 \]
 The function $\phi$ is called \emph{$q$-Lipschitz at $z\in\mathbb Z^d$} if
 this inequality is satisfied for any edge $\{x,y\}\in\mathbb A$
 containing $z$.
 Naturally extend these definitions to cover the cases that $\phi:\Lambda\to\mathbb R$
 for some $\Lambda\subset\mathbb Z^d$.
 Write $\Omega_q$ for the collection of $q$-Lipschitz height functions.
 A measure
 is called \emph{$q$-Lipschitz} if it is supported on $\Omega_q$.
 A specification is called \emph{$q$-Lipschitz} if it maps $q$-Lipschitz measures
 to $q$-Lipschitz measures.
 Finally, a function is called \emph{strictly $q$-Lipschitz}
 if it is $q_\varepsilon$-Lipschitz for $\varepsilon>0$ sufficiently small.
\end{definition}

We now construct a number of objects which derive from $q$.
These are necessary to state the main results,
which address the macroscopic behavior of Lipschitz surfaces.

\begin{definition}[$U_q$, $\|\cdot\|_q$]
 By a \emph{slope} we simply mean an element $u$
 in the dual space $(\mathbb R^d)^*$ of $\mathbb R^d$.
 Write $U_q$ for the interior of the set of slopes $u$
 such that $u|_\mathcal L$ is $q$-Lipschitz.
 The set $U_q$ is nonempty and convex---this
 follows from the definition of a local Lipschitz constraint;
 see Lemma~\ref{lemma_lipschitz_q_and_norm}.
 Introduce furthermore the function $\|\cdot\|_q:\mathbb R^d\to \mathbb R$ defined by
 \[
  \|x\|_q:=\sup\{u(x):u\in U_q\}.
 \]
\end{definition}

The function $\|\cdot\|_q$ is positive homogeneous: we have $\|ax\|_q=a\|x\|_q$ for $a\in[0,\infty)$
and $x\in\mathbb R^d$.
It also satisfies the triangle inequality,
in the sense that
$\|x+y\|_q\leq \|x\|_q+\|y\|_q$
for any $x,y\in\mathbb R^d$.

\begin{definition}[$\|\cdot\|$-Lipschitz]
 If $\|\cdot\|:\mathbb R^d\to\mathbb R$ is any positive homogeneous function satisfying the triangle inequality,
 then any other function
 $f:D\to\mathbb R$
 defined on a subset $D$ of $\mathbb R^d$
 is called \emph{$\|\cdot\|$-Lipschitz}
 if $f(y)-f(x)\leq \|y-x\|$
 for any $x,y\in D$.
 The function $f$
 is called \emph{strictly
 $\|\cdot\|_q$-Lipschitz} if it is $\|\cdot\|_{q_\varepsilon}$-Lipschitz
 for some $\varepsilon>0$.
 If $D$ is open, then
  $f$ is called \emph{locally strictly $\|\cdot\|_q$-Lipschitz}
 if $f|_K$ is strictly $\|\cdot\|_q$-Lipschitz
 for all compact sets $K\subset D$.
\end{definition}

For example,
$U_q$ is the interior of the set of slopes $u\in(\mathbb R^d)^*$
which are $\|\cdot\|_q$-Lipschitz.

\subsection{Strong interactions}
\label{subsection:formal_setting:strong}

Let $\Psi$ denote an arbitrary periodic gradient potential.
The potential $\Psi$ is called \emph{positive} if $\Psi_\Lambda\geq 0$
for any $\Lambda\subset\subset\mathbb Z^d$.
The potential $\Psi$ is said to have \emph{finite range} if
$\Psi_\Lambda\equiv 0$ whenever the diameter of $\Lambda$---in the graph
metric $d_1$ on the square lattice---exceeds some fixed constant $R\in\mathbb N$;
in that case the smallest such $R$ is called the \emph{range}
of $\Psi$.
The potential $\Psi$ is called
\emph{Lipschitz}
if there exists a local Lipschitz constraint $(\mathbb A,q)$
such that
$\Psi_\Lambda(\phi)=\infty$
if and only if $\Lambda=\{x,y\}\in\mathbb A$
and $\phi(y)-\phi(x)>q(x,y)$
for some $x,y\in\mathbb Z^d$.
If $E=\mathbb R$
and $\Psi$ Lipschitz with constraint $(\mathbb A,q)$,
then $\Psi$ is called
\emph{locally bounded}
if for any $\varepsilon>0$ sufficiently small,
there exists a fixed constant $C_\varepsilon<\infty$,
such that \[H_{\{x\}}^\Psi(\phi)\leq C_\varepsilon\]
for any $x\in\mathbb Z^d$ and for any $\phi\in\Omega$
which is $q_\varepsilon$-Lipschitz at $x$.

\begin{definition}[strong interaction, $\mathcal S_\mathcal L$]
 A potential $\Psi$ is called a \emph{strong interaction}
 if $\Psi$ has all of the above properties,
 that is,
 if $\Psi$ is a positive Lipschitz periodic gradient potential of finite range,
 and if it is locally bounded in the case that $E=\mathbb R$.
 We shall write $\mathcal S_\mathcal L$ for the collection of strong interactions.
\end{definition}

The class $\mathcal S_\mathcal L$ includes all so-called \emph{Lipschitz simply attractive
potentials}. These are convex Lipschitz nearest-neighbor interactions,
see~\cite{S05}.

\subsection{Weak interactions}
\label{subsection:formal_setting:weak}

Let $\Xi$ denote an arbitrary periodic gradient potential.

\begin{definition}[summability]
The potential $\Xi$ is called \emph{summable}
if it has finite norm
\[
 \|\Xi\|:=\sup_{(x,\phi)\in\mathbb Z^d\times\Omega}
 \sum_{\text{$\Lambda\subset\subset\mathbb Z^d$ with $x\in\Lambda$}}
 |\Xi_\Lambda(\phi)|.
\]
\end{definition}

This requirement is significantly weaker than the
\emph{absolutely summable} setting of Georgii~\cite{G11}.

\begin{definition}[amenability]
By an \emph{amenable function} we mean a function $f$
which assigns a number in $[0,\infty)$ to each finite subset
of $\mathbb Z^d$, such that:
\begin{enumerate}
  \item $f(\Lambda)=f(\theta\Lambda)$ for all $\Lambda\subset\subset\mathbb Z^d$ and for any $\theta\in\Theta$,
  \item $f(\Lambda\cup\Delta)\leq f(\Lambda)+f(\Delta)$
  for all $\Lambda,\Delta\subset\subset\mathbb Z^d$ disjoint,
  \item $f(\Lambda_n)=o(|\Lambda_n|)$ as $n\to\infty$ for any $( \Lambda_n)_{n\in\mathbb N}\uparrow\mathbb Z^d$.
\end{enumerate}
\end{definition}

\begin{definition}[lower exterior bound]
Let us now turn back to
the potential $\Xi$ and define, for any  $\Lambda\subset\subset\mathbb Z^d$,
\[
 e^-(\Lambda)
 :=
 \sup_{\phi\in\Omega}\sum_{\text{$\Delta\subset\subset\mathbb Z^d$
 with $\Delta$ intersecting both $\Lambda$ and $\mathbb Z^d\smallsetminus\Lambda$}}
 |\Xi_\Delta(\phi)|.
\]
The function $e^-(\cdot)$ is called the \emph{lower exterior bound} of $\Xi$.
\end{definition}

The key property of the function $e^-(\cdot)$
is that $|H_\Lambda^\Xi-H_\Lambda^{0,\Xi}|\leq e^-(\Lambda)$.
The lower exterior bound
satisfies
Properties~1 and~2 from the definition of an amenable function;
this is immediate from the definition.

\begin{definition}[weak interaction, $\mathcal W_\mathcal L$]
A \emph{weak interaction} is a summable periodic gradient potential
for which the lower exterior bound is amenable.
Write $\mathcal W_\mathcal L$ for the collection of weak interactions.
\end{definition}

It is straightforwardly verified that amenability of $e^-(\cdot)$ is
equivalent to asking that
$
 e^-(\Pi_n)=o(n^d)
$ as $n\to\infty$.
Remark that $(\mathcal W_\mathcal L, \|\cdot\|)$ is a Banach space.

\subsection{Overview}
\label{subsection:formal_setting:overview}

Let us fix a number of notations, in order to avoid an excessive number of
declarations.
We notify the reader of any deviation from this notation.
We had already agreed that
the choices for $d\geq 2$ and $E\in\{\mathbb Z,\mathbb R\}$ are fixed,
and that
$\mathcal L$ denotes a fixed
full-rank sublattice of $\mathbb Z^d$ with
corresponding translation group $\Theta$.
The letter $\Phi$ denotes a fixed potential
in $\mathcal S_\mathcal L+\mathcal W_\mathcal L$,
and
we fix some pair $(\Psi,\Xi)\in \mathcal S_\mathcal L\times\mathcal W_\mathcal L$
such that $\Phi=\Psi+\Xi$.
This decomposition is not unique,
but this is never a problem.
The specification generated by $\Phi$ is denoted
$\gamma=\gamma^\Phi$.
The pair $(\mathbb A,q)$ always denotes the local Lipschitz constraint
corresponding to $\Psi$,
and the range of $\Psi$ is denoted by $R$.
If
$E=\mathbb Z$,
then $q$ is always assumed to be integral. The function $e^-(\cdot)$ denotes the lower exterior bound of $\Xi$.
Finally, let $K\in(0,\infty)$
denote the smallest constant
such that $Kd_1\geq q$,
and let $N\in\mathbb N$
denote the smallest positive integer
such that $N\cdot\mathbb Z^d\subset\mathcal L$.

\begin{definition}
  The potential $\Phi\in\mathcal S_\mathcal L+\mathcal W_\mathcal L$
  is called \emph{monotone}
  if the induced specification $\gamma=\gamma^\Phi$
  is monotone over $\Omega_q$.
\end{definition}

\section{Main results}
\label{sec:main}

The motivation for writing this article
was to demonstrate that the surface tension is strictly convex on $U_\Phi$ if the potential
of interest is in the class $\mathcal S_\mathcal L+\mathcal W_\mathcal L$
and monotone.
If $E=\mathbb Z$, then we require an extra condition to be met,
but we also demonstrate that this condition is satisfied
for many natural models.
This section contains an overview of the main results,
including several results and applications which are of independent interest.
The results are presented roughly in the order in which they appear
in the article.


\subsection{The specific free energy and its minimizers}
\label{subsec_main_sfe_and_minimizers}

The specific free energy functional plays a fundamental role in the analysis.
The following result is therefore of independent interest in the study of Lipschitz random surfaces;
it is a direct extension of a result of Sheffield~\cite{S05} to the setting of this article.

\begin{theorem}[specific free energy]
  \label{thm_main_sfe}
  If $\Phi\in\mathcal S_\mathcal L+\mathcal W_\mathcal L$,
  then the specific free energy
   functional
   \[
    \mathcal H(\cdot|\Phi):\mathcal P_\mathcal L(\Omega,\mathcal F^\nabla)\to\mathbb R\cup\{\infty\},\,
    \mu\mapsto \lim_{n\to\infty}n^{-d}\mathcal H_{\Pi_n}(\mu|\Phi)
    \]
  is well-defined, affine, bounded below, lower-semicontinuous,
  and for each $C\in\mathbb R$ its lower level set
  \[
   M_C:=\{\mu\in\mathcal P_\mathcal L(\Omega,\mathcal F^\nabla):\mathcal H(\mu|\Phi)\leq C\}
  \]
  is a compact Polish space, with respect to the topology of (weak) local convergence. In fact, the two topologies coincide on each set $M_C$.
\end{theorem}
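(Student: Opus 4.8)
The plan is to establish the five assertions — existence of the limit, affineness, boundedness below, lower-semicontinuity, and compactness/Polishness of the lower level sets $M_C$ — in an order that lets each feed into the next. The natural starting point is a subadditivity argument for the limit. Writing $\Phi=\Psi+\Xi$ with $\Psi\in\mathcal S_\mathcal L$ and $\Xi\in\mathcal W_\mathcal L$, I would first control the weak part: since $|H^\Xi_\Lambda-H^{0,\Xi}_\Lambda|\le e^-(\Lambda)$ and $e^-(\Pi_n)=o(n^d)$, the difference between using the full Hamiltonian and the interior Hamiltonian is negligible at the scale $n^d$, so it suffices to analyze $n^{-d}\mathcal H_{\Pi_n}(\mu|\Phi)$ up to $o(1)$. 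Next, decompose the free energy over $\Pi_{n}$ into contributions over a tiling by translated copies of a smaller box $\Pi_m$ (using $\mathcal L$-invariance of $\mu$ and of $\Phi$): the entropy term $\mathcal H_{\mathcal F^\nabla_{\Pi_n}}(\mu|\lambda^{\Pi_n-1})$ is superadditive under conditioning on the boundary data between tiles, the energy term $\mu(H^{0,\Phi}_{\Pi_n})$ is (up to boundary corrections absorbed in the amenable/$o(n^d)$ error and the finite range $R$ of $\Psi$) additive, and summability of $\Xi$ keeps the energy finite. This yields an (almost) subadditive sequence $a_n:=\mathcal H_{\Pi_n}(\mu|\Phi)$ with $o(n^d)$ defects; a Fekete-type lemma for such nearly-subadditive arrays along the net of boxes (as in Georgii~\cite{G11} or Sheffield~\cite{S05}) gives convergence of $n^{-d}a_n$ to an infimum-like limit, possibly $+\infty$.

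Boundedness below is then a consequence of the same decomposition together with the basic inequality $\mathcal H(\mu|\nu)\ge-\log\nu(X)$ for the relative entropy: over a single box $\Pi_m$ the $q$-Lipschitz constraint (enforced by $\Psi$ taking value $\infty$ off $\Omega_q$) means the reference measure $e^{-H^{0,\Phi}_{\Pi_m}}\lambda^{\Pi_m-1}$ has finite total mass — indeed the Lipschitz functions on $\Pi_m$ with one value pinned form a bounded set, which has finite $\lambda^{\Pi_m-1}$-measure whether $E=\mathbb Z$ or $E=\mathbb R$ (here the local boundedness of $\Psi$ when $E=\mathbb R$, giving $H^\Psi_{\{x\}}\le C_\varepsilon$, is what keeps $e^{-H^{0,\Phi}_{\Pi_m}}$ from degenerating) — so $\mathcal H_{\Pi_m}(\mu|\Phi)\ge -\log(\text{finite})$, and dividing by $m^d$ and summing over tiles gives a uniform lower bound on $n^{-d}\mathcal H_{\Pi_n}(\mu|\Phi)$, hence on the limit. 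Affineness follows because $\mathcal H_{\mathcal F^\nabla_\Lambda}(\cdot|\nu)$ is affine in its first argument modulo the entropy of the mixing parameter (a standard fact: $\mathcal H(t\mu_1+(1-t)\mu_2|\nu)=t\mathcal H(\mu_1|\nu)+(1-t)\mathcal H(\mu_2|\nu)+\ent(t,1-t)-[\text{correction}]$), and the $\ent(t,1-t)$ correction is a constant $O(1)$ that disappears after dividing by $n^d$ and taking $n\to\infty$; the energy term $\mu(H^{0,\Phi}_{\Pi_n})$ is literally linear in $\mu$. So the limit is affine wherever it is finite, and the convention $\infty-\infty=\infty$ handles the remaining cases.

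For lower-semicontinuity and the structure of $M_C$, I would proceed as follows. First handle a fixed box: the finite-volume free energy $\mu\mapsto\mathcal H_{\Pi_m}(\mu|\Phi)$ is lower-semicontinuous in the topology of weak local convergence, because relative entropy is l.s.c.\ (it is a supremum of the continuous affine functionals $\mu\mapsto\mu(f)-\log\nu(e^f)$ over bounded continuous cylinder $f$) and $\mu(H^{0,\Phi}_{\Pi_m})$ is a supremum of $\mu(f)$ over bounded continuous cylinder $f\le H^{0,\Phi}_{\Pi_m}$, using summability of $\Xi$ to approximate. Since $\mathcal H(\cdot|\Phi)$ is the limit — in fact, up to the $o(n^d)$ correction, essentially the infimum over $n$ — of $n^{-d}\mathcal H_{\Pi_n}(\cdot|\Phi)$, it is l.s.c.\ as well (a limit/infimum of l.s.c.\ functions, after absorbing the vanishing defects). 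Hence each $M_C$ is closed. For compactness I would use the Lipschitz support crucially: since $\mathcal H(\mu|\Phi)<\infty$ forces $\mu$ to be supported on $\Omega_q$ (any $\mu$ charging non-Lipschitz configurations has $\mu(H^{0,\Psi}_{\Pi_m})=\infty$ for some $m$, hence infinite specific free energy — this needs the local and periodic nature of the Lipschitz constraint), and the set of $q$-Lipschitz gradient fields, seen through the differences $\phi(y)-\phi(x)$ for edges of $\mathbb A$, is a closed subset of a compact product space ($[-K,K]^{\mathbb A}$-type, compact when $E=\mathbb R$, and even when $E=\mathbb Z$ it is a closed subset of a compact product of finite sets after fixing gradients), the space of $q$-Lipschitz gradient measures is compact in the topology of weak local convergence, and $M_C$ is a closed subset of it, hence compact. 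Polishness and metrizability follow because on $q$-Lipschitz measures the relevant space of test functions is separable (the underlying configuration space restricted to $\Omega_q$ is compact metrizable), so the topology of weak local convergence is metrizable there; and the coincidence of the two topologies on $M_C$ is because on $\Omega_q$ every bounded cylinder function can be approximated, in the relevant sense, by continuous ones — when $E=\mathbb Z$ they are literally equal, and when $E=\mathbb R$ the Lipschitz constraint compactifies the local state space so that the local and weak-local topologies agree on tight families, and $M_C$ is tight (in fact supported on the fixed compact $\Omega_q$).

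I expect the main obstacle to be the subadditivity/convergence step for the limit defining $\mathcal H(\cdot|\Phi)$: one must simultaneously keep track of the entropy (superadditive), the finite-range energy of $\Psi$ (additive up to a boundary layer of width $R$, contributing $O(n^{d-1})=o(n^d)$), and the infinite-range energy of $\Xi$ (additive only up to the amenable error $e^-(\cdot)$, which is exactly why amenability, rather than mere summability, is assumed), and show that all defects are genuinely $o(n^d)$ uniformly enough to apply the Fekete-type lemma along the Van Hove net — this bookkeeping, together with verifying that the reference measure over a box has finite mass so that the whole quantity is bounded below before taking limits, is the technical heart of the proof; the l.s.c.\ and compactness parts are then comparatively routine given the $q$-Lipschitz support.
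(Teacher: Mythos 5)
Your outline matches the paper's architecture in broad strokes (attachment/near-additivity for $\mathcal H_\Lambda$ over tilings, relative-entropy lower bound for boundedness below, $q$-Lipschitz support for compactness and for the coincidence of the two topologies, $\ent(t,1-t)$ correction vanishing under normalization for affineness), and the compactness route you describe --- compactness of the set of $q$-Lipschitz gradient measures plus closedness of $M_C$ --- is a slight, workable variant of the paper's appeal to compactness of lower level sets of finite-volume relative entropy. However, there is a genuine gap in the near-additivity step and it propagates to your lower-semicontinuity argument.

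You correctly observe that the entropy term is superadditive over a tiling and that the energy term is additive up to $o(n^d)$ defects controlled by $e^-$ and the range of $\Psi$, but you then conclude that $a_n:=\mathcal H_{\Pi_n}(\mu|\Phi)$ is \emph{almost subadditive} and that the limit is ``infimum-like.'' That direction is wrong: superadditive entropy plus additive energy makes $\mathcal H_\Lambda(\cdot|\Phi)$ almost \emph{super}additive, which is exactly what the free energy attachment lemma (Lemma~\ref{lemma_fe_superadditivity}) asserts, namely $\mathcal H_\Lambda \geq \mathcal H_{\Lambda_1}+\mathcal H_{\Lambda_2}-\min_i e^*(\Lambda_i)$. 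Applying Proposition~\ref{propo_large_set_limit} to $-\mathcal H_\Lambda$ yields the \emph{supremum} characterization
\[
\mathcal H(\mu|\Phi)=\sup_{n\in N\cdot\mathbb N} n^{-d}\bigl(\mathcal H_{\Pi_n}(\mu|\Phi)-e^*(\Pi_n)\bigr),
\]
not an infimum. This is not merely cosmetic: your l.s.c.\ argument reads ``$\mathcal H(\cdot|\Phi)$ is a limit/infimum of l.s.c.\ functions, hence l.s.c.,'' and neither a pointwise limit nor an infimum of l.s.c.\ functionals is l.s.c.\ in general. The lower-semicontinuity --- and with it the closedness of $M_C$, on which your compactness argument depends --- requires precisely the supremum form; a supremum of l.s.c.\ functionals is l.s.c., which is the mechanism the paper actually uses (writing $M_C$ as an intersection of the closed sets $\{\mu:\mathcal H_{\Pi_n}(\mu|\Phi)\leq n^dC+e^*(\Pi_n)\}$). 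Once you flip the direction of the near-additivity and take the sup, the rest of your proposal (boundedness below via the free energy attachment lemma and finiteness of the reference mass, affineness, compactness and Polishness on $M_C$, and coincidence of the topologies on $q$-Lipschitz measures) is sound and essentially the same as the paper's.
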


A measure $\mu\in\mathcal P_\mathcal L(\Omega,\mathcal F^\nabla)$
is called a \emph{minimizer of the specific free energy},
or simply a \emph{minimizer},
if it satisfies the equation
\[
  \mathcal H(\mu|\Phi)=\sigma(S(\mu))<\infty.
\]
For the purpose of deriving the main result,
all that we require is that such minimizers have finite energy,
in a sense which is similar to the notion of finite energy
in the original paper of Burton and Keane~\cite{BURTON}.
There is a canonical way to translate
the concept of finite energy to
the gradient Lipschitz setting:
 we shall see that
the following result fits our arguments.
Recall that $\Omega_q$ denotes the set of $q$-Lipschitz height functions,
and that $\pi_\Lambda$ is the kernel which restrict measures to $\Lambda$,
for any $\Lambda\subset\mathbb Z^d$.

\begin{theorem}[finite energy]
  \label{thm_main_minimizers_finite_energy}
  Consider $\Phi\in\mathcal S_\mathcal L+\mathcal W_\mathcal L$,
  and suppose that $\mu\in\mathcal P_\mathcal L(\Omega,\mathcal F^\nabla)$
  is a minimizer.
  Then for any $\Lambda\subset\subset\mathbb Z^d$,
  we have
  \[
    1_{\Omega_q}(\mu\pi_{\mathbb Z^d\smallsetminus\Lambda}\times\lambda^\Lambda)\ll\mu.
  \]
\end{theorem}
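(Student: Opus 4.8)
The plan is to argue by contradiction, in the spirit of the finite‑energy arguments for entropy minimizers in \cite{G11,S05,lammers2019variational}, but carried out in the gradient Lipschitz setting and with the infinite‑range part $\Xi$ neutralised by truncation. Note first that $\mu$ is supported on $\Omega_q$, since finiteness of $\mathcal H(\mu|\Phi)$ forces $\mu$‑a.e.\ $q$‑Lipschitzness through $\Psi$. Write $\nu_\Lambda:=1_{\Omega_q}(\mu\pi_{\mathbb Z^d\smallsetminus\Lambda}\times\lambda^\Lambda)$. Because $\theta\nu_\Lambda=\nu_{\theta\Lambda}$ and $\theta\mu=\mu$ for all $\theta\in\Theta$ and $\Lambda$ is finite, it is enough to treat the case $0\notin\Lambda$. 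Disintegrating $\mu$ and $\nu_\Lambda$ over $\mathcal T^\nabla_\Lambda$ (their marginals there being mutually absolutely continuous with density in $(0,\infty)$, namely $\omega\mapsto\lambda^{\Lambda-1}$ of the admissible Lipschitz extensions), one checks that $\nu_\Lambda\ll\mu$ is equivalent to the vanishing of
\[
 \bar\delta:=\int\mathcal H_{\mathcal F^\nabla_\Lambda}\!\big(\mu(\,\cdot\,|\,\mathcal T^\nabla_\Lambda)(\omega)\;\big|\;\gamma^\Phi_\Lambda(\,\cdot\,,\omega)\big)\,d\mu(\omega),
\]
using that on $\mathcal F^\nabla_\Lambda$ the kernel $\gamma^\Phi_\Lambda(\,\cdot\,,\omega)$ is a normalisation of $e^{-H_\Lambda^{\Xi}}1_{\Omega_q}\lambda^{\Lambda-1}$ with $|H_\Lambda^{\Xi}|\le|\Lambda|\,\|\Xi\|$ on $\Omega_q$. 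So assume $\bar\delta>0$; I will contradict minimality.

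Next I would build a slope‑preserving perturbation of $\mu$ that strictly lowers the specific free energy. For $R'\in\mathbb N$ let $\Xi^{(R')}$ be the truncation of $\Xi$ to interactions of diameter $\le R'$ and $\Phi':=\Psi+\Xi^{(R')}$, a finite‑range potential; then $\|\Xi-\Xi^{(R')}\|\to0$ by summability, and $\gamma^{\Phi'}_\Lambda(\,\cdot\,,\omega)\to\gamma^\Phi_\Lambda(\,\cdot\,,\omega)$ in total variation, uniformly in $\omega$. Pick a full‑rank sublattice $\mathcal L_0\subseteq\mathcal L$ so coarse that the translates $\{\Lambda+\theta:\theta\in\mathcal L_0\}$ are pairwise separated by more than the range of $\Phi'$ and $0\notin\Lambda+\mathcal L_0$; then the kernels $\gamma^{\Phi'}_{\Lambda+\theta}$ mutually commute and
\[
 \mu_{R',\mathcal L_0}:=\mu\,\textstyle\prod_{\theta\in\mathcal L_0}\gamma^{\Phi'}_{\Lambda+\theta}
\]
is a well‑defined, $\mathcal L_0$‑invariant, $q$‑Lipschitz measure; moreover $S(\mu_{R',\mathcal L_0})=S(\mu)$ because $\Lambda+\mathcal L_0$ is disjoint from $\mathcal L_0$, so the gradients $\phi(x)-\phi(0)$ with $x\in\mathcal L_0$ are untouched. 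The heart of the matter is the estimate
\[
 \mathcal H(\mu|\Phi')-\mathcal H(\mu_{R',\mathcal L_0}|\Phi')\ \ge\ c\,\bar\delta_{R',\Delta},
\]
where $c=c(\mathcal L_0)\in(0,1]$ is the density of $\mathcal L_0$ in $\mathbb Z^d$ and $\bar\delta_{R',\Delta}:=\int\mathcal H_{\mathcal F^\nabla_\Lambda}(\mu(\,\cdot\,|\,\mathcal F^\nabla_{\Delta\smallsetminus\Lambda})(\omega)\,|\,\gamma^{\Phi'}_\Lambda(\,\cdot\,,\omega))\,d\mu(\omega)$ for a finite $\Delta\supseteq\Lambda$ which can be made as large as we like by taking $\mathcal L_0$ coarse. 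This is the gradient analogue of the additivity of entropy loss under disjoint, non‑interacting resamplings; its proof uses the chain rule for relative entropy with respect to the $\sigma$‑algebra of heights outside $\Lambda+\mathcal L_0$, the factorisation of both the reference conditional and the resampled conditional over the well‑separated blocks, superadditivity of relative entropy against a product reference, and the fact that for blocks deep inside $\Pi_n$ the $\mu_{R',\mathcal L_0}$‑conditional equals the local Gibbs kernel exactly.

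Finally I would invoke minimality. Since $\mu$ and $\mu_{R',\mathcal L_0}$ differ only on the density‑$c$ set $\Lambda+\mathcal L_0$, the specific energies of $\Xi-\Xi^{(R')}$ that they carry differ by at most $2c\,\|\Xi-\Xi^{(R')}\|$, whence
\[
 \mathcal H(\mu|\Phi)-\mathcal H(\mu_{R',\mathcal L_0}|\Phi)\ \ge\ c\big(\bar\delta_{R',\Delta}-2\|\Xi-\Xi^{(R')}\|\big).
\]
Because $\sigma$ is unchanged when one enlarges the lattice of symmetries (averaging an $\mathcal L_0$‑invariant measure over $\mathcal L/\mathcal L_0$ preserves the slope and does not increase the specific free energy), $\mathcal H(\mu_{R',\mathcal L_0}|\Phi)\ge\sigma(S(\mu_{R',\mathcal L_0}))=\sigma(S(\mu))=\mathcal H(\mu|\Phi)$, so the left‑hand side above is $\le0$. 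As $c>0$ this forces $\bar\delta_{R',\Delta}\le 2\|\Xi-\Xi^{(R')}\|$; letting $\mathcal L_0$ coarsen (so $\Delta\uparrow\mathbb Z^d$ and $\mathcal F^\nabla_{\Delta\smallsetminus\Lambda}\uparrow\mathcal T^\nabla_\Lambda$) gives $\int\mathcal H_{\mathcal F^\nabla_\Lambda}(\mu(\,\cdot\,|\,\mathcal T^\nabla_\Lambda)\,|\,\gamma^{\Phi'}_\Lambda)\,d\mu\le 2\|\Xi-\Xi^{(R')}\|$, and then letting $R'\to\infty$, using lower semicontinuity of relative entropy under the total‑variation convergence $\gamma^{\Phi'}_\Lambda\to\gamma^\Phi_\Lambda$ together with Fatou, yields $\bar\delta=0$ — the desired contradiction. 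The main obstacle is the displayed entropy estimate: making the block decomposition of the specific free energy rigorous inside the gradient $\sigma$‑algebra $\mathcal F^\nabla$ (choice of base points, controlling the discrepancy between the box $\Pi_n$ and all of $\mathbb Z^d$, and keeping all energies and relative entropies finite — the last point requiring the local boundedness of $\Psi$ when $E=\mathbb R$), uniformly in $E\in\{\mathbb Z,\mathbb R\}$.
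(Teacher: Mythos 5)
The proposal has the right general shape (resample with commuting local kernels over a coarse sublattice, average to restore $\mathcal L$‑invariance, and contradict minimality), which is the skeleton of the paper's proof too. But there is a decisive gap at the very first technical step: you write ``$\|\Xi-\Xi^{(R')}\|\to0$ by summability.'' This is false for the class $\mathcal W_\mathcal L$. Summability in this paper means only $\|\Xi\|=\sup_{(x,\phi)}\sum_{\Lambda\ni x}|\Xi_\Lambda(\phi)|<\infty$; it does \emph{not} assert uniform convergence of the tails, and the paper stresses that this is strictly weaker than Georgii's absolutely summable setting. For the motivating example of the paper (the level‑set interaction $\Xi$ used for tree‑valued homomorphisms), one has $\|\Xi-\Xi^{(R')}\|\geq\log(k-1)$ for every $R'$: there is always a configuration with an arbitrarily large level set through a given vertex, so the $>R'$ tail never becomes uniformly small. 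So the error term you feed into minimality does not vanish, and the chain of inequalities that was supposed to give $\bar\delta\le 2|\Lambda|\,\|\Xi-\Xi^{(R')}\|\to0$ never closes.

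This is not a removable inconvenience: if your argument went through, it would prove $\bar\delta=0$, i.e.\ the DLR identity $\mu=\mu\gamma^\Phi_\Lambda$, which the paper explicitly warns is \emph{not} true in general when the specification fails to be quasilocal — and $\|\Xi-\Xi^{(R')}\|\to0$ is essentially a quasilocality requirement. The paper side‑steps exactly this obstruction by (i) replacing the kernel $\gamma^{\Phi}_\Lambda$ (or your $\gamma^{\Phi'}_\Lambda$) with a ``partial'' kernel built from $H_{\Lambda^{-R},\Lambda}$, which depends only on the configuration inside $\Lambda$ and so introduces no uncontrolled long‑range truncation error, and (ii) proving the strictly weaker statement $K_\mu\equiv0$, where $K_\mu(\Lambda,\Delta)$ takes an \emph{infimum} over measures $\nu$ agreeing with $\mu$ on $\Delta$; this only shows $\mu^\phi\pi_\Lambda\in\mathcal A_{\Lambda,\phi}$, not exact equality with the Gibbs kernel. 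The absolute continuity then comes from the finite max‑diameter $\operatorname{Diam}^\infty\mathcal A_{\Lambda,\phi}\le 4e^-(\Lambda)$ (Proposition~\ref{propo_all_abs_cts_wrt_one_another}), for which amenability of $e^-$ suffices. (As a minor point, your claim that $\nu_\Lambda\ll\mu$ is \emph{equivalent} to $\bar\delta=0$ is also wrong — only the implication $\bar\delta=0\Rightarrow\nu_\Lambda\ll\mu$ holds — but that direction is all your argument uses, so it doesn't affect the structure.) To repair the proposal you would either have to impose absolute summability, which shrinks the class of models back towards Georgii's, or switch from truncation to the paper's partial‑Hamiltonian / infimum‑over‑extensions device.
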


In~\cite{S05}, finite energy follows from the \emph{variational
principle}, which asserts that shift-invariant measures $\mu$
which satisfy $\mathcal H(\mu|\Phi)=\sigma(S(\mu))$
must also be Gibbs measures with respect
to the specification $\gamma=\gamma^\Phi$ induced by the potential $\Phi$---which has finite range.
In the infinite-range setting one cannot
hope for such a statement, because the specification $\gamma$
is not necessarily quasilocal.
This pathology, and its relation
to the variational principle, is discussed
extensively in~\cite{lammers2019variational}.
One of the key observations in that article
is that minimizers of the specific free energy
must have finite energy, even if the concept of a Gibbs measure is not
well-defined because the specification fails to be quasilocal.
There,
finite energy is an immediate corollary of a result (Lemma~5.4) which is not quite
equivalent to the variational principle,
but it is ``as close as one expects to get'' to it in the non-quasilocal setting.
We shall follow the same strategy here:
the following theorem states the strongest result
on minimizers of the specific free energy,
implies directly that such minimizers have finite energy,
and is a direct translate of Lemma~5.4 from~\cite{lammers2019variational}
to the Lipschitz gradient setting.
Let us first introduce the necessary definitions
for the analysis of quasilocality.

\begin{definition}[quasilocality, almost Gibbs measure]
  \label{definition_quasilocality_aGm}
  Consider two finite sets $\Lambda\subset\Delta\subset\subset\mathbb Z^d$.
Denote by $\mathcal A_{\Lambda,\Delta,\phi}$
the set of probability measures on $(E^\Lambda,\mathcal E^\Lambda)$
of the form
$\mu\gamma_\Lambda\pi_\Lambda$,
where $\mu$ is any measure in $\mathcal P(\Omega,\mathcal F)$
subject only to $\mu\pi_\Delta=\delta_{\phi_\Delta}$.
In other words, $\mathcal A_{\Lambda,\Delta,\phi}$
is the set of local Gibbs measures in $\Lambda$
(and restricted to $\Lambda$) given (mixed) boundary conditions which
match $\phi$ on $\Delta$.
Write $\mathcal C(\mathcal A)$
for the closure of any $\mathcal A\subset\mathcal P(E^\Lambda,\mathcal E^\Lambda)$
in the strong topology,
and define
\[
  \mathcal A_{\Lambda,\phi}:=\cap_{\Delta\subset\subset\mathbb Z^d}
  \mathcal C(\mathcal A_{\Lambda,\Delta,\phi}).
\]
A height function $\phi\in\Omega$ is called
a \emph{point of quasilocality}
if $\mathcal A_{\Lambda,\phi}=\{\delta_\phi\gamma_\Lambda\pi_\Lambda\}=\{\gamma_\Lambda(\cdot,\phi)\pi_\Lambda\}$
for any $\Lambda\subset\subset\mathbb Z^d$.
Write $\Omega_\gamma$ for the set of points of quasilocality.
A measure $\mu\in\mathcal P(\Omega,\mathcal F)$
is called an \emph{almost Gibbs measure}
whenever $\mu(\Omega_\gamma)=1$
and
 $\mu=\mu\gamma_\Lambda$ for any $\Lambda\subset\subset\mathbb Z^d$.
 The definition of an almost Gibbs measure is the same for
 gradient measures
 $\mu\in\mathcal P(\Omega,\mathcal F^\nabla)$---noting that
 $\Omega_\gamma\in\mathcal F^\nabla$ as $\gamma$ is a
 gradient specification.
 Almost Gibbs measures are also called \emph{Gibbs measures}
 whenever $\Omega_\gamma=\Omega$.
\end{definition}

Let us now state the strongest result on minimizers,
which is of independent interest.

\begin{theorem}[minimizers of the specific free energy]
  \label{thm_mr_minimizers}
  Consider $\Phi\in\mathcal S_\mathcal L+\mathcal W_\mathcal L$,
  and suppose that $\mu\in\mathcal P_\mathcal L(\Omega,\mathcal F^\nabla)$
  is a minimizer.
  Fix $\Lambda\subset\subset\mathbb Z^d$,
  and write
  $\mu^{\phi}$
  for the regular conditional
  probability distribution of
  $\mu$ on $(\Omega,\mathcal F)$
  corresponding to the projection map $\Omega\to E^{\mathbb Z^d\smallsetminus\Lambda}$.
  Then for $\mu$-almost every $\phi\in\Omega$,
  we have
  $\mu^\phi\pi_\Lambda\in\mathcal A_{\Lambda,\phi}$.
  In particular,
  if $\mu(\Omega_\gamma)=1$,
  then $\mu$ is an almost Gibbs measure,
  and if $\Omega_\gamma=\Omega$, then $\mu$ is a Gibbs measure.
\end{theorem}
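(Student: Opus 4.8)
The plan is to follow the strategy behind Lemma~5.4 of~\cite{lammers2019variational} (itself in the spirit of Georgii's characterization of entropy minimizers~\cite{G11}), now transported to the gradient setting and to potentials with an infinite-range summand~$\Xi$. First I would reduce the theorem to the following statement: for every fixed pair $\Lambda\subset\Delta$ with $\Delta\subset\subset\mathbb Z^d$, one has $\mu^\phi\pi_\Lambda\in\mathcal C(\mathcal A_{\Lambda,\Delta,\phi})$ for $\mu$-almost every $\phi$. This reduction works because $\mathcal A_{\Lambda,\Delta,\phi}$ is non-increasing in $\Delta$, so choosing a countable cofinal sequence $\Delta^{(1)}\subset\Delta^{(2)}\subset\cdots\uparrow\mathbb Z^d$ and intersecting the corresponding $\mu$-full events yields $\mu^\phi\pi_\Lambda\in\cap_k\mathcal C(\mathcal A_{\Lambda,\Delta^{(k)},\phi})=\mathcal A_{\Lambda,\phi}$ almost surely, which is exactly the assertion. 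The two ``in particular'' statements would then follow immediately: if $\mu(\Omega_\gamma)=1$, then $\mathcal A_{\Lambda,\phi}=\{\gamma_\Lambda(\cdot,\phi)\pi_\Lambda\}$ for $\mu$-a.e.\ $\phi$, hence $\mu^\phi\pi_\Lambda=\gamma_\Lambda(\cdot,\phi)\pi_\Lambda$; since $\mu^\phi$ and $\gamma_\Lambda(\cdot,\phi)$ both sit on configurations agreeing with $\phi$ off $\Lambda$, disintegrating $\mu$ along $\Omega\to E^{\mathbb Z^d\smallsetminus\Lambda}$ turns this into the DLR identity $\mu=\mu\gamma_\Lambda$, and as $\Lambda$ was arbitrary and $\mu(\Omega_\gamma)=1$ this says precisely that $\mu$ is an almost Gibbs measure; the case $\Omega_\gamma=\Omega$ needs nothing further.

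The main work is a \emph{strict entropy improvement inequality}. Fixing $\Lambda\subset\Delta$, I would pick a full-rank sublattice $\mathcal L'\subset\mathcal L$ whose translates of $\Delta$ are pairwise disjoint, write $(\Lambda_i)_i,(\Delta_i)_i$ for the corresponding $\mathcal L'$-translates of $\Lambda$ and $\Delta$, and build an $\mathcal L'$-invariant measure $\mu'$ by resampling simultaneously, in every block $\Lambda_i$, according to a kernel whose conditional output given the configuration outside $\Lambda_i$ is $\gamma_{\Lambda_i}(\cdot,\cdot)$ — so that, after forgetting the configuration outside $\Delta_i$, the conditional law of $\mu'$ in $\Lambda_i$ lies in $\mathcal A_{\Lambda_i,\Delta_i,\cdot}$, which is what the ``arbitrary boundary condition outside $\Delta$'' clause in the definition of $\mathcal A_{\Lambda,\Delta,\phi}$ is designed to record. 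Since $\mu$ is a minimizer it has finite specific free energy, hence is supported on $\Omega_q$, and therefore so is $\mu'$ as $\gamma$ is $q$-Lipschitz; moreover $\mu'$ still has slope $S(\mu)$, because the resampled blocks are sparse and $\mu'(\phi(x)-\phi(0))$ can be read off along long-range paths avoiding $\cup_i\Lambda_i$. Running the chain rule for relative entropy on $\mathcal H_{\Pi_m}(\cdot\,|\Phi)=\mathcal H_{\mathcal F_{\Pi_m}^\nabla}(\cdot\,|\lambda^{\Pi_m-1})+(\cdot)(H^{0,\Phi}_{\Pi_m})$ — organized entirely within the gradient $\sigma$-algebras, with reference measures $\lambda^{\Lambda-1}$ and the convention $\infty-\infty=\infty$ — then dividing by $m^d$ and letting $m\to\infty$, I expect to obtain an inequality of the shape
\[
 \mathcal H(\mu\,|\Phi)-\mathcal H(\mu'\,|\Phi)\ \ge\ c\cdot\mathbb E_\mu\!\big[G_{\Lambda,\Delta}(\phi)\big],
\]
where $c>0$ is the density of the blocks $\Lambda_i$ and $G_{\Lambda,\Delta}\ge 0$ is an explicit functional of $\mu^\phi\pi_\Lambda$ — a relative entropy of $\mu^\phi\pi_\Lambda$ with respect to the resampling kernel's output — that vanishes exactly when $\mu^\phi\pi_\Lambda\in\mathcal C(\mathcal A_{\Lambda,\Delta,\phi})$. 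The error terms coming from the boundary of $\Pi_m$ and from the infinite range of $\Xi$ are $o(m^d)$: they are dominated by the lower exterior bound $e^-(\cdot)$, amenable because $\Xi\in\mathcal W_\mathcal L$, and by summability of $\Xi$, so they vanish in the limit.

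Finally, averaging $\mu'$ over the finite quotient $\mathcal L/\mathcal L'$ produces $\bar\mu'\in\mathcal P_\mathcal L(\Omega,\mathcal F^\nabla)$ with $\mathcal H(\bar\mu'|\Phi)=\mathcal H(\mu'|\Phi)$ (as $\mathcal H(\cdot\,|\Phi)$ is affine and translation-invariant) and slope $S(\mu)$. Minimality of $\mu$ then gives $\sigma(S(\mu))=\mathcal H(\mu|\Phi)\le\mathcal H(\bar\mu'|\Phi)=\mathcal H(\mu'|\Phi)$, so the left-hand side of the display is $\le 0$; since the right-hand side is $\ge 0$, we conclude $G_{\Lambda,\Delta}(\phi)=0$, i.e.\ $\mu^\phi\pi_\Lambda\in\mathcal C(\mathcal A_{\Lambda,\Delta,\phi})$, for $\mu$-a.e.\ $\phi$ — the claim for the fixed $\Delta$, and with the reduction above, the theorem. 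Throughout, Theorem~\ref{thm_main_sfe} is what guarantees that the specific free energies involved are well-defined, affine, bounded below and lower-semicontinuous, which is what makes the limit $m\to\infty$ and the $\mathcal L/\mathcal L'$-averaging legitimate.

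I expect the main obstacle to be the entropy improvement inequality in the combined \emph{infinite-range} and \emph{gradient} setting. Three points make it delicate. First, in the gradient formalism one can pull back only $\lambda^{\Lambda-1}$ to $\mathcal F_\Lambda^\nabla$, not $\lambda^\Lambda$ to $\mathcal F_\Lambda$, so the disintegration of relative entropy and the definition of $G_{\Lambda,\Delta}$ must be kept strictly within the gradient $\sigma$-algebras, which complicates the bookkeeping of conditional entropies. Second, because $\Xi$ has infinite range, the conditional law obtained by forgetting everything outside $\Delta_i$ genuinely differs from $\gamma_{\Lambda_i}(\cdot,\phi)\pi_{\Lambda_i}$; controlling this discrepancy — together with the exterior Hamiltonian gap $|H^\Xi_{\Pi_m}-H^{0,\Xi}_{\Pi_m}|\le e^-(\Pi_m)$ — forces one to track the whole tower of sets $\Delta$, which is also why $\mathcal A_{\Lambda,\phi}$ is a nontrivial intersection rather than a single measure. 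Third, one must hold the slope exactly fixed under the resampling and carefully manage the interplay between finite and infinite values of $\mathcal H_{\Pi_m}(\cdot\,|\Phi)$, so that minimality of $\mu$ can legitimately be invoked against $\bar\mu'$. Once these are handled, Theorem~\ref{thm_main_minimizers_finite_energy} follows quickly, since every element of $\mathcal A_{\Lambda,\phi}$ is — for $\Delta$ large — equivalent to $\lambda^\Lambda$ restricted to the configurations agreeing with $\phi$ off $\Lambda$ and lying in $\Omega_q$, with densities controlled by summability of $\Xi$ (and local boundedness when $E=\mathbb R$), so that integrating over $\phi$ yields $1_{\Omega_q}(\mu\pi_{\mathbb Z^d\smallsetminus\Lambda}\times\lambda^\Lambda)\ll\mu$.
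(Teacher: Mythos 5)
Your proposal is correct and follows essentially the same route as the paper's proof: both build a modified measure by resampling simultaneously on a sparse array of $\mathcal L'$-translated blocks, average back over $\mathcal L/\mathcal L'$ to restore $\mathcal L$-invariance while preserving the slope, and pit the resulting free-energy improvement against minimality of $\mu$ to force a conditional relative entropy to vanish $\mu$-a.e., then pass to $\mathcal A_{\Lambda,\phi}$ via a tower of sets $\Delta\uparrow\mathbb Z^d$. Your ``entropy improvement functional'' $G_{\Lambda,\Delta}$ plays exactly the role of the paper's quantity $K_\mu(\Lambda,\Delta):=\inf_{\nu\pi_\Delta=\mu\pi_\Delta}\mathcal H_{\mathcal F^\nabla_\Delta}(\mu\mid\nu\gamma_\Lambda)$, and the technical issue you flag about the infinite-range part forcing one to track ``arbitrary boundary conditions outside $\Delta$'' is precisely what the paper handles by using the kernel $\gamma^*_\Lambda$ built from the partial Hamiltonian $H_{\Lambda^{-R},\Lambda}$ rather than $\gamma_\Lambda$ itself.
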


We shall furthermore demonstrate that
in each of our applications,
all minimizers are indeed
(almost) Gibbs measures.
We finally derive the following result.

\begin{theorem}[existence of ergodic minimizers]
  \label{thm_existence_ergodic_minimizers}
  Suppose that $\Phi\in\mathcal S_\mathcal L+\mathcal W_\mathcal L$.
  Then for any exposed point $u\in\bar U_\Phi$
  of $\sigma$,
  there exists an ergodic gradient measure $\mu$ of slope $u$
  which is also a minimizer.
  In particular, if $\sigma$ is strictly convex on $U_\Phi$,
  then for each $u\in U_\Phi$, there is an ergodic minimizer of that slope.
\end{theorem}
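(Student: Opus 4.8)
The plan is to exploit the compactness and affineness of $\mathcal H(\cdot|\Phi)$ from Theorem~\ref{thm_main_sfe} together with an ergodic decomposition argument. First I would fix an exposed point $u\in\bar U_\Phi$ of $\sigma$, so that there is an affine map $h:(\mathbb R^d)^*\to\mathbb R$ with $h\leq\sigma$ and $\{h=\sigma\}=\{u\}$. Consider the functional $\mu\mapsto\mathcal H(\mu|\Phi)-h(S(\mu))$ on $\mathcal P_\mathcal L(\Omega,\mathcal F^\nabla)$. On $K$-Lipschitz measures the slope map $S$ is continuous and affine, and $\mathcal H(\cdot|\Phi)$ is lower-semicontinuous and affine with compact sublevel sets $M_C$; hence this functional is lower-semicontinuous with compact sublevel sets on the (closed) set of $K$-Lipschitz measures, so it attains its infimum at some $\mu_0$. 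By definition of $\sigma$ and the exposedness of $u$, any minimizer $\mu_0$ of this functional must satisfy $S(\mu_0)=u$ and $\mathcal H(\mu_0|\Phi)=\sigma(u)$, i.e.\ $\mu_0$ is a minimizer of slope $u$. (One should first check the set of $K$-Lipschitz measures with a given slope is nonempty for $u$ in the closed allowable region, which follows from the construction of $\|\cdot\|_q$-Lipschitz interpolants and the definition of $U_q$; for boundary slopes one takes a deterministic tilted height function.)

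Next I would pass to an ergodic one. Take the ergodic decomposition $\mu_0=\int w_\alpha\,\rho(d\alpha)$ of $\mu_0$ over $\operatorname{ex}\mathcal P_\mathcal L(\Omega,\mathcal F^\nabla)$. Since $\mathcal H(\cdot|\Phi)$ and $S(\cdot)$ are affine and (for the former) lower-semicontinuous — hence affinely integrable along the decomposition in the sense that $\mathcal H(\mu_0|\Phi)=\int\mathcal H(w_\alpha|\Phi)\,\rho(d\alpha)$ and $S(\mu_0)=\int S(w_\alpha)\,\rho(d\alpha)$ — we get for $\rho$-a.e.\ $\alpha$ that $\mathcal H(w_\alpha|\Phi)-h(S(w_\alpha))=0$, again using $h\leq\sigma\leq\mathcal H(\cdot|\Phi)\circ$ (slope) pointwise so that the integrand is $\geq0$ with integral $0$. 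For such $\alpha$, exposedness forces $S(w_\alpha)=u$ and $\mathcal H(w_\alpha|\Phi)=\sigma(u)$, so $w_\alpha$ is the desired ergodic minimizer of slope $u$. The only subtlety here is justifying that $\mathcal H(\cdot|\Phi)$ integrates affinely over the ergodic decomposition; this is standard for specific free energies (it follows from $\mathcal H_{\Pi_n}(\cdot|\Phi)$ being affine at finite volume together with a monotone/dominated passage to the limit, exactly as in~\cite{S05, G11}), and should be invoked rather than reproved.

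Finally, for the ``in particular'' clause: if $\sigma$ is strictly convex on $U_\Phi$, then every $u\in U_\Phi$ is an exposed point of $\sigma$ (a strictly convex function on an open set has every interior point exposed — the supporting hyperplane at $u$, which exists since $\sigma$ is finite near $u$ hence continuous and subdifferentiable there, touches the graph only at $u$ by strict convexity), so the first part applies and yields an ergodic minimizer of that slope.

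The main obstacle I expect is not any single step but the bookkeeping around boundary slopes $u\in\partial U_\Phi$: one must confirm that the class of $K$-Lipschitz measures of slope $u$ is nonempty and that $\sigma(u)<\infty$ there, and that the lower-semicontinuity/compactness of Theorem~\ref{thm_main_sfe} still delivers an attaining minimizer for the functional $\mathcal H(\cdot|\Phi)-h\circ S$. Everything else — the ergodic decomposition and the affine integration of $\mathcal H(\cdot|\Phi)$ — is routine given the results already established.
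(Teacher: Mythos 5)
Your proposal is correct and, at its core, is the same argument the paper uses: construct a (not necessarily ergodic) minimizer of slope $u$ via compactness of the sublevel sets $M_C$ and lower-semicontinuity of $\mathcal H(\cdot|\Phi)$ together with continuity of $S$ on $K$-Lipschitz measures, then pass to the ergodic decomposition and use strong affineness of $S$ (Proposition~\ref{propo_S_strongly_affine}) and of $\mathcal H(\cdot|\Phi)$ (Theorem~\ref{thm_SFE_strongly_affine}) plus exposedness of $u$ to conclude that almost every ergodic component is itself a minimizer of slope $u$. The paper states this quite tersely; your phrasing via the tilted functional $\mu\mapsto\mathcal H(\mu|\Phi)-h(S(\mu))$ is a cosmetic reorganization that makes the logic explicit (and correctly notes that $h\circ S$ is bounded on $K$-Lipschitz measures, so the compactness of the sublevel sets is preserved under the tilt), but the ingredients and the order in which they are deployed coincide with the paper's proof.

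One small remark: you flag the strong affineness of $\mathcal H(\cdot|\Phi)$ over the ergodic decomposition as a standard fact to be invoked rather than reproved, and this is the right instinct, but your parenthetical sketch (``monotone/dominated passage to the limit'') is not quite how the paper establishes it; the paper (Theorem~\ref{thm_SFE_strongly_affine}) reduces to the non-gradient case by considering $\phi\bmod 4K$ and then applies Theorem~15.20 of~\cite{G11}. This does not affect correctness of your argument since you cite the result rather than rely on the sketch. Also, regarding boundary slopes: when $E=\mathbb R$, Theorem~\ref{thm_main_st_general} gives $\sigma=\infty$ on $\partial U_\Phi$, so no boundary slope can be exposed there; your boundary bookkeeping is only genuinely needed when $E=\mathbb Z$, and there it works out because $\sigma$ is bounded and continuous on $\bar U_\Phi$, so $\sigma(u)=h(u)<\infty$ guarantees the infimizing set is nonempty and the limit exists in a compact sublevel set.
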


Theorem~\ref{thm_main_sfe}
is proven in Section~\ref{section:specific_free_energy}.
Theorems~\ref{thm_main_minimizers_finite_energy} and~\ref{thm_mr_minimizers}
are proven in Section~\ref{section_minimizers}.
Theorem~\ref{thm_existence_ergodic_minimizers} is proven in Section~\ref{section:ergo_decomp}.


\subsection{Large deviations principle and variational principle}
\label{subsec:main_results:ldp}
In Section~\ref{sec:LDP} we prove a large deviations principle (LDP) of similar strength to the one stated in Chapter~7 of~\cite{S05}, with the noteworthy difference that we express it directly in terms of the Gibbs specification.
This LDP captures both the macroscopic profile of each sample, as well as its local statistics.
In this subsection however, we shall state a simpler LDP:
one that captures only the macroscopic profile.
By doing so we deliver on the premise that limit shapes
are characterized by a variational principle,
without spending many pages discussing the exact topology
for the LDP with local statistics.
However, the full LDP is also of independent interest,
and we refer the interested reader to Subsection~\ref{subsec:LDP_FULL}.
Before stating the LDP,
we must first describe how a sequence of discrete boundary conditions can approximate
a continuous boundary profile,
and we must also introduce a topology which captures the macroscopic
profile of each sample.
Let $\Phi$ denote a fixed potential throughout this subsection,
and adopt the standard notation from Subsection~\ref{subsection:formal_setting:overview}.

\begin{definition}[asymptotic boundary profile]
	A \emph{domain} is a nonempty
	bounded open subset of $\mathbb R^d$
  such that its boundary has zero Lebesgue measure.
	An \emph{asymptotic boundary profile}
	is a pair $(D,b)$
	where $D$ is a domain
	and $b$ a $\|\cdot\|_q$-Lipschitz function
	on $\partial D$.
	If $E=\mathbb R$, then call an asymptotic boundary profile
	$(D,b)$ \emph{good}
	if $b$ is strictly $\|\cdot\|_q$-Lipschitz.
	If $E=\mathbb Z$, then call an asymptotic boundary
	profile \emph{good}
	if it is \emph{non-taut}.
	An asymptotic boundary profile
	$(D,b)$ is called \emph{non-taut}
	if $b$ has an extension $\bar b$
	to $\bar D$ such that $\bar b|_D$
	is locally strictly $\|\cdot\|_q$-Lipschitz.
	This is equivalent to asking that
	the largest and smallest
	$\|\cdot\|_q$-Lipschitz
	extensions $b^\pm$ of $b$
	to $\bar D$
	satisfy $b^-<b^+$ on $D$.
\end{definition}

\begin{definition}[discrete approximations]
	Let $(D,b)$ denote an asymptotic boundary profile.
	Call a sequence of pairs $(D_n,b_n)_{n\in \mathbb N}$
	of finite subsets of $\mathbb Z^d$ and height functions
 an \emph{approximation} of $(D,b)$
if
	\begin{enumerate}
		\item
		For all $n\in\mathbb N$,
			the function
			$b_n$ is $q$-Lipschitz
			if $E=\mathbb Z$ or strictly $q$-Lipschitz if $E=\mathbb R$,
		\item We have $\frac1n D_n\to D$ in the Hausdorff metric on $\mathbb R^d$,
		\item We have  $\frac{1}{n}\operatorname{Graph}(b_n|_{\partial D_n})\to\operatorname{Graph}(b)$ in the Hausdorff metric on $\R^d\times\mathbb R$.
	\end{enumerate}
	Moreover,
	if $E=\mathbb R$,
	then an approximation
	$(D_n,b_n)_{n\in \mathbb N}$ is called \emph{good}
	if the constant $\varepsilon>0$
	which makes each function $b_n$
	a
	$q_\varepsilon$-Lipschitz
	function,
	is independent of $n$.
	If $E=\mathbb Z$,
	then any approximation is called
	\emph{good}.
\end{definition}

We have in mind a good approximation $(D_n,b_n)_{n\in\mathbb N}$
of some fixed good asymptotic boundary profile $(D,b)$.
The sequence of local Gibbs measures
which are of interest in the LDP is the sequence
$(\gamma_n)_{n\in\mathbb N}$ defined
by $\gamma_n:=\gamma_{D_n}(\cdot,b_n)$.
All samples from the sequence of measures
$(\gamma_n)_{n\in\mathbb N}$
must be brought to the same topological space,
in order for us to formulate the LDP.
	 We will now describe this topology,
	 as well as the map
	 from $\Omega$ to
	 this topological space.

\begin{definition}[topology for macroscopic profiles]
	\label{definition_main_ldp_topology_macroscopic_profiles}
	For any $U\subset\mathbb R^d$,
 write $\operatorname{Lip}(U)$
 for the set of real-valued $K\|\cdot\|_1$-Lipschitz functions on $U$,
 where we recall that $K$ is minimal subject to $Kd_1\geq q$.
 Suppose given
 a sample
 $\phi$ from $\gamma_n$.
 Define the
 scaled interpolation $\mathfrak G_n(\phi)\in\operatorname{Lip}(\bar D)$
 of $\phi$, which captures the global shape of $\phi$, as follows.
 The sample $\phi$ is almost surely
 $q$-Lipschitz, and therefore
 also $Kd_1$-Lipschitz.
 First, write
$\bar\phi:\mathbb R^d\to\mathbb R$
for the smallest
$K\|\cdot\|_1$-Lipschitz extension
of $\phi$ to $\mathbb R^d$.
Next, we simply scale back each sample by $n$ and
restrict it to the set $\bar D$.
Formally, this means that we define
\[
	 \mathfrak G_n(\phi):\bar D\to\mathbb R,\,x\mapsto \frac{1}{n}\bar\phi(nx).
\]
This function is $K\|\cdot\|_1$-Lipschitz,
that is,
$\mathfrak G_n(\phi)\in\operatorname{Lip}(\bar D)$.
Endow the space $\operatorname{Lip}(\bar D)$ with the topology of uniform
convergence, denoted by $\mathcal X^\infty$.
The map $\mathfrak G_n:\Omega\to\operatorname{Lip}(\bar D)$
captures the global profile of the height functions in the large
deviations principle.
\end{definition}

\begin{definition}[rate function, pressure]
	The \emph{rate function} associated to the profile $(D,b)$
	is the function $I:\operatorname{Lip}(\bar D)\to[0,\infty]$
	defined by
	\[
		I(f):=-P_{\Phi}(D,b)+\int_D\sigma(\nabla f(x))dx
	\]
	if $f|_{\partial D}=b$ and $I(f):=\infty$
	otherwise.
	Here $P_{\Phi}(D,b)$ is
	the \emph{pressure} associated to this profile,
	which is defined precisely such that the minimum of $I$
	is zero.
\end{definition}

\begin{theorem}[large deviations principle]
	\label{thm:sec_mr_ldp}
	Let $\Phi\in\mathcal S_\mathcal L+\mathcal W_\mathcal L$,
	 and let $(D_n, b_n)_{n\in \mathbb N}$ denote a good approximation of some good
	 asymptotic profile $(D,b)$.
	Let $\gamma_n^*$ denote the pushforward
 of $\gamma_n:=\gamma_{D_n}(\cdot,b_n)$ along the map $\mathfrak G_n$,
 for any $n\in\mathbb N$.
	Then the sequence of probability measures
	$(\gamma_n^*)_{n\in\mathbb N}$
	satisfies a large deviations principle with speed $n^d$ and rate function
	$I$
	 on the topological space $(\operatorname{Lip}(\bar D),\mathcal X^\infty)$.
Moreover, the sequence of normalizing constants $(Z_n)_{n\in\mathbb N}:=(Z_{D_n}(b_n))_{n \in\N}$  satisfies
$
-
n^{-d} \log Z_n \to P_{\Phi}(D,g)
$
as $n\to\infty$.
\end{theorem}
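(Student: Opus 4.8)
The plan is to establish the large deviations principle directly, in the spirit of~\cite{CKP01} and Chapter~7 of~\cite{S05}, by reducing it to asymptotics of constrained partition functions. Since every $\gamma_n^*$ is supported on a common uniformly bounded (hence precompact) subset of $(\operatorname{Lip}(\bar D),\mathcal X^\infty)$ — the profiles are $K\|\cdot\|_1$-Lipschitz and pinned near $b$ on $\partial D$ by goodness of the approximation — exponential tightness is immediate, so it suffices to prove the upper and lower bounds in local form: for every $f\in\operatorname{Lip}(\bar D)$,
\[
  \lim_{\delta\downarrow 0}\limsup_{n\to\infty}\tfrac1{n^d}\log\gamma_n^*(B_\delta(f))\le -I(f)\le\lim_{\delta\downarrow 0}\liminf_{n\to\infty}\tfrac1{n^d}\log\gamma_n^*(B_\delta(f)),
\]
where $B_\delta(f)$ is the uniform ball. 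Writing $Z_n(f,\delta)$ for the partition function of $\gamma_n$ restricted to $\{\mathfrak{G}_n(\phi)\in B_\delta(f)\}$, so that $\gamma_n^*(B_\delta(f))=Z_n(f,\delta)/Z_n$, both local bounds and the asserted limit $-n^{-d}\log Z_n\to P_\Phi(D,b)$ follow once one establishes
\[
  \lim_{\delta\downarrow0}\lim_{n\to\infty}\tfrac1{n^d}\log Z_n(f,\delta)=-\int_D\sigma(\nabla f(x))\,dx
\]
(read as $-\infty$ unless $f|_{\partial D}=b$ and $f$ is $\|\cdot\|_q$-Lipschitz), since $P_\Phi(D,b)$ is by construction $\min_f\int_D\sigma(\nabla f)$ over admissible $f$, which is finite and attained because the goodness hypothesis supplies an admissible $f$ with slopes in a compact subset of $U_\Phi$.

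The cornerstone is a \emph{linear estimate}: for a box $\Pi_m$ carrying boundary data approximating a linear functional of slope $u\in U_\Phi$, one has $-m^{-d}\log Z_{\Pi_m}(\cdot)\to\sigma(u)$. The ``$\liminf\ge$'' half is the Gibbs variational inequality $\mathcal H(\mu|\Phi)\ge\sigma(S(\mu))$ together with Theorem~\ref{thm_main_sfe}; the ``$\limsup\le$'' half is obtained by feeding into the partition function a shift-invariant measure of slope $u$ whose specific free energy is within $\varepsilon$ of $\sigma(u)$, where Theorem~\ref{thm_main_minimizers_finite_energy} (finite energy) is what turns the $o(m^d)$ boundary-layer error — itself controlled by the exterior bound $e^-(\cdot)$ of the weak part $\Xi$ and by local boundedness of the strong part $\Psi$ — into a genuine lower bound compatible with the pinned data. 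Granting this, the large deviations upper bound for a general $f$ follows by tiling $\bar D$ (minus a thin collar) by mesoscopic cubes $Q$ of side $\eta n$: any $\phi$ with $\mathfrak{G}_n(\phi)\in B_\delta(f)$ restricts on $\partial Q$ to data that is linear of slope $\nabla f(x_Q)$ up to an error governed by $\delta$, $\eta$, and the modulus of continuity of $\nabla f$; a conditioning (``strong subadditivity'') argument bounds $Z_n(f,\delta)$ above by $e^{o(n^d)}\prod_Q Z_Q(\cdot)$, the linear estimate turns $-n^{-d}\log$ of the right-hand side into a Riemann sum converging to $\int_D\sigma(\nabla f)$ as $n\to\infty$ then $\delta,\eta\downarrow0$, and the collar contributes $o(1)$ since it has vanishing volume fraction while the relevant slopes stay in a fixed compact subset of $U_\Phi$ on which $\sigma$ is bounded.

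For the lower bound, fix $f$ with $I(f)<\infty$. The goodness hypothesis (strict $\|\cdot\|_q$-Lipschitzness when $E=\mathbb R$, non-tautness when $E=\mathbb Z$) lets us approximate $f$ by piecewise-linear functions $f_\varepsilon$ that agree with $b$ on $\partial D$, have all slopes in a compact subset of $U_\Phi$, and satisfy $\int_D\sigma(\nabla f_\varepsilon)\to\int_D\sigma(\nabla f)$; this reduction is exactly where goodness is indispensable. On each linear piece one inserts the near-minimizing measure of the corresponding slope produced by the linear estimate, splices these together, and matches the actual discrete data $b_n$ along $\partial D_n$ using the finite-energy property of Theorem~\ref{thm_main_minimizers_finite_energy}, at entropy cost $O(\varepsilon n^d)$ because the splicing regions and the collar have volume fraction $O(\varepsilon)$; this gives $n^{-d}\log Z_n(f_\varepsilon,\delta)\ge -\int_D\sigma(\nabla f_\varepsilon)+O(\varepsilon)+o(1)$. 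Applying the already-proven upper bound to $Z_n$ itself (which is at most a sum of the $Z_n(f,\delta)$ over a finite $\delta$-net of the profile space) identifies $-n^{-d}\log Z_n\to\min_f\int_D\sigma(\nabla f)=P_\Phi(D,b)$, so dividing yields $n^{-d}\log\gamma_n^*(B_\delta(f_\varepsilon))\ge -I(f_\varepsilon)+O(\varepsilon)+o(1)$; letting $\varepsilon,\delta\downarrow0$ and using that every neighbourhood of $f$ eventually contains some $f_\varepsilon$ completes the local lower bound, hence the LDP and the partition-function asymptotics.

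The main obstacle is the lower-bound splicing construction: producing, with quantitative control of its partition-function mass, configurations whose macroscopic profile tracks a prescribed piecewise-linear function while simultaneously respecting the given discrete boundary condition $b_n$ on $\partial D_n$. Here the finite-energy theorem does the heavy lifting — it replaces the exact combinatorial patching lemmas available in the dimer model — and the goodness/non-tautness hypotheses enter essentially, since without them the variational problem can be rigid and the lower bound genuinely fails. A secondary but unavoidable technical point is the bookkeeping relating $Z_n=Z_{D_n}^\Phi(b_n)$, defined through the full Hamiltonian $H_{D_n}$ and the pinned reference measure $\lambda^{D_n}$, to the thermodynamic-limit quantity $\sigma$, defined through the interior Hamiltonian and the gradient reference measure $\lambda^{\Lambda-1}$: the discrepancy consists of boundary terms that are $o(n^d)$ by amenability of $e^-(\cdot)$ and finiteness of the range $R$ of $\Psi$.
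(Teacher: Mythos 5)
Your overall architecture (constrained partition functions $Z_n(f,\delta)$, a Riemann-sum reduction over mesoscopic cubes, and a splicing construction for the lower bound) is sound and is essentially the CKP/Sheffield template; it is a genuinely different route from the paper, which instead proves a stronger LDP jointly on macroscopic profiles and empirical measures (Theorem~\ref{p_LDP}) via the free/pinned boundary limit formalism of Section~\ref{section_limit_equalities}, and then obtains the present statement by projection through Lemma~\ref{lemma_rate_fcts}. But there is a real gap in the way you discharge the hard step, namely the ``$\limsup\le$'' half of your linear estimate (and, downstream, the splicing step in your lower bound). You attribute both to Theorem~\ref{thm_main_minimizers_finite_energy}, but that theorem asserts only an absolute continuity $1_{\Omega_q}(\mu\pi_{\mathbb Z^d\smallsetminus\Lambda}\times\lambda^\Lambda)\ll\mu$ for minimizers $\mu$; it says nothing quantitative about Radon--Nikodym derivatives, so it cannot by itself convert a bulk near-minimizer into a measure supported on the pinned boundary event $C_n^u$ (or $C_{n,\varepsilon}^u$) while changing the free energy by only $o(n^d)$. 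That conversion is exactly where the work is: in the paper it is done by the random truncation argument (Lemma~\ref{lemma:PBL_upper_bound}), which squeezes a sample between the pyramid-shaped extensions $\phi_n^\pm$ of the boundary data, tracks the symmetric difference $A_n^\pm$, and bounds the resulting entropy and energy cost using ergodicity (Theorem~\ref{thm:superergodresult}), the bounds~\eqref{eq_discrete_singleton_hamil_bound}/\eqref{eq_cts_singleton_hamil_bound}, and in the continuous case a careful randomized rounding so that the truncated measure stays absolutely continuous with respect to $\lambda^{\Pi_n-1}$. Finite energy is the wrong tool for this; it is needed elsewhere (for the Burton--Keane contradiction in Section~\ref{sec_strict_convex}), not here.

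Two secondary issues. First, even granting the linear estimate for ergodic measures, passing to non-ergodic ones is not automatic: you need either the convexity of the pinned functional (the washboard construction of Lemma~\ref{lemma_PB_star_convex}, which in turn needs the attachment lemmas and the discrete approximation Theorem~\ref{general_lips_ext_less_general}) or some equivalent argument, and when $E=\mathbb Z$ the ergodic components of a measure of allowable slope may themselves have slope in $\partial U_\Phi$, which is handled in the paper by the non-taut approximation Lemma~\ref{lemma_ergodic_approx_aux_1}; your appeal to ``goodness/non-tautness'' is gesturing at this but doesn't engage with it. Second, your upper-bound tiling argument implicitly conditions on the boundary data of each mesoscopic cube; making this rigorous requires the lower attachment lemma (Lemma~\ref{lemma_lower_attachments}) and the amenability of $e^-(\cdot)$, plus a uniform bound on the entropy term over the collar (the $\log(2K+1)$ per-site bound), all of which you mention only implicitly. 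None of these secondary points is fatal, but the reliance on finite energy in place of the truncation/washboard machinery is, as stated, a genuine missing idea.
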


\begin{corollary}[variational principle]
	\label{cor_vp}
	Let $\Phi\in\mathcal S_\mathcal L+\mathcal W_\mathcal L$,
	 and let $(D_n, b_n)_{n\in \mathbb N}$ denote a good approximation of some good
	 asymptotic profile $(D,b)$.
	Let $\gamma_n^*$ denote the pushforward
 of $\gamma_n:=\gamma_{D_n}(\cdot,b_n)$ along the map $\mathfrak G_n$,
 for any $n\in\mathbb N$.
 Write $f_n$ for the random function in $\gamma_n^*$,
 which---as a random object---takes values in $\operatorname{Lip}(\bar D)$.
 If $\sigma$ is strictly convex on $U_\Phi$,
 then the random function $f_n$
 converges to the unique minimizer $f^*$ of the rate function $I$,
 in probability in the topology
 of uniform convergence
 as $n\to\infty$.
 In other words, $f^*$ is the unique minimizer of the integral
 \[
		\int_D\sigma(\nabla f(x))dx
 \]
 over all Lipschitz functions $f:\bar D\to\mathbb R$ which equal $b$ on the boundary of
 $D$.
 If however $\sigma$ fails to be strictly convex
 on $U_\Phi$,
 then for any neighborhood $A$ of the set of
 minimizers of the integral
 in the topology of uniform convergence,
 we have $f_n\in A$
 with high probability as $n\to\infty$.
\end{corollary}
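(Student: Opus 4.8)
The plan is to deduce the corollary from the large deviations principle of Theorem~\ref{thm:sec_mr_ldp} by the standard mechanism: an LDP with a good rate function forces the random object to concentrate on the zero set of that rate function. The first step is to check that $I$ is a good rate function on $(\operatorname{Lip}(\bar D),\mathcal X^\infty)$, that is, lower-semicontinuous with compact sublevel sets. Every $f$ with $I(f)<\infty$ lies in $\mathcal D:=\{f\in\operatorname{Lip}(\bar D):f|_{\partial D}=b\}$; such an $f$ is $K\|\cdot\|_1$-Lipschitz on the bounded set $\bar D$ and coincides with the fixed function $b$ on $\partial D\neq\emptyset$, hence is uniformly bounded with bounds depending only on $D$, $b$, and $K$, so Arzel\`a--Ascoli shows that $\mathcal D$ is compact in $\mathcal X^\infty$. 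Lower-semicontinuity of $I$ is the lower-semicontinuity of the convex integral functional $f\mapsto\int_D\sigma(\nabla f(x))dx$ under uniform convergence of uniformly Lipschitz functions (uniform convergence forces weak-$*$ convergence of the gradients, on which $\sigma$ acts as a convex integrand), together with closedness of $\mathcal D$; it is in any case implicit in Theorem~\ref{thm:sec_mr_ldp}. Since the pressure $P_\Phi(D,b)$ is defined so that $\inf I=0$, and this infimum is attained by compactness of $\{I\le 1\}\subset\mathcal D$, the set $\mathcal M:=\{f:I(f)=0\}$ is a nonempty compact subset of $\mathcal D$.

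On $\mathcal D$ the rate function $I$ differs from $f\mapsto\int_D\sigma(\nabla f(x))dx$ only by the additive constant $-P_\Phi(D,b)$, and $I\equiv\infty$ off $\mathcal D$, so $\mathcal M$ is exactly the set of minimizers of the integral over all Lipschitz $f:\bar D\to\mathbb R$ with $f|_{\partial D}=b$; this identifies the two minimization problems in the statement and recovers the classical variational problem of~\cite{CKP01}. If $\sigma$ is strictly convex on $U_\Phi$, then the integral functional is strictly convex along segments in $\mathcal D$ and therefore has at most one minimizer, so $\mathcal M=\{f^*\}$. That strict convexity on the \emph{interior} $U_\Phi$ of $\{\sigma<\infty\}$ suffices for uniqueness, even though $\nabla f^*$ may take values in $\partial U_\Phi$ on a set of positive measure, is the statement proved in~\cite{de2010minimizers}; the hypothesis that $(D,b)$ is good, already needed for Theorem~\ref{thm:sec_mr_ldp}, also enters here.

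It remains to pass from the LDP to convergence in probability. Let $A\supseteq\mathcal M$ be any open neighborhood of $\mathcal M$ in $\mathcal X^\infty$; then $A^c$ is closed and $c:=\inf_{A^c}I>0$. Indeed, if $A^c\cap\{I\le 1\}=\emptyset$ then $c\ge 1$; otherwise $A^c\cap\{I\le 1\}$ is nonempty and compact, $I$ attains its infimum $c$ over it, and $c=0$ is impossible because $\{I=0\}=\mathcal M$ is disjoint from $A^c$. The large deviations upper bound of Theorem~\ref{thm:sec_mr_ldp} then gives
\[
 \limsup_{n\to\infty}n^{-d}\log\gamma_n^*(A^c)\le-c<0,
\]
so $\gamma_n^*(A^c)\to 0$, i.e.\ $f_n\in A$ with high probability. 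When $\sigma$ is strictly convex on $U_\Phi$ we take $\mathcal M=\{f^*\}$ and let $A$ run over a neighborhood base of $f^*$, which is exactly $f_n\to f^*$ in probability in the topology of uniform convergence; when $\sigma$ is not strictly convex on $U_\Phi$, the same display, with $\mathcal M$ the compact set of minimizers, is the final assertion of the corollary.

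The step I expect to require the most care is the verification that $I$ is a good rate function: one must combine the uniform Lipschitz bound, the uniform bound coming from the boundary data, and Arzel\`a--Ascoli to obtain compact sublevel sets, and separately invoke lower-semicontinuity of the convex integral functional under uniform convergence. The only genuinely external input is the uniqueness of $f^*$ from strict convexity of $\sigma$ on $U_\Phi$, for which we rely on~\cite{de2010minimizers}.
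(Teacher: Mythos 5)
Your proof is correct and takes exactly the route the paper intends: the corollary is stated without an explicit proof because it is the standard deduction of a weak law from a large deviations principle with a good rate function, and your argument supplies precisely that deduction. The key verifications (goodness of $I$, identification of the zero set with the set of minimizers of the integral, uniqueness from strict convexity via~\cite{de2010minimizers}, and the $\limsup$ bound on closed complements of neighborhoods) match the paper's own setup; in particular, the paper encodes the goodness of the rate function in Lemma~\ref{lemma_rate_fcts} (whose compactness claim is obtained by exhibiting the sublevel set as a closed subset of a product of compact Polish spaces, where you use Arzel\`a--Ascoli directly on $\operatorname{Lip}(\bar D)$ --- an equivalent route), and Statement~\ref{minimizerindeedasexpected} of that lemma identifies the marginal rate function with $\int_D\sigma(\nabla g)$, which is what justifies your identification of $\mathcal M$ with the set of minimizers of the integral. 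You also correctly flag the one subtlety in the uniqueness step (strict convexity only on the interior $U_\Phi$), which the paper likewise handles by citing~\cite{de2010minimizers}.
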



\subsection{The surface tension}

Let us now state the motivating result on the surface tension.

\begin{theorem}[strict convexity of the surface tension]
  \label{thm_main_main}
Let $\Phi$ denote a potential in $\mathcal S_\mathcal L+\mathcal W_\mathcal L$
which is monotone.
\begin{enumerate}
  \item If $E=\mathbb R$,
  then $\sigma$ is strictly convex on $U_\Phi$,
  \item If $E=\mathbb Z$,
  then $\sigma$ is strictly convex on $U_\Phi$
  if for any affine map $h:(\mathbb R^d)^*\to\mathbb R$
  with $h\leq \sigma$,
  the set $\{h=\sigma\}\cap\partial U_\Phi$
  is convex.
  In particular, $\sigma$ is strictly convex on $U_\Phi$
  if at least one of the following conditions is satisfied:
  \begin{enumerate}
    \item $\sigma$ is affine on $\partial U_\Phi$,
    but not on $\bar U_\Phi$,
    \item $\sigma$ is not affine on $[u_1,u_2 ]$ for any distinct $u_1,u_2 \in \partial U_\Phi$ such that  $[u_1,u_2 ] \not\subset \partial U_\Phi$.
  \end{enumerate}
\end{enumerate}
\end{theorem}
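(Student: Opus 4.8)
The plan is to argue by contradiction, following the three-step outline sketched in the introduction: assume $\sigma$ is affine on some segment $[u_1,u_2]$ with $u_1\neq u_2$, construct from this a shift-invariant gradient measure on the product space $\Omega\times\Omega$ whose two marginals are each minimizers of slope $u=(u_1+u_2)/2$ but which fails to have finite energy in the Burton--Keane sense, and then invoke Theorem~\ref{thm_main_minimizers_finite_energy} (finite energy of minimizers) to reach a contradiction. The first reduction is that it suffices to rule out affine segments $[u_1,u_2]$ with $u_1,u_2\in U_\Phi$: if $\sigma$ is affine on a segment meeting $\bar U_\Phi$ then, by convexity of $\sigma$ and openness of $U_\Phi$, it is affine on a subsegment lying in $U_\Phi$ unless the entire segment is forced to lie in $\partial U_\Phi$. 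When $E=\mathbb R$ this last possibility cannot separate the endpoints (one can always perturb slightly into $U_\Phi$), and when $E=\mathbb Z$ this is exactly what the extra hypothesis on $\{h=\sigma\}\cap\partial U_\Phi$ is there to handle; the two explicit sufficient conditions (a) and (b) are then checked to imply the abstract convexity hypothesis by a short convex-geometry argument. So the heart of the matter is: \emph{no affine segment with both endpoints in $U_\Phi$.}

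Assume then $\sigma$ is affine on $[u_1,u_2]\subset U_\Phi$. Using the LDP of Theorem~\ref{thm:sec_mr_ldp} (or directly the partition-function asymptotics), pick boundary data $\phi^u$ on $\Pi_n$ approximating the linear slope $u$, form the product measure $\mu_n:=\gamma_{\Pi_n}(\cdot,\phi^u)\times\gamma_{\Pi_n}(\cdot,\phi^u)$, and write $(\phi_1,\phi_2)$ for the pair and $f:=\phi_1-\phi_2$. Affineness of $\sigma$ on $[u_1,u_2]$ forces the free energy of the product at slope $(u,u)$ to equal $2\sigma(u)=\sigma(u_1)+\sigma(u_2)$, so that tilting $\phi_1$ up by the slope $(u_2-u_1)/2$ and $\phi_2$ down by the same amount costs only $o(n^d)$ in log-probability; concretely, the event that $f$ deviates macroscopically (at scale $n$) from $0$ has probability $e^{-o(n^d)}$. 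Next comes the geometric construction: since $\phi_1,\phi_2$ are $K$-Lipschitz, $f$ is $2K$-Lipschitz; setting $a=4K$, $b=8K$, whenever $|f(c_n)|\gtrsim \varepsilon n$ the level band $\{f\in[a,b]\}$ must contain a connected component (a \emph{moat}) separating the center $c_n$ from $\partial\Pi_n$. Conditioning on an outermost such moat $M$ with $f>b$ just inside it, monotonicity of $\gamma$ over $\Omega_q$ (and the gradient structure) makes it at least as likely that $f(c_n)\le -\varepsilon n + O(K)$ as that $f(c_n)\ge \varepsilon n$; but the former forces a \emph{second}, nested moat inside $M$. Iterating produces $\lfloor \varepsilon n/(10K)\rfloor$ nested moats, and a counting/pigeonhole argument shows their union occupies a uniformly positive fraction of $\Pi_n$.

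Finally, re-randomize the origin: shift the pair $(\phi_1,\phi_2)$ by a uniform vector in $\Pi_n\cap\mathcal L$, pass to a subsequential weak-local limit $\mu_\infty\in\mathcal P_\mathcal L(\Omega\times\Omega,\mathcal F^\nabla\otimes\mathcal F^\nabla)$. The two one-sided free-energy bounds (the $e^{-o(n^d)}$ cost of the macroscopic deviation, and the upper bound coming from the product structure) pinch the specific free energy of $\mu_\infty$ so that each marginal is a minimizer of slope $u$. But the moat bound, combined with the positive-density lower bound on the number of moats, survives the limit: $\mu_\infty$-almost surely the random set $\{f\in[a,b]\}$ has at least two distinct infinite connected components. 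Since each marginal of $\mu_\infty$ is a minimizer, Theorem~\ref{thm_main_minimizers_finite_energy} gives it finite energy, hence the Burton--Keane argument applies and $\{f\in[a,b]\}$ can have at most one infinite component---contradiction. I expect the main obstacle to be the geometric moat construction and, in particular, verifying that the moats do not all shrink to measure zero in the re-randomized limit; this requires the quantitative lower bound of order $\varepsilon n/K$ on the number of nested moats together with a careful accounting (using that the moats are nested and each moat is a cutset, so their sizes add up), and it is precisely here that the Lipschitz hypothesis is indispensable.
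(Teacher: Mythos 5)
Your high-level strategy matches the paper's: assume an affine segment $[u_1,u_2]\subset U_\Phi$, build the product measure $\gamma_{\Pi_n}(\cdot,\phi^u)\times\gamma_{\Pi_n}(\cdot,\phi^u)$, extract from the affineness a subexponential bound on macroscopic deviations of $f=\phi_1-\phi_2$, use monotonicity (the reflection principle) to trade such a deviation for many nested moats, pass to a re-randomized limit to get a shift-invariant product minimizer, and contradict finite energy via Burton--Keane. That outline is in the introduction and you have reproduced it faithfully. However, your explanation of where the extra discrete hypothesis enters is wrong, and this is the one genuinely delicate point of the theorem.

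You write that the hypothesis on $\{h=\sigma\}\cap\partial U_\Phi$ is there to handle the case where the affine segment is ``forced to lie in $\partial U_\Phi$.'' That is not the issue. Segments witnessing non-strict convexity on $U_\Phi$ automatically lie in $U_\Phi$ (it is convex and open), so no reduction is needed there. The actual difficulty, specific to $E=\mathbb Z$, is that $\sigma$ is finite and bounded on $\bar U_\Phi$, so ergodic measures with finite specific free energy can have slope on $\partial U_\Phi$. When you pass the product measure $\mu$ to its ergodic decomposition $w_\mu$, the components $\nu$ have slopes distributed along the affine facet $\{h=\sigma\}$; even though $S(\mu)=(u,u)$ with $u\in U_\Phi$, the individual $S(\nu_1),S(\nu_2)$ may land on $\partial U_\Phi$. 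The finite-energy/Burton--Keane argument (the paper's Lemma~\ref{lemma_application_of_BK}) requires at least one of $S(\nu_1),S(\nu_2)$ to be in the open set $U_\Phi$; otherwise the insertion-tolerance step used to build a trifurcation box is not available, because the pyramid-type deformation needed to glue in new values of $\phi_1$ relies on the slope being interior. The hypothesis that $\{h=\sigma\}\cap\partial U_\Phi$ is convex forces it to sit inside the boundary hyperplane $\partial H(p)$ of a single defining half-space of $\bar U_\Phi$, and measures with slope in $\partial H(p)$ are \emph{frozen} along the direction $x=p_n-p_0$ in the sense that $\phi(y+kx)-\phi(y)$ is deterministic. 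The construction in Theorem~\ref{thm_beyond_moats} therefore carries a second output which you omitted: with positive probability the difference function $\xi$ is \emph{not} constant along $y+\mathbb Z x$, which rules out both marginal slopes lying in $\partial H(p)$ and hence guarantees the Burton--Keane step applies. Without the non-constancy of $\xi$ along this specific line, the discrete proof does not close.

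Two smaller points. First, the Burton--Keane contradiction as used here requires \emph{three} infinite components of $\{\xi=0\}$ (the trifurcation box argument), not two; Theorem~\ref{thm_beyond_moats} explicitly produces three. Second, the moat construction you describe is the introduction's single-nested-family sketch around the center $c_n$; the proof actually works with the washboard of Lemma~\ref{lemma_PB_star_convex} and produces nested families around each strip $\Lambda_{n,k}$, which is needed to control both the slope and the free energy of the subsequential limit simultaneously, and to arrange the ``uniformly positive density of moats along the line $L^z$'' statement required in the final limiting argument. Your pigeonhole remark about the moats occupying a positive fraction of $\Pi_n$ is in the right spirit, but the actual positivity used is positivity of the density of moats intersected by a random translated line $L^z$, which comes out of the washboard geometry and the fact that $(u_1-u_2)(x)\neq 0$.
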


Strict convexity of the surface tension is important because
of Theorem~\ref{thm_existence_ergodic_minimizers},
Theorem~\ref{thm:sec_mr_ldp},
and Corollary~\ref{cor_vp}.
Let us also mention some other properties of the surface tension
which are useful to keep in mind.

\begin{theorem}[general properties of the surface tension]
  \label{thm_main_st_general}
  If $\Phi\in\mathcal S_\mathcal L+\mathcal W_\mathcal L$,
  then
  \begin{enumerate}
    \item We have $U_\Phi=U_q$,
    \item If $E=\mathbb R$,
    then $\sigma(u)$ tends to $\infty$ as $u$
    approaches the boundary of $U_\Phi$,
    \item If $E=\mathbb Z$,
    then $\sigma$ is bounded and continuous on the closure of $U_\Phi$.
  \end{enumerate}
\end{theorem}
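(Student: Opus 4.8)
The three assertions are of different natures and I would establish them in turn. Write $Y_e:=\phi(e_+)-\phi(e_-)$ for the increment of $\phi$ along an oriented edge $e$ of $\mathbb A$, and use freely that $\overline{U_q}=\{u:u(w)\le q(0,w)\ \forall w\in\mathcal L\}$ is a polytope (equivalently, $\|\cdot\|_q$ is polyhedral) and that $\mathcal H(\cdot|\Phi)$ and $S(\cdot)$ are affine on $\mathcal P_{\mathcal L}(\Omega,\mathcal F^\nabla)$.

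\emph{Part (1): $U_\Phi=U_q$.} For the inclusion $U_\Phi\subseteq U_q$, observe that if $\mu$ has slope $u$ and $\mathcal H(\mu|\Phi)<\infty$ then, since $\Psi\ge 0$ and $\Psi_{\{x,y\}}\equiv\infty$ off the $q$-Lipschitz configurations at edges $\{x,y\}\in\mathbb A$ while $\Xi$ is summable, $\mu$ must be supported on $\Omega_q$; hence $u(w)=\mu(\phi(w)-\phi(0))\le q(0,w)$ for all $w\in\mathcal L$, so $u\in\overline{U_q}$, and taking interiors gives $U_\Phi\subseteq U_q$. Conversely, for $u\in U_q$ I would exhibit a trial measure of finite specific free energy. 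The key observation is that $x\mapsto u(x)$ is automatically $q$-Lipschitz on all of $\mathbb Z^d$ whenever $u\in\overline{U_q}$: for $z\in\mathbb Z^d$ one has $Nz\in\mathcal L$, so $Nu(z)=u(Nz)\le q(0,Nz)\le Nq(0,z)$; and for $u$ in the open set $U_q$ this is strict on the finitely many edge-types of $\mathbb A$. If $E=\mathbb R$, take $\mu$ to be the law of $(u(x)+\xi_x)_x$ with $(\xi_x)$ i.i.d.\ uniform on a small enough interval: this is $\mathbb Z^d$-invariant, almost surely strictly $q$-Lipschitz, has bounded entropy per site, and---by summability of $\Xi$ and local boundedness of $\Psi$---bounded energy per site, so $\sigma(u)\le\mathcal H(\mu|\Phi)<\infty$. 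If $E=\mathbb Z$, first take $u$ rational and let $\mu$ be the uniform average over the finitely many lattice translates of a periodic integer-valued $q$-Lipschitz function of slope $u$ (vanishing entropy per site, bounded energy per site since $\Psi$ has finite range and $\Xi$ is summable), then write a general $u\in U_q$ as a finite convex combination of rational slopes in $U_q$ and invoke affineness. Hence $U_q\subseteq U_\Phi$.

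\emph{Part (2): $E=\mathbb R$, blow-up at the boundary.} This is the substantial step. Fix $u\in U_q$; by polyhedrality choose a facet-defining $w^*\in\mathcal L\setminus\{0\}$ with $\delta:=q(0,w^*)-u(w^*)\le C\,\dist(u,\partial U_q)$, and fix once and for all a $q$-geodesic $\pi_0$ from $0$ to $w^*$ in $\mathbb A$ together with an edge $e_0\in\pi_0$. Let $\mu$ be any measure of slope $u$; we may assume $\mu(\Omega_q)=1$, as otherwise $\mathcal H(\mu|\Phi)=\infty$. Since every summand of $\sum_{e\in\pi_0}\mathbb E_\mu[q(e_-,e_+)-Y_e]=q(0,w^*)-u(w^*)=\delta$ is nonnegative, $\mathbb E_\mu[q(e_{0,-},e_{0,+})-Y_{e_0}]\le\delta$, and by $\mathcal L$-invariance the same bound holds for every lattice translate of $e_0$. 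Inside $\Pi_n$ there are at least $cn^d$ such translates; being translates of a single edge they form a forest, which I extend to a spanning tree $T$ of $(\Pi_n,\mathbb A\cap\Pi_n)$. The increments $(Y_e)_{e\in T}$ are a volume-preserving coordinate system for $\mathcal F^\nabla_{\Pi_n}$ relative to $\lambda^{\Pi_n-1}$, so superadditivity of relative entropy with respect to a product reference measure yields
\[
\mathcal H_{\mathcal F^\nabla_{\Pi_n}}(\mu\,|\,\lambda^{\Pi_n-1})\ \ge\ \sum_{e\in T}\mathcal H(\mu_e\,|\,\mathrm{Leb}),
\]
where $\mu_e$ is the law of $Y_e$. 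For a translate $e$ of $e_0$, Markov's inequality puts at least half of $\mu_e$ within $2\delta$ of the almost-sure upper bound $q(e_-,e_+)$, while $\mu_e$ is supported in an interval of length at most some fixed $L_0$; the elementary fact that a probability measure on $\mathbb R$ with at least half its mass in an interval of length $\eta$ and total support in an interval of length $L_0$ has relative entropy at least $\tfrac12\log(1/\eta)-\log(2L_0)-\log 2$ then gives $\mathcal H(\mu_e|\mathrm{Leb})\ge\tfrac12\log(1/2\delta)-C_1$. The remaining tree edges contribute at least $-C_0$ apiece, and $\mu(H^{0,\Phi}_{\Pi_n})\ge -n^d\|\Xi\|$ because $\Psi\ge 0$. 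Dividing by $n^d$ and letting $n\to\infty$,
\[
\mathcal H(\mu|\Phi)\ \ge\ c\bigl(\tfrac12\log\tfrac{1}{2\delta}-C_1\bigr)-C_0-\|\Xi\|
\]
uniformly in $\mu$, whence $\sigma(u)\ge c'\log\bigl(1/\dist(u,\partial U_\Phi)\bigr)-C''\to\infty$ as $u\to\partial U_\Phi$. I expect this geometric--entropic step---upgrading a single $\mathcal O(n)$-long geodesic constraint into a positive density of edges whose increment marginals are all forced to concentrate---to be the main obstacle.

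\emph{Part (3): $E=\mathbb Z$, $\sigma$ bounded and continuous on $\overline{U_\Phi}$.} Now the reference measure is the counting measure, so for $q$-Lipschitz $\mu$ the entropy $\mathcal H_{\mathcal F^\nabla_{\Pi_n}}(\mu|\lambda^{\Pi_n-1})$ lies in $[-\log|\mathcal G_n|,0]$, where $\mathcal G_n$ is the finite set of $q$-Lipschitz gradient configurations on $\Pi_n$ (and $\log|\mathcal G_n|=\mathcal O(n^d)$), while the energy per site is bounded above and below because $\Psi$ has finite range and $\Xi$ is summable; thus $\mathcal H(\mu|\Phi)$ is bounded uniformly over $q$-Lipschitz $\mu$, and since by Part (1) and the compactness in Theorem~\ref{thm_main_sfe} every slope in $\overline{U_\Phi}=\overline{U_q}$ is realised by a $q$-Lipschitz measure, $\sigma$ is bounded on $\overline{U_q}$. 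For continuity, $\sigma$ is convex, lower semicontinuous (its sublevel sets are the continuous images $S(M_C)$ of the compacta $M_C$) and finite on $\overline{U_q}$, hence continuous on the interior $U_q$; at a boundary point $u^*\in\partial U_q$ and any $u_k\to u^*$ in $\overline{U_q}$, polyhedrality of $\overline{U_q}$ furnishes $\varepsilon_k\downarrow 0$ and $v_k\in\overline{U_q}$ with $u_k=(1-\varepsilon_k)u^*+\varepsilon_k v_k$ (the tangent cone at $u^*$ is polyhedral, so $u^*+\varepsilon^{-1}(u_k-u^*)$ stays in $\overline{U_q}$ for $\varepsilon$ of order $|u_k-u^*|$), and affineness of $S$ and $\mathcal H(\cdot|\Phi)$ gives $\sigma(u_k)\le(1-\varepsilon_k)\sigma(u^*)+\varepsilon_k\sup_{\overline{U_q}}\sigma\to\sigma(u^*)$; combined with lower semicontinuity this yields $\sigma(u_k)\to\sigma(u^*)$, so $\sigma$ is continuous on $\overline{U_\Phi}$. (Alternatively, a direct comparison of the partition functions $Z_{\Pi_n}$ for nearby boundary slopes gives a uniform modulus of continuity on $\overline{U_q}$.)
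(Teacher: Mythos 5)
Your Part (1) contains a genuine gap. You claim that a linear functional $u\in \bar U_q$ is automatically $q$-Lipschitz on all of $\mathbb Z^d$, deducing it from $Nu(z)=u(Nz)\le q(0,Nz)\le N q(0,z)$. The second inequality fails in general: $q$ is only $\mathcal L$-invariant, so $q(0,Nz)\le\sum_{k=0}^{N-1}q(kz,(k+1)z)$, and these summands need not all equal $q(0,z)$ when $z\notin\mathcal L$. Concretely, take $d=1$, $\mathcal L=2\mathbb Z$, $\mathbb A$ the nearest-neighbour edges, $q(2k,2k+1)=1$, $q(2k+1,2k+2)=3$ and the reverse increments equal to $0$; then $\bar U_q$ is the set of slopes with $u(1)\in[0,2]$, but for $u(1)=3/2\in U_q$ one has $u(1)-u(0)=3/2>1=q(0,1)$. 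Consequently the trial measure $(u(x)+\xi_x)_x$ is a.s.\ not $q$-Lipschitz and has infinite energy, and likewise your ``periodic integer-valued $q$-Lipschitz function of slope $u$'' in the discrete case is not constructed. The fix is exactly what the paper does: use the representative $\phi^u$, the smallest $q$-Lipschitz (resp.\ $q_\varepsilon$-Lipschitz) extension of $\lfloor u\rfloor|_{\mathcal L}$ (resp.\ $u|_{\mathcal L}$) to $\mathbb Z^d$, which exists by Kirszbraun and approximates $u$ to within an additive constant. With $\phi^u$ in place of $u$ your Part (1) goes through, and your Part (3) (which leans on Part (1)) is then fine; it spells out the polyhedral upper-semicontinuity step that the paper leaves implicit after showing boundedness.

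Your Part (2) is a genuinely different and more quantitative argument than the paper's, and it is interesting. The paper argues qualitatively: it shows $\sigma=\infty$ on $\partial U_q$ by observing that a $q$-Lipschitz $\mathcal L$-invariant measure with boundary slope must have a deterministic increment $\phi(p_1)-\phi(p_0)$ along the tight facet cycle lift (a nonnegative random variable with maximal mean equals its maximum a.s.), hence is singular with respect to Lebesgue, so its specific entropy is $+\infty$; combined with the already-established lower-semicontinuity of $\sigma$ this gives $\sigma(u)\to\infty$ as $u\to\partial U_\Phi$. You instead derive a quantitative lower bound $\sigma(u)\ge c\log(1/\operatorname{dist}(u,\partial U_\Phi))-C$ by forcing a single edge-increment to concentrate, propagating this to a positive density of $\mathcal L$-translates, and decomposing the entropy along a spanning tree. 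This buys an explicit blow-up rate that the paper does not state, at the cost of the spanning-tree bookkeeping and the elementary one-dimensional entropy lower bound; the paper's route is shorter because it only needs the extremal statement at $\partial U_q$ together with lower semicontinuity. One caveat you should resolve explicitly: the facet path $p$ from Lemma~\ref{lemma_lipschitz_q_and_norm} has $p_n-p_0\in\mathcal L$ but $p_0$ need not be $0$, so the geodesic and the distinguished edge $e_0$ should be taken along $p$ itself rather than translated to start at the origin (translation by $-p_0\notin\mathcal L$ does not preserve $q$-values).
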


Theorem~\ref{thm_main_main}
is proven in Section~\ref{sec_strict_convex},
and Theorem~\ref{thm_main_st_general} is proven in Section~\ref{section:specific_free_energy}.


\subsection{Note on the Lipschitz setting}
\label{subsec_main_Lip_note}
Local Lipschitz constraints
are designed to be as flexible as possible.
Essential in the argument is that a height function $\phi:\mathbb Z^d\to E$
has finite energy \emph{if and only if} it is Lipschitz with respect to the local Lipschitz
constraint.
This means that we can rely on the Kirszbraun theorem (Theorem~\ref{propo_Kirszbraun})
to join together Lipschitz functions defined
on disjoint parts of the space.
However, this formulation is sometimes inconvenient.
There are, as we shall see, several natural models in which
the admissible height functions
are exactly the \emph{graph homomorphisms}
from $\mathbb Z^d$ to $\mathbb Z$:
these are functions $\phi:\mathbb Z^d\to\mathbb Z$
which satisfy $\phi(0)\in2\mathbb Z$
and $|\phi(y)-\phi(x)|=1$
for each edge $\{x,y\}$
of the square lattice.
For example, the canonical height functions
corresponding to the six-vertex model
are precisely the graph homomorphisms from $\mathbb Z^2$
to $\mathbb Z$.
Since the zero transition is not allowed,
it might appear that this model does not fit the Lipschitz framework:
it is the first \emph{if} in the \emph{if and only if}
that is violated.
However, this problem is only cosmetic in nature:
by a simple transformation one can move from graph homomorphisms to
the Lipschitz framework.
Write $h:\mathbb Z^d\to\mathbb Z$ for the function
$h(x):=\sum_i x_i$,
and consider the map
\[
  \phi\mapsto(\phi+h)/2.
\]
This map is a bijection from the set
of graph homomorphisms
to the set of functions which are
$q$-Lipschitz
for $q$ defined by
\[
    q(x,y):=\sum\nolimits_i 0\vee (y-x)_i.
\]
By applying this transformation, it is thus clear
that models of graph homomorphisms \emph{do}
fit into the local Lipschitz setting of this article.
In fact, the exact same trick applies to dimer models,
and perhaps other models of discrete height functions.


\subsection{Application to submodular potentials}

A potential $\Phi$ is said to be \emph{submodular} if for every $\Lambda\subset\subset\Z^d$,
$\Phi_{\Lambda}$ has the property that
\[
\Phi_{\Lambda} (\phi\wedge\psi) + \Phi_{\Lambda}(\phi\vee \psi)
\leq
\Phi_{\Lambda}(\phi) + \Phi_{\Lambda}(\psi).
\]
Sheffield proposes this family of potentials as a natural generalization of simply attractive potentials, and asks if similar results as the ones proved for simply attractive potentials in~\cite{S05} could be proved for finite-range submodular potentials.
We provide an answer to this question for the case that the model is also Lipschitz.
(In fact, we do not even require the potential to be finite-range.)
It is easy to see that submodular potentials
generate monotone specifications.
If $E=\mathbb R$ and $\Phi$ a submodular Lipschitz
potential fitting the framework of this article (which is a very mild requirement),
then we derive immediately from Theorem~\ref{thm_main_main} that the surface tension is strictly convex.
If $E=\mathbb Z$, then we must also fulfill the extra condition in Theorem~\ref{thm_main_main}.
We show that we can fulfill the extra condition if all shift-invariant measures
$\mu$ which are supported on $q$-Lipschitz functions
and which have $S(\mu)\in\partial U_\Phi$,
are \emph{frozen}, in the sense that for any $\Lambda\subset\subset\mathbb Z^d$,
the values of $\phi_\Lambda$ depend deterministically on $\phi_{\partial^R\Lambda}$ in $\mu$.
This is a property of the local Lipschitz constraint $q$,
and such local Lipschitz constraints are called \emph{freezing}.

\begin{theorem}[strict convexity for submodular potentials]
  \label{thm:main_results_submodular}
  Suppose that the potential $\Phi\in\mathcal S_\mathcal L+\mathcal W_\mathcal L$
  is submodular.
  Then it is monotone. Moreover,
  \begin{enumerate}
    \item If $E=\mathbb R$,
    then $\sigma$ is strictly convex on $U_\Phi$,
    \item If $E=\mathbb Z$,
    then $\sigma$ is strictly convex on $U_\Phi$
    if the local Lipschitz constraint $q$ is freezing.
  \end{enumerate}
  Note that $q$ is automatically freezing if it is $\mathbb Z^d$-invariant.
\end{theorem}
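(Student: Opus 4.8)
The plan is to reduce Theorem~\ref{thm:main_results_submodular} to Theorem~\ref{thm_main_main} by verifying, in turn, monotonicity and then the extra hypothesis needed when $E=\mathbb Z$. First I would check that submodularity implies monotonicity. This is the Holley criterion in its potential form: if each $\Phi_\Lambda$ is submodular, then for any single-site resampling the conditional density satisfies the FKG lattice condition, and therefore $\gamma^\Phi_\Lambda$ preserves $\preceq$ on the set $\Omega_q$ of $q$-Lipschitz configurations (one checks that submodularity passes to the Hamiltonian $H^0_\Lambda$ since it is a finite sum of submodular $\Phi_\Gamma$'s, at least when the full potential is summable, and the Lipschitz constraint itself is submodular because $\Omega_q$ is a sublattice of $E^{\mathbb Z^d}$ — if $\phi,\psi$ are $q$-Lipschitz, so are $\phi\wedge\psi$ and $\phi\vee\psi$). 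Hence $\Phi$ is monotone in the sense of the Definition preceding Section~\ref{sec:main}, which gives part~(1) immediately from Theorem~\ref{thm_main_main}(1).

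For part~(2) the task is to show that, when $q$ is freezing, the extra condition of Theorem~\ref{thm_main_main}(2) holds — namely that $\{h=\sigma\}\cap\partial U_\Phi$ is convex for every affine minorant $h\leq\sigma$. The idea is to show that on $\partial U_\Phi=\partial U_q$ the surface tension is determined by a purely combinatorial/frozen computation: if $\mu\in\mathcal P_\mathcal L(\Omega,\mathcal F^\nabla)$ is supported on $\Omega_q$ and $S(\mu)\in\partial U_q$, then $\mu$ is frozen, meaning $\phi_\Lambda$ is a deterministic function of $\phi_{\partial^R\Lambda}$. I would argue this geometrically: a slope $u$ on $\partial U_q$ saturates the Lipschitz constraint along some direction, so any $q$-Lipschitz function with average gradient exactly $u$ must achieve equality $\phi(y)-\phi(x)=q(x,y)$ on a positive-density set of constraint-edges; iterating the equality cases of the triangle inequality for $q$ and using shift-invariance and the maximality clause in the definition of a local Lipschitz constraint forces the configuration to be rigid. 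Freezing of $q$ (which, as noted, is automatic when $q$ is $\mathbb Z^d$-invariant, since then the extremal slopes correspond to genuinely frozen tilings) is exactly what makes this rigidity hold. Once $\mu$ is frozen, $\mathcal H_{\mathcal F^\nabla_\Lambda}(\mu|\lambda^{\Lambda-1})$ is controlled by the (finitely many) boundary values, so this entropy term is $o(|\Lambda|)$, and $\sigma$ restricted to $\partial U_\Phi$ equals $\inf_\mu \mu(H^0)$ over frozen measures of the given slope — an expression which is affine in $u$ on faces of $\partial U_q$ because the energy density of a frozen configuration depends affinely on the (boundary) frequencies. Convexity of $\{h=\sigma\}\cap\partial U_\Phi$ then follows: on each face of the polytope $\bar U_q$, $\sigma$ is affine, so the contact set with any affine $h$ is a face of that face, hence convex, and one glues these across the boundary using that $\partial U_q$ is the boundary of a convex polytope.

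The main obstacle I anticipate is the rigidity step — proving that extremal-slope measures supported on $\Omega_q$ are frozen when $q$ is freezing, and extracting from this that the specific free energy on $\partial U_\Phi$ reduces to a convex-affine combinatorial quantity. This requires carefully unpacking what ``freezing'' means for a general admissible quasimetric $q$ (not merely the nearest-neighbor examples), relating it to the face structure of $U_q$, and ruling out entropy on the boundary; the summability of $\Xi$ and the bound $|H^\Xi_\Lambda-H^{0,\Xi}_\Lambda|\leq e^-(\Lambda)=o(|\Lambda|)$ ensure the weak part of the potential does not interfere. The remaining pieces — submodularity~$\Rightarrow$~Holley, and the observation that $\mathbb Z^d$-invariance of $q$ forces it to be freezing — are comparatively routine. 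I would also record, for the final sentence of the theorem, the short verification that a $\mathbb Z^d$-invariant local Lipschitz constraint is freezing: in that case the extreme points of $\bar U_q$ correspond to directions in which $q$ is additive along coordinate lines, so a saturated configuration propagates its value deterministically from $\partial^R\Lambda$ into $\Lambda$.
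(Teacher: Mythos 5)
Your verification of monotonicity via the Holley criterion is essentially the paper's argument, and your reduction of part~(1) to Theorem~\ref{thm_main_main}(1) is correct. The observation that $\Omega_q$ is a sublattice of $E^{\mathbb Z^d}$ is a clean way to see why the Lipschitz constraint does not spoil submodularity.

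Part~(2), however, takes a different route from the paper, and it has two genuine gaps. First, you aim to show that $\sigma$ restricted to each face of $\partial U_q$ is affine, from which you would deduce that $\{h=\sigma\}\cap F$ is a face of $F$ for every face $F$ and every affine minorant $h$. But the affineness of $\sigma$ on faces of $\partial U_q$ is not established by your argument---the freezing hypothesis tells you that measures supported on $\Omega_q$ with slope in $\partial U_\Phi$ have vanishing specific entropy, so $\sigma(u)=\inf\langle\mu|\Phi\rangle$ there, but the infimum of specific energies over frozen measures of a given slope is a \emph{convex} function of $u$, not automatically affine on faces; your heuristic about energy density depending affinely on boundary frequencies does not prove this. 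Second, the ``gluing across the boundary'' step is invalid: $\{h=\sigma\}\cap\partial U_\Phi$ is a union of the face-contact sets across the faces of the polytope, and a union of convex sets lying on the non-convex set $\partial U_\Phi$ is not convex in general. Indeed the genuinely problematic case, which is precisely what the extra condition in Theorem~\ref{thm_main_main}(2) is designed to rule out, is when the affine face $\{h=\sigma\}$ is a line segment passing through the interior $U_\Phi$ and hitting $\partial U_\Phi$ at two isolated extreme points: the intersection with $\partial U_\Phi$ is a two-point set, and no face-by-face convexity can repair that.

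What is really needed---and what the paper supplies---is an argument that rules out this ``chord through the interior'' configuration, and it uses submodularity for more than just monotonicity. The paper proceeds by contradiction: if the extra condition fails, take the exposed points $H$ of $\{h=\sigma\}$ lying in $\partial U_\Phi$, whose convex hull then meets $U_\Phi$. At each slope in $H$ there is an ergodic minimizer, and freezing forces $\mathcal H(\mu|\lambda)=0$ there, hence $\langle\mu|\Phi\rangle=h(S(\mu))$. The key additional ingredient is a submodular midpoint construction: the inequality $\Phi_\Lambda(\lceil(\phi_1+\phi_2)/2\rceil)+\Phi_\Lambda(\lfloor(\phi_1+\phi_2)/2\rfloor)\le\Phi_\Lambda(\phi_1)+\Phi_\Lambda(\phi_2)$ (a consequence of submodularity under averaging, not of monotonicity alone) yields, for any two ergodic measures $\mu_1,\mu_2$, an ergodic $\nu$ of slope $(S(\mu_1)+S(\mu_2))/2$ with $\langle\nu|\Phi\rangle\le\tfrac12(\langle\mu_1|\Phi\rangle+\langle\mu_2|\Phi\rangle)$. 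Iterating this across $H$ produces an ergodic measure with slope inside $U_\Phi$, zero specific entropy, and specific energy $\le h(S(\mu))\le\sigma(S(\mu))$---hence a zero-entropy ergodic minimizer at an interior slope. This contradicts the paper's Lemma~\ref{lem:frozen}, which shows (via the finite-energy property of minimizers, Theorem~\ref{thm_main_minimizers_finite_energy}) that ergodic minimizers with slope in $U_\Phi$ have strictly negative specific entropy. Your proposal never invokes submodularity beyond Holley, never builds the midpoint measure, and never uses the finite-energy property to rule out zero-entropy interior minimizers; these are the steps that actually close the argument.
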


Of course, Theorem~\ref{thm_existence_ergodic_minimizers}
applies,
and if the potential is finite-range, then the specification is
quasilocal ($\Omega=\Omega_\gamma$) so that all minimizers are Gibbs measures
(Theorem~\ref{thm_mr_minimizers}).

\subsection{Application to tree-valued graph homomorphisms}

The flexibility of the main theorem in this article can also be used to prove statements about the behavior of random functions taking values in target spaces other than $\Z$ and $\R$.
A noteworthy example is the model of tree-valued graph homomorphisms described in~\cite{MT16}.
In this context, tree-valued graph homomorphisms are functions from $\Z^d$ to a
$k$-regular tree $\mathcal T_k$ which also map the edges of the square lattice to
the edges of the tree.
Regular trees are natural objects in
several fields of mathematics:
in group theory, for example, they arise as Cayley graphs
of free groups on finitely many generators.
As a significant result in~\cite{MT16}, the authors characterize
the surface tension for the model (there named \emph{entropy}) and show that
it is equivalent to the number of graph homomorphisms with nearly-linear boundary
conditions.
This entropy function describes the macroscopic behavior of the model,
as is extensively discussed in~\cite{MT16}.
We confirm the conjecture in~\cite{MT16}, which asserts that
this entropy function is strictly convex.
We can do so because the model of uniformly random $\mathcal T_k$-valued
graph homomorphisms can be translated into a model of $\mathbb Z$-valued
graph homomorphisms after introducing an infinite-range interaction.

Let us now rigorously describe the conjecture which we prove is correct.
Write $U$ for the set of slopes $u\in(\mathbb R^d)^*$
such that $|u(e_i)|<1$ for each element $e_i$
in the natural basis of $\mathbb R^d$.
For fixed $u\in\bar U$,
write $\phi^u:\mathbb Z^d\to\mathbb Z$ for the graph homomorphism
defined by
\[
	\phi^u(x):=
	\lfloor u(x)\rfloor+
	\begin{cases}
		0&\text{if $d_1(0,x)\equiv \lfloor u(x)\rfloor \mod 2$,}\\
		1&\text{if $d_1(0,x)\equiv \lfloor u(x)\rfloor + 1 \mod 2$.}
	\end{cases}
\]
Then $\phi^u$ approximates $u$ and it thus nearly linear,
in the sense that $\|\phi^u-u|_{\mathbb Z^d}\|_\infty\leq 1$.
Let $g$ denote a \emph{bi-infinite geodesic} through $\mathcal T_k$,
that is, a $\mathbb Z$-indexed sequence of vertices $g=(g_n)_{n\in\mathbb Z}\subset\mathcal T_k$
such that $d_{\mathcal T_k}(g_n,g_m)=|m-n|$ for any $n,m\in\mathbb Z$.
The geodesic $g$ is thought of as a copy of $\mathbb Z$
in $\mathcal T_k$, and is used as reference frame.
Write $\tilde\phi^u:\mathbb Z^d\to\mathcal T_k$
for the graph homomorphism defined by
$\tilde\phi^u(x):=g_{\phi^u(x)}$ for every $x\in\mathbb Z^d$.
It is shown in~\cite{MT16} that the macroscopic behavior of uniformly random
$\mathcal T_k$-valued graph homomorphisms is characterized by the function
\[
  \operatorname{Ent}:\bar U\to [-\log k,0],\,
  u\mapsto\lim_{n \to \infty}-n^{-d}\log |\{\tilde\phi\in\tilde\Omega:\tilde\phi_{\mathbb Z^d\smallsetminus\Pi_n}=\tilde\phi_{\mathbb Z^d\smallsetminus\Pi_n}^u\}|,
\]
where $\tilde\Omega$ denotes the set of all graph homomorphisms from $\mathbb Z^d$
to $\mathcal T_k$.
It is conjectured in~\cite{MT16} that $\operatorname{Ent}$ is strictly convex on $U$,
which we prove is correct.
Figure~\ref{fig:tree_valued_large} displays
a sample from the model; the limit shape is clearly visible.

\begin{theorem}[strict convexity of the entropy for tree-valued graph homomorphisms]
  \label{thm:main_tree_valued_statement}
  For any $d,k\geq 2$, the entropy function $\operatorname{Ent}:\bar U\to[-\log k,0]$
  associated to uniformly random
  graph homomorphisms from $\mathbb Z^d$
  to a $k$-regular tree,
  is strictly convex on $U$.
\end{theorem}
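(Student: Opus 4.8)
The strategy is to recast the tree-valued model as a $\mathbb Z$-valued model of the type covered by Theorem~\ref{thm:main_results_submodular}, by projecting onto the reference geodesic and recording the discarded information as a weak (infinite-range) interaction. Fix the bi-infinite geodesic $g=(g_n)_{n\in\mathbb Z}$. Deleting its edges from $\mathcal T_k$ splits the tree into branches $B_n\ni g_n$; for $v\in B_n$ put $m(v):=n$ and $h_g(v):=d_{\mathcal T_k}(v,g_n)$, so that $F:\mathcal T_k\to\mathbb Z,\ v\mapsto m(v)+h_g(v)$, is a graph homomorphism (this is checked on the three types of tree-edges). If $\tilde\phi:\mathbb Z^d\to\mathcal T_k$ is a graph homomorphism then $\phi:=F\circ\tilde\phi$ is a $\mathbb Z$-valued graph homomorphism, which becomes a $q$-Lipschitz height function for $q(x,y):=\sum_i 0\vee(y-x)_i$ under the bijection $\phi\mapsto(\phi+h)/2$ of Subsection~\ref{subsec_main_Lip_note}; moreover $F\circ\tilde\phi^u=\phi^u$, so the reference boundary data match up. Let $\Psi\in\mathcal S_\mathcal L$ be the finite-range potential enforcing exactly this $q$-Lipschitz constraint, and let $\Xi$ be the gradient potential whose interior Hamiltonian satisfies that $e^{-H^{0,\Xi}_\Lambda(\phi)}$ equals the number of liftings of $\phi|_\Lambda$ along $F$ that do not interact with $\mathbb Z^d\smallsetminus\Lambda$; concretely $\Xi$ is produced by inclusion--exclusion over the connected components of the super- and sub-level sets of $\phi$, exactly in the spirit of the random-cluster representation alluded to in the introduction. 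One arranges that $\Phi:=\Psi+\Xi$ generates the specification of the projected model, and in particular that $Z^\Phi_{\Pi_n}(b_n)$ with $b_n:=(\phi^u+h)/2$ equals $\#\{\tilde\phi:\tilde\phi=\tilde\phi^u\text{ on }\mathbb Z^d\smallsetminus\Pi_n\}$. A branch-shifting automorphism of $\mathcal T_k$ sends $F$ to $F+c$, so $w(\phi)$ depends only on gradients and $\Xi$ is a genuine $\mathbb Z^d$-invariant gradient potential; we take $\mathcal L=\mathbb Z^d$.

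Next I would verify that $\Xi\in\mathcal W_\mathcal L$ and that $\Phi$ is submodular. Since $\phi$ is $q$-Lipschitz, only boundedly many level sets separate the endpoints of an edge, so each vertex meets only boundedly many of the weighted geometric features; this yields $\|\Xi\|<\infty$. The lower exterior bound $e^-(\Pi_n)$ is controlled by the number of level-set components crossing $\partial\Pi_n$, which is $O(n^{d-1})=o(n^d)$, so $e^-(\cdot)$ is amenable and $\Xi\in\mathcal W_\mathcal L$. The hard-constraint part $\Psi$ is submodular because $\Omega_q$ is a sublattice of $\Omega$ for the pointwise operations $\wedge,\vee$ (immediate from the coordinatewise description of $q$-Lipschitzness). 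It then remains only to show that $\Xi$ is submodular, i.e.\ that the lifting count $w$ is log-supermodular: $w(\phi\vee\psi)\,w(\phi\wedge\psi)\ge w(\phi)\,w(\psi)$ for all $\phi,\psi$.

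Granting log-supermodularity, $\Phi=\Psi+\Xi$ is a submodular potential in $\mathcal S_\mathcal L+\mathcal W_\mathcal L$, hence monotone; and since $q$ is $\mathbb Z^d$-invariant it is freezing, so Theorem~\ref{thm:main_results_submodular} gives that $\sigma$ is strictly convex on $U_\Phi$, which equals $U_q$ by Theorem~\ref{thm_main_st_general}. The affine isomorphism $u\mapsto(u+\mathbf 1)/2$ (with $\mathbf 1=(1,\dots,1)$) maps $U$ bijectively onto $U_q$. Applying Theorem~\ref{thm:sec_mr_ldp} to a cube with linear boundary data of the relevant slope $(u+\mathbf 1)/2$ — which is non-taut because the slope lies in the interior $U_q$, and whose energy-minimizing profile is the linear one — yields $-n^{-d}\log Z^\Phi_{\Pi_n}(b_n)\to P_\Phi=\sigma((u+\mathbf 1)/2)$. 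Comparing with the definition of $\operatorname{Ent}$ and the identity $Z^\Phi_{\Pi_n}(b_n)=\#\{\tilde\phi:\tilde\phi=\tilde\phi^u\text{ outside }\Pi_n\}$ identifies $\operatorname{Ent}(u)=\sigma((u+\mathbf 1)/2)$, and strict convexity of $\sigma$ on $U_q$ therefore gives strict convexity of $\operatorname{Ent}$ on $U$.

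The hardest step by far will be the log-supermodularity of the lifting count: the branching along ascending edges is $(k-1)$-fold but is subject to consistency around the cycles of $\mathbb Z^d$, so $w$ is not visibly a product, and a direct manipulation seems delicate. I would attack it either by building, by induction over the level-set structure of $\phi$ and $\psi$, a monotone coupling of uniformly random $F$-liftings over $\phi\wedge\psi$ and $\phi\vee\psi$ that stochastically dominates the independent pair of $F$-liftings over $\phi$ and $\psi$, or by presenting $w$ as the partition function of an auxiliary FKG system on the branch labels and invoking the lattice condition there. As a fallback, one may bypass Theorem~\ref{thm:main_results_submodular} entirely, verify monotonicity of $\gamma^\Phi$ directly via Holley's criterion, and check the extra hypothesis of Theorem~\ref{thm_main_main} by hand using that measures whose slope lies in $\partial U_\Phi=\partial U$ are frozen; the conclusion is the same.
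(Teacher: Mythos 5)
Your framework---project tree-valued graph homomorphisms to $\mathbb Z$-valued ones through the horocyclic height function, encode the lost information as a weak interaction $\Xi\in\mathcal W_\mathcal L$ counting level sets, identify $\operatorname{Ent}$ with $\sigma^\Phi$ via the large deviations principle, and deduce strict convexity---is exactly the paper's. The construction of $\Xi$ (weight $-\log(k-1)$ per finite level set, detected via its outer boundary), the verification that $\Xi\in\mathcal W_\mathcal L$, the identification $\sigma=\operatorname{Ent}$, and the observation that $\sigma$ vanishes on $\partial U$ but not on $U$ (giving the $E=\mathbb Z$ extra condition essentially for free) all match the paper.

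The gap is in the monotonicity step, and it is where all the work resides. Your primary route---prove that the lifting count $w$ is log-supermodular, so that $\Phi$ is submodular and Theorem~\ref{thm:main_results_submodular} applies---is left entirely unproven, and I believe it is false as stated. The paper does \emph{not} claim $\Phi$ is submodular; it verifies the Holley criterion directly, and crucially only over the set $\Omega_-$ of graph homomorphisms with \emph{no infinite level sets}. This restriction is not cosmetic: the potential $\Xi$ sees only finite level sets, so when the paper computes $a_i=(k-1)^{2-X_i}$ with $X_i$ the number of $1$-level sets adjacent to the resampled vertex, the inequality $X_1\geq X_2$ for $\phi_1\leq\phi_2$ holds because raising a configuration can only merge level sets---but a merge that turns finite level sets into an infinite one \emph{drops} the merged set from the count and can reverse the inequality. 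Since submodularity of $\Phi$ over $\Omega_q$ would imply monotonicity of $\gamma^\Phi$ over all of $\Omega_q$, whereas the specification is genuinely non-monotone once infinite level sets are admitted, log-supermodularity of $w$ on $\Omega_q$ cannot hold. Your fallback (direct Holley) is the right route, but you do not carry it out, and you do not notice the $\Omega_-$ subtlety at all; the paper must further replace the reference boundary data $\phi^u$ (which has infinite level sets) by a modified $\phi^u_n\in\Omega_-$ and re-examine why the moat argument of Section~\ref{section:moats_heart}, and Lemma~\ref{lemma_moats_application_in_product_setting} in particular, still applies with the weaker hypothesis ``monotone over $\Omega_-$''. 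Without these two pieces---the Holley computation on $\Omega_-$ and the replacement $\phi^u\rightsquigarrow\phi^u_n$---the argument does not close.
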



\begin{figure}
	\centering
  \includegraphics[width=\textwidth]{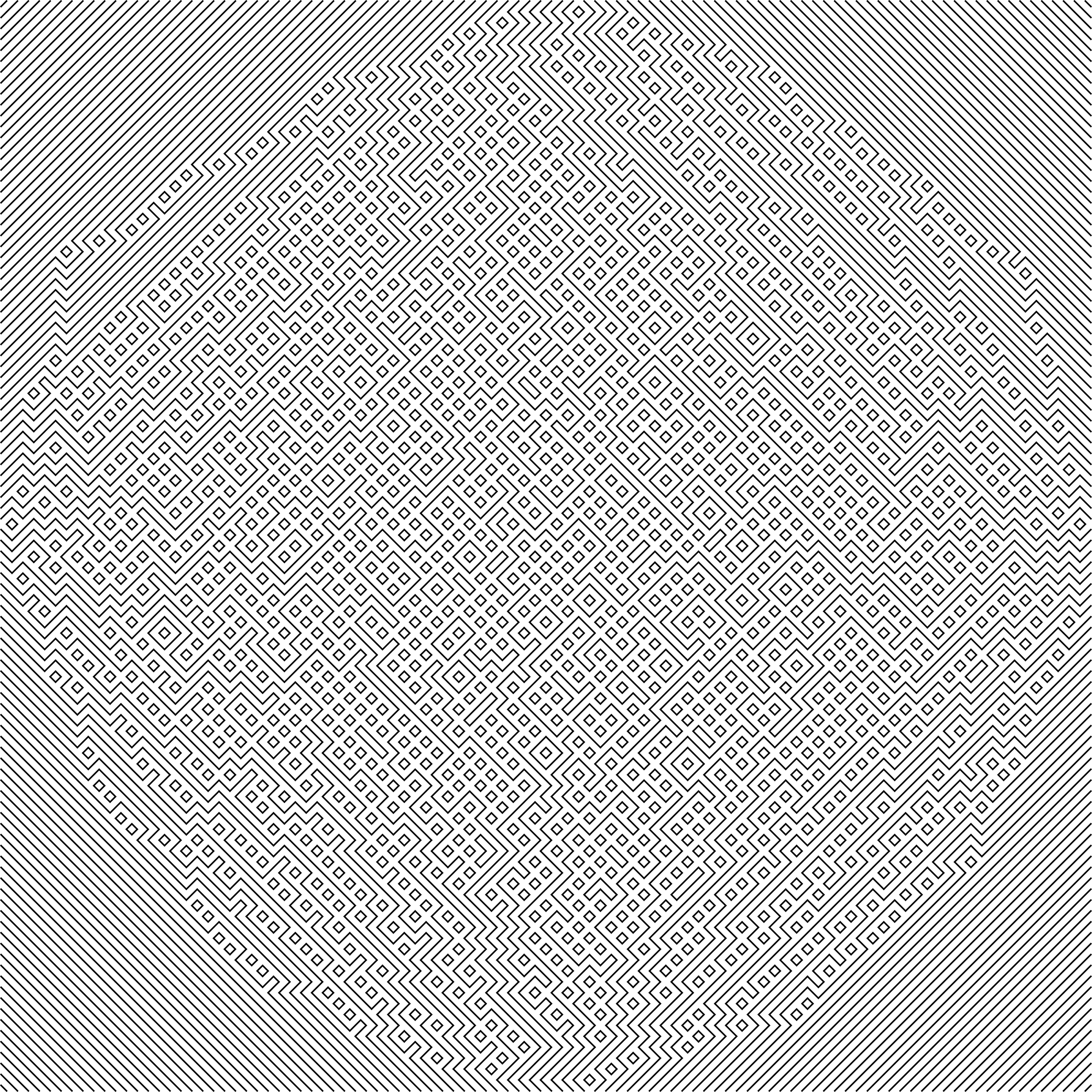}
	\caption[The upper level sets of a random $\mathcal T_3$-valued graph homomorphism]{This figure
	shows the boundaries
	of the upper level sets of the horocylic height function
	(presented in Subsection~\ref{subsection:applications_tree})
	of a random $\mathcal T_3$-valued graph homomorphism.
	The boundary conditions resemble
	the Aztec diamond for domino tilings.
	The simulation hints at the presence of an arctic circle,
	alongside the limit shape which we prove appears inside.
	}
	\label{fig:tree_valued_large}
\end{figure}


\section{Moats}
\label{section:moats_heart}
The following section is at the heart of this work. Its purpose is to show that for a specification which is stochastically monotone, two configurations sampled independently with the same boundary conditions are, on the scale of the specific free energy, at least as likely to oscillate a large number of times than to deviate from each other macroscopically.
\emph{Moats} are introduced in Definition~\ref{def:moats}
to formalize this statement.
Informally, moats are clusters surrounding a given connected set, and on which the height difference between two configurations is prescribed between two fixed bounds.
The proof relies crucially on the reflection principle which is
stated in Lemma~\ref{lemma:reflection}.

In this section, the implicit graph structure on $\mathbb Z^d$ is always the square lattice.
As per usual,
$(\mathbb A,q)$
denotes the local Lipschitz constraint,
and $K\in(0,\infty)$
is chosen minimal subject to $Kd_1\geq q$.
We have in mind a gradient specification $\gamma$ which is $q$-Lipschitz and monotone over $\Omega_q$.
From this specification we draw two height
functions $\phi_1,\phi_2\in\Omega$,
and $f$ shall generally denote the difference function $\phi_1-\phi_2$,
which is thus $2K$-Lipschitz.

\subsection{Reflection principle}

We first state and prove the reflection principle, which does not rely on
the Lipschitz property.
Throughout this section only, we shall adopt the following notation.
Suppose that $f_1$ and $f_2$ are random functions in $\Omega$,
in some probability measures $\mu_1$ and $\mu_2$
respectively.
Then write $f_1\preceq f_2$
if $f_1$ is \emph{stochastically dominated}
by $f_2$,
that is, $\mu_1(f_1\in A)\leq \mu_2(f_2\in A)$
for any increasing set $A\in\mathcal F$.
Note that this notation still makes sense if $\mu_1=\mu_2$,
even if $\mu_1$ and $\mu_2$ are finite measures rather than probability measures.

\begin{lemma}[Reflection principle]\label{lemma:reflection}
	Let $\gamma=(\gamma_{\Lambda})_{\Lambda\subset\subset\mathbb Z^d}$ denote a monotone gradient specification.
	Fix $\Lambda\subset\subset\mathbb Z^d$,
	and consider a probability measure $\mu$
	on the product space $(\Omega^2,\mathcal F^2)$,
	 writing $(\phi_1,\phi_2)$
	for the random pair of height functions,
		and with $f:=\phi_1-\phi_2$.
		Suppose that
		\[
			\mu=\mu(\gamma_\Lambda\times\gamma_\Lambda).
		\]
		If $\mu$-almost surely
		$f_{\mathbb Z^d\smallsetminus \Lambda}\geq a$
		for some $a\in\mathbb R$, then
		\[
			-f\preceq f-2a.
		\]
		Similarly, if
		$\mu$-almost surely
		$f_{\mathbb Z^d\smallsetminus \Lambda}\leq b$ for some $b\in\mathbb R$, then
		\[
			f\preceq -f+2b.
		\]
		The same holds true if $\mu$ is a finite measure
		rather than a probability measure.
\end{lemma}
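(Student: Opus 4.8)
The plan is to prove the reflection principle by constructing an explicit coupling on the product space that exchanges the roles of $\phi_1$ and $\phi_2$ inside $\Lambda$ in a way that is compatible with the monotonicity of $\gamma$. The key observation is that $\mu = \mu(\gamma_\Lambda \times \gamma_\Lambda)$ means we may resample $\phi_1|_\Lambda$ and $\phi_2|_\Lambda$ independently from the local Gibbs measures $\gamma_\Lambda(\cdot,\phi_1)$ and $\gamma_\Lambda(\cdot,\phi_2)$, conditionally on the boundary data $\phi_1|_{\mathbb Z^d\smallsetminus\Lambda}$ and $\phi_2|_{\mathbb Z^d\smallsetminus\Lambda}$. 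Since $\gamma$ is a gradient specification, these conditional laws depend only on the respective exterior height functions up to a global additive constant, which is exactly the symmetry we will exploit.

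First I would set up the conditional picture: condition on the pair of exterior restrictions $(\phi_1|_{\mathbb Z^d\smallsetminus\Lambda},\phi_2|_{\mathbb Z^d\smallsetminus\Lambda})$, and note that on the event $f|_{\mathbb Z^d\smallsetminus\Lambda}\geq a$ we have $\phi_1|_{\mathbb Z^d\smallsetminus\Lambda}\geq \phi_2|_{\mathbb Z^d\smallsetminus\Lambda}+a$ pointwise outside $\Lambda$. The conditional law of $\phi_1$ is $\gamma_\Lambda(\cdot,\phi_1)$ and that of $\phi_2$ is $\gamma_\Lambda(\cdot,\phi_2)$, and they are conditionally independent. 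Because $\gamma$ is a gradient specification, the conditional law of $\phi_2 + a$ under $\gamma_\Lambda(\cdot,\phi_2)$ equals the conditional law of a height function under $\gamma_\Lambda(\cdot,\phi_2+a)$; and since outside $\Lambda$ we have $\phi_2+a \leq \phi_1$, monotonicity of $\gamma$ gives $\gamma_\Lambda(\cdot,\phi_2+a)\preceq\gamma_\Lambda(\cdot,\phi_1)$. Hence conditionally, $\phi_2+a\preceq \phi_1$, i.e.\ $-f = \phi_2-\phi_1 \preceq \phi_1-\phi_2-2a = f-2a$ — wait, this needs the stochastic domination to be applied to the difference, not just to each coordinate. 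The correct way to see it: conditionally, $(\phi_1,\phi_2)$ are independent with $\phi_2+a\preceq\phi_1$ and $\phi_1-a\succeq\phi_2$; by independence we can couple so that $\phi_1\geq\phi_2+a$ and simultaneously (swapping the roles) produce a copy with the two configurations interchanged and shifted by $2a$. More cleanly: let $(\psi_1,\psi_2)$ be the pair obtained by sampling $\psi_1\sim\gamma_\Lambda(\cdot,\phi_2)$ and $\psi_2\sim\gamma_\Lambda(\cdot,\phi_1)$ (roles swapped) and setting the exteriors accordingly; then $\psi_1-\psi_2$ has the law of $-f$ conditionally, while $f$ has its original law, and monotonicity plus the gradient property let one couple $\psi_1\leq\phi_1$ and $\psi_2\geq\phi_2$ on $\Lambda$ after the appropriate $a$-shifts, yielding $\psi_1-\psi_2\leq \phi_1-\phi_2-2a$ pointwise in the coupling, i.e.\ $-f\preceq f-2a$.

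More carefully, the clean route is: conditionally on the two exteriors, write $\nu_1=\gamma_\Lambda(\cdot,\phi_1)$, $\nu_2=\gamma_\Lambda(\cdot,\phi_2)$, which are supported on configurations agreeing with the fixed exteriors. By the gradient property, the pushforward of $\nu_2$ under $+a$ is $\gamma_\Lambda(\cdot,\phi_2+a)$, and since $\phi_2+a\leq\phi_1$ outside $\Lambda$ (and $=\phi_1$ nowhere needed), monotonicity gives $\gamma_\Lambda(\cdot,\phi_2+a)\preceq\nu_1$. So there is a monotone coupling of a draw $\xi_2\sim\nu_2$ and $\xi_1\sim\nu_1$ with $\xi_2+a\leq\xi_1$ everywhere. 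In this coupling, $\xi_1-\xi_2\geq a$; but I want a statement about $-f$ vs.\ $f-2a$, so I apply this with the roles of the two marginals swapped: couple $\eta_1\sim\nu_2$ and $\eta_2\sim\nu_1$ (so $(\eta_1,\eta_2)$ has the law of $(\phi_2,\phi_1)$, hence $\eta_1-\eta_2$ has the law of $-f$) with $\eta_1+a\le\eta_2$, giving $\eta_1-\eta_2\le -a\le f-2a$... the arithmetic requires tracking that on the event we also have $f\geq a$ after resampling inside — but actually inside $\Lambda$ we have no a priori bound, so the comparison must be done at the level of laws, not pointwise on a single sample. The right formulation: conditionally, $-f\stackrel{d}{=}\eta_1-\eta_2$ where $(\eta_1,\eta_2)\sim\nu_2\times\nu_1$, and $f\stackrel{d}{=}\zeta_1-\zeta_2$ where $(\zeta_1,\zeta_2)\sim\nu_1\times\nu_2$; using $\nu_2(\cdot) = \nu_2'(\cdot-a)$ shifted form and the monotone couplings $\gamma_\Lambda(\cdot,\phi_2+a)\preceq\gamma_\Lambda(\cdot,\phi_1)$ and $\gamma_\Lambda(\cdot,\phi_2)\preceq\gamma_\Lambda(\cdot,\phi_1-a)$, one assembles a coupling of $(\eta_1,\eta_2)$ and $(\zeta_1,\zeta_2)$ with $\eta_1-\eta_2\le \zeta_1-\zeta_2-2a$ almost surely, since $\eta_1 = \zeta_2 - a$ can be arranged and $\eta_2 = \zeta_1 + a$... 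I will need to lay out the four couplings so that the shifts cancel to exactly $2a$. Integrating over the exterior conditioning then gives $-f\preceq f-2a$ unconditionally. The second inequality (with upper bound $b$) is symmetric: apply the same argument to $-f$, or equivalently swap $\phi_1\leftrightarrow\phi_2$ and replace $a$ by $-b$. The finite-measure case is immediate since all the steps — disintegration, monotone coupling, pushforward under translation — respect scaling by a positive constant and hence go through for a finite measure normalized to a probability measure.

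The main obstacle I anticipate is bookkeeping the additive shifts correctly in the coupling: monotonicity of $\gamma$ gives comparisons between $\gamma_\Lambda(\cdot,\psi)$ for different boundary data $\psi$, and the gradient property converts a global shift of the boundary into a global shift of the sample, but one has to chain two such comparisons (one shift of $+a$ applied to $\phi_2$, one shift of $-a$ applied to $\phi_1$, say) so that when the difference $\phi_1-\phi_2$ is formed the shifts add up to $\pm 2a$ rather than cancelling. Making this precise requires writing the conditional coupling as a composition: first use the gradient property to move $\gamma_\Lambda(\cdot,\phi_2)$ to $\gamma_\Lambda(\cdot,\phi_2+a)$, then use $\phi_2+a\le\phi_1$ outside $\Lambda$ and monotonicity to dominate by $\gamma_\Lambda(\cdot,\phi_1)$; symmetrically on the other side. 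Everything else — disintegration over $\mathcal T_\Lambda^{\otimes 2}$, the fact that $\mu=\mu(\gamma_\Lambda\times\gamma_\Lambda)$ means the conditional law of the interiors given the exteriors is exactly the product of the local Gibbs measures, and passage to finite measures — is routine.
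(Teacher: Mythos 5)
Your proposal is correct and takes essentially the same approach as the paper's proof: condition on the two exterior restrictions (reducing to product Gibbs measures with fixed boundaries), use the gradient property to shift $\phi_2$ by $a$, invoke monotonicity to obtain $\gamma_\Lambda(\cdot,\phi_2+a)\preceq\gamma_\Lambda(\cdot,\phi_1)$, combine two such monotone couplings via the conditional independence of the two coordinates to get the domination for the difference, and then average over the boundary data. The paper states this more compactly ($\phi_1\succeq\phi_2+a$ plus independence immediately gives $-f\preceq f-2a$), but the bookkeeping you lay out for chaining the two shifts and two couplings is exactly what that one line is packaging.
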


\begin{proof}
	We focus on the first statement; the second statement then
	follows by symmetry.
	Fix $a\in\mathbb R$.
	Suppose first that $\mu$ restricted to $\mathbb Z^d\smallsetminus\Lambda$
	is a Dirac measure,
	that is,
	\[
		\mu=\gamma_\Lambda(\cdot,\psi_1)\times\gamma_\Lambda(\cdot,\psi_2)
	\]
	for some $\psi_1,\psi_2\in\Omega$
	with $\psi_1-\psi_2\geq a$.
	As $\gamma$ is a monotone gradient specification,
	we have
	\[
		\phi_1\succeq \phi_2+a.
	\]
	But $\phi_1$ and $\phi_2$ are independent, and therefore
	\[
		-f=\phi_2-\phi_1\preceq (\phi_1-a)-(\phi_2+a)=f-2a.
	\]
	This inequality is generalized
	to the case that $\mu$ restricted to $\mathbb Z^d\smallsetminus\Lambda$
	is not a Dirac measure, simply by averaging the inequality over all possible
	 values
	of $\phi_1$ and $\phi_2$ on $\mathbb Z^d\smallsetminus\Lambda$
	with respect to $\mu$.
\end{proof}

\subsection{Definition of moats}

\begin{definition}[Moats]\label{def:moats}
	Let $f:\mathbb Z^d\to \mathbb R$ be a $2K$-Lipschitz function and $\Lambda\subset\subset\mathbb Z^d$ connected.
	Consider two real numbers $a$ and $b$ with $b-a\geq 4K$.
 	\begin{enumerate}
		\item A set $M\subset \mathbb Z^d$ is a called an \emph{$a,b$-moat of $(f,\Lambda)$}
		or simply a \emph{moat}
		if
		$M$ is a finite connected component of the set
		$\{a\leq f< b\}=\{x\in\mathbb Z^d:a\leq f(x)< b\}\subset \mathbb Z^d$
		such that $\Lambda$ is contained in a bounded connected component
		of $\mathbb Z^d\smallsetminus M$.
		\item The boundary of $M$, that is, the set of vertices $x\in\mathbb Z^d\smallsetminus M$
		adjacent to $M$, is denoted by $\partial M$.
		Write $\bar M$ for the closure of $M$,
		that is, $M\cup \partial M$.
		\item The connected component of $\mathbb Z^d\smallsetminus M$ containing $\Lambda$ is called
		the \emph{inside} of $M$,
		and the \emph{inside boundary} is the intersection of the inside with $\partial M$.
		Write $M^\Lambda$ and $\partial_\Lambda M$ for the inside and the inside boundary respectively.
		\item The unbounded connected component of $\mathbb Z^d\smallsetminus M$ is called
		the \emph{outside} of $M$,
		and the \emph{outside boundary} is the intersection of the outside with $\partial M$.
		Write $M^\infty$ and $\partial_\infty M$ for the outside and the outside boundary respectively.
		\item A moat $M$ is said to \emph{surround}
		another moat $N$, if $N\subset M^\Lambda$.
		\item A moat $M$ is called a \emph{climbing moat} if $f_{\partial_\infty M}<a$
		and $f_{\partial_\Lambda M}\geq b$
		and it is called a \emph{descending moat} if $f_{\partial_\infty M}\geq b$
		and $f_{\partial_\Lambda M}<a$.
		From now on, we shall only consider moats
		which are either climbing or descending;
		when speaking of a moat, it is implicit that it
		belongs to one of these categories.
		\item A finite sequence of moats $(M_k)_{1\leq k\leq n}$
		is called \emph{nested} if
		$M_k$ surrounds $ M_{k+1}$ for all $1\leq k<n$,
		and if the moats are alternatingly climbing and descending, with $M_1$ climbing.
	\end{enumerate}
\end{definition}

We immediately collect a number
of important properties.

\begin{proposition}
	\label{propo:collection}
	Work in the context of the previous definition.
	\begin{enumerate}
		\item There exists at most one moat $M$
		with $x\in M$,
		for any fixed $x\in\mathbb Z^d$.
		\item If $M$ is a moat, then
		$a-2K\leq f<  b+2K$ on $\bar M$.
		\item
		\label{propo:collection:in_out_dist}
	 Suppose that $M$ is a moat,
	 and that $p=(p_k)_{0\leq k\leq n}\subset \mathbb Z^d$
	 is a path through the square lattice
	 from $M^\Lambda$ to $M^\infty$.
	 Then $p_k\in M$
	 for at least $\lfloor(b-a)/2K\rfloor\geq 2$
	 consecutive integers $k$.
	 \item If $\Delta\subset\subset\mathbb Z^d$ contains $\Lambda$,
	 then the number of moats $M$ of $(f,\Lambda)$
	 for which $M\cup M^\Lambda\subset\Delta$, is bounded by
	 the $d_1$-distance from $\Lambda$ to $\mathbb Z^d\smallsetminus\Delta$.

		\item
		Suppose that $M$ is a moat of $(f,\Lambda)$,
		and that $g$ is another $2K$-Lipschitz function with
		$g=f$ on $\bar M$.
		Then $M$ is also a moat of $(g,\Lambda)$.
		If $M$ was climbing (resp.\ descending) w.r.t.\ $(f,\Lambda)$
		then it is climbing (resp.\ descending) w.r.t.\ $(g,\Lambda)$.
		In other words, for $M\subset\mathbb Z^d$, the event
		\[
		\{\text{$M$ is a (climbing or descending) moat of $(f,\Lambda)$}\}
		\] is $\mathcal F_{\bar M}^2$-measurable.
		\item
		Suppose that $A\subset \mathbb Z^{d}$
		such that $\Lambda$ is contained in a finite connected component of $\mathbb Z^d\smallsetminus A$,
		and write $A^\Lambda$ for this connected component.
		If $f<a$ on $A$ and $f\geq b$ on $\Lambda$, then $A^\Lambda$ contains a climbing moat.
		If $f\geq b$ on $A$ and $f<a$ on $\Lambda$, then $A^\Lambda$ contains a descending moat.

		\item
		\label{moatcontainsmoat}
		If $a'$ and $b'$ are real numbers
		with $b'-a'\geq 4K$ and $[a',b']\subset [a,b]$,
		then any $a,b$-moat contains an $a',b'$-moat.
	\end{enumerate}
\end{proposition}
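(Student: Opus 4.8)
The plan is to reduce to the case of a climbing moat, to produce the $a',b'$-moat by invoking part~(6) with thresholds $a',b'$ and with a singleton ``source'' $\{v\}$ attached to the inside boundary of $M$, and then to verify by boundary-crossing arguments (using part~(3)) that the moat so produced lies inside $M$ and still surrounds $\Lambda$. The descending case is entirely analogous, with the second half of part~(6) replacing the first.

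Assume $M$ is a climbing $a,b$-moat of $(f,\Lambda)$, and record the facts to be used: $M$ separates $M^\Lambda\supseteq\Lambda$ from the unique unbounded component $M^\infty$ of $\mathbb Z^d\smallsetminus M$, the set $\mathbb Z^d\smallsetminus M^\infty$ is finite, $f\geq b\geq b'$ on $\partial_\Lambda M$, and $f<a\leq a'$ on $\partial_\infty M$; moreover $[a',b')\subseteq[a,b)$, so $\{a'\leq f<b'\}\subseteq\{a\leq f<b\}$ and hence $\{a'\leq f<b'\}$ is disjoint from both $\partial_\Lambda M$ and $\partial_\infty M$. Fix $v\in\partial_\Lambda M$ (nonempty, since $M$ surrounds $\Lambda$) and set $A:=\partial_\infty M$. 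Then $f<a'$ on $A$, and the connected component $D$ of $\mathbb Z^d\smallsetminus A$ containing $v$ is finite: every path that leaves $\mathbb Z^d\smallsetminus M^\infty$ must enter $M^\infty$ through a vertex of $\partial_\infty M=A$, so $D\subseteq\mathbb Z^d\smallsetminus M^\infty$. Since $M^\Lambda$ is connected, disjoint from $A$, and contains both $v$ and $\Lambda$, we get $M^\Lambda\subseteq D$, so $\Lambda\subseteq D$ as well. As $f(v)\geq b'$ and $f$ is $2K$-Lipschitz with $b'-a'\geq 4K$, part~(6) applied to $(f,\{v\})$ with thresholds $a',b'$ yields a climbing $a',b'$-moat $N$ of $(f,\{v\})$ with $N\subseteq D$; write $N^v$ for the bounded component of $\mathbb Z^d\smallsetminus N$ containing $v$ and $N^\infty$ for the unbounded one.

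It remains to prove $N\subseteq M$ and that $N$ surrounds $\Lambda$. First, $N$ meets $M$: as $N\subseteq D$ is disjoint from $M^\infty$, the connected set $M^\infty$ lies in a single component of $\mathbb Z^d\smallsetminus N$, necessarily $N^\infty$; now take a path from $v\in N^v$ that enters $M$ and crosses $M$ to a vertex of $\partial_\infty M\subseteq M^\infty\subseteq N^\infty$, and apply part~(3): this path dwells in $N$ for at least $\lfloor(b'-a')/2K\rfloor\geq 2$ consecutive steps, and these steps lie in $M$. Next, a connected subset of $\{a'\leq f<b'\}$ meeting $M$ can reach neither $M^\Lambda$ (the first vertex of $M^\Lambda$ along a connecting path would lie in $\partial_\Lambda M$, where $f\geq b'$) nor $M^\infty$ (the first vertex of $M^\infty$ would lie in $\partial_\infty M$, where $f<a'$); the remaining bounded components of $\mathbb Z^d\smallsetminus M$ are excluded by the same crossing reasoning together with the precise form of the moat produced in part~(6), so $N\subseteq M$. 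Finally, since $N\subseteq M$, the set $M^\Lambda$ is connected, disjoint from $N$, and contains both $v\in N^v$ and $\Lambda$, so $\Lambda\subseteq N^v$; thus $N$ surrounds $\Lambda$ and, with the same inside region, is a climbing $a',b'$-moat of $(f,\Lambda)$ contained in $M$. The step I expect to be the main obstacle is exactly this certification that $N\subseteq M$: one must rule out that part~(6) returns a spurious component of $\{a'\leq f<b'\}$ nested more deeply in $M^\Lambda$ or tucked into another complementary pocket, and this is precisely why the source $\{v\}$ is anchored on the inside boundary of $M$ (so that any encircling level set of the right height must already traverse $M$) and why $A$ is taken to be exactly $\partial_\infty M$.
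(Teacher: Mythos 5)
Your overall strategy is sound and is almost certainly what the authors intend: the paper's own justification is the one-liner ``The final statement is a corollary of the sixth,'' and applying part~(6) to $(f,\{v\})$ with $v\in\partial_\Lambda M$ and $A=\partial_\infty M$, then promoting the resulting $\{v\}$-moat to a $\Lambda$-moat via $M^\Lambda\subseteq N^v$, is exactly the kind of argument that makes this a corollary.

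There is, however, a genuine gap in the certification that $N\subseteq M$, and you yourself flagged this as the weak point. Your crossing argument shows $N$ cannot reach $M^\Lambda$ or $M^\infty$, because the first vertex encountered would lie on $\partial_\Lambda M$ (where $f\geq b\geq b'$), respectively on $\partial_\infty M$ (where $f<a\leq a'$). But for the other bounded components of $\mathbb Z^d\smallsetminus M$---the ``pockets'' inside $M$---the definition of a moat imposes no constraint on $f$ along their boundaries, so ``the same crossing reasoning'' does not rule them out; and ``the precise form of the moat produced in part~(6)'' is not a real argument, since part~(6) only places $N$ inside $D$, which contains all those pockets.

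The correct fix is simpler than the crossing argument and supersedes it entirely. Since $[a',b']\subset[a,b]$ we have $a\leq a'$ and $b'\leq b$, hence $\{a'\leq f<b'\}\subset\{a\leq f<b\}$. The set $N$ is therefore a connected subset of $\{a\leq f<b\}$, and hence lies inside a single connected component of $\{a\leq f<b\}$. You have already shown via part~(3) that $N$ meets $M$, and $M$ is, by definition, a connected component of $\{a\leq f<b\}$; maximality of that component then gives $N\subseteq M$ with no case analysis on where else $N$ might go. With this substitution, the remainder of your proof---that $M^\Lambda\subseteq N^v$ forces $\Lambda\subseteq N^v=N^\Lambda$, so that $N$ is a climbing $a',b'$-moat of $(f,\Lambda)$ contained in $M$---goes through cleanly, and the descending case is symmetric, as you note.
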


\begin{proof}[Proof]
	The first three statements
	follow from the definitions,
	where it is important that $f$ is $2K$-Lipschitz
	and that any moat is either climbing or descending.
	For the fourth statement, observe that a path of minimal length
	from $\Lambda$ to $\mathbb Z^d\smallsetminus\Delta$ through
	the square lattice must intersect any moat $M$ for which $M\cup M^\Lambda\subset\Delta$.
	The fifth statement is immediate from the definition.
	The sixth statement follows from the
	connectivity properties of the square lattice,
	as well as the fact that $f$ is $2K$-Lipschitz.
	The final statement is a corollary of the sixth.
\end{proof}

\subsection{Moats and macroscopic deviations}

\begin{theorem}\label{thm:moats}
	Let $\gamma=(\gamma_{\Lambda})_{\Lambda\subset\subset\mathbb Z^d}$ denote a $q$-Lipschitz gradient specification
	which is monotone over $\Omega_q$.
	Fix $\Delta\subset\subset\mathbb Z^d$,
	and consider a $q$-Lipschitz probability measure $\mu$
	on the product space $(\Omega^2,\mathcal F^2)$,
	 writing $(\phi_1,\phi_2)$
	for the random pair of height functions,
		and with $f:=\phi_1-\phi_2$.
		Suppose that
		\[
		\text{$
			\mu=\mu(\gamma_\Delta\times\gamma_\Delta)$
			\qquad and\qquad
			$\mu$-almost surely $|f_{\mathbb Z^d\smallsetminus\Delta}|\leq 2K$.}
		\]
		Fix a connected set $\Lambda\subset\Delta$,
		and write $E(n)$ for the event that there exists a sequence of $n$ nested $a,b$-moats
 	of $(f,\Lambda)$,
	where $a=4K$ and $b\geq 8K$.
	Then
	\begin{equation}
	\label{eq:thmgoal}
	m^{2n}\mu(E(2n))\geq \mu(f_\Lambda\geq 3bn)
	\end{equation}
	for all $n\in\mathbb N $,
	where $m=d_1(\Lambda,\mathbb Z^d\smallsetminus\Delta)$.
\end{theorem}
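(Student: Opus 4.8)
\section*{Proof proposal for Theorem~\ref{thm:moats}}

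The plan is to argue by induction on the number of moats, peeling off one moat at a time using the reflection principle. Fix $a=4K$ and $b\ge 8K$ as in the statement and write $m=d_1(\Lambda,\mathbb Z^d\smallsetminus\Delta)$. The engine is a single ``step'': starting from the event that $f_\Lambda$ deviates to one side by at least some amount $t$, we locate the innermost moat $M$ of a longest nested sequence constructed so far, condition on $M$ (and on the existence of a valid nested sequence surrounding it), and apply Lemma~\ref{lemma:reflection} to resample \emph{inside} $M$. Reflection then shows that $f_\Lambda$ is at least as likely to deviate to the \emph{opposite} side by at least $t-c$ for a bounded height cost $c$; and on this reflected event one further moat, of the type opposite to $M$, is forced inside $M^\Lambda$ by Proposition~\ref{propo:collection}(6). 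Since $M$ must be one of at most $m$ moats (Proposition~\ref{propo:collection}(4)), summing over the possible $M$ costs a factor $m$ per step.

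To set up the bookkeeping, define thresholds $t_0,\dots,t_{2n}$ by $t_0=3bn$, $t_\ell=t_{\ell-1}-(2b+4K)$ for $\ell$ odd and $t_\ell=t_{\ell-1}-4K$ for $\ell$ even. Because $b\ge 8K$ we have $2b+8K\le 3b$, hence $t_{2j}=3bn-j(2b+8K)\ge 3b(n-j)\ge 0$ for $0\le j\le n$, and one checks that the intermediate thresholds also stay nonnegative (and stay large compared with $b+2K$ at the even indices). Let $G_\ell$ be the event that a nested sequence of $\ell$ $a,b$-moats of $(f,\Lambda)$ exists, so $G_0$ is the whole space and $G_{2n}=E(2n)$, and set $B_\ell:=G_\ell\cap\{(-1)^\ell f_\Lambda\ge t_\ell\}$, so $B_0=\{f_\Lambda\ge 3bn\}$ and $B_{2n}\subseteq E(2n)$. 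The claim to be proved by induction is
\[
\mu(B_{\ell-1})\le m\,\mu(B_\ell),\qquad \ell=1,\dots,2n,
\]
since chaining these and using $B_0$ and $B_{2n}$ gives exactly \eqref{eq:thmgoal}.

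For the inductive step, first note that on $B_{\ell-1}$ one more moat can always be adjoined: given a nested $(\ell-1)$-sequence $(M_1,\dots,M_{\ell-1})$, apply Proposition~\ref{propo:collection}(6) with the separating set $\partial_\Lambda M_{\ell-1}$ (for $\ell=1$, with $\mathbb Z^d\smallsetminus\Delta$, on which $|f|\le 2K<a$) to get a moat $M_\ell$ of the required type inside $M_{\ell-1}^\Lambda$; thus $B_{\ell-1}\subseteq G_\ell\cap\{(-1)^{\ell-1}f_\Lambda\ge t_{\ell-1}\}$. Next, for a connected $M\subseteq\Delta$ let $\mathcal E_M$ be the event ``$M$ is an $a,b$-moat of $(f,\Lambda)$ of the type appropriate to position $\ell$ (climbing if $\ell$ odd, descending if $\ell$ even), and some valid nested $(\ell-1)$-sequence surrounds $M$''; then $G_\ell=\bigcup_M\mathcal E_M$. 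Using Proposition~\ref{propo:collection}(5), and the fact that any moat surrounding $M$ lies outside $M^\Lambda$, the event $\mathcal E_M$ is measurable with respect to the configuration outside a resampling region $R_M$ contained in $M^\Lambda$ and containing $\Lambda$, so $\mathbf 1_{\mathcal E_M}\mu$ is invariant under $\gamma_{R_M}\times\gamma_{R_M}$. Since $M$ is a connected component of $\{a\le f<b\}$, $f$ is bounded on $\bar M$ (by $b+2K$ above and $a-2K$ below) and $|f|\le 2K$ off $\Delta$; feeding these a priori bounds into Lemma~\ref{lemma:reflection} applied to $\mathbf 1_{\mathcal E_M}\mu$ yields, at an odd step, $\mu(\mathcal E_M\cap\{f_\Lambda\ge t_{\ell-1}\})\le\mu(\mathcal E_M\cap\{f_\Lambda\le -t_\ell\})$ with $t_\ell=t_{\ell-1}-(2b+4K)$, and, at an even step, $\mu(\mathcal E_M\cap\{-f_\Lambda\ge t_{\ell-1}\})\le\mu(\mathcal E_M\cap\{f_\Lambda\ge t_\ell\})$ with $t_\ell=t_{\ell-1}-4K$. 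Finally, summing over $M$ and using that every $M$ with $\mathcal E_M$ is one of the at most $m$ moats of $(f,\Lambda)$ fitting in $\Delta$ (Proposition~\ref{propo:collection}(4)), and that $\#\{M:\mathcal E_M\}\ge 1$ precisely on $G_\ell$,
\[
\mu(B_{\ell-1})\le\sum_M\mu\bigl(\mathcal E_M\cap\{(-1)^{\ell-1}f_\Lambda\ge t_{\ell-1}\}\bigr)\le\sum_M\mu\bigl(\mathcal E_M\cap\{(-1)^\ell f_\Lambda\ge t_\ell\}\bigr)=\E_\mu\bigl[\mathbf 1_{\{(-1)^\ell f_\Lambda\ge t_\ell\}}\,\#\{M:\mathcal E_M\}\bigr]\le m\,\mu(B_\ell),
\]
which closes the induction.

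The main obstacle is precisely the construction of the conditioning event $\mathcal E_M$ and the resampling region $R_M$ in the inductive step: one needs $R_M$ measurable and containing $\Lambda$; one needs $f$ to obey a \emph{deterministic} bound on the complement of $R_M$, since only then does Lemma~\ref{lemma:reflection} apply with a fixed constant, and this essentially forces the complement of $R_M$ to consist of moat closures and of $\mathbb Z^d\smallsetminus\Delta$, the only places where $f$ is controlled a priori; and one needs the nested moats already built to survive the resampling, which is what dictates taking $R_M\subseteq M^\Lambda$ so as not to touch their closures. Reconciling these competing demands — and in particular controlling, or otherwise handling, the behaviour of $f$ on $M^\infty\cap\Delta$ away from the existing moats — is where the geometric content of Definition~\ref{def:moats} and Proposition~\ref{propo:collection} (uniqueness of the moat through a vertex, the in--out crossing property of part~\ref{propo:collection:in_out_dist}, $\mathcal F_{\bar M}$-measurability, and the forcing of moats from separating level sets) is used in full. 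The remaining ingredients — the arithmetic of the thresholds $t_\ell$, relying on $b\ge 8K$, and the appearance of the single factor $m$ per step rather than a larger combinatorial factor, which works because only the innermost moat has to be named at each step while the rest of the nested sequence is preserved automatically — are then routine.
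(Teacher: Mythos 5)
Your high-level strategy — a chain of reflection steps, each producing a deeper moat at a known threshold cost, with a factor of $m$ per step — is in the same spirit as the paper's proof, and your threshold arithmetic and final combinatorial count are consistent with what the paper obtains. But the inductive step as written does not close, and the difficulty is exactly where you flag the ``main obstacle'' without resolving it.

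You condition on $\mathcal E_M$, which names only the innermost moat $M$ and merely posits the \emph{existence} of a nested $(\ell-1)$-sequence surrounding it. For Lemma~\ref{lemma:reflection} to apply to $\mathbf 1_{\mathcal E_M}\mu$ you need a deterministic set $R_M$ such that (i) $\mathcal E_M$ is $\mathcal T^2_{R_M}$-measurable, so that $\mathbf 1_{\mathcal E_M}\mu=(\mathbf 1_{\mathcal E_M}\mu)(\gamma_{R_M}\times\gamma_{R_M})$, and (ii) $f$ satisfies a fixed pointwise bound on all of $\mathbb Z^d\smallsetminus R_M$. Requirement (i) forces $R_M\subseteq M^\Lambda$, as you note. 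But then $\mathbb Z^d\smallsetminus R_M$ contains $M^\infty\cap\Delta$, and on the annular regions between the unnamed, unlocated outer moats there is no control on $f$ whatsoever: the existential conditioning gives no pointwise bound there, and $f$ can be much larger than $b+2K$ in absolute value. So (ii) fails, the comparison $\mu(\mathcal E_M\cap\{(-1)^{\ell-1}f_\Lambda\ge t_{\ell-1}\})\le\mu(\mathcal E_M\cap\{(-1)^{\ell}f_\Lambda\ge t_{\ell}\})$ cannot be extracted from Lemma~\ref{lemma:reflection}, and the closing paragraph of your proposal acknowledges this tension but does not resolve it.

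The paper's resolution is to name the \emph{entire} nested sequence $M=(M_1,\dots,M_k)$ rather than just the innermost moat. The conditioning event $A(M)$ is then $\mathcal F^2_{\cup_i\bar M_i}$-measurable (Proposition~\ref{propo:collection}, statement~5), the resampling region $\Gamma(M):=\Delta\smallsetminus\cup_i\bar M_i$ is deterministic given $M$, and its complement is exactly $(\mathbb Z^d\smallsetminus\Delta)\cup\bigl(\cup_i\bar M_i\bigr)$, on all of which the a priori bound $-2K\le f<b+2K$ holds (by the hypothesis $|f_{\mathbb Z^d\smallsetminus\Delta}|\le 2K$ and Proposition~\ref{propo:collection}, statement~2, noting $a-2K=2K$). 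Reflection then applies without obstruction at every step. The price of naming all $k$ moats is paid once at the end: a configuration in $E(2n)$ supports at most $m$ moats (Proposition~\ref{propo:collection}, statement~4), so it admits at most ${m\choose 2n}\le m^{2n}$ nested $2n$-sequences, giving $\sum_{M\in\Delta_{2n}}\mu_{A(M)}\le m^{2n}\mu$. Your desire to keep the surrounding moats anonymous so as to pay only $m$ per step is precisely what makes the reflection step impossible.
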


The idea of the proof is as follows.
If $f\geq 3bn$ on $\Lambda$ and $f\leq 2K$ on $\mathbb Z^d\smallsetminus\Delta$, then
$\Delta$ must contain a climbing $a,b$-moat.
Suppose now that we fix a subset $M$ of $\Delta$,
and condition on the event
\[
	A:=\{\text{$M$ is a climbing $a,b$-moat of $(f,\Lambda)$}\}\in\mathcal F_{\bar M}^2.
\]
If we write $\Gamma$ for the set $\Delta\smallsetminus\bar M$,
then the conditioned measure $\mu(\cdot|A)$ satisfies
\[
\text{$
	\mu(\cdot|A)=\mu(\cdot|A)(\gamma_\Gamma\times\gamma_\Gamma)$
	\qquad and\qquad
	$\mu(\cdot|A)$-almost surely $-2K\leq f_{\mathbb Z^d\smallsetminus\Gamma}\leq b+2K$.}
\]
By the reflection principle,
we thus have
\[
	\mu(f_\Lambda\leq -3bn+2b+4K|A)\geq \mu(f_\Lambda\geq 3bn|A).
\]
In other words, this means that it is as least as likely to observe
the set $M$ as a climbing moat and a large negative deviation
on $\Lambda$, than to see the set $M$ as a climbing moat
and a slightly larger positive deviation on $\Lambda$.
But if $f$ is negative on $\Lambda$ then we can find a descending moat
in the inside $M^\Lambda$ of $M$.
One repeats this reflection procedure
to generate a full nested sequence of moats, while retaining a sufficiently large probability.
The formalism is slightly more convoluted because one needs
to choose the set $M$ appropriately.
This produces the extra factor $m^{2n}$ in~\eqref{eq:thmgoal}.

\begin{proof}[Proof of Theorem~\ref{thm:moats}]
	We proceed along the same spirit.
	Write
	$\Delta_k:=\{\Lambda\subset\Delta\}^k$,
	and define
	\[
	A(M)
	:=
	\{\text{$M$ is a nested sequence of $a,b$-moats of $(f,\Lambda)$}\}\in\mathcal F_{\cup_i\bar M_i}^2
	\]
	for $M\in\Delta_k$.
	We also write $\Gamma(M):=\Delta\smallsetminus\cup_i\bar M_i$.
	Define
	$\mu_B:=\mu(\cdot\cap B)$
	for any $B\in\mathcal F^2$.

	For any $k\in\mathbb N$ and $M\in\Delta_k$,
	we have
	\[
		\text{$
			\mu_{A(M)}=\mu_{A(M)}(\gamma_{\Gamma(M)}\times\gamma_{\Gamma(M)})$
			\qquad and\qquad
			$\mu_{A(M)}$-a.e.\ $-2K\leq f_{\mathbb Z^d\smallsetminus\Gamma(M)}\leq b+2K$,}
	\]
	which means that the reflection principle applies
	to this measure.
	Claim that
	\begin{align}
		\label{al:moats:1}\mu(f_\Lambda\geq 3bn) &\leq \sum\nolimits_{M\in\Delta_1}\mu_{A(M)}(f_\Lambda\geq 3bn)
		\\
		&\label{al:moats:2}\leq
		\sum\nolimits_{M\in\Delta_1}\mu_{A(M)}(f_\Lambda\leq -3bn+2b+4K)
		\\
		&\label{al:moats:3}\leq
		\sum\nolimits_{M\in\Delta_2}\mu_{A(M)}(f_\Lambda\leq -3bn+2b+4K)
		\\
		&\label{al:moats:4}\leq
		\sum\nolimits_{M\in\Delta_2}\mu_{A(M)}(f_\Lambda\geq 3bn-2b-8K)
		\\
		&\label{al:moats:5}\leq
		\sum\nolimits_{M\in\Delta_2}\mu_{A(M)}(f_\Lambda\geq 3b(n-1)).
	\end{align}
	Here~\eqref{al:moats:1} follows from the fact that $\Delta$
	contains a moat whenever $f\leq 2K$ on the complement of $\Delta$
	and $f\geq 3bn$ on $\Lambda$, and
	\eqref{al:moats:2} follows from the reflection principle applied to each measure in the finite sum.
	Now isolate one set $M\in\Delta_1$ and consider the measure
	$\mu_{A(M)}$.
	If $f_\Lambda\leq -3bn+2b+4K$,
	then there must be a descending moat in the inside of $M_1$---recall
	that $M_1$ is a climbing moat, by definition of a nested sequence of moats.
	In particular, this proves~\eqref{al:moats:3}.
	Inequality~\eqref{al:moats:4} follows again from the reflection principle
	applied to each separate measure, and~\eqref{al:moats:5}
	follows from the fact that $3bn-2b-8K\geq 3b(n-1)$.
	A continuation of this series of inequalities leads to the equation
	\[
	\mu(f_\Lambda\geq 3bn)\leq \sum_{M\in\Delta_{2n}}\mu_{A(M)}(f_\Lambda\geq 0).
	\]

	The proof is nearly done.
	Note that
	$\mu_{A(M)}(\Omega^2)=\mu_{A(M)}(E(2n))$ for $M\in \Delta_{2n}$
	by definition of $A(M)$ and $E(2n)$,
	and therefore
	\[
	\mu(f_\Lambda\geq 3bn)\leq
	\sum_{M\in\Delta_{2n}}\mu_{A(M)}(f_\Lambda\geq 0)
	\leq \sum_{M\in\Delta_{2n}}\mu_{A(M)}(E(2n)).
	\]
	To deduce~\eqref{eq:thmgoal},
	it suffices to demonstrate that,
	as measures,
	\[
		\sum_{M\in\Delta_{2n}}\mu_{A(M)}\leq m^{2n}\mu.
	\]
	The measure on the left equals $X\mu$,
	where $X$ is the number of ways to choose
	a nested sequence of $2n$ moats
	contained in $\Delta$.
		Since $\Delta$ contains at most $m$ moats, we have $X\leq {m\choose 2n}\leq m^{2n}$.
\end{proof}

We state an immediate corollary, which is an adaptation of the previous
result to the case that $\Delta$ and $\Lambda$ are not connected.

\begin{proposition}
	\label{proposition_moats_effective_application}
	Assume the setting of the previous theorem,
	only suppose now that $\Delta$ and $\Lambda$ each decompose into
	$k$ connected components denoted by $(\Delta_i)_i$ and $(\Lambda_i)_i$
	respectively with $\Lambda_i\subset \Delta_i$,
	and write $E(n)$ for the event that each $\Delta_i$ contains
	a sequence of $n$ nested $a,b$-moats of $(f,\Lambda_i)$.
	Then~\eqref{eq:thmgoal} holds true once we replace $m$ by
	\[
		m=\prod_{i=1}^k d_1(\Lambda_i,\mathbb Z^d\smallsetminus\Delta_i).
	\]
%
\end{proposition}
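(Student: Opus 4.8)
The plan is to rerun the proof of Theorem~\ref{thm:moats} essentially verbatim, carrying out the reflection construction in all $k$ components of $\Delta$ simultaneously. The point that makes this painless is that distinct connected components of $\Delta$ are never adjacent in the square lattice. First I would record the elementary consequence that $\partial\Delta_i\subset\mathbb Z^d\smallsetminus\Delta$ for each $i$: a vertex adjacent to $\Delta_i$ but not lying in $\Delta_i$ cannot lie in another component $\Delta_j$, since that would make $\Delta_i\cup\Delta_j$ connected. In particular $f<a=4K$ on each $\partial\Delta_i$ whenever $|f_{\mathbb Z^d\smallsetminus\Delta}|\le 2K$, and moats of $(f,\Lambda_i)$ that are contained in $\Delta_i$ have closures that are pairwise disjoint across distinct~$i$, so no $\sigma$-algebra bookkeeping is affected.

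Next I would run the iteration. For a $k$-tuple $\mathbf M=(M^i_1,\dots,M^i_\ell)_{1\le i\le k}$ consisting of a nested sequence of $a,b$-moats in each component, the conditioning event $A(\mathbf M)$ that records, for every $i$, that $(M^i_1,\dots,M^i_\ell)$ is a nested sequence of $a,b$-moats of $(f,\Lambda_i)$ is $\mathcal F^2_{\cup_{i,j}\bar M^i_j}$-measurable by Proposition~\ref{propo:collection}. Writing $\Gamma(\mathbf M):=\Delta\smallsetminus\cup_{i,j}\bar M^i_j$, the finite measure $\mu_{A(\mathbf M)}:=\mu(\cdot\cap A(\mathbf M))$ still satisfies $\mu_{A(\mathbf M)}=\mu_{A(\mathbf M)}(\gamma_{\Gamma(\mathbf M)}\times\gamma_{\Gamma(\mathbf M)})$ by consistency of the specification, and, using the hypothesis on $\mathbb Z^d\smallsetminus\Delta$ together with Proposition~\ref{propo:collection}(2) on each moat, one has $-2K\le f\le b+2K$ on $\mathbb Z^d\smallsetminus\Gamma(\mathbf M)$ almost surely; hence the reflection principle (Lemma~\ref{lemma:reflection}) applies to $\mu_{A(\mathbf M)}$ exactly as before, yielding for any $c$
\[
\mu_{A(\mathbf M)}(f_\Lambda\ge c)\le\mu_{A(\mathbf M)}(f_\Lambda\le -c+2b+4K),\qquad
\mu_{A(\mathbf M)}(f_\Lambda\le c)\le\mu_{A(\mathbf M)}(f_\Lambda\ge -c-4K).
\]
When $f_\Lambda$ is large and positive under $\mu_{A(\mathbf M)}$, I would produce in each component a new climbing moat inside the innermost (descending) moat $M^i_\ell$ by applying Proposition~\ref{propo:collection}(6) to the set $\partial_{\Lambda_i}M^i_\ell$, on which $f<a$ — or to $\partial\Delta_i$ when $\ell=0$; symmetrically one gets a new descending moat in each component when $f_\Lambda$ is large and negative. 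This step, which relies on non-adjacency of the components and on $\partial\Delta_i\subset\mathbb Z^d\smallsetminus\Delta$, is the only place where any care is needed. Iterating the reflection $2n$ times exactly as in the proof of Theorem~\ref{thm:moats} gives
\[
\mu(f_\Lambda\ge 3bn)\le\sum_{\mathbf M}\mu_{A(\mathbf M)}(f_\Lambda\ge 0)=\sum_{\mathbf M}\mu_{A(\mathbf M)}(E(2n)),
\]
where $\mathbf M$ ranges over all $k$-tuples consisting of a nested $2n$-sequence of moats in each $\Delta_i$.

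Finally I would count the $\mathbf M$. As measures, $\sum_{\mathbf M}\mu_{A(\mathbf M)}=X\mu$ with $X=\prod_{i=1}^k X_i$, where $X_i$ is the number of nested $2n$-sequences of $a,b$-moats of $(f,\Lambda_i)$ contained in $\Delta_i$; by Proposition~\ref{propo:collection}(4) the set $\Delta_i$ contains at most $m_i:=d_1(\Lambda_i,\mathbb Z^d\smallsetminus\Delta_i)$ such moats, so $X_i\le\binom{m_i}{2n}\le m_i^{2n}$ and hence $X\le\bigl(\prod_i m_i\bigr)^{2n}=m^{2n}$. Combining with the previous display gives $m^{2n}\mu(E(2n))\ge\mu(f_\Lambda\ge 3bn)$, which is~\eqref{eq:thmgoal} with the stated $m$. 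I expect no genuine obstacle: the analytic content is already in Theorem~\ref{thm:moats}, and the disjointness of the components makes the product structure of the combinatorial factor essentially automatic.
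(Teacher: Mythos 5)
Your argument is correct and is precisely the multi-component version of the proof of Theorem~\ref{thm:moats} that the paper has in mind when it calls Proposition~\ref{proposition_moats_effective_application} an ``immediate corollary.'' The three load-bearing observations --- that distinct components of $\Delta$ are non-adjacent, hence $\partial\Delta_i\subset\mathbb Z^d\smallsetminus\Delta$ and $f<a$ there; that the conditioned measure $\mu_{A(\mathbf M)}$ still satisfies the resampling identity on $\Gamma(\mathbf M)=\Delta\smallsetminus\cup_{i,j}\bar M^i_j$ with the bounds $-2K\le f\le b+2K$ off $\Gamma(\mathbf M)$; and that the random count $X=\sum_{\mathbf M}1_{A(\mathbf M)}$ factors as $\prod_i X_i$ with each $X_i\le m_i^{2n}$ by Proposition~\ref{propo:collection}(4) --- are exactly what is needed, and you have verified each of them. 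One small imprecision: the closures $\bar M^i_j$ for moats in distinct components need not actually be pairwise disjoint, since a vertex of $\mathbb Z^d\smallsetminus\Delta$ could be adjacent to moats in two different $\Delta_i$; but this is harmless, as the measurability of $A(\mathbf M)$ with respect to $\mathcal F^2_{\cup_{i,j}\bar M^i_j}$ follows from Proposition~\ref{propo:collection}(5) moat-by-moat with no disjointness required, and nothing else in the argument uses it.
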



\section{Analysis of local Lipschitz constraints}

This section
contains several results
on local Lipschitz constraints---most are deduced directly from
Definition~\ref{def_llc}.
Fix, throughout this section,
a local Lipschitz constraint $(\mathbb A,q)$,
and let $R\in\mathbb N$
denote a fixed constant
such that $d_1(x,y)\leq R$
for all $\{x,y\}\in\mathbb A$.
For example, one can take $(\mathbb A,q)$ to be the local Lipschitz constraint of $\Psi$, and $R$ its range.
These results are near-trivial for most commonly studied models;
they require some work in the generality of Definition~\ref{def_llc}.

Throughout this section,
we adopt the following notation.
If $p=(p_k)_{0\leq k\leq n}$
is a path through $(\mathbb Z^d,\mathbb A)$,
then we write $q(p)$ for $\sum_{k=1}^nq(p_{k-1},p_k)$.
If $q(p)=q(p_0,p_n)$,
then $p$ is called an \emph{optimal
path}.

\subsection{Homogenization of local Lipschitz constraints}

The following lemma characterizes $U_q$
in terms of $q$.
It also provides a relation between the local Lipschitz constraint
$q$ and the map $\|\cdot\|_q$ that it generates.
The proof is similar to the proof in~\cite{S05},
although the formulation of the lemma is different.

\begin{lemma}
 \label{lemma_lipschitz_q_and_norm}
 The set $U_q$ is nonempty.
 Its closure $\bar U_q$ can be written as the intersection of finitely many half-spaces.
 For each contributing half-space $H$, there exists a path
 $p=(p_k)_{0\leq k\leq n}$ through $(\mathbb Z^d,\mathbb A)$
 with $p_n-p_0\in\mathcal L$
 such that
 \[
  \textstyle
  H=H(p):=\left\{u\in(\mathbb R^d)^*:u(p_n-p_0)\leq q(p)\right\}.
 \]
 Moreover, there exists a constant $C<\infty$
 such that
 \[
  \|y-x\|_q-C
  \leq
  q(x,y)
  \leq
  \|y-x\|_q+C
 \]
 for any $x,y\in\mathbb Z^d$.
\end{lemma}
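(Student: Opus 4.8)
The plan is to realise $\bar U_q$ as a polyhedron governed by the shortest‑path structure of the $\mathcal L$‑periodic weighted graph $(\mathbb A,q|_{\mathbb A})$, and then to read the comparison between $q$ and $\|\cdot\|_q$ off the linear programme that computes shortest paths.

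\textbf{Reduction to a path metric and the constraint set.} First I would show that $q$ is the path metric of $(\mathbb A,q|_{\mathbb A})$: the function $q'(x,y):=\inf\{q(p):p\text{ an }\mathbb A\text{-path from }x\text{ to }y\}$ is real‑valued ($q'\ge q$ by the triangle inequality for $q$, and $q'\le q(p)<\infty$ for any fixed path), it is itself an admissible quasimetric agreeing with $q$ on $\mathbb A$, so the maximality clause of Definition~\ref{def_llc} forces $q'\le q$; since $q\le q'$ by the triangle inequality, $q=q'$. A useful byproduct is that any closed $\mathbb A$‑walk that returns to its starting vertex has $q$‑cost $\ge 0$ (triangle inequality plus $q(z,z)=0$). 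Let $\cK$ be the set of slopes $u$ with $u|_{\mathcal L}$ being $q$‑Lipschitz; by $\mathcal L$‑invariance of $q$ this is $\{u\in(\mathbb R^d)^*:u(w)\le q(0,w)\text{ for all }w\in\mathcal L\}$, and $U_q=\operatorname{int}\cK$. Inserting the path representation, $\cK=\bigcap\{H(p):p\text{ an }\mathbb A\text{-path with }p_0=0,\ p_n\in\mathcal L\}$, and only \emph{optimal} paths contribute (if $q(p)>q(0,p_n)$ then $H(p)$ is implied by the optimal ones). The bound $Kd_1\ge q$ from the Remarks after Definition~\ref{def_llc} gives $|u(w)|\le K\|w\|_1$ for $u\in\cK$, $w\in\mathcal L$; as $\mathcal L$ has full rank, $\cK$ is bounded, hence compact.

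\textbf{Homogenisation and nonemptiness of the interior.} The sequence $k\mapsto q(0,kw)$ is subadditive (concatenate optimal paths), so $\bar q(w):=\lim_k k^{-1}q(0,kw)=\inf_k k^{-1}q(0,kw)$ exists for $w\in\mathcal L$, is positively homogeneous and subadditive, is $K\|\cdot\|_1$‑bounded, and therefore extends to a sublinear (in particular Lipschitz) function on $\mathbb R^d$. I claim $\cK=\{u:u(w)\le\bar q(w)\text{ for all }w\in\mathbb R^d\}$: ``$\subseteq$'' follows from $u(kw)\le q(0,kw)$ on dividing by $k$ and letting $k\to\infty$, and ``$\supseteq$'' from $\bar q(w)\le q(0,w)$; taking support functions, $\|x\|_q=\sup_{u\in\cK}u(x)=\bar q(x)$, so $\|\cdot\|_q=\bar q$. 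For nonempty interior: applying the axiom $q(x,y)+q(y,x)>0$ edge by edge along an optimal path and its reverse, together with periodicity (finitely many edge‑types modulo $\Theta$), yields a uniform $c_0>0$ with $q(x,y)+q(y,x)\ge c_0\,d_1(x,y)$; in the limit $\bar q(v)+\bar q(-v)>0$ for every $v\ne 0$, i.e.\ $\cK$ has positive width in every direction, so $\operatorname{int}\cK\ne\emptyset$ and $\bar U_q=\cK$.

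\textbf{Polyhedrality and the comparison estimate (the crux).} It remains to show $\bar q$ is piecewise linear with finitely many linearity regions; then $\cK$, a bounded set of the form $\{u:u\le\bar q\}$ for piecewise linear $\bar q$, is a polytope. This is where the periodicity of $(\mathbb A,q|_{\mathbb A})$ must be used in earnest. I would pass to the finite quotient graph $\mathbb A/\mathcal L$ (with induced edge costs, each closed walk carrying a displacement in $\mathcal L$), decompose an $\mathbb A$‑path from $0$ to $w$ into a bounded‑length remainder plus a nonnegative‑integer combination of the finitely many primitive closed walks of $\mathbb A/\mathcal L$ (the null‑homologous ones, of nonnegative cost, being excised), and thereby express $q(0,w)$, up to an additive $O(1)$, as the optimal value of an integer programme with a fixed constraint matrix and right‑hand side depending linearly on $w$. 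Its linear relaxation is piecewise linear in $w$, and differs from $q(0,w)$ by a bounded amount uniform in $w$ (a standard feature of integer programmes with a fixed constraint matrix); the homogenised value is precisely $\bar q$. This gives piecewise linearity of $\bar q$, the bound $\|w\|_q\le q(0,w)\le\|w\|_q+C$ for $w\in\mathcal L$ (the left inequality already from subadditivity), and — after moving arbitrary $x,y\in\mathbb Z^d$ to lattice points of $\mathcal L$ at bounded cost and invoking $\mathcal L$‑invariance and the Lipschitzness of $\bar q$ — the two‑sided estimate $\|y-x\|_q-C\le q(x,y)\le\|y-x\|_q+C$. The same decomposition shows the infimum defining $q(0,w)$ is attained by a path of length $O(\|w\|_1)$. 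I expect this linear‑programming / cycle‑excision analysis to be the main obstacle; it is precisely the part requiring genuine work beyond the classical models, and it is analogous to the corresponding step in~\cite{S05}.

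\textbf{Realising the facets.} Since $\cK$ is a polytope and $\cK=\bigcap_{w\in\mathcal L}\{u:u(w)\le q(0,w)\}$, finitely many of these half‑spaces cut it out, and each facet lies on a hyperplane of the form $\{u:u(w_0)=\bar q(w_0)\}$; the linear‑programming description exhibits a positive multiple of $w_0$ at which the infimum defining $\bar q$ is attained by an $\mathbb A$‑path $p$ from $0$, and for that path $H=\{u:u(p_n-p_0)\le q(p)\}=H(p)$, as claimed. Assembling the pieces proves all four assertions of the lemma.
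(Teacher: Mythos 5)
Your overall strategy runs roughly parallel to the paper's at a high level: both treat $q$ as the path metric of the $\mathcal L$-periodic edge-weighted graph $(\mathbb Z^d,\mathbb A)$, and both use a decomposition of $\mathbb A$-paths into closed walks of the quotient graph $\mathbb Z^d/\mathcal L$ (the paper's ``cycle lifts'') plus a bounded remainder to obtain polyhedrality of $\bar U_q$ and the $O(1)$ comparison between $q$ and $\|\cdot\|_q$. Your opening observation that the maximality clause forces $q$ to coincide with the induced path metric is a correct and useful explication of a step the paper leaves implicit, and the homogenization via Fekete's lemma is sound.

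There is, however, a genuine gap in your proof that $U_q$ is nonempty. You assert that applying the axiom $q(x,y)+q(y,x)>0$ ``edge by edge along an optimal path and its reverse'', together with periodicity, yields a uniform $c_0>0$ with $q(x,y)+q(y,x)\ge c_0\,d_1(x,y)$. This does not follow. If $p$ is an optimal $\mathbb A$-path from $x$ to $y$ and $\tilde p$ is its reversal, periodicity gives $q(p)+q(\tilde p)=\sum_e\bigl(q(e)+q(\tilde e)\bigr)\ge c_0'|p|$; but $\tilde p$ need not be optimal for $q(y,x)$, so all one obtains is $q(x,y)+q(y,x)\le q(p)+q(\tilde p)$ --- an upper bound, in the wrong direction. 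The two optimal paths realizing $q(x,y)$ and $q(y,x)$ are in general distinct, and the edge-pair axiom alone gives no lower bound on the cost of their concatenation in terms of its length. Note also that, via your own steps 4--5 together with the $C$-bound, the inequality you assert is essentially equivalent to the conclusion $\operatorname{int}\cK\ne\emptyset$, so stating it without proof is circular. The paper closes this gap by a different argument: assuming $U_q=\emptyset$, it takes a \emph{minimal} family of cycle-lift half-spaces with empty common interior, deduces (Helly plus linear algebra) that the normals $x^k$ admit a positive integer relation $\sum_k a^k x^k=0$, and then collapses the resulting chain of inequalities via the triangle inequality to a single application of $q(0,a^1x^1)+q(a^1x^1,0)>0$, giving the contradiction. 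That minimality/linear-dependence step is the idea your proposal is missing; the rest of your argument (polyhedrality and the $C$-bound via quotient-graph decomposition) is, up to supplying detail, sound and in the same spirit as the paper's.
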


\begin{proof}
 Call some path $p=(p_k)_{0\leq k\leq n}$
 through $(\mathbb Z^d,\mathbb A)$
 a \emph{cycle lift} if
 the projection of $p$ onto $\mathbb Z^d/\mathcal L$ is
 a cycle.
 Since $\mathbb Z^d/\mathcal L$ is finite
 and $(\mathbb Z^d,\mathbb A)$ of bounded degree,
 there exist only finitely many cycle lifts
 once we identify paths which differ by a shift
 by a vector in $\mathcal L$.

 Claim that
 \[
 \numberthis
 \label{eq:U_q_bar}
  \{u\in(\mathbb R^d)^*:\text{$u|_\mathcal L$ is $q$-Lipschitz}\}
  =
  \cap_{p:\:\text{$p$ is a cycle lift}}
  H(p).
 \]
 It is clear that the left set is contained in the right set.
 Focus now on the other containment.
 Fix a slope $u$ in the set on the right.
 Suppose, for the sake of contradiction, that $u$ is not in the set on the left,
 that is, that $u|_\mathcal L$ is not $q$-Lipschitz.
 Then there is some vertex $x\in\mathcal L$ and a path $p$
 from $0$ to $x$ through $(\mathbb Z^d,\mathbb A)$ such that
 $u(x)>q(0,x)=q(p)$.
 But $p$ decomposes into a finite collection of cycle lifts $(p^i)_i$.
 By choice of $u$, we have $u(x)\leq\sum_iq(p^i)=q(p)$,
 a contradiction.
 This proves the claim.

 The set $U_q$ equals the interior of the left
 and right in~\eqref{eq:U_q_bar}.
 Suppose that $U_q$ is empty.
 Select a minimal family of cycle lifts $(p^k)_{1\leq k\leq m}$
 such that the corresponding
 intersection of interiors of half-spaces $\cap_k \mathring H(p^k)$
 is empty---by minimal we simply mean that $m$ is as small as possible.
 For each $1\leq k\leq m$,
 write $x^k\in\mathcal L$ for the endpoint of $p^k$
 minus $p^k_0$.
 Then each vector $x^k$ is orthogonal
 to the affine hyperplane $\partial H(p^k)$.
 By Helly's theorem, we observe that $m\leq d+1$.
 In fact, it is easy to see that, regardless of the value of $m$, the set
 $\{x^k:1\leq k\leq m\}$
 is linearly dependent,
 with any strict subset linearly independent.
 It is a simple exercise in linear algebra to derive from the fact that the intersection of half-spaces
 $\cap_k \mathring H(p^k)$
 is empty,
 that there is some slope
 $u$ which is contained in the complement of $\mathring H(p^k)$
 for any $k$,
 and that
 there exists a family of positive integers $(a^k)_{1\leq k\leq m}\subset \mathbb N$ such that
 $\sum_k a^kx^k=0$.
 Since $u(x^k)\geq q(0,x^k)$
 for each $k$ by choice of $u$, we have
 \[
  \sum\nolimits_k q(0,a^kx^k)\leq \sum\nolimits_k a^kq(0,x^k)\leq \sum\nolimits_k a^ku(x^k)=\textstyle u(\sum_ka^kx^k)=0.
  \]
 However,
 the triangle inequality and the inequality
 $q(x,y)+q(y,x)>0$ for $x\neq y$
 from the definition of an admissible quasimetric
 imply that
 \begin{align*}
   \sum\nolimits_{k=1}^m q(0,a^kx^k)&=  q(0,a^1x^1)+\sum\nolimits_{k=2}^m q(0,a^kx^k)
   \\&\geq q(0,a^1x^1)+q(0,-a^1x^1)=q(0,a^1x^1)+q(a^1x^1,0)>0,
 \end{align*}
 a contradiction.
 This proves that $U_q$ is nonempty.

 Now let $x,y\in\mathbb Z^d$ arbitrary,
 and let $p$ denote an optimal path from $x$
 to $y$. Then $p$ decomposes
 into cycle lifts and at most $|\mathbb Z^d/\mathcal L|-1$
 remaining edges.
 It is straightforward to derive from this decomposition
 that the difference between $q(x,y)$ and $\|y-x\|_q$ is bounded uniformly over
 the choice of $x$ and $y$.
\end{proof}

Let us also state the following result, which follows
immediately from the definition of $\|\cdot\|_q$
in terms of $q$.

\begin{proposition}
  If $f:D\to \mathbb R$
  is $\|\cdot\|_q$-Lipschitz for $D\subset\mathbb R^d$,
  then $f|_{D\cap\mathcal L}$ is $q$-Lipschitz.
  If furthermore $q$ is integral,
  then $\lfloor f\rfloor|_{D\cap\mathcal L}$ is also $q$-Lipschitz.
\end{proposition}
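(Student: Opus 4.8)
The plan is to deduce everything from the single inequality $\|y-x\|_q\le q(x,y)$ for lattice points $x,y\in\mathcal L$, which is essentially a matter of unwinding the definitions of $U_q$ and $\|\cdot\|_q$. Recall that $U_q$ is, by definition, the interior of the set of slopes $u$ with $u|_{\mathcal L}$ being $q$-Lipschitz; in particular every $u\in U_q$ has this property itself. Hence, for any $x,y\in\mathcal L$ and any $u\in U_q$, linearity of $u$ together with $q$-Lipschitzness of $u|_{\mathcal L}$ gives $u(y-x)=u(y)-u(x)\le q(x,y)$, and taking the supremum over $u\in U_q$ yields $\|y-x\|_q\le q(x,y)$.

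For the first assertion I would then simply fix $x,y\in D\cap\mathcal L$. Since $D\cap\mathcal L\subset\mathcal L$, the inequality above applies to the pair $(x,y)$, and since $f$ is $\|\cdot\|_q$-Lipschitz on $D$ this gives
\[
  f(y)-f(x)\le\|y-x\|_q\le q(x,y).
\]
As $x,y\in D\cap\mathcal L$ were arbitrary, this says exactly that $f|_{D\cap\mathcal L}$ is $q$-Lipschitz.

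For the second assertion, assume in addition that $q$ is integral and again fix $x,y\in D\cap\mathcal L$. Then $q(x,y)\in\mathbb Z$, and combining the first part with the elementary bounds $\lfloor f(x)\rfloor>f(x)-1$ and $\lfloor f(y)\rfloor\le f(y)$ one gets
\[
  \lfloor f(y)\rfloor-\lfloor f(x)\rfloor<f(y)-f(x)+1\le q(x,y)+1.
\]
Since the left-hand side and $q(x,y)$ are both integers, this strict inequality forces $\lfloor f(y)\rfloor-\lfloor f(x)\rfloor\le q(x,y)$, so $\lfloor f\rfloor|_{D\cap\mathcal L}$ is $q$-Lipschitz.

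There is no serious obstacle here: the statement is essentially bookkeeping. The only two points that require a moment's attention are that one must restrict to $D\cap\mathcal L$ (rather than all of $D$) so that the defining $q$-Lipschitz inequality for slopes $u\in U_q$ can legitimately be applied to the pair $(x,y)$, and that integrality of $q$ is precisely what is needed in the last step to upgrade a strict inequality between reals to the desired inequality between integers — without it the rounded function need not be $q$-Lipschitz.
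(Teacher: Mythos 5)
Your proof is correct, and it takes exactly the route the paper has in mind: the authors explicitly flag the proposition as following ``immediately from the definition of $\|\cdot\|_q$ in terms of $q$,'' and your argument just unwinds that definition. The key observation $\|y-x\|_q\le q(x,y)$ for $x,y\in\mathcal L$ is the right reduction (since every $u\in U_q$ lies in the set of which $U_q$ is the interior, hence $u|_{\mathcal L}$ is $q$-Lipschitz), and your use of integrality to convert the strict inequality $\lfloor f(y)\rfloor-\lfloor f(x)\rfloor<q(x,y)+1$ into the desired $\le q(x,y)$ is exactly the right final step.
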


\subsection{General observations}

First state the Kirszbraun theorem: this is an elementary result
in the theory of Lipschitz functions.
It asserts that a Lipschitz function
defined on part of the space
can be extended to a Lipschitz function on the entire space, with the same Lipschitz constant.

\begin{proposition}[Kirszbraun theorem]
  \label{propo_Kirszbraun}
    If $\Lambda\subset\mathbb Z^d$ is nonempty and if $\phi:\Lambda\to\mathbb R$
    is $q$-Lipschitz,
    then the function
    \[
      \phi^*:\mathbb Z^d\to\mathbb R,
      x\mapsto \sup_{y\in\Lambda}\phi(y)-q(x,y)
    \]
    is the unique smallest $q$-Lipschitz
    extension of $\phi$ to $\mathbb Z^d$.
    If $\phi$ and $q$ are integral, then so is $\phi^*$.
    Suppose that $\|\cdot\|:\mathbb R^d\to\mathbb R$ is any positive homogeneous function satisfying the triangle inequality.
    If $D\subset\mathbb R^d$ is nonempty and if $f:D\to\mathbb R$
    is $\|\cdot\|$-Lipschitz,
    then the function
    \[
      f^*:\mathbb R^d\to\mathbb R,
      x\mapsto \sup_{y\in D}f(y)-\|y-x\|
    \]
    is the unique smallest $\|\cdot\|$-Lipschitz
    extension of $f$ to $\mathbb R^d$.
\end{proposition}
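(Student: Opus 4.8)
The plan is to verify directly that the explicit sup-convolution formulas defining $\phi^*$ and $f^*$ have all the asserted properties; this is the classical McShane--Whitney construction and uses nothing beyond the triangle inequality together with the normalisations $q(x,x)=0$ and $\|0\|=0$ (the latter being positive homogeneity applied to $0\cdot x$). I would treat the discrete case first via four short verifications, and then observe that the continuous case is literally the same argument with $q(x,y)$ replaced by $\|y-x\|$.

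First I would establish that $\phi^*$ is real-valued on all of $\mathbb Z^d$. Fixing a base point $x_0\in\Lambda$, the $q$-Lipschitz bound $\phi(y)\le\phi(x_0)+q(x_0,y)$ together with the triangle inequality $q(x_0,y)\le q(x_0,x)+q(x,y)$ gives $\phi(y)-q(x,y)\le\phi(x_0)+q(x_0,x)$ for every $y\in\Lambda$, so the supremum is finite; the term corresponding to $y=x_0$ equals $\phi(x_0)-q(x,x_0)>-\infty$, so it is not $-\infty$. This finiteness check is the one place where a little care is needed, since $q$ (and, later, $\|\cdot\|$) is allowed to take negative values, so boundedness is not automatic. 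Next I would check that $\phi^*$ restricts to $\phi$ on $\Lambda$: for $x\in\Lambda$ the term $y=x$ contributes exactly $\phi(x)$ because $q(x,x)=0$, and no term $y\in\Lambda$ exceeds $\phi(x)$ since $\phi(y)-\phi(x)\le q(x,y)$.

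Then I would show $\phi^*$ is $q$-Lipschitz: for $x,z\in\mathbb Z^d$ and $y\in\Lambda$ the triangle inequality $q(x,y)\le q(x,z)+q(z,y)$ rearranges to $\phi(y)-q(z,y)\le(\phi(y)-q(x,y))+q(x,z)\le\phi^*(x)+q(x,z)$, and passing to the supremum over $y$ yields $\phi^*(z)-\phi^*(x)\le q(x,z)$. Finally, minimality: if $\psi:\mathbb Z^d\to\mathbb R$ is any $q$-Lipschitz extension of $\phi$, then $\psi(x)\ge\psi(y)-q(x,y)=\phi(y)-q(x,y)$ for all $y\in\Lambda$, hence $\psi\ge\phi^*$ after taking the supremum; combined with the previous steps this identifies $\phi^*$ as the (automatically unique) smallest $q$-Lipschitz extension. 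The integrality claim then follows: when $\phi$ and $q$ are integer-valued, the set $\{\phi(y)-q(x,y):y\in\Lambda\}$ is a subset of $\mathbb Z$ bounded above by Step~1, hence attains its supremum at an integer.

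For the second half of the proposition I would simply rerun the same four steps with $\mathbb R^d$, $D$ and $\|y-x\|$ in place of $\mathbb Z^d$, $\Lambda$ and $q(x,y)$, using $\|0\|=0$ wherever $q(x,x)=0$ was used, and the triangle inequalities $\|y-x\|\le\|y-z\|+\|z-x\|$ and $\|y-x_0\|\le\|y-x\|+\|x-x_0\|$ wherever those for $q$ were used. I do not anticipate any genuine obstacle: the whole argument is elementary, and the only subtlety worth flagging in the write-up is the finiteness estimate in the presence of possibly negative $q$ or $\|\cdot\|$.
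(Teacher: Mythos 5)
Your proof is correct and complete; the paper itself states this proposition without proof (introducing it as an elementary fact in the theory of Lipschitz functions), so there is no paper proof to compare against. Your write-up is the standard McShane--Whitney sup-convolution argument, and you correctly flag the one nontrivial point in this generality: since $q$ (and $\|\cdot\|$) may take negative values, the finiteness of $\phi^*$ and $f^*$ is not immediate and deserves the explicit bound you supply via a base point $x_0\in\Lambda$ and the triangle inequality. The four-step structure (finiteness, restriction to $\Lambda$, $q$-Lipschitz property of $\phi^*$, minimality against any other extension), the observation that minimality forces uniqueness, the integrality argument via a bounded-above subset of $\mathbb Z$, and the remark that positive homogeneity yields $\|0\|=0$ are all sound; the reduction of the continuous case to the same four steps with $q(x,y)$ replaced by $\|y-x\|$ is also exactly right.
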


Next, we discuss the derived local Lipschitz constraint $q_\varepsilon$
for $\varepsilon$ sufficiently small.
For example, if $\mathbb A$ is the edge set of the square lattice
and $q=Kd_1$ for $K\in[0,\infty)$,
then $q_\varepsilon$ is well-defined for $\varepsilon\in[0,K)$,
and $q_\varepsilon=(K-\varepsilon)d_1$ for such $\varepsilon$.
For the more general case, we use a technical construction to understand
the derived local Lipschitz constraint $q_\varepsilon$.

\begin{proposition}
  \label{propo_qqq}
  There exist constants $\eta>0$ and $C<\infty$
  such that for any $0\leq\varepsilon\leq\eta$,
  \begin{enumerate}
    \item We have
    $\varepsilon d_1/R\leq  q-q_\varepsilon\leq C\varepsilon d_1$,
    \item We have
    $q_{\varepsilon+\varepsilon'}=(q_\varepsilon)_{\varepsilon'}=(q_{\varepsilon'})_\varepsilon$
    for any $\varepsilon'\geq 0$ with $\varepsilon+\varepsilon'\leq\eta$,
    \item
    \label{propo_qqq:flex}
    For any $\varepsilon'\geq 0$ with $\varepsilon+2\varepsilon'\leq\eta$,
    if $\phi,\psi:\Lambda\to\mathbb R$ are functions for some $\Lambda\subset\mathbb Z^d$
    where $\phi$ is $q_{\varepsilon+2\varepsilon'}$-Lipschitz
    and $\|\phi-\psi\|_\infty\leq \varepsilon'$, then $\psi$ is $q_{\varepsilon}$-Lipschitz.
  \end{enumerate}
\end{proposition}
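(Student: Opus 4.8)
The plan is to reduce both $q$ and $q_\varepsilon$ to shortest-path functions on $(\mathbb Z^d,\mathbb A)$, and to isolate a single quantitative input: that the minimal mean weight of a closed walk is strictly positive.

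\textit{Step 1: the shortest-path description.} For edge weights $w$ on $\mathbb A$ write $d_w(x,y):=\inf\sum_k w(p_{k-1},p_k)$, the infimum over paths $p=(p_k)_k$ in $(\mathbb Z^d,\mathbb A)$ from $x$ to $y$. I would first check $q=d_q$ (writing $q$ also for its restriction to $\mathbb A$): $d_q$ satisfies the triangle inequality and is $\mathcal L$-invariant by construction, while $d_q(x,x)=0$ and $d_q(x,y)+d_q(y,x)>0$ hold because, iterating the triangle inequality for $q$, any nonconstant closed walk $C=(c_0,\dots,c_m=c_0)$ has $q(C):=\sum_k q(c_{k-1},c_k)\ge q(c_0,c_1)+q(c_1,c_0)$, which is bounded below by a uniform $c_*>0$ (finitely many edge-orbits of $\mathbb A$). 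Thus $d_q$ is an admissible quasimetric agreeing with $q$ on $\mathbb A$ and is dominated by any other such; maximality in Definition~\ref{def_llc} gives $d_q=q$. Running the same argument with weights $q|_{\mathbb A}-\varepsilon$ shows that, as soon as $\varepsilon<\eta_0:=\inf_C q(C)/|C|$ (infimum over nonconstant closed walks, $|C|$ the edge count), $d_{q-\varepsilon}$ is an admissible quasimetric, $(\mathbb A,d_{q-\varepsilon})$ is a local Lipschitz constraint, and $d_{q-\varepsilon}=q_\varepsilon$. So all three claims, and well-definedness of $q_\varepsilon$, follow once $\eta_0>0$; one then sets $\eta:=\eta_0/2$.

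\textit{Step 2: $\eta_0>0$, the heart of the matter.} Pass to the finite quotient graph $G$ on $\mathbb Z^d/\mathcal L$ with edges induced from $\mathbb A$, onto which $q$ descends to a weight $\bar q$ by $\mathcal L$-invariance. A closed walk of $\mathbb Z^d$ projects to a closed walk of $G$ of equal weighted and combinatorial length whose total winding -- the image of its homology class under $H_1(G)\to\mathcal L$ -- vanishes, and every such homologically trivial closed walk of $G$ lifts back. Reading the edge-flow of a closed walk as a nonnegative circulation, $\eta_0$ becomes the value of a linear program over the polyhedral cone of homologically trivial circulations of $G$, hence is attained, on an extreme ray, by a minimal homologically trivial integer circulation $\rho$; decompose its supporting Eulerian multigraph into connected components $H_1,\dots,H_s$ with windings $w_1,\dots,w_s$, so $\sum_i w_i=0$. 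Lifting an Euler circuit of $H_i$, iterating it $k$ times (each repetition costs the same, since the endpoint returns to the start modulo $\mathcal L$), and invoking the lower bound $q(x,y)\ge\|y-x\|_q-C_0$ of Lemma~\ref{lemma_lipschitz_q_and_norm}, I get $\langle\bar q,\rho_i\rangle\ge\|w_i\|_q$, hence $\langle\bar q,\rho\rangle\ge\sum_i\|w_i\|_q$. If all $w_i=0$ then $s=1$ by minimality, the Euler circuit of $H_1$ lifts to a nonconstant closed walk of $\mathbb Z^d$, and $\langle\bar q,\rho\rangle=q(\text{that walk})\ge c_*>0$. If some $w_i\neq 0$, then using subadditivity of $\|\cdot\|_q$ and the fact that $\|v\|_q+\|{-v}\|_q>0$ for $v\neq 0$ (because $U_q$ is open, so $u\mapsto u(v)$ is nonconstant on $\bar U_q$) one gets $\sum_i\|w_i\|_q\ge\|w_i\|_q+\|{-w_i}\|_q>0$. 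Either way $\langle\bar q,\rho\rangle>0$; as there are finitely many extreme rays, $\eta_0>0$. The main obstacle is exactly marshalling this reduction -- the passage to $G$ and the circulation/LP bookkeeping around disconnected supports -- rather than any single estimate.

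\textit{Step 3: the three claims.} Granting $q_\varepsilon=d_{q-\varepsilon}$ and $\eta_0>0$: For (2), $q_\varepsilon$ is its own shortest-path function (it is an admissible quasimetric, so $q_\varepsilon(x,y)\le\sum_k q_\varepsilon(\text{edges})$, and $q_\varepsilon\le q-\varepsilon$ on $\mathbb A$ gives the reverse), hence $(q_\varepsilon)_{\varepsilon'}=d_{q_\varepsilon-\varepsilon'}$; comparing this with $d_{q-\varepsilon-\varepsilon'}$ by refining a coarse path edge-by-edge (which never decreases the edge count) yields $(q_\varepsilon)_{\varepsilon'}=d_{q-\varepsilon-\varepsilon'}=q_{\varepsilon+\varepsilon'}$, and symmetry gives $(q_{\varepsilon'})_\varepsilon$ likewise. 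For (1), write $q(x,y)-q_\varepsilon(x,y)=\sup_p\bigl(n_p\varepsilon-(q(p)-q(x,y))\bigr)$ over paths $p$ from $x$ to $y$ with $n_p$ edges; since $q(p)\ge q(x,y)$ and, by closing $p$ with a shortest edge-path back and using $q(\cdot)\ge\eta_0|\cdot|$, also $q(p)\ge\eta_0 n_p-O(d_1(x,y))$, the supremand tends to $-\infty$ with $n_p$ once $\varepsilon<\eta_0$, so the supremum is over $n_p=O(d_1(x,y))$ and is $\le C\varepsilon\,d_1(x,y)$; taking $p$ near-$q$-optimal and using $n_p\ge d_1(x,y)/R$ gives the matching lower bound $\varepsilon\,d_1(x,y)/R$. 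For (3), if $\phi$ is $q_{\varepsilon+2\varepsilon'}$-Lipschitz on $\Lambda$ and $\|\phi-\psi\|_\infty\le\varepsilon'$, then for distinct $x,y\in\Lambda$ we have $\psi(y)-\psi(x)\le q_{\varepsilon+2\varepsilon'}(x,y)+2\varepsilon'$; by (2), $q_{\varepsilon+2\varepsilon'}(x,y)=(q_\varepsilon)_{2\varepsilon'}(x,y)=d_{q_\varepsilon-2\varepsilon'}(x,y)\le q_\varepsilon(x,y)-2\varepsilon'$ because any $q_\varepsilon$-path from $x$ to $y$ has at least one edge, whence $\psi(y)-\psi(x)\le q_\varepsilon(x,y)$, i.e.\ $\psi$ is $q_\varepsilon$-Lipschitz.
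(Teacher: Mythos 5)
Your proposal is correct, and it arrives at the proposition by a genuinely different route from the paper's (admittedly terse) outline. Both proofs hinge on the same identification — $q_\varepsilon$ is the shortest-path distance on $(\mathbb Z^d,\mathbb A)$ for the reduced edge weights $q|_{\mathbb A}-\varepsilon$, so that $q_\varepsilon(x,y)=\inf_p\bigl(q(p)-\varepsilon\,n(p)\bigr)$ — and both need a quantitative control on path lengths that ultimately comes from the openness of $U_q$ via Lemma~\ref{lemma_lipschitz_q_and_norm}. But the paper isolates this control as the claim ``$n(p)\leq C\,d_1(x,y)$ for every $q$-optimal path $p$'', introduces $X_q(x,y):=\max\{n(p):p\text{ optimal}\}$, and asserts $q_\varepsilon=q-\varepsilon X_q$, from which all three statements follow by inspection; the derivation of the claim is left as a short exercise from the two facts quoted. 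You instead isolate the positivity of the minimal mean closed-walk weight $\eta_0$ as the key input, and prove it by passing to the finite quotient graph on $\mathbb Z^d/\mathcal L$, viewing closed walks as homologically trivial circulations, reducing to a finite LP over the cone of such circulations, and decomposing a minimizing integer circulation on an extreme ray into Eulerian components — the case analysis then uses exactly the two facts the paper cites ($\|v\|_q+\|{-v}\|_q>0$ for $v\neq 0$, and $|q-\|\cdot\|_q|\leq C_0$) but wrapped inside the circulation framework. Your route costs more machinery (quotient graph, flow polyhedra, extreme rays, Euler decompositions) but produces a fully spelled-out argument for the key positivity statement where the paper says ``straightforward'', and your Step~3 cleanly recovers all three claims from the reduced-weight shortest-path description, including the two-sided comparison in~(1) by the same closed-walk estimate. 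Two small imprecisions worth flagging: you write that $\eta_0$ ``becomes'' the value of the LP, but the correct direction is $\eta_0\geq$ LP value (the converse would require every circulation on an extreme ray to come from a single closed walk, which fails when the support is disconnected) — this suffices for what you need; and in Step~1, when you set $\eta:=\eta_0/2$ rather than $\eta_0$ you should say why (you need $\varepsilon<\eta_0$ strictly for admissibility, and headroom for claims~(2)–(3) where $\varepsilon+\varepsilon'$ or $\varepsilon+2\varepsilon'$ must also stay below $\eta_0$), which your Step~3 bound $n_p=O(d_1(x,y))$ with uniform constant implicitly uses.
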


\begin{proof}[Proof outline]
  Claim that there exists a uniform constant $C<\infty$
  such that $n(p)\leq Cd_1(x,y)$ for any optimal path $p$ from $x$
  to $y$,
  where $n(p)$ denotes the length of that path.
  To see that the claim is true,
  observe that
   $U_q$ is nonempty and open,
  and therefore there exists a constant $\alpha>0$
  such that $\|x\|_q+\|-x\|_q\geq \alpha\|x\|_1$
  for any $x\in\mathbb R^d$.
  Moreover, the difference between $q(x,y)$ and $\|y-x\|_q$
  is bounded uniformly over $x,y\in\mathbb Z^d$
  (Lemma~\ref{lemma_lipschitz_q_and_norm}).
  It is straightforward to deduce the claim from these two facts.

  One now defines the map $X_{q}:\mathbb Z^d\times\mathbb Z^d
  \to\mathbb Z_{\geq 0}$
  by
  \[
    X_{q}(x,y):=\max\{n(p):\text{$p$ is an optimal path from $x$ to $y$}\}.
  \]
  Then $d_1/R\leq X_q\leq Cd_1$ by the previous discussion.
  It is straightforward, but slightly technical,
  to see that $q_\varepsilon=q-\varepsilon X_q$
  for $\varepsilon$ sufficiently small.
  This implies the three statements of the proposition.
\end{proof}

\begin{proposition}
  We have $U_q=\cup_{\varepsilon>0}\bar U_{q_\varepsilon}$.
\end{proposition}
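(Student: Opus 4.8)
The plan is to deduce the identity from the polyhedral description of $\bar U_q$ established in Lemma~\ref{lemma_lipschitz_q_and_norm}, comparing the half-spaces attached to $q$ with those attached to $q_\varepsilon$ by means of Proposition~\ref{propo_qqq}. Recall from \eqref{eq:U_q_bar} and its proof that, once cycle lifts that differ by an $\mathcal L$-translation are identified, there remain only finitely many of them, say $\mathcal C=\{p^1,\dots,p^N\}$; call a cycle lift $p$ \emph{essential} if $p_0\neq p_n$ (if there are no essential ones, then $\bar U_q=\bar U_{q_\varepsilon}=(\mathbb R^d)^*$ and the claim is trivial, so assume otherwise). Equation~\eqref{eq:U_q_bar} gives $U_q=\operatorname{int}\bigl(\bigcap_{p\in\mathcal C}H(p)\bigr)$ with $H(p)=\{u:u(p_n-p_0)\le q(p)\}$, and the non-essential cycle lifts contribute $H(p)=(\mathbb R^d)^*$ because $q(p)\ge q(p_0,p_0)=0$ by the triangle inequality. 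Since $(\mathbb A,q_\varepsilon)$ is again a local Lipschitz constraint on the same graph (Definition~\ref{def_llc}), the same holds with $q$ replaced by $q_\varepsilon$ for every $\varepsilon\in(0,\eta]$; as $U_{q_\varepsilon}$ is nonempty and $\bigcap_{p}H_\varepsilon(p)$, where $H_\varepsilon(p):=\{u:u(p_n-p_0)\le q_\varepsilon(p)\}$, is a closed convex set with nonempty interior, we get $\bar U_{q_\varepsilon}=\bigcap_{p\in\mathcal C}H_\varepsilon(p)$. Finally, using that the interior of a finite intersection equals the intersection of the interiors, $U_q=\bigcap_{p\in\mathcal C}\operatorname{int}H(p)$, where $\operatorname{int}H(p)=\{u:u(p_n-p_0)<q(p)\}$ for essential $p$ and $\operatorname{int}H(p)=(\mathbb R^d)^*$ otherwise.

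The quantitative input is Proposition~\ref{propo_qqq}(1): for $0\le\varepsilon\le\eta$ one has $\tfrac\varepsilon R\,d_1\le q-q_\varepsilon\le C\varepsilon\,d_1$. Summing this along a cycle lift $p$ of combinatorial length $n(p)$ and using $1\le d_1(p_{k-1},p_k)\le R$ for each edge $\{p_{k-1},p_k\}\in\mathbb A$ yields
\[
q(p)-C\varepsilon R\,n(p)\ \le\ q_\varepsilon(p)\ \le\ q(p)-\tfrac\varepsilon R\,n(p);
\]
moreover $n(p)\le L$ for all $p\in\mathcal C$ and some fixed $L<\infty$, and $n(p)\ge 1$ when $p$ is essential.

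For the inclusion ``$\supseteq$'', let $u\in\bar U_{q_\varepsilon}$ for some $\varepsilon\in(0,\eta]$. For every essential $p\in\mathcal C$ we then have $u(p_n-p_0)\le q_\varepsilon(p)\le q(p)-\varepsilon/R<q(p)$, so $u\in\operatorname{int}H(p)$; and trivially $u\in\operatorname{int}H(p)=(\mathbb R^d)^*$ for non-essential $p$. Hence $u\in\bigcap_{p\in\mathcal C}\operatorname{int}H(p)=U_q$. For the inclusion ``$\subseteq$'', let $u\in U_q$. Then $u(p_n-p_0)<q(p)$ for every essential $p\in\mathcal C$, so $\delta:=\min\{q(p)-u(p_n-p_0):p\in\mathcal C\text{ essential}\}>0$; choosing $\varepsilon\in(0,\eta]$ with $C\varepsilon RL<\delta$, we get for essential $p$ that $u(p_n-p_0)\le q(p)-\delta<q(p)-C\varepsilon R\,n(p)\le q_\varepsilon(p)$, hence $u\in H_\varepsilon(p)$, while for non-essential $p$ we have $u(p_n-p_0)=0\le q_\varepsilon(p)$ so again $u\in H_\varepsilon(p)$. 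Therefore $u\in\bigcap_{p\in\mathcal C}H_\varepsilon(p)=\bar U_{q_\varepsilon}\subseteq\bigcup_{\varepsilon'>0}\bar U_{q_{\varepsilon'}}$.

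No genuine obstacle is expected here; the two points requiring a little care are that the comparison between $q$ and $q_\varepsilon$ must be made uniform over the half-spaces — which is precisely where the finiteness of $\mathcal C$ (from the proof of Lemma~\ref{lemma_lipschitz_q_and_norm}) enters, both through $L$ and through the positivity of $\delta$ — and the systematic discarding of the degenerate cycle lifts that already close up in $\mathbb Z^d$ rather than only in $\mathbb Z^d/\mathcal L$.
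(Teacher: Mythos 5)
The paper states this proposition without supplying a proof, so there is nothing to compare against directly. Your argument is correct and uses the natural tools the paper has already set up: the polyhedral description of $\bar U_q$ from Lemma~\ref{lemma_lipschitz_q_and_norm} (finitely many half-spaces $H(p)$ indexed by cycle lifts modulo $\mathcal L$-shifts) together with the uniform quantitative comparison $\tfrac{\varepsilon}{R}d_1\le q-q_\varepsilon\le C\varepsilon d_1$ from Proposition~\ref{propo_qqq}. The two points you flag as needing care — uniformity over the finitely many cycle lifts (both through $L$ and through the positivity of $\delta$) and the dismissal of degenerate cycle lifts with $p_0=p_n$ — are indeed the crux, and you handle both correctly. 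A couple of small remarks. First, you use $\operatorname{int}\bigl(\bigcap_p H(p)\bigr)=\bigcap_p\operatorname{int}H(p)$; this does require $\bigcap_p\operatorname{int}H(p)\ne\emptyset$, which follows because $U_q=\operatorname{int}\bigl(\bigcap_p H(p)\bigr)\ne\emptyset$ and $\operatorname{int}$ is monotone, so the intersection of interiors is a superset of $U_q$. Second, the case of no essential cycle lifts is in fact vacuous: any path joining $0$ to a nonzero $x\in\mathcal L$ decomposes into cycle lifts whose displacements sum to $x$, so at least one is essential; moreover $\|\cdot\|_q$ being finite forces $U_q$ to be bounded, hence the half-space description is nontrivial. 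Treating that case anyway does no harm. Finally it is worth noting that $q(p)$ and $u(p_n-p_0)$ are $\mathcal L$-shift invariant, so the minimum defining $\delta$ is genuinely over the finite set $\mathcal C$ and is attained; you implicitly use this but it is worth keeping in mind.
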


\subsection{Approximation of continuous profiles}

Recall that $\Lambda^{-m}(D):=(\mathbb Z^d\cap D)\smallsetminus \partial^m(\mathbb Z^d\cap D)$
for any $m\in\mathbb Z_{\geq 0}$ and $D\subset\mathbb R^d$.

\begin{theorem}
  Consider $\varepsilon>0$ sufficiently small so
  that $q_\varepsilon$ is well-defined,
  and fix $C<\infty$.
  Then
  there is a constant $m\in\mathbb Z_{\geq 0}$
  such that the following statement holds true.
  Suppose given a collection $(D_i)_i$ of disjoint subsets of $\mathbb R^d$,
  and write $D:=\cup_iD_i$.
  Let $f:D\to \mathbb R$ denote a $\|\cdot\|_q$-Lipschitz
  function such that
  $f(y)-f(x)\leq \|y-x\|_{q_\varepsilon}$ for $x\in D_i$
  and $y\in D_j$ with $i\neq j$.
  Define $\Lambda_i:=\Lambda^{-m}(D_i)$
  and $\Lambda:=\cup_i\Lambda_i$.
  Let $\phi:\Lambda\to E$ denote a function such that $\phi_{\Lambda_i}$ is $q$-Lipschitz for all $i$
  and with
   $|\phi-f|_{\Lambda}|\leq C$.
   Then $\phi$ is $q$-Lipschitz,
   and has a $q$-Lipschitz extension to $\mathbb Z^d$.
\end{theorem}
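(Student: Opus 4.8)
The plan is to verify that $\phi$, regarded as a function on $\Lambda$, is $q$-Lipschitz; the existence of a $q$-Lipschitz extension to $\mathbb Z^d$ then follows immediately from the Kirszbraun theorem (Proposition~\ref{propo_Kirszbraun}), and this extension is automatically integral when $E=\mathbb Z$, since $q$ is then integral. As $\Lambda=\cup_i\Lambda_i$ and each $\phi_{\Lambda_i}$ is $q$-Lipschitz by hypothesis, the only inequalities left to check are $\phi(y)-\phi(x)\leq q(x,y)$ for $x\in\Lambda_i$ and $y\in\Lambda_j$ with $i\neq j$.

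For such a cross pair I would first record a distance estimate. By definition $\Lambda_i=\Lambda^{-m}(D_i)=(\mathbb Z^d\cap D_i)\smallsetminus\partial^m(\mathbb Z^d\cap D_i)$, so $x\in\Lambda_i$ forces $x\in\mathbb Z^d\cap D_i$ together with $d_1(x,\mathbb Z^d\smallsetminus(\mathbb Z^d\cap D_i))>m$. Since the sets $D_i$ are pairwise disjoint and $y\in\mathbb Z^d\cap D_j$, the vertex $y$ lies in $\mathbb Z^d\smallsetminus(\mathbb Z^d\cap D_i)$, whence $d_1(x,y)>m$. Thus thickening each region inwards by $m$ layers guarantees that distinct pieces of $\Lambda$ are far apart in the graph metric $d_1$.

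Now I combine the hypotheses. Since $x$ and $y$ lie in distinct $D_i$'s we may use $f(y)-f(x)\leq\|y-x\|_{q_\varepsilon}$, which with $|\phi-f|_\Lambda|\leq C$ gives $\phi(y)-\phi(x)\leq\|y-x\|_{q_\varepsilon}+2C$. Applying Lemma~\ref{lemma_lipschitz_q_and_norm} to the local Lipschitz constraint $(\mathbb A,q_\varepsilon)$ produces a finite constant $C'$ with $\|z-w\|_{q_\varepsilon}\leq q_\varepsilon(w,z)+C'$ for all $w,z\in\mathbb Z^d$, while the first part of Proposition~\ref{propo_qqq} gives $q_\varepsilon\leq q-\tfrac{\varepsilon}{R}d_1$. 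Chaining these estimates,
\[
\phi(y)-\phi(x)\leq q_\varepsilon(x,y)+C'+2C\leq q(x,y)-\tfrac{\varepsilon}{R}\,d_1(x,y)+C'+2C .
\]
Taking $m:=\lceil\tfrac{R}{\varepsilon}(C'+2C)\rceil$ therefore does the job: for any cross pair we then have $d_1(x,y)>m$, hence $\tfrac{\varepsilon}{R}d_1(x,y)>C'+2C$ and so $\phi(y)-\phi(x)<q(x,y)$. This $m$ depends only on $\varepsilon$, on $C$, and on the fixed data $(\mathbb A,q,R,\mathcal L)$, as the statement allows.

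I do not expect a genuine obstacle here: the result is essentially a bookkeeping lemma. The one point requiring a little care is that Lemma~\ref{lemma_lipschitz_q_and_norm} is invoked for $q_\varepsilon$ rather than for $q$ — which is legitimate because $(\mathbb A,q_\varepsilon)$ is again a local Lipschitz constraint — and correspondingly that the constant $C'$, and hence $m$, is allowed to depend on $\varepsilon$. One should also make sure that it is really the disjointness of the $D_i$ that yields $y\notin\mathbb Z^d\cap D_i$ and thus the distance bound, since this is precisely the mechanism that upgrades the weaker $q_\varepsilon$-Lipschitz control available across pieces into full $q$-Lipschitz control.
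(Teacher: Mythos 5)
Your proof is correct and uses exactly the two ingredients the paper cites in its one-line proof (Lemma~\ref{lemma_lipschitz_q_and_norm} and Proposition~\ref{propo_qqq}), filling in the details the paper omits; the choice $m\propto\frac{R}{\varepsilon}(C'+2C)$ together with the observation that cross pairs are separated by $d_1$-distance more than $m$ is precisely the mechanism needed. The argument is sound, and the caveats you flag (applying Lemma~\ref{lemma_lipschitz_q_and_norm} to $(\mathbb A,q_\varepsilon)$ rather than $(\mathbb A,q)$, and letting $C'$ hence $m$ depend on $\varepsilon$) are both legitimate and well noted.
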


\begin{proof}
  This follows from Lemma~\ref{lemma_lipschitz_q_and_norm} and Proposition~\ref{propo_qqq}.
\end{proof}

In the remainder of this section,
we specialize to the case that
 $(\mathbb A,q)$ is the local Lipschitz
constraint associated
to the strong interaction $\Psi$ as described
in Subsection~\ref{subsection:formal_setting:overview}.
The previous theorem is particularly useful
in the case that the function $f$
is affine on each set $D_i$,
say with slope $u_i\in U_q$.
In that case, we want the height function $\phi$ to approximate
the slope
$u_i$ on each set $\Lambda_i$.
To this end we will choose
for each $u\in U_q$
a canonical Lipschitz height function $\phi^u$
to represent that slope $u$.
This is the purpose of the following definition.

\begin{definition}
  Consider some fixed slope $u\in U_q$.
  If $E=\mathbb Z$, then
  write $\phi^u\in\Omega$
  for the unique smallest $q$-Lipschitz
  extension of the function $\lfloor u\rfloor|_{\mathcal L}$
  to $\mathbb Z^d$.
  If $E=\mathbb R$,
  then write $\phi^u\in\Omega$ for the unique smallest
  $q_\varepsilon$-Lipschitz extension of $u|_{\mathcal L}$
  to $\mathbb Z^d$,
  where $\varepsilon$ is the largest positive real number
  such that $u|_{\mathcal L}$ is $q_\varepsilon$-Lipschitz
  (subject to $\varepsilon\leq\eta$, where $\eta$ is as in Proposition~\ref{propo_qqq}).
\end{definition}

If $E=\mathbb Z$, then $q$ is integral,
and therefore the smallest $q$-Lipschitz extension
of $\lfloor u\rfloor|_\mathcal L$
to $\mathbb Z^d$ is also integer-valued.
The rounding procedure in the discrete setting
makes that the gradient of $\phi^u$ is not
$\mathcal L$-invariant.
In the continuous setting $E=\mathbb R$
there is no rounding, and therefore
the gradient of $\phi^u $ is $\mathcal L$-invariant.
Finally, we want to remark that, in both the
discrete and the continuous setting, there exists
a constant $C<\infty$ such that
$|\phi^u-u|_{\mathbb Z^d}|\leq C$
for any $u\in U_q$.
This is due to Lemma~\ref{lemma_lipschitz_q_and_norm}.
This observation, combined with the previous theorem, implies the following result.

\begin{theorem}
  \label{general_lips_ext_less_general}
  Let $C<\infty$ denote the smallest
  constant such that
  $|\phi^u-u|_{\mathbb Z^d}|+1\leq C$
  for all $u\in U_q$.
  Consider $\varepsilon>0$ so small
  that $q_{\varepsilon}$ is well-defined.
  Then there exists a constant $m\in\mathbb Z_{\geq 0}$ such that the following holds true.
  Suppose given a collection $(D_i)_i$ of disjoint subsets of $\mathbb R^d$,
  and write $D:=\cup_iD_i$,
  $\Lambda_i:=\Lambda^{-m}(D_i)$,
  and $\Lambda:=\cup_i\Lambda_i$.
  Let $f:D\to \mathbb R$ denote a $\|\cdot\|_{q_{\varepsilon}}$-Lipschitz
  function which is affine
  with slope $u_i\in\bar U_{q_{\varepsilon}}$ whenever restricted to $D_i$.
  Then there exists a $q$-Lipschitz function $\phi:\Lambda\to E$
  which satisfies $|\phi-f|_\Lambda|\leq C$
  and $\nabla\phi|_{\Lambda_i}=\nabla \phi^{u_i}|_{\Lambda_i}$
  for all $i$.
  If $E=\mathbb R$
  then we may furthermore impose that $\phi$
  is $q_{\varepsilon'}$-Lipschitz for fixed $0<\varepsilon'<\varepsilon$ (that $m$ is allowed to depend upon).
\end{theorem}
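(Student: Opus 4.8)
The plan is to build $\phi$ piece by piece: on each $\Lambda_i$ we let $\phi$ be a translate (by a real number if $E=\mathbb R$, by an integer if $E=\mathbb Z$) of the canonical height function $\phi^{u_i}$, chosen so that $\phi$ stays within $C$ of $f$ there; the previous theorem on approximation of continuous profiles is then exactly what promotes these pieces, once glued together, into a single $q$-Lipschitz function on the disjoint union $\Lambda=\cup_i\Lambda_i$ (the sets $\Lambda_i=\Lambda^{-m}(D_i)\subseteq D_i\cap\mathbb Z^d$ are pairwise disjoint because the $D_i$ are). Accordingly, the first step is to let $m$ be the constant provided by that previous theorem, applied with the given $\varepsilon$ and with the present constant $C$.

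For the building blocks, fix $i$ and write $f(x)=u_i(x)+c_i$ on $D_i$. Since $u_i\in\bar U_{q_\varepsilon}$, the restriction $u_i|_{\mathcal L}$ is $q_\varepsilon$-Lipschitz (the defining inequalities are closed), so $u_i\in U_q$ and $\phi^{u_i}$ is defined; moreover the remark preceding the statement, a consequence of Lemma~\ref{lemma_lipschitz_q_and_norm}, gives $|\phi^{u_i}-u_i|_{\mathbb Z^d}|\leq C-1$. When $E=\mathbb R$, the parameter appearing in the definition of $\phi^{u_i}$ is at least $\varepsilon$, so $\phi^{u_i}$ is $q_\varepsilon$-Lipschitz, hence $q_{\varepsilon'}$-Lipschitz and a fortiori $q$-Lipschitz; when $E=\mathbb Z$ the function $\phi^{u_i}$ is integer-valued and $q$-Lipschitz by construction (Proposition~\ref{propo_Kirszbraun}). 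Now define $\phi$ on $\Lambda_i$ by $\phi:=\phi^{u_i}+c_i$ if $E=\mathbb R$ and by $\phi:=\phi^{u_i}+\lfloor c_i+1/2\rfloor$ if $E=\mathbb Z$. On $\Lambda_i$ the function $\phi$ differs from $\phi^{u_i}$ by a constant, so $\nabla\phi|_{\Lambda_i}=\nabla\phi^{u_i}|_{\Lambda_i}$ and $\phi_{\Lambda_i}$ is $q$-Lipschitz (even $q_{\varepsilon'}$-Lipschitz when $E=\mathbb R$); and for $x\in\Lambda_i$ the quantity $|\phi(x)-f(x)|$ equals $|\phi^{u_i}(x)-u_i(x)|\leq C-1$ when $E=\mathbb R$ and is at most $|\phi^{u_i}(x)-u_i(x)|+1/2\leq(C-1)+1/2<C$ when $E=\mathbb Z$. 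Hence $|\phi-f|_\Lambda|\leq C$ in either case.

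To conclude, note that $f$ is $\|\cdot\|_{q_\varepsilon}$-Lipschitz, hence $\|\cdot\|_q$-Lipschitz, and in particular $f(y)-f(x)\leq\|y-x\|_{q_\varepsilon}$ for all $x,y$; together with the fact that each $\phi_{\Lambda_i}$ is $q$-Lipschitz and that $|\phi-f|_\Lambda|\leq C$, all hypotheses of the previous theorem are met, and it yields that $\phi$ is $q$-Lipschitz on $\Lambda$, as required. For the addendum, when $E=\mathbb R$ and $0<\varepsilon'<\varepsilon$ is given, one instead applies the previous theorem with $q$ replaced by $q_{\varepsilon'}$ and with $\varepsilon-\varepsilon'$ in place of the auxiliary parameter there; this is legitimate because $f$ is also $\|\cdot\|_{q_{\varepsilon'}}$-Lipschitz and, by Proposition~\ref{propo_qqq}, $f(y)-f(x)\leq\|y-x\|_{q_\varepsilon}=\|y-x\|_{(q_{\varepsilon'})_{\varepsilon-\varepsilon'}}$ at the seams, while each $\phi_{\Lambda_i}$ is $q_{\varepsilon'}$-Lipschitz by the choice of building block. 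The conclusion is then that $\phi$ is $q_{\varepsilon'}$-Lipschitz on $\Lambda$, with $m$ now allowed to depend on $\varepsilon'$.

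There is no conceptual obstacle here; the statement is essentially a repackaging of the previous approximation theorem together with the uniform bound $|\phi^u-u|_{\mathbb Z^d}|\leq C-1$. The care required is entirely in the bookkeeping: checking that $\phi^{u_i}$ retains the correct ($q_\varepsilon$-, respectively $q_{\varepsilon'}$-) Lipschitz regularity even for boundary slopes $u_i\in\bar U_{q_\varepsilon}$; choosing the additive offsets so that the margin $|\phi-f|\leq C$ is respected, which is precisely why the ``$+1$'' is built into the definition of $C$; and, for the continuous addendum, threading Proposition~\ref{propo_qqq} through the hypotheses of the approximation theorem so that the same gluing argument applies verbatim with $q_{\varepsilon'}$ in place of $q$.
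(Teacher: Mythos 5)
Your proposal is correct and follows exactly the route the paper intends: the paper offers no explicit proof, only the remark that the uniform bound $|\phi^u-u|_{\mathbb Z^d}|\leq C-1$ (a consequence of Lemma~\ref{lemma_lipschitz_q_and_norm}) combined with the preceding approximation theorem implies the result, and your write-up spells out precisely how—translating $\phi^{u_i}$ by a suitable constant on each $\Lambda_i$, verifying the $|\phi-f|\leq C$ margin (with the extra $1/2$ from rounding in the discrete case absorbed by the built-in ``$+1$''), and feeding the pieces into the previous theorem with the correct Lipschitz parameters. The handling of the $E=\mathbb R$ addendum via $(q_{\varepsilon'})_{\varepsilon-\varepsilon'}=q_\varepsilon$ from Proposition~\ref{propo_qqq} is the right move and matches the paper's intent that $m$ be allowed to depend on $\varepsilon'$.
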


For this result, the notation $\nabla\phi=\nabla\psi$
means that the difference $\phi-\psi$
is constant.


\section{The specific free energy}
\label{section:specific_free_energy}


\subsection{The attachment lemmas}

The letter $\Phi$
denotes a potential in $\mathcal S_\mathcal L+\mathcal W_\mathcal L$
throughout this section.
For the thermodynamical formalism, it is crucial that we are able to attach
height functions defined on disjoint subsets of $\mathbb Z^d$ without
losing or
gaining too much energy.
More precisely,
if $\Lambda_1,\Lambda_2\subset\subset\mathbb Z^d$
are disjoint with $\Lambda:=\Lambda_1\cup\Lambda_2$,
then
we want to find bounds on the difference
between $H_\Lambda^0(\phi)$
and $H_{\Lambda_1}^0(\phi)+H_{\Lambda_2}^0(\phi)$.
Similarly, we will require bounds
on the difference between $H_\Lambda^0(\phi)$
and $H_\Lambda(\phi)$.
In this section, we present simple tools for doing this: the attachment lemmas.
We first state and prove the lower attachment lemma,
which is easier.

\begin{lemma}[Lower attachment lemma]
  \label{lemma_lower_attachments}
  Let $\Lambda_1,\Lambda_2\subset\subset\mathbb Z^d$ disjoint, and write $\Lambda:=\Lambda_1\cup\Lambda_2$.
  Then
  \[
  H_{\Lambda}^0
  \geq
  H_{\Lambda_1}^0+
  H_{\Lambda_2}^0-
  \min_{i\in\{1,2\}}e^-(\Lambda_i),
  \]
  where $e^-$ is the lower exterior bound of $\Xi$.
  We also have $H_\Lambda\geq H_\Lambda^0-e^-(\Lambda)$
  for any $\Lambda\subset\subset\mathbb Z^d$.
\end{lemma}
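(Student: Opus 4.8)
The plan is to exploit the two structural features of the decomposition $\Phi=\Psi+\Xi$ separately: positivity of the strong interaction $\Psi$ (so every partial sum of $\Psi$-potentials takes values in $[0,\infty]$, and discarding nonnegative terms only decreases it), and summability of the weak interaction $\Xi$ (so $H^{\Xi}_{\bullet}(\phi)$ is a finite real number, bounded by $|\Lambda|\,\|\Xi\|$, whose defining series converges absolutely, and whose subsums may be rearranged at will). Accordingly I would keep the two parts apart throughout, writing $H^{0}_{\Lambda}=H^{0,\Psi}_{\Lambda}+H^{0,\Xi}_{\Lambda}$ and $H_{\Lambda}=H^{\Psi}_{\Lambda}+H^{\Xi}_{\Lambda}$ pointwise, and recombine only at the very end; this is what prevents any expression of the form $\infty-\infty$ from ever being formed.

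For the first inequality, fix $\phi\in\Omega$ and compare index sets. Set $\mathcal I:=\{\Gamma\subset\subset\mathbb Z^{d}:\emptyset\neq\Gamma\subseteq\Lambda\}$ and $\mathcal I_{i}:=\{\Gamma:\emptyset\neq\Gamma\subseteq\Lambda_{i}\}$ for $i=1,2$. Since $\Lambda_{1}\cap\Lambda_{2}=\emptyset$, the families $\mathcal I_{1},\mathcal I_{2}$ are disjoint subfamilies of $\mathcal I$, and $\mathcal I\setminus(\mathcal I_{1}\cup\mathcal I_{2})$ is exactly the family $\mathcal J$ of $\Gamma\subseteq\Lambda$ meeting both $\Lambda_{1}$ and $\Lambda_{2}$ (because $\Lambda=\Lambda_{1}\sqcup\Lambda_{2}$ makes ``$\Gamma\not\subseteq\Lambda_{1}$'' the same as ``$\Gamma\cap\Lambda_{2}\neq\emptyset$'', and symmetrically). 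Applying this to $\Psi$ and dropping the nonnegative terms indexed by $\mathcal J$ gives $H^{0,\Psi}_{\Lambda}(\phi)\geq H^{0,\Psi}_{\Lambda_{1}}(\phi)+H^{0,\Psi}_{\Lambda_{2}}(\phi)$ in $[0,\infty]$. Applying it to $\Xi$ gives $H^{0,\Xi}_{\Lambda}(\phi)-H^{0,\Xi}_{\Lambda_{1}}(\phi)-H^{0,\Xi}_{\Lambda_{2}}(\phi)=\sum_{\Gamma\in\mathcal J}\Xi_{\Gamma}(\phi)$; each $\Gamma\in\mathcal J$ meets $\Lambda_{i}$ and, since it meets $\Lambda_{3-i}\subseteq\mathbb Z^{d}\setminus\Lambda_{i}$, also meets $\mathbb Z^{d}\setminus\Lambda_{i}$, so it occurs in the sum defining $e^{-}(\Lambda_{i})$ — for whichever $i\in\{1,2\}$ we choose. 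Hence $\bigl|\sum_{\Gamma\in\mathcal J}\Xi_{\Gamma}(\phi)\bigr|\leq\min_{i}e^{-}(\Lambda_{i})$. Adding the $\Psi$-inequality and the $\Xi$-estimate (legitimate because the $\Xi$-parts are finite reals) yields $H^{0}_{\Lambda}(\phi)\geq H^{0}_{\Lambda_{1}}(\phi)+H^{0}_{\Lambda_{2}}(\phi)-\min_{i}e^{-}(\Lambda_{i})$.

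The second inequality is the identical argument with the role of $\Lambda_{2}$ played by the exterior $\mathbb Z^{d}\setminus\Lambda$: the $\Gamma$'s appearing in $H_{\Lambda}=H_{\Lambda,\mathbb Z^{d}}$ but not in $H^{0}_{\Lambda}=H_{\Lambda,\Lambda}$ are precisely those with $\Gamma\cap\Lambda\neq\emptyset$ and $\Gamma\not\subseteq\Lambda$, i.e.\ those meeting both $\Lambda$ and $\mathbb Z^{d}\setminus\Lambda$. Positivity of $\Psi$ gives $H^{\Psi}_{\Lambda}\geq H^{0,\Psi}_{\Lambda}$, and the residual $\Xi$-contribution $H^{\Xi}_{\Lambda}-H^{0,\Xi}_{\Lambda}=\sum\Xi_{\Gamma}$ over that family has absolute value at most $e^{-}(\Lambda)$ directly from the definition of the lower exterior bound (this is exactly the stated ``key property'' $|H^{\Xi}_{\Lambda}-H^{0,\Xi}_{\Lambda}|\leq e^{-}(\Lambda)$); adding gives $H_{\Lambda}\geq H^{0}_{\Lambda}-e^{-}(\Lambda)$.

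I do not expect a genuine obstacle here: the content is the bookkeeping of which $\Gamma$'s enter which Hamiltonian, together with the two structural facts (positivity of $\Psi$, summability of $\Xi$). The only point that needs real care — and the nearest thing to a subtlety — is the arithmetic with the value $\infty$: one must keep the $\Psi$-part (valued in $[0,\infty]$) and the $\Xi$-part (a finite real) separate all the way through and recombine only at the end, and one must invoke the absolute convergence $\sum_{\Gamma\ni x}|\Xi_{\Gamma}(\phi)|\leq\|\Xi\|<\infty$ to justify splitting and rearranging the $\Xi$-series. With that observed, both displays follow immediately.
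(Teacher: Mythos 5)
Your proof is correct and takes essentially the same approach as the paper's: split $\Phi=\Psi+\Xi$, use positivity of $\Psi$ to drop the cross-terms, and bound the residual $\Xi$-contribution by the lower exterior bound $e^-$. The paper's proof is simply a terser version of your index-set bookkeeping.
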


\begin{proof}
  The inequality $H_{\Lambda}^{0,\Psi}
  \geq
  H_{\Lambda_1}^{0,\Psi}+
  H_{\Lambda_2}^{0,\Psi}$ is obvious because $\Psi$
  is positive.
  The inequality
  $H_{\Lambda}^{0,\Xi}
  \geq
  H_{\Lambda_1}^{0,\Xi}+
  H_{\Lambda_2}^{0,\Xi}-\min_{i\in\{1,2\}}e^-(\Lambda_i)$
  is immediate from the definition of $e^-$ in terms of $\Xi$.
  This proves the inequality in the display.
  The other inequality follows from a
  similar decomposition.
\end{proof}

More care is required for the upper bound.
There is a difference between the discrete case $E=\mathbb Z$
and the continuous case $E=\mathbb R$.
If $E=\mathbb Z$ then the strong interaction $\Psi$
can be described by finite information. The effect of this
is that there exists a uniform bound $C<\infty$
such that
\[
\numberthis \label{eq_discrete_singleton_hamil_bound}
  H_{\{x\}}^\Psi(\phi)\leq C
\]
for any $x\in\mathbb Z^d$ and any $q$-Lipschitz function $\phi\in\Omega$.
If $E=\mathbb R$ then there exists no such \emph{a priori} bound,
and it is this specific reason reason that we introduce the \emph{locally bounded}
property in Subsection~\ref{subsection:formal_setting:strong},
so that at least
\[
\numberthis \label{eq_cts_singleton_hamil_bound}
  H_{\{x\}}^\Psi(\phi)\leq C_\varepsilon
\]
whenever $\phi$ is $q_\varepsilon$-Lipschitz at $x$.

For the upper bound, one requires control especially over
the potential $\Psi$ which enforces the Lipschitz
constraint.
The height function $\phi$ must therefore be sufficiently
well-behaved for the lemma to work,
at least on the boundary where $\Lambda_1$
meets $\Lambda_2$.
%
%

\begin{lemma}[Upper attachment lemma]
  \label{lemma_attachment}
  Let $\phi\in\Omega$
  and $\Lambda_1,\Lambda_2\subset\subset\mathbb Z^d$ disjoint, and write $\Lambda:=\Lambda_1\cup\Lambda_2$.
  If $E=\mathbb Z$,
  then there exists an amenable function
  $e^+$, dependent only on $\Phi$,
  such that
  \begin{equation}
    \label{eq_gluing_basic}
    H_{\Lambda}^0(\phi)
    \leq
    H_{\Lambda_1}^0(\phi)+
    H_{\Lambda_2}^0(\phi)+
    \min_{i\in\{1,2\}}e^+(\Lambda_i)
  \end{equation}
  whenever $\phi_{\partial^R\Lambda_1\cup\partial^R\Lambda_2}$
  is $q$-Lipschitz,
  and such that
  \begin{equation}
    \label{eq_gluing_ext_0}
    H_{\Lambda}(\phi)
    \leq
    H_{\Lambda}^0(\phi)+e^+(\Lambda)
  \end{equation}
  whenever $\phi_{\partial^R\Lambda\cup\partial^R(\mathbb Z^d\smallsetminus\Lambda)}$
  is $q$-Lipschitz.
  If $E=\mathbb R$ and $\varepsilon>0$,
  then there exists an amenable function $e^+_\varepsilon$, dependent only on $\Phi$ and $\varepsilon$,
  such that~\eqref{eq_gluing_basic} and~\eqref{eq_gluing_ext_0} hold true  whenever the restrictions of $\phi$
    are $q_\varepsilon$-Lipschitz,
    and with $e^+$ replaced by $e^+_\varepsilon$.
\end{lemma}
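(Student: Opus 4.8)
The plan is to estimate the two ``defect'' quantities $H_\Lambda^0-H_{\Lambda_1}^0-H_{\Lambda_2}^0$ and $H_\Lambda-H_\Lambda^0$ by splitting each into a $\Psi$-part and a $\Xi$-part and treating them separately. I would set $e^+(\Lambda):=e^-(\Lambda)+C\,|\partial^R\Lambda|$, with $C$ the constant of~\eqref{eq_discrete_singleton_hamil_bound}, and, in the case $E=\mathbb R$, $e^+_\varepsilon(\Lambda):=e^-(\Lambda)+C_\varepsilon\,|\partial^R\Lambda|$ with $C_\varepsilon$ from~\eqref{eq_cts_singleton_hamil_bound}. To see these are amenable it suffices to treat $\Lambda\mapsto|\partial^R\Lambda|$: shift-invariance and positivity are clear, subadditivity on disjoint sets follows from $\partial^R(\Lambda\cup\Delta)\subset\partial^R\Lambda\cup\partial^R\Delta$, and sublinear growth on Van Hove sequences follows from $|\partial^R\Lambda|\leq|\partial\Lambda|\cdot|\{v:d_1(0,v)\leq R\}|$, which holds because every vertex of $\partial^R\Lambda$ lies within distance $R$ of $\partial\Lambda$; then $e^+$ and $e^+_\varepsilon$ are amenable since $e^-$ is (as $\Xi\in\mathcal W_\mathcal L$) and sums of amenable functions are amenable.

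For~\eqref{eq_gluing_basic} I would write $H_\Lambda^0(\phi)-H_{\Lambda_1}^0(\phi)-H_{\Lambda_2}^0(\phi)=\sum_\Gamma\Psi_\Gamma(\phi)+\sum_\Gamma\Xi_\Gamma(\phi)$, where $\Gamma$ runs over the finite subsets of $\Lambda$ meeting both $\Lambda_1$ and $\Lambda_2$ (the \emph{crossing sets}). Any crossing set meets $\Lambda_1$ and also $\Lambda_2\subset\mathbb Z^d\smallsetminus\Lambda_1$, so the $\Xi$-sum is at most $e^-(\Lambda_1)$ by definition of the lower exterior bound, and symmetrically at most $e^-(\Lambda_2)$. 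For the $\Psi$-sum only crossing sets of $d_1$-diameter at most $R$ contribute, and each such $\Gamma$ satisfies $\Gamma\subset\partial^R\Lambda_1\cup\partial^R\Lambda_2$: a vertex of $\Gamma$ in $\Lambda_1$ lies within distance $R$ of the nonempty set $\Gamma\cap\Lambda_2\subset\mathbb Z^d\smallsetminus\Lambda_1$, hence in $\partial^R\Lambda_1$, and symmetrically for vertices in $\Lambda_2$. Thus the $\Psi$-sum depends on $\phi$ only through $\phi|_{\partial^R\Lambda_1\cup\partial^R\Lambda_2}$, which is $q$-Lipschitz by hypothesis; by the Kirszbraun theorem (Proposition~\ref{propo_Kirszbraun}) I may replace $\phi$ by a globally $q$-Lipschitz $\tilde\phi\in\Omega$ agreeing with it there (integer-valued if $E=\mathbb Z$, as $q$ is then integral). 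Using $\Psi\geq 0$ and that each relevant $\Gamma$ appears at least once in the double sum below (it meets $\partial^R\Lambda_1$), this gives $\sum_\Gamma\Psi_\Gamma(\phi)=\sum_\Gamma\Psi_\Gamma(\tilde\phi)\leq\sum_{x\in\partial^R\Lambda_1}H_{\{x\}}^\Psi(\tilde\phi)\leq C\,|\partial^R\Lambda_1|$ by~\eqref{eq_discrete_singleton_hamil_bound}, and symmetrically $\leq C\,|\partial^R\Lambda_2|$. For each fixed $i\in\{1,2\}$ the two bounds combine to $H_\Lambda^0(\phi)-H_{\Lambda_1}^0(\phi)-H_{\Lambda_2}^0(\phi)\leq C\,|\partial^R\Lambda_i|+e^-(\Lambda_i)=e^+(\Lambda_i)$, whence the claimed bound with $\min_i e^+(\Lambda_i)$.

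For~\eqref{eq_gluing_ext_0} I would argue identically: $H_\Lambda(\phi)-H_\Lambda^0(\phi)=\sum_\Gamma\Psi_\Gamma(\phi)+\sum_\Gamma\Xi_\Gamma(\phi)$ with $\Gamma$ running over the finite sets meeting both $\Lambda$ and $\mathbb Z^d\smallsetminus\Lambda$; the $\Xi$-sum is at most $e^-(\Lambda)$ by definition, and for the $\Psi$-sum only sets of diameter at most $R$ matter, each of which satisfies $\Gamma\subset\partial^R\Lambda\cup\partial^R(\mathbb Z^d\smallsetminus\Lambda)$ and in particular meets $\partial^R\Lambda$. Replacing $\phi$ by a global $q$-Lipschitz extension of its restriction to $\partial^R\Lambda\cup\partial^R(\mathbb Z^d\smallsetminus\Lambda)$ and summing singleton Hamiltonians over $\partial^R\Lambda$ as above yields $\sum_\Gamma\Psi_\Gamma(\phi)\leq C\,|\partial^R\Lambda|$, so $H_\Lambda(\phi)-H_\Lambda^0(\phi)\leq e^+(\Lambda)$. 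In the case $E=\mathbb R$ I would run both arguments verbatim with a fixed $\varepsilon>0$: the hypotheses become $q_\varepsilon$-Lipschitzness of the relevant restrictions, the Kirszbraun extension is taken for the admissible quasimetric $q_\varepsilon$ (legitimate since $(\mathbb A,q_\varepsilon)$ is again a local Lipschitz constraint), and~\eqref{eq_cts_singleton_hamil_bound} gives $H_{\{x\}}^\Psi(\tilde\phi)\leq C_\varepsilon$ because a globally $q_\varepsilon$-Lipschitz function is $q_\varepsilon$-Lipschitz at every vertex; this delivers $e^+_\varepsilon$. The one genuinely delicate step is the reduction of the $\Psi$-contribution to data on the boundary layers $\partial^R\Lambda_i$ (resp.\ $\partial^R\Lambda$); it rests on combining the finite range of $\Psi$ with the elementary fact that the relevant crossing sets lie in those layers, after which positivity of $\Psi$ together with the Kirszbraun extension finish the estimate.
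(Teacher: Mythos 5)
Your proposal is correct and follows essentially the same route as the paper's proof: decompose each Hamiltonian defect into its $\Psi$- and $\Xi$-parts, absorb the $\Xi$-part into $e^-$ directly from the definition of the lower exterior bound, and localize the $\Psi$-part to the boundary layers $\partial^R\Lambda_i$ (resp.\ $\partial^R\Lambda$) using finite range, then bound by singleton Hamiltonians via the locally-bounded property. The one step you make explicit that the paper leaves implicit is the Kirszbraun extension of $\phi|_{\partial^R\Lambda_1\cup\partial^R\Lambda_2}$ to a globally Lipschitz $\tilde\phi$, which is needed to invoke the singleton bound $H_{\{x\}}^\Psi\leq C$ on what is a priori only a boundary-layer restriction; this is a clean and legitimate way to justify that step, and your choice $e^+(\Lambda)=e^-(\Lambda)+C|\partial^R\Lambda|$ is interchangeable with the paper's $e^{+,\Psi}(\Lambda)=C(2R+1)^d|\partial\Lambda|$ up to constants.
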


\begin{definition}
  The functions
  $e^+$ and $e_\varepsilon^+$ are called \emph{upper exterior bounds}.
\end{definition}

\begin{proof}[Proof of Lemma~\ref{lemma_attachment}]
  It suffices to consider
  the contributions of the potentials
  $\Psi$ and $\Xi$ to each Hamiltonian separately;
  one can simply
  sum the two upper exterior bounds $e^{+,\Psi}$
  and $e^{+,\Xi}$ so obtained.
  In fact, the upper exterior bound $e^{+,\Xi}:=e^-$ suffices
  for the long-range interaction $\Xi$.
  Let us therefore focus on the contribution from the potential $\Psi$.

  We shall simultaneously
  consider the discrete case and the continuous case.
  In this proof we shall reserve the name \emph{Lipschitz} for \emph{$q$-Lipschitz} whenever $E=\mathbb Z$
  and for \emph{$q_\varepsilon$-Lipschitz} whenever $E=\mathbb R$.
  Write $C$ for a fixed constant
  such that
  $H_{
  \{x\}
  }^\Psi(\psi)\leq C$
  for any $x\in\mathbb Z^d$
  and for any Lipschitz height function $\psi$.
  Because $\Psi$ is positive and of range $R$
  and because the restriction of $\phi$
  to $\partial^R\Lambda_1\cup\partial^R\Lambda_2$
  is Lipschitz,
  we have
  \begin{align*}
    &H_{\Lambda}^{0,\Psi}(\phi)-
    H_{\Lambda_1}^{0,\Psi}(\phi)-
    H_{\Lambda_2}^{0,\Psi}(\phi)
    =
    \sum_{
      \Delta\subset\Lambda,\,
      \Delta\not\subset\Lambda_1,\,
      \Delta\not\subset\Lambda_2
    }
    \Psi_\Delta(\phi)
    \\&\qquad=
    \sum_{
      \Delta\subset\partial^R\Lambda_1\cup\partial^R\Lambda_2,\,
      \Delta\not\subset\Lambda_1,\,
      \Delta\not\subset\Lambda_2
    }
    \Psi_\Delta(\phi)
    \leq\min_{i\in\{1,2\}} H^\Psi_{\partial^R\Lambda_i,\partial^R\Lambda_1\cup\partial^R\Lambda_2}(\phi)
    \\&\qquad
    \leq
    \min_{i\in\{1,2\}}
    C |\partial^R \Lambda_i|
    \leq
      \min_{i\in\{1,2\}}e^{+,\Psi}(\Lambda_i)
  \end{align*}
  if we define $e^{+,\Psi}(\Lambda):=C (2R+1)^d|\partial \Lambda|$;
  this function satisfies the desired constraints.
  It is clear that this choice for $e^{+,\Psi}$
  also implies that
  \[
    H_\Lambda^\Psi(\phi)\leq
    H_\Lambda^{0,\Psi}(\phi)
    +e^{+,\Psi}(\Lambda)
  \]
  whenever the restriction of $\phi$
  to $\partial^R\Lambda\cup\partial^R(\mathbb Z^d\smallsetminus\Lambda)$ is Lipschitz.
\end{proof}

\subsection{Density limits of functions on finite subsets of $\mathbb Z^d$}

\begin{proposition}
  \label{propo_large_set_limit}
  Consider two $\mathcal L$-invariant real-valued
  functions $f$ and $b$ on the finite subsets of $\mathbb Z^d$,
  with $b$ amenable and
\[
\numberthis
\label{equation_for_f_a_superadditivty}
  f(\Lambda_1\cup\Lambda_2)\leq f(\Lambda_1)+f(\Lambda_2)+\min_{i\in\{1,2\}}b(\Lambda_i)
\]
for disjoint $\Lambda_1,\Lambda_2\subset\subset\mathbb Z^d$.
Then
$(n^{-d}f(\Pi_n))_{n\in\mathbb N}$ tends
to a limit in $[-\infty,\infty)$ as $n\to\infty$,
and
\[
\lim_{n\to\infty}n^{-d}f(\Pi_n)=\inf_{n\in N\cdot\mathbb N}
n^{-d}(f(\Pi_{n})+b(\Pi_{n}))
\]
where $N\in\mathbb N$ is minimal subject to $N\cdot\mathbb Z^d\subset\mathcal L$.
Finally, if $(\Lambda_n)_{n\in\mathbb N}\uparrow\mathbb Z^d$,
then
\[
  \limsup_{n\to\infty}|\Lambda_n|^{-1}f(\Lambda_n)\leq \lim_{n\to\infty}n^{-d}f(\Pi_n).
\]

If we weaken the assumptions, and
suppose only that~\eqref{equation_for_f_a_superadditivty}
holds true whenever $\Lambda_1$ contains some vertex $x$
adjacent to some vertex $y$ in $\Lambda_2$,
then each statement in this proposition remains valid,
except that, for the final assertion, we also require
that each set $\Lambda_n$ is connected.
\end{proposition}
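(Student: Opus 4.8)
The plan is to run a $d$-dimensional Fekete argument, using amenability of $b$ to absorb the error terms produced by gluing. Write $N\in\mathbb N$ for the minimal integer with $N\cdot\mathbb Z^d\subset\mathcal L$; then any two translates of $\Pi_m$ by vectors in $m\cdot\mathbb Z^d$ have the same $f$-value as soon as $m\in N\cdot\mathbb N$, which is exactly why the infimum in the statement ranges over $N\cdot\mathbb N$. The first ingredient is a crude linear upper bound $f(\Lambda)\le C_0|\Lambda|$, and likewise $b(\Lambda)\le C_1|\Lambda|$, valid for every box $\Lambda=\prod_i[0,\ell_i)$, and (in the weakened setting) for every connected $\Lambda$: decompose $\Lambda$ into the finitely many nonempty pieces $\Lambda\cap(\Pi_N+Nx)$, $x\in\mathbb Z^d$, of which there are at most $|\Lambda|$, and glue them one at a time using \eqref{equation_for_f_a_superadditivty}; each piece contributes at most $\max_{S\subseteq\Pi_N}f(S)<\infty$ and each gluing step costs at most $\max_{S\subseteq\Pi_N}b(S)<\infty$. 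Note that $f$ need not be bounded below, which is why the limit is allowed to be $-\infty$.

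Next comes the core estimate. Fix $m\in N\cdot\mathbb N$, and let $n\in N\cdot\mathbb N$ with $n=km+r$, $0\le r<m$. Split each coordinate interval $[0,n)$ into $k$ intervals of length $m$ and one of length $r$; taking products partitions $\Pi_n$ into $(k+1)^d$ rectangular boxes, of which exactly $k^d$ are translates of $\Pi_m$ by vectors in $m\cdot\mathbb Z^d$, the remaining ``skin'' boxes having total volume $n^d-(km)^d\le d\,m\,n^{d-1}$. Gluing the $k^d$ copies of $\Pi_m$ together one at a time costs at most $k^d f(\Pi_m)+(k^d-1)b(\Pi_m)$ — here one uses $\min_i b(\cdot)\le b(\Pi_m)$ at each step, since the newly attached piece is a single copy of $\Pi_m$ — and attaching the skin boxes one at a time costs at most $(C_0+C_1)\,d\,m\,n^{d-1}$ by the crude bound. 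Dividing by $n^d$ and using $k^d\le (n/m)^d$ gives
\[
n^{-d}f(\Pi_n)\le m^{-d}\bigl(f(\Pi_m)+b(\Pi_m)\bigr)+\frac{C_2\,m}{n}.
\]
Passing from $N\cdot\mathbb N$ to all of $\mathbb N$ is routine: $\Pi_n$ differs from $\Pi_{N\lfloor n/N\rfloor}$ by a region of volume $O(N n^{d-1})$, absorbed by the same crude bound. Hence $\limsup_{n\to\infty}n^{-d}f(\Pi_n)\le\inf_{m\in N\cdot\mathbb N}m^{-d}(f(\Pi_m)+b(\Pi_m))$.

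For the matching lower bound, since $b\ge 0$ one has, along $n\in N\cdot\mathbb N$, that $n^{-d}f(\Pi_n)\ge n^{-d}(f(\Pi_n)+b(\Pi_n))-n^{-d}b(\Pi_n)$; the first term is at least the claimed infimum, and the second tends to $0$ because $b$ is amenable and $(\Pi_n)$ is a Van Hove sequence. This shows the limit exists along $N\cdot\mathbb N$ and equals the infimum; sandwiching a general $\Pi_n$ between $\Pi_{N\lfloor n/N\rfloor}$ and $\Pi_{N\lceil n/N\rceil}$, and using the crude bound once more, upgrades this to convergence along all of $\mathbb N$ to the same value, in $[-\infty,\infty)$. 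The Van Hove statement is then proved exactly like the core estimate: tile $\Lambda_n$ by the translates of $\Pi_m$ contained in it, of which there are at most $|\Lambda_n|/m^d$, and absorb the leftover cells — whose total volume is $O(m|\partial\Lambda_n|)=o(|\Lambda_n|)$ — with the crude bound, so that $|\Lambda_n|^{-1}f(\Lambda_n)\le m^{-d}(f(\Pi_m)+b(\Pi_m))+o(1)$; letting $m\to\infty$ finishes it.

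The main difficulty is bookkeeping rather than conceptual. One must keep every error term on exactly the right order of magnitude — in particular, recognize that the term $(k^d-1)b(\Pi_m)$ from gluing the bulk copies is not an error but precisely what produces the $+b(\Pi_m)$ inside the infimum, while all other gluings contribute only $O(m n^{d-1})$, negligible after division by $n^d$. In the weakened setting there is the additional chore of choosing, throughout, a gluing order in which each newly attached set is adjacent to the connected region built so far: this is where one uses connectedness of $\Pi_n$, of each rectangular box, and — in the final assertion — of each $\Lambda_n$, together with the easy fact that the ``cell adjacency graph'' of a connected set is connected, so such an order always exists.
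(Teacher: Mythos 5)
The paper states this proposition without giving a proof, so there is no in-text argument to compare against; your Fekete-type decomposition-and-gluing proof is the standard one and is correct. Two small points worth polishing when you write it out: the displayed inequality $n^{-d}f(\Pi_n)\le m^{-d}(f(\Pi_m)+b(\Pi_m))+C_2 m/n$ only holds with a constant $C_2$ that is allowed to depend on $m$ when $f(\Pi_m)+b(\Pi_m)<0$ (since $k^d/n^d<m^{-d}$, the term $(k^d/n^d)(f(\Pi_m)+b(\Pi_m))$ can then exceed $m^{-d}(f(\Pi_m)+b(\Pi_m))$), but this is harmless because $m$ is fixed before $n\to\infty$; and in the weakened setting of the final paragraph one cannot glue all the $\Pi_m$-tiles first and the leftover afterwards, since the tiles are pairwise non-adjacent — one must instead order \emph{all} pieces (tiles and leftover $\Pi_N$-cells) by a single spanning tree of the piece-adjacency graph of $\Lambda_n$, which exists precisely because $\Lambda_n$ is connected.
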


\begin{definition}
  Write $\langle\cdot|\Phi\rangle:\mathcal P_\mathcal L(\Omega,\mathcal F^\nabla)\to [-\|\Xi\|,\infty]$
  for the unique functional which satisfies
  \[
    \langle\mu|\Phi\rangle:=\mu(\Phi):=\lim_{n\to\infty}n^{-d}\mu(H_{\Pi_n}^0).
  \]
  The limit on the right converges due to the lower attachment lemma and the previous proposition.
  This quantity
  is called the \emph{specific energy} of $\mu$ with respect to $\Phi$.
\end{definition}

\subsection{Free energy attachment lemma}

\begin{definition}
  Define $e^*:=e^-+\log(2K+1)$,
  where $K$ is minimal subject to $Kd_1\geq q$.
  Call the amenable function $e^*$ the \emph{free energy exterior bound}.
\end{definition}

\begin{lemma}[Free energy attachment lemma]
  \label{lemma_fe_superadditivity}
  Fix $\mu\in\mathcal P(\Omega,\mathcal F^\nabla)$,
  and consider some disjoint sets $\Lambda_1,\Lambda_2\subset\subset\mathbb Z^d$
  with some vertex $x$ of $\Lambda_1$
  adjacent to some vertex $y$ of $\Lambda_2$ in the square lattice.
  Write $\Lambda:=\Lambda_1\cup\Lambda_2$.
  Then
  \[
    \mathcal H_\Lambda(\mu|\Phi)\geq \mathcal H_{\Lambda_1}(\mu|\Phi)
    +\mathcal H_{\Lambda_2}(\mu|\Phi)-\min_{i\in\{1,2\}}e^*(\Lambda_i).
  \]
  Moreover, for $\Lambda\subset\subset\mathbb Z^d$ connected and nonempty, we have
  \[\mathcal H_\Lambda(\mu|\Phi)\geq -(|\Lambda|-1)\max_{x\in\mathbb Z^d/\mathcal L}
  e^*(\{x\}).\]
\end{lemma}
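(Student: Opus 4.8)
The plan is to prove both inequalities by splitting the free energy
$\mathcal H_\Delta(\mu|\Phi)=\mathcal H_{\mathcal F_\Delta^\nabla}(\mu|\lambda^{\Delta-1})+\mu(H_\Delta^{0,\Phi})$
into its entropy and energy contributions and treating each separately.
First I would dispose of the degenerate cases. For every finite $\Delta$ the reference measure $e^{-H_\Delta^{0,\Phi}}\lambda^{\Delta-1}$ is finite---it is carried by the $\lambda^{\Delta-1}$-finite set of functions which are $q$-Lipschitz on the finitely many edges of $\mathbb A$ inside $\Delta$, on which $e^{-H_\Delta^{0,\Phi}}$ is bounded---so $\mathcal H_\Delta(\mu|\Phi)$ is always bounded below. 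Moreover, since the Lipschitz constraint for $\Lambda_i$ is carried by edges inside $\Lambda_i\subseteq\Lambda$, and relative entropy does not decrease under a refinement of the $\sigma$-algebra, one checks that $\mathcal H_\Lambda(\mu|\Phi)=\infty$ whenever $\mathcal H_{\Lambda_1}(\mu|\Phi)=\infty$ or $\mathcal H_{\Lambda_2}(\mu|\Phi)=\infty$. Hence, by the convention $\infty-\infty=\infty$, both inequalities are trivial unless every free energy in sight is finite, which I now assume. In particular $\mu(H_\Lambda^{0,\Psi})<\infty$, so $\mu$-almost every $\phi$ is $q$-Lipschitz on every edge of $\mathbb A$ inside $\Lambda$; since $\{x,y\}$ is a square-lattice edge---hence an edge of $\mathbb A$---contained in $\Lambda$, this forces $\phi(x)-\phi(y)\in[-q(y,x),q(x,y)]\subseteq[-K,K]$ almost surely.

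The energy contribution is immediate from the lower attachment lemma (Lemma~\ref{lemma_lower_attachments}): the pointwise bound $H_\Lambda^{0,\Phi}\geq H_{\Lambda_1}^{0,\Phi}+H_{\Lambda_2}^{0,\Phi}-\min_i e^-(\Lambda_i)$ integrates, since the interior Hamiltonians are bounded below, to $\mu(H_\Lambda^{0,\Phi})\geq\mu(H_{\Lambda_1}^{0,\Phi})+\mu(H_{\Lambda_2}^{0,\Phi})-\min_i e^-(\Lambda_i)$.

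The heart of the argument is the entropy contribution, and the step I expect to cost the most care---though it is purely bookkeeping---is to exhibit an explicit product structure for the gradient reference measure. Anchoring all gradients at the vertex $x$, the $\sigma$-algebra $\mathcal F_\Lambda^\nabla$ is generated by the coordinates $(\phi(v)-\phi(x))_{v\in\Lambda\smallsetminus\{x\}}$, and, by definition, $\lambda^{\Lambda-1}$ is the pushforward of $\lambda^{\Lambda\smallsetminus\{x\}}$ along this coordinate map. The triangular change of coordinates which keeps $(\phi(v)-\phi(x))_{v\in\Lambda_1\smallsetminus\{x\}}$ and $\phi(y)-\phi(x)$ and replaces $\phi(v)-\phi(x)$ by $\phi(v)-\phi(y)$ for $v\in\Lambda_2\smallsetminus\{y\}$ is a bijection preserving $\lambda^{\Lambda\smallsetminus\{x\}}$ (unit Jacobian when $E=\mathbb R$; a bijection of $E^{\Lambda\smallsetminus\{x\}}$ when $E=\mathbb Z$), and it carries $\mathcal F_{\Lambda_1}^\nabla$, $\sigma(\phi(x)-\phi(y))$ and $\mathcal F_{\Lambda_2}^\nabla$ onto the three factors of a product. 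This identifies $(\Omega,\mathcal F_\Lambda^\nabla,\lambda^{\Lambda-1})$ with the product of $(\Omega,\mathcal F_{\Lambda_1}^\nabla,\lambda^{\Lambda_1-1})$, $(E,\mathcal E,\lambda)$ (recording $\phi(x)-\phi(y)$), and $(\Omega,\mathcal F_{\Lambda_2}^\nabla,\lambda^{\Lambda_2-1})$. Writing $\mu_3$ for the law of $\phi(x)-\phi(y)$ under $\mu$, subadditivity of relative entropy with respect to a product of $\sigma$-finite reference measures (obtained by factoring the Radon--Nikodym derivative over the three marginals) then gives
\[
\mathcal H_{\mathcal F_\Lambda^\nabla}(\mu|\lambda^{\Lambda-1})\geq\mathcal H_{\mathcal F_{\Lambda_1}^\nabla}(\mu|\lambda^{\Lambda_1-1})+\mathcal H_{\mathcal F_{\Lambda_2}^\nabla}(\mu|\lambda^{\Lambda_2-1})+\mathcal H(\mu_3|\lambda).
\]
Since $\mu_3$ is supported on a set of $\lambda$-measure at most $2K+1$ (here I use that $q$ is integral when $E=\mathbb Z$), the last term is at least $-\log(2K+1)$.

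Adding the entropy and energy inequalities and recalling that $e^*=e^-+\log(2K+1)$, so that $\log(2K+1)+\min_i e^-(\Lambda_i)=\min_i e^*(\Lambda_i)$, yields the first claim. For the second claim I would induct on $|\Lambda|$. When $|\Lambda|=1$ the gradient $\sigma$-algebra is trivial, $H_\Lambda^{0,\Phi}=0$ by convention, and $\lambda^{\Lambda-1}$ is a probability measure, so $\mathcal H_\Lambda(\mu|\Phi)=0$. When $|\Lambda|\geq2$ and $\Lambda$ is connected, I peel off a non-cut vertex $x$ of $\Lambda$, so that $\Lambda_2:=\Lambda\smallsetminus\{x\}$ is connected and $x$ is adjacent to $\Lambda_2$; applying the first claim with $\Lambda_1=\{x\}$, using $\mathcal H_{\{x\}}(\mu|\Phi)=0$ and $\min_i e^*(\Lambda_i)\leq e^*(\{x\})\leq\max_{z\in\mathbb Z^d/\mathcal L}e^*(\{z\})$, and invoking the inductive hypothesis for $\Lambda_2$, gives $\mathcal H_\Lambda(\mu|\Phi)\geq-(|\Lambda|-1)\max_{z\in\mathbb Z^d/\mathcal L}e^*(\{z\})$.
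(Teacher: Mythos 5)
Your proof is correct and follows essentially the same route as the paper: the energy part is handled by the lower attachment lemma, the entropy part by anchoring gradients at a reference vertex so that $\lambda^{\Lambda-1}$ factors as a product and relative entropy is superadditive over the marginals, with the single-edge term contributing $-\log(2K+1)$ because the height increment across $\{x,y\}$ is confined to an interval of length at most $2K$; the second assertion then follows by iterating with singletons. The only cosmetic difference is that you collapse the paper's two successive two-factor decompositions (through $\Lambda_1\cup\{y\}$) into a single three-factor decomposition, and you spend a bit more effort than necessary disposing of the degenerate case, where the paper simply notes that a non-Lipschitz restriction to $\Lambda$ already makes the left-hand side infinite.
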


\begin{proof}
  Fix $K$ minimal subject to $Kd_1\geq q$.
  Recall that $\mu\pi_\Lambda$ is the restriction of $\mu$ to $\Lambda$.
  We assume that $\mu\pi_\Lambda$ is supported on $Kd_1$-Lipschitz functions;
  if this is not the case, then $\mathcal H_\Lambda(\mu|\Phi)$
  is infinite, and we are done.
  For any $\Delta\subset\Lambda$,
   we have
   \[
    \mathcal H_\Delta(\mu|\Phi)=\mathcal H_{\mathcal F_\Delta^\nabla}(\mu|\lambda^{\Delta-1})+\mu(H_{\Delta}^0).
   \]
   By the lower attachment lemma,
   we have
   $
    \mu(H_{\Lambda}^0)\geq \mu(H_{\Lambda_1}^0)+\mu(H_{\Lambda_2}^0)-\min_{i\in\{1,2\}}e^-(\Lambda_i).
   $
   Therefore it suffices to show that
   \begin{equation}
     \label{equation_remains_to_prove_for_quasi_superadditivity_fe}
     \mathcal H_{\mathcal F_\Lambda^\nabla}(\mu|\lambda^{\Lambda-1})
     \geq
     \mathcal H_{\mathcal F_{\Lambda_1}^\nabla}(\mu|\lambda^{\Lambda_1-1})
     +
     \mathcal H_{\mathcal F_{\Lambda_2}^\nabla}(\mu|\lambda^{\Lambda_2-1})
     -\log(2K+1)
   \end{equation}
   whenever $\mu\pi_\Lambda$ is supported on $Kd_1$-Lipschitz functions.
      This follows from the following two facts:
   \begin{enumerate}
     \item We have
     $\mathcal H_{\mathcal F_{\{x,y\}}^\nabla}(\mu|\lambda^{\{x,y\}-1})\geq -\log(2K+1)$,
     \item If $\Delta_1,\Delta_2\subset \Lambda$ share a single vertex $z$
     and $\Delta:=\Delta_1\cup\Delta_2$,
     then
     \[
     \mathcal H_{\mathcal F_\Delta^\nabla}(\mu|\lambda^{\Delta-1})
     \geq
     \mathcal H_{\mathcal F_{\Delta_1}^\nabla}(\mu|\lambda^{\Delta_1-1})
     +
     \mathcal H_{\mathcal F_{\Delta_2}^\nabla}(\mu|\lambda^{\Delta_2-1}).
     \]
   \end{enumerate}
   Note that~\eqref{equation_remains_to_prove_for_quasi_superadditivity_fe} then follows by applying the
   second fact twice, first to the sets $\Lambda_1$ and $\{x,y\}$,
   then to the sets $\Lambda_1\cup\{y\}$ and $\Lambda_2$.
   Let us first prove the first fact.
   Since $\mu$ is supported on $Kd_1$-Lipschitz functions,
   we have
   \[
   \mathcal H_{\mathcal F_{\{x,y\}}^\nabla}(\mu|\lambda^{\{x,y\}-1})
   \geq
   -\log\lambda^{\{x,y\}-1}(\{|\phi(y)-\phi(x)|\leq K\})
   \geq -\log (2K+1).
   \]
   For the second fact, we can simply choose the point $z$ as a reference
   point for all gradient measures,
   such that the measurable space $(\Omega,\mathcal F^\nabla_\Delta)$
   becomes effectively a product space;
   the measure $\lambda^{\Delta-1}$ is then the product measure
   of $\lambda^{\Delta_1-1}$ and $\lambda^{\Delta_2-1}$.
   The second fact now follows; the inequality in the display
  is
   well-known for product spaces.

   The final assertion of the lemma is a direct consequence of the first
   assertion and the fact that $\mathcal H_\Lambda(\mu|\Phi)=0$
   whenever $\Lambda$ is a singleton.
\end{proof}

\subsection{Convergence and properties of the
specific free energy}

The two results in this subsection jointly imply Theorem~\ref{thm_main_sfe}.

\begin{theorem}
  If $\Phi\in\mathcal S_\mathcal L+\mathcal W_\mathcal L$,
  then the
   functional $\mathcal H(\cdot|\Phi):\mathcal P_\mathcal L(\Omega,\mathcal F^\nabla)\to\mathbb R\cup\{\infty\}$
  is well-defined
  and satisfies
  \[
    \mathcal H(\mu|\Phi):=
    \lim_{n\to\infty}n^{-d}\mathcal H_{\Pi_n}(\mu|\Phi)
    =
    \sup_{n\in N\cdot\mathbb N}
    n^{-d}\left(\mathcal H_{\Pi_n}(\mu|\Phi)-e^*(\Pi_n)\right)
    \geq -\max_{x\in\mathbb Z^d/\mathcal L}
    e^*(\{x\}),
  \]
  where $N$ is minimal subject to $N\cdot\mathbb Z^d\subset\mathcal L$.
  Moreover, $\mathcal H(\cdot|\Phi)$ is lower-semicontinuous,
  and for each $C\in\mathbb R$ the lower level set
  \[
   M_C:=\{\mu\in\mathcal P_\mathcal L(\Omega,\mathcal F^\nabla):\mathcal H(\mu|\Phi)\leq C\}
  \]
  is a compact Polish space, with respect to the topology of (weak) local convergence. In fact, the two topologies coincide on each set $M_C$.
\end{theorem}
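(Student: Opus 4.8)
The plan is to reduce the convergence, the supremum formula, and the lower bound in the first displayed line of the theorem to the two results just established: the free energy attachment lemma (Lemma~\ref{lemma_fe_superadditivity}) and the density-limit proposition (Proposition~\ref{propo_large_set_limit}). One applies the latter to the function $f(\Lambda):=-\mathcal H_\Lambda(\mu|\Phi)\in[-\infty,\infty)$ with $b:=e^*$. This $f$ is $\mathcal L$-invariant because $\Phi$ is periodic, $\mu$ is $\mathcal L$-invariant, and the reference measures $\lambda^{\Lambda-1}$ transform covariantly under $\Theta$; and the adjacency version of the quasi-subadditivity hypothesis~\eqref{equation_for_f_a_superadditivty} for $f$ is exactly the first inequality of Lemma~\ref{lemma_fe_superadditivity}. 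Proposition~\ref{propo_large_set_limit} then yields both the convergence of $n^{-d}\mathcal H_{\Pi_n}(\mu|\Phi)$ and the supremum formula, while the asserted lower bound $-\max_x e^*(\{x\})$ follows from the connected-set lower bound of Lemma~\ref{lemma_fe_superadditivity} applied to $\Pi_n$, letting $n\to\infty$. (Proposition~\ref{propo_large_set_limit} is stated for real-valued $f$, but it and its proof apply verbatim when $f$ may take the value $-\infty$, i.e.\ when some $\mathcal H_{\Pi_n}(\mu|\Phi)=\infty$.)

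Lower-semicontinuity for the topology of local convergence is then immediate: a pointwise supremum of lower-semicontinuous functions is lower-semicontinuous, so by the supremum formula it suffices to treat each $\mu\mapsto\mathcal H_{\Pi_n}(\mu|\Phi)$, which I would decompose as $\mathcal H_{\mathcal F^\nabla_{\Pi_n}}(\mu|\lambda^{\Pi_n-1})+\mu(H^{0,\Psi}_{\Pi_n})+\mu(H^{0,\Xi}_{\Pi_n})$. The relative-entropy term is lower-semicontinuous by the Donsker--Varadhan variational formula, being the supremum of the local-continuous affine functionals $\mu\mapsto\mu(h)-\log\lambda^{\Pi_n-1}(e^h)$ over bounded measurable cylinder functions $h$ with $\lambda^{\Pi_n-1}(e^h)<\infty$. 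The term $\mu\mapsto\mu(H^{0,\Psi}_{\Pi_n})$ is lower-semicontinuous because $H^{0,\Psi}_{\Pi_n}\geq 0$ (positivity of $\Psi$) and is the increasing limit of the bounded cylinder functions $H^{0,\Psi}_{\Pi_n}\wedge k$; and $\mu\mapsto\mu(H^{0,\Xi}_{\Pi_n})$ is continuous, since $H^{0,\Xi}_{\Pi_n}$ is a bounded cylinder function by summability of $\Xi$.

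The substantial point is compactness of $M_C$ and the coincidence of the two topologies on it. For $E=\mathbb Z$ the two topologies agree everywhere and this is the classical compactness of entropy level sets over a compact configuration space, so assume $E=\mathbb R$. First, $M_C\subset\cP^K_\cL$, the set of $\mathcal L$-invariant gradient measures supported on $Kd_1$-Lipschitz functions: if $\mathcal H(\mu|\Phi)\leq C$, the supremum formula gives $\mathcal H_{\Pi_n}(\mu|\Phi)\leq n^dC+e^*(\Pi_n)<\infty$ for every $n$, hence $\mu\pi_{\Pi_n}\ll e^{-H^0_{\Pi_n}}\lambda^{\Pi_n-1}$, hence $\mu$ is supported on configurations that are $q$-Lipschitz on $\Pi_n$ (since $\Psi$ assigns infinite energy otherwise), and $n\to\infty$ gives $\mu(\Omega_q)=1$. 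The space $\cP^K_\cL$ is compact metrizable for the topology of weak local convergence, because $\phi\mapsto\nabla\phi$ carries the $Kd_1$-Lipschitz height functions onto a closed, hence compact, metrizable subspace of $\prod_e[-K,K]$ (indexed by the edges $e$ of the square lattice), and $\mathcal L$-invariance is a closed condition. The decisive step, and the main obstacle, is the upgrade: \emph{a sequence in $M_C$ that converges for the weak local topology also converges, to the same limit, for the a priori finer topology of local convergence}. Indeed, fixing $n$, the marginals $\mu_k\pi_{\Pi_n}$ have densities with respect to the restriction of $\lambda^{\Pi_n-1}$ to the $Kd_1$-Lipschitz configurations on $\Pi_n$, which is a \emph{finite} measure, and the relative entropies of these densities are bounded uniformly in $k$ (combine $\mathcal H_{\Pi_n}(\mu_k|\Phi)\leq n^dC+e^*(\Pi_n)$ with $\mu_k(H^0_{\Pi_n})\geq -|\Pi_n|\,\|\Xi\|$). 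By the de la Vall\'ee--Poussin criterion these densities are uniformly integrable, hence relatively weakly sequentially compact in $L^1$ by Dunford--Pettis; weak local convergence identifies every weak-$L^1$ subsequential limit with the density of $\mu\pi_{\Pi_n}$, so $\mu_k\pi_{\Pi_n}\to\mu\pi_{\Pi_n}$ setwise, and as $n$ and the test cylinder function are arbitrary this is exactly local convergence.

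Granting the upgrade, the rest follows cleanly. The set $M_C$ is weak-local-closed in $\cP^K_\cL$: a weak-local limit $\mu$ of a sequence $(\mu_k)\subset M_C$ is also a local limit, so $\mathcal H(\mu|\Phi)\leq\liminf_k\mathcal H(\mu_k|\Phi)\leq C$ by the lower-semicontinuity above, whence $\mu\in M_C$; being closed in a compact metrizable space, $M_C$ is compact Polish for the weak local topology. This in turn gives lower-semicontinuity of $\mathcal H(\cdot|\Phi)$ for the weak local topology, since then every $M_C$ is weak-local-closed. Finally the two topologies coincide on $M_C$: local convergence always implies weak local convergence, and conversely each basic local-open set $\{\mu:a<\mu(g)<b\}\cap M_C$ (with $g$ a bounded measurable cylinder function) has weak-local-closed complement in $M_C$, because $M_C$ with the weak local topology is compact metrizable and, by the upgrade, a weak-local convergent sequence in that complement keeps its limit there. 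Thus the only real work is the entropy-to-uniform-integrability upgrade; once the Lipschitz constraint is used to replace the non-compact single-site space by the compact interval $[-K,K]$ and to render the relevant reference measures finite, it is a routine application of de la Vall\'ee--Poussin and Dunford--Pettis, but it is where the content of the section is cashed in.
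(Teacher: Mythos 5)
Your proposal is correct and follows essentially the same route as the paper's proof: the first display via Lemma~\ref{lemma_fe_superadditivity} and Proposition~\ref{propo_large_set_limit}, and compactness of $M_C$ by reducing to the finite-window sublevel sets of $\mathcal H_{\Pi_n}(\cdot|\Phi)$, on which the Lipschitz constraint makes the reference measure finite and the entropy bound forces uniform integrability of the densities, whence the weak and strong local topologies coincide. The paper states the finite-window compactness and topology-coincidence rather tersely (it simply asserts each window-level set is "a compact Polish space with respect to both the weak and strong topologies, which coincide on this set" and then stitches together the metrics $\delta_n$); your paragraph spelling out the de la Vall\'ee--Poussin / Dunford--Pettis argument is precisely the justification the paper leaves implicit, and your Donsker--Varadhan decomposition for lower-semicontinuity of $\mathcal H_{\Pi_n}$ likewise fills in what the paper takes for granted.
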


\begin{proof}
  The statements in the first display follow from Lemma~\ref{lemma_fe_superadditivity}
  and Proposition~\ref{propo_large_set_limit}.
  For the remainder of the theorem,
  observe that
  \[
    M_C=\mathcal P_\mathcal L(\Omega,\mathcal F^\nabla)\cap\bigcap_{n\in N\cdot\mathbb N}\{\mu\in\mathcal P(\Omega,\mathcal F^\nabla):\mathcal H_{\Pi_n}(\mu|\Phi)\leq n^d C+e^*(\Pi_n)\}.
  \]
  Each of these sets is closed (in the topology of weak local convergence),
  and therefore $M_C$ is closed;
  the functional
  $\mathcal H(\cdot|\Phi)$ must be lower-semicontinuous (in either topology).
  Moreover, for each $n\in N\cdot\mathbb N$,
  the set
  \[
  \{\mu\in\mathcal P(\Omega,\mathcal F^\nabla_{\Pi_n}):\mathcal H_{\Pi_n}(\mu|\Phi)\leq n^d C+e^*(\Pi_n)\}
  \]
  is a compact Polish space with respect to both the weak and strong topologies,
  which coincide on this set.
  Write $\delta_n$ for the corresponding metric.
  Then $M_C$ is a compact Polish space with metric
  $\delta(\mu,\nu):=\sum_{n\in N\cdot\mathbb N}e^{-n}(\delta_n(\mu,\nu)\wedge 1)$.
\end{proof}

\begin{theorem}
If $\Phi\in\mathcal S_\mathcal L+\mathcal W_\mathcal L$,
then the functional $\mathcal H(\cdot|\Phi)$
is affine, in the sense that
\[\mathcal H((1-t)\mu+t\nu|\Phi)=(1-t)\mathcal H(\mu|\Phi)+t\mathcal H(\nu|\Phi)\]
for $\mu,\nu\in\mathcal P_\mathcal L(\Omega,\mathcal F^\nabla)$ and $0\leq t\leq1$.
\end{theorem}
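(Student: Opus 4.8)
The plan is to establish the identity at finite volume with an additive error that is bounded uniformly in the volume, and then to divide by $n^d$ and let $n\to\infty$, where the convergence of $n^{-d}\mathcal H_{\Pi_n}(\cdot|\Phi)$ on $\mathcal P_\mathcal L(\Omega,\mathcal F^\nabla)$ established in the preceding theorem finishes the job. Write $\mu:=(1-t)\mu_1+t\mu_2$ with $\mu_1,\mu_2\in\mathcal P_\mathcal L(\Omega,\mathcal F^\nabla)$ and $0<t<1$; the cases $t\in\{0,1\}$ are trivial. The first point is that the \emph{energy} part of the finite-volume free energy is already exactly affine: for any $\Lambda\subset\subset\mathbb Z^d$ the map $\rho\mapsto\rho(H_\Lambda^{0,\Phi})$ is affine on $\mathcal P(\Omega,\mathcal F^\nabla)$, and no $\infty-\infty$ ambiguity occurs because $H_\Lambda^{0,\Phi}=H_\Lambda^{0,\Psi}+H_\Lambda^{0,\Xi}\geq-|\Lambda|\,\|\Xi\|$ is bounded below. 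Thus all of the (mild) failure of affineness is carried by the entropy term, and it suffices to prove, for $\Lambda=\Pi_n$,
\[
(1-t)\,\mathcal H_{\Pi_n}(\mu_1|\Phi)+t\,\mathcal H_{\Pi_n}(\mu_2|\Phi)-\log 2\ \leq\ \mathcal H_{\Pi_n}(\mu|\Phi)\ \leq\ (1-t)\,\mathcal H_{\Pi_n}(\mu_1|\Phi)+t\,\mathcal H_{\Pi_n}(\mu_2|\Phi).
\]

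The right-hand inequality is convexity of the relative entropy (Jensen applied to $\varphi(x):=x\log x$). For the left-hand inequality I would first dispose of the infinite cases: if $\mathcal H_{\Pi_n}(\mu|\Phi)=\infty$, then by the right-hand inequality together with the lower bound of Lemma~\ref{lemma_fe_superadditivity} (which rules out $\infty-\infty$) the right-hand side above is also $\infty$; conversely the same convexity bound shows $\mathcal H_{\Pi_n}(\mu_i|\Phi)=\infty$ for some $i$ forces $\mathcal H_{\Pi_n}(\mu|\Phi)=\infty$. So assume $\mathcal H_{\Pi_n}(\mu|\Phi)<\infty$; then $\mu(H_{\Pi_n}^{0,\Psi})<\infty$, and since $\Psi$ takes the value $\infty$ exactly on violated Lipschitz edges, $\mu$ — hence also $\mu_1,\mu_2$ — is, as a measure on $\mathcal F_{\Pi_n}^\nabla$, concentrated on a set $S$ of finite $\lambda^{\Pi_n-1}$-measure. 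Replacing $\lambda^{\Pi_n-1}$ by $\nu':=\lambda^{\Pi_n-1}|_S$ (which leaves every relevant entropy unchanged) and writing $f_i:=d\mu_i/d\nu'$, so that $f:=(1-t)f_1+tf_2=d\mu/d\nu'$, one uses superadditivity of $\varphi$ on $[0,\infty)$ (valid since $\varphi$ is convex with $\varphi(0)=0$) applied pointwise to $a=(1-t)f_1$, $b=tf_2$:
\[
\varphi(f)\ \geq\ \varphi\bigl((1-t)f_1\bigr)+\varphi\bigl(tf_2\bigr)\ =\ (1-t)\varphi(f_1)+t\varphi(f_2)+(1-t)\log(1-t)\,f_1+t\log t\,f_2 .
\]
Integrating against the finite measure $\nu'$ and using $\mu_i(\Omega)=1$ gives $\mathcal H_{\mathcal F_{\Pi_n}^\nabla}(\mu|\lambda^{\Pi_n-1})\geq(1-t)\mathcal H_{\mathcal F_{\Pi_n}^\nabla}(\mu_1|\lambda^{\Pi_n-1})+t\mathcal H_{\mathcal F_{\Pi_n}^\nabla}(\mu_2|\lambda^{\Pi_n-1})-H(t)$ with $H(t):=-(1-t)\log(1-t)-t\log t\in[0,\log 2]$; adding the (exactly affine) energy term gives the left-hand inequality.

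Finally, dividing the displayed double inequality by $n^d$ and letting $n\to\infty$: each of the three normalized free energies converges to $\mathcal H(\mu_1|\Phi)$, $\mathcal H(\mu_2|\Phi)$, $\mathcal H(\mu|\Phi)$ by the preceding theorem, and $n^{-d}\log 2\to 0$, so both inequalities collapse to the asserted equality. The arithmetic is unambiguous because $\mathcal H(\cdot|\Phi)$ is bounded below, and for $0<t<1$ the right-hand side is $+\infty$ exactly when one of its two summands is. I expect the only genuinely delicate point to be the measure-theoretic bookkeeping around $\lambda^{\Lambda-1}$ — the reduction to a finite reference measure via the Lipschitz support, and keeping the $\pm\infty$ cases consistent so that the additive $\log 2$ error is legitimate — which is precisely the part that must be adapted from the analogous arguments in~\cite{S05} and~\cite{G11} to the present gradient Lipschitz setting.
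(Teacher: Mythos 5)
Your proposal is correct and follows essentially the same route as the paper: establish a two-sided finite-volume bound with an additive error uniform in $n$ (you get $\log 2$ via pointwise superadditivity of $x\log x$, the paper states $2\log 2$), then divide by $n^d$ and invoke the convergence of $n^{-d}\mathcal H_{\Pi_n}(\cdot|\Phi)$ from the preceding theorem. One small slip in the exposition: to dispose of the infinite cases in the left-hand inequality you write that ``the same convexity bound'' shows $\mathcal H_{\Pi_n}(\mu_i|\Phi)=\infty$ forces $\mathcal H_{\Pi_n}(\mu|\Phi)=\infty$, but convexity only gives the opposite implication --- the correct argument there is either absolute-continuity failure propagating through $\mu\geq(1-t)\mu_i$, or the superadditivity inequality itself applied with the bounded-below convention --- and this does not affect the validity of the proof.
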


\begin{proof}
  It follows from a direct entropy calculation that
  for fixed $\Lambda\subset\subset\mathbb Z^d$,
  \[
    0\leq
    (1-t)\mathcal H_\Lambda(\mu|\Phi)+t\mathcal H_\Lambda(\nu|\Phi)
    -\mathcal H_\Lambda((1-t)\mu+t\nu|\Phi)
\leq 2\log 2.
  \]
  This error term vanishes in the normalization of the specific free energy.
\end{proof}

\subsection{The surface tension}

Recall that the surface tension $\sigma:(\mathbb R^d)^*\to\mathbb R\cup\{\infty\}$ is
defined by
\[
\sigma(u):=\inf_{\text{$\mu\in\mathcal P_\mathcal L(\Omega,\mathcal F^\nabla)$ with $S(\mu)=u$}}\mathcal H(\mu|\Phi).
\]
The function $\sigma$ must be convex because both $S(\cdot)$ and $\mathcal H(\cdot|\Phi)$
are affine.
It is also bounded from below because $\mathcal H(\cdot|\Phi)$
is bounded from below by $-\max_{x\in\mathbb Z^d/\mathcal L}e^*(\{x\})$.
Recall that $U_\Phi$
is  defined to be the interior of the set $\{\sigma<\infty\}\subset (\mathbb R^d)^*$.
The set $U_\Phi$ is convex, and  $\sigma$ is continuous on $U_\Phi$.
Moreover, $\sigma$ must equal $\infty$ on the complement
of the closure of $U_\Phi$.
Recall the statement of Theorem~\ref{thm_main_st_general},
for which we now provide a proof.


\begin{proof}[Proof of Theorem~\ref{thm_main_st_general}]
  \label{thm_main_st_general_proof}
  Observe that $\sigma$ is lower-semicontinuous,
  because $S(\cdot)$ is continuous
  and because
  $\mathcal H(\cdot|\Phi)$ is lower-semicontinuous
  with compact lower level sets.

  Let us first prove that $U_\Phi\subset U_q$.
  Suppose that the slope $u:=S(\mu)$ of
  $\mu\in\mathcal P_\mathcal L(\Omega,\mathcal F^\nabla)$
  is not in $\bar U_q$.
  It suffices to demonstrate that $\mathcal H(\mu|\Phi)=\infty$.
  Since $u\not\in \bar U_q$,
  we know that $u|_\mathcal L$ is not $q$-Lipschitz,
  and therefore with positive $\mu$-probability, $\phi|_\mathcal L$
  is not $q$-Lipschitz. In particular, this means that
  $\mu(H_{\Pi_n}^0)=\infty$ for $n$ sufficiently large.
  This proves that $\mathcal H(\mu|\Phi)=\infty$.

  For the remainder of the proof, we distinguish between the
  discrete and the continuous setting.
  Consider first the case that $E=\mathbb Z$.
  For the lemma, it suffices to demonstrate that
  $\sigma$ is bounded on $U_q$.
  If $\mu\in\mathcal P_\mathcal L(\Omega,\mathcal F^\nabla)$
  is supported on $q$-Lipschitz functions,
  then
  \[
    \mathcal H_{\Pi_n}(\mu|\Phi)
    =\mu(H_{\Pi_n}^0)+\mathcal H_{\mathcal F_{\Pi_n}^\nabla}(\mu|\lambda^{\Pi_n-1})
    \leq
    Cn^d
    \qquad \text{where}
    \qquad
    C:=\max_{x\in\mathbb Z^d/\mathcal L}e^+(\{x\});
  \]
  the energy term is bounded by $Cn^d$ because $\phi$
  is $q$-Lipschitz $\mu$-almost surely,
  and the entropy term is nonpositive because $\lambda^{\Pi_n-1}$
  is a counting measure.
  In particular, $\mathcal H(\mu|\Phi)\leq C$.
  Fix $u\in U_q$,
  and consider a subsequential limit $\mu$ of the sequence
  \[
  \mu_n:= \frac{1}{|\Pi_n\cap\mathcal L|}\sum_{x\in\Pi_n\cap\mathcal L}
  \delta_{\theta_x\phi^u}.
  \]
  This limit $\mu$ is clearly supported on $q$-Lipschitz functions
  and is automatically shift-invariant
  and satisfies $S(\mu)=u$;
  in particular, $\sigma(u)\leq C<\infty$. This proves that $\sigma$
  is bounded by $C$ on $U_q$.

  Consider now the continuous case $E=\mathbb R$.
  For the lemma, we must show that $\sigma$
  is finite on $U_q$,
  and infinite on $\partial U_q$.
  Fix $u\in U_q$.
  Then $\phi^u$ is $q_{3\varepsilon}$-Lipschitz
  for $\varepsilon>0$ sufficiently small.
  Let $X=(X_{x})_{x\in\mathbb Z^d}$ denote an i.i.d.\ family
  of random variables which are uniformly random in the interval $[0,\varepsilon]$.
  Write $\mu\in\mathcal P_\mathcal L(\Omega,\mathcal F^\nabla)$
  for the measure in which $\phi$ has the distribution of $\phi^u+X$.
  Then $\phi$ is $q_\varepsilon$-Lipschitz almost surely.
  It is straightforward to see that
  \[
    \mathcal H_{\Pi_n}(\mu|\Phi)=\mu(H_{\Pi_n}^0)+\mathcal H_{\mathcal F_{\Pi_n}^\nabla}(\mu|\lambda^{\Pi_n-1})
    \leq (C -\log \varepsilon)n^d
    \quad \text{where}
    \quad
    C:=\max_{x\in\mathbb Z^d/\mathcal L}e^+_\varepsilon(\{x\});
  \]
  in particular, $\mathcal H(\mu|\Phi)\leq C-\log\varepsilon<\infty$.
  Clearly $S(\mu)=u$, and so $\sigma(u)<\infty$.
  Finally, consider $u\in\partial U_q$.
  Suppose that $\mu\in\mathcal P_\mathcal L(\Omega,\mathcal F^\nabla)$
  has slope $u$.
  Then at least one of the following two must hold true:
  \begin{enumerate}
    \item $\phi$ is not $q$-Lipschitz, with positive $\mu$-probability,
    \item $\phi(y)-\phi(x)$ is deterministic in $\mu$ for some distinct vertices
    $x$ and $y$.
  \end{enumerate}
  This follows from Lemma~\ref{lemma_lipschitz_q_and_norm} which
  gives a characterization of $U_q$.
  In the former case we have $\mathcal H(\mu|\Phi)=\infty$
  as was shown at the beginning of this proof.
  In the latter case,
  we observe that
  \[
    \mathcal H_{\mathcal F_{\Pi_n}^\nabla}(\mu|\lambda^{\Pi_n-1})=\infty
    \]
    for $n$ sufficiently large,
    because $\mu\pi_{\Pi_n}$ is not absolutely continuous with respect
    to $\lambda^{\Pi_n-1}$.
    This also implies that $\mathcal H(\mu|\Phi)=\infty$.
    We have now shown that $\sigma=\infty$ on $\partial U_q$.
\end{proof}

\section{Minimizers of the specific free energy}
\label{section_minimizers}

Recall that a minimizer is a shift-invariant measure
$\mu\in\mathcal P_\mathcal L(\Omega,\mathcal F^\nabla)$
which satisfies
\[
  \mathcal H(\mu|\Phi)=\sigma(S(\mu))<\infty,
\]
and recall the discussion of minimizers
in Subsection~\ref{subsec_main_sfe_and_minimizers},
in particular Definition~\ref{definition_quasilocality_aGm}.
The purpose of this section
is to prove the following theorem,
which provides us with several properties of minimizers,
and is equivalent to the conjunction of Theorem~\ref{thm_main_minimizers_finite_energy} and Theorem~\ref{thm_mr_minimizers}.

\begin{theorem}
  \label{theorem_stronger_than_finite_energy}
  Let $\Phi\in\mathcal S_\mathcal L+\mathcal W_\mathcal L$,
  and consider a minimizer
  $\mu\in\mathcal P_\mathcal L(\Omega,\mathcal F^\nabla)$.
  Fix $\Lambda\subset\subset\mathbb Z^d$,
  and write
  $\mu^{\phi}$
  for the regular conditional
  probability distribution of
  $\mu$ on $(\Omega,\mathcal F)$
  corresponding to the projection map $\Omega\to E^{\mathbb Z^d\smallsetminus\Lambda}$.
  Then for $\mu$-almost every $\phi\in\Omega$,
  we have
  $\mu^\phi\pi_\Lambda\in\mathcal A_{\Lambda,\phi}$.
  In particular,
  if $\mu(\Omega_\gamma)=1$,
  then $\mu$ is an almost Gibbs measure.
  In general,
  the former implies that
  $\mu$ has finite energy,
  in the sense that
  \[
1_{\Omega_q}(\mu\pi_{\mathbb Z^d\smallsetminus\Lambda}\times \lambda^\Lambda)\ll \mu,
  \]
  where $\Omega_q$ is the set of $q$-Lipschitz height functions.
\end{theorem}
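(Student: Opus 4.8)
The plan is to follow the proof of \cite[Lemma~5.4]{lammers2019variational}, adapting its entropy bookkeeping to the gradient Lipschitz setting with an infinite-range weak interaction. First note that a minimizer $\mu$ is automatically $q$-Lipschitz: otherwise $\Psi_{\{x,y\}}(\phi)=\infty$ with positive probability for some $\{x,y\}\in\mathbb A$, forcing $\mu(H_{\Pi_n}^0)=\infty$ for $n$ large and $\mathcal H(\mu|\Phi)=\infty$; moreover $u:=S(\mu)\in U_\Phi=U_q$ (Theorem~\ref{thm_main_st_general}) with $\mathcal H(\mu|\Phi)=\sigma(u)$. Fix $\Lambda$, and for finite $\Delta$ with $\partial^R\Lambda\subset\Delta$ write $\mu^{\Delta,\phi}$ for the regular conditional distribution of $\mu$ given $\mathcal F_{\Delta\smallsetminus\Lambda}$ and $\nu^{\Delta,\phi}:=\mu(\,\cdot\mid\phi_{\Delta\smallsetminus\Lambda})$; set $e_{\Delta}(\Lambda):=\sup_\phi\sum\{|\Xi_\Gamma(\phi)|:\Gamma\cap\Lambda\neq\emptyset,\ \Gamma\not\subset\Delta\}$, which tends to $0$ as $\Delta\uparrow\mathbb Z^d$ by summability of $\Xi$. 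The main step will be the bound $\mathbb E_\mu\bigl[\mathcal H(\mu^{\Delta,\phi}\pi_\Lambda\,|\,\nu^{\Delta,\phi}\gamma_\Lambda\pi_\Lambda)\bigr]\le e_{\Delta}(\Lambda)$. Granting it, choose $\Delta_j\uparrow\mathbb Z^d$ fast enough that $\sum_j e_{\Delta_j}(\Lambda)<\infty$; then for $\mu$-a.e.\ $\phi$ we have $\mathcal H(\mu^{\Delta_j,\phi}\pi_\Lambda\,|\,\nu^{\Delta_j,\phi}\gamma_\Lambda\pi_\Lambda)\to0$, hence by Pinsker $\mu^{\Delta_j,\phi}\pi_\Lambda-\nu^{\Delta_j,\phi}\gamma_\Lambda\pi_\Lambda\to0$ in total variation, while $\mu^{\Delta_j,\phi}\pi_\Lambda\to\mu^\phi\pi_\Lambda$ in total variation by (forward) martingale convergence. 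Since $\gamma_\Lambda$ reads only $\mathcal T_\Lambda$, one may replace $\nu^{\Delta_j,\phi}$ by a measure that is additionally deterministic on $\Lambda$ without changing $\nu^{\Delta_j,\phi}\gamma_\Lambda$, so $\nu^{\Delta_j,\phi}\gamma_\Lambda\pi_\Lambda\in\mathcal A_{\Lambda,\Delta_j,\phi}\subseteq\mathcal A_{\Lambda,\Delta,\phi}$ once $\Delta_j\supseteq\Delta$; thus $\mu^\phi\pi_\Lambda\in\mathcal C(\mathcal A_{\Lambda,\Delta,\phi})$ for every $\Delta$ in a countable cofinal family, i.e.\ $\mu^\phi\pi_\Lambda\in\mathcal A_{\Lambda,\phi}$.

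The main step is obtained by a global free-energy comparison. Fix $\Delta$ and choose a full-rank sublattice $\mathcal L'\le\mathcal L$ of index large enough that the $\mathcal L'$-translates of $\Delta$ are pairwise disjoint. Let $K$ be the kernel resampling $\Lambda$ by $\nu^{\Delta,\phi}\gamma_\Lambda$, and let $\mu'$ be the measure obtained from $\mu$ by applying the $\mathcal L'$-translates of $K$ simultaneously (these act on disjoint coordinates and read disjoint ``frozen'' coordinates, so $\mu'$ is well defined, $\mathcal L'$-invariant, and $q$-Lipschitz). The coupling $(\phi,\phi')$ of $\mu$ and $\mu'$ is $\mathcal L'$-invariant, which forces $S(\mu')=u$; averaging $\mu'$ over $\mathcal L/\mathcal L'$ and using that $\mathcal H(\,\cdot\,|\Phi)$ is affine and $\mathcal L$-invariant gives $\mathcal H(\mu'|\Phi)\ge\sigma(u)=\mathcal H(\mu|\Phi)$. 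On the other hand, decompose $\mathcal H_{\Pi_n}(\mu|\Phi)$ and $\mathcal H_{\Pi_n}(\mu'|\Phi)$ box by box over the $k_n$ many $\mathcal L'$-translates of $\Lambda$ in $\Pi_n$, using the attachment lemmas (Lemmas~\ref{lemma_lower_attachments}, \ref{lemma_attachment}, \ref{lemma_fe_superadditivity}), the finite range of $\Psi$ together with the singleton bounds \eqref{eq_discrete_singleton_hamil_bound}--\eqref{eq_cts_singleton_hamil_bound}, and amenability of the exterior bounds to absorb the $\mathcal L'$-uniform chain-rule errors into $o(n^d)$; since a Gibbs kernel minimizes the conditional free energy of its box up to the interactions reaching beyond $\Delta$ (at most $e_{\Delta}(\Lambda)$ per box), one gets
\[
\mathcal H_{\Pi_n}(\mu|\Phi)\ \ge\ \mathcal H_{\Pi_n}(\mu'|\Phi)\ +\ k_n\Bigl(\mathbb E_\mu\bigl[\mathcal H(\mu^{\Delta,\phi}\pi_\Lambda\,|\,\nu^{\Delta,\phi}\gamma_\Lambda\pi_\Lambda)\bigr]-e_{\Delta}(\Lambda)\Bigr)\ -\ o(n^d),
\]
with $k_n/n^d\to1/|\mathbb Z^d/\mathcal L'|>0$. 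Dividing by $n^d$, letting $n\to\infty$, and using $\mathcal H(\mu'|\Phi)\ge\mathcal H(\mu|\Phi)$ yields the claimed bound. Keeping the cumulative chain-rule and gluing errors sublinear in the volume despite the infinite range of $\Xi$, and securing $\mathcal H(\mu'|\Phi)\ge\mathcal H(\mu|\Phi)$ for the resampled measure, is the main obstacle of the proof.

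Finally, the finite-energy assertion. For any admissible $q$-Lipschitz $\psi\in\Omega$ and $A\in\mathcal E^\Lambda$, the kernel $\gamma_\Lambda(\,\cdot\,,\psi)$ assigns mass at least $e^{-c_\Lambda}\lambda^\Lambda\bigl(A\cap\{q\text{-Lipschitz extensions of }\psi\}\bigr)$ to the corresponding cylinder, with $c_\Lambda<\infty$ depending only on $\Phi$ and $|\Lambda|$: indeed $0\le H_\Lambda^\Psi\le C|\Lambda|$ on $q$-Lipschitz configurations with $q$-Lipschitz boundary by finite range of $\Psi$ and \eqref{eq_discrete_singleton_hamil_bound}--\eqref{eq_cts_singleton_hamil_bound}, $|H_\Lambda^\Xi|\le\|\Xi\|\,|\Lambda|$ by summability, and $Z_\Lambda(\psi)\le e^{\|\Xi\|\,|\Lambda|}\lambda^\Lambda(\{q\text{-Lipschitz extensions of }\psi\})$ likewise; in the case $E=\mathbb R$ one first works with $q_\varepsilon$-Lipschitz extensions (using that $\Psi$ is locally bounded) and notes that the non-strict $q$-Lipschitz extensions are $\lambda^\Lambda$-null. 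This lower bound is stable under the strong-closure operation defining $\mathcal C(\mathcal A_{\Lambda,\Delta,\phi})$, hence passes to every element of $\mathcal A_{\Lambda,\phi}$; combined with $\mu^\phi\pi_\Lambda\in\mathcal A_{\Lambda,\phi}$ and integrated over $\phi$ against $\mu\pi_{\mathbb Z^d\smallsetminus\Lambda}$ it gives $1_{\Omega_q}\bigl(\mu\pi_{\mathbb Z^d\smallsetminus\Lambda}\times\lambda^\Lambda\bigr)\ll\mu$. The (almost-)Gibbs consequences are then immediate from Definition~\ref{definition_quasilocality_aGm}: at a point of quasilocality $\mathcal A_{\Lambda,\phi}=\{\gamma_\Lambda(\,\cdot\,,\phi)\pi_\Lambda\}$, so $\mu(\Omega_\gamma)=1$ gives $\mu=\mu\gamma_\Lambda$ for every $\Lambda$, i.e.\ $\mu$ is an almost Gibbs measure, and $\mu$ is a Gibbs measure when $\Omega_\gamma=\Omega$.
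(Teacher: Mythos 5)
Your proposal is essentially the same argument the paper gives: resample $\Lambda$ by a local kernel at $\mathcal L'$-spaced translates, use the minimizer property to force the entropy gap to be absorbed by the interaction-truncation error, pass $\Delta\uparrow\mathbb Z^d$ by a martingale argument, and then use mutual absolute continuity of the measures in $\mathcal A_{\Lambda,\phi}$ to get finite energy. Where the routes differ is in the technical scaffolding, and one of your choices introduces a genuine subtlety that the paper is careful to avoid.

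The paper defines the auxiliary functional $K_\mu(\Lambda,\Delta):=\inf\{\mathcal H_{\mathcal F^\nabla_\Delta}(\mu\,|\,\nu\gamma_\Lambda):\nu\pi_\Delta=\mu\pi_\Delta\}$, proves superadditivity/monotonicity, and shows $K_\mu\equiv 0$ by a contradiction based on the kernel $\gamma^*_{\Pi_n}$ that resamples $\Pi_n^{-R}$ with respect to the \emph{partial} Hamiltonian $H_{\Pi_n^{-R},\Pi_n}$. The point of this kernel is that $\gamma^*_{\Pi_n}(\cdot,\phi)$ is \emph{deterministic} in the annulus data $\phi_{\Pi_n\smallsetminus\Pi_n^{-R}}$ and genuinely minimizes the truncated conditional free energy; the only error is the truncation $\le e^-(\Pi_n)$. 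Your kernel $K=\nu^{\Delta,\phi}\gamma_\Lambda$ is instead an average over the unknown far field via the conditional law $\nu^{\Delta,\phi}$, so the resampled measure on $\Lambda$ is a \emph{mixture} of Gibbs measures. A mixture $\int\gamma_\Lambda(\cdot,\psi)\,d\nu^{\Delta,\phi}(\psi)$ does \emph{not} minimize the conditional free energy (entropy is concave under mixing), so the inequality ``Gibbs kernel minimizes up to $e_\Delta(\Lambda)$'' you invoke requires an additional Donsker--Varadhan-type estimate comparing the log-density of the mixture to that of a single truncated Gibbs kernel. This can be done (the log-densities differ by at most $O(e_\Delta(\Lambda))$ uniformly since all the boundary conditions in the mixture agree on $\Delta\smallsetminus\Lambda$), but the constant is not exactly $1$ and the argument is not automatic; your sketch elides it and even flags it as the ``main obstacle.'' The paper's choice of $\gamma^*$ makes this step essentially bookkeeping.

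Beyond this, your replacement of the paper's compactness argument (finite max-diameter of $\mathcal C(\mathcal A_{\Lambda,\Delta,\phi})$ via Proposition~\ref{propo_all_abs_cts_wrt_one_another}) by Pinsker + Borel--Cantelli + Scheff\'e is perfectly sound, and in fact more self-contained: the paper outsources that step to Lemma~5.1 and the proof of Lemma~5.4 in \cite{lammers2019variational}. Your finite-energy paragraph is also fine, with one detail to make explicit: the lower bound $\gamma_\Lambda(\cdot,\psi)\pi_\Lambda\ge e^{-c_\Lambda}1_{\{q\text{-Lip extensions}\}}\lambda^\Lambda$ passes to mixtures and strong limits only because, once $\Delta\supset\partial^R\Lambda$, the set of $q$-Lipschitz extensions over $\Lambda$ depends on $\psi$ only through $\psi_{\partial^R\Lambda}=\phi_{\partial^R\Lambda}$, so it is the same set for every $\psi$ contributing to $\mathcal A_{\Lambda,\Delta,\phi}$; the paper instead uses finite max-diameter of $\mathcal A_{\Lambda,\phi}$ directly, which is cleaner. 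In short: same route, sound conclusion, but the mixture-of-kernels subtlety in your main step should be resolved (or the kernel should be replaced by a partial-Hamiltonian kernel as in the paper) before the argument is complete.
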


We first introduce the definition of the max-entropy,
which is due to Datta~\cite{DATTA}.

\begin{definition}
  Let $(X,\mathcal X)$ denote a measurable space,
  endowed with some finite measures $\mu$ and $\nu$.
  Then the \emph{max-entropy} of $\mu$ with respect to $\nu$
  is defined by
  \[
    \mathcal H^\infty(\mu|\nu):=
    \log\inf\{\lambda\geq 0:\mu\leq\lambda\nu\}
    =
    \begin{cases}
      \operatorname{ess\,sup}\log f&\text{if $\mu\ll\nu$ where $f=d\mu/d\nu$,}\\
      \infty&\text{otherwise}.
    \end{cases}
  \]
  The \emph{max-diameter} of a non-empty set $\mathcal A$ of  finite measures on $(X,\mathcal X)$
  is defined by
  \[
    \operatorname{Diam}^\infty\mathcal A:=\sup_{\mu,\nu\in\mathcal A}
    \mathcal H^\infty(\mu|\nu).
  \]
\end{definition}

If $\operatorname{Diam}^\infty\mathcal A<\infty$,
then all measures in $\mathcal A$ are absolutely continuous with respect to one another,
with uniform lower and upper bounds on the Radon-Nikodym derivatives.

\begin{proposition}
  \label{propo_all_abs_cts_wrt_one_another}
  Suppose that $\Lambda\subset\Delta\subset\subset\mathbb Z^d$ with $\Lambda\subset\Delta^{-R}$.
  Then $\operatorname{Diam}^{\infty}\mathcal C(\mathcal A_{\Lambda,\Delta,\phi})\leq 4e^-(\Lambda)<\infty$.
  In particular, $\operatorname{Diam}^{\infty}\mathcal A_{\Lambda,\phi}\leq 4e^-(\Lambda)<\infty$.
\end{proposition}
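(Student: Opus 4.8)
The plan is to reduce the statement to an explicit two-sided bound on Radon--Nikodym derivatives. Fix $\Lambda\subset\Delta\subset\subset\mathbb Z^d$ with $\Lambda\subset\Delta^{-R}$; since $\Xi$ is summable we have $e^-(\Lambda)\le|\Lambda|\,\|\Xi\|<\infty$, so the finiteness in the statement is automatic once the bound $4e^-(\Lambda)$ is in hand. The idea is to compare every element of $\mathcal A_{\Lambda,\Delta,\phi}$ to a single, $\phi$-dependent but $\Delta$-only reference measure $\rho_\phi$ on $(E^\Lambda,\mathcal E^\Lambda)$.

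Concretely, for admissible $\psi\in\Omega$ the restriction $\gamma_\Lambda(\cdot,\psi)\pi_\Lambda$ is the probability measure on $E^\Lambda$ with $\lambda^\Lambda$-density proportional to $\chi\mapsto e^{-H_\Lambda^\Phi(\chi\psi_{\mathbb Z^d\smallsetminus\Lambda})}$. I would decompose $H_\Lambda^\Phi=H_\Lambda^{0,\Phi}+(H_\Lambda^{\Psi}-H_\Lambda^{0,\Psi})+(H_\Lambda^{\Xi}-H_\Lambda^{0,\Xi})$. Here $H_\Lambda^{0,\Phi}$ is $\mathcal F_\Lambda$-measurable, hence a function of $\chi$ alone. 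The key observation is that $(H_\Lambda^{\Psi}-H_\Lambda^{0,\Psi})(\chi\psi_{\mathbb Z^d\smallsetminus\Lambda})$ depends only on $\chi$ and on $\psi|_\Delta=\phi|_\Delta$: this sum runs over sets $\Gamma$ of diameter at most $R$ meeting both $\Lambda$ and $\mathbb Z^d\smallsetminus\Lambda$, and since $\Lambda\subset\Delta^{-R}$ every vertex of such a $\Gamma$ lies within $d_1$-distance $R$ of $\Lambda$, hence in $\Delta$; so $\Psi_\Gamma$ sees only coordinates in $\Delta$, on which $\psi$ equals $\phi$. Finally $|H_\Lambda^{\Xi}-H_\Lambda^{0,\Xi}|\le e^-(\Lambda)$ is the defining property of $e^-$. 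Letting $\rho_\phi$ be the probability measure on $E^\Lambda$ with density proportional to $e^{-H_\Lambda^{0,\Phi}(\chi)-(H_\Lambda^{\Psi}-H_\Lambda^{0,\Psi})(\chi\phi_{\mathbb Z^d\smallsetminus\Lambda})}$ (a genuine probability measure, since the presence of \emph{some} admissible $\psi$ with $\psi|_\Delta=\phi|_\Delta$ forces the normalisation into $(0,\infty)$; if no such $\psi$ exists then $\mathcal A_{\Lambda,\Delta,\phi}=\varnothing$ and there is nothing to prove), one reads off that the density of $\gamma_\Lambda(\cdot,\psi)\pi_\Lambda$ with respect to $\rho_\phi$ equals $e^{-(H_\Lambda^{\Xi}-H_\Lambda^{0,\Xi})(\chi\psi_{\mathbb Z^d\smallsetminus\Lambda})}$ divided by its $\rho_\phi$-integral, hence lies pointwise in $[e^{-2e^-(\Lambda)},e^{2e^-(\Lambda)}]$ for every admissible $\psi$ matching $\phi$ on $\Delta$.

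Next I would pass from Dirac boundary conditions to general $\mu$: any $\nu\in\mathcal A_{\Lambda,\Delta,\phi}$ is of the form $\mu\gamma_\Lambda\pi_\Lambda=\int\gamma_\Lambda(\cdot,\psi)\pi_\Lambda\,d\mu(\psi)$ with $\mu$ carried by admissible configurations matching $\phi$ on $\Delta$, so as an average of measures with $\rho_\phi$-density in $[e^{-2e^-(\Lambda)},e^{2e^-(\Lambda)}]$ it inherits the same two-sided bound. Consequently $d\nu_1/d\nu_2\in[e^{-4e^-(\Lambda)},e^{4e^-(\Lambda)}]$ for all $\nu_1,\nu_2\in\mathcal A_{\Lambda,\Delta,\phi}$, i.e.\ $\mathcal H^\infty(\nu_1|\nu_2)\le 4e^-(\Lambda)$, so $\operatorname{Diam}^\infty\mathcal A_{\Lambda,\Delta,\phi}\le 4e^-(\Lambda)$. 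To handle the closure, I note that the domination $e^{-2e^-(\Lambda)}\rho_\phi\le\nu\le e^{2e^-(\Lambda)}\rho_\phi$ is preserved under total-variation limits (test it on a fixed measurable set), so every $\nu\in\mathcal C(\mathcal A_{\Lambda,\Delta,\phi})$ still satisfies it, and the diameter bound is inherited. For the ``in particular'' clause, picking $\Delta$ to be a box large enough that $\Lambda\subset\Delta^{-R}$ gives $\mathcal A_{\Lambda,\phi}=\bigcap_{\Delta'}\mathcal C(\mathcal A_{\Lambda,\Delta',\phi})\subset\mathcal C(\mathcal A_{\Lambda,\Delta,\phi})$, whence $\operatorname{Diam}^\infty\mathcal A_{\Lambda,\phi}\le 4e^-(\Lambda)$.

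I do not expect a genuine obstacle: the only real content is the combinatorial observation that the $\Psi$-part of the boundary Hamiltonian is a function of $\chi$ and $\phi|_\Delta$ only (this is precisely what $\Lambda\subset\Delta^{-R}$ buys), with the $\Xi$-part controlled crudely by $e^-(\Lambda)$; the rest is routine density bookkeeping and a soft closedness argument. The single point needing a careful line is the non-degeneracy of the normalisation defining $\rho_\phi$, which is exactly why one must invoke the existence of an admissible boundary condition agreeing with $\phi$ on $\Delta$.
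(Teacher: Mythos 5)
Your proof is correct and takes essentially the same approach as the paper: isolate the contribution of $\Xi$ to the boundary Hamiltonian (the $\Psi$-part being determined by $\chi$ and $\phi|_\Delta$ once $\Lambda\subset\Delta^{-R}$), bound its oscillation by $e^-(\Lambda)$, and observe that this error enters once in the exponent and once in the normalisation, hence four times in a Radon--Nikodym ratio. The only cosmetic difference is that the paper passes to the closure by citing Lemma~5.1 of~\cite{lammers2019variational}, whereas you give the equivalent direct argument that two-sided domination by the fixed reference measure $\rho_\phi$ is preserved under total-variation limits.
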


\begin{proof}
Claim first that $\operatorname{Diam}^{\infty}\mathcal A_{\Lambda,\Delta,\phi}\leq 4e^-(\Lambda)$.
Consider two random fields $\nu_1,\nu_2\in\mathcal P(\Omega,\mathcal F)$ with $\nu_1\pi_\Delta=\nu_2\pi_\Delta=\delta_{\phi_\Delta}$.
Then
\[
  \nu_i\gamma_\Lambda\pi_\Lambda=\int \frac1{Z_\Lambda(\psi)}e^{-H(\cdot,\psi_{\mathbb Z^d\smallsetminus\Lambda})}\lambda^\Lambda d\nu_i(\psi).
\]
But since $\psi_\Delta=\phi_\Delta$ almost surely in both $\nu_1$ and $\nu_2$,
the dependence of $H(\cdot,\psi_{\mathbb Z^d\smallsetminus\Delta})$
on $\psi$ is bounded by $e^{-}(\Lambda)$.
This error term appears twice in each measure $\nu_i\gamma_\Lambda\pi_\Lambda$;
directly in the Hamiltonian, and indirectly in the normalization constant.
Thus, in calculating the Radon-Nikodym derivative between the two measures,
the term appears four times. This proves the claim.
By Lemma~5.1 in~\cite{lammers2019variational},
this also implies that $\operatorname{Diam}^{\infty}\mathcal C(\mathcal A_{\Lambda,\Delta,\phi})\leq 4e^-(\Lambda)$.
\end{proof}

We also need the following lemma,
which is an adaptation of an intermediate
result in~\cite{lammers2019variational} to the gradient setting.

\begin{lemma}
  Fix $\mu\in\mathcal P(\Omega,\mathcal F^\nabla)$,
  and define, for $\Lambda\subsetneq \Delta\subset\subset\mathbb Z^d$,
  \[
    K_\mu(\Lambda,\Delta):=\inf_{\text{$\nu\in\mathcal P(\Omega,\mathcal F^\nabla)$ with $
    \nu\pi_\Delta=\mu\pi_\Delta$}}\mathcal H_{\mathcal F^\nabla_\Delta}(\mu|\nu\gamma_\Lambda)\geq 0.
  \]
  Then $K_\mu(\cdot,\cdot)$ is superadditive in the first argument,
  and increasing in the second argument.
\end{lemma}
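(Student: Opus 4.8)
The plan is to establish the three ingredients—nonnegativity, monotonicity in the second argument, and superadditivity in the first—separately, using only elementary properties of relative entropy together with the consistency relations $\gamma_\Lambda\gamma_\Delta=\gamma_\Lambda$ for $\Delta\subseteq\Lambda$. Nonnegativity is immediate: for any probability measure $\nu$, the measure $\nu\gamma_\Lambda$—and hence its restriction to $\mathcal F^\nabla_\Delta$—is again a probability measure, so $\mathcal H_{\mathcal F^\nabla_\Delta}(\mu|\nu\gamma_\Lambda)\ge 0$ by the entropy inequality recalled in the excerpt; thus $K_\mu(\Lambda,\Delta)\ge 0$. For monotonicity in $\Delta$, fix $\Lambda\subsetneq\Delta\subseteq\Delta'$ and let $\nu$ be any probability measure with $\nu\pi_{\Delta'}=\mu\pi_{\Delta'}$. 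Restricting this identity gives $\nu\pi_\Delta=\mu\pi_\Delta$, so $\nu$ is admissible in the infimum defining $K_\mu(\Lambda,\Delta)$; since $\mathcal F^\nabla_\Delta\subseteq\mathcal F^\nabla_{\Delta'}$ and relative entropy decreases under restriction to a coarser $\sigma$-algebra,
\[
K_\mu(\Lambda,\Delta)\le \mathcal H_{\mathcal F^\nabla_\Delta}(\mu|\nu\gamma_\Lambda)\le \mathcal H_{\mathcal F^\nabla_{\Delta'}}(\mu|\nu\gamma_\Lambda),
\]
and taking the infimum over such $\nu$ yields $K_\mu(\Lambda,\Delta)\le K_\mu(\Lambda,\Delta')$.

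The substantive part is superadditivity: for disjoint $\Lambda_1,\Lambda_2$ with $\Lambda:=\Lambda_1\cup\Lambda_2\subsetneq\Delta$, I would prove $K_\mu(\Lambda,\Delta)\ge K_\mu(\Lambda_1,\Delta)+K_\mu(\Lambda_2,\Delta)$. Fix $\eps>0$, choose $\nu$ with $\nu\pi_\Delta=\mu\pi_\Delta$ and $\mathcal H_{\mathcal F^\nabla_\Delta}(\mu|\nu\gamma_\Lambda)\le K_\mu(\Lambda,\Delta)+\eps$, and set $\rho:=\nu\gamma_\Lambda$. The crucial structural input is that, since $\Lambda_1,\Lambda_2\subseteq\Lambda$, consistency gives $\gamma_\Lambda\gamma_{\Lambda_i}=\gamma_\Lambda$, hence $\rho\gamma_{\Lambda_i}=\rho$ for $i=1,2$; equivalently $\rho$ obeys the DLR equation for each $\gamma_{\Lambda_i}$, so $\gamma_{\Lambda_i}(\cdot,\phi)$ is a version of the conditional law of $\rho$ given $\mathcal T_{\Lambda_i}$. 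Recall also that $\rho$ agrees with $\nu$, hence with $\mu$, on $\mathcal F_{\Delta\smallsetminus\Lambda}$. I would then split $\mathcal H_{\mathcal F^\nabla_\Delta}(\mu|\rho)$ with the relative-entropy chain rule along the sub-$\sigma$-algebra $\mathcal A_1:=\mathcal F^\nabla_\Delta\cap\mathcal T_{\Lambda_1}$—which one checks equals $\mathcal F^\nabla_{\Delta\smallsetminus\Lambda_1}$, so it still carries all $\Lambda_2$-gradient information—writing
\[
\mathcal H_{\mathcal F^\nabla_\Delta}(\mu|\rho)=\mathcal H_{\mathcal F^\nabla_{\Delta\smallsetminus\Lambda_1}}(\mu|\rho)+\int\mathcal H_{\mathcal F^\nabla_\Delta}\big(\mu(\cdot\mid\mathcal A_1)\,\big\|\,\rho(\cdot\mid\mathcal A_1)\big)\,d\mu .
\]

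By the DLR property, the conditional law $\rho(\cdot\mid\mathcal A_1)$ is a mixture of local Gibbs measures $\gamma_{\Lambda_1}(\cdot,\phi)$ over boundary conditions consistent with the conditioning, so it lies in the closed set $\mathcal A_{\Lambda_1,\phi}$ of Definition~\ref{definition_quasilocality_aGm}; presenting this mixture as $\nu_1\gamma_{\Lambda_1}$ for a suitable $\nu_1$ with $\nu_1\pi_\Delta=\mu\pi_\Delta$ and using the agreement of $\rho$ with $\mu$ on $\mathcal F_{\Delta\smallsetminus\Lambda}$, the integral term is bounded below by $K_\mu(\Lambda_1,\Delta)$. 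The residual term $\mathcal H_{\mathcal F^\nabla_{\Delta\smallsetminus\Lambda_1}}(\mu|\rho)$ now involves only $\Lambda_2$, since $\rho\gamma_{\Lambda_2}=\rho$ and $\Lambda_2\subseteq\Delta\smallsetminus\Lambda_1$, and the same scheme bounds it below by $K_\mu(\Lambda_2,\cdot)$; letting $\eps\to0$ then gives the claim.

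The main obstacle is exactly the identification of the integral term with (something at least) $K_\mu(\Lambda_1,\Delta)$: conditioning $\rho=\nu\gamma_\Lambda$ on the coarse gradient algebra $\mathcal A_1$ produces a genuine mixture of local Gibbs measures rather than a single $\gamma_{\Lambda_1}(\cdot,\phi)$, so one must check this mixture still belongs to $\mathcal A_{\Lambda_1,\phi}$ and can be written as $\nu_1\gamma_{\Lambda_1}$ with the correct $\Delta$-marginal, all while tracking the gradient anchoring ($\mathcal F^\nabla$ versus $\mathcal F$). A parallel subtlety is that a naive nesting of the chain rule controls the residual term only by $K_\mu(\Lambda_2,\Delta\smallsetminus\Lambda_1)$, so recovering the clean bound $K_\mu(\Lambda_2,\Delta)$ requires choosing the splitting symmetrically between $\Lambda_1$ and $\Lambda_2$. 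This is precisely the $\sigma$-algebra bookkeeping carried out in the non-gradient setting in~\cite{lammers2019variational}, and I would follow that template, adapting each step to the Lipschitz gradient framework as in the rest of this section.
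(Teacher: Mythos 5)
Your treatment of nonnegativity and of monotonicity in $\Delta$ matches the paper exactly: enlarging $\Delta$ both shrinks the class of admissible $\nu$ and enlarges the $\sigma$-algebra over which the relative entropy is computed, and each effect pushes $K_\mu(\Lambda,\Delta)$ up.

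For superadditivity the paper's proof is terse: it invokes Lemma~4.1 of~\cite{lammers2019variational} for the inequality $\mathcal H_{\mathcal F^\nabla_\Delta}(\mu|\nu\gamma_\Lambda)\geq\sum_i\inf_{\nu'}\mathcal H_{\mathcal F^\nabla_\Delta}(\mu|\nu'\gamma_{\Lambda_i})$, and handles the transition from the gradient setting to the non-gradient one by fixing a reference vertex $x\in\Delta\smallsetminus\Lambda$, which is available precisely because the hypothesis $\Lambda\subsetneq\Delta$ was imposed. Your proposal instead tries to reprove the key inequality via a chain-rule decomposition, but as you yourself point out, it has two unresolved issues, and one of them is in fact a genuine gap rather than mere bookkeeping. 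Specifically, after you write $\mathcal H_{\mathcal F^\nabla_\Delta}(\mu|\rho)=\mathcal H_{\mathcal F^\nabla_{\Delta\smallsetminus\Lambda_1}}(\mu|\rho)+\int\mathcal H_{\mathcal F^\nabla_\Delta}(\mu(\cdot|\mathcal A_1)\|\rho(\cdot|\mathcal A_1))\,d\mu$, the residual $\mathcal H_{\mathcal F^\nabla_{\Delta\smallsetminus\Lambda_1}}(\mu|\rho)$ is computed on a strictly smaller $\sigma$-algebra than $\mathcal F^\nabla_\Delta$, so at best you can bound it by $K_\mu(\Lambda_2,\Delta\smallsetminus\Lambda_1)$, which by the monotonicity you just proved is \emph{smaller} than the target $K_\mu(\Lambda_2,\Delta)$. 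The remedy you gesture at---``choosing the splitting symmetrically''---is left entirely to the imagination; the chain rule is intrinsically asymmetric between the base term and the conditional term, and it is not at all obvious how to recover the larger quantity from this decomposition. Moreover, $\rho=\nu\gamma_\Lambda$ has been resampled on $\Lambda_2\subset\Delta\smallsetminus\Lambda_1$, so $\rho$ does not satisfy $\rho\pi_{\Delta\smallsetminus\Lambda_1}=\mu\pi_{\Delta\smallsetminus\Lambda_1}$, which means $\rho$ is not even admissible in the infimum defining $K_\mu(\Lambda_2,\Delta\smallsetminus\Lambda_1)$ without further work.

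The first issue you flag---expressing the conditional $\rho(\cdot|\mathcal A_1)$ as $\nu_1\gamma_{\Lambda_1}$ with $\nu_1\pi_\Delta=\mu\pi_\Delta$---is real but is indeed the kind of disintegration bookkeeping that Lemma~4.1 of~\cite{lammers2019variational} is built to handle; deferring to that reference is fair and is also what the paper does. But you should be explicit about the gradient-to-nongradient device: the paper fixes a single vertex in $\Delta\smallsetminus\Lambda$ (nonempty because $\Lambda\subsetneq\Delta$) as a common anchor for all three relative-entropy terms in the display, which turns each $\mathcal F^\nabla_\Delta$-entropy into an ordinary $\mathcal F_\Delta$-entropy and lets the non-gradient lemma apply verbatim. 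Your phrase ``tracking the gradient anchoring'' hints at this but does not supply it. In short, your two elementary parts are correct and identical to the paper's; for superadditivity you identify the right obstacles but your sketch does not resolve the residual-term mismatch, and the paper's actual argument is to delegate the whole estimate to the cited non-gradient lemma after anchoring at a vertex of $\Delta\smallsetminus\Lambda$.
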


\begin{proof}
  It is straightforward to see that $K_\mu(\Lambda,\Delta)$ is increasing
  in $\Delta$: increasing $\Delta$ restricts the set of measures
  $\nu$ for the infimum, while increasing the
  $\sigma$-algebra $\mathcal F_{\Delta}^\nabla$ for the entropy.
  Both operations increase the value of $K_\mu(\Lambda,\Delta)$.
  For superadditivity in $\Lambda$,
  it suffices to prove that
  \[
  \inf_{\text{$\nu\in\mathcal P(\Omega,\mathcal F^\nabla)$ with $
  \nu\pi_\Delta=\mu\pi_\Delta$}}\mathcal H_{\mathcal F^\nabla_\Delta}(\mu|\nu\gamma_\Lambda)
  \geq
  \sum_{i\in\{1,2\}}
  \inf_{\text{$\nu\in\mathcal P(\Omega,\mathcal F^\nabla)$ with $
  \nu\pi_\Delta=\mu\pi_\Delta$}}\mathcal H_{\mathcal F^\nabla_\Delta}(\mu|\nu\gamma_{\Lambda_i})
  \]
  for $\Lambda_1$ and $\Lambda_2$ disjoint with
  $\Lambda:=\Lambda_1\cup\Lambda_2\subsetneq\Delta$.
  This follows from Lemma~4.1 in~\cite{lammers2019variational}.
  Observe that that lemma does not concern the gradient setting,
  which provides us with a slight complication.
  However, since we choose $\Lambda$ to be a strict subset of $\Delta$,
  we can fix a vertex $x\in\Delta\smallsetminus\Lambda$
  to serve as a reference vertex for the gradient setting
  for all three entropy calculations in the display,
  thus translating the inequality to the non-gradient setting.
\end{proof}

\begin{lemma}
  If
  $\mu$
  is a minimizer,
  then $K_\mu\equiv 0$.
\end{lemma}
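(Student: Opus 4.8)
The plan is to argue by contradiction, playing the superadditivity and monotonicity of $K_\mu$ from the preceding lemma against the minimality of $\mu$. Suppose $K_\mu\not\equiv 0$, so that there are finite sets $\Lambda_0\subsetneq\Delta_0$ with $K_\mu(\Lambda_0,\Delta_0)=:c>0$; by $\mathcal L$-invariance of $\mu$ we get $K_\mu(\theta\Lambda_0,\theta\Delta_0)=c$ for all $\theta\in\Theta$. Note first that superadditivity in the first argument together with $K_\mu\geq 0$ forces $K_\mu$ to be \emph{increasing} in the first argument. Fix a large integer $m$ with (a translate of) $\Delta_0$ inside $\Pi_m$. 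For $n$ large one can place $\kappa n^d$ pairwise disjoint translates $\theta_i\Delta_0\subset\Pi_n^{-m}$ by vectors of $\mathcal L$, with $\kappa=\kappa(\Delta_0)>0$. Writing $\Lambda:=\bigcup_i\theta_i\Lambda_0\subsetneq\Pi_n$ and using, in this order, monotonicity in the first argument, superadditivity in the first argument, monotonicity in the second argument, and shift-invariance,
\[
K_\mu(\Pi_n^{-m},\Pi_n)\ \geq\ K_\mu(\Lambda,\Pi_n)\ \geq\ \sum_i K_\mu(\theta_i\Lambda_0,\Pi_n)\ \geq\ \sum_i K_\mu(\theta_i\Lambda_0,\theta_i\Delta_0)\ \geq\ \kappa c\,n^d .
\]

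It therefore suffices to prove the matching upper bound $K_\mu(\Pi_n^{-m},\Pi_n)=o(n^d)$, which is where minimality enters. Taking $\nu=\mu$ in the defining infimum, $K_\mu(\Pi_n^{-m},\Pi_n)\leq\mathcal H_{\mathcal F^\nabla_{\Pi_n}}(\mu|\mu\gamma_{\Pi_n^{-m}})$. Since $\mu$ and $\mu\gamma_{\Pi_n^{-m}}$ agree on $\mathcal T^\nabla_{\Pi_n^{-m}}$, the chain rule for relative entropy reduces this to a conditional relative entropy on $\Pi_n^{-m}$, and the Gibbs variational identity rewrites the latter as a free-energy gap: the conditional free energy of $\mu$ over $\Pi_n^{-m}$ given the exterior, minus its minimal value $\mathbb E_\mu[-\log Z_{\Pi_n^{-m}}(\phi)]$, which is attained by the $\gamma_{\Pi_n^{-m}}$-resampling. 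Now $\mu$ is supported on $\Omega_q$ (otherwise $\mu(H^0_{\Pi_n})=\infty$ and $\mathcal H(\mu|\Phi)=\infty$), so the lower and upper attachment lemmas (Lemmas~\ref{lemma_lower_attachments}, \ref{lemma_attachment}, \ref{lemma_fe_superadditivity}) together with the amenability of $e^-,e^+,e^*$ along the Van Hove sequence $(\Pi_n^{-m})_n$ let one replace $H$ by $H^0$, the conditional entropy on $\Pi_n^{-m}$ by the box entropy on $\Pi_n$, and the box $\Pi_n^{-m}$ by $\Pi_n$, all at the cost of $o(n^d)$; the first term of the gap is thus $\leq\mathcal H_{\Pi_n}(\mu|\Phi)+o(n^d)=n^d\mathcal H(\mu|\Phi)+o(n^d)$, using convergence of the specific free energy (Theorem~\ref{thm_main_sfe}, Proposition~\ref{propo_large_set_limit}).

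What remains — and this is the main obstacle — is the lower bound $\mathbb E_\mu[-\log Z_{\Pi_n^{-m}}(\phi)]\geq n^d\sigma(S(\mu))-o(n^d)$; combined with $\mathcal H(\mu|\Phi)=\sigma(S(\mu))$ (minimality), subtracting then gives $K_\mu(\Pi_n^{-m},\Pi_n)=o(n^d)$ and the contradiction. Informally this asserts that a minimizer cannot strictly lower its free-energy density by resampling a macroscopic Van Hove block. I would establish it by passing from the quenched optimizers $\gamma_{\Pi_n^{-m}}(\cdot,\phi)$ to a shift-invariant competitor: average their $\Theta$-translates over a box and extract a weak-local subsequential limit $\bar\nu$, which is $q$-Lipschitz, hence of finite slope, and of slope $S(\mu)$ — here the $q$-Lipschitz property is essential, since it ensures that resampling moves each height by only $O(1)$ and so leaves the asymptotic slope unchanged. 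Lower semicontinuity and the superadditivity of the free energy (Lemma~\ref{lemma_fe_superadditivity}, Theorem~\ref{thm_main_sfe}), together with the definition of $\sigma$ as the infimal specific free energy at a given slope, then yield $n^{-d}\mathbb E_\mu[-\log Z_{\Pi_n^{-m}}(\phi)]\geq\mathcal H(\bar\nu|\Phi)-o(1)\geq\sigma(S(\bar\nu))-o(1)=\sigma(S(\mu))-o(1)$.

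The delicate points in the last step are the interchange of the box-averaging limit with $n\to\infty$, and making the comparison between the constrained block free energy and $\mathcal H(\bar\nu|\Phi)$ precise in the presence of the infinite-range part $\Xi$; both are controlled by the same toolkit used above, the attachment lemmas and the $\mathrm{Diam}^\infty$-estimate of Proposition~\ref{propo_all_abs_cts_wrt_one_another}, which bounds the effect of the exterior boundary conditions (including those outside $\Pi_n$) on the resampled law by $4e^-(\Pi_n^{-m})=o(n^d)$. In fact this whole argument is the gradient Lipschitz analogue of Lemma~5.4 of~\cite{lammers2019variational}, and its proof strategy transfers once these estimates are available; the reduction to the seed pair $(\Lambda_0,\Delta_0)$ via superadditivity, as in Step 1, is exactly what turns the per-volume estimate $n^{-d}K_\mu(\Pi_n^{-m},\Pi_n)\to 0$ into the pointwise conclusion $K_\mu\equiv 0$.
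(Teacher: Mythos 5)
Your reduction is the same as the paper's: superadditivity, monotonicity, and $\mathcal L$-invariance of $K_\mu$ reduce the claim to $K_\mu(\Pi_n^{-m},\Pi_n)=o(n^d)$, and the infimum with $\nu=\mu$ gives $K_\mu(\Pi_n^{-m},\Pi_n)\leq\mathcal H_{\mathcal F^\nabla_{\Pi_n}}(\mu|\mu\gamma_{\Pi_n^{-m}})$. From there you diverge: you decompose the relative entropy via the Gibbs variational identity into a ``free-energy gap'' and try to show $\mathbb E_\mu[-\log Z_{\Pi_n^{-m}}(\phi)]\geq n^d\sigma(S(\mu))-o(n^d)$. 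The paper instead argues by contradiction and builds an explicit $\mathcal L$-invariant competitor $\mu''$ of the same slope with strictly smaller specific free energy: it introduces the local kernel $\gamma^*_{\Pi_n}$ (resampling $\Pi_n^{-R}$ against the \emph{partial} Hamiltonian $H_{\Pi_n^{-R},\Pi_n}$, so the kernel depends only on $\Pi_n$ and commutes over disjoint translates), shows via the assumed lower bound $\mathcal H_{\mathcal F^\nabla_{\Pi_n}}(\mu|\mu\gamma_{\Pi_n^{-R}})\geq 2e^-(\Pi_n)+\varepsilon$ that one such application lowers the free energy by at least $\varepsilon$, applies it to a tiling by $M\mathbb Z^d$-translates, and averages over $\mathcal L/M\mathbb Z^d$. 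This contradicts minimality directly, with no need for any block-free-energy estimate against $\sigma$.

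The gap in your write-up is the ``main obstacle'' you flagged: the lower bound $\mathbb E_\mu[-\log Z_{\Pi_n^{-m}}(\phi)]\geq n^d\sigma(S(\mu))-o(n^d)$ is never actually established. The slope-preservation argument you offer rests on the claim that ``resampling moves each height by only $O(1)$,'' which is false: if $\phi$ and $\psi$ are $q$-Lipschitz and agree outside $\Pi_n^{-m}$, then $\phi-\psi$ is $2K$-Lipschitz and vanishing outside $\Pi_n^{-m}$, so $|\phi(x)-\psi(x)|$ can be as large as $O(n)$ in the bulk. (What is true is that the slope of the empirical/averaged limit $\bar\nu$ is controlled by a telescoping boundary sum determined by $\mu$ on $\mathbb Z^d\smallsetminus\Pi_n^{-m}$; that argument works, but it is not the one you gave.) Likewise, passing from $\liminf n^{-d}\mathbb E_\mu[-\log Z_{\Pi_n^{-m}}]$ to $\mathcal H(\bar\nu|\Phi)$ requires re-running the empirical-measure/attachment machinery of the $\FB$-limit proof, which you gesture at as ``delicate points'' but do not carry out. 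As it stands, the key inequality is plausible but unproved, and the paper's route — a single explicit local modification whose free-energy reduction is read off from the assumed entropy lower bound — avoids it entirely and is the shorter path.
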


\begin{proof}
  Fix $\mu\in\mathcal P_\mathcal L(\Omega,\mathcal F^\nabla)$.
  Then $K_\mu$ is shift-invariant, in the sense that
  $K_\mu(\Lambda,\Delta)=K_\mu(\theta\Lambda,\theta\Delta)$
  for $\Lambda\subsetneq\Delta\subset\subset\mathbb Z^d$
  and $\theta\in\Theta$.
  Using also the properties of $K_\mu$
  in the previous defining lemma, it is immediate
  that
  $K_\mu\equiv 0$ if and only if
  $
    K_\mu(\Pi_n^{-R},\Pi_n)=o(n^d)
  $
  as $n\to\infty$.
  Moreover, by definition of $K_\mu$,
  it is immediate that
  \[
    K_\mu(\Pi_n^{-R},\Pi_n)\leq \mathcal H_{\mathcal F_{\Pi_n}^\nabla}(\mu|\mu\gamma_{\Pi_n^{-R}}).
  \]
  We must therefore prove that $\mathcal H(\mu|\Phi)=\sigma(S(\mu))<\infty$
  implies
  that the expression on the right in this display
  is of order
  $o(n^d)$
  as $n\to\infty$.
  If this expression is not of order $o(n^d)$,
  then there is an $n\in\mathbb N$ and an $\varepsilon>0$
  such that
  \[
  \numberthis
  \label{qwineioqwnqiweqewqw}
    \mathcal H_{\mathcal F_{\Pi_n}^\nabla}(\mu|\mu\gamma_{\Pi_n^{-R}})\geq 2e^-(\Pi_n)+\varepsilon.
  \]
  We will use this inequality to construct another $\mathcal L$-invariant
  measure $\mu''$ of the same slope as $\mu$ and with a strictly smaller
  specific free energy.
  This proves that $\mathcal H(\mu|\Phi)\neq\sigma(S(\mu))$.

       For $\Lambda\subset\subset\mathbb Z^d$,
       we denote by $\gamma_\Lambda^*$
       the kernel $\gamma_{\Lambda^{-R}}$,
       only now with respect to the partial Hamiltonian
       $H_{\Lambda^{-R},\Lambda}$ rather than the full Hamiltonian
       $H_{\Lambda^{-R}}$.
       With a straightforward entropy calculation one
       can demonstrate that
       \[\mathcal H_{\Lambda}(\nu\gamma_{\Pi_n}^*|\Phi)\leq
       \mathcal H_{\Lambda}(\nu|\Phi)-\varepsilon
       \]
       for any $\Lambda\subset\subset\mathbb Z^d$ containing $\Pi_n$,
       and for any $\nu\in\mathcal P(\Omega,\mathcal F^\nabla)$
       with $\nu\pi_{\Pi_n}=\mu\pi_{\Pi_n}$.
       This can be done by
       calculating each free energy term first over the $\sigma$-algebra
       generated by the vertices in $\Lambda\smallsetminus\Pi_n^{-R}$,
       then over the remaining vertices. The first term is the same for
       $\nu\gamma_{\Pi_n}^*$ and $\nu$ since the kernel modifies the values of $\phi$
       in $\Pi_n^{-R}$ only;
       the difference between the two measures
       for the second term is at least $\varepsilon$ due to~\eqref{qwineioqwnqiweqewqw}
       and because \[|H_{\Pi_n^{-R},\Lambda}-H_{\Pi_n^{-R},\Pi_n}|\leq e^-(\Pi_n).\]

       If $\Lambda$ and $\Delta$ are disjoint,
       then clearly $\gamma_\Lambda^*$ and $\gamma_\Delta^*$ commute.
       Let $M$ denote the smallest multiple of $N$ which exceeds
       $n$,
       and write
       \[\mu':=\mu\prod_{x\in M\cdot\mathbb Z^d}\gamma_{\Pi_n+x}^*;\]
       this measure is $M\cdot\mathbb Z^d$-invariant,
       but not necessarily $\mathcal L$-invariant.
       By the inequality in the previous paragraph, we have $\mathcal H_{\Pi_{kM}}(\mu'|\Phi)\leq \mathcal H_{\Pi_{kM}}(\mu|\Phi)-k^d\varepsilon$
       for any $k\in\mathbb N$.
       As $M\cdot\mathbb Z^d$-invariant measures,
       we have $S(\mu')=S(\mu)$ and $\mathcal H(\mu'|\Phi)\leq \mathcal H(\mu|\Phi)-\varepsilon/M^d<\mathcal H(\mu|\Phi)$.
       To make $\mu'$ also $\mathcal L$-invariant,
       simply define
       \[
        \mu'':=\frac{1}{|\mathcal L/(M\cdot\mathbb Z^d)|}
        \sum_{x\in \mathcal L/(M\cdot\mathbb Z^d)}\theta_x\mu'.
       \]
       The averaging procedure does not change the slope
       or the specific free energy.
       This is the desired measure.
\end{proof}

\begin{proof}[Proof of Theorem~\ref{theorem_stronger_than_finite_energy}]
  The theorem contains three claims.
  The second claim follows directly from the first claim
  and the definition of an almost Gibbs measure.
  We shall quickly demonstrate that the third claim
  also follows from the first claim,
   before focusing on that first claim.
   Assume that the first claim holds true.
   Observe first that, by assumption, for $\mu$-almost every $\phi$,
   \[
   1_{\Omega_q}(\delta_{\phi_{\mathbb Z^d\smallsetminus\Lambda}}\times \lambda^\Lambda)\pi_\Lambda\ll
   \gamma_\Lambda(\cdot,\phi)\pi_\Lambda\in\mathcal A_{\Lambda,\phi}\ni \mu^{\phi}\pi_\Lambda.
   \]
   But all measures in $\mathcal A_{\Lambda,\phi}$ are absolutely
   continuous with respect to one another, by Proposition~\ref{propo_all_abs_cts_wrt_one_another}
   and the comment preceding it.
   Therefore
   \[
    1_{\Omega_q}(\delta_{\phi_{\mathbb Z^d\smallsetminus\Lambda}}\times \lambda^\Lambda)
    \ll
    \mu^{\phi}
   \]
   for $\mu$-almost every $\phi$,
   which implies that
   $1_{\Omega_q}(\mu\pi_{\mathbb Z^d\smallsetminus\Lambda}\times \lambda^\Lambda)\ll \mu$.

   Focus finally on the first claim.
   By the previous lemma,
   it suffices to prove that $K_\mu\equiv 0$ implies that
   $\mu^\phi\pi_\Lambda\in\mathcal A_{\Lambda,\phi}$
   for $\mu$-almost every $\phi$.
   The proof is nearly identical to the proof of Lemma~5.4 in~\cite{lammers2019variational}.
   Fix $\Delta\subset\subset\mathbb Z^d$
   with $\Lambda\subset\Delta^{-R}$;
   it suffices to demonstrate that $K_\mu\equiv 0$ implies that
   $\mu^\phi\pi_\Lambda\in\mathcal C(\mathcal A_{\Lambda,\Delta,\phi})$
   for $\mu$-almost every $\phi$.
   By choice of $\Delta$, we have $\operatorname{Diam}^\infty\mathcal C(\mathcal A_{\Lambda,\Delta,\phi})<\infty$.
   Write $\Delta_n$ for $\{-n,\dots,n\}^d\subset\subset\mathbb Z^d$,
   and write $\mu^\phi_n$ for the
   regular conditional probability distribution of
   $\mu$ on $(\Omega,\mathcal F)$
   corresponding to the projection map $\Omega\to E^{\Delta_n\smallsetminus\Lambda}$.
   We only consider $n\in\mathbb N$ so large that $\Lambda\subset\Delta\subset\Delta_n$.
   As in the proof of Lemma~5.4 in~\cite{lammers2019variational},
   we observe that $K_\mu(\Lambda,\Delta_n)=0$ implies that
   for $\mu$-almost every $\phi$,
   \begin{enumerate}
     \item $\mu_n^\phi\pi_\Lambda\in \mathcal C( \mathcal A_{\Lambda,\Delta_n,\phi}) \subset\mathcal C( \mathcal A_{\Lambda,\Delta,\phi})$
     for fixed $n$---this follows from Lemma 5.1 in~\cite{lammers2019variational},
     \item $\mu_n^\phi(A)\to \mu^\phi(A)$ for fixed $A\in\mathcal F_\Lambda$, by the bounded martingale convergence theorem,
     \item $\mu_n^\phi\pi_\Lambda\to\mu^\phi\pi_\Lambda\in \mathcal C( \mathcal A_{\Lambda,\Delta,\phi})$
     by compactness of $\mathcal C(\mathcal A_{\Lambda,\Delta,\phi})$
     in the strong topology.
   \end{enumerate}
   Compactness of $\mathcal C(\mathcal A_{\Lambda,\Delta,\phi})$ follows from Lemma~5.1 in~\cite{lammers2019variational}
   and the fact that $\mathcal A_{\Lambda,\Delta,\phi}$
   has finite max-diameter.
\end{proof}


\section{Ergodic decomposition of shift-invariant measures}
\label{section:ergo_decomp}

In this section we cite some standard
results on ergodic decompositions of
shift-invariant random fields from the work of Georgii~\cite{G11}.
Recall that $\mathcal I_\mathcal L^\nabla$
is the $\sigma$-algebra of shift-invariant gradient events,
and that $\operatorname{ex \mathcal P_\mathcal L}(\Omega,\mathcal F^\nabla)$
is the set of ergodic gradient measures,
endowed with the $\sigma$-algebra
 $e(\operatorname{ex \mathcal P_\mathcal L}(\Omega,\mathcal F^\nabla))$.


The following result is a direct adaptation of Theorem~14.10 in~\cite{G11}
to the gradient setting of this article.
Informally, the theorem asserts that if $\mu$ is a shift-invariant gradient random field, then the regular conditional probability distribution
of $\mu$ given the information in $\mathcal I_\mathcal L^\nabla$
is well-defined.

\begin{theorem}
  \label{theorem_ergodic_decomposition}
  There is a unique affine bijection
  \[
    w:\mathcal P_\mathcal L(\Omega,\mathcal F^\nabla)\to
    \mathcal P(\operatorname{ex \mathcal P_\mathcal L}(\Omega,\mathcal F^\nabla),e(\operatorname{ex \mathcal P_\mathcal L}(\Omega,\mathcal F^\nabla)))
    ,\,
    \mu\mapsto w_\mu
  \]
   such that
  \[
    \mu=\int \nu dw_\mu(\nu)
  \]
  for all $\mu\in\mathcal P_\mathcal L(\Omega,\mathcal F^\nabla)$.
  For any $A\in\mathcal F^\nabla$ and $c\in\mathbb R$, this bijection satisfies
  \[
    w_\mu(\nu(A)\leq c)=\mu(\mu(A|\mathcal I_\mathcal L^\nabla)\leq c).
  \]
\end{theorem}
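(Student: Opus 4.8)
The plan is to obtain the statement as a direct application of the general ergodic decomposition theorem for measurable $\mathbb Z^d$-dynamical systems (Theorem~14.10 in~\cite{G11}); the only real work is setting up the reduction, since that theorem is phrased for actions on a standard Borel space whereas $\mathcal F^\nabla$ does not separate the points of $\Omega$ (two height functions differing by a global additive constant are $\mathcal F^\nabla$-indistinguishable). First I would realize $(\Omega,\mathcal F^\nabla)$ as a standard Borel $\mathbb Z^d$-system: consider the map $\pi^\nabla:\Omega\to\Omega^\nabla$, $\phi\mapsto(\phi(y)-\phi(x))_{x,y\in\mathbb Z^d}$, whose image $\Omega^\nabla$ is the set of additive, antisymmetric functions on $\mathbb Z^d\times\mathbb Z^d$, a closed — hence Polish — shift-invariant subset of the product space $E^{\mathbb Z^d\times\mathbb Z^d}$. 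Since $(\theta\phi)(y)-(\theta\phi)(x)=\phi(\theta y)-\phi(\theta x)$, the group $\Theta\cong\mathbb Z^d$ acts on $\Omega^\nabla$ by homeomorphisms and $\pi^\nabla$ intertwines the actions; moreover $\mathcal F^\nabla$ is exactly the $\pi^\nabla$-preimage of the Borel $\sigma$-algebra of $\Omega^\nabla$, so $\pi^\nabla$ induces an isomorphism $\mathcal P(\Omega,\mathcal F^\nabla)\to\mathcal P(\Omega^\nabla)$ that carries $\mathcal P_\mathcal L(\Omega,\mathcal F^\nabla)$ onto the $\Theta$-invariant measures, $\mathcal I_\mathcal L^\nabla$ onto the $\Theta$-invariant Borel $\sigma$-algebra, and $\operatorname{ex \mathcal P_\mathcal L}(\Omega,\mathcal F^\nabla)$ equipped with $e(\operatorname{ex \mathcal P_\mathcal L}(\Omega,\mathcal F^\nabla))$ onto the ergodic measures equipped with their evaluation $\sigma$-algebra. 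After this translation everything takes place on a standard Borel $\mathbb Z^d$-system with a countably generated $\sigma$-algebra.

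On this system, Theorem~14.10 of~\cite{G11} provides, for each $\mu\in\mathcal P_\mathcal L(\Omega,\mathcal F^\nabla)$, a regular conditional probability distribution $\mu(\cdot|\mathcal I_\mathcal L^\nabla)$ whose value at $\phi$ lies in $\operatorname{ex \mathcal P_\mathcal L}(\Omega,\mathcal F^\nabla)$ for $\mu$-almost every $\phi$ and with $\mu=\int\mu(\cdot|\mathcal I_\mathcal L^\nabla)(\phi)\,d\mu(\phi)$. I would then define $w_\mu$ as the pushforward of $\mu$ along the map $\phi\mapsto\mu(\cdot|\mathcal I_\mathcal L^\nabla)(\phi)$, which is measurable into $e(\operatorname{ex \mathcal P_\mathcal L}(\Omega,\mathcal F^\nabla))$ precisely because that $\sigma$-algebra is generated by the evaluations $\nu\mapsto\nu(A)$, $A\in\mathcal F^\nabla$. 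The identity $\mu=\int\nu\,dw_\mu(\nu)$ is then a change of variables; and since the value of the regular conditional distribution at $\phi$, evaluated on $A$, equals $\mu(A|\mathcal I_\mathcal L^\nabla)(\phi)$, the same change of variables yields $w_\mu(\nu(A)\leq c)=\mu(\mu(A|\mathcal I_\mathcal L^\nabla)\leq c)$, which is the displayed formula.

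The remaining assertions — that $w$ is affine, bijective, and the unique such map — all come from the uniqueness half of the decomposition theorem, namely that a $\Theta$-invariant measure has exactly one representation as the barycentre of a probability measure on the ergodic measures. Injectivity is immediate ($w_\mu=w_{\mu'}$ forces $\mu=\int\nu\,dw_\mu(\nu)=\int\nu\,dw_{\mu'}(\nu)=\mu'$); affinity follows because $\int\nu\,d((1-t)w_\mu+tw_{\mu'})(\nu)=(1-t)\mu+t\mu'$, so uniqueness identifies this representing measure with $w_{(1-t)\mu+t\mu'}$; and for surjectivity one checks that for $W\in\mathcal P(\operatorname{ex \mathcal P_\mathcal L}(\Omega,\mathcal F^\nabla),e(\operatorname{ex \mathcal P_\mathcal L}(\Omega,\mathcal F^\nabla)))$ the formula $\mu(A):=\int\nu(A)\,dW(\nu)$ defines a measure in $\mathcal P_\mathcal L(\Omega,\mathcal F^\nabla)$ satisfying $w_\mu=W$. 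Uniqueness of the map $w$ itself is likewise immediate from uniqueness of representing measures.

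I expect the main obstacle to be purely measure-theoretic bookkeeping rather than anything conceptual: one must carefully verify that $\Omega^\nabla$ satisfies the standard-Borel hypotheses needed to invoke~\cite{G11}, that $\pi^\nabla$ transports $\mathcal I_\mathcal L^\nabla$ and $e(\operatorname{ex \mathcal P_\mathcal L}(\Omega,\mathcal F^\nabla))$ correctly, and that $\phi\mapsto\mu(\cdot|\mathcal I_\mathcal L^\nabla)(\phi)$ is a genuine $\operatorname{ex \mathcal P_\mathcal L}(\Omega,\mathcal F^\nabla)$-valued random element for $\mu$-almost every $\phi$. Once these identifications are in place, Theorem~14.10 of~\cite{G11} applies essentially verbatim and there is nothing further to prove.
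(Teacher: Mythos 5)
Your proposal is correct and follows essentially the same route as the paper: realize the gradient measure as a non-gradient measure on a standard Borel $\mathbb Z^d$-system and invoke Theorem~14.10 of~\cite{G11}. The only cosmetic difference is that the paper encodes the gradient via the forward differences $x\mapsto(\phi(x+e_1)-\phi(x),\dots,\phi(x+e_d)-\phi(x))\in E^d$, obtaining a configuration in $(E^d)^{\mathbb Z^d}$, whereas you use the full doubly-indexed difference field $(\phi(y)-\phi(x))_{x,y}$; both yield the same identifications of $\sigma$-algebras and the same conclusion.
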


\begin{definition}
  The measure $w_\mu$
  is called the \emph{ergodic decomposition} of $\mu$.
\end{definition}

\begin{proof}[Proof of Theorem~\ref{theorem_ergodic_decomposition}]
  Let $(e_1,\dots,e_d)$ denote the standard basis of $\mathbb R^d$.
  The measure $\mu$ can be considered a non-gradient measure,
  by associating to each vertex $x\in\mathbb Z^d$
  the tuple $(\phi(x+e_1)-\phi(x),\dots,\phi(x+e_d)-\phi(x))\in E^d$.
  Theorem~14.10 in~\cite{G11} applies to this non-gradient measure, which immediately implies
  the current theorem.
\end{proof}

It was shown in previous sections that
the slope and specific free energy are
affine.
In fact, these functionals are also strongly affine.
This is the subject of the following two results.

\begin{proposition}
  \label{propo_S_strongly_affine}
  The functional $S$
  is strongly affine, that is,
  \[
  S(\mu)=\int S(\nu) dw_\mu(\nu)
  \]
  for any $\mu\in\mathcal P_\mathcal L(\Omega,\mathcal F^\nabla)$ with finite slope.
\end{proposition}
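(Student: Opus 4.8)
The plan is to reduce the statement to a componentwise integrability argument. Recall from Theorem~\ref{theorem_ergodic_decomposition} that the ergodic decomposition gives $\mu=\int\nu\,dw_\mu(\nu)$ as measures on $(\Omega,\mathcal F^\nabla)$, i.e.\ $\mu(A)=\int\nu(A)\,dw_\mu(\nu)$ for every $A\in\mathcal F^\nabla$. First I would record the standard monotone-class fact that $\nu\mapsto\nu(h)$ is $e(\operatorname{ex \mathcal P_\mathcal L}(\Omega,\mathcal F^\nabla))$-measurable for every bounded $\mathcal F^\nabla$-measurable $h$ (the class of such $h$ is a vector space, closed under bounded monotone limits, and contains the indicators of $\mathcal F^\nabla$ by definition of the $\sigma$-algebra), and hence $\nu\mapsto\nu(h)\in[0,\infty]$ is measurable for every nonnegative $\mathcal F^\nabla$-measurable $h$ as the increasing limit of $\nu\mapsto\nu(h\wedge n)$. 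Approximating indicators by simple functions and then passing to monotone limits on both sides of the barycenter identity (using monotone convergence for both the inner and the outer integral) yields
\[
\mu(h)=\int\nu(h)\,dw_\mu(\nu)
\]
for every nonnegative $\mathcal F^\nabla$-measurable $h$.

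Next, fix $x\in\mathcal L$ and set $g_x:=\phi(x)-\phi(0)$, which is $\mathcal F^\nabla$-measurable, and apply the displayed identity to $h=g_x^+$ and to $h=g_x^-$ separately. Since $\mu$ has finite slope, $g_x$ is $\mu$-integrable, so $\mu(g_x^+),\mu(g_x^-)<\infty$; therefore $\int\nu(g_x^\pm)\,dw_\mu(\nu)<\infty$, and consequently $\nu(g_x^+)$ and $\nu(g_x^-)$ are both finite for $w_\mu$-almost every $\nu$. Running this over a countable generating set of $\mathcal L$ and using additivity of $x\mapsto\nu(g_x)$ (valid once these quantities are finite), one concludes that for $w_\mu$-almost every $\nu$ the measure $\nu$ itself has finite slope, so that $S(\nu)$ is defined, depends measurably on $\nu$, and satisfies $S(\nu)(x)=\nu(g_x)=\nu(g_x^+)-\nu(g_x^-)$. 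Then
\[
\int S(\nu)(x)\,dw_\mu(\nu)=\int\nu(g_x^+)\,dw_\mu(\nu)-\int\nu(g_x^-)\,dw_\mu(\nu)=\mu(g_x^+)-\mu(g_x^-)=\mu(g_x)=S(\mu)(x),
\]
where splitting the outer integral is legitimate precisely because both integrals on the right-hand side are finite.

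Finally, since this holds for every $x\in\mathcal L$ and $\mathcal L$ is a full-rank sublattice of $\mathbb Z^d$, the two linear functionals $S(\mu)$ and $\int S(\nu)\,dw_\mu(\nu)$ (the latter defined componentwise along a basis of $\mathcal L$, which exists inside $\mathcal L$) agree, which is the claim. I do not anticipate a genuine obstacle: the only delicate points are the measurability of $\nu\mapsto\nu(g_x^\pm)$ and the extension of the barycenter identity from $\mathcal F^\nabla$-indicators to unbounded nonnegative functions, both of which are routine monotone-class and monotone-convergence arguments, and the finite-slope hypothesis on $\mu$ is exactly what makes the decomposition $g_x=g_x^+-g_x^-$ harmless under the outer integral.
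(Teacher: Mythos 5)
Your proof is correct and spells out precisely what the paper leaves implicit: the paper gives no argument here, remarking only that the claim "is immediate from the definition of $S$," and your monotone-class plus monotone-convergence extension of the barycenter identity to $h=g_x^\pm$ is exactly the intended reduction. One small point of precision: the paper's definition of \emph{finite slope} requires $\phi(y)-\phi(x)$ to be $\nu$-integrable for \emph{all} $x,y\in\mathbb Z^d$, not merely for $x,y\in\mathcal L$, so to justify that $S(\nu)$ is defined for $w_\mu$-almost every $\nu$ you should run the countability argument over all of $\mathbb Z^d$ (which is of course still countable) rather than just over a generating set of $\mathcal L$; this changes nothing substantive, since once finite slope is established the functional $S(\nu)$ is determined by its values on $\mathcal L$ exactly as you use it.
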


This proposition is immediate from the definition of
$S$.

\begin{theorem}
  \label{thm_SFE_strongly_affine}
  If $\Phi\in\mathcal S_\mathcal L+\mathcal W_\mathcal L$,
  then the functional $\mathcal H(\cdot|\Phi)$
  is strongly affine, that is,
  \[
  \numberthis
  \label{eq_thm_SFE_strongly_affine}
  \mathcal H(\mu|\Phi)=\int \mathcal H(\nu|\Phi) dw_\mu(\nu)
  \]
  for any $\mu\in\mathcal P_\mathcal L(\Omega,\mathcal F^\nabla)$.
\end{theorem}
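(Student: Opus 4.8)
The plan is to prove the two inequalities ``$\le$'' and ``$\ge$'' in \eqref{eq_thm_SFE_strongly_affine} separately. Throughout, I would use that the maps $\nu\mapsto\mathcal H_{\Pi_n}(\nu|\Phi)$ and $\nu\mapsto\mathcal H(\nu|\Phi)$ are lower semicontinuous in the topology of (weak) local convergence (Theorem~\ref{thm_main_sfe}), hence Borel measurable on the set of ergodic gradient measures, and that they are bounded below; so all integrals below are well-defined in $(-\infty,\infty]$, and in fact in $[-\max_{x}e^*(\{x\}),\infty]$ for $\mathcal H(\cdot|\Phi)$ itself.

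The inequality ``$\le$'' is routine and amounts to Jensen's inequality for the convex functional $\mathcal H(\cdot|\Phi)$. At each finite volume one splits $\mathcal H_{\Pi_n}(\mu|\Phi)=\mathcal H_{\mathcal F^\nabla_{\Pi_n}}(\mu|\lambda^{\Pi_n-1})+\mu(H^0_{\Pi_n})$. The energy term is linear in $\mu$ with a bounded-below, $\mathcal F^\nabla_{\Pi_n}$-measurable integrand, hence \emph{strongly} affine by Theorem~\ref{theorem_ergodic_decomposition} (apply it to truncations of $H^0_{\Pi_n}$ and use monotone convergence): $\mu(H^0_{\Pi_n})=\int\nu(H^0_{\Pi_n})\,dw_\mu(\nu)$. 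The entropy term is a relative entropy with respect to the \emph{fixed} measure $\lambda^{\Pi_n-1}$, hence convex in its first argument; since the restriction of $\mu$ to $\mathcal F^\nabla_{\Pi_n}$ is the barycenter of the restrictions of the $\nu$, convexity of relative entropy gives $\mathcal H_{\mathcal F^\nabla_{\Pi_n}}(\mu|\lambda^{\Pi_n-1})\le\int\mathcal H_{\mathcal F^\nabla_{\Pi_n}}(\nu|\lambda^{\Pi_n-1})\,dw_\mu(\nu)$. Adding, dividing by $n^d$, subtracting $n^{-d}e^*(\Pi_n)$, and using on the right that $\mathcal H(\nu|\Phi)$ equals the supremum over $n\in N\cdot\mathbb N$ of $n^{-d}(\mathcal H_{\Pi_n}(\nu|\Phi)-e^*(\Pi_n))$ (established in Section~\ref{section:specific_free_energy}), one gets $n^{-d}(\mathcal H_{\Pi_n}(\mu|\Phi)-e^*(\Pi_n))\le\int\mathcal H(\nu|\Phi)\,dw_\mu(\nu)$ for every $n$; taking the supremum over $n$ yields $\mathcal H(\mu|\Phi)\le\int\mathcal H(\nu|\Phi)\,dw_\mu(\nu)$.

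The inequality ``$\ge$'' is the crux, since relative entropy is convex but not concave, so the above cannot simply be reversed. If $\mathcal H(\mu|\Phi)=\infty$ there is nothing to prove, so assume $\mathcal H(\mu|\Phi)<\infty$; then, exactly as in the proof of Theorem~\ref{thm_main_st_general}, $\mu$ is carried by $\Omega_q$, and so is $w_\mu$-a.e.\ ergodic component $\nu$. The key observation is that the state space of $q$-Lipschitz gradient configurations is compact (a $q$-Lipschitz height function normalized by $\phi(0)=0$ lies in a bounded interval at each vertex), so $\mathcal P_\mathcal L(\Omega_q,\mathcal F^\nabla)$ is a compact metrizable Choquet simplex, whose barycentric decomposition is precisely the restriction of the ergodic decomposition of Theorem~\ref{theorem_ergodic_decomposition}. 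On this simplex $\mathcal H(\cdot|\Phi)$ is affine (Theorem~\ref{thm_main_sfe}), lower semicontinuous, and bounded below; a bounded-below lower semicontinuous affine function on a metrizable Choquet simplex is the pointwise supremum of an increasing sequence of continuous affine functions, and continuous affine functions commute with barycentric integration, whence $\mathcal H(\cdot|\Phi)$ is strongly affine by monotone convergence. Alternatively, one encodes $\mu$ as a shift-invariant random field valued in $E^d$ via $x\mapsto(\phi(x+e_1)-\phi(x),\dots,\phi(x+e_d)-\phi(x))$, exactly as in the proof of Theorem~\ref{theorem_ergodic_decomposition}; under this encoding $\mathcal H(\cdot|\Phi)$ becomes the specific free energy of a non-gradient potential, and one invokes the strong affineness of the specific free energy in the non-gradient setting, cf.\ \cite[Chapter~15]{G11}.

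I expect the main obstacle to be precisely this ``$\ge$'' direction: either making the simplex-theoretic input (lower semicontinuous affine $\Rightarrow$ strongly affine, on a metrizable Choquet simplex) fully rigorous in the present topology, or, in the reduction to \cite{G11}, matching our broader class $\mathcal S_\mathcal L+\mathcal W_\mathcal L$ to Georgii's framework---in particular checking that the $+\infty$-valued Lipschitz potential $\Psi$ and the infinite-range weak interaction $\Xi$ are harmless, which they are because all measures with finite specific free energy are carried by $\Omega_q$, on which $\Psi$ is effectively bounded so that the reduction to an absolutely summable potential is immediate. Everything else---the splitting of the finite-volume free energy, the behaviour of the reference measures $\lambda^{\Pi_n-1}$, and the measurability and integrability bookkeeping---is a direct transcription of facts already available above.
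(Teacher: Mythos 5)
Your proposal takes a genuinely different route from the paper, and one of your two alternatives for the hard direction has a concrete gap worth flagging.

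The paper's proof does \emph{not} use the gradient encoding $x\mapsto(\phi(x+e_1)-\phi(x),\dots,\phi(x+e_d)-\phi(x))$ here; that encoding appears only in the proof of Theorem~\ref{theorem_ergodic_decomposition}, which is a purely measure-theoretic statement about ergodic decompositions with no reference-measure bookkeeping. For the present theorem the paper first splits $\mathcal H(\mu|\Phi)=\langle\mu|\Phi\rangle+\mathcal H(\mu|\lambda)$ and notes the energy part is visibly strongly affine; for the entropy part it passes to $\hat E:=E/4K\mathbb Z$ by recording $\hat\phi(x)=\phi(x)-\phi(0)+a$ with an auxiliary independent uniform $a$, observes that the gradient of $\phi$ is recoverable from $\hat\phi$ because $\mu$ is $K$-Lipschitz, and checks the exact identity $\mathcal H_{\mathcal F^\nabla_\Lambda}(\mu|\lambda^{\Lambda-1})=\mathcal H_{\hat{\mathcal F}_\Lambda}(\hat\mu|\hat\lambda^\Lambda)+\log 4K$. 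The output is a genuine non-gradient random field over the \emph{compact} group $\hat E$ with a \emph{per-site product} reference measure $\hat\lambda$ of finite mass $4K$ --- exactly what Georgii's Theorem~15.20 requires, and it yields both inequalities in one stroke. Your alternative~(b), by contrast, does not reduce to that framework: the pushforward of $\lambda^{\Lambda-1}$ along $\phi\mapsto\nabla\phi|_\Lambda$ lives on the ``curl-free'' submanifold of $(E^d)^\Lambda$ and is singular with respect to any per-site product $\lambda^{d\Lambda}$ (for $E=\mathbb R$ the resulting relative entropy would be identically $+\infty$), so there is no natural non-gradient specific entropy to invoke. The device you are missing --- reduction modulo $4K$ --- is precisely the step that uses the Lipschitz hypothesis, and it is the crux of the paper's proof.

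Your ``$\le$'' direction (energy strongly affine, finite-volume entropy convex, combine with $\mathcal H(\nu|\Phi)=\sup_{n\in N\cdot\mathbb N}n^{-d}(\mathcal H_{\Pi_n}(\nu|\Phi)-e^*(\Pi_n))$ and take suprema) is correct and consistent with Section~\ref{section:specific_free_energy}, though the paper never needs it separately. Your alternative~(a) for ``$\ge$'' via Choquet theory is a legitimately different strategy: $\mathcal P_\mathcal L(\Omega_q,\mathcal F^\nabla)$ (after pinning $\phi(0)=0$) is a compact metrizable Choquet simplex in the weak local topology, so the statement reduces to the assertion that a bounded-below lower semicontinuous affine $(-\infty,\infty]$-valued function on such a simplex is the increasing pointwise limit of continuous affine functions. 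That is the Edwards/Choquet--Meyer interposition machinery, and you are right to flag it as the main obstacle --- the standard statements are for bounded functions, and the $+\infty$-valued case needs care (one cannot simply truncate, as $f\wedge n$ is not affine). If you pursue that route you should cite a precise form of the interposition theorem (Alfsen or Phelps) and handle the unbounded case explicitly; otherwise, the modulo-$4K$ encoding is the shorter and more self-contained path.
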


\begin{proof}
  If $\mu$ is not supported on $q$-Lipschitz functions,
  then the left and right of~\eqref{eq_thm_SFE_strongly_affine}
  equal $\infty$;
  recall that $\mathcal H(\cdot|\Phi)$
  is bounded below by Theorem~\ref{thm_main_sfe}
  so that the integral on the right in~\eqref{eq_thm_SFE_strongly_affine}
  is always well-defined.

  Consider now the case that $\mu$ is supported on $q$-Lipschitz functions,
  which means in particular that $\mu$ is $K$-Lipschitz
  for $K$ minimal subject to $Kd_1\geq q$.
  In that case we have
  \[
  \numberthis
  \label{eq_decomp_of_SFE_useful_occas}
    \mathcal H(\mu|\Phi)=\langle\mu|\Phi\rangle+\lim_{n\to\infty}n^{-d}\mathcal H_{\mathcal F_{\Pi_n}^\nabla}(\mu|\lambda^{\Pi_n-1}),
  \]
  once it is established that the sequence on the right
  tends to some limit in $(-\infty,\infty]$.
  The functional $\langle\cdot|\Phi\rangle$
  is clearly strongly affine on $\mathcal P_\mathcal L(\Omega,\mathcal F^\nabla)$.
  Let us therefore focus on the limit on the right in the display.
  It suffices to demonstrate that the second limit in the display
  is well-defined, bounded below,
  and strongly affine in its dependence on $\mu$,
  once restricted to $K$-Lipschitz measures.
  The idea is to use Theorem~15.20 in~\cite{G11},
  which concerns the non-gradient setting.
  The measure $\mu$ can be made into a shift-invariant,
  non-gradient measure by considering the values of $\phi$
  modulo $4K$.
  It is clear that the gradient of $\phi$
  can be reconstructed from this reduced height function,
  if we use the extra information that $\phi$ is $K$-Lipschitz.
  This is formalized as follows.
  Write $\hat E$ for the set $E/4K\mathbb Z$,
  and endow it with the Borel $\sigma$-algebra $\hat{\mathcal E}$
  and the Lebesgue measure $\hat\lambda$ which satisfies $\hat\lambda(\hat E)=4K$.
  Write $\hat\Omega$ for the set of functions from $\mathbb Z^d$
  to $\hat E$,
  and $\hat{\mathcal F}$ for the product $\sigma$-algebra on $\Omega$.
  Define the measure $\hat\mu$ on $(\hat\Omega,\hat{\mathcal F})$
  as follows:
  first sample a pair $(\phi,a)$ from $\mu\times (\hat\lambda/4K)$,
  the final sample $\hat\phi$ is then obtained by
  setting $\hat\phi(x)=\phi(x)-\phi(0)+a\in \hat E$.
  The measure $\hat\mu$ is clearly $\mathcal L$-invariant.
  Note that, for $\Lambda\subset\subset\mathbb Z^d$ nonempty,
  \[
    \mathcal H_{\mathcal F_\Lambda^\nabla}(\mu|\lambda^{\Lambda-1})
    =
    \mathcal H_{\hat{\mathcal F}_\Lambda}(\hat \mu|\hat \lambda^\Lambda)+\log 4K,
  \]
  where $\hat{\mathcal F_\Lambda}:=\sigma(\hat\phi(x):x\in\Lambda)$.
  By Theorem~15.20 in~\cite{G11},
  the limit
  \[
  \lim_{n\to\infty}n^{-d}\mathcal H_{\mathcal F_{\Pi_n}^\nabla}(\mu|\lambda^{\Pi_n-1})
  =
    \lim_{n\to\infty}n^{-d}\mathcal H_{\hat{\mathcal F}_{\Pi_n}}(\hat \mu|\hat \lambda^{\Pi_n})
  \]
  is well-defined, bounded below by $-\log 4K$,
  and strongly affine over $\mu$.
\end{proof}

\begin{definition}
  For $\mu\in\mathcal P_\mathcal L(\Omega,\mathcal F^\nabla)$ a $K$-Lipschitz measure,
  define
  \[
    \mathcal H(\mu|\lambda):=\lim_{n\to\infty}n^{-d}\mathcal H_{\mathcal F_{\Pi_n}^\nabla}(\mu|\lambda^{\Pi_n-1})
    \in[-\log 4K,\infty],
  \]
  the \emph{specific entropy} of $\mu$.
  This quantity is well-defined and
  strongly affine over $\mu$ due to the proof of the previous theorem.
  Remark that $\mathcal H(\mu|\lambda)\leq 0$
  whenever $E=\mathbb Z$.
\end{definition}

\begin{lemma}
  Consider a potential $\Phi\in\mathcal S_\mathcal L+\mathcal W_\mathcal L$
  and a measure $\mu\in\mathcal P_\mathcal L(\Omega,\mathcal F^\nabla)$.
  Fix $K$ minimal subject to $Kd_1\geq q$.
  If $\mu$ is not $K$-Lipschitz,
  then $\mathcal H(\mu|\Phi)=\langle\mu|\Phi\rangle=\infty$,
  and if $\mu$ is $K$-Lipschitz,
  then $\mathcal H(\mu|\Phi)=\langle\mu|\Phi\rangle+\mathcal H(\mu|\lambda)$.
\end{lemma}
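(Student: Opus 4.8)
The plan is to reduce everything to the decomposition
\[
\mathcal H_{\Pi_n}(\mu|\Phi)=\mathcal H_{\mathcal F_{\Pi_n}^\nabla}(\mu|\lambda^{\Pi_n-1})+\mu(H_{\Pi_n}^{0}),
\]
which is built into the definition of the free energy (with the convention $\infty-\infty=\infty$), then to divide by $n^d$ and let $n\to\infty$, distinguishing the two cases in the statement.

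\emph{First case: $\mu$ is not $K$-Lipschitz.} Since $K$ is minimal with $Kd_1\geq q$, every $q$-Lipschitz function is $Kd_1$-Lipschitz, so $\Omega_q$ is contained in the set of $Kd_1$-Lipschitz functions; hence $\mu$ is not supported on $\Omega_q$. By the maximality clause in Definition~\ref{def_llc}, $\Omega\smallsetminus\Omega_q=\bigcup_{\{x,y\}\in\mathbb A}\{\phi(y)-\phi(x)>q(x,y)\}$ is a countable union, so some edge $\{x_0,y_0\}\in\mathbb A$ satisfies $\mu(\phi(y_0)-\phi(x_0)>q(x_0,y_0))>0$; by $\mathcal L$-invariance of $\mu$, $q$ and $\mathbb A$, the same holds for every translate $\{\theta x_0,\theta y_0\}$, $\theta\in\Theta$, and for $n$ large one such translate lies inside $\Pi_n$. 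On the positive-probability event that this translated edge violates its constraint one has $\Psi_{\{\theta x_0,\theta y_0\}}=\infty$, hence—since $\Psi\geq 0$ is one of the summands of $H^{0,\Psi}_{\Pi_n}$ and $|H^{0,\Xi}_{\Pi_n}|\leq n^d\|\Xi\|$—one has $H_{\Pi_n}^{0}=\infty$ on that event. Therefore $\mu(H_{\Pi_n}^{0})=\infty$ for all large $n$, so $\langle\mu|\Phi\rangle=\infty$; and since $\mu$ then charges the $e^{-H^0_{\Pi_n}}\lambda^{\Pi_n-1}$-null set $\{H^0_{\Pi_n}=\infty\}$, also $\mathcal H_{\Pi_n}(\mu|\Phi)=\infty$, whence $\mathcal H(\mu|\Phi)=\infty$.

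\emph{Second case: $\mu$ is $K$-Lipschitz.} Here the per-$n$ decomposition above is a genuine identity in $(-\infty,\infty]$: the energy term lies in $[-n^d\|\Xi\|,\infty]$ by summability of $\Xi$ and positivity of $\Psi$, while the entropy term is $>-\infty$ because $\mu\pi_{\Pi_n}$ is carried by a set of finite $\lambda^{\Pi_n-1}$-measure (fixing a spanning tree of lattice edges of $\Pi_n$, a $Kd_1$-Lipschitz configuration has each tree-edge gradient confined to $[-K,K]$). By the lower attachment lemma (Lemma~\ref{lemma_lower_attachments}) and Proposition~\ref{propo_large_set_limit}, $n^{-d}\mu(H_{\Pi_n}^{0})\to\langle\mu|\Phi\rangle\in[-\|\Xi\|,\infty]$, and by the proof of Theorem~\ref{thm_SFE_strongly_affine}, i.e.\ the definition of the specific entropy, $n^{-d}\mathcal H_{\mathcal F_{\Pi_n}^\nabla}(\mu|\lambda^{\Pi_n-1})\to\mathcal H(\mu|\lambda)\in[-\log 4K,\infty]$. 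Both sequences converge and are bounded below by fixed finite constants, so the limit of their sum equals the sum of their limits (ordinary arithmetic if both limits are finite; the sum tends to $+\infty$ if either limit is), which yields $\mathcal H(\mu|\Phi)=\langle\mu|\Phi\rangle+\mathcal H(\mu|\lambda)$. In the subcase where $\mu$ is $K$-Lipschitz but not $q$-Lipschitz, the first-case argument still forces $\langle\mu|\Phi\rangle=\infty$, so both sides equal $\infty$ and the identity again holds.

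The only genuinely delicate point is the $\pm\infty$ bookkeeping: one must check that the entropy term is never $-\infty$ for a $K$-Lipschitz measure (the finite-measure-support observation), so that the decomposition identity and the addition of limits are legitimate; everything else is a direct assembly of results already established.
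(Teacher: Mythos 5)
Your proof is correct and follows essentially the same route as the paper's: the paper's own proof of this lemma is a one-liner ("This also follows from the proof of the previous theorem"), and you have simply unpacked what that previous proof (of Theorem~\ref{thm_SFE_strongly_affine}) establishes — namely the per-$n$ decomposition $\mathcal H_{\Pi_n}(\mu|\Phi)=\mathcal H_{\mathcal F_{\Pi_n}^\nabla}(\mu|\lambda^{\Pi_n-1})+\mu(H_{\Pi_n}^0)$, passage to the normalised limit of each term, and the observation that both limits exist and are bounded below. One small merit of your write-up over the paper's terse reference: the paper's proof of Theorem~\ref{thm_SFE_strongly_affine} dichotomises into "supported on $q$-Lipschitz" versus "not supported on $q$-Lipschitz," whereas the lemma's dichotomy is $K$-Lipschitz versus not $K$-Lipschitz, which leaves an intermediate case ($K$-Lipschitz but not $q$-Lipschitz) implicit; you handle it explicitly by noting that the specific-entropy part of the decomposition from the previous proof is defined for all $K$-Lipschitz measures (it only needs the mod-$4K$ reduction), while $\langle\mu|\Phi\rangle=\infty$ there by the edge-violation argument, so both sides are $\infty$.
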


\begin{proof}
  This also follows from the proof of the previous theorem.
\end{proof}

We are now able to prove Theorem~\ref{thm_existence_ergodic_minimizers}.

\begin{proof}[Proof of Theorem~\ref{thm_existence_ergodic_minimizers}]
  Suppose that $u\in\bar U_\Phi$ is an exposed
  point of $\sigma$.
  By compactness of the lower level sets of $\mathcal H(\cdot|\Phi)$
  (Theorem~\ref{thm_main_sfe})
  and by continuity of $S(\cdot)$,
  there exists a minimizer $\mu\in\mathcal P_\mathcal L(\Omega,\mathcal F^\nabla)$
  of slope $u$.
  Write $w_\mu$ for the ergodic decomposition of $\mu$.
  Since both $S(\cdot)$ and $\mathcal H(\cdot|\Phi)$
  are strongly affine (due to Proposition~\ref{propo_S_strongly_affine} and
  Theorem~\ref{thm_SFE_strongly_affine})
  and because $u$ is an exposed point,
  we observe that $w_\mu$-almost every component $\nu$ is an ergodic minimizer of slope $u$.
\end{proof}

\section{Limit equalities}
\label{section_limit_equalities}

This section provides the fundamental building
blocks for the large deviations principle
in the next section.
The motivating thesis for this section is that
$\sigma(u)$
can be approximated by integrals of $\exp-H_{\Pi_n}^0$
after restricting to height functions
which are close to the slope $u$ on $\partial^R\Pi_n$.
It is possible to be more subtle:
if one considers a measure $\mu\in\mathcal P_\mathcal L(\Omega,\mathcal F)$
with $\mathcal H(\mu|\Phi)<\infty$
and $S(\mu)\in U_\Phi$,
then one can approximate $\mathcal H(\mu|\Phi)$
by integrals of $\exp-H_{\Pi_n}^0$
after restricting to height functions
which are close to the slope $S(\mu)$ on $\partial^R\Pi_n$,
and after restricting further to height functions $\phi$
whose \emph{empirical measure} in $\Pi_n$
approximates $\mu$.
The empirical measure of $\phi$ in $\Pi_n$ is obtained
by randomly shifting $\phi$ by a vertex in $\mathcal L\cap\Pi_n$.
Analogous results for finite-range non-Lipschitz potentials
can be found in Chapter~6 in~\cite{S05}.
However, the proof presented here differs from the proof in~\cite{S05} to account for the generality of our
setting, and the specificity of the discrete Lipschitz case.


\subsection{Formal statement}

Let us first introduce some simple notation for fixing
boundary conditions.

\begin{definition}
  Write $0_\Lambda$
  for the smallest
  element in $\Lambda$
  in the dictionary
  order on $\mathbb Z^d$
  whenever $\Lambda\subset\subset\mathbb Z^d$.
  Let $u\in U_\Phi$.
If $E=\mathbb Z$,
then write
$C_\Lambda^u$ for the set of height functions
\[
  \{
    \phi\in\Omega:
    \phi_{\partial^R\Lambda}-\phi(0_\Lambda)
    =
    \phi_{\partial^R\Lambda}^u-\phi^u(0_\Lambda)
  \}\in \mathcal F^\nabla_{\partial^R\Lambda}.
\]
Now consider $E=\mathbb R$,
and fix $\varepsilon>0$.
Write $C_{\Lambda,\varepsilon}^u$
for the set
\[
\{
  \phi\in\Omega:
  |(\phi_{\partial^R\Lambda}-\phi(0_\Lambda))
  -(
  \phi_{\partial^R\Lambda}^u-\phi^u(0_\Lambda)
  )|\leq \varepsilon
\}\in \mathcal F^\nabla_{\partial^R\Lambda}.
\]
Abbreviate
$C_{\Pi_n}^u$
and
$C_{\Pi_n,\varepsilon}^u$
to $C_n^u$
and $C_{n,\varepsilon}^u$ respectively.
\end{definition}

Next, we formally define the empirical measure of a
height function $\phi$ in $\Lambda$.
Recall the definition of the basis $\mathcal B$
of the topology of weak local convergence
on $\mathcal P(\Omega,\mathcal F^\nabla)$
from Subsection~\ref{subsubsec:topo_local_conv}.

\begin{definition}
    In this definition, we adopt the following notation:
    if $\phi$ is a height function and
    $\Lambda\subset\subset\mathbb Z^d$, then write $\bar\phi_\Lambda$
    for unique extension of $\phi_\Lambda$ to $\mathbb Z^d$
    which equals $\phi(0_\Lambda)$
    on the complement of $\Lambda$.
    For $\Lambda\subset\subset\mathbb Z^d$ and $\phi\in\Omega$,
    we define the measure
    $L_\Lambda(\phi)$ by
    \[
    L_\Lambda(\phi):=\frac{1}{|\mathcal L\cap\Lambda|}\sum_{x\in\mathcal L\cap\Lambda}\delta_{\theta_x\bar \phi_{\Lambda}}.
    \]
    This is called the \emph{empirical measure}
    of $\phi$ in $\Lambda$.
    The kernel $L_\Lambda$
    is thus a probability
    kernel from $(\Omega,\mathcal F_\Lambda)$
    to $(\Omega,\mathcal F)$
    which restricts to a kernel from
    $(\Omega,\mathcal F_\Lambda^\nabla)$
    to $(\Omega,\mathcal F^\nabla)$.
    Now consider $B\in\mathcal B$.
    Write $B_\Lambda$ for the event
    $B_\Lambda:=\{\phi\in\Omega:L_\Lambda(\phi)\in B\}$; this event
    is $\mathcal F_{\Lambda}^\nabla$-measurable.
    We shall also write
    $L_n$
    and $B_n$
    for $L_{\Pi_n}$
    and $B_{\Pi_n}$
    respectively.
\end{definition}

We start with the introduction of free boundary limits,
which is slightly easier than the definition of pinned boundary limits.
For free boundary limits, we integrate
over all height functions
having the appropriate
empirical measure,
irrespective of
boundary conditions.
It will be useful to define free boundary limits also
for measures $\mu\in\mathcal P(\Omega,\mathcal F^\nabla)$
which are not shift-invariant.

\begin{definition}
  Let
  $\Lambda\subset\subset\mathbb Z^d$
  and $B\in\mathcal B$.
  The \emph{free boundary}
  estimate
  of $B$ over $\Lambda$
  is given by
  \[
    \FB_\Lambda(B):=
    -\log\int_{B_\Lambda}
    e^{-H^0_\Lambda}d\lambda^{\Lambda-1}.
  \]
  Let $\mu\in\mathcal P(\Omega,\mathcal F^\nabla)$.
  The \emph{free boundary}
  limits
  of $B$ and $\mu$
  respectively are given
  by
  \[
    \FB(B)
    :=
    \liminf_{n\to\infty}
    n^{-d}
    \FB_{\Pi_n}(B)
    \qquad\text{and}
    \qquad
    \FB(\mu):=\sup_{\text{$A\in\mathcal B$ with $\mu\in A$}}
    \FB(A).
  \]
\end{definition}

Free boundary limits should be thought of as an asymptotic upper bound
on the integral in the display,
and this is why we take the limit inferior in the definition
of $\FB(B)$---taking
into account the minus sign which appears in the definition of $\FB_\Lambda(B)$.
Indeed, the free boundary estimates
are useful in proving the upper bound on probabilities
in the large deviations principle in the next section.
Remark that it is immediate from the definition of $\FB(\mu)$
that $\FB(\cdot)$ is lower-semicontinuous on
the set of gradient measures
in the topology of weak local convergence for which $\mathcal B$
forms a basis.

Finally, we introduce pinned boundary limits,
which take into consideration also the value of $\phi$
on the boundary of $\Pi_n$.
In this case, it is the lower bound on the integral of interest
that matters to us; pinned boundary limits play a crucial
role in the proof of the lower bound on probabilities in the large
deviations principle.

\begin{definition}
  Fix
  $u\in U_\Phi$
  and $\varepsilon>0$,
  and let $\Lambda\subset\subset\mathbb Z^d$
  and $B\in\mathcal B$.
  If $E=\mathbb R$,
  then define
  \[
  \PB_{\Lambda,u,\varepsilon}(B):=
  -\log\int_{C_{\Lambda,\varepsilon}^u\cap B_\Lambda}e^{-H_\Lambda^0}d\lambda^{\Lambda-1}.
  \]
  If $E=\mathbb Z$,
  then define
  \[
  \PB_{\Lambda,u}(B):=
  -\log\int_{C_{\Lambda}^u\cap B_\Lambda}e^{-H_\Lambda^0}d\lambda^{\Lambda-1}.
  \]
  These are called the \emph{pinned boundary}
  estimates of $B$ over $\Lambda$.
  In either case,
  we set $\PB_{\Lambda,u,\varepsilon}(B):=\infty$ and $\PB_{\Lambda,u}(B):=\infty$
  whenever $u\not\in U_\Phi$.
  Consider now also some random field $\mu\in\mathcal P_\mathcal L(\Omega,\mathcal F^\nabla)$.
  The \emph{pinned boundary} limits of $B$ and $\mu$
  are defined as follows:
  \[
    \PB_{u,\varepsilon}(B):=\limsup_{n\to\infty}n^{-d}\PB_{\Pi_n,u,\varepsilon}(B)
    ,\qquad
    \PB(\mu):=
    \sup_{\text{$\varepsilon>0$ and $A\in\mathcal B$ with $\mu\in A$}}
    \PB_{S(\mu),\varepsilon}(A)
  \]
  whenever $E=\mathbb R$,
  and if $E=\mathbb Z$, then
  \[
  \PB_{u}(B):=\limsup_{n\to\infty}n^{-d}\PB_{\Pi_n,u}(B)
  ,\qquad
  \PB(\mu):=
  \sup_{\text{$A\in\mathcal B$ with $\mu\in A$}}
  \PB_{S(\mu)}(A).
  \]
\end{definition}

It is again immediate from these definitions
that for fixed $u\in U_\Phi$,
the functional $\PB(\cdot)$ is lower-semicontinuous
on the set $\{S(\cdot)=u\}\subset\mathcal P_\mathcal L(\Omega,\mathcal F^\nabla)$.

For the proof of the large deviations principle
in the next section, we require the following equalities
and inequalities.

\begin{theorem}
  \label{thm_limit_equalities_overview}
  If $\Phi\in\mathcal S_\mathcal L+\mathcal W_\mathcal L$
  and $\mu\in\mathcal P_\mathcal L(\Omega,\mathcal F^\nabla)$,
  then
  \[
    \mathcal H(\mu|\Phi)=\FB(\mu)=\PB(\mu),
  \]
  unless $E=\mathbb Z$ and $S(\mu)\in\partial U_\Phi$.
  If however $E=\mathbb Z$ and $S(\mu)\in\partial U_\Phi$,
  then
  \[
    \FB(\mu)\geq\mathcal H(\mu|\Phi).
  \]
  Finally, if $\mu\in\mathcal P(\Omega,\mathcal F^\nabla)\smallsetminus\mathcal P_\mathcal L(\Omega,\mathcal F^\nabla)$, then $\FB(\mu)=\infty$.
\end{theorem}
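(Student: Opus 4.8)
The plan is to establish the chain
$\mathcal H(\mu|\Phi)\le\FB(\mu)\le\PB(\mu)\le\mathcal H(\mu|\Phi)$,
from which every assertion follows. The middle inequality is immediate and holds for all $\mu$: since $C^u_{\Pi_n}\cap B_{\Pi_n}\subseteq B_{\Pi_n}$ (and likewise with $C^u_{\Pi_n,\varepsilon}$), the integral defining $\PB_{\Pi_n,u}(B)$ is no larger than the one defining $\FB_{\Pi_n}(B)$, so $\PB_{\Pi_n,u}(B)\ge\FB_{\Pi_n}(B)$ for each $n$; passing to $\limsup$ and $\liminf$ and then to the supremum over basic open $B\ni\mu$ (and over $\varepsilon$) gives $\PB(\mu)\ge\FB(\mu)$. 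The first inequality $\mathcal H(\mu|\Phi)\le\FB(\mu)$ will be proved for \emph{every} $\mu\in\mathcal P_\mathcal L(\Omega,\mathcal F^\nabla)$ — which already yields the inequality claimed when $E=\mathbb Z$ and $S(\mu)\in\partial U_\Phi$ — and the \emph{same argument} applied to $\mu\notin\mathcal P_\mathcal L(\Omega,\mathcal F^\nabla)$ yields $\FB(\mu)=\infty$ (the final assertion), because the sublevel sets of $\mathcal H(\cdot|\Phi)$ that enter the argument lie inside $\mathcal P_\mathcal L(\Omega,\mathcal F^\nabla)$. The last inequality $\PB(\mu)\le\mathcal H(\mu|\Phi)$ is trivial when $\mathcal H(\mu|\Phi)=\infty$; when $\mathcal H(\mu|\Phi)<\infty$ and we are not in the exceptional case, Theorem~\ref{thm_main_st_general} forces $S(\mu)\in U_q$, which is the only hypothesis its proof uses. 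Thus all three quantities coincide in the non-exceptional case, and the displayed inequality holds in the exceptional one.

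\emph{The inequality $\mathcal H(\mu|\Phi)\le\FB(\mu)$.} The key point is the finite-volume Gibbs variational principle: with $\rho_n:=e^{-H^0_{\Pi_n}}\lambda^{\Pi_n-1}$ regarded as a (finite) measure on $(\Omega,\mathcal F^\nabla_{\Pi_n})$ one has
$\FB_{\Pi_n}(B)=-\log\rho_n(B_{\Pi_n})=\inf\bigl\{\mathcal H_{\Pi_n}(\nu|\Phi):\nu\in\mathcal P(\Omega,\mathcal F^\nabla),\ \nu(B_{\Pi_n})=1\bigr\}$,
attained at the normalised restriction of $\rho_n$ to $B_{\Pi_n}$ when that set has positive mass. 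Suppose, for contradiction, that $\FB(\mu)<\mathcal H(\mu|\Phi)$, and fix $C<C'$ with $\FB(\mu)<C<C'<\mathcal H(\mu|\Phi)$ (if $\mathcal H(\mu|\Phi)=\infty$, or $\mu\notin\mathcal P_\mathcal L$, simply take $C:=\FB(\mu)+1$, $C':=\FB(\mu)+2$). The sublevel set $M_{C'}$ is compact (Theorem~\ref{thm_main_sfe}) and does not contain $\mu$, so there is a basic open $B\ni\mu$ with $\bar B\cap M_{C'}=\varnothing$; take $B$ small enough that a fixed $o(1)$-enlargement of $\bar B$ is still disjoint from $M_{C'}$. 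Since $\FB(B)\le\FB(\mu)<C$, there are arbitrarily large $n$ and measures $\nu_n$ — necessarily supported on $q$-Lipschitz functions — with $\nu_n(B_{\Pi_n})=1$ and $n^{-d}\mathcal H_{\Pi_n}(\nu_n|\Phi)<C$. Periodise: place independent recentred copies of the $\nu_n$-configuration on the cells of a tiling of $\mathbb Z^d$ by boxes of side $M\approx n$, fill the thin inter-cell gaps by independent $q$-Lipschitz fillings of bounded relative entropy (Kirszbraun extensions, Proposition~\ref{propo_Kirszbraun}, randomly perturbed within the constraint), and average over $\mathcal L/(M\mathbb Z^d)$, producing $\hat\nu_n\in\mathcal P_\mathcal L(\Omega,\mathcal F^\nabla)$. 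By the upper attachment lemma (Lemma~\ref{lemma_attachment}), amenability of the exterior bounds, and the product form of the relative entropy over disjoint cells, $\mathcal H(\hat\nu_n|\Phi)\le n^{-d}\mathcal H_{\Pi_n}(\nu_n|\Phi)+o(1)<C'$ for $n$ large; and, $B$ being convex and the statistics of $\hat\nu_n$ an average of those of $\nu_n$-samples up to $O(1/n)$ boundary and gap corrections, $\hat\nu_n$ lies in the small enlargement of $\bar B$. Hence $\hat\nu_n\in M_{C'}$ and $\hat\nu_n$ lies in a set disjoint from $M_{C'}$: a contradiction.

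\emph{The inequality $\PB(\mu)\le\mathcal H(\mu|\Phi)$ when $S(\mu)=u\in U_q$.} This is the part that does the real work. It suffices to produce, for each basic open $A\ni\mu$, each $\varepsilon>0$, and all large $n$, a lower bound
$\int_{C^u_{\Pi_n,\varepsilon}\cap A_{\Pi_n}}e^{-H^0_{\Pi_n}}\,d\lambda^{\Pi_n-1}\ge e^{-n^d(\mathcal H(\mu|\Phi)+\delta)}$
with $\delta\downarrow0$ (for $E=\mathbb Z$, read $C^u_{\Pi_n}$ for $C^u_{\Pi_n,\varepsilon}$). Decompose $\mu=\int\nu\,dw_\mu(\nu)$ (Theorem~\ref{theorem_ergodic_decomposition}); since $\mathcal H(\mu|\Phi)=\int\mathcal H(\nu|\Phi)\,dw_\mu(\nu)$ (Theorem~\ref{thm_SFE_strongly_affine}) is finite, cover a set of $w_\mu$-mass exceeding $1-\delta$ inside a compact sublevel set by finitely many Borel groups $G_1,\dots,G_r$ of small weak-local diameter, choose ergodic $\nu_i\in G_i$ with $\mathcal H(\nu_i|\Phi)<\inf_{G_i}\mathcal H(\cdot|\Phi)+\delta$, and set $p_i:=w_\mu(G_i)$; then $\sum_ip_i\nu_i$ is within $O(\delta)$ of $\mu$ in the weak-local sense and $\sum_ip_i\mathcal H(\nu_i|\Phi)\le\mathcal H(\mu|\Phi)+\delta$. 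Now tile $\Pi_n$ into $m^d$ sub-boxes of side $\approx n/m$ ($m$ fixed large, then $n\to\infty$), allocate about $p_im^d$ of them to group $i$, place in each allocated sub-box a recentred sample of $\nu_i$ on a slightly smaller concentric box, prescribe the values $\phi^u$ on $\partial^R\Pi_n$, and leave the configuration arbitrary — subject only to the $q$-Lipschitz constraint — on the remaining sub-boxes and on the margins; by the Lipschitz-extension theorem (Theorem~\ref{general_lips_ext_less_general}) such completions exist and form a set of positive $\lambda^{\Pi_n-1}$-volume. This existence is exactly where $u\in U_q$ enters, since by Lemma~\ref{lemma_lipschitz_q_and_norm} it provides a uniform gap $q(x,y)-u(y-x)\ge c\,d_1(x,y)-C$ absorbing the $O(1)$ fluctuations of the ergodic samples and of $\phi^u$ across the margins (when $E=\mathbb R$, the $q_\varepsilon$-room of $\phi^u$ additionally places the completion inside $C^u_{\Pi_n,\varepsilon}$ with slack). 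One then checks: all such configurations lie in $C^u_{\Pi_n}\subseteq C^u_{\Pi_n,\varepsilon}$; restricting within each allocated sub-box to the concentration event of the ergodic theorem (a loss of $o(1)$ per sub-box) their empirical measure lies in $A$ once $m$ is large, being $O(1/m)+O(\delta)$-close to $\sum_ip_i\nu_i$; by the lower and upper attachment lemmas and amenability, $H^0_{\Pi_n}$ on these configurations exceeds $\sum_i(\#\text{sub-boxes}_i)\,H^0_{\text{sub-box}}$ by at most $o(n^d)$; and the $e^{-H^0}\lambda^{\Pi_n-1}$-mass of the set of such configurations factorises over sub-boxes up to bounded per-sub-box and per-margin factors, so $-\log$ of it is at most $\sum_i(\#\text{sub-boxes}_i)\,\mathcal H_{\text{sub-box}}(\nu_i|\Phi)+o(n^d)$. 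With $\mathcal H_{\text{sub-box}}(\nu_i|\Phi)=(n/m)^d\mathcal H(\nu_i|\Phi)+o(n^d)$ and $\#\text{sub-boxes}_i\approx p_im^d$, this is $\le n^d\bigl(\sum_ip_i\mathcal H(\nu_i|\Phi)+o(1)\bigr)\le n^d(\mathcal H(\mu|\Phi)+2\delta)$; letting $n\to\infty$, then $\delta\downarrow0$, then taking suprema over $A$ and $\varepsilon$ gives $\PB(\mu)\le\mathcal H(\mu|\Phi)$.

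The main obstacle is this last step: approximating $\mu$ by a finite ergodic mixture living in a small neighbourhood, tiling $\Pi_n$ so that the \emph{global} empirical measure lands in $A$, and simultaneously arranging completions across all sub-box margins and onto the pinned boundary $\phi^u$ with energy and entropy losses that are genuinely $o(n^d)$. The Lipschitz-extension theorem~\ref{general_lips_ext_less_general} and the uniform linear gap furnished by $u\in U_q$ (Lemma~\ref{lemma_lipschitz_q_and_norm}) are what make this possible, and it is precisely the breakdown of that gap when $E=\mathbb Z$ and $S(\mu)\in\partial U_\Phi$ that forces the exception — there, only the soft inequality $\mathcal H(\mu|\Phi)\le\FB(\mu)$ survives. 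The periodisation argument of the previous paragraph is more routine but still leans on the compactness and lower-semicontinuity of $\mathcal H(\cdot|\Phi)$ from Theorem~\ref{thm_main_sfe}.
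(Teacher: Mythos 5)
Your skeleton --- the chain $\mathcal H(\mu|\Phi)\le\FB(\mu)\le\PB(\mu)\le\mathcal H(\mu|\Phi)$, with the middle inequality immediate from the inclusion of integration domains and the other two carrying the weight --- is exactly the paper's architecture, and your treatment of the middle inequality and of the non-shift-invariant case is sound. The genuine gap is in the last inequality, at the single step where you pass from the factorised mass over sub-boxes to $\sum_i(\#\text{sub-boxes}_i)\,\mathcal H_{\text{sub-box}}(\nu_i|\Phi)+o(n^d)$. That step is precisely the assertion $\PB_{\Lambda,S(\nu_i),\varepsilon}(B^i)\le\mathcal H_\Lambda(\nu_i|\Phi)+o(|\Lambda|)$ for ergodic $\nu_i$, i.e.\ the ergodic case of the very inequality being proved, and it does not follow from ``the concentration event of the ergodic theorem'' together with the existence of Lipschitz completions. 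The ergodic theorem controls $\nu_i$-probabilities, whereas you need a lower bound on the $e^{-H^0}\lambda$-mass of the pinned set; by the finite-volume variational principle this requires exhibiting a measure supported on $C^{u_i}_{\Lambda,\varepsilon}\cap B^i_\Lambda$ whose free energy exceeds $\mathcal H_\Lambda(\nu_i|\Phi)$ by only $o(|\Lambda|)$. A typical recentred $\nu_i$-sample violates the pinning envelope $\phi_\Lambda^-\le\phi\le\phi_\Lambda^+$ at $o(|\Lambda|)$ vertices but with probability bounded away from zero, so one must truncate $\nu_i$ and control the resulting changes in both energy and entropy (and, for $E=\mathbb R$, keep the truncated measure absolutely continuous and $q_{\varepsilon'}$-Lipschitz at the modified sites so that the locally bounded property applies). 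This is the random truncation argument of Lemma~\ref{lemma:PBL_upper_bound}, which occupies most of Section~\ref{section_limit_equalities}; your proposal assumes its conclusion rather than supplying it.

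Two further points. First, for $E=\mathbb Z$ a measure $\mu$ with $S(\mu)\in U_\Phi$ may have ergodic components $\nu_i$ with $S(\nu_i)\in\partial U_\Phi$, where the linear gap of Lemma~\ref{lemma_lipschitz_q_and_norm} on which your construction relies is absent; the paper must first replace such components by non-taut approximations (Lemma~\ref{lemma_ergodic_approx_aux_1}) before any mixing construction applies, and your proposal is silent on this. Second, your periodisation proof of $\mathcal H(\mu|\Phi)\le\FB(\mu)$ glues independent copies and therefore invokes the upper attachment lemma, which for $E=\mathbb R$ requires $q_\varepsilon$-Lipschitz regularity on the $R$-boundaries of the cells --- a regularity that samples of the restricted Gibbs measures $\nu_n$ do not possess. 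The paper's route (average the translates of the single measure $\nu_n^B$, apply the superadditivity of Lemma~\ref{lemma_fe_superadditivity}, and extract a weak subsequential limit inside a compact sublevel set) uses only the lower attachment direction, needs no boundary regularity, and is the argument I would adopt here.
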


Free and pinned boundary limits are calculated along the
sequence $(\Pi_n)_{n\in\mathbb N}$.
This choice is convenient, but by no means necessary.
In the following sections, we do not only prove the inequalities
 presented in the theorem: we also prove some generalizations thereof
 where these quantities
 are calculated over sequences of the form
$(\Lambda_n)_{n\in\mathbb N}$
with
 $\Lambda_n:=\Lambda^{-m}(nD)$,
 where $D$ is a bounded convex subset of $\mathbb R^d$
 of positive Lebesgue measure,
 and where $m\in\mathbb Z_{\geq 0}$.
 Observe that in this notation, $\Pi_n=\Lambda_n$ for $m=0$ and $D=[0,1)^d\subset\mathbb R^d$.

 \begin{definition}
   Write $\mathcal C$ for the set of bounded convex subsets of $\mathbb R^d$
   of positive Lebesgue measure.
 \end{definition}

The definitions imply that
$\PB(\mu)\geq \FB(\mu)$ for $\mu$ shift-invariant.
In Subsection~\ref{subsec:emp_measure} we discuss free boundary limits.
In particular, we show that
$\FB(\mu)\geq\mathcal H(\mu|\Phi)$ whenever $\mu$ is shift-invariant,
and that $\FB(\mu)=\infty$ whenever $\mu$ is not shift-invariant.
In Subsection~\ref{subsec:rta} we prove that
$\PB(\mu)\leq \mathcal H(\mu|\Phi)$
whenever $\mu$ is ergodic with $S(\mu)\in U_\Phi$.
In Subsection~\ref{subsec:ea} we extend this inequality
to shift-invariant measures $\mu$ which are not ergodic.


\subsection{Free boundary limits: empirical measure argument}
\label{subsec:emp_measure}

The idea in this subsection is always
to use the set $B$, the empirical measures
$L_n(\phi)$ for $\phi\in B_n$,
as well as the subsequential limits thereof as $n\to\infty$,
to derive the desired inequalities which were mentioned in the previous subsection.
Let us first cover the case that $\mu$ is not shift-invariant.

\begin{lemma}
  \label{lemma_FB_non_shift-inv}
  If $\mu\in\mathcal P(\Omega,\mathcal F^\nabla)\smallsetminus\mathcal P_\mathcal L(\Omega,\mathcal F^\nabla)$,
  then $\FB(\mu)=\infty$.
\end{lemma}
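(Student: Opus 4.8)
The heart of the matter is that the empirical measure $L_n(\phi)$ is \emph{asymptotically $\Theta$-invariant}: shifting by a lattice vector only reshuffles the sum defining $L_n(\phi)$ up to boundary terms, which are negligible because $(\Pi_n)_{n\in\mathbb N}$ is a Van Hove sequence. Consequently, if $\mu$ is not shift-invariant, one can find a basic neighbourhood $A\in\mathcal B$ of $\mu$ which, for all large $n$, contains $L_n(\phi)$ for \emph{no} $\phi\in\Omega$; then $A_n=\emptyset$, so $\FB_{\Pi_n}(A)=-\log 0=\infty$ for all large $n$, whence $\FB(A)=\infty$ and $\FB(\mu)\geq\FB(A)=\infty$.

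To make this precise I would first use that the bounded continuous $\mathcal F^\nabla$-measurable cylinder functions separate the points of $\mathcal P(\Omega,\mathcal F^\nabla)$ (the gradient field has Polish state space, and its law is determined by the finite-dimensional distributions of $(\phi(x)-\phi(0))_{x\in\Lambda}$). Since $\mu\notin\mathcal P_\mathcal L(\Omega,\mathcal F^\nabla)$, there exist $x\in\mathcal L$ and a bounded continuous cylinder function $g$, say $\mathcal F^\nabla_\Delta$-measurable for some finite $\Delta$, with $\delta:=|\mu(g\circ\theta_x)-\mu(g)|>0$. Note $g\circ\theta_x$ is again a bounded continuous cylinder function, so
\[
  A:=\{\nu:|\nu(g)-\mu(g)|<\delta/4\}\cap\{\nu:|\nu(g\circ\theta_x)-\mu(g\circ\theta_x)|<\delta/4\}
\]
belongs to $\mathcal B$ and contains $\mu$. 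The key estimate is that for every $\phi\in\Omega$,
\[
  |L_n(\phi)(g\circ\theta_x)-L_n(\phi)(g)|\leq \frac{C\|g\|_\infty\,|\partial\Pi_n|}{|\mathcal L\cap\Pi_n|}\xrightarrow[n\to\infty]{}0 ,
\]
with $C$ depending only on $\Delta$ and $x$: indeed, $L_n(\phi)(g)=|\mathcal L\cap\Pi_n|^{-1}\sum_{y\in\mathcal L\cap\Pi_n}g(\theta_y\bar\phi_{\Pi_n})$, and for $y$ with $\Delta+y\subset\Pi_n$ one has $g(\theta_y\bar\phi_{\Pi_n})=g(\theta_y\phi)$ since $g$ only reads gradients of its argument inside $\Delta+y$; reindexing $y\mapsto y+x$ in $L_n(\phi)(g\circ\theta_x)$ changes the index set from $\mathcal L\cap\Pi_n$ to $\mathcal L\cap(\Pi_n+x)$, and these two sets, together with the non-interior $y$'s, account for only $O(|\partial\Pi_n|)$ terms, each bounded by $2\|g\|_\infty$. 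Fixing $n_0$ so that the right-hand side is $<\delta/4$ for $n\geq n_0$, any $\phi$ with $L_n(\phi)\in A$ would yield the contradiction $\delta=|\mu(g\circ\theta_x)-\mu(g)|\leq |\mu(g\circ\theta_x)-L_n(\phi)(g\circ\theta_x)|+|L_n(\phi)(g\circ\theta_x)-L_n(\phi)(g)|+|L_n(\phi)(g)-\mu(g)|<3\delta/4$. Hence $A_n=\emptyset$ for $n\geq n_0$, giving $\FB(A)=\liminf_n n^{-d}\FB_{\Pi_n}(A)=\infty$ and therefore $\FB(\mu)=\infty$.

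The only genuinely delicate point is the bookkeeping in the boundary estimate—keeping track of the discrepancy between $\mathcal L\cap\Pi_n$ and $\mathcal L\cap(\Pi_n+x)$ and of the vertices $y$ for which $\Delta+y$ pokes outside $\Pi_n$—but since all of these contribute $O(|\partial\Pi_n|)$ terms and $|\partial\Pi_n|/|\Pi_n|\to 0$, this is routine. I would also remark in passing that when $E=\mathbb Z$ every cylinder function is continuous, so the separation step there is immediate; for $E=\mathbb R$ it is the Polishness of the gradient state space that is invoked.
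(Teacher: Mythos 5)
Your proof is correct and follows essentially the same route as the paper's: both exploit that the empirical measure $L_n(\phi)$ is asymptotically $\Theta$-invariant (uniformly in $\phi$, by a boundary-term estimate over the Van Hove sequence), extract from non-shift-invariance a continuous bounded cylinder function $g$ and a shift $\theta$ with $\mu(g)\neq\mu(g\circ\theta)$, and conclude that a suitable basic neighbourhood of $\mu$ in $\mathcal B$ contains no $L_n(\phi)$ for large $n$, forcing $\FB_{\Pi_n}(\cdot)=\infty$ eventually. The only cosmetic difference is that the paper packages the test function as $f:=g-\theta g$ and uses a single half-width, whereas you keep $g$ and $g\circ\theta_x$ separate and take an intersection of two basic open sets; the content is identical.
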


\begin{proof}
  If $\mu$ is not shift-invariant,
  then there is a shift $\theta\in\Theta$
  and a continuous cylinder function $g:\Omega\to[0,1]$
  such that $\mu(g-\theta g)\neq 0$.
  Define $f:=g-\theta g$;
  this is a bounded continuous cylinder function
  such that $\mu(f)\neq 0$.
  Define $\varepsilon:=|\mu(f)|/2$
  and $B:=\{\nu:|\nu(f)-\mu(f)|<\varepsilon\}\in\mathcal B$.
  For $\Lambda\subset\subset\mathbb Z^d$
  fixed and for $n$ large,
  the measure $L_n(\phi)=L_{\Pi_n}(\phi)$
  restricted to $\mathcal F_\Lambda^\nabla$
  looks almost shift-invariant.
  More precisely, the sequence of functions
  \[
  \Omega\to[-1,1],\,
    \phi\mapsto L_{n}(\phi)(f)
  \]
  converges to $0$ uniformly over $\phi\in\Omega$
  as $n\to\infty$.
  This proves that $B_n=B_{\Pi_n}$ is empty for $n$ sufficiently large,
  that is, $\FB(\mu)\geq\FB(B)=\infty$.
\end{proof}

Next, we consider shift-invariant gradient random fields.

\begin{lemma}
  \label{lemma_limit_equalities_fbl}
For any $\mu\in\mathcal P_\mathcal L(\Omega,\mathcal F^\nabla)$,
we have $\FB(\mu)\geq\mathcal H(\mu|\Phi)$.
\end{lemma}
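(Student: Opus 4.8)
The plan is to prove the equivalent statement that for every $\varepsilon>0$ there is a basis element $B\in\mathcal B$ with $\mu\in B$ and $\FB(B)\geq\mathcal H(\mu|\Phi)-\varepsilon$. If $\mu$ is not $q$-Lipschitz then $\mathcal H(\mu|\Phi)=\infty$ by the argument in the proof of Theorem~\ref{thm_main_st_general}, and also $\FB(\mu)=\infty$: any configuration charged by $e^{-H^0_{\Pi_n}}\lambda^{\Pi_n-1}$ is $q$-Lipschitz on $\Pi_n$, so its empirical measure puts mass $O(1/n)$ on any fixed ``Lipschitz-violating edge'' event, whereas $\mu$ puts positive mass on some such event; a bounded continuous cylinder function witnessing this (trivial if $E=\mathbb Z$, a linear cutoff if $E=\mathbb R$) yields $B\ni\mu$ with $B_{\Pi_n}=\varnothing$, hence $\FB_{\Pi_n}(B)=\infty$, for all large $n$. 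So from now on $\mu$ is $q$-Lipschitz, hence $K$-Lipschitz.

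Writing $\rho_n:=e^{-H^0_{\Pi_n}}\lambda^{\Pi_n-1}$ (a finite measure, as $\Psi$ forces the Lipschitz constraint), Jensen's inequality gives the key identity
\[
\FB_{\Pi_n}(B)=-\log\rho_n(B_{\Pi_n})=\inf\bigl\{\mathcal H_{\mathcal F^\nabla_{\Pi_n}}(\nu\,|\,\rho_n):\nu\in\mathcal P(\Omega,\mathcal F^\nabla_{\Pi_n}),\ \nu(B_{\Pi_n})=1\bigr\}=\inf\bigl\{\mathcal H_{\Pi_n}(\nu|\Phi):\nu(B_{\Pi_n})=1\bigr\}.
\]
I would pick, for each $n$, an almost-minimizer $\nu_n$; necessarily $\nu_n\ll\rho_n$, so $\nu_n$ is $q$-Lipschitz on $\Pi_n$ and $K$-Lipschitz. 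Fix a large $M$ with $N\mid M$, tile $\Pi_n$ by the $\asymp(n/M)^d$ translates $\Pi_M+y$, $y\in M\mathbb Z^d\subset\mathcal L$, that fit inside $\Pi_n$, and iterate the free energy attachment lemma (Lemma~\ref{lemma_fe_superadditivity}), combining one tile at a time and each time paying the smaller of the two exterior bounds $e^*$, and handling the residual boundary strip via the same lemma and its lower bound:
\[
n^{-d}\mathcal H_{\Pi_n}(\nu_n|\Phi)\ \geq\ n^{-d}\sum_{\text{tiles }y}\mathcal H_{\Pi_M+y}(\nu_n|\Phi)-e^*(\Pi_M)/M^d-o(1)_n.
\]
Since $y\in\mathcal L$ and $\Phi,\lambda$ are $\mathcal L$-invariant, $\mathcal H_{\Pi_M+y}(\nu_n|\Phi)=\mathcal H_{\Pi_M}(\theta_y\nu_n|\Phi)$; averaging over the tiles and using convexity of $\mathcal H_{\Pi_M}(\,\cdot\,|\Phi)$ together with its lower bound from Lemma~\ref{lemma_fe_superadditivity} (to absorb the discrepancy between the number of tiles and $(n/M)^d$), and then comparing the tile-position average of $\nu_n$ with the empirical-measure average $\bar\nu_n$ appearing in $L_{\Pi_n}$ — they differ by a further $\mathcal L/M\mathbb Z^d$-average, handled by convexity and the attachment bound for the translates $\Pi_M+z$ with $z$ small — one arrives at
\[
n^{-d}\mathcal H_{\Pi_n}(\nu_n|\Phi)\ \geq\ M^{-d}\,\mathcal H_{\Pi_M}(\bar\nu_n|\Phi)-O(1/M)-e^*(\Pi_M)/M^d-o(1)_n.
\]

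To finish, pass to a subsequence along which $n^{-d}\mathcal H_{\Pi_n}(\nu_n|\Phi)$ converges and $\bar\nu_n$ converges (weakly locally, after the standard mod-$4K$ compactification of the state space) to some $\mu'$. The truncation built into $\bar\phi_{\Pi_n}$ affects only an $O(1/n)$ fraction of shifts, so $\mu'$ is genuinely $q$-Lipschitz, $\mathcal L$-invariant, and satisfies $\mu'(f_i)=\lim_n\bar\nu_n(f_i)=\lim_n\nu_n\bigl(L_{\Pi_n}(\phi)(f_i)\bigr)\in[a_i,b_i]$ for the bounded continuous cylinder functions $f_i$ defining $B=\bigcap_i\{\nu:a_i<\nu(f_i)<b_i\}$. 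As $\mathcal H_{\Pi_M}(\,\cdot\,|\Phi)$ is lower semicontinuous and $\liminf_n n^{-d}\mathcal H_{\Pi_n}(\nu_n|\Phi)\leq\FB(B)$ by the choice of $\nu_n$, the last display gives $\FB(B)\geq M^{-d}\mathcal H_{\Pi_M}(\mu'|\Phi)-O(1/M)-e^*(\Pi_M)/M^d$; letting $M\to\infty$ and using $M^{-d}\mathcal H_{\Pi_M}(\mu'|\Phi)\to\mathcal H(\mu'|\Phi)$ yields $\FB(B)\geq\mathcal H(\mu'|\Phi)\geq\inf\{\mathcal H(\nu|\Phi):\nu(f_i)\in[a_i,b_i]\ \forall i\}$. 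Finally, $\mathcal H(\,\cdot\,|\Phi)$ is lower semicontinuous with compact lower level sets (Theorem~\ref{thm_main_sfe}), so the compact set $\{\mathcal H(\,\cdot\,|\Phi)\leq\mathcal H(\mu|\Phi)-\varepsilon\}$ does not contain $\mu$; choosing $B$ small enough that the closed slab $\{\nu(f_i)\in[a_i,b_i]\ \forall i\}$ is disjoint from it gives $\FB(B)\geq\mathcal H(\mu|\Phi)-\varepsilon$, whence $\FB(\mu)=\sup_{B\ni\mu}\FB(B)\geq\mathcal H(\mu|\Phi)$.

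The main obstacle is not a single conceptual step but the accounting in the two displayed inequalities: keeping all the boundary and ``straddle'' corrections in the iterated tiling of order $o(n^d)$ (this is where the minimum in Lemma~\ref{lemma_fe_superadditivity} and the amenability of $e^-$ enter), carrying through the gradient reference-vertex bookkeeping from the proof of Lemma~\ref{lemma_fe_superadditivity}, and matching the tile-position average of $\nu_n$ with the empirical-measure average $\bar\nu_n$ despite the truncation in $\bar\phi_{\Pi_n}$ — which is also what forces the argument to be run through a weak accumulation point $\mu'$ of the $\bar\nu_n$ rather than with $\bar\nu_n$ directly.
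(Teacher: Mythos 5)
Your proposal is correct and follows essentially the same route as the paper. The technical heart coincides with the paper's auxiliary lemma (the one inserted between Lemma~\ref{lemma_limit_equalities_fbl} and its proof): Gibbs/Donsker--Varadhan variational identity $\FB_{\Pi_n}(B)=\mathcal H_{\Pi_n}(\nu_n^B|\Phi)$ for the conditioned measure, iterated application of the free energy attachment lemma (Lemma~\ref{lemma_fe_superadditivity}) over an $\mathcal L$-tiling of $\Pi_n$, averaging over shift positions (first over $M\cdot\mathbb Z^d$ and then over $\mathcal L/M\cdot\mathbb Z^d$) to land on the empirical measure, and then a diagonal weak subsequential limit using compactness of the sublevel sets of $\mathcal H_{\Pi_M}(\cdot|\Phi)$. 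The only organizational deviation is the ending: where the paper proves the auxiliary lemma first and then invokes lower-semicontinuity of $\mathcal H(\cdot|\Phi)$ together with metrizability of the sublevel set $M_{\FB(\mu)}$, you run the same construction inline and close with a compactness--separation step, choosing $B$ so that the closed slab is disjoint from the compact set $\{\mathcal H(\cdot|\Phi)\leq\mathcal H(\mu|\Phi)-\varepsilon\}$ and concluding $\FB(B)\geq\mathcal H(\mu'|\Phi)>\mathcal H(\mu|\Phi)-\varepsilon$ for the accumulation point $\mu'$. That separation (a finite-subcover argument over $M_C$ using that the basis $\mathcal B$ generates a Hausdorff topology) does go through and is a legitimate alternative to the paper's lsc step, but it is a cosmetic rather than a substantive difference. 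One should be explicit that the subsequence of $n$ is taken to be one along which $n^{-d}\mathcal H_{\Pi_n}(\nu_n|\Phi)$ converges to its $\liminf$, so that the bound $\leq\FB(B)$ is available; and the correction term in your second display should really be $o(1)_n$ from the boundary strip and the truncation in $\bar\phi_{\Pi_n}$ rather than $O(1/M)$, though since you send $n\to\infty$ before $M\to\infty$ this does not affect the conclusion.
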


We start with the following auxiliary lemma.

\begin{lemma}
  Suppose that $B\in\mathcal B$ satisfies
  $\FB(B)<\infty$.
  Then $\bar B$ contains a shift-invariant measure
  $\mu$ with $\mathcal H(\mu|\Phi)\leq\FB(B)$.
\end{lemma}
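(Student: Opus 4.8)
The plan is to realise $\mu$ as a weak-local subsequential limit of symmetrised finite-volume measures, and to push the finite-volume free energy bound to the limit by lower-semicontinuity. Set $c:=\FB(B)<\infty$ and pick a subsequence $(n_k)_k$ with $n_k^{-d}\FB_{\Pi_{n_k}}(B)\to c$; along it $\int_{B_{\Pi_{n_k}}}e^{-H^0_{\Pi_{n_k}}}\,d\lambda^{\Pi_{n_k}-1}\in(0,\infty)$, so let $\tilde\mu_k$ be the probability measure on $(\Omega,\mathcal F^\nabla_{\Pi_{n_k}})$ proportional to $1_{B_{\Pi_{n_k}}}e^{-H^0_{\Pi_{n_k}}}\lambda^{\Pi_{n_k}-1}$. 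Since $\tilde\mu_k$ has density a constant times $1_{B_{\Pi_{n_k}}}$ relative to the reference measure $e^{-H^0_{\Pi_{n_k}}}\lambda^{\Pi_{n_k}-1}$, the definition of relative entropy yields at once the key identity $\mathcal H_{\Pi_{n_k}}(\tilde\mu_k|\Phi)=\FB_{\Pi_{n_k}}(B)$.

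Next I would symmetrise: put $\mu_k:=\tilde\mu_k L_{\Pi_{n_k}}$, the empirical-measure smoothing of $\tilde\mu_k$. As $\tilde\mu_k$ is carried by $B_{\Pi_{n_k}}=\{\phi:L_{\Pi_{n_k}}(\phi)\in B\}$ and $B\in\mathcal B$ is a finite intersection of open slabs $\{\nu:a<\nu(f)<b\}$ — hence convex and stable under mixtures, the strict inequalities surviving integration against a probability measure — one gets $\mu_k\in B$ for every $k$. The Van Hove property of $(\Pi_n)_n$ gives $\mu_k(g)-\mu_k(\theta g)\to0$ for every bounded continuous cylinder function $g$ and every $\theta\in\Theta$, by the same computation as in the proof of Lemma~\ref{lemma_FB_non_shift-inv}, so any weak-local subsequential limit of $(\mu_k)_k$ is $\mathcal L$-invariant. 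To produce one I would, for each finite window, split the translates making up $L_{\Pi_{n_k}}$ into ``good'' ones whose window lands inside $\Pi_{n_k}$ and ``bad'' ones straddling $\partial\Pi_{n_k}$; the bad fraction is $o(1)$ as $k\to\infty$, and the good part $\rho_k^{(\ell)}$ of $\mu_k$ restricted to $\mathcal F^\nabla_{\Pi_\ell}$ is carried by the compact set of $q$-Lipschitz configurations on $\Pi_\ell$. Precompactness of these restrictions, a diagonal argument over the windows (together with asymptotic shift-invariance to reduce to boxes), and the vanishing of the bad part then give $\mu\in\bar B\cap\mathcal P_\mathcal L(\Omega,\mathcal F^\nabla)$, supported on $\Omega_q$, with $\rho_k^{(\ell)}\to\mu|_{\mathcal F^\nabla_{\Pi_\ell}}$ for each $\ell$.

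It remains to bound $\mathcal H(\mu|\Phi)$. Fix $\ell\in N\cdot\mathbb N$. Packing $\Pi_{n_k}$ by $\sim(n_k/\ell)^d$ disjoint $\mathcal L$-translates of $\Pi_\ell$ (plus a connected leftover layer) and iterating the free energy attachment lemma (Lemma~\ref{lemma_fe_superadditivity}) on $\tilde\mu_k$ gives
\[
\FB_{\Pi_{n_k}}(B)=\mathcal H_{\Pi_{n_k}}(\tilde\mu_k|\Phi)\ \ge\ \sum_{z}\mathcal H_{\Pi_\ell+\ell z}(\tilde\mu_k|\Phi)-\frac{n_k^d}{\ell^d}\,e^*(\Pi_\ell)-C\,n_k^{d-1}\ell ,
\]
the subtracted terms being the per-gluing amenable costs and the crude lower bound on the layer. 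Grouping the good translates of $L_{\Pi_{n_k}}$ by residue modulo $\ell\mathbb Z^d$ and applying this packing bound within each class, the average of $\mathcal H_{\Pi_\ell-x}(\tilde\mu_k|\Phi)$ over good $x$ is at most $\frac{\ell^d}{n_k^d}\FB_{\Pi_{n_k}}(B)+e^*(\Pi_\ell)+o_k(1)$; by convexity of relative entropy in its first argument, together with the $\mathcal L$-periodicity of $\Phi$ (whence shift-covariance of the free energy), this average dominates $\mathcal H_{\Pi_\ell}(\rho_k^{(\ell)}|\Phi)$. Letting $k\to\infty$, using lower-semicontinuity of $\mathcal H_{\mathcal F^\nabla_{\Pi_\ell}}(\,\cdot\,|e^{-H^0_{\Pi_\ell}}\lambda^{\Pi_\ell-1})$ — its reference measure being finite, since $e^{-H^0_{\Pi_\ell}}$ is supported on the compact set of $q$-Lipschitz configurations — and $\rho_k^{(\ell)}\to\mu|_{\mathcal F^\nabla_{\Pi_\ell}}$, one obtains $\mathcal H_{\Pi_\ell}(\mu|\Phi)\le\ell^d c+e^*(\Pi_\ell)$, that is, $\ell^{-d}\bigl(\mathcal H_{\Pi_\ell}(\mu|\Phi)-e^*(\Pi_\ell)\bigr)\le c$. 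Taking the supremum over $\ell\in N\cdot\mathbb N$ and invoking $\mathcal H(\mu|\Phi)=\sup_{\ell\in N\cdot\mathbb N}\ell^{-d}(\mathcal H_{\Pi_\ell}(\mu|\Phi)-e^*(\Pi_\ell))$ from Theorem~\ref{thm_main_sfe} gives $\mathcal H(\mu|\Phi)\le c=\FB(B)$, as desired.

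The main obstacle is this transfer step. One runs a double limit (volume $n_k\to\infty$ at fixed block size $\ell$, then $\sup$ over $\ell$) while keeping two error sources negligible — the boundary and gluing terms, controlled by amenability of $e^*$, and, when $E=\mathbb R$, the degenerate (deterministic-gradient) configurations created by the exterior extension built into $L_\Lambda$. The good/bad split is exactly what quarantines the latter: every entropy actually estimated is that of a measure obtained from $\tilde\mu_k$ restricted to a window genuinely inside $\Pi_{n_k}$, so it never sees the exterior extension, while the exterior-affected part has mass $o(1)$ and leaves no trace in $\mu$ nor in the weak-local bounds. The remaining ingredients — strictness of the slab inequalities under mixing, shift-covariance of $\mathcal H_\Lambda(\cdot|\Phi)$, and weak lower-semicontinuity of relative entropy against a fixed finite measure — are routine, in the spirit of Chapter~15 of~\cite{G11} and Chapter~6 of~\cite{S05}.
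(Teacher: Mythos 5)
Your proposal is correct and follows essentially the same approach as the paper: construct the normalized measures $\nu_n^B$, apply the free energy attachment lemma to a packing by $\mathcal L$-translates of $\Pi_\ell$ (with a margin/leftover whose contribution vanishes after normalizing by $n^{-d}$), pass from an average of free energies to the free energy of the symmetrized measure via convexity of relative entropy, extract a shift-invariant subsequential limit $\mu\in\bar B$ by compactness/diagonal argument, and transfer the finite-volume bound $\mathcal H_{\Pi_\ell}(\mu|\Phi)\leq\ell^d\FB(B)+e^*(\Pi_\ell)$ to $\mathcal H(\mu|\Phi)\leq\FB(B)$ via the supremum characterization of Theorem~\ref{thm_main_sfe}. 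The only differences from the paper's proof are cosmetic (you tile all of $\Pi_{n_k}$ and handle a thin connected leftover layer by the crude lower bound, whereas the paper works inside $\Pi_n'$ with a shrinking margin $m(n)$; your argument that $\mu_k\in B$ via convexity of the slab constraints is a slightly cleaner justification of the paper's assertion that $\mu\in\bar B$), and they do not change the substance of the argument.
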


\begin{proof}
  Write
  $\nu_n^B$ for the normalized version
  of the measure
     $1_{B_n}e^{-H_{\Pi_n}^0}\lambda^{\Pi_n-1}$
    for each $n\in\mathbb N$,
    and observe that
    \[
      \FB(B)=\liminf_{n\to\infty}n^{-d}\mathcal H_{\Pi_n}(\nu_n^B|\Phi).
    \]
    We focus
    on \emph{good} subsequences of $n$,
    that is, subsequences
    along which the limit inferior is reached.

      Write $m:\mathbb N\to\mathbb N$ for a
      sequence of integers with $m(n)\to\infty$ and $m(n)/n\to 0$
      as $n\to\infty$,
      and set
       $\Pi_n':=\Pi_n^{-m(n)}=\{m(n),\dots,n-m(n)-1\}^d
       \subset\Pi_n$.
       Fix $N\in\mathbb N$
       minimal subject to
       $N\cdot\mathbb Z^d\subset\mathcal L$,
       and let $k$ denote an integer
       multiple of $N$.
       Let $n$ denote another integer,
       which is so large that  $m(n)>k$.
       The idea is now to apply Lemma~\ref{lemma_fe_superadditivity}
       to translates of $\Pi_k$.
       In particular, if we write $\Pi_{n,k}''$
       for the set
       $\Pi_n$
       with the sets
       $\Pi_k+x$ removed
       for all $x$ in $\Pi_n'\cap (k\cdot\mathbb Z^d)$,
       then
       that lemma asserts
       that
       \[
       \numberthis
       \label{eq:lim_eq_pbl_first_inequ}
        \mathcal H_{\Pi_n}(\nu_n^B|\Phi)\geq
        \mathcal H_{\Pi_{n,k}''}(\nu_n^B|\Phi)+
        \sum_{x\in \Pi_n'\cap (k\cdot\mathbb Z^d)}
        \mathcal H_{\Pi_k+x}(\nu_n^B|\Phi)
        -e^*(\Pi_k+x).
       \]
       The set $\Pi_{n,k}''$ is
       always connected and,
       as $n\to\infty$, we have $|\Pi_{n,k}''|=o(n^d)$.
       Therefore the first term
       on the right in~\eqref{eq:lim_eq_pbl_first_inequ}
       has a lower bound of order $o(n^d)$.
       Moreover, the value of $e^*(\Pi_k+x)$
       is independent of $x$ as long as $x$ lies
       in $\mathcal L$,
       and therefore we obtain the asymptotic
       bound
       \[
       \numberthis\label{eq_Leb_here}
          \frac{1}{|\Pi_n'\cap (k\cdot\mathbb Z^d)|}
          \sum_{x\in \Pi_n'\cap (k\cdot\mathbb Z^d)}
          \mathcal H_{\Pi_k+x}(\nu_n^B|\Phi)
          \leq k^d\FB(B)+e^*(\Pi_k)+o(1)
       \]
       as $n\to\infty$
       along a good subsequence.
       Moreover, if we write
       $\mu^{n,k}$ for the measure
       \[
        \frac{1}{|\Pi_n'\cap (k\cdot\mathbb Z^d)|}
        \sum_{x\in \Pi_n'\cap (k\cdot\mathbb Z^d)}
        \theta_x\nu_n^B,
       \]
       then the previous inequality
       and
       convexity of relative entropy
         imply that
       \[
        \mathcal H_{\Pi_k}(\mu^{n,k}|\Phi)\leq k^d\FB(B)+e^*(\Pi_k)+o(1)
       \]
       as $n\to\infty$ along a good subsequence.
       We may replace the sublattice $k\cdot\mathbb Z^d$
       by another set $k\cdot\mathbb Z^d+y$
       for $y\in \mathcal L/(k\cdot\mathbb Z^d)$
       in the previous discussion,
       and by doing so and averaging further,
       it is immediate that
       the sequence of measures
       $\mu^n$ defined by
       \[
        \frac{1}{|\Pi_n'\cap\mathcal L|}
        \sum_{x\in \Pi_n'\cap\mathcal L}
        \theta_x\nu_n^B,
       \]
       also satisfies
       \[
        \mathcal H_{\Pi_k}(\mu^{n}|\Phi)\leq k^d\FB(B)+e^*(\Pi_k)+o(1)
       \]
       as $n\to\infty$
       along a good subsequence.
       Compactness of the lower level sets of relative entropy
       implies that the sequence $\mu^n$
       has a subsequential limit---at least
       when restricted to $\mathcal F^\nabla_{\Pi_k}$.
       Using a standard diagonal argument
       for convergence for all integers
       $k\in N\cdot\mathbb N$,
       one obtains a subsequential limit $\mu$
       which is shift-invariant and satisfies
       \[
        \mathcal H_{\Pi_k}(\mu|\Phi)\leq k^d\FB(B)+e^*(\Pi_k)
       \]
       for all $k$,
       that is,
       $\mathcal H(\mu|\Phi)\leq\FB(B)$.
       This measure must clearly
       lie in $\bar B$ by construction.
\end{proof}

\begin{proof}[Proof of Lemma~\ref{lemma_limit_equalities_fbl}]
Fix $\mu\in\mathcal P_\mathcal L(\Omega,\mathcal F^\nabla)$,
and suppose that $\FB(\mu)<\infty$.
Then the lower level set of
the specific free energy $M_{\FB(\mu)}$
endowed with the topology of weak local convergence
is metrizable,
and therefore we may choose for
each $n$
an open set $B^n\in\mathcal B$,
containing $\mu$
and
with $B^n\cap M_{\FB(\mu)}$ of diameter at most $1/n$ in this metric.
Then each set $\bar B^n$ contains a measure $\mu^n$
with $\mathcal H(\mu^n|\Phi)\leq \FB(B^n)\leq \FB(\mu)$.
By choice of $B^n$ we must have $\mu^n\to\mu$,
and lower-semicontinuity implies that
\[
\mathcal H(\mu|\Phi)\leq \liminf_{n\to\infty}\mathcal H(\mu^n|\Phi)
\leq
\liminf_{n\to\infty}\FB(B^n)\leq \FB(\mu).\qedhere
\]
\end{proof}

Finally, we discuss how to extend this result
to other shapes.

\begin{definition}
  For fixed $D\in\mathcal C$ and $\mu\in\mathcal P(\Omega,\mathcal F^\nabla)$,
  we write
  \[
    \FB(\mu:D):=
    \sup_{\text{$B\in\mathcal B$ with $\mu\in B$}}
      \liminf_{n\to\infty}n^{-d}\FB_{\Lambda_n}(B),
  \]
  where we write $\Lambda_n$ for $\Lambda(nD)=nD\cap\mathbb Z^d$.
\end{definition}

The previous results extend immediately as follows---the Lebesgue measure
$\operatorname{Leb}(D)$
would first appear as a factor on the left in~\eqref{eq_Leb_here} in the generalized argument.

\begin{lemma}
  Consider $D\in\mathcal C$ and $\mu\in\mathcal P(\Omega,\mathcal F^\nabla)$.
  If $\mu$ is not shift-invariant, then $\FB(\mu:D)=\infty$,
  and if $\mu$ is shift-invariant, then
  $
  \FB(\mu:D)\geq\operatorname{Leb}(D)\cdot\mathcal H(\mu|\Phi)$.
\end{lemma}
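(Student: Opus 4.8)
The plan is to repeat, almost verbatim, the proofs of Lemma~\ref{lemma_FB_non_shift-inv} and Lemma~\ref{lemma_limit_equalities_fbl} (together with the auxiliary lemma preceding the latter), replacing the cubes $\Pi_n$ throughout by the sets $\Lambda_n:=\Lambda(nD)=nD\cap\mathbb Z^d$. The only genuinely new input is combinatorial bookkeeping: since $D\in\mathcal C$ is bounded and convex, its boundary is Lebesgue-null, so both $(\Lambda_n)_{n\in\mathbb N}$ and $(\Lambda_n^{-m(n)})_{n\in\mathbb N}$ (for any $m(n)=o(n)$) are Van Hove sequences, while $|\Lambda_n|=\operatorname{Leb}(D)\,n^d(1+o(1))$ and, for each fixed $k$, $|\Lambda_n^{-m(n)}\cap(k\cdot\mathbb Z^d)|=\operatorname{Leb}(D)\,n^d k^{-d}(1+o(1))$ as $n\to\infty$. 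This last count of translated boxes is exactly what produces the factor $\operatorname{Leb}(D)$.

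For the non-shift-invariant case, suppose $\mu\notin\mathcal P_\mathcal L(\Omega,\mathcal F^\nabla)$ and choose, as in the proof of Lemma~\ref{lemma_FB_non_shift-inv}, a bounded continuous cylinder function $f=g-\theta g$ with $\mu(f)\neq0$ and the corresponding $B\in\mathcal B$. The telescoping estimate used there is insensitive to the shape of the averaging region: $\sum_{x\in\mathcal L\cap\Lambda_n}f(\theta_x\bar\phi_{\Lambda_n})$ is supported on the symmetric difference of $\mathcal L\cap\Lambda_n$ with a fixed bounded translate of itself, whose cardinality is $O(|\partial\Lambda_n|)$, so $L_{\Lambda_n}(\phi)(f)\to0$ uniformly in $\phi$ because $(\Lambda_n)$ is Van Hove. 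Hence $B_{\Lambda_n}=\varnothing$ for $n$ large, $\FB_{\Lambda_n}(B)=\infty$ eventually, and $\FB(\mu:D)\geq\liminf_n n^{-d}\FB_{\Lambda_n}(B)=\infty$.

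For the shift-invariant case, fix $B\in\mathcal B$ with $\mu\in B$ and let $\nu_n$ be the normalization of $1_{B_{\Lambda_n}}e^{-H^0_{\Lambda_n}}\lambda^{\Lambda_n-1}$, so $\mathcal H_{\Lambda_n}(\nu_n|\Phi)=\FB_{\Lambda_n}(B)$. Choose $m(n)\to\infty$ with $m(n)=o(n)$, fix $k\in N\cdot\mathbb N$, and for $n$ large enough that all boxes $\Pi_k+x$ with $x\in\Lambda_n^{-m(n)}\cap(k\cdot\mathbb Z^d)$ lie in $\Lambda_n$, peel these boxes off $\Lambda_n$ one at a time in an order keeping each removed box adjacent to the remaining region (possible since $\Lambda_n^{-m(n)}$ is connected and the boxes tile $\mathbb Z^d$). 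Iterating Lemma~\ref{lemma_fe_superadditivity} as in~\eqref{eq:lim_eq_pbl_first_inequ}, the leftover set has $o(n^d)$ vertices, so (bounding its free energy below component-by-component, again via Lemma~\ref{lemma_fe_superadditivity}) one obtains the analogue of~\eqref{eq_Leb_here},
\[
\frac{1}{|\Lambda_n^{-m(n)}\cap(k\cdot\mathbb Z^d)|}\sum_{x\in\Lambda_n^{-m(n)}\cap(k\cdot\mathbb Z^d)}\mathcal H_{\Pi_k+x}(\nu_n|\Phi)\leq\frac{k^d}{\operatorname{Leb}(D)}\,n^{-d}\FB_{\Lambda_n}(B)+e^*(\Pi_k)+o(1),
\]
where the factor $\operatorname{Leb}(D)$ enters precisely through the box count. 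Averaging the translates $\theta_x\nu_n$ over $x\in\Lambda_n^{-m(n)}\cap(k\cdot\mathbb Z^d)$ and then over the cosets $\mathcal L/(k\cdot\mathbb Z^d)$, using convexity of relative entropy and periodicity of $\Phi$, yields $\mathcal L$-invariant measures $\mu^n$ with $\mathcal H_{\Pi_k}(\mu^n|\Phi)\leq\frac{k^d}{\operatorname{Leb}(D)}n^{-d}\FB_{\Lambda_n}(B)+e^*(\Pi_k)+o(1)$; passing to a subsequential limit along a subsequence realizing $\liminf_n n^{-d}\FB_{\Lambda_n}(B)$, with a diagonal argument over $k\in N\cdot\mathbb N$, and using lower semicontinuity of $\mathcal H_{\Pi_k}(\cdot|\Phi)$ and the variational identity for $\mathcal H(\cdot|\Phi)$ from Theorem~\ref{thm_main_sfe}, produces a shift-invariant $\mu'\in\bar B$ with $\mathcal H(\mu'|\Phi)\leq\operatorname{Leb}(D)^{-1}\liminf_n n^{-d}\FB_{\Lambda_n}(B)$.

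The conclusion then follows exactly as in the proof of Lemma~\ref{lemma_limit_equalities_fbl}: if $\FB(\mu:D)=\infty$ there is nothing to prove; otherwise $M_{\FB(\mu:D)}$ is metrizable, so one picks $B^n\in\mathcal B$ containing $\mu$ and shrinking to $\{\mu\}$ on this level set, obtains $\mu^{(n)}\in\bar B^n$ with $\mathcal H(\mu^{(n)}|\Phi)\leq\operatorname{Leb}(D)^{-1}\FB(\mu:D)$, and deduces $\mathcal H(\mu|\Phi)\leq\operatorname{Leb}(D)^{-1}\FB(\mu:D)$ by lower semicontinuity, i.e.\ $\FB(\mu:D)\geq\operatorname{Leb}(D)\cdot\mathcal H(\mu|\Phi)$. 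The only point requiring real care is the discrete geometry of $\Lambda_n$: that the boxes $\Pi_k+x$ fill $\Lambda_n^{-m(n)}$ up to $o(n^d)$ vertices, that they can be removed one by one while remaining adjacent to the rest, and that their number is $\operatorname{Leb}(D)n^dk^{-d}(1+o(1))$. All of this is where convexity of $D$ is used, and it makes the estimates behave exactly as they do for cubes; everything else is a transcription of the existing arguments.
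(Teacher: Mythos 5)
Your proof is correct and follows exactly the approach the paper indicates; the paper itself gives no proof of this lemma beyond the one-line remark that ``the Lebesgue measure $\operatorname{Leb}(D)$ would first appear as a factor in~\eqref{eq_Leb_here} in the generalized argument,'' and what you have written is a careful elaboration of precisely that remark, tracking the $\operatorname{Leb}(D)$ factor through the box count $|\Lambda_n^{-m(n)}\cap(k\cdot\mathbb Z^d)|\sim\operatorname{Leb}(D)\,n^d k^{-d}$ and otherwise transcribing the proofs of Lemma~\ref{lemma_FB_non_shift-inv} and the auxiliary lemma preceding Lemma~\ref{lemma_limit_equalities_fbl}. One tiny slip of language: the finite-$n$ measures $\mu^n$ obtained by averaging over $\Lambda_n^{-m(n)}\cap\mathcal L$ are not $\mathcal L$-invariant, only their subsequential limit is (this matches the paper's treatment in the cube case), but this has no effect on the argument.
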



\subsection{Pinned boundary limits for $\mu$ ergodic: truncation argument}
\label{subsec:rta}

The goal of this section is to derive the following lemma.
The proof
starts with a simple reduction,
and is then intermitted to state an auxiliary result
and to give an overview of the remainder
of the proof.
The proof extends the random truncation argument in~\cite{S05} to the infinite-range Lipschitz setting.

\begin{lemma}
  \label{lemma:PBL_upper_bound}
  If $\mu\in\mathcal P_\mathcal L(\Omega,\mathcal F^\nabla)$
  is ergodic and $u:=S(\mu)\in U_\Phi$,
  then $\PB(\mu)\leq \mathcal H(\mu|\Phi)$.
\end{lemma}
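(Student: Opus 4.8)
We may assume $\mathcal H(\mu|\Phi)<\infty$; then $\mu$ is supported on $q$-Lipschitz (hence $Kd_1$-Lipschitz) functions, and $\mathcal H(\mu|\Phi)=\langle\mu|\Phi\rangle+\mathcal H(\mu|\lambda)$ by the last lemma of Section~\ref{section:ergo_decomp}. Fix $\varepsilon>0$ and a basic open set $A\in\mathcal B$ with $\mu\in A$; the aim is $\limsup_{n}n^{-d}\PB_{\Pi_n,u,\varepsilon}(A)\leq\mathcal H(\mu|\Phi)$ (and the analogue with $\PB_{\Pi_n,u}$ when $E=\mathbb Z$). Jensen's inequality in the form of the Gibbs variational principle shows that for any probability measure $\nu$ on $(\Omega,\mathcal F^\nabla_{\Pi_n})$ that is absolutely continuous with respect to $\lambda^{\Pi_n-1}$ and supported on $C^u_{\Pi_n,\varepsilon}\cap A_{\Pi_n}$ one has $\PB_{\Pi_n,u,\varepsilon}(A)\leq \mathcal H_{\mathcal F^\nabla_{\Pi_n}}(\nu|\lambda^{\Pi_n-1})+\nu(H^0_{\Pi_n})=\mathcal H_{\Pi_n}(\nu|\Phi)$. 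Hence it suffices to exhibit, for each $\eta>0$ and each large $n$, such a $\nu_n$ with $\limsup_n n^{-d}\mathcal H_{\Pi_n}(\nu_n|\Phi)\leq\mathcal H(\mu|\Phi)+\eta$, and then let $\eta\downarrow 0$. Note that, by the definition of $\langle\cdot|\Phi\rangle$ and of $\mathcal H(\cdot|\lambda)$ as limits over boxes (together with Lemma~\ref{lemma_fe_superadditivity} and Proposition~\ref{propo_large_set_limit}), both of these quantities are approximately additive over large sub-boxes; so the task is really to build a $\nu_n$ whose interior marginal is $\mu$-distributed and whose boundary behaviour is pinned to $\phi^u$.

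\textbf{The random truncation construction.} I would build $\nu_n$ by extending the random truncation argument of~\cite{S05}. Fix a large $k\in N\cdot\mathbb N$ (eventually sent to infinity; this will govern $\eta$), and take $n$ a large multiple of $k$. Tile the interior of $\Pi_n$ by translates $\Pi_k+x$ of $\Pi_k$, separated from one another by buffer corridors of a fixed width $w=w(k)$, and leave a boundary annulus of vanishing-but-macroscopic width around the whole thing. On each tile, independently sample a height function from the restriction of $\mu$; since $u=S(\mu)\in U_\Phi=U_q$ (Theorem~\ref{thm_main_st_general}), $u$ is $q_{\varepsilon_0}$-Lipschitz for some $\varepsilon_0>0$, and by the multidimensional ergodic theorem applied to the increments of $\phi$ the rescaled sample $\tfrac1k\phi(k\cdot)$ is, with probability tending to $1$ as $k\to\infty$, uniformly close to $u$ on the tile. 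On this event the tile samples on neighbouring tiles can be joined across the buffers, and the outermost tiles joined to the pinned datum $\phi^u$ across the annulus, by the Kirszbraun extension of Proposition~\ref{propo_Kirszbraun} keeping the global $q$-Lipschitz constraint — this is exactly the gluing supplied by Theorem~\ref{general_lips_ext_less_general}, whose hypotheses hold because the inner slopes are controlled, because $w(k)$ is large relative to the slope fluctuation, and because $\|q-\|\cdot\|_q\|_\infty<\infty$ (Lemma~\ref{lemma_lipschitz_q_and_norm}); the ``truncation'' is random in the sense that the precise slice at which one switches from the incoming sample to the interpolant is chosen from a range, so that, after averaging and a Markov/pigeonhole step using $\mu(H^0_{\Pi_n})=O(n^d)$, it can be taken to lie on a slice where the sample has well-behaved energy. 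Tiles whose sample fails the slope-control event are simply replaced by $\phi^u$, at density $q_k\to 0$. If $E=\mathbb R$, I would additionally superpose an independent uniform perturbation of small fixed amplitude $\varepsilon''$ on the buffers, annulus, and replaced tiles, both so that $\nu_n$ acquires a density with respect to $\lambda^{\Pi_n-1}$ and so that, via Proposition~\ref{propo_qqq}(3), the interpolant remains strictly $q$-Lipschitz there and its per-site $\Psi$-energy stays bounded by the constants furnished by local boundedness of $\Psi$.

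\textbf{The three estimates.} First, $\nu_n$ is supported on $C^u_{\Pi_n,\varepsilon}\cap A_{\Pi_n}$: the pinned condition holds because the annulus is set equal to $\phi^u$ (up to the permitted $\varepsilon$), and membership in $A_{\Pi_n}$ holds with probability exponentially close to $1$ in $(n/k)^d$ because $L_n(\cdot)$ of the output equals, up to an $O(w/k+{\rm annulus\ fraction})$ error, the empirical average over the $\sim(n/k)^d$ independent tile samples of the bounded functional $\phi\mapsto|\Pi_k\cap\mathcal L|^{-1}\sum_{x\in\Pi_k\cap\mathcal L}f(\theta_x\phi)$, whose $\mu$-mean is $\mu(f)$ by shift-invariance, so Hoeffding applies; one restricts to that event at negligible entropy cost. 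Second, for the energy, the upper and lower attachment lemmas (Lemmas~\ref{lemma_lower_attachments} and~\ref{lemma_attachment}) together with amenability of the exterior bounds reduce $\nu_n(H^0_{\Pi_n})$ to a sum of tile terms $\mu(H^0_{\Pi_k})$, a buffer/annulus term that is $o(n^d)$ once $w$, $\varepsilon''$ are fixed and the annulus width is $o(n)$, a bad-tile term of order $q_kCn^d$, and an attachment error of order $(n/k)^d e^+(\Pi_k)$; dividing by $n^d$ and sending $n\to\infty$ this is at most $k^{-d}\mu(H^0_{\Pi_k})+o_k(1)+Cq_k\to\langle\mu|\Phi\rangle$. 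Third, block-additivity of relative entropy over the tiles gives $\mathcal H_{\mathcal F^\nabla_{\Pi_n}}(\nu_n|\lambda^{\Pi_n-1})\leq (n/k)^d\mathcal H_{\mathcal F^\nabla_{\Pi_k}}(\mu|\lambda^{\Pi_k-1})+(\text{buffer/annulus/bad-tile entropy})$, which per unit volume is $\mathcal H(\mu|\lambda)+o_k(1)$ by the definition of the specific entropy. Combining, $\limsup_n n^{-d}\mathcal H_{\Pi_n}(\nu_n|\Phi)\leq\langle\mu|\Phi\rangle+\mathcal H(\mu|\lambda)+o_k(1)+Cq_k=\mathcal H(\mu|\Phi)+o_k(1)+Cq_k$, and letting $k\to\infty$ yields the claim.

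\textbf{Main obstacle.} The delicate step is the gluing in the continuous case $E=\mathbb R$. In~\cite{S05} the potential has finite range and the heights are unconstrained, so joining across a corridor is essentially free; here one must simultaneously (i) maintain the global $q$-Lipschitz property while passing between tiles with only approximately matched slopes and the exact boundary datum $\phi^u$ — which is why one needs ergodicity to drive the slope fluctuation below a fixed multiple of the buffer width, and the strict Lipschitzness of $\phi^u$ (i.e.\ $u\in U_q$) to have room to spare in the sense of Lemma~\ref{lemma_lipschitz_q_and_norm} and Proposition~\ref{propo_qqq} — and (ii) keep the $\Psi$-energy near the gluing locus finite even though $\mu$-typical samples are only $q$-Lipschitz, which is precisely what the random choice of truncation slice, the auxiliary amplitude-$\varepsilon''$ perturbation, and local boundedness of $\Psi$ are designed to handle. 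A secondary, bookkeeping difficulty is to see that every attachment error is $o(n^d)$; this is exactly what the amenability built into $\mathcal S_\mathcal L+\mathcal W_\mathcal L$ buys, through the two attachment lemmas and Proposition~\ref{propo_large_set_limit}.
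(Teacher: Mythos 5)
Your proposal takes a genuinely different route from the paper. The paper works with a \emph{single} $\mu$-sample on $\Pi_n$ and replaces it by $\phi_n^-\vee\phi_{\Pi_n}\wedge\phi_n^+$ (with a small uniform perturbation when $E=\mathbb R$), where $\phi_n^\pm$ are the extreme $q$-Lipschitz (resp.\ $q_{3\varepsilon}$-Lipschitz) extensions of $\phi^u_{\partial^R\Pi_n}$. Ergodicity then gives that the truncation set $A_n^\pm$ has density $o(1)$, which simultaneously produces the boundary pinning, the $o(n^d)$ change in free energy, and (for free, via the ergodic theorem) membership in $B_n$. Your block construction --- independent $\mu$-samples on $\Pi_k$-tiles, Kirszbraun gluing over buffers of width $o(k)$, Hoeffding for the empirical measure, attachment lemmas for the energy, block additivity for the entropy --- is structurally different: where the paper keeps the correlations of a single sample and lets ergodicity do the bookkeeping, you break $\Pi_n$ into independent pieces and restore the desired empirical statistics by the law of large numbers over the $\sim(n/k)^d$ tiles.

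For $E=\mathbb Z$ your outline is convincing (if heavier machinery than needed: Hoeffding is overkill once the tiles are i.i.d.\ with the right mean), because $H^\Psi_{\{x\}}\le C$ uniformly over $q$-Lipschitz configurations (cf.\ \eqref{eq_discrete_singleton_hamil_bound}), so the attachment lemma applies to the glued configuration with no further conditions.

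For $E=\mathbb R$ there is a genuine gap, and it sits exactly at the point you flag as the main obstacle. The upper attachment lemma (Lemma~\ref{lemma_attachment}), and equally the single-site bound \eqref{eq_cts_singleton_hamil_bound}, require the glued configuration $\psi$ to be $q_{\varepsilon}$-Lipschitz on $\partial^R(\text{tile})\cup\partial^R(\text{buffer})$, that is, on \emph{both} sides of the cut. Your $\varepsilon''$-perturbation and Proposition~\ref{propo_qqq}(\ref{propo_qqq:flex}) give strict Lipschitzness on the buffer side, and your random-slice argument controls the \emph{expected local energy} of $\phi$ near the cut; but neither gives strict Lipschitzness on the tile side. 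A $\mu$-typical sample is $\mu$-a.s.\ strictly $q$-Lipschitz at every fixed edge (since $\mu$ is locally absolutely continuous w.r.t.\ $\lambda^{\Lambda-1}$), yet the slack is not bounded below by any fixed $\varepsilon>0$ over the $\Theta(k^{d-1})$ edges of the cut shell, so conditioning the tile sample to be $q_{\varepsilon}$-Lipschitz on any such shell is an event of vanishing probability and incurs an unbounded entropy cost. Markov/pigeonhole over slices does not rescue this: low energy at a slice is not the same as Lipschitz slack at a slice.

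The paper's construction avoids this precisely because the object it produces is guaranteed to be $q_{\varepsilon'}$-Lipschitz \emph{at every modified site}, thanks to two one-sided bounds that your Kirszbraun glue does not supply: $\phi_n^+$ is $q_{3\varepsilon}$-Lipschitz by choice (which gives slack between two truncated points), and for an untruncated neighbour $y$ one has $\phi(y)\le\phi_n^+(y)+2\varepsilon$ on one side and $\phi_n^+(x)+X(x)\le\phi(x)-\varepsilon$ on the other (see the argument around \eqref{eq:cutclaim}). To repair your block construction along these lines you would have to run that $\pm$-pyramid truncation \emph{inside each tile} (with pyramid functions built from $\phi^u_{\partial^R(\Pi_k+x)}$) before gluing; but at that point the per-tile step is the paper's proof at scale $k$, and the extra scaffolding (independence, Hoeffding) becomes superfluous.
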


\begin{proof}
  It suffices to consider the case that $\mathcal H(\mu|\Phi)<\infty$,
  which implies in particular that $\mu$ is $K$-Lipschitz.
  We first focus on the discrete case $E=\mathbb Z$,
  then generalize to the continuous case $E=\mathbb R$;
  the latter comes with some additional technical complications.

  \emph{The discrete case.}
  Pick $B\in\mathcal B$ with $\mu\in B$.
  It suffices to show that
  \[
    \mathcal H(\mu|\Phi)=\lim_{n\to\infty}n^{-d}
    \mathcal H_{\Pi_n}(\mu|\Phi)
    \geq\limsup_{n\to\infty}n^{-d}
    \PB_{\Pi_n,u}(B)
    =\limsup_{n\to\infty}n^{-d}\mathcal H_{\Pi_n}(\nu_n^B|\Phi),
  \]
  where
   $\nu_n^B$ is the normalized measure
  \[
    \nu_n^B:=\frac1Z1_{C_n^u\cap B_n}e^{-H_{\Pi_n}^0}\lambda^{\Pi_n-1}.
  \]
  Observe that $\nu_n^B$ minimizes $\mathcal H_{\Pi_n}(\cdot|\Phi)$
  over all measures which are supported on $C_n^u\cap B_n$.
  Therefore it suffices to construct a sequence of measures
  $(\mu_n)_{n\in\mathbb N}$,
  with each $\mu_n$ supported on $C_n^u\cap B_n$,
  and such that
  $\mathcal H_{\Pi_n}(\mu_n|\Phi)\leq\mathcal H_{\Pi_n}(\mu|\Phi)+o(n^d)$
  as $n\to\infty$.
  Let us now intermit the proof to give an overview
  of the remainder of the proof, before continuing.
  \let\qed\relax
\end{proof}

One continues roughly as follows.
Always take $0$ as a reference point for all gradient
measures. This means that $\mu$-almost surely $\phi(0)=0$.
Write $\phi^\pm_n$ for the largest and smallest $q$-Lipschitz
extensions of $\phi^u_{\partial^R\Pi_n}$ to $\Pi_n$ respectively,
for each $n\in\mathbb N$.
Define the random sets
\[A_n^-:=\{x\in\Pi_n:\phi(x)<\phi^-_n(x)\}\quad\text{and}\quad A_n^+:=\{x\in\Pi_n:\phi(x)>\phi^+_n(x)\}.\]
Note that $\phi_n^\pm(0)=0$ by definition; $0$
is $\mu$-almost surely not contained in $A_n^\pm$.

Since $\mu$ is ergodic and $K$-Lipschitz,
almost every sample $\phi$ from $\mu$ is asymptotically
close to $u$, in the sense of Theorem~\ref{thm:superergodresult}.
As $u$ belongs to $U_q$, the interior of the set of Lipschitz slopes,
the function $\phi_n^+$ is substantially
larger than $u$
on most vertices in
$\Pi_n$.
This means that
$\mu(|A^+_n|)=o(n^d)$
as $n\to\infty$,
and similarly $\mu(|A^-_n|)=o(n^d)$.
For each $n\in\mathbb N$,
define the measure $\mu_n^\pm$
as follows:
to draw a sample from $\mu_n^\pm$,
sample first a height function $\phi$
from $\mu$,
then replace this sample by
$\psi:=\phi_n^-\vee \phi_{\Pi_n}\wedge \phi_n^+$.
Note that $\phi$ and $\psi$ differ at at most $o(n^d)$
vertices in $\Pi_n$ on average as $n\to\infty$.
Moreover, the modified height function $\psi$ is $q$-Lipschitz
if the original height function $\phi$ was $q$-Lipschitz.
In particular, we deduce
that
\[
\mathcal H_{\Pi_n}(\mu_n^\pm|\Phi)
=\mathcal H_{\Pi_n}(\mu|\Phi)+o(n^d).
\]
The measure $\mu_n^{\pm}$
is clearly supported on $C_n^u$,
because $\phi_n^-$, $\phi^+_n$, and $\phi^u$
are equal
on $\partial^R\Pi_n$.
Using again the ergodicity of $\mu$
through Theorem~\ref{thm:superergodresult},
one can show that $\mu(B_n)\to 1$ as $n\to\infty$,
and consequently $\mu_n^\pm(B_n)\to 1$
because $\phi$ and $\psi$ agree on most vertices of $\Pi_n$.
This proves that the sequence
$(\mu_n)_{n\in\mathbb N}$ defined by
$\mu_n:=\mu_n^\pm(\cdot|B_n)$
is the desired sequence of measures.
This concludes the proof overview for $E=\mathbb Z$.
In the real case $E=\mathbb R$,
the details are more involved,
owing to the following two difficulties:
\begin{enumerate}
  \item We cannot simply replace $\phi$
  by
  $\phi_n^-\vee \phi_{\Pi_n}\wedge \phi_n^+$,
  because the measure so produced would
  not be absolutely continuous with respect to Lebesgue measure,
  \item We only have a bound on $H_{\{x\}}(\phi)$
  if $\phi$ is $q_\varepsilon$-Lipschitz
  at $x$; it is not sufficient to make
  modifications which are $q$-Lipschitz.
\end{enumerate}
Let us now state Theorem~\ref{thm:superergodresult}
before continuing the proof of   Lemma~\ref{lemma:PBL_upper_bound}.

\begin{theorem}
  \label{thm:superergodresult}
  Consider $\mu\in\mathcal P_\mathcal L(\Omega,\mathcal F^\nabla)$
  ergodic.
Then $L_n(\phi)(f)\to\mu(f)$ as $n\to\infty$ for $\mu$-almost every $\phi$, for any bounded cylinder function $f$.
Suppose now that $\mu$
is also $K$-Lipschitz with slope $u:=S(\mu)$.
Then $\mu$-almost surely $\|\phi_{\Pi_n}-\phi(0)-u|_{\Pi_n}\|_\infty\leq \varepsilon n$ for $n$ sufficiently large,
    for any fixed constant $\varepsilon>0$.
\end{theorem}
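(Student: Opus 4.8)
The first assertion follows from the multidimensional pointwise ergodic theorem after a routine boundary reduction. Fix a bounded cylinder function $f$, say $\mathcal F^\nabla_\Delta$-measurable for some $\Delta\subset\subset\mathbb Z^d$. If $x\in\mathcal L\cap\Pi_n$ lies at $d_1$-distance more than the diameter of $\Delta$ from $\mathbb Z^d\smallsetminus\Pi_n$, then $x+\Delta\subset\Pi_n$, on which $\bar\phi_{\Pi_n}$ agrees with $\phi$, so $f(\theta_x\bar\phi_{\Pi_n})=f(\theta_x\phi)$. Since $(\Pi_n)_{n\in\mathbb N}$ is a Van Hove sequence, the fraction of exceptional $x$ in $\mathcal L\cap\Pi_n$ tends to $0$, whence
\[
L_n(\phi)(f)=\frac{1}{|\mathcal L\cap\Pi_n|}\sum_{x\in\mathcal L\cap\Pi_n}f(\theta_x\phi)+o(1)
\]
uniformly in $\phi$. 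Identifying the $\mathcal L$-action with a $\mathbb Z^d$-action through a basis of $\mathcal L$, the sets $\mathcal L\cap\Pi_n$ become $\mathbb Z^d$ intersected with dilates of a fixed parallelepiped, so the pointwise ergodic theorem for $\mathbb Z^d$-actions (see~\cite{G11}) shows that this average converges $\mu$-almost surely to $\mathbb E_\mu[f\mid\mathcal I_\mathcal L^\nabla]$, which equals $\mu(f)$ since $\mu$ is ergodic.

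For the second assertion, abbreviate $\Delta(x):=\phi(x)-\phi(0)-u(x)$; the claim is exactly that $\sup_{x\in\Pi_n}|\Delta(x)|=o(n)$ for $\mu$-a.e.\ $\phi$. The function $\Delta$ is Lipschitz with respect to $d_1$ (with constant $K$ plus a constant bounding $u$), so it suffices to control its values on $\mathcal L$; let $g_n\colon[0,1]^d\to\mathbb R$ denote the piecewise-affine interpolation over the rescaled lattice $\tfrac1n\mathcal L$ of the values $\tfrac1n\Delta(\ell)$, $\ell\in\mathcal L$. The $g_n$ are equi-Lipschitz and equi-bounded with $g_n(0)=0$, and $\sup_{\Pi_n}|\Delta|=n\|g_n\|_\infty+O(1)$; by Arzelà–Ascoli it therefore suffices to show that, $\mu$-almost surely, every subsequential uniform limit $g$ of $(g_n)_{n\in\mathbb N}$ vanishes identically. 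Since $g$ is Lipschitz with $g(0)=0$, this reduces to proving $\nabla g=0$ a.e.\ on $(0,1)^d$.

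Fix a basis $v_1,\dots,v_d$ of $\mathcal L$ and put $h_i(\psi):=\psi(v_i)-\psi(0)-u(v_i)$, a bounded cylinder function with $\mu(h_i)=0$ by definition of the slope $u$. Let $Q\subset(0,1)^d$ be a closed cube with rational data and let $\rho>0$ be small. Telescoping $\Delta(\ell+\lfloor n\rho\rfloor v_i)-\Delta(\ell)=\sum_{m=0}^{\lfloor n\rho\rfloor-1}h_i(\theta_{\ell+mv_i}\phi)$ for $\ell\in\mathcal L$, and discarding floor- and boundary-errors of lower order, one obtains for every $n$
\[
\int_Q\bigl(g_n(t+\rho v_i)-g_n(t)\bigr)\,dt=\frac{c}{n^{d+1}}\sum_{m=0}^{\lfloor n\rho\rfloor-1}\;\sum_{\ell\in\mathcal L\cap nQ}h_i(\theta_\ell\phi)+E_n,\qquad |E_n|\le C\rho^2+o(1),
\]
where $c>0$ depends only on $\mathcal L$, $C=C(Q,v_i,K,u)$, and the $\rho^2$-term accounts for replacing each translated region $(\mathcal L\cap nQ)+mv_i$ by $\mathcal L\cap nQ$ (whose symmetric difference has $O(mn^{d-1})$ points). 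All shifts appearing are by elements of $\mathcal L$, so the $\mathbb Z^d$-ergodic theorem applies along the dilates of $Q$—simultaneously for a countable family of rational cubes and all $i$—and gives that $n^{-d}\sum_{\ell\in\mathcal L\cap nQ}h_i(\theta_\ell\phi)$ tends $\mu$-a.s.\ to a multiple of $\operatorname{Leb}(Q)\,\mu(h_i)$, which is $0$. Hence, along a subsequence $n_k$ with $g_{n_k}\to g$ uniformly, the right-hand side above tends to $0$ up to $O(\rho^2)$ while the left-hand side tends to $\int_Q(g(t+\rho v_i)-g(t))\,dt$, so $\bigl|\int_Q(g(t+\rho v_i)-g(t))\,dt\bigr|\le C\rho^2$. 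Dividing by $\rho$ and letting $\rho\to0^+$ (dominated convergence, using that $\partial_{v_i}g$ exists a.e.\ by Rademacher) yields $\int_Q\partial_{v_i}g=0$. As this holds for all rational cubes $Q\subset(0,1)^d$, Lebesgue differentiation gives $\partial_{v_i}g=0$ a.e.; since the $v_i$ span $\mathbb R^d$, $\nabla g=0$ a.e., hence $g\equiv 0$, as needed.

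The main obstacle is the passage from the bulk statement that the ergodic theorem supplies directly—that empirical increments average to $u$ over $\Pi_n$—to the uniform statement over all of $\Pi_n$; this is what makes the compactness-plus-every-subcube argument necessary. A related subtlety, also circumvented by that argument, is that $\mu$ need not be ergodic for the individual generators $\theta_{v_i}$, so one cannot merely telescope along a ray and invoke the one-dimensional ergodic theorem (this can alternatively be repaired by noting that the mixed second differences $\psi(v_i+v_j)-\psi(v_j)-\psi(v_i)+\psi(0)$ have bounded Birkhoff sums along $v_i$ thanks to the Lipschitz property, which forces $\mathbb E_\mu[h_i\mid\mathcal I_{v_i}]$—the conditional expectation given the $\theta_{v_i}$-invariant $\sigma$-algebra—to be $\mathcal L$-invariant and hence $\mu$-a.s.\ equal to $0$).
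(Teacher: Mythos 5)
Your proof is correct and, at the conceptual level, matches the paper's: the first assertion reduces to the multidimensional pointwise ergodic theorem after discarding the Van Hove boundary layer, and the second combines ergodicity of gradient averages with the Lipschitz property to upgrade a bulk estimate into a uniform one. The paper's own proof of the second assertion is merely a one-sentence heuristic (``the height difference $(\phi(x)-\phi(0))/\|x\|_1$ is approximately the average of the gradient over a large set''), so the work you have done---the Arzel\`a--Ascoli rescaling of $\Delta$ to equi-Lipschitz functions $g_n$, reducing to $\nabla g=0$ a.e.\ for every subsequential limit, and the cube-averaged telescoping identity controlling $\int_Q(g(t+\rho v_i)-g(t))\,dt$---is exactly what is needed to make that heuristic rigorous. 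In particular, you correctly identify the genuine subtlety that one cannot simply telescope along a single ray and invoke the one-dimensional ergodic theorem, since $\mu$ is only $\mathcal L$-ergodic and need not be ergodic under the individual generator shifts $\theta_{v_i}$; both of your remedies (the rational-cube integral argument in the body, and the parenthetical mixed-second-difference trick using bounded Birkhoff sums) are valid ways around this. One small point to watch: when you pass to the limit $n_k\to\infty$ along a subsequence, you should first fix the full-measure event on which the ergodic theorem holds simultaneously for all rational cubes $Q$ and all $i$, and only then consider $\phi$-dependent subsequential limits $g$---you do say this implicitly (``simultaneously for a countable family''), but it is worth stating explicitly since the subsequence and the limit $g$ both depend on $\phi$.
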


The first assertion is the ergodic theorem.
The second assertion is straightforward:
in the Lipschitz setting,
the height difference $(\phi(x)-\phi(0))/\|x\|_1$ is approximately equal
to the average of the gradient---which is bounded in magnitude---of
$\phi$ over a large
set $\Lambda\subset\subset\mathbb Z^d$.

\begin{proof}[Continuation of the proof of Lemma~\ref{lemma:PBL_upper_bound}]
  Recall that $E=\mathbb Z$.
  By taking a smaller set $B\in\mathcal B$ if necessary,
  we suppose that $B$ is of the form
  \[
    B=\{\nu\in\mathcal P(\Omega,\mathcal F^\nabla):\text{$|\nu(f_i)-\mu(f_i)|<2\eta$ for all $i$}\}
  \]
  for a finite collection $(f_i)_i$ of continuous cylinder functions
  $f_i:\Omega\to[0,1]$ and for some $\eta>0$,
  and we write $B^*$ for the same set with $2\eta$
  replaced by $\eta$.
  The ergodic theorem asserts that $\mu(B_n^*)\to 1$
  as $n\to\infty$.
  Consider $\mu$ a non-gradient measure
  on $(\Omega,\mathcal F)$ by taking $0\in\Pi_n$
  as a reference point:
  this means that $\phi(0)=0$ almost surely in $\mu$.

  Recall the definitions of $\phi^\pm_n$
  and $A_n^\pm$ from the proof overview,
  and
  claim that
  $\mu(|A_n^\pm|)=o(n^d)$ as $n\to\infty$.
  The function $\phi^+_n$ is pyramid-shaped,
  as in Figure~\ref{fig:random_truncation}---that
  figure concerns the more complicated continuous setting
  $E=\mathbb R$, but the shape of $\phi_n^+$ is the same.
  Formally, this means that there exist constants $C'>0$ and $\varepsilon'>0$
  such that for any $n\in\mathbb N$ and for any $x\in\Pi_n$,
  \begin{equation}
    \label{eq:phipyr}
    \phi^+_n(x)\geq u(x)+\varepsilon' d_1(x,\partial^R\Pi_n)-C'.
  \end{equation}
  This is a consequence of Lemma~\ref{lemma_lipschitz_q_and_norm} and the fact that $u$ is in $U_\Phi$, the \emph{interior} of the set of slopes
  $u'$ for which $u'|_\mathcal L$ is $q$-Lipschitz.
  Now fix $\varepsilon''>0$.
  By~\eqref{eq:phipyr}, the number of points $x\in\Pi_n$
  at which $\phi_n^+(x)\leq u(x)+\varepsilon''n$
  is bounded from above by $(2d\varepsilon''/\varepsilon')n^d+o(n^d)$
  as $n\to\infty$.
  Theorem~\ref{thm:superergodresult} tells us that
  \[\mu(|\{x\in\Pi_n:\phi(x)> u(x)+\varepsilon'' n\}|)=o(n^d).\]
  Combining the two bounds gives
  $\mu(|A_n^+|)\leq (2d\varepsilon''/\varepsilon')n^d+o(n^d)$.
  The constant $\varepsilon''$ may be chosen arbitrarily small,
  and therefore we obtain $\mu(|A_n^+|)=o(n^d)$.
  In the same spirit, one obtains $\mu(|A_n^-|)=o(n^d)$.
  This proves the claim.

  Next, we construct for each $n\in\mathbb N$
  a new measure $\mu^+_n$, the \emph{upper truncation} of $\mu$.
  To sample from $\mu^+_n$,
  first sample $\phi$ from $\mu$,
  then replace
  $\phi(x)$
  by $\phi^+_n(x)$
  for any $x\in A_n^+$.
  This means that the distribution of
  $\phi_{\Pi_n}$ in $\mu_n^+$ is the same as the distribution of
  $\phi_{\Pi_n}\wedge \phi^+_n$ in $\mu$.
  Assert that
  \begin{equation*}
    \mathcal H_{\Pi_n}(\mu_n^+|\Phi)=\mathcal H_{\Pi_n}(\mu|\Phi)+o(n^d).
  \end{equation*}

  We present an alternative three-stage construction of $\mu_n^+$,
  and demonstrate that the free energy changes by no more than $o(n^d)$ at every stage.
  Write $\mathcal S$ for the set of finite subsets of $\mathbb Z^d$,
  which is countable, and write $\alpha$ for the counting measure on $\mathcal S$.
  Write $\mathcal G_n$ for the smallest $\sigma$-algebra
  on $\Omega\times\mathcal S$
  containing $A\times\{\Lambda\}$ for any $A\in\mathcal F_{\Pi_n}^\nabla$
  and any $\Lambda\subset\Pi_n$.

   For the first stage, write $\tilde \mu_n$ for the measure $\mu$ with the
       set $A_n^+$ attached to every sample $\phi\in\Omega$.
       The measure $\tilde \mu_n$ is thus a probability measure on the measurable space $(\Omega\times \mathcal S,\mathcal G_n)$.
       Moreover, the distribution of $\phi_{\Pi_n}$ is the same in $\mu$ as it is in $\tilde \mu_n$,
       and the set $A_n^+$ depends deterministically on $\phi_{\Pi_n}$.
       Therefore
       \begin{equation}
         \label{eq:reftoentrterm}
         \mathcal H_{\Pi_n}(\mu|\Phi)
         =\mathcal H_{\mathcal F_{\Pi_n}^\nabla}\left(\mu\middle|e^{-H_{\Pi_n}^0}\lambda^{\Pi_n-1}\right)
         =\mathcal H_{\mathcal G_n}\left(\tilde\mu_n\middle|\left(e^{-H_{\Pi_n}^0}\lambda^{\Pi_n-1}\right)\times\alpha\right).
       \end{equation}

     For the second stage, introduce a new measure $\tilde\mu_n^+$ on $(\Omega\times\mathcal S,\mathcal G_n)$.
     To sample from $\tilde\mu^+_n$, sample first a pair $(\phi,A)$ from $\tilde\mu_n$,
     then replace $\phi(x)$ by $\phi_n^+(x)$ for every $x\in A$.
     Remark that the distribution of $\phi_{\Pi_n}$ is the
     same in $\tilde \mu_n^+$ as it is in $\mu_n^+$.
     Write $A'$ for the set $\Pi_n\smallsetminus A$.
     The entropies of $\tilde\mu_n$ and $\tilde \mu_n^+$
     (relative to the reference measure in the final term of~\eqref{eq:reftoentrterm})
     can be calculated in three steps.
     First, calculate the entropy of the choice of the set $A$.
     Second, calculate the entropy of the choice of the values of $\phi$
     on $A'$.
     Third, calculate the entropy of the choice of the values of $\phi$
     on $A$.
     In the construction of $\tilde\mu_n^+$ we only change the values
     of $\phi$ on $A$, and therefore the third step is the only step that produces a different entropy term.
     We have
     \begin{align*}
       &
\mathcal H_{\mathcal G_n}\left(\tilde\mu_n^+\middle|\left(e^{-H_{\Pi_n}^0}\lambda^{\Pi_n-1}\right)\times\alpha\right)
-
\mathcal H_{\mathcal G_n}\left(\tilde\mu_n\middle|\left(e^{-H_{\Pi_n}^0}\lambda^{\Pi_n-1}\right)\times\alpha\right)
         \\&\qquad=\int \left(
         \mathcal H\left(\delta_{\phi_n^+|_A}\middle|e^{-H_{A,\Pi_n}\left(\cdot,\phi_{A'}\right)}\lambda^{A}\right)
         -
         \mathcal H\left(\mu^{(A,\phi_{A'})}\pi_A\middle|e^{-H_{A,\Pi_n}\left(\cdot,\phi_{A'}\right)}\lambda^{A}\right)
       \right)d\tilde\mu_n(\phi,A)
       \\&\qquad=
       \tilde\mu_n\left(H_{A,\Pi_n}(\phi\wedge \phi_n^+)-H_{A,\Pi_n}(\phi)\right)-
       \int
       \mathcal H\left(\mu^{(A,\phi_{A'})}\pi_A\middle|\lambda^{A}\right)
       d\tilde\mu_n(\phi,A).\numberthis\label{eq:finalentrdiff}
     \end{align*}
     In these equations, $\delta$ denotes the Dirac measure,
     $\pi_A$ is the projection kernel
     onto $A$, and
     $\mu^{(A,\phi_{A'})}$
     denotes the original measure $\mu$ conditioned on seeing $A_n^+=A$
     and on the values of $\phi$ on the set $A'$.
     For the first term in~\eqref{eq:finalentrdiff} we observe
     that
     \[
      \left|
      \tilde\mu_n\left(H_{A,\Pi_n}(\phi\wedge \phi_n^+)-H_{A,\Pi_n}(\phi)\right)
      \right|
      =O(\mu(|A_n^+|))=o(n^d);
     \]
     this follows from the claim and~\eqref{eq_discrete_singleton_hamil_bound}---noting
     that $\phi$ and $\phi\wedge \phi_n^+$ are $q$-Lipschitz.
     For $\tilde\mu_n$-a.e.\ $(\phi,A)$,
     we observe that
     the measure $\mu^{(A,\phi_{A'})}$
     produces $K$-Lipschitz height functions
     almost surely,
     and consequently the same measure---restricted to $A$---is
     supported on a set of cardinality
     at most $(2K+1)^{|A|}$.
     Conclude that the second term in~\eqref{eq:finalentrdiff}
     is bounded absolutely by
     \[
     \mu(|A_n^+|)\log(2K+1)=o(n^d).
     \]

  To sample from $\mu^+_n$, sample a pair $(\phi,A)$ from $\tilde\mu^+_n$,
  then simply forget about the set $A$.
  This is the third stage.
  Write $\nu_n$ for marginal of $\tilde\mu^+_n$ on $\mathcal S$.
   Then
  \[
  \mathcal H_{\Pi_n}(\mu_n^+|\Phi)+\mathcal H(\nu_n|\alpha) \leq \mathcal H_{\mathcal G_n}\left(\tilde\mu^+_n\middle|\left(e^{-H_{\Pi_n}^0}\lambda^{\Pi_n-1}\right)\times\alpha\right)\leq\mathcal H_{\Pi_n}(\mu_n^+|\Phi).
  \]
  Evidently $\mathcal H(\nu_n|\alpha)\leq 0$; the goal is to
   find a lower bound on $\mathcal H(\nu_n|\alpha)$.
   The measure $\nu_n$ is a probability measure on the set of subsets of $\Pi_n$
   and we also know that
   $\nu_n(|A|)=\mu(|A_n^+|)$.
   The entropy of $\nu_n$ is minimized (among all probability measures with these two properties)
   if one samples from $\nu_n$
   by flipping a coin independently for every vertex $x\in\Pi_n$
   to determine if $x\in A$.
   The Bernoulli parameter of the coin is $\mu(|A_n^+|)/n^d$
   so that $\nu_n(|A|)=\mu(|A_n^+|)$.
   Write $f(p)=p\log p + (1-p)\log(1-p)$,
   the entropy of a Bernoulli trial with parameter $p$.
   Then the entropy of the entropy-minimizing measure
   is
   $n^df(\mu(|A_n^+|)/n^d)$.
   Now $\lim_{p\to 0}f(p)=0$
   and therefore $\mathcal H(\nu_n|\alpha)=o(n^d)$.
   Conclude that the assertion holds true,
   that is,
   \[
   \mathcal H_{\Pi_n}(\mu_n^+|\Phi)
   =\mathcal H_{\Pi_n}(\mu|\Phi)+o(n^d).\]

   The measure $\mu_n^\pm$ is now obtained from $\mu_n^+$
   by applying a \emph{lower truncation}.
   To sample from $\mu_n^\pm$, sample first a height function
   $\phi$ from $\mu_n^+$,
   then replace $\phi(x)$ by $\phi_n^-(x)$
   for any $x\in A_n^-$.
   Alternatively, sample $\phi$ from $\mu$,
   then replace $\phi_{\Pi_n}$ by
   $\phi_n^-\vee \phi_{\Pi_n}\wedge \phi_n^+$.
   By similar arguments as before we have
   \[
   \mathcal H_{\Pi_n}(\mu_n^\pm|\Phi)=
   \mathcal H_{\Pi_n}(\mu_n^+|\Phi)+o(n^d)
   =\mathcal H_{\Pi_n}(\mu|\Phi)+o(n^d)
    \]
    as $n\to\infty$.
   The measure $\mu_n^\pm$ is supported on $C_n^u$.
   Moreover, because $\mu(|A_n^\pm|)=o(n^d)$
   and because $\mu(B_n^*)\to 1$ as $n\to\infty$,
   we have $\mu^\pm_n(B_n)\to 1$ as $n\to\infty$.
   In particular, this means that the measures $\mu_n:=\mu_n^\pm(\cdot|B_n)$
   are supported on $C_n^u\cap B_n$
   and satisfy
   $\mathcal H_{\Pi_n}(\mu_n|\Phi)\leq \mathcal H_{\Pi_n}(\mu|\Phi)+o(n^d)$
   as $n\to\infty$. This concludes the proof for $E=\mathbb Z$.

   \emph{The continuous case.}
     Fix $\varepsilon>0$ so small that $\phi^u$ is $q_{4\varepsilon}$-Lipschitz,
      and pick $B\in\mathcal B$ with $\mu\in B$.
     Assume a choice of $B$ and $B^*$ as for the discrete case.
      It suffices to find a sequence of measures $(\mu_n)_{n\in\mathbb N}$
     with $\mu_n$ supported on $C_{n,2\varepsilon}^u\cap B_n $
     and with $\mathcal H_{\Pi_n}(\mu_n|\Phi)\leq \mathcal H_{\Pi_n}(\mu|\Phi)+o(n^d)$.
   Write $\phi_n^\pm$
   for the largest and smallest $q_{3\varepsilon}$-Lipschitz
   extensions of $\phi^u_{\partial^R\Pi_n}$
   to $\Pi_n$ respectively, for each $n\in\mathbb N$.
   Take again $0$ as reference vertex for the gradient setting
   (as for the discrete case),
   and define the random sets
   \[
     A_n^-:=\{x\in\Pi_n:\phi(x)<\phi^-_n(x)-2\varepsilon\}\quad\text{and}\quad A_n^+:=\{x\in\Pi_n:\phi(x)>\phi^+_n(x)+2\varepsilon\}.
     \]
     Note that $\phi_n^\pm(0)=0$ by definition,
     and therefore $\mu$-almost surely $0\not\in A_n^\pm$.
     Observe that
     $\mu(|A_n^\pm|)=o(n^d)$ by arguments identical to the case $E=\mathbb Z$;
     one can show that $\phi^+_n$ is pyramid-shaped in the sense of~\eqref{eq:phipyr}
     because $\phi^u$ is $q_{4\varepsilon}$-Lipschitz and because we chose the extension
     $\phi^+_n$ to be the largest $q_{3\varepsilon}$-Lipschitz extension.

\begin{figure}
	\centering
  \begin{tikzpicture}[x=0.07cm,y=0.03cm]
	  \input{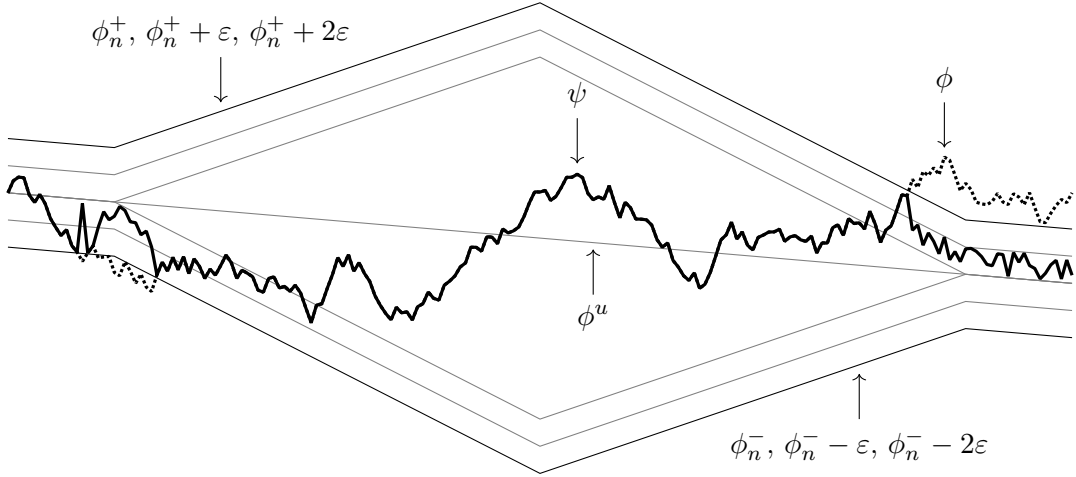}
		\draw[<-] (40,40) -- (40,60) node[above] {$\phi^+_n$, $\phi^+_n+\varepsilon$, $\phi^+_n+2\varepsilon$};
		\draw[<-] (160,-80) -- (160,-100) node[below] {$\phi^-_n$, $\phi^-_n-\varepsilon$, $\phi^-_n-2\varepsilon$};
		\draw[<-] (176,20) -- (176,40) node[above] {$\phi$};
		\draw[<-] (107,13) -- (107,33) node[above] {$\psi$};
		\draw[<-] (110,-25) -- (110,-45) node[below] {$\phi^u$};

  \end{tikzpicture}
  \caption[The random truncation for $E=\mathbb R$]{The random truncation for $E=\mathbb R$.
	The randomly truncated sample $\psi$
  remains between $\phi^-_n-2\varepsilon$ and $\phi^+_n+2\varepsilon$.}
	\label{fig:random_truncation}
\end{figure}

     For each $n\in\mathbb N$ we construct a new measure $\mu_n^+$ on $(\Omega,\mathcal F^\nabla_{\Pi_n})$, the \emph{upper truncation}
     of $\mu$.
     Let $(X(x))_{x\in\mathbb Z^d}$ be a process of i.i.d.\
     random variables, uniformly random
     in the interval $[0,\varepsilon]$,
     in some new measure $\nu$.
     To sample from $\mu_n^+$, first sample $(\phi,X)$ from $\mu\times\nu$.
     Then, for each $x\in A_n^+$, replace
     $\phi(x)$ by $\phi_n^+(x)+X(x)$.
     Figure~\ref{fig:random_truncation}
     displays the original function $\phi$
     and the randomly truncated function $\psi$;
     the upper truncation is located on the right hand side,
     a lower truncation (which is defined at a later stage) occurs
     on the left.
     The new measure $\mu_n^+$ is absolutely continuous with respect to $\lambda^{\Pi_n-1}$
     because we replaced each value $\phi(x)$ by a continuously distributed random variable.
     Assert
     that
     \begin{equation}
       \label{eq:bigclaimREAL}
       \mathcal H_{\Pi_n}
       (\mu_n^+|\Phi)
       \leq
       \mathcal H_{\Pi_n}(\mu|\Phi)+o(n^d).
     \end{equation}
     Again, we present an alternative three-stage construction of $\mu_n^+$,
     and we demonstrate that the entropy does not increase by more than $o(n^d)$ at every stage.

     For the first stage, write $\tilde \mu_n$ for the measure $\mu$ with the
         set $A_n^+$ attached to every sample $\phi\in\Omega$.
         Define $\alpha$ and $\mathcal G_n$ as before.
         The measure $\tilde\mu_n$ is a probability measure on $(\Omega\times\mathcal S,\mathcal G_n)$.
         Note that~\eqref{eq:reftoentrterm} holds for this measure
         as
          $A_n^+$ depends deterministically on $\phi_{\Pi_n}$.

            For the second stage, introduce a new measure $\tilde\mu_n^+$ on $(\Omega\times\mathcal S,\mathcal G_n)$.
            To sample from $\tilde\mu_n^+$, sample first a triple $(\phi,A,X)$ from $\tilde\mu_n\times\nu$,
            then replace $\phi(x)$ by $\phi_n^+(x)+X(x)$ for every $x\in A$.
            Write $A'$ for $\Pi_n\smallsetminus A$.
            Write $\psi$ for the function on
            $\Pi_n$ defined by
            $\psi_A=\phi^+_n|_A+X_A$ and $\psi_{A'}=\phi_{A'}$.
            One calculates the entropies of $\tilde\mu_n$ and $\tilde \mu_n^+$
            as in the discrete case to deduce that
            \begin{align*}
              &
              \mathcal H_{\mathcal G_n}\left(\tilde\mu_n^+\middle|\left(e^{-H_{\Pi_n}^0}\lambda^{\Pi_n-1}\right)\times\alpha\right)
              -
   \mathcal H_{\mathcal G_n}\left(\tilde\mu_n\middle|\left(e^{-H_{\Pi_n}^0}\lambda^{\Pi_n-1}\right)\times\alpha\right)
              \\&=\int \left(
              \mathcal H\left((\nu+\phi_n^+)\pi_A\middle|e^{-H_{A,\Pi_n}\left(\cdot,\phi_{A'}\right)}\lambda^{A}\right)
              -
              \mathcal H\left(\mu^{(A,\phi_{A'})}\pi_A\middle|e^{-H_{A,\Pi_n}\left(\cdot,\phi_{A'}\right)}\lambda^{A}\right)
            \right)d\tilde\mu_n(\phi,A)
            \\&=
            \tilde\mu_n\times\nu\left(H_{A,\Pi_n}(\psi)-H_{A,\Pi_n}(\phi)\right)-
            \tilde\mu_n(|A|)\log\varepsilon
            -
            \int
            \mathcal H\left(\mu^{(A,\phi_{A'})}\pi_A\middle|\lambda^{A}\right)
            d\tilde\mu_n(\phi,A).
            \end{align*}
            In these equations, $\mu^{(A,\phi_{A'})}$
            denotes the original measure $\mu$ conditioned on seeing $A_n^+=A$
            and on the values of $\phi$ on the set $A'$.
            By $\nu+\phi_n^+$ we simply mean the measure obtained
            by shifting each sample $X$ from $\nu$ by $\phi_n^+$.
            As in the discrete setting,
            the last two terms have an upper bound of
            order $o(n^d)$
            as $n\to\infty$.
            It suffices to find an appropriate upper bound
            for the first term in the final expression.

            Let $(\mathbb A,q)$
            denote the local Lipschitz constraint.
            By Proposition~\ref{propo_qqq},
            it is possible
            to find a constant $0<\varepsilon'\leq \varepsilon$,
            such that
            for any $\{x,y\}\in\mathbb A$, we have
            \[
              q_{\varepsilon'}(x,y)\geq q(x,y)-\varepsilon
              .
            \]
            Claim that $\tilde\mu_n\times\nu$-almost surely,
            $\psi$ is $q_{\varepsilon'}$-Lipschitz at every $x\in A$.
            In other words, we claim that
            \begin{equation}\label{eq:cutclaim}
              -q_{\varepsilon'}(y,x)\leq \psi(y)-\psi(x)
              \leq
              q_{\varepsilon'}(x,y)
            \end{equation}
            whenever $x\in A$, $y\in\Pi_n$, and $\{x,y\}\in\mathbb A$.
            Suppose first that $y\in A$.
            The function $\phi_n^+$ is $q_{3\varepsilon}$-Lipschitz
            and $0\leq (\psi-\phi_n^+)_{\{x,y\}}=X_{\{x,y\}}\leq \varepsilon$ for $x,y\in A$,
            and therefore~\eqref{eq:cutclaim}
            holds true with $\varepsilon'$
            replaced by $\varepsilon$.
            But $q_\varepsilon\leq q_{\varepsilon'}$,
            which implies~\eqref{eq:cutclaim} without said replacement.
            Now suppose that $y\not\in A$, so that $\psi(y)-\psi(x)=\phi(y)-\phi_n^+(x)-X(x)$.
            For the righthand inequality of~\eqref{eq:cutclaim}
            we have (almost surely)
            \[
            \phi(y)-\phi_n^+(x)-X(x)\leq (\phi_n^+(y)+2\varepsilon)-\phi_n^+(x)\leq q_{3\varepsilon}(x,y)+2\varepsilon\leq q_{\varepsilon}(x,y)
            \leq q_{\varepsilon'}(x,y).
            \]
            For the inequality on the left we see that (using $\phi_n^+(x)+X(x)\leq\phi(x)-\varepsilon$ for the first inequality)
            \[\phi(y)-\phi_n^+(x)-X(x)\geq  \phi(y)-\phi(x)+\varepsilon\geq -q(y,x)+\varepsilon\geq-q_{\varepsilon'}(y,x).\]
            The middle inequality in this equation is due to the fact that $\phi$ is $\mu$-almost surely $q$-Lipschitz.
            This proves the claim.

         By the claim and~\eqref{eq_cts_singleton_hamil_bound},
         we have
         \[
           \tilde\mu_n\times\nu\left(H_{A,\Pi_n}(\psi)\right)\leq O(\mu(|A_n^+|))=o(n^d).
           \]
           For the other Hamiltonian we simply observe that
           \[
             \tilde\mu_n\times\nu\left(H_{A,\Pi_n}(\phi)\right)
             =
             \tilde\mu_n\left(H_{A,\Pi_n}(\phi)\right)
             \geq -\|\Xi\|\mu(|A_n^+|)=o(n^d).
           \]
           Putting all estimates together, we see that
           \begin{equation*}
             \mathcal H_{\mathcal G_n}\left(\tilde\mu_n^+\middle|\left(e^{-H_{\Pi_n}^0}\lambda^{\Pi_n-1}\right)\times\alpha\right)
             \leq
             \mathcal H_{\Pi_n}(\mu|\Phi)+o(n^d).
           \end{equation*}
           To prove the original assertion,
           simply observe that, as in the discrete
           case,
           forgetting about the information
           encoded in the set $A$ changes the entropy
           of $\tilde\mu_n^+$ by no more than $o(n^d)$:
           \[
           \mathcal H_{\Pi_n}(\mu_n^+|\Phi)=
           \mathcal H_{\mathcal G_n}\left(\tilde\mu_n^+\middle|\left(e^{-H_{\Pi_n}^0}\lambda^{\Pi_n-1}\right)\times\alpha\right)
           +o(n^d).
           \]
           This proves the assertion~\eqref{eq:bigclaimREAL}.

         Finally one constructs a \emph{lower truncation} $\mu_n^\pm$ from $\mu_n^+$.
         To sample from $\mu_n$, one first samples $\phi$ from $\mu_n^+$.
         Then, for every $x\in A_n^-$,
         one resamples $\phi(x)$ independently and uniformly at random from the interval $[\phi_n^-(x)-\varepsilon,\phi_n^-(x)]$.
         As before, we have
         \[
         \mathcal H_{\Pi_n}(\mu_n^\pm|\Phi)\leq \mathcal H_{\Pi_n}(\mu_n^+|\Phi)+o(n^d)\leq \mathcal H_{\Pi_n}(\mu|\Phi)+o(n^d).
         \]
         Now $\phi_n^--2\varepsilon\leq \phi_{\Pi_n}\leq \phi_n^++2\varepsilon$ almost surely in the measure $\mu_n^\pm$
         and this implies in particular that
         \[\phi^u_{\partial^R\Pi_n}-2\varepsilon\leq\phi_{\partial^R\Pi_n}\leq \phi^u_{\partial^R\Pi_n}+2\varepsilon,\]
         that is, $\mu_n^\pm$ is supported on $C_{n,2\varepsilon}^u$.
         Since $\mu(B_n^*)\to 1$
         and $\mu(|A_n^\pm|)=o(n^d)$ as $n\to\infty$,
         we have $\mu_n^\pm(B_n)\to 1$ as $n\to\infty$.
         This proves that the sequence $\mu_n:=\mu_n^\pm(\cdot|B_n)$
         has the desired properties.
\end{proof}


%
%
%
%

We now proceed as for free boundary limits,
and define pinned boundary limits over other
Van Hove sequences.

\begin{definition}
  Fix $D\in\mathcal C$
  and
  $m\in\mathbb Z_{\geq 0}$,
  and write $\Lambda_n$ for $\Lambda^{-m}(nD)=(nD\cap\mathbb Z^d)^{-m}$.
  Consider $\mu\in\mathcal P_\mathcal L(\Omega,\mathcal F^\nabla)$.
  If $E=\mathbb Z$, then define
  \[
    \PB(\mu:D,m):=
    \sup_{\text{$B\in\mathcal B$ with $\mu\in B$}}
      \limsup_{n\to\infty}n^{-d}\PB_{\Lambda_n,S(\mu)}(B),
  \]
  and if $E=\mathbb R$, then define
  \[
    \PB(\mu:D,m):=
    \sup_{\text{$\varepsilon>0$ and $B\in\mathcal B$ with $\mu\in B$}}
      \limsup_{n\to\infty}n^{-d}\PB_{\Lambda_n,S(\mu),\varepsilon}(B).
  \]
  Finally, write
  $\PB^*(\mu):=\sup_{(D,m)\in \mathcal C\times\mathbb Z_{\geq 0}}
  \PB(\mu:D,m)/\operatorname{Leb}(D)$.
\end{definition}

It is immediate that $\PB^*(\mu)\geq \PB(\mu)$
because one can take $D=[0,1)^d$ and $m=0$
in the supremum in this new definition.
By reordering the suprema in the definitions, it is also clear that
$\PB^*$ is lower-semicontinuous on the set
$\{S(\cdot)=u\}\subset\mathcal P_\mathcal L(\Omega,\mathcal F^\nabla)$
for any $u$.


\begin{lemma}
  \label{lemma_pb_convex_shape}
  Consider $D\in\mathcal C$,
  $m\in\mathbb Z_{\geq 0}$,
  and $\mu\in\mathcal P_\mathcal L(\Omega,\mathcal F^\nabla)$
 ergodic with $S(\mu)\in U_\Phi$.
Then   \[
  \PB(\mu:D,m)\leq\operatorname{Leb}(D)\cdot\mathcal H(\mu|\Phi).\]
  In other words, $\PB^*(\mu)\leq \mathcal H(\mu|\Phi)$.
\end{lemma}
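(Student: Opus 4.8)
The plan is to deduce the statement from the box estimate of Lemma~\ref{lemma:PBL_upper_bound} by chopping the shape $\Lambda_n:=\Lambda^{-m}(nD)$ into a large number of well‑separated translates of a fixed box $\Pi_k$, in a two‑parameter limit: first $n\to\infty$, then the box size $k\to\infty$. Fix $D\in\mathcal C$, $m\in\mathbb Z_{\geq0}$, and $u:=S(\mu)\in U_\Phi$, and recall that $(\Lambda_n)_n$ is a Van Hove sequence with $|\Lambda_n|=\operatorname{Leb}(D)n^d(1+o(1))$ which is connected for $n$ large. We may assume $\mathcal H(\mu|\Phi)<\infty$, so $\mu$ is $K$‑Lipschitz (indeed $q$‑Lipschitz), and in case $E=\mathbb R$ we fix $\varepsilon>0$ small enough that $\phi^u$ is $q_{8\varepsilon}$‑Lipschitz; it suffices to treat all small such $\varepsilon$, since $\PB_{\Lambda_n,u,\varepsilon}(B)$ increases as $\varepsilon\downarrow0$. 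Fix also $B\in\mathcal B$ with $\mu\in B$, of the form $\{\nu:|\nu(f_i)-\mu(f_i)|<2\eta\text{ for all }i\}$ for continuous cylinder functions $f_i:\Omega\to[0,1]$ and $\eta>0$; write $B'\in\mathcal B$ for the analogous set with $2\eta$ replaced by $\eta$. Since the normalized restriction of $e^{-H^0_{\Lambda_n}}\lambda^{\Lambda_n-1}$ to $C^u_{\Lambda_n,\varepsilon}\cap B_{\Lambda_n}$ minimizes $\mathcal H_{\Lambda_n}(\cdot|\Phi)$ among measures supported on that set, we have $\PB_{\Lambda_n,u,\varepsilon}(B)\leq\mathcal H_{\Lambda_n}(\nu_n|\Phi)$ for any measure $\nu_n$ supported there; so it suffices to build such $\nu_n$ with $n^{-d}\mathcal H_{\Lambda_n}(\nu_n|\Phi)\leq\operatorname{Leb}(D)\mathcal H(\mu|\Phi)+\rho(k)+o(1)$ as $n\to\infty$, where $\rho(k)\to0$ as $k\to\infty$ (the case $E=\mathbb Z$ is identical with $C^u_{\Lambda_n}$ in place of $C^u_{\Lambda_n,\varepsilon}$ and without any $\varepsilon$).

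\emph{Box measure, packing, gluing.} Running the construction of Lemma~\ref{lemma:PBL_upper_bound} with $B'$ in place of $B$ and parameter $\varepsilon$ produces, for each large $k\in N\cdot\mathbb N$, a measure $\mu_k\in\mathcal P(\Omega,\mathcal F^\nabla_{\Pi_k})$ supported on $C^u_{\Pi_k,2\varepsilon}\cap B'_{\Pi_k}$ (on $C^u_{\Pi_k}\cap B'_{\Pi_k}$ if $E=\mathbb Z$) and on $q$‑Lipschitz functions, with $\mathcal H_{\Pi_k}(\mu_k|\Phi)\leq\mathcal H_{\Pi_k}(\mu|\Phi)+o(k^d)\leq k^d\mathcal H(\mu|\Phi)+o(k^d)$, the last step using $\mathcal H_{\Pi_k}(\mu|\Phi)\leq k^d\mathcal H(\mu|\Phi)+e^*(\Pi_k)$ (Theorem~\ref{thm_main_sfe}) and amenability of $e^*$. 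Partition $\mathbb Z^d$ into translates of $\Pi_{k+2R}$ along $(k+2R)\mathbb Z^d\subset\mathcal L$ (arranging $k+2R\in N\cdot\mathbb N$), place in each the concentric box $\Pi_k+x$ so that adjacent boxes are separated by a mortar layer of width $2R$, let $G_n$ collect the centres $x$ with $\Pi_{k+2R}+x\subset\Lambda_n$ disjoint from $\partial^R\Lambda_n$, and set $\Gamma_n:=\Lambda_n\setminus\bigcup_{x\in G_n}(\Pi_k+x)$; then $|G_n|=\operatorname{Leb}(D)n^d/(k+2R)^d+o(n^d)$ and $|\Gamma_n|\leq(\beta(k)+o(1))n^d$ with $\beta(k)\to0$. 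Let $\nu_n$ be the law, read in $\mathcal P(\Omega,\mathcal F^\nabla_{\Lambda_n})$, of the surface $\phi^*$ obtained by sampling independent copies of lattice translates of $\mu_k$ on the boxes $\Pi_k+x$, shifting each by a constant so that it agrees with $\phi^u$ up to $2\varepsilon$ on its inner $R$‑layer, filling $\Gamma_n$ with $\phi^u$ (if $E=\mathbb Z$) or with $\phi^u$ plus an independent field uniform on $[0,\varepsilon/2]$ (if $E=\mathbb R$), and glueing. Then $\phi^*$ is $q$‑Lipschitz; its restriction to $\Gamma_n$ is $q_\varepsilon$‑Lipschitz (being within $\varepsilon/2$ of the $q_{8\varepsilon}$‑Lipschitz $\phi^u$), and on the inner $R$‑layer of every box it lies within $2\varepsilon$ of $\phi^u$, so by the flexibility of the family $(q_\varepsilon)_\varepsilon$ (Proposition~\ref{propo_qqq}) and the Kirszbraun theorem (Proposition~\ref{propo_Kirszbraun}) it has exactly the Lipschitz regularity near box boundaries and on the mortar needed for the attachment lemmas below. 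Moreover $\phi^*\in C^u_{\Lambda_n,\varepsilon}$ (resp.\ $C^u_{\Lambda_n}$) since $\Gamma_n\supseteq\partial^R\Lambda_n$, and $\phi^*\in B_{\Lambda_n}$ for $k,n$ large: the vertices of $\mathcal L\cap\Lambda_n$ whose $f_i$‑window is contained in a single box — a fraction $\geq1-\beta(k)-O(1/k)-o(1)$ of the total — contribute an $f_i$‑average within $\eta$ of $\mu(f_i)$ (as $\mu_k$ is supported on $B'_{\Pi_k}$ and the $f_i$ are cylinder functions), while the remaining vertices move this average by at most $\beta(k)+O(1/k)+o(1)<\eta$.

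\emph{Free energy bookkeeping and conclusion.} Peeling the boxes off $\Lambda_n$ one at a time — using the upper attachment lemma (Lemma~\ref{lemma_attachment}), whose hypotheses hold because $\phi^*$ has the right Lipschitz regularity on the inner $R$‑layer of each box (near $\phi^u$) and of each remaining region (contained in the mortar), together with the lower attachment lemma (Lemma~\ref{lemma_lower_attachments}) and the additivity of relative entropy over disjoint blocks up to $O(1)$ per seam, as in the proof of Lemma~\ref{lemma_fe_superadditivity} — one obtains
\[
\mathcal H_{\Lambda_n}(\nu_n|\Phi)\leq\sum_{x\in G_n}\mathcal H_{\Pi_k}(\mu_k|\Phi)+\mathcal H_{\Gamma_n}(\nu_n|\Phi)+|G_n|\big(e^{+}(\Pi_k)+e^-(\Pi_k)+O(1)\big),
\]
where we used that $\nu_n$ restricted to $\mathcal F^\nabla_{\Pi_k+x}$ is a lattice translate of $\mu_k$ — the random constant shift being invisible to $\mathcal F^\nabla$ — and that $\Phi$ is periodic, so $\nu_n(H^0_{\Pi_k+x})=\mu_k(H^0_{\Pi_k})$. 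Here $\mathcal H_{\Gamma_n}(\nu_n|\Phi)\leq C_\varepsilon|\Gamma_n|\leq C_\varepsilon(\beta(k)+o(1))n^d$ (the energy bounded via $e^+_\varepsilon$ and positivity/summability of $\Psi,\Xi$, since $\phi^*|_{\Gamma_n}$ is $q_\varepsilon$‑Lipschitz; the entropy by $|\Gamma_n|\log(2/\varepsilon)$ if $E=\mathbb R$ and by $0$ if $E=\mathbb Z$); the term $|G_n|(e^+(\Pi_k)+e^-(\Pi_k)+O(1))$ is $o(n^d)$ for fixed $k$ and, as $e^+(\Pi_k)=O(k^{d-1})$, $e^-(\Pi_k)=o(k^d)$ and $|G_n|$ is of order $n^d/k^d$, is also at most $(\rho_2(k)+o(1))n^d$ with $\rho_2(k)\to0$; and $|G_n|\mathcal H_{\Pi_k}(\mu_k|\Phi)\leq\operatorname{Leb}(D)\mathcal H(\mu|\Phi)n^d+(\rho_3(k)+o(1))n^d$ with $\rho_3(k)\to0$, using $|\mathcal H_{\Pi_k}(\mu_k|\Phi)|\leq Ck^d$ (bounded below by the free energy attachment lemma) and $|G_n|k^d=\operatorname{Leb}(D)n^d(1-O(1/k))(1+o(1))$. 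Dividing by $n^d$, taking $\limsup_{n\to\infty}$ and then $k\to\infty$ gives $\limsup_n n^{-d}\PB_{\Lambda_n,u,\varepsilon}(B)\leq\operatorname{Leb}(D)\mathcal H(\mu|\Phi)$. Taking the supremum over $B\in\mathcal B$ with $\mu\in B$ and over small $\varepsilon>0$ yields $\PB(\mu:D,m)\leq\operatorname{Leb}(D)\mathcal H(\mu|\Phi)$, and finally the supremum over $(D,m)\in\mathcal C\times\mathbb Z_{\geq0}$ of $\PB(\mu:D,m)/\operatorname{Leb}(D)$ gives $\PB^*(\mu)\leq\mathcal H(\mu|\Phi)$.

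The step I expect to be most delicate is the gluing when $E=\mathbb R$: the box samples are only pinned to $\phi^u$ up to $2\varepsilon$ rather than exactly, the otherwise frozen mortar must be perturbed by a small independent field to keep $\nu_n$ absolutely continuous with respect to Lebesgue measure, and one must check that the patched surface retains enough Lipschitz regularity — on the mortar and near the box boundaries — for the singleton‑Hamiltonian bound $e^+_\varepsilon$ and the upper attachment lemma to apply. This is handled exactly as in the continuous case of Lemma~\ref{lemma:PBL_upper_bound}, via Proposition~\ref{propo_qqq} and the Kirszbraun theorem. The secondary points requiring care are the verification that the empirical measure of $\phi^*$ stays inside $B$ and that all the mortar, seam, and boundary‑layer errors vanish after the $k\to\infty$ limit.
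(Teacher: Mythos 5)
Your proof is essentially correct, but it takes a substantially heavier route than the paper's, and the two approaches are worth contrasting. The paper handles both tasks in a few lines by avoiding any gluing construction: it observes that the truncation argument of Lemma~\ref{lemma:PBL_upper_bound} applies verbatim to the Van Hove sequence $\Lambda_n=\Lambda^{-m}(nD)$ (since $\Lambda_n$ is connected with $|\partial\Lambda_n|=O(n^{d-1})$), giving $\PB_{\Lambda_n,u,\varepsilon}(B)\leq\mathcal H_{\Lambda_n}(\mu|\Phi)+o(n^d)$ directly, and then bounds $\limsup_n n^{-d}\mathcal H_{\Lambda_n}(\mu|\Phi)$ by $\operatorname{Leb}(D)\,\mathcal H(\mu|\Phi)$ \emph{by complementation inside $\Pi_n$}: one combines the free-energy superadditivity of Lemma~\ref{lemma_fe_superadditivity} (applied to $\Pi_n=\Lambda_n\cup\Delta_n$) with the Van Hove inequality of Proposition~\ref{propo_large_set_limit} applied to $\Delta_n:=\Pi_n\smallsetminus\Lambda_n$, which gives $\liminf_n n^{-d}\mathcal H_{\Delta_n}(\mu|\Phi)\geq(1-\operatorname{Leb}(D))\mathcal H(\mu|\Phi)$. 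Your construction instead builds a test measure by tiling $\Lambda_n$ with independent copies of a box measure $\mu_k$ and a mortar, and takes a second limit $k\to\infty$; this amounts to re-deriving, in a specialized way, the sub-/super-additivity machinery that the paper already packages in the attachment lemmas, and it forces you to verify that the patched surface stays $q$-Lipschitz across every box/mortar seam. You correctly flag the seam analysis as the delicate point, but the specific constants you fix are not actually sufficient for general range $R$: with $\phi^u$ chosen $q_{8\varepsilon}$-Lipschitz, boxes pinned within $2\varepsilon$ and mortar within $\varepsilon/2$, Proposition~\ref{propo_qqq} only guarantees $q-q_{8\varepsilon}\geq 8\varepsilon/R$ on $\mathbb A$-edges, which falls short of the seam discrepancy (roughly $2.5\varepsilon$) as soon as $R\geq 4$. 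The fix is routine — take $\phi^u$ to be $q_{M\varepsilon}$-Lipschitz with $M$ chosen large relative to $R$, or shrink the pinning tolerances accordingly — but it should be stated, since as written the key claim that $\phi^*$ is $q$-Lipschitz is false for general $R$. The paper's proof does not face this issue at all, because it never glues anything: it works with the single truncated measure on $\Lambda_n$ inherited from $\mu$.
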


\begin{proof}
   Write $u:=S(\mu)$
   and $\Lambda_n:=\Lambda^{-m}(nD)$,
   and fix $B\in\mathcal B$ with $\mu\in B$.
  The truncation argument in the proof of Lemma~\ref{lemma:PBL_upper_bound}
  implies that
  $\PB_{\Lambda_n,u}(B)\leq \mathcal H_{\Lambda_n}(\mu|\Phi)+o(n^d)$
  as $n\to\infty$
  if $E=\mathbb Z$,
  and   $\PB_{\Lambda_n,u,\varepsilon}(B)\leq \mathcal H_{\Lambda_n}(\mu|\Phi)+o(n^d)$
  as $n\to\infty$
  for any $\varepsilon>0$
  if $E=\mathbb R$.
  Therefore it suffices to demonstrate
  that
  \[
  \numberthis\label{eq:wrongsideineq}
    \limsup_{n\to\infty}n^{-d}\mathcal H_{\Lambda_n}(\mu|\Phi)\leq \operatorname{Leb}(D)\cdot
    \mathcal H(\mu|\Phi).
  \]
  Without loss of generality,
  we suppose that $D\subset [\varepsilon,1-\varepsilon]^d\subset [0,1)^d\subset\mathbb R^d$
  for some $\varepsilon>0$.
  Define $\Delta_n:=\Pi_n\smallsetminus\Lambda_n$.
  Then
   $n^{-d}|\Delta_n|\to 1-\operatorname{Leb}(D)$
  as $n\to\infty$,
  and therefore Proposition~\ref{propo_large_set_limit}
  implies that
  \[
  \liminf_{n\to\infty}n^{-d}\mathcal H_{\Delta_n}(\mu|\Phi)\geq (1-\operatorname{Leb}(D))\cdot
  \mathcal H(\mu|\Phi).
  \]
  Note that~\eqref{eq:wrongsideineq}
  now follows from the fact that
  \[
    n^{-d}(\mathcal H_{\Lambda_n}(\mu|\Phi)+
    \mathcal H_{\Delta_n}(\mu|\Phi))
    \leq
    \mathcal H(\mu|\Phi)+o(1)
  \]
  as $n\to\infty$.
\end{proof}


\subsection{Pinned boundary limits for $\mu$ not ergodic: washboard argument}

\label{subsec:ea}

The purpose of this subsection is to demonstrate that $\PB^*(\mu)\leq\mathcal H(\mu|\Phi)$
for any shift-invariant random field $\mu$ with $S(\mu)\in U_\Phi$.
The previous subsection proved this for $\mu$
ergodic.
First, we demonstrate that $\PB^*$ is convex (Lemma~\ref{lemma_PB_star_convex})---recall that $\mathcal H(\cdot|\Phi)$
is affine.
The idea is then to use lower-semicontinuity of $\PB^*$
in the topology of weak local convergence to derive the inequality
for all non-ergodic measures (Lemma~\ref{lemma_PB_star_uncontrained_ineq}).
Extra care must be taken whenever $E=\mathbb Z$,
because in that case there exist
ergodic measures with finite specific free energy
which have their slope in $\partial U_\Phi$
rather than $U_\Phi$.
This pathology is dealt with in Lemma~\ref{lemma_ergodic_approx_aux_1}.

\begin{lemma}
  \label{lemma_PB_star_convex}
  The functional $\PB^*$ is convex.
\end{lemma}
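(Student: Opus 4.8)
Throughout write $\mu_t:=(1-t)\mu_0+t\mu_1$ and $u_i:=S(\mu_i)$ for a given $t\in(0,1)$ (the cases $t\in\{0,1\}$ being trivial). The plan is to prove $\PB^*(\mu_t)\le(1-t)\PB^*(\mu_0)+t\PB^*(\mu_1)$. We may assume the right-hand side is finite, and hence, since $\PB^*(\cdot)\ge\FB(\cdot)\ge\mathcal H(\cdot\,|\Phi)$ and $\mathcal H(\nu|\Phi)=\infty$ whenever $\nu$ is not $q$-Lipschitz, that $\mu_0,\mu_1$ are $q$-Lipschitz; then $u_0,u_1\in\bar U_q=\bar U_\Phi$ by Lemma~\ref{lemma_lipschitz_q_and_norm}. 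If either $u_i$ lay in $\partial U_\Phi$, the convention $\PB_{\Lambda,u}(\cdot)=\infty$ for $u\notin U_\Phi$ (respectively $\sigma\equiv\infty$ on $\partial U_\Phi$ when $E=\mathbb R$, cf.\ the proof of Theorem~\ref{thm_main_st_general}) would force $\PB^*(\mu_i)=\infty$; so we may assume $u_0,u_1\in U_\Phi=U_q$, whence $u_t:=S(\mu_t)=(1-t)u_0+tu_1\in U_q$ as well. Finally, since $\PB^*(\nu)=\sup_{(D,m)\in\mathcal C\times\mathbb Z_{\ge0}}\PB(\nu:D,m)/\operatorname{Leb}(D)$, it suffices to fix $D\in\mathcal C$, $m\in\mathbb Z_{\ge0}$ and prove $\PB(\mu_t:D,m)\le\operatorname{Leb}(D)\bigl((1-t)\PB^*(\mu_0)+t\PB^*(\mu_1)\bigr)$; accordingly we also fix $B\in\mathcal B$ with $\mu_t\in B$ and $\varepsilon_0>0$ (to be omitted if $E=\mathbb Z$), put $\Lambda_n:=\Lambda^{-m}(nD)$, and bound $\PB_{\Lambda_n,u_t,\varepsilon_0}(B)$ from above.

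The heart of the argument is a \emph{washboard}: a macroscopic profile honouring the pinned slope $u_t$ near $\partial(nD)$ but whose gradient on most of $nD$ equals $u_0$ on a set of density $1-t$ and $u_1$ on a set of density $t$. Assume $u_0\ne u_1$ (if $u_0=u_1$ the same scheme works with $nD$ split into two convex pieces of volume fractions $1-t$ and $t$ and a single transition collar). Put $\xi:=u_0-u_1$ and let $\rho:\mathbb R\to\mathbb R$ be a fixed $1$-periodic Lipschitz ramp with $\rho'\equiv t$ on an initial sub-interval of relative length $1-t$ of each period and $\rho'\equiv-(1-t)$ on the rest, so $\rho$ is well defined and $u_t+\rho'\xi\in\{u_0,u_1\}$ throughout. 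For small $\eta>0$ and a cutoff $\chi_\eta:\overline D\to[0,1]$ equal to $0$ on $\partial D$, equal to $1$ off the $\sqrt\eta$-neighbourhood of $\partial D$, with $\|\nabla\chi_\eta\|_\infty=O(\eta^{-1/2})$, set $g_\eta(x):=u_t(x)+\eta\,\chi_\eta(x)\,\rho(\xi(x)/\eta)$. Then $g_\eta\equiv u_t$ on $\partial D$; away from the $\sqrt\eta$-shell $\nabla g_\eta=u_t+\rho'(\xi/\eta)\xi\in\{u_0,u_1\}$, the two phases being unions of $O(\eta^{-1})$ thin slabs transverse to $\xi$ of total Lebesgue densities $1-t$ and $t$; inside the shell $\nabla g_\eta=u_t+\chi_\eta\rho'(\xi/\eta)\xi+\eta\rho(\xi/\eta)\nabla\chi_\eta$ lies within $O(\sqrt\eta)$ of the compact segment $[u_0,u_1]$, which is contained in the open set $U_q$, so that, using $v(w)\le\|w\|_q$ for $v\in\overline{U_q}$ together with Proposition~\ref{propo_qqq}, $g_\eta$ is $\|\cdot\|_{q_{\varepsilon_1}}$-Lipschitz for a fixed $\varepsilon_1>0$ once $\eta$ is small; and $\|g_\eta-u_t\|_\infty=O(\eta)$.

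Now fix also a truncation scale $\delta>0$ and a large erosion parameter $m'$. In each $u_0$-slab $S$ of $g_\eta$ we place a configuration whose gradient matches that of $g_\eta$ (affine of slope $u_0$ there) on $\partial^R\Lambda^{-m'}(nS)$ and whose empirical measure over $\Lambda^{-m'}(nS)$ lies in a fixed small neighbourhood $B^{(0)}\in\mathcal B$ of $\mu_0$, and symmetrically in each $u_1$-slab with $B^{(1)}\ni\mu_1$; all remaining vertices — the $O(1)$-wide inner collars of the slabs, the bands of width $O(\delta\cdot(\text{slab width}))$ between consecutive slabs, the $\sqrt\eta$-shell near $\partial(nD)$, and an $O(1)$-band just inside $\partial^R\Lambda_n$ where the configuration is pinned to $\phi^{u_t}$ — are filled by a $q$-Lipschitz interpolant of the values of $g_\eta$, which exists by the Kirszbraun theorem (Proposition~\ref{propo_Kirszbraun}) and Theorem~\ref{general_lips_ext_less_general}, the only nontrivial matching occurring across the inter-slab bands where neighbouring $\mu_i$-typical configurations differ from each other by $O(\delta\cdot(\text{slab width}))$ on the common interface (here one uses, as in the previous subsection, that a $q$-Lipschitz configuration in a neighbourhood of $\mu_i$ stays within $o(1)$ times the box size of the affine profile of slope $u_i$, cf.\ Theorem~\ref{thm:superergodresult}, and that the uniform slack $\varepsilon_1$ leaves room for the interpolant). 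By the lower and upper attachment lemmas (Lemmas~\ref{lemma_lower_attachments} and~\ref{lemma_attachment}), the interior Hamiltonian $H^0_{\Lambda_n}$ of such a glued configuration is the sum of the per-slab interior Hamiltonians and the Hamiltonian of the filled region, up to cross-terms bounded by $\sum_{\text{slabs}}e^+(\cdot)=O(n^{d-1}/\eta)=o(n^d)$; the filled region has $O((\delta+\sqrt\eta)n^d)$ vertices, on each of which $H^0$ is $O(1)$ by the a priori bounds~\eqref{eq_discrete_singleton_hamil_bound} and~\eqref{eq_cts_singleton_hamil_bound} and, when $E=\mathbb R$, the entropy defect relative to Lebesgue measure is $O(1)$ after restricting the filled values to an $O(1)$-box around $g_\eta$; and the empirical measure of any glued configuration differs from $(1-t)\mu_0+t\mu_1=\mu_t$, in the weak-local sense, by $O(\delta+\sqrt\eta)$ plus the diameters of $B^{(0)},B^{(1)}$, since a uniformly random base point lands deep inside a single $\mu_i$-phase except for a fraction $O(\delta+\sqrt\eta)+o(1)$ of base points. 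Choosing $B^{(0)},B^{(1)},\delta,\eta$ small enough and $n$ large, every glued configuration therefore lies in $C_{\Lambda_n,\varepsilon_0}^{u_t}\cap B_{\Lambda_n}$, and counting them yields $n^{-d}\PB_{\Lambda_n,u_t,\varepsilon_0}(B)\le n^{-d}\sum_{S}\PB_{\Lambda^{-m'}(nS),u_0,\varepsilon_2}(B^{(0)})+n^{-d}\sum_{T}\PB_{\Lambda^{-m'}(nT),u_1,\varepsilon_2}(B^{(1)})+O(\delta+\sqrt\eta)+o(1)$, the sums running over the $u_0$- and $u_1$-slabs respectively.

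To conclude, take $\limsup_{n\to\infty}$ and bound each slab term by $\PB(\mu_0:S,m')\le\operatorname{Leb}(S)\,\PB^*(\mu_0)$, respectively $\PB(\mu_1:T,m')\le\operatorname{Leb}(T)\,\PB^*(\mu_1)$; since $\sum_S\operatorname{Leb}(S)\to(1-t)\operatorname{Leb}(D)$ and $\sum_T\operatorname{Leb}(T)\to t\operatorname{Leb}(D)$ as $\eta\downarrow0$, taking the supremum over $B,\varepsilon_0$ and then letting $\delta,\eta\downarrow0$ gives $\PB(\mu_t:D,m)\le\operatorname{Leb}(D)\bigl((1-t)\PB^*(\mu_0)+t\PB^*(\mu_1)\bigr)$; dividing by $\operatorname{Leb}(D)$ and taking the supremum over $(D,m)$ proves the lemma. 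The step I expect to be the main obstacle is precisely the washboard profile $g_\eta$ and the associated gluing: the pinned boundary data is compatible only with slope $u_t$ near $\partial(nD)$, whereas the entropy must be harvested from the two incompatible bulk phases of slopes $u_0\ne u_1$, so reconciling them requires the fine-scale oscillation (which keeps $\nabla g_\eta$ inside $U_q$) together with a careful balance of the scale hierarchy $1\ll\delta\cdot(\text{slab width})$, $\eta n$-thick slabs, $\eta^{-1}$ of them, $\sqrt\eta n$-thick shell, so that all the ``defect'' regions together carry free energy and Lebesgue entropy only $o(n^d)$ while the empirical measure of the glued configuration is still squeezed into the prescribed weak-local neighbourhood of $\mu_t$.
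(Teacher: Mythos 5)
Your proposal is correct and follows essentially the same strategy as the paper's proof: both construct a ``washboard'' profile that matches $u_t$ near the boundary but oscillates at a fine (macroscopically vanishing) scale between the two phases of slope $u_0$ and $u_1$ in proportions $1-t:t$, then glue bulk configurations typical of $\mu_0$ and $\mu_1$ onto the slabs using the Lipschitz extension theorem and attachment lemmas, and finally compare empirical measures (the paper isolates this last step as Proposition~\ref{propo_compare_empirical_measures}). The presentations differ cosmetically --- the paper encodes the slabs via a floor function $w$ and disjoint simplex pieces $D_k'$ with the boundary collar handled through Theorem~\ref{general_lips_ext_less_general}, whereas you use a periodic ramp $\rho$ with a cutoff $\chi_\eta$ on a $\sqrt\eta$-shell --- but the scale hierarchy, the role of the attachment lemmas, and the bookkeeping of defect regions are the same.
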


Consider $\nu_1,\nu_2\in\mathcal P_\mathcal L(\Omega,\mathcal F^\nabla)$
with $S(\nu_1),S(\nu_2)\in U_\Phi$,
and define $\mu:=(1-t)\nu_1+t\nu_2$
for some $t\in(0,1)$.
If we take the value of
$\PB^*(\nu_1)$
and $\PB^*(\nu_2)$ for granted,
then we look for an upper bound on
$\PB^*(\mu)$.
This means that we look for asymptotic
lower bounds on the integrals defining
the pinned boundary estimates of $\mu$.

The proof of the lemma uses a general strategy
which produces an asymptotic lower bound
on this particular integral,
and which is used again twice
in this article:
in the lower bound on probabilities in the proof of the large
deviations principle
in Subsection~\ref{subsection:lower_bound_on_probabilities_in_the_LDP},
and when constructing
the contradiction
which leads to a proof of strict convexity of the surface tension
in Subsection~\ref{subsection_attachment_applications}.
The general idea is as follows:
Lemma~\ref{lemma:PBL_upper_bound},
and later (once it is proven)
Lemma~\ref{lemma_PB_star_uncontrained_ineq},
 provide the fundamental building blocks for the lower
bounds.
One then shows that these building blocks
can be put together without
gaining too much energy,
that is, without decreasing the value of the integral
of interest by too much.
For this, one appeals to
Theorem~\ref{general_lips_ext_less_general},
which allows one to find suitable discrete approximations
of continuous Lipschitz profiles,
and
the upper attachment lemma (Lemma~\ref{lemma_attachment}),
which allows one to bound the energy increase
due to combining height functions defined on different parts
of $\mathbb Z^d$.
This is already sufficient to understand the
macroscopic shape of the height functions. In the
context of boundary limits,
this is expressed through the pinning
of the height functions
on the boundary $\partial^R\Lambda$
of the set $\Lambda$ of interest---essentially
by restricting to the set $C_{\Lambda}^u$
or $C_{\Lambda,\varepsilon}^u$.
It is, however, also necessary
to understand the behavior of the local statistics
of the height functions---expressed
in the boundary limits through the
sets $B_\Lambda$---under the operation of putting
together the fundamental building blocks.
For this, one appeals to the following result.

\begin{proposition}
  \label{propo_compare_empirical_measures}
  Consider some set $D\in\mathcal C$
  and a nonnegative integer $m\in\mathbb Z_{\geq 0}$.
  Consider also
  some finite family $(D_i,m_i)_i\subset
  \mathcal C\times\mathbb Z_{\geq m}$
  with the sets $D_i$
  disjoint and contained in $D$.
  Write $\Lambda_n:=\Lambda^{-m}(nD)$
  and $\Lambda_n^i:=\Lambda^{-m_i}(nD_i)$.
  Then for any cylinder function $f:\Omega\to[0,1]$,
  we have
  \[
    \lim_{n\to\infty}
    \sup_{\phi\in\Omega}\left|
    \textstyle
      L_{\Lambda_n}(\phi)(f)-\sum_i\frac{\operatorname{Leb}(D_i)}{\operatorname{Leb}(D)}
      L_{\Lambda_n^i}(\phi)(f)
    \right|
    \leq \frac{\operatorname{Leb}(D\smallsetminus\cup_iD_i)}
    {\operatorname{Leb}(D)}.
      \]
\end{proposition}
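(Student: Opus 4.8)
The plan is to reduce the statement to elementary lattice-point counting. Fix a finite set $W\subset\subset\mathbb Z^d$ such that $f$ is $\mathcal F_W$-measurable, and put $r:=\max_{w\in W}\|w\|_1$. First I would record two elementary facts about the sets involved. Since $D_i\subset D$ and $m_i\geq m$, one has $\Lambda_n^i\subset\Lambda_n$ for every $n$: a vertex whose $d_1$-distance to $\mathbb Z^d\smallsetminus(nD_i\cap\mathbb Z^d)$ exceeds $m_i$ has $d_1$-distance to the smaller set $\mathbb Z^d\smallsetminus(nD\cap\mathbb Z^d)$ exceeding $m_i\geq m$. Since the $D_i$ are pairwise disjoint, so are the $nD_i\cap\mathbb Z^d$, hence the $\Lambda_n^i$. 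Second, because each of $D$ and the $D_i$ is bounded, convex and of positive measure, its boundary is Lebesgue-null, so $|nS\cap\mathbb Z^d|=\operatorname{Leb}(S)n^d+o(n^d)$ and $|\partial^j(nS\cap\mathbb Z^d)|=o(n^d)$ for each fixed $j$ and each $S\in\{D\}\cup\{D_i\}$; combining this with equidistribution of the full-rank sublattice $\mathcal L$ gives $|\mathcal L\cap\Lambda_n|=\rho\operatorname{Leb}(D)n^d+o(n^d)$, $|\mathcal L\cap\Lambda_n^i|=\rho\operatorname{Leb}(D_i)n^d+o(n^d)$ and $|\mathcal L\cap(\Lambda_n\smallsetminus\bigcup_i\Lambda_n^i)|=\rho\operatorname{Leb}(D\smallsetminus\bigcup_iD_i)n^d+o(n^d)$ for a common constant $\rho>0$.

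The heart of the argument is a decomposition of the empirical averages. Starting from
\[
L_{\Lambda_n}(\phi)(f)-\sum_i\frac{\operatorname{Leb}(D_i)}{\operatorname{Leb}(D)}L_{\Lambda_n^i}(\phi)(f)=\frac{1}{|\mathcal L\cap\Lambda_n|}\sum_{x\in\mathcal L\cap\Lambda_n}f(\theta_x\bar\phi_{\Lambda_n})-\sum_i\frac{\operatorname{Leb}(D_i)}{\operatorname{Leb}(D)\,|\mathcal L\cap\Lambda_n^i|}\sum_{x\in\mathcal L\cap\Lambda_n^i}f(\theta_x\bar\phi_{\Lambda_n^i}),
\]
I would first replace the coefficient $\operatorname{Leb}(D_i)/(\operatorname{Leb}(D)\,|\mathcal L\cap\Lambda_n^i|)$ by $1/|\mathcal L\cap\Lambda_n|$; because $0\leq f\leq 1$ and $\bigl|\operatorname{Leb}(D_i)/\operatorname{Leb}(D)-|\mathcal L\cap\Lambda_n^i|/|\mathcal L\cap\Lambda_n|\bigr|\to 0$, this incurs only a $\phi$-uniform $o(1)$ error. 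Then, using the partition $\mathcal L\cap\Lambda_n=\bigl(\bigsqcup_i\mathcal L\cap\Lambda_n^i\bigr)\sqcup\bigl(\mathcal L\cap(\Lambda_n\smallsetminus\bigcup_i\Lambda_n^i)\bigr)$, the right-hand side equals
\[
\frac{1}{|\mathcal L\cap\Lambda_n|}\Bigl(\sum_i\sum_{x\in\mathcal L\cap\Lambda_n^i}\bigl(f(\theta_x\bar\phi_{\Lambda_n})-f(\theta_x\bar\phi_{\Lambda_n^i})\bigr)+\sum_{x\in\mathcal L\cap(\Lambda_n\smallsetminus\bigcup_i\Lambda_n^i)}f(\theta_x\bar\phi_{\Lambda_n})\Bigr)+o(1).
\]

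Finally I would bound the two sums. For the first, the point is that $f(\theta_x\bar\phi_{\Lambda_n})=f(\theta_x\bar\phi_{\Lambda_n^i})$ whenever $W+x\subset\Lambda_n^i$: then both $\bar\phi_{\Lambda_n}$ and $\bar\phi_{\Lambda_n^i}$ coincide with $\phi$ on $W+x$ (here $\Lambda_n^i\subset\Lambda_n$ is used), so $\theta_x\bar\phi_{\Lambda_n}$ and $\theta_x\bar\phi_{\Lambda_n^i}$ agree on $W$ and $f$ cannot distinguish them. The set of $x\in\Lambda_n^i$ with $W+x\not\subset\Lambda_n^i$ is contained in $\partial^r\Lambda_n^i$, itself contained in the bounded-depth boundary layer $\partial^{r+m_i}(nD_i\cap\mathbb Z^d)$, hence of cardinality $o(n^d)$; as $|f|\leq 1$ and there are finitely many $i$, the first sum is $o(n^d)$ uniformly in $\phi$. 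For the second sum, $|f|\leq 1$ bounds it by $|\mathcal L\cap(\Lambda_n\smallsetminus\bigcup_i\Lambda_n^i)|=\rho\operatorname{Leb}(D\smallsetminus\bigcup_iD_i)n^d+o(n^d)$. Dividing by $|\mathcal L\cap\Lambda_n|=\rho\operatorname{Leb}(D)n^d+o(n^d)$ gives
\[
\sup_{\phi\in\Omega}\Bigl|L_{\Lambda_n}(\phi)(f)-\sum_i\frac{\operatorname{Leb}(D_i)}{\operatorname{Leb}(D)}L_{\Lambda_n^i}(\phi)(f)\Bigr|\leq\frac{\operatorname{Leb}(D\smallsetminus\bigcup_iD_i)}{\operatorname{Leb}(D)}+o(1)\qquad(n\to\infty),
\]
which is the assertion. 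I do not expect a genuine obstacle here; the only points needing care are the bookkeeping of the counting estimates, so that both the coefficient replacement and the discard of the boundary layers are uniform in $\phi$, and verifying that convexity of $D$ and of each $D_i$ (more weakly, Lebesgue-nullity of their boundaries) is exactly what delivers the $o(n^d)$ bounds used above.
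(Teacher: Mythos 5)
Your proof is correct and is essentially the paper's argument: both replace the coefficients $\operatorname{Leb}(D_i)/\operatorname{Leb}(D)$ by $|\mathcal L\cap\Lambda_n^i|/|\mathcal L\cap\Lambda_n|$ at a $\phi$-uniform $o(1)$ cost and then observe that the term for a shift $\theta_x$ cancels whenever the translated window lies entirely inside some $\Lambda_n^i$, leaving only a boundary layer of size $o(n^d)$ plus the contribution from $\Lambda_n\smallsetminus\cup_i\Lambda_n^i$. The paper compresses this bookkeeping by writing the difference as $\mathbb E_n(g_n)$ for a uniform random shift and a case-defined $g_n$, whereas you spell out the sums explicitly; the underlying decomposition and estimates are the same.
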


\begin{proof}
  Note that
  \[
    \frac{|\mathcal L\cap \Lambda_n^i|}{|\mathcal L\cap \Lambda_n|}\to
    \frac{\operatorname{Leb}(D_i)}{\operatorname{Leb}(D)}
  \]
  as $n\to\infty$,
  and therefore it suffices to prove the proposition
  for the latter fraction replaced by the former.
  Suppose that $f$ is $\mathcal F_\Delta$-measurable
  for some $\Delta\subset\subset\mathbb Z^d$
  which contains $0$.
  Write $\mathbb P_n$
  for the uniform probability measure on $\{\theta_x:x\in\mathcal L\cap \Lambda_n\}$.
  By coupling the measures
   in the obvious way,
  we observe that
  \[
  L_{\Lambda_n}(\phi)(f)-\sum_i\frac{|\mathcal L\cap \Lambda_n^i|}{|\mathcal L\cap \Lambda_n|}
  L_{\Lambda_n^i}(\phi)(f)
  =\mathbb E_n(g_n)
  \]
  where $g_n$ is defined by
  \[
    g_n(\theta)=\begin{cases}
      0 &\text{if $\theta\Delta\subset\Lambda_n^i$ for some $i$,}\\
      f(\theta \bar\phi_{\Lambda_n})-f(\theta \bar\phi_{\Lambda_n^i})
      &\text{if $\theta 0\in\Lambda_n^i$ for some $i$ but $\theta\Delta\not\subset\Lambda_n^i$,}\\
      f(\theta \bar\phi_{\Lambda_n})&\text{otherwise.}
  \end{cases}
  \]
  Now $|g_n|\leq 1$
  and $\mathbb P_n(\text{$\theta\Delta\not\subset \Lambda_n^i$
  for any $i$})= \operatorname{Leb}(D\smallsetminus\cup_iD_i)/
  \operatorname{Leb}(D)+o(1)$
  as $n\to\infty$, which implies the proposition.
\end{proof}

The particular proof of Lemma~\ref{lemma_PB_star_convex}
utilizes the so-called washboard construction
(see Figure~\ref{fig:washboard}),
which appears in the work of Sheffield~\cite{S05},
and is adapted here to the particular Lipschitz setting.

\begin{proof}[Proof of Lemma~\ref{lemma_PB_star_convex}]
  Consider $\mu:=s\nu_1+t\nu_2$
  for $s,t\in(0,1)$ with $s+t=1$ and
  for some measures $\nu_1,\nu_2\in\mathcal P_\mathcal L(\Omega,\mathcal F^\nabla)$
  which have their slope in $U_\Phi$.
  The goal is to prove that
  $\PB^*(\mu)\leq s\PB^*(\nu_1)+t\PB^*(\nu_2)$.

  Write $u:=S(\mu)$, $u_1:=S(\nu_1)$, and $u_2:=S(\nu_2)$.
  Consider $D\in\mathcal C$, $m\in\mathbb Z_{\geq 0}$,
  and $B\in\mathcal B$ with $\mu\in B$.
Write $\Lambda_n:=\Lambda^{-m}(nD)$.
Fix also some $\varepsilon>0$.
If $E=\mathbb Z$, then we must show that
\[
\limsup_{n\to\infty}n^{-d}\PB_{\Lambda_n,u}(B)
\leq \operatorname{Leb}(D)( s\PB^*(\nu_1)+t\PB^*(\nu_2)),
\]
and if $E=\mathbb R$, then we must show that
\[
\limsup_{n\to\infty}n^{-d}\PB_{\Lambda_n,u,\varepsilon}(B)
\leq \operatorname{Leb}(D)( s\PB^*(\nu_1)+t\PB^*(\nu_2)).
\]

By choosing $\varepsilon>0$ smaller if necessary,
we suppose that $u,u_1,u_2\in U_{q_{7\varepsilon}}$.
By choosing $B$ smaller if necessary,
we suppose that
$B$ is of the form
\[
  B=\{\pi:\text{$|\mu(f_i)-\pi(f_i)|<2\eta$ for all $i$}\}\in\mathcal B
\]
for some finite family $(f_i)_i$ of continuous cylinder functions $f_i:\Omega\to[0,1]$
and for some $\eta>0$,
and we write
\[
B^j:=\{\pi:\text{$|\nu_j(f_i)-\pi(f_i)|<\eta$ for all $i$}\}\in\mathcal B
\]
for $j\in\{1,2\}$.

The idea of the proof is roughly as follows.
First, we partition a large subset of $D$ into finitely many convex shapes.
Second, we find a continuous Lipschitz function $f$ which equals $u$
on $\partial D$,
and which is affine on each convex shape in this partition, with slope
either $u_1$ or $u_2$.
This function is chosen such that the Lebesgue measure
of the convex shapes with slope $u_j$ is roughly $s\operatorname{Leb}(D)$
for $j=1$ and roughly $t\operatorname{Leb}(D)$ for $j=2$.
Informally, the function $f$ looks like a ``washboard''.
Next, we define $f_n:=nf(\cdot/n)$,
and use the existence of the function $f_n$
and Theorem~\ref{general_lips_ext_less_general} to find for each $n\in\mathbb N$
a corresponding height function $\phi_n$.
The existence of the height function $\phi_n$
and the previously described general strategy allow us to build a direct comparison
between $\PB_{\Lambda_n,u}(B)$
or $\PB_{\Lambda_n,u,\varepsilon}(B)$ and the numbers $\PB^*(\nu_1)$ and $\PB^*(\nu_2)$.




\begin{figure}
	\centering
  \begin{tikzpicture}
    \clip
     (-2,-.2) rectangle (12,7.2);
		\fill[color=gray, fill opacity=0.5, rounded corners=.4cm] (-.2,-.2) rectangle (10.2,7.2);
		\draw[very thick] (.2,.2) rectangle (9.8,6.8);
		\fill[color=white] (.6,.6) rectangle (9.4,6.4);
		\draw[very thick,dashed] (1,1) rectangle (9,6);
		\fill[color=lightgray, fill opacity=0.5](1.0277777777777777,1.0833333333333335)--(1.0277777777777777,1.0833333333333335)--(1.0277777777777777,1.0833333333333335)--(1.0277777777777777,1.0833333333333335)--cycle;
\fill[color=darkgray, fill opacity=0.5](1.0277777777777777,1.0833333333333335)--(1.0277777777777777,1.0833333333333335)--(1.0277777777777777,1.0833333333333335)--(1.0277777777777777,1.0833333333333335)--cycle;
\fill[color=lightgray, fill opacity=0.5](1.0277777777777777,1.0833333333333335)--(1.0277777777777777,1.0833333333333335)--(1.0277777777777777,1.0833333333333335)--(1.0277777777777777,1.0833333333333335)--cycle;
\fill[color=darkgray, fill opacity=0.5](1.0277777777777777,1.0833333333333335)--(1.1666666666666667,1.0)--(1.0,1.5)--(1.0277777777777777,1.0833333333333335)--cycle;
\fill[color=lightgray, fill opacity=0.5](1.1666666666666667,1.0)--(1.4166666666666667,1.15)--(1.0499999999999998,2.2499999999999996)--(1.0,1.5)--cycle;
\fill[color=darkgray, fill opacity=0.5](1.4166666666666667,1.15)--(1.6666666666666667,1.0)--(1.0,3.0)--(1.0499999999999998,2.2499999999999996)--cycle;
\fill[color=lightgray, fill opacity=0.5](1.6666666666666667,1.0)--(1.916666666666667,1.15)--(1.05,3.7500000000000004)--(1.0,3.0)--cycle;
\fill[color=darkgray, fill opacity=0.5](1.916666666666667,1.15)--(2.1666666666666665,1.0)--(1.0,4.5)--(1.05,3.7500000000000004)--cycle;
\fill[color=lightgray, fill opacity=0.5](2.1666666666666665,1.0)--(2.4166666666666665,1.15)--(1.05,5.25)--(1.0,4.5)--cycle;
\fill[color=darkgray, fill opacity=0.5](2.4166666666666665,1.15)--(2.6666666666666665,1.0)--(1.0,6.0)--(1.0,6.0)--(1.05,5.25)--cycle;
\fill[color=lightgray, fill opacity=0.5](2.6666666666666665,1.0)--(2.9166666666666665,1.15)--(1.2500000000000002,6.15)--(1.0,6.0)--(1.0,6.0)--cycle;
\fill[color=darkgray, fill opacity=0.5](2.9166666666666665,1.15)--(3.1666666666666665,1.0)--(1.5,6.0)--(1.2500000000000002,6.15)--cycle;
\fill[color=lightgray, fill opacity=0.5](3.1666666666666665,1.0)--(3.4166666666666665,1.15)--(1.7500000000000002,6.15)--(1.5,6.0)--cycle;
\fill[color=darkgray, fill opacity=0.5](3.4166666666666665,1.15)--(3.6666666666666665,1.0)--(2.0,6.0)--(1.7500000000000002,6.15)--cycle;
\fill[color=lightgray, fill opacity=0.5](3.6666666666666665,1.0)--(3.9166666666666665,1.15)--(2.25,6.15)--(2.0,6.0)--cycle;
\fill[color=darkgray, fill opacity=0.5](3.9166666666666665,1.15)--(4.166666666666667,1.0)--(2.5,6.0)--(2.25,6.15)--cycle;
\fill[color=lightgray, fill opacity=0.5](4.166666666666667,1.0)--(4.416666666666667,1.15)--(2.75,6.15)--(2.5,6.0)--cycle;
\fill[color=darkgray, fill opacity=0.5](4.416666666666667,1.15)--(4.666666666666667,1.0)--(3.0,6.0)--(2.75,6.15)--cycle;
\fill[color=lightgray, fill opacity=0.5](4.666666666666667,1.0)--(4.916666666666667,1.15)--(3.25,6.15)--(3.0,6.0)--cycle;
\fill[color=darkgray, fill opacity=0.5](4.916666666666667,1.15)--(5.166666666666667,1.0)--(3.5,6.0)--(3.25,6.15)--cycle;
\fill[color=lightgray, fill opacity=0.5](5.166666666666667,1.0)--(5.416666666666667,1.15)--(3.75,6.15)--(3.5,6.0)--cycle;
\fill[color=darkgray, fill opacity=0.5](5.416666666666667,1.15)--(5.666666666666667,1.0)--(4.0,6.0)--(3.75,6.15)--cycle;
\fill[color=lightgray, fill opacity=0.5](5.666666666666667,1.0)--(5.916666666666667,1.15)--(4.25,6.15)--(4.0,6.0)--cycle;
\fill[color=darkgray, fill opacity=0.5](5.916666666666667,1.15)--(6.166666666666667,1.0)--(4.5,6.0)--(4.25,6.15)--cycle;
\fill[color=lightgray, fill opacity=0.5](6.166666666666667,1.0)--(6.416666666666667,1.15)--(4.75,6.15)--(4.5,6.0)--cycle;
\fill[color=darkgray, fill opacity=0.5](6.416666666666667,1.15)--(6.666666666666667,1.0)--(5.0,6.0)--(4.75,6.15)--cycle;
\fill[color=lightgray, fill opacity=0.5](6.666666666666667,1.0)--(6.916666666666667,1.15)--(5.25,6.15)--(5.0,6.0)--cycle;
\fill[color=darkgray, fill opacity=0.5](6.916666666666667,1.15)--(7.166666666666667,1.0)--(5.5,6.0)--(5.25,6.15)--cycle;
\fill[color=lightgray, fill opacity=0.5](7.166666666666667,1.0)--(7.416666666666667,1.15)--(5.75,6.15)--(5.5,6.0)--cycle;
\fill[color=darkgray, fill opacity=0.5](7.416666666666667,1.15)--(7.666666666666667,1.0)--(6.0,6.0)--(5.75,6.15)--cycle;
\fill[color=lightgray, fill opacity=0.5](7.666666666666667,1.0)--(7.916666666666666,1.1499999999999995)--(6.249999999999999,6.1499999999999995)--(6.0,6.0)--cycle;
\fill[color=darkgray, fill opacity=0.5](7.916666666666666,1.1499999999999995)--(8.166666666666666,1.0)--(6.5,6.0)--(6.249999999999999,6.1499999999999995)--cycle;
\fill[color=lightgray, fill opacity=0.5](8.166666666666666,1.0)--(8.416666666666664,1.1499999999999995)--(6.749999999999999,6.1499999999999995)--(6.5,6.0)--cycle;
\fill[color=darkgray, fill opacity=0.5](8.416666666666664,1.1499999999999995)--(8.666666666666666,1.0)--(7.0,6.0)--(6.749999999999999,6.1499999999999995)--cycle;
\fill[color=lightgray, fill opacity=0.5](8.666666666666666,1.0)--(8.916666666666664,1.1499999999999995)--(7.249999999999999,6.1499999999999995)--(7.0,6.0)--cycle;
\fill[color=darkgray, fill opacity=0.5](8.916666666666664,1.1499999999999995)--(9.027777777777779,1.083333333333333)--(9.0,1.5)--(7.5,6.0)--(7.249999999999999,6.1499999999999995)--cycle;
\fill[color=lightgray, fill opacity=0.5](9.0,1.5)--(9.05,2.2499999999999973)--(7.749999999999999,6.1499999999999995)--(7.5,6.0)--cycle;
\fill[color=darkgray, fill opacity=0.5](9.05,2.2499999999999973)--(9.0,3.0)--(8.0,6.0)--(7.749999999999999,6.1499999999999995)--cycle;
\fill[color=lightgray, fill opacity=0.5](9.0,3.0)--(9.05,3.7499999999999973)--(8.25,6.1499999999999995)--(8.0,6.0)--cycle;
\fill[color=darkgray, fill opacity=0.5](9.05,3.7499999999999973)--(9.0,4.5)--(8.5,6.0)--(8.25,6.1499999999999995)--cycle;
\fill[color=lightgray, fill opacity=0.5](9.0,4.5)--(9.05,5.249999999999997)--(8.75,6.1499999999999995)--(8.5,6.0)--cycle;
\fill[color=darkgray, fill opacity=0.5](9.05,5.249999999999997)--(9.0,6.0)--(9.0,6.0)--(8.75,6.1499999999999995)--cycle;
\fill[color=lightgray, fill opacity=0.5](9.0,6.0)--(9.0,6.0)--(9.0,6.0)--(9.0,6.0)--cycle;
\fill[color=darkgray, fill opacity=0.5](9.0,6.0)--(9.0,6.0)--(9.0,6.0)--(9.0,6.0)--cycle;
\draw[very thin] (2.25,6.15)--(2.25,6.45);
\draw[very thin] (2.75,6.15)--(2.75,6.45);
\draw[very thin] (2.25,6.300000000000001)--(2.75,6.300000000000001);
\draw[<-] (2.15,6.300000000000001)--(-0.5,6.300000000000001)node[left] {$\propto\varepsilon_2$};

    \draw[-Circle] (-0.5,3.5) node[left] {$\partial'D$} -- (0,3.5) ;
    \draw[->] (-0.5,4.5) node[left] {$\partial D$} -- (.1,4.5) ;
    \draw[-Circle] (-0.5,2.5) node[left] {$D'$} -- (1.5,2.5) ;

        \draw[very thin] (-.2,.2)--(-.5,.2);
        \node[anchor=east] at (-.5,.4) {$\varepsilon_1$};
        \draw[very thin] (-.2,.6)--(-.5,.6);
        \node[anchor=east] at (-.5,.8) {$\varepsilon_1$};
        \draw[very thin] (-.2,1)--(-.5,1);
        \draw[very thin] (-.35,.2)--(-.35,1);

  \end{tikzpicture}
  \caption{The washboard and the function $f:\partial'D\cup D'\to\mathbb R$}
	\label{fig:washboard}
\end{figure}
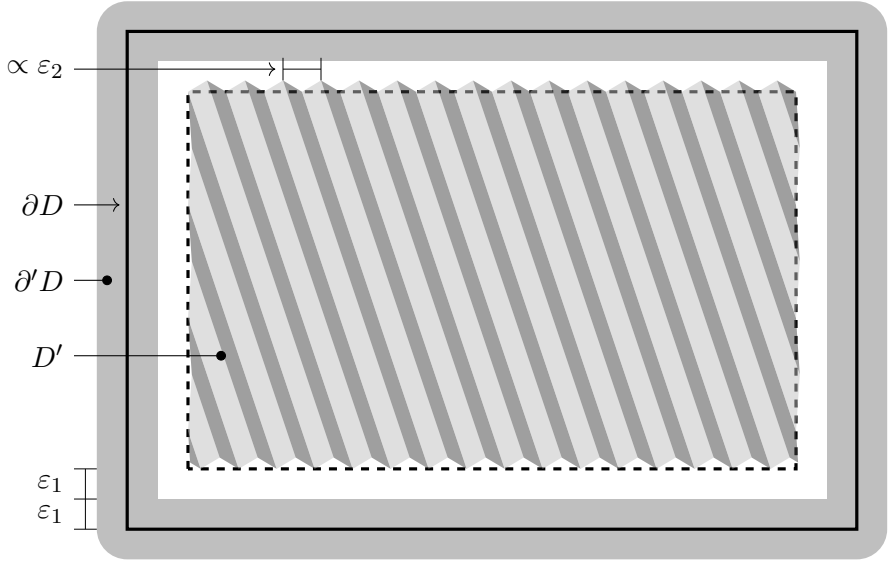

We start by constructing the continuous ``washboard''---see
Figure~\ref{fig:washboard}.
\label{washboarddefstart}
Set $v:=u_1-u_2$ if $u_1\neq u_2$,
and choose $v\in(\mathbb R^d)^*\smallsetminus \{0\}$
arbitrary otherwise.
Define
\[
  w:\mathbb R^d\to\mathbb Z,\,
  x\mapsto \begin{cases}
    2\lfloor v(x)\rfloor &\text{if $v(x)-\lfloor v(x)\rfloor\in [0,t)$},\\
    2\lfloor v(x)\rfloor+1&\text{if $v(x)-\lfloor v(x)\rfloor\in [t,1)$}.
\end{cases}
\]
Write $p:\mathbb R^d\to\mathbb R$ for the unique continuous function that maps
$0$ to $0$,
and
which
has gradient $u_j$ on
the interior of $\{w\in 2\mathbb Z+j\}\subset\mathbb R^d$
for $j\in\{1,2\}$.
For $\alpha>0$,
write $w_\alpha$ for the map $w_\alpha(\cdot):=w(\cdot/\alpha)$,
and
write $p_\alpha$ for the map $p_\alpha(\cdot):=\alpha p(\cdot/\alpha)$.
It is straightforward to see that $p_\alpha$ has gradient
$u_j$ on $\{w_\alpha\in 2\mathbb Z+j\}$ for $j\in\{1,2\}$,
and that $\|p_\alpha-u\|_\infty\propto \alpha$.
Observe also that $p$ and $p_\alpha$ are $\|\cdot\|_{q_{7\varepsilon}}$-Lipschitz.

In the remainder of the proof, we shall
work with three limits. First we take $n\to\infty$,
then $\varepsilon_2\to 0$, then $\varepsilon_1\to 0$.
Reference to these variables is sometimes omitted for brevity.
Define
\begin{alignat*}{2}
 &\partial'D&&:=\{x\in\mathbb R^d:d_2(x,\partial D)<\varepsilon_1\},
\\
 & D'&&:=\{x\in D:d_2(x,\partial D)>2\varepsilon_1\},
\\
&  D'_k&&:=D'\cap \{w_{\varepsilon_2}=k\},
\end{alignat*}
where $d_2$ denotes Euclidean distance.
Write
$f:\partial'D\cup D'\to\mathbb R$ for the function defined by
\[
f(x):=\begin{cases}
  u(x)&\text{if $x\in\partial'D$},\\
  p_{\varepsilon_2}(x)&\text{if $x\in D'$}.
\end{cases}
\]
This function is $\|\cdot\|_{q_{6\varepsilon}}$-Lipschitz
for $\varepsilon_2$ sufficiently small,
depending on $\varepsilon_1$.
Note that $f$ is affine with gradient $u_1$
on $D_k'$ for $k$ odd and with gradient $u_2$
on $D_k'$ for $k$ even.
Moreover, the family $(D'_k)_{k\in\mathbb Z}$ is a partition of $D'$.
Only finitely many members are nonempty, and
the nonempty members are convex, bounded, and have positive Lebesgue
measure.
The merit of this construction is that
\begin{alignat}{3}
  \label{lebesgue_1}
  &\operatorname{Leb}(\cup_{k\in2\mathbb Z+1} D'_k)&&\to_{\varepsilon_2\to 0} s\operatorname{Leb}(D')
  &&\to_{\varepsilon_1\to 0} s\operatorname{Leb}(D),\\
  \label{lebesgue_2}
  &\operatorname{Leb}(\cup_{k\in2\mathbb Z}D'_k)&&\to_{\varepsilon_2\to 0} t\operatorname{Leb}(D')
  &&\to_{\varepsilon_1\to 0} t\operatorname{Leb}(D).
\end{alignat}

For $n\in\mathbb N$, define
$f_n:n(\partial'D\cup D')\to\mathbb R$
by $f_n(\cdot):=nf(\cdot/n)$---this function is also $\|\cdot\|_{q_{6\varepsilon}}$-Lipschitz.
In particular,
Theorem~\ref{general_lips_ext_less_general}
implies that for some $M\in\mathbb Z_{\geq m}$ depending only on $\varepsilon$,
there exists a $q$-Lipschitz height function $\phi_n\in\Omega$
such that
\begin{enumerate}
  \item $\nabla\phi_n|_{\Lambda^{-M}(n\partial'D)}=\nabla\phi^u|_{\Lambda^{-M}(n\partial'D)}$,
  \item $\nabla\phi_n|_{\Lambda^{-M}(nD'_k)}=\nabla\phi^{u_1}|_{\Lambda^{-M}(nD'_k)}$
  for all $k$ odd,
  \item $\nabla\phi_n|_{\Lambda^{-M}(nD'_k)}=\nabla\phi^{u_2}|_{\Lambda^{-M}(nD'_k)}$
  for all $k$ even,
  \item $\phi_n$ is $q_{5\varepsilon}$-Lipschitz if $E=\mathbb R$.
\end{enumerate}
Recall the definition of $\Lambda_n$,
and define
\[
  \Lambda_{n,k}:=\Lambda^{-M}(nD'_k),\qquad
  \Lambda_n^0:=\Lambda_n\smallsetminus\cup_k(\Lambda_{n,k}\smallsetminus\{0_{\Lambda_{n,k}}\})
  ,\qquad
  \Lambda_n^*:=\Lambda_n\smallsetminus\cup_k\Lambda_{n,k}^{-R}.
\]
Note that $\partial^R\Lambda_n\subset\Lambda^{-M}(n\partial'D)$
for $n$ sufficiently large,
and consequently $\nabla\phi_n|_{\partial^R\Lambda_n}=\nabla\phi^u|_{\partial^R\Lambda_n}$.
This
 also implies that the sets $\partial^R\Lambda_n$ and $\Lambda_{n,k}$ are all disjoint
 for fixed $n$ as $k$ ranges over $\mathbb Z$.
 Finally, $\Lambda_{n,k}\subset \Lambda_n$ for all $k$.

The idea is now to use the existence of the function $\phi_n$
to derive the inequalities.
We distinguish two cases, depending on whether $E=\mathbb Z$
or $E=\mathbb R$.
Start with the former, which is easier.
Write $A_n$ for the set of height functions $\phi$
such that
\begin{enumerate}
  \item $\nabla\phi$ equals $\nabla\phi_n$ on $\Lambda_n^*$,
  \item $\phi\in B^1_{\Lambda_{n,k}}$ for all $k$ odd,
  \item $\phi\in B^2_{\Lambda_{n,k}}$ for all $k$ even.
\end{enumerate}
Note that $A_n\subset C^u_{\Lambda_n}$ because $\partial^R\Lambda_n\subset\Lambda_n^*$
and because $\nabla\phi_n=\nabla\phi^u$ on $\partial^R\Lambda_n$.
It is straightforward to
work out that $A_n\subset B_{\Lambda_n}$ for $n$ sufficiently large
and $\varepsilon_1,\varepsilon_2$ sufficiently small,
by application of Proposition~\ref{propo_compare_empirical_measures}
combined with~\eqref{lebesgue_1} and~\eqref{lebesgue_2}.

Therefore it suffices to demonstrate that
\[
\numberthis\label{eq_proof_again_one}
  \liminf n^{-d}\log\int_{A_n}
  e^{-H_{\Lambda_n}^0}d\lambda^{\Lambda_n-1}
  \geq -\operatorname{Leb}(D)( s\PB^*(\nu_1)+t\PB^*(\nu_2))
\]
where the limit is in the variables $n$, $\varepsilon_2$, and $\varepsilon_1$.
Moreover, since $\nabla\phi$ equals $\nabla\phi_n$ on $\Lambda_n^*$
for any $\phi\in A_n$,
this restriction to $\Lambda_n^* $ is $q$-Lipschitz, and the upper attachment lemma
(Lemma~\ref{lemma_attachment}) implies that
\[
\numberthis\label{eq_proof_again_two}
  H_{\Lambda_n}^0\leq
  \sum_kH_{\Lambda_{n,k}}^0+\sum_k e^+(\Lambda_{n,k})
  +
  |\Lambda_n\smallsetminus\cup_k\Lambda_{n,k}| \max_{x\in\mathbb Z^d/\mathcal L }e^+(\{x\})
\]
on $A_n$.
For the third term we have $n^{-d}|\Lambda_n\smallsetminus\cup_k\Lambda_{n,k}|\to_{n\to\infty} \operatorname{Leb}(D\smallsetminus D')\to_{\varepsilon_1\to 0} 0$,
and the second term is of order $o(n^d)$ as $n\to\infty$.
This implies that
\[
\numberthis\label{eq_proof_again_three}
\liminf n^{-d}\log\int_{A_n}
e^{-H_{\Lambda_n}^0}d\lambda^{\Lambda_n-1}
\geq
\liminf
n^{-d}\log
\int_{A_n}
e^{-\sum_kH_{\Lambda_{n,k}}^0}d\lambda^{\Lambda_n-1}.
\]
Recall the definition of $\Lambda_n^0$,
and consider $\lambda^{\Lambda_n-1}$ a product measure,
by writing
\[
  \lambda^{\Lambda_n-1}:=\lambda^{\Lambda_n^0-1}\times\prod_{k}\lambda^{\Lambda_{n,k}-1}.
\]
Note that $A_n$ contains exactly all height functions $\phi$ such that
\begin{enumerate}
  \item $\nabla\phi$ equals $\nabla\phi_n$ on $\Lambda_n^0$,
  \item $\phi\in C_{\Lambda_{n,k}}^{u_1}\cap B^1_{\Lambda_{n,k}}$ for all $k$ odd,
  \item $\phi\in C_{\Lambda_{n,k}}^{u_2}\cap B^2_{\Lambda_{n,k}}$ for all $k$ even,
\end{enumerate}
and therefore
\[\begin{split}
  &\int_{A_n}
  e^{-\sum_kH_{\Lambda_{n,k}}^0}d\lambda^{\Lambda_n-1}
=
\int_{\{\text{$\nabla\phi$ equals $\nabla\phi_n$ on $\Lambda_n^0$}\}}d\lambda^{\Lambda_n^0-1}(\phi)
\\&\qquad\qquad\cdot\prod_{k\in2\mathbb Z+1}\int_{C_{\Lambda_{n,k}}^{u_1}\cap B_{\Lambda_{n,k}}^1}
e^{-H_{\Lambda_{n,k}}^0}d\lambda^{\Lambda_{n,k}-1}
\cdot\prod_{k\in2\mathbb Z}\int_{C_{\Lambda_{n,k}}^{u_2}\cap B_{\Lambda_{n,k}}^2}
e^{-H_{\Lambda_{n,k}}^0}d\lambda^{\Lambda_{n,k}-1}.
\end{split}\]
The first factor equals one since we are dealing with the counting measure,
and therefore
\[
\log\int_{A_n}
e^{-\sum_kH_{\Lambda_{n,k}}^0}d\lambda^{\Lambda_n-1}
=-\sum_{k\in2\mathbb Z+1}\PB_{\Lambda_{n,k},u_1}(B^1)
-\sum_{k\in2\mathbb Z}\PB_{\Lambda_{n,k},u_2}(B^2).
\]
For fixed $\varepsilon_1,\varepsilon_2$ only finitely many
terms are possibly nonzero---those corresponding to nonempty sets $D_k'$---and
for each term we have (for $j\in\{1,2\}$)
\[
  \limsup_{n\to\infty} n^{-d}\PB_{\Lambda_{n,k},u_j}(B^j)\leq
  \PB(\nu_j:D_k',M)
  \leq
  \operatorname{Leb}(D_k')\PB^*(\nu_j).
\]
Therefore~\eqref{lebesgue_1} and~\eqref{lebesgue_2} imply
\[
\liminf n^{-d}\log\int_{A_n}
e^{-H_{\Lambda_n}^0}d\lambda^{\Lambda_n-1}
\geq -
\operatorname{Leb}(D)(s\PB^*(\nu_1)+t\PB^*(\nu_2)),
\]
 the desired inequality.

Let us now discuss what changes for $E=\mathbb R$.
Write $A_n$ for the set of samples $\phi$
such that
\begin{enumerate}
  \item $|(\phi_{\Lambda_n^0}-\phi(0_{\Lambda_n}))
  -(
  \phi_n|_{\Lambda_n^0}-\phi_n(0_{\Lambda_n})
  )|\leq \varepsilon$,
  \item $\phi\in C^{u_1}_{\Lambda_{n,k},\varepsilon}$ and $\phi\in B^1_{\Lambda_{n,k}}$ for all $k$ odd,
  \item $\phi\in C^{u_2}_{\Lambda_{n,k},\varepsilon}$ and $\phi\in B^2_{\Lambda_{n,k}}$ for all $k$ even.
\end{enumerate}
Note that $A_n\subset C^u_{\Lambda_n,\varepsilon}$.
The proof that $A_n\subset B_{\Lambda_n}$ is the same as before.
We must again prove~\eqref{eq_proof_again_one}.
The definition of $A_n$ implies that
$|(\phi_{\Lambda_n^*}-\phi(0_{\Lambda_n}))
-(
\phi_n|_{\Lambda_n^*}-\phi_n(0_{\Lambda_n})
)|\leq 2\varepsilon$
for any $\phi\in A_n$,
which in turn implies that $\phi_{\Lambda_n^*}$ is $q_\varepsilon$-Lipschitz
as $\phi_n$ was $q_{5\varepsilon}$-Lipschitz---see Proposition~\ref{propo_qqq}.
Therefore~\eqref{eq_proof_again_two} holds true with $e^+(\cdot)$
replaced by $e^+_{\varepsilon}(\cdot)$,
which implies~\eqref{eq_proof_again_three}.
We now have
\begin{multline*}
  \int_{A_n}
  e^{-\sum_kH_{\Lambda_{n,k}}^0}d\lambda^{\Lambda_n-1}
=
\int_{\{|(\phi_{\Lambda_n^0}-\phi(0_{\Lambda_n}))
-(
\phi_n|_{\Lambda_n^0}-\phi_n(0_{\Lambda_n})
)|\leq \varepsilon\}}
d\lambda^{\Lambda_n^0-1}(\phi)
\\\cdot\prod_{k\in2\mathbb Z+1}\int_{C_{\Lambda_{n,k},\varepsilon}^{u_1}\cap B_{\Lambda_{n,k}}^1}
e^{-H_{\Lambda_{n,k}}^0}d\lambda^{\Lambda_{n,k}-1}
\cdot\prod_{k\in2\mathbb Z}\int_{C_{\Lambda_{n,k},\varepsilon}^{u_2}\cap B_{\Lambda_{n,k}}^2}
e^{-H_{\Lambda_{n,k}}^0}d\lambda^{\Lambda_{n,k}-1}.
\end{multline*}
The first integral equals
$(2\varepsilon)^{|\Lambda_n^0|-1}$,
and therefore
\begin{multline*}
\log\int_{A_n}
e^{-\sum_kH_{\Lambda_{n,k}}^0}d\lambda^{\Lambda_n-1}
\\=(|\Lambda_n^0|-1)\log (2\varepsilon)-\sum_{k\in2\mathbb Z+1}\PB_{\Lambda_{n,k},u_1,\varepsilon}(B^1)
-\sum_{k\in2\mathbb Z}\PB_{\Lambda_{n,k},u_2,\varepsilon}(B^2).
\end{multline*}
The first term vanishes in the limit in the three variables after normalizing by $n^{-d}$.
The remainder of the proof is the same as before.
\end{proof}

Let us now discuss briefly how to deal with
ergodic measures with finite specific free energy
which have their slope in $\partial U_\Phi$,
before proving that $\PB^*(\mu)\leq\mathcal H(\mu|\Phi)$
for any shift-invariant random field $\mu$
with $S(\mu)\in U_\Phi$.

\begin{definition}
  Consider a measure $\mu\in\mathcal P_\mathcal L(\Omega,\mathcal F^\nabla)$ with finite specific
free energy.
Classify $\mu$ as \emph{taut} if $w_\mu$-almost surely $S(\nu)\in\partial U_\Phi$,
and as \emph{non-taut} if $w_\mu$-almost surely $S(\nu)\in U_\Phi$.
A \emph{non-taut approximation} of $\mu$ is a sequence
 $(\mu_n)_{n\in\mathbb N}\subset\mathcal P_\mathcal L(\Omega,\mathcal F^\nabla)$
 of non-taut measures
such that
$\mathcal H(\mu_n|\Phi)\to\mathcal H(\mu|\Phi)$
and
$\mu_n\to\mu$
in the topology of weak local convergence
as $n\to\infty$.
\end{definition}

If $E=\mathbb R$ and $\mu$ a shift-invariant random field
with finite specific free energy,
then $w_\mu$-almost surely $S(\nu)\in U_\Phi$,
due to Theorem~\ref{thm_main_st_general}
and because $\mathcal H(\cdot|\Phi)$
is strongly affine.
In other words, $\mu$ is automatically non-taut.
The following lemma is therefore meaningful for $E=\mathbb Z$ only.

\begin{lemma}
  \label{lemma_ergodic_approx_aux_1}
  Any ergodic gradient random field with finite specific free energy
  has a non-taut approximation.
\end{lemma}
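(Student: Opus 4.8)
The plan is to dispose of the easy case and then concentrate on the rigid one. If $u:=S(\mu)\in U_\Phi=U_q$, then $\mu$ is already non-taut (being ergodic, $w_\mu=\delta_\mu$), so the constant sequence $\mu_n:=\mu$ is a non-taut approximation. Assume therefore $u\in\partial U_q$, and recall that $E=\mathbb Z$ and $q$ is integral. By Lemma~\ref{lemma_lipschitz_q_and_norm} write $\bar U_q=\bigcap_{p\in P}H(p)$ for a finite collection $P$ of cycle lifts, where $H(p)=\{u'\colon u'(z_p)\le q(p)\}$ with $z_p\in\mathcal L$ the displacement of $p$, and let $P_u\subset P$ be the nonempty set of constraints active at $u$, i.e. those with $u(z_p)=q(p)$.

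The first step is a rigidity statement: $\mu$-almost surely $\phi(p_k)-\phi(p_{k-1})=q(p_{k-1},p_k)$ for every $p=(p_k)_{0\le k\le n}\in P_u$, every $k$, and every $\mathcal L$-translate. Indeed $F_p:=q(p)-(\phi(p_n)-\phi(p_0))$ is nonnegative since $\phi$ is $q$-Lipschitz, while $\mu(F_p)=q(p)-u(z_p)=0$ by the definition of the slope together with shift-invariance (using $z_p\in\mathcal L$); hence $F_p=0$ almost surely, which forces saturation on every edge of $p$. In particular $\phi$ is affine with gradient $u$ along the nontrivial sublattice $\mathcal M$ generated by $\{z_p\colon p\in P_u\}$, so the "free" degrees of freedom of $\mu$ lie on a subspace of dimension $<d$; consequently the specific entropy of $\mu$ vanishes and $\mathcal H(\mu|\Phi)=\langle\mu|\Phi\rangle$.

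The construction of $(\mu_m)_m$ is then a surgery. Fix $\delta_m\downarrow 0$ and widths $w_m\to\infty$ with $w_m\delta_m\to 0$, and into a $\mu$-sample insert a sparse, $\mathcal L$-periodic-up-to-a-uniform-random-translation family of codimension-one "fault slabs" of width $w_m$ at density $\delta_m$, across each of which $\phi$ is lowered by one unit, the drop being smeared over the slab. The normal direction of the slabs is chosen in the interior of the cone dual to $\{z_p\colon p\in P_u\}$ — nonempty precisely because $u\in\partial U_q$ — so that paths in every active direction $z_p$ meet the slabs "from behind", whence the crossings reduce the growth of $\phi$ along each $z_p$; integrality of $q$ (via $q(x,y)+q(y,x)\ge 1$) lets the one-unit drop be absorbed edge by edge on the saturated edges produced by the rigidity step, and the remaining edges are handled by filling each slab with a Kirszbraun extension of the prescribed boundary heights (Proposition~\ref{propo_Kirszbraun}, Theorem~\ref{general_lips_ext_less_general}), at the cost of losing local control inside the slabs — harmless, as they occupy a fraction $O(w_m\delta_m)\to 0$ of space. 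The resulting $\mu_m\in\mathcal P_\mathcal L(\Omega,\mathcal F^\nabla)$ has slope $u_m$ with $u_m(z_p)=q(p)-c_p\delta_m<q(p)$ for $p\in P_u$ and $u_m(z_{p'})<q(p')$ for the finitely many other $p'\in P$ once $m$ is large, so $u_m\in U_q=U_\Phi$; since moreover every ergodic component of $\mu_m$ carries the same fault density and hence the same slope $u_m$, the measure $\mu_m$ is non-taut. Because the faults have density $\delta_m\to 0$, a vertex chosen uniformly in $\Pi_n$ avoids them with probability $\to 1$, so $\mu_m\to\mu$ in the topology of (weak) local convergence; and since the surgery alters only $O(\delta_m n^d)$ vertices of $\Pi_n$, changing the relevant interior Hamiltonian by $O(1)$ per altered vertex (using $E=\mathbb Z$, $q$-Lipschitzness, and summability of $\Xi$) and adding only $O(1)$ of randomness, we get $\mathcal H(\mu_m|\Phi)=\mathcal H(\mu|\Phi)+O(\delta_m)$; combined with lower-semicontinuity of $\mathcal H(\cdot|\Phi)$ (Theorem~\ref{thm_main_sfe}) this gives $\mathcal H(\mu_m|\Phi)\to\mathcal H(\mu|\Phi)$, completing the proof.

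The \textbf{main obstacle} is the $q$-Lipschitz bookkeeping inside the surgery: arranging that a single family of fault slabs relaxes all active constraints simultaneously without ever producing a new violation. This is a purely geometric fact about $\bar U_q$ near $u$ together with the rigidity of the second step; in degenerate configurations of the active normals (e.g. when $u$ is a vertex at which the $z_p$ "positively span" in a self-dual fashion, so that the interior of the dual cone is contained in the normal cone) one cannot smear a drop in a single direction, and must instead combine one fault family transversal to each facet through $u$ — exploiting that, by the rigidity step, $\mu$ is then deterministic up to a finite choice, so that $\phi^{u_m}$-type configurations may be substituted directly. Carrying this bookkeeping out carefully, uniformly in the choice of $q$, is the only genuinely nontrivial part.
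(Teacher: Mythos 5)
Your proposal takes a genuinely different route from the paper, and it has a substantial gap at exactly the place you flag as the ``main obstacle.''

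The paper's construction is a floor-of-difference interpolation: with $\xi$ another ergodic measure of slope in $U_\Phi$ and finite specific free energy, and $\rho_n$ uniform on $\{0,\dots,n-1\}$, one sets
\[
  \phi := \phi^\mu - \Bigl\lfloor \tfrac{1}{n}\bigl((\phi^\mu-\phi^\mu(0))-(\phi^\xi-\phi^\xi(0))+a\bigr)\Bigr\rfloor
\]
with $(\phi^\mu,\phi^\xi,a)\sim\mu\times\xi\times\rho_n$. The crucial point is that the $q$-Lipschitz constraint is preserved \emph{automatically}: on any edge $\{x,y\}\in\mathbb A$ where the rounded correction drops by one, the numerator $h$ satisfies $h(y)<h(x)$, hence $\phi^\mu(y)-\phi^\mu(x)<\phi^\xi(y)-\phi^\xi(x)\le q(x,y)$; since $E=\mathbb Z$ and $q$ is integral this forces $\phi^\mu(y)-\phi^\mu(x)\le q(x,y)-1$, absorbing the $+1$. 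This is precisely the ``unit of slack'' you are trying to create by hand with fault slabs, except here it is free because one subtracts the difference of two genuine $q$-Lipschitz samples rather than a deterministic tilt.

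Your proposal replaces this by surgery (insert periodic codimension-one fault slabs, drop $\phi$ by one across each, fill the slab by Kirszbraun). Three issues make this a genuine gap rather than a mere sketch. First, you never actually verify that the filled-in function \emph{exists}: the drop must be consistent across the entire $(d-1)$-dimensional slab simultaneously, and Kirszbraun gives you \emph{an} extension of boundary data, not one respecting a prescribed one-unit drop in the chosen normal direction on all active edges. Second, the claim that ``the remaining edges are handled by'' Kirszbraun silently assumes the boundary heights you prescribe on the two faces of a slab are themselves jointly $q$-Lipschitz; this is exactly the bookkeeping you defer. Third, the worry you raise about degenerate vertices where the active $z_p$ positively span $\mathbb R^d$ is actually vacuous — if $\{z_p:p\in P_u\}$ positively spanned then $\bar U_q - u$ would sit inside the zero cone, forcing $\bar U_q=\{u\}$ and contradicting $U_q\neq\emptyset$ — so the escape route you propose for that case (substituting ``$\phi^{u_m}$-type configurations directly'') is never needed, but the ordinary case remains unproved. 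Your opening reduction (constant sequence when $S(\mu)\in U_\Phi$), your rigidity observation, and your estimates on local convergence and specific free energy are all correct and mirror the paper; it is the construction of a single valid $q$-Lipschitz sample that is missing, and the paper's blend-with-$\xi$ trick is exactly what fills that hole cleanly.
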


\begin{proof}
  Let $E=\mathbb Z$,
  and let $\mu$ denote an ergodic
  random field with $\mathcal H(\mu|\Phi)<\infty$ and $S(\mu)\in\partial U_\Phi$.
  In this pathological case, we must modify $\mu$
  slightly, so that the modified measure
  is non-taut,
  and without changing the
  specific free energy
  too much.
  Let $\xi$ denote another ergodic measure
  in $\mathcal P_\mathcal L(\Omega,\mathcal F^\nabla)$
  with $\mathcal H(\xi|\Phi)<\infty$
  and with $S(\xi)\in U_\Phi$---such measures exist,
  due to the proof of Theorem~\ref{thm_main_st_general}
  on Page~\pageref{thm_main_st_general_proof}.
  Write $\rho_n$ for the uniform probability
  measure on the set $\{0,\dots,n-1\}$.
  Fix $n\in\mathbb N$;
  we are going to define a new measure $\mu_n$.
  To sample a height function $\phi$
  from $\mu_n$,
  sample first a triple $(\phi^\mu,\phi^\xi,a)$
  from the measure $\mu\times\xi\times\rho_n$.
  The final sample $\phi$
  is then given by the equation
  \[
    \phi:=
    \phi^\mu
    -
    \left\lfloor
      \frac{(\phi^\mu-\phi^\mu(0))-(\phi^\xi-\phi^\xi(0))+a}{n}
    \right\rfloor.
  \]
  The random choice of $a$ makes the rounding operation shift-invariant.
  Note that the numerator in this fraction
  is $2K$-Lipschitz almost surely for $K$ minimal subject to $Kd_1\geq q$,
  and therefore the rounded function is $1$-Lipschitz for $n$ sufficiently large.
  In fact, the density of edges on which
  the rounded function is not constant, has a bound of order $O(1/n)$ as $n\to\infty$.
  In particular, this implies that $\mu_n\to\mu$
  in the topology of (weak) local convergence.
  Recall~\eqref{eq_decomp_of_SFE_useful_occas}
  from the proof of Theorem~\ref{thm_SFE_strongly_affine},
  and observe that
  the specific free energy of $\mu$ and $\mu_n$ can be calculated
  as in this equation because either measure is $K$-Lipschitz.
  If $f(p)$ denotes the entropy function of a Bernoulli trial with
  parameter $p$ as in the proof of Lemma~\ref{lemma:PBL_upper_bound},
  then by arguments similar to those used in that proof,
  we can bound the difference in the specific entropy between
  $\mu$ and $\mu_n$:
  \[
    |\mathcal H(\mu|\lambda)
    -
    \mathcal H(\mu_n|\lambda)|=O(f(O(1/n)))=o(1)
  \]
  as $n\to\infty$.
  For $E=\mathbb Z$,
  we have a lower and upper bound on $H_{\{x\}}(\phi)$
  for $q$-Lipschitz $\phi$,
  and this and
  amenability of the weak interaction $\Xi$
  imply that the specific energy functional
  \[
    \mu\mapsto \mu(\Phi)
  \]
  is continuous with respect to the topology of local convergence
  whenever restricted to shift-invariant random fields
  which are supported on $q$-Lipschitz functions.
  Jointly these two observations imply that $\mathcal H(\mu_n|\Phi)\to\mathcal H(\mu|\Phi)$.
  It suffices to demonstrate that each
  measure $\mu_n$ is non-taut.
  Claim that $w_{\mu_n}$-almost every ergodic component $\nu$
  satisfies $S(\nu)=(1-\frac1n)S(\mu)+\frac1nS(\xi)\in U_\Phi$.
  Recall Theorem~\ref{thm:superergodresult}.
  The final assertion of that theorem tells us that
  the slope $S(\nu)$ of each ergodic
  component can be read off from almost every sample $\phi$
  from $\nu$,
  since
  the slope $u:=S(\nu)$ is almost surely the unique slope such that
  for any fixed $\varepsilon>0$,
  \[
    \|\phi_{\Pi_m}-\phi(0)-u|_{\Pi_m}\|_\infty\leq \varepsilon m
  \]
  for $m$ sufficiently large.
  The slope $(1-\frac1n)S(\mu)+\frac1nS(\xi)$  makes
  this inequality work
  for samples $\phi$ from the original measure $\mu_n$,
  because $\mu$ and $\xi$ are ergodic,
  and because
    $\phi$
  equals $(1-\frac1n)\phi^\mu+\frac1n\phi^\xi$
  up to bounded differences.
\end{proof}

\begin{lemma}
  \label{lemma_PB_star_uncontrained_ineq}
  For any $\mu\in\mathcal P_\mathcal L(\Omega,\mathcal F^\nabla)$
  with $S(\mu)\in U_\Phi$, we have $\PB^*(\mu)\leq \mathcal H(\mu|\Phi)$.
\end{lemma}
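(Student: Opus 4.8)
The plan is to bootstrap the ergodic bound $\PB^{*}(\nu)\le\mathcal H(\nu|\Phi)$ of Lemma~\ref{lemma_pb_convex_shape} up to all shift-invariant $\mu$ with $S(\mu)\in U_\Phi$, using convexity of $\PB^{*}$ (Lemma~\ref{lemma_PB_star_convex}), affineness of $\mathcal H(\cdot|\Phi)$, the ergodic decomposition (Theorem~\ref{theorem_ergodic_decomposition}) together with strong affineness of $S(\cdot)$ and $\mathcal H(\cdot|\Phi)$ (Proposition~\ref{propo_S_strongly_affine}, Theorem~\ref{thm_SFE_strongly_affine}), the non-taut approximation of ergodic measures (Lemma~\ref{lemma_ergodic_approx_aux_1}), and lower-semicontinuity of $\PB^{*}$ on each slice $\{S(\cdot)=u\}$. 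We may assume $\mathcal H(\mu|\Phi)<\infty$, so that $\mu$ is $K$-Lipschitz and lies in a compact subset of $\mathcal P_\mathcal L(\Omega,\mathcal F^\nabla)$ on which $S(\cdot)$ is continuous; then $\mathcal H(\cdot|\Phi)\in L^1(w_\mu)$ since it is bounded below. First, if $\mu=\sum_{i=1}^{N}t_i\nu_i$ is a finite convex combination of ergodic measures $\nu_i$ with $S(\nu_i)\in U_\Phi$, then convexity of $\PB^{*}$, Lemma~\ref{lemma_pb_convex_shape}, and affineness of $\mathcal H(\cdot|\Phi)$ give $\PB^{*}(\mu)\le\sum_i t_i\PB^{*}(\nu_i)\le\sum_i t_i\mathcal H(\nu_i|\Phi)=\mathcal H(\mu|\Phi)$.

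Next I would treat a \emph{non-taut} $\mu$, that is one for which $w_\mu$-almost every ergodic component has slope in $U_\Phi$. If $\mu$ is ergodic this is Lemma~\ref{lemma_pb_convex_shape}; otherwise draw $\nu_1,\dots,\nu_k$ i.i.d.\ from $w_\mu$ and note that $w_\mu$-almost surely the empirical measures converge weakly to $w_\mu$, while $\tfrac1k\sum_j S(\nu_j)\to S(\mu)$ and $\tfrac1k\sum_j\mathcal H(\nu_j|\Phi)\to\mathcal H(\mu|\Phi)$. Since $S(\mu)$ is the barycenter of $S_{*}w_\mu$, for large $k$ it lies well inside the convex hull of $\{S(\nu_j)\}_{j\le k}$, so one may perturb the weights $1/k$ to weights $t_j^k$ with $\max_j|t_j^k-1/k|=o(1/k)$ and $\sum_j t_j^k S(\nu_j)=S(\mu)$ exactly. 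Then $\mu_k:=\sum_j t_j^k\nu_j$ has slope exactly $S(\mu)$, satisfies $\mu_k\to\mu$ weakly and $\mathcal H(\mu_k|\Phi)\to\mathcal H(\mu|\Phi)$ (the reweighting is too small to affect the limits), and is a finite convex combination of ergodic measures of interior slope, so the first step applies. As $\mu$ and all $\mu_k$ lie in $\{S(\cdot)=S(\mu)\}$, lower-semicontinuity of $\PB^{*}$ there gives $\PB^{*}(\mu)\le\liminf_k\PB^{*}(\mu_k)\le\mathcal H(\mu|\Phi)$.

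Finally, the general case, which is only non-vacuous for $E=\mathbb Z$ (for $E=\mathbb R$ every finite-energy measure is non-taut by Theorem~\ref{thm_main_st_general} and strong affineness). Split $w_\mu=a\,w^{\mathrm{in}}+b\,w^{\partial}$ according to whether a component's slope lies in $U_\Phi$ or in $\partial U_\Phi$, and write $\mu=a\mu^{\mathrm{in}}+b\mu^{\partial}$, with $\mu^{\mathrm{in}}$ non-taut. Approximate $\mu^{\partial}$ by sampling its components i.i.d.\ and replacing each by its non-taut approximant from Lemma~\ref{lemma_ergodic_approx_aux_1}, then averaging and diagonalizing, to obtain non-taut $\eta_k\to\mu^{\partial}$ with $\mathcal H(\eta_k|\Phi)\to\mathcal H(\mu^{\partial}|\Phi)$ and $S(\eta_k)\to S(\mu^{\partial})$. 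Then $\mu_k:=a\mu^{\mathrm{in}}+b\eta_k$ is non-taut, converges to $\mu$, has free energy converging to $\mathcal H(\mu|\Phi)$, and $S(\mu_k)\to S(\mu)\in U_\Phi$, so $S(\mu_k)\in U_\Phi$ for large $k$ and the non-taut case gives $\PB^{*}(\mu_k)\le\mathcal H(\mu_k|\Phi)$. Since $S(\mu_k)$ need not equal $S(\mu)$, I would correct the $O(1/k)$ slope drift by mixing $\mu_k$ with a vanishing weight of a non-taut finite-energy measure of a suitable slope near $S(\mu)$ — such measures of every slope in a neighborhood of $S(\mu)$ coming from the constructions in the proof of Theorem~\ref{thm_main_st_general} — so that the mixture has slope exactly $S(\mu)$, stays non-taut (hence is bounded by its own free energy by the non-taut case), converges to $\mu$, and has free energy tending to $\mathcal H(\mu|\Phi)$; a last application of lower-semicontinuity on $\{S(\cdot)=S(\mu)\}$ then yields the claim.

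The main obstacle is the bookkeeping of slopes. Lower-semicontinuity of $\PB^{*}$ is only available along sequences of constant slope, whereas both the finite-combination approximation of a general ergodic decomposition and the non-taut approximation of Lemma~\ref{lemma_ergodic_approx_aux_1} perturb the slope. In the non-taut step the perturbation can be absorbed by reweighting, precisely because $S(\mu)$ is a genuine barycenter of the component slopes; in the general step this is impossible, and one must cancel the drift by hand against an auxiliary reservoir of well-behaved non-taut measures whose slopes fill a neighborhood of $S(\mu)$. Checking that this reservoir can be taken non-taut when $E=\mathbb Z$ — that is, that interior slopes admit non-taut measures of uniformly bounded specific free energy — is the most delicate point, and where the interior geometry of $U_\Phi$ and Theorem~\ref{thm_main_st_general} are used essentially.
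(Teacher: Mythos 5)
Your proof is essentially the paper's own: it runs through the identical chain of reductions---finite convex combinations of interior-slope ergodic components via convexity of $\PB^*$ and Lemma~\ref{lemma_pb_convex_shape}, then general non-taut $\mu$ via i.i.d.\ sampling from $w_\mu$ and averaging, then general $\mu$ via the non-taut approximation of Lemma~\ref{lemma_ergodic_approx_aux_1}---each time closing the argument with lower-semicontinuity of $\PB^*$ on the slice $\{S(\cdot)=S(\mu)\}$. The one place you genuinely diverge is in re-centering the slope after the non-taut perturbation: the paper ``alters the coefficients'' of the convex combination, whereas you mix in a vanishing weight of an auxiliary non-taut measure of a compensating interior slope. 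Your version is arguably the safer of the two, since the paper's internal reweighting can in principle fail when the perturbed component slopes no longer have $S(\mu)$ in the relative interior of their convex hull; the cleanest variant, available to either proof, is simply to run Lemma~\ref{lemma_ergodic_approx_aux_1} with the auxiliary measure $\xi$ chosen of slope exactly $S(\mu)\in U_\Phi$, which fixes the slope of the approximants at $S(\mu)$ identically and dispenses with any correction step.
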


\begin{proof}
  Let $\mu$ denote an arbitrary
  shift-invariant random field with $H:=\mathcal H(\mu|\Phi)+1<\infty$
  and $u:=S(\mu)\in U_\Phi$.
  If $\mu$ is non-taut and a convex combination of finitely many ergodic random fields,
  then the lemma follows immediately from
  Theorem~\ref{thm_main_sfe} and Lemmas~\ref{lemma_pb_convex_shape} and~\ref{lemma_PB_star_convex}.
  Let us now consider the case that $\mu$ is non-taut,
  but not a convex combination of finitely many ergodic random fields.
  The lower level set of the specific free energy
  $M_{H}$ is a compact Polish space,
  and therefore
  there exists a sequence
  of continuous cylinder functions
  $(f_k)_{k\in\mathbb N}$
  with $f_k:\Omega\to[0,1]$
  such that some sequence
  $(\mu_n)_{n\in\mathbb N}\subset
  M_{H}$
  satisfies
  $\mu_n\to\mu$ in the
  topology of weak local convergence if
  and only if $\mu_n(f_k)\to\mu(f_k)$
  as  $n\to\infty$ for every $k\in\mathbb N$.
  Write $w_\mu$ for the ergodic
  decomposition of $\mu$.
  Let $(\nu_i)_{i\in\mathbb N}$
  denote an i.i.d.\ sequence
  of samples from $w_\mu$.
  Define
  \[
    \mu_n:=\sum\nolimits_{i=1}^{n}{\textstyle\frac 1 n}\nu_i.
  \]
  Then $w_\mu$-almost surely,
  $\mathcal H(\mu_n|\Phi)\to \mathcal H(\mu|\Phi)$
  and
  $\mu_n(f_k)\to\mu(f_k)$
  as $n\to\infty$
  for all $k\in\mathbb N$.
  This implies that $\mu_n\to\mu$
  in the topology of weak local convergence.
  Finally, we have $S(\mu_n)\to u$.
  By altering the coefficients in the definition
  of each measure $\mu_n$
  slightly, we can make sure that $S(\mu_n)=u$
  for $n$ sufficiently large,
  while retaining the other properties of this sequence.
  For each measure $\mu_n$
  we have $\PB^*(\mu_n)\leq\mathcal H(\mu_n|\Phi)$
  by the first part of this proof,
  and $\PB^*(\mu)\leq\mathcal H(\mu|\Phi)$
  because $\PB^*(\cdot)$ is lower-semicontinuous when restricted
  to $\{S(\cdot)=u\}$,
  while $\mathcal H(\mu_n|\Phi)\to\mathcal H(\mu|\Phi)$ as $n\to\infty$.

  We now prove the lemma for the case that $\mu$
  is a convex combination of finitely
  many ergodic measures, but without imposing that $\mu$ is non-taut.
  Write
  \[
    \mu=\sum\nolimits_{i=1}^n a_i\nu^i
  \]
  for the decomposition of $\mu$ into ergodic components.
  Since each $\nu^i$ is ergodic, it has a non-taut approximation
  $(\nu^i_k)_{k\in\mathbb N}$.
  Define $\mu_k:=\sum_{i=1}^na_i\nu_k^i$,
  so that $\mu_k\to\mu$ in the topology of weak local convergence
  with $\mathcal H(\mu_k|\Phi)\to \mathcal H(\mu|\Phi)$
  as $k\to\infty$.
  This implies also that $S(\mu_k)\to S(\mu)$,
  and by altering the coefficients in the definition
  of each measure $\mu_k$ slightly,
  we may ensure that $S(\mu_k)=u$ for $k$
  sufficiently large,
  while retaining the previously mentioned properties.
  By arguing as before,
  we have $\PB^*(\mu_k)\leq\mathcal H(\mu_k|\Phi)$
  and therefore $\PB^*(\mu)\leq\mathcal H(\mu|\Phi)$.
  The generalization to those measures $\mu$ which
  are not a convex combination of finitely many
  ergodic measures
  and not non-taut is the same as before.
\end{proof}


\section{Large deviations principle}
\label{sec:LDP}

Large deviations are the subject of a vast literature within statistical physics~\cite{MR997938,MR2571413,MR3309619}.
In the context of gradient models, the pioneering result was derived by Sheffield in~\cite{S05}.
In this section we prove a large deviations principle (LDP) of similar strength to the one contained in Chapter 7 of~\cite{S05}, with the noteworthy difference that we express it directly in terms of the Gibbs specification.
The large deviations principle applies to all models described in the introduction,
including for example perturbed dimer models~\cite{MR3606736,
  GMT2019} which are not monotone, even if the perturbation has infinite range.
This LDP captures both the macroscopic profile of each sample,
as well as its local statistics.
We will be using some notations and ideas from~\cite{S05} and~\cite{krieger2019deducing}.
Recall Subsection~\ref{subsec:main_results:ldp}
for a description of good asymptotic boundary profiles
and good approximations.
That subsection also contains a description of the topology for the macroscopic profile of each function.
The letter $\Phi$ denotes a fixed potential belonging to the class $\mathcal S_\mathcal L+\mathcal W_\mathcal L$ throughout this section.


\subsection{Formal description of the LDP}

\label{subsec:LDP_FULL}

Recall Subsection~\ref{subsec:main_results:ldp},
which gave an overview of the large deviations principle
without local statistics.
Throughout this section,
the sequence $(D_n,b_n)_{n\in\mathbb N}$
denotes a good approximation of
some fixed good asymptotic boundary profile $(D,b)$.
The sequence of local Gibbs measures
which are of interest in the LDP is the sequence
$(\gamma_n)_{n\in\mathbb N}$ defined
by $\gamma_n:=\gamma_{D_n}(\cdot,b_n)$.
We shall also write $Z_n$ for $Z_{D_n}(b_n)$; the
normalizing constant in the definition of the measure
$\gamma_{D_n}(\cdot,b_n)$.
Finally, $\tilde\gamma_n$ shall denote the non-normalized
version of $\gamma_n$, that is, $\tilde\gamma_n:=Z_n\gamma_n$.

	\subsubsection{The topological space}
	\label{formal_LDP_topo}

All samples from the sequence of measures
$(\gamma_n)_{n\in\mathbb N}$
must be brought to the same topological space,
in order to formulate the large deviations principle.
	We want our large
	 deviations principle to describe both
	 the global profile
	 of each sample as well
	 as its local statistics,
	 and this is reflected
	 in the choice of topological
	 space.
	 More concretely,
	 the topological space that we have
	 in mind decomposes as the product of two topological spaces, each describing one
	 of the two aspects of each sample.
   Recall from Definition~\ref{definition_main_ldp_topology_macroscopic_profiles}
   that $(\operatorname{Lip}(\bar D),\mathcal X^\infty)$
   is space of $K\|\cdot\|_1$-Lipschitz functions functions
   on $\bar D$ endowed with the topology of uniform convergence.
   Recall also the definition of $\mathfrak G_n$;
	 each map $\mathfrak G_n$ is used to map samples
	 from $\gamma_n$ to $\operatorname{Lip}(\bar D)$.
	 This map characterizes the macroscopic profile of each sample.
%


Next, we define the empirical measure profile
$\mathfrak L_n(\phi)$
of the sample $\phi$
from $\gamma_n$.
The empirical measure profile captures the local statistics
of the height function $\phi$ in the large deviations principle.

\begin{definition}[topology for local statistics]
Write $\mathcal D$ for the Borel $\sigma$-algebra
on $D$,
and
recall that
$\mathcal M(X,\mathcal X)$
denotes the set of $\sigma$-finite measures on the measurable space $(X,\mathcal X)$.
Throughout this article,
we shall write
$\mathcal M^D$
for the set of measures
$\mu\in\mathcal M(D\times\Omega,\mathcal D\times\mathcal F^\nabla)$
which have the property that
the first marginal
$\mu_D=\mu(\cdot,\Omega)$
equals the Lebesgue measure
on $D$.
The empirical profile $\mathfrak L_n(\phi)\in \mathcal M^D$ of $\phi$ is now defined
by the equation
\[
    \mathfrak L_n(\phi):=\int_D \delta_{(x,\theta_{[nx]_\mathcal L}\phi)}dx,
\]
where $\delta$ denotes the Dirac measure
and $[nx]_\mathcal L$ is the vertex in $\mathcal L$
closest to $nx$ in the Euclidean metric---this is well-defined
for almost every $x$ with respect to the Lebesgue measure.
Thus, to ``sample'' from $\mathfrak L_n(\phi)$---this
language is abusive because the size of the measure
$\mathfrak L_n(\phi)$ is $\operatorname{Leb}(D)$
and therefore not generally a probability measure---one first
samples $x$ from $D$ uniformly at random;
then one shifts the sample $\phi$
by $[nx]_\mathcal L$.
The map $\mathfrak L_n:\Omega\to\mathcal M^D$
 thus captures the local statistics of the height functions in the large
 deviations principle.
 For the statement of the large deviations principle,
 we endow the space $\mathcal M^D$
 with the topology $\mathcal X^\mathfrak L$.
 This is defined to be the weakest topology
 which makes the map $\mu\mapsto \mu(R,f)$ continuous
 for any rectangular subset $R$ of $D$,
 and for any continuous cylinder function $f:\Omega\mapsto[0,1]$.
\end{definition}

\begin{remark*}
If $\phi$ is a height function, $R\subset D$
a bounded convex set of positive Lebesgue measure,
 and $n$ large,
then
\[
    \operatorname{Leb}(R)^{-1}\mathfrak L_n(\phi)(R,\cdot)\approx
    L_{\Lambda(nR)}(\phi).
\]
More precisely, the total variation distance between
the two measures goes to zero as $n\to\infty$,
uniformly over the choice of $\phi$.
\end{remark*}

\begin{definition}[Product topology for the large deviation principle]
  The large deviations principle is
  formulated on the space $X^\mathfrak P:=\operatorname{Lip}(\bar D)\times \mathcal M^D$
  endowed with the topology $\mathcal X^\mathfrak P:=\mathcal X^\infty\times\mathcal X^\mathfrak L$,
  and we map each sample $\phi$ from $\gamma_n$
  to this space by applying the map $\mathfrak P_n:=\mathfrak G_n\times\mathfrak L_n$.
\end{definition}

\subsubsection{The rate function}

Before proceeding, a few definitions for measures  $\mu\in\mathcal M^D$ are introduced.
The measure $\mu$ is called \emph{$\mathcal L$-invariant} if $\operatorname{Leb}(U)^{-1}\mu(U,\cdot)\in\mathcal P_\mathcal L(\Omega,\mathcal F^\nabla)$ for any $U\in\mathcal D$ of positive Lebesgue measure.
Write $\mathcal M^D_\mathcal L$
for the set of all such shift-invariant measures.
If $\mu$ is $\mathcal L$-invariant
and
$U\in\mathcal D$ has positive Lebesgue measure,
then we write
$S(\mu(U,\cdot))$ for the slope of  $\operatorname{Leb}(U)^{-1}\mu(U,\cdot)$.
Call a pair $(g,\mu)\in X^\mathfrak P$ \emph{compatible},
and write $g\sim\mu$, if $\mu$ is $\mathcal L$-invariant
with $\nabla g(x)=S(\mu(x,\cdot))$ as a distribution on $D$.
Finally, write $w_\mu$ for the ergodic decomposition
of the shift-invariant non-normalized measure $\mu(D,\cdot)$,
and define
\[
	\mathcal H(\mu|\Phi):=\mathcal H(\mu(D,\cdot)|\Phi):=\int \mathcal H(\nu|\Phi)dw_\mu(\nu)
	=\operatorname{Leb}(D)\mathcal H(\operatorname{Leb}(D)^{-1} \mu(D,\cdot)|\Phi).
\]

\begin{definition}
	Consider a good asymptotic boundary profile $(D,b)$.
The \emph{rate function} associated to this profile
is the function $I:X^\mathfrak P\to\mathbb R\cup\{\infty\}$ defined by
\[
I(g,\mu) :=\tilde I(g,\mu)-P_\Phi(D,b)
\quad\text{where}\quad
\tilde I(g,\mu) := \begin{cases}
\mathcal H(\mu|\Phi) &
\text{if $g|_{\partial D} = b$ and $g\sim\mu$,}\\
\infty & \text{otherwise.}
\end{cases}
\]
Here $P_\Phi(D,b)$ denotes the
 \emph{pressure}
	of $(D,b)$,  which is given by
	\[
		P_\Phi(D,b)
		:=
		\min_{\text{$g \in \operatorname{Lip}(\bar D)$ with $g|_{\partial D} = b$}} \int_D \sigma(\nabla g(x)) dx.
	\]
\end{definition}

The function $\tilde I$ is useful because its definition
does not appeal to the pressure.
It will later appear as the rate function of the LDP
corresponding to the sequence of measures $(\tilde\gamma_n)_{n\in\mathbb N}$
defined by $\tilde\gamma_n:=Z_n\gamma_n$,
the non-normalized versions of the local Gibbs measures
$\gamma_{D_n}(\cdot,b_n)$.

\begin{lemma}
	\label{lemma_rate_fcts}
	The following hold true:
	\begin{enumerate}
		\item The rate functions $I$ and $\tilde I$ are convex,
		\item The rate functions $I$ and $\tilde I$ are lower-semicontinuous,
		\item The lower level sets $\{I\leq C\}$ and $\{\tilde I\leq C\}$ are compact
		Polish spaces for $C<\infty$,
    \item There is a probability kernel $u\mapsto \mu_u$
		such that for any $u\in\{\sigma<\infty\}$,
		we have $\mu_u\in\mathcal P_\mathcal L(\Omega,\mathcal F^\nabla)$
		with $S(\mu_u)=u$
		and $\mathcal H(\mu_u|\Phi)=\sigma(u)$,
		%
		\item
    \label{minimizerindeedasexpected}
     For fixed $g\in\operatorname{Lip}(\bar D)$
		with $g|_{\partial D}=b$, we have
		\[
			\min_{\mu\in\mathcal M^D}\tilde I(g,\mu)=\int_D\sigma(\nabla g(x))dx,
		\]
		\item The minimum of $I$ is $0$, and the minimum of $\tilde I$ is $P_\Phi(D,b)$.
	\end{enumerate}
\end{lemma}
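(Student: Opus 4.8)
The plan is to prove the six assertions in the stated order; parts (1)--(3) are soft consequences of the properties of $\mathcal H(\cdot|\Phi)$ established in Theorem~\ref{thm_main_sfe} and Theorem~\ref{thm_SFE_strongly_affine}, while (4) and (5) carry the real content. Since $I$ and $\tilde I$ differ by the additive constant $P_\Phi(D,b)$, it suffices to treat $\tilde I$ throughout. For \emph{convexity}, the effective domain $\{(g,\mu):g|_{\partial D}=b,\ g\sim\mu\}$ is convex: $g\mapsto g|_{\partial D}$ is affine, $\mathcal L$-invariance of $\mu$ is a convex condition (the projections $\mu\mapsto\operatorname{Leb}(U)^{-1}\mu(U,\cdot)$ are affine), and the relation $\nabla g(x)=S(\mu(x,\cdot))$ is preserved under convex combinations because $\nabla$ is linear and $S$ is affine (Proposition~\ref{propo_S_strongly_affine}); on this domain $\tilde I(g,\mu)=\operatorname{Leb}(D)\,\mathcal H(\operatorname{Leb}(D)^{-1}\mu(D,\cdot)|\Phi)$ is affine in $\mu$ by affineness of $\mathcal H(\cdot|\Phi)$. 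For \emph{lower semicontinuity}, if $(g_n,\mu_n)\to(g,\mu)$ in $\mathcal X^\mathfrak P$ with $\tilde I(g_n,\mu_n)\le L+1<\infty$ along a subsequence realising $L:=\liminf\tilde I(g_n,\mu_n)$, then uniform convergence gives $g|_{\partial D}=b$, lower semicontinuity of $\mathcal H(\cdot|\Phi)$ gives $\mathcal H(\mu|\Phi)\le L$, and since each normalised $\mu_n(D,\cdot)$ lies in a fixed compact level set $M_C$ of $\mathcal H(\cdot|\Phi)$ on which $S$ is continuous, we may pass to the limit in $\nabla g_n(x)=S(\mu_n(x,\cdot))$ (using $\nabla g_n\to\nabla g$ distributionally) to get $g\sim\mu$, whence $\tilde I(g,\mu)\le L$.

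For \emph{compactness and Polishness} (3), the projection of $\{\tilde I\le C\}$ to $\operatorname{Lip}(\bar D)$ lies in the Arzel\`a--Ascoli-compact set of $K\|\cdot\|_1$-Lipschitz functions on $\bar D$ equal to $b$ on $\partial D$, while its projection to $\mathcal M^D$ lies in $\mathcal K_{C'}:=\{\mu\in\mathcal M^D_\mathcal L:\mathcal H(\mu|\Phi)\le C'\}$ with $C'=C$ (for $\tilde I$) or $C'=C+P_\Phi(D,b)$ (for $I$). I would show $\mathcal K_{C'}$ is compact by fixing a countable family $\mathcal R$ of dyadic boxes generating $\mathcal D$: affineness of $\mathcal H(\cdot|\Phi)$ together with the lower bound $\mathcal H(\cdot|\Phi)\ge-\max_{x}e^*(\{x\})$ forces every normalised restriction $\operatorname{Leb}(R)^{-1}\mu(R,\cdot)$, for $R\in\mathcal R$ and $\mu\in\mathcal K_{C'}$, into a fixed compact level set $M_{C_R}$; a diagonal extraction over $\mathcal R$ then yields a limit consistent on the algebra generated by $\mathcal R$, which extends (Carath\'eodory) to a measure $\mu\in\mathcal M^D$ with $\mathcal H(\mu|\Phi)\le C'$ by lower semicontinuity. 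Being closed (part (2)) inside a compact metrizable product, $\{\tilde I\le C\}$ is compact, and it is Polish since each $M_C$ is Polish by Theorem~\ref{thm_main_sfe}.

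For the \emph{kernel} (4): whenever $\sigma(u)<\infty$ the set $\{\mu\in M_{\sigma(u)}:S(\mu)=u\}$ is nonempty and compact (continuity of $S$ on $M_{\sigma(u)}$), and $\mathcal H(\cdot|\Phi)$ attains the value $\sigma(u)$ there, giving a minimizer of slope $u$; to make the choice measurable I would apply the Kuratowski--Ryll-Nardzewski selection theorem on each of the compact pieces $\{\sigma\le n\}$ of $\{\sigma<\infty\}$ (which equals $U_\Phi$ if $E=\mathbb R$ and $\bar U_\Phi$ if $E=\mathbb Z$, with $\sigma$ continuous there by Theorem~\ref{thm_main_st_general}) to the compact-valued, closed-graph multifunction $u\mapsto\{\mu\in M_n:S(\mu)=u,\ \mathcal H(\mu|\Phi)\le\sigma(u)\}$, and then patch over $n$. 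For (5), fix $g\in\operatorname{Lip}(\bar D)$ with $g|_{\partial D}=b$. Lower bound: any $\mu$ with $\tilde I(g,\mu)<\infty$ is compatible, so its disintegration $(\mu_x)_{x\in D}$ satisfies $\mu_x\in\mathcal P_\mathcal L(\Omega,\mathcal F^\nabla)$ and $S(\mu_x)=\nabla g(x)$ for a.e.\ $x$; strong affineness of $\mathcal H(\cdot|\Phi)$ (Theorem~\ref{thm_SFE_strongly_affine}, applied to $\operatorname{Leb}(D)^{-1}\mu(D,\cdot)$ viewed as the mixture of the probability measures $\mu_x$ with mixing measure $\operatorname{Leb}(D)^{-1}\operatorname{Leb}|_D$) gives $\mathcal H(\mu|\Phi)=\int_D\mathcal H(\mu_x|\Phi)\,dx\ge\int_D\sigma(\nabla g(x))\,dx$ by definition of $\sigma$. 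Upper bound: if $\int_D\sigma(\nabla g)=\infty$ there is nothing to prove, and otherwise $\nabla g(x)\in\{\sigma<\infty\}$ a.e., so $\mu:=\int_D\delta_x\otimes\mu_{\nabla g(x)}\,dx$ (well-defined by measurability of the kernel) lies in $\mathcal M^D_\mathcal L$, satisfies $g\sim\mu$ by Lebesgue differentiation of $U\mapsto\int_US(\mu_{\nabla g(x)})\,dx=\int_U\nabla g(x)\,dx$, and has $\tilde I(g,\mu)=\int_D\mathcal H(\mu_{\nabla g(x)}|\Phi)\,dx=\int_D\sigma(\nabla g(x))\,dx$; hence the infimum is attained and (5) holds.

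Finally (6): since $\tilde I(g,\mu)=\infty$ unless $g|_{\partial D}=b$, part (5) gives $\min_{(g,\mu)}\tilde I=\min\{\int_D\sigma(\nabla g)\,dx:g\in\operatorname{Lip}(\bar D),\ g|_{\partial D}=b\}$, a minimum which is attained because the feasible set is nonempty (extend $b$ by the Kirszbraun theorem, Proposition~\ref{propo_Kirszbraun}) and compact while $g\mapsto\int_D\sigma(\nabla g)\,dx$ is lower semicontinuous, being the $\mu$-marginal $\inf_\mu\tilde I(g,\mu)$ of the lower semicontinuous function $\tilde I$ with compact level sets; this minimum equals $P_\Phi(D,b)$ by definition of the pressure, so $\min\tilde I=P_\Phi(D,b)$ and $\min I=\min\tilde I-P_\Phi(D,b)=0$. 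I expect the main obstacle to be the bookkeeping around disintegrations of measures in $\mathcal M^D$: identifying $\mathcal H(\mu|\Phi)$ with $\int_D\mathcal H(\mu_x|\Phi)\,dx$, making precise sense of ``$\nabla g(x)=S(\mu(x,\cdot))$ as a distribution on $D$'' via Lebesgue differentiation, and the Carath\'eodory-type extension in the compactness argument for $\mathcal K_{C'}$, together with verifying the hypotheses of the measurable selection theorem in part (4).
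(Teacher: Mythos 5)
Your proof follows essentially the same route as the paper: convexity and lower semicontinuity via closedness and convexity of $\{g\sim\mu\}$ combined with affineness/lower-semicontinuity of $\mathcal H(\cdot|\Phi)$ (Theorem~\ref{thm_main_sfe}), compactness of the level sets by exhibiting them as closed subsets of a product of two compact Polish spaces, part~(4) by a measurable selection, and part~(5) by disintegrating and applying strong affineness together with the kernel from~(4). You simply spell out the steps the paper dismisses as ``it follows immediately'' or ``a simple exercise in measure theory'' (the diagonal extraction over dyadic boxes for compactness of $\{\mu\in\mathcal M^D_\mathcal L:\mathcal H(\mu|\Phi)\leq C\}$, and the Kuratowski--Ryll-Nardzewski selection for the kernel).
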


We provide a proof in the next subsection.

\subsubsection{Statement of the LDP}

\begin{theorem}[Large deviations principle]\label{p_LDP}
	Let $\Phi\in\mathcal S_\mathcal L+\mathcal W_\mathcal L$,
	 and let  $(D_n, b_n)_{n\in \mathbb N}$ denote a good approximation of some good
	 asymptotic profile $(D,b)$.
	Let $\gamma_n^*$ denote the pushforward
 of $\gamma_n:=\gamma_{D_n}(\cdot,b_n)$ along the map $\mathfrak P_n$,
 for any $n\in\mathbb N$.
	Then the sequence of probability measures
	$(\gamma_n^*)_{n\in\mathbb N}$
	satisfies a large deviations principle with speed $n^d$ and rate function
	$I$
	 on the topological space $(X^\mathfrak P,\mathcal X^\mathfrak P)$.
Moreover, the sequence of normalizing constants $(Z_n)_{n\in\mathbb N}:=(Z_{D_n}(b_n))_{n \in\N}$  satisfies
$
-
n^{-d} \log Z_n \to P_{\Phi}(D,g)
$
as $n\to\infty$.
\end{theorem}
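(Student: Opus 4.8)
The plan is to prove first the large deviations principle for the sequence of \emph{unnormalized} measures $(\tilde\gamma_n)_{n\in\mathbb N}$, pushed forward along $\mathfrak P_n$ to $X^\mathfrak P$, with the rate function $\tilde I$; this is the convenient order because $\tilde I$ does not refer to the pressure. The measures $\tilde\gamma_n^*$ all have total mass $Z_n$, and their supports lie in a common compact subset $\mathcal K\subset X^\mathfrak P$: the functions $\mathfrak G_n(\phi)$ are $K\|\cdot\|_1$-Lipschitz and pinned near $\partial D$ by $b_n/n\to b$, and the fibres of $\mathfrak L_n(\phi)$ are supported on $q$-Lipschitz height functions modulo constants, so compactness follows from Arzelà–Ascoli and compactness of the space of $q$-Lipschitz gradient configurations. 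Hence no exponential tightness argument is needed, and it suffices to establish the upper bound on closed sets and the lower bound on open sets. Granting the unnormalized LDP, applying it to $X^\mathfrak P$ itself—which is both closed and open—gives $-n^{-d}\log\tilde\gamma_n^*(X^\mathfrak P)\to\inf_{X^\mathfrak P}\tilde I=P_\Phi(D,b)$ by Lemma~\ref{lemma_rate_fcts}, and since $\tilde\gamma_n^*(X^\mathfrak P)=Z_n$ this is exactly the claimed limit $-n^{-d}\log Z_n\to P_\Phi(D,b)$. Dividing by $Z_n$ then converts the LDP for $\tilde\gamma_n^*$ with rate $\tilde I$ into the LDP for $\gamma_n^*$ with rate $I=\tilde I-P_\Phi(D,b)$.

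For the upper bound, fix a compact $C\subset X^\mathfrak P$ and $\delta>0$. One may replace $H_{D_n}$ by the interior Hamiltonian $H_{D_n}^0$ throughout, since $|H_{D_n}-H_{D_n}^0|\le e^-(D_n)=o(n^d)$, and the discrepancy between $D_n$ and $nD\cap\mathbb Z^d$ together with thin collars contributes $o(n^d)$ to the exponent (using positivity of $\Psi$ and the lower attachment lemma, Lemma~\ref{lemma_lower_attachments}). Around each $(g,\mu)\in C$ I would produce a basic neighborhood $V$ with
\[
\limsup_{n\to\infty}n^{-d}\log\tilde\gamma_n^*(V)\le-\bigl(\tilde I(g,\mu)-\delta\bigr)\wedge\tfrac1\delta .
\]
If $g|_{\partial D}\ne b$ or $g\not\sim\mu$, then, because $b_n/n\to b$ and because $\mathcal X^\mathfrak P$ detects the slope field through rectangular test sets against $\mathfrak G_n$, the event $\mathfrak P_n^{-1}(V)$ is empty for large $n$—also invoking Lemma~\ref{lemma_FB_non_shift-inv}. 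Otherwise $(g,\mu)$ is compatible and matches $b$; partitioning $D$ into small cubes, on each cube the empirical measure forced by $V$ lies near a shift-invariant measure whose slope is the local value of $\nabla g$, and the free boundary estimates of Section~\ref{section_limit_equalities} (together with their shape-dependent extension, $\FB(\cdot:D')\ge\operatorname{Leb}(D')\mathcal H(\cdot|\Phi)$, Lemma~\ref{lemma_limit_equalities_fbl}) bound the resulting product of integrals; letting the cube size tend to $0$ recovers $\mathcal H(\mu|\Phi)=\tilde I(g,\mu)$. A finite subcover and the union bound give $\limsup n^{-d}\log\tilde\gamma_n^*(C)\le-\inf_C\tilde I$.

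The lower bound is the crux. Fix an open $U\subset X^\mathfrak P$ and $(g,\mu)\in U$ with $\tilde I(g,\mu)<\infty$; then $g|_{\partial D}=b$, $g\sim\mu$, and $\mathcal H(\mu|\Phi)<\infty$, so $\nabla g(x)=S(\mu(x,\cdot))\in U_\Phi=U_q$ for a.e.\ $x$ and $g$ is $\|\cdot\|_q$-Lipschitz. The goal is $\liminf_{n\to\infty}n^{-d}\log\tilde\gamma_n^*(U)\ge-\mathcal H(\mu|\Phi)$, after which one takes the supremum over $(g,\mu)\in U$. I would use the washboard-and-attachment scheme of the proof of Lemma~\ref{lemma_PB_star_convex}: partition a slightly shrunken interior subdomain $D'\subset D$ into small convex cells on which $g$ is nearly affine with slope in $U_q$; fill the collar between $\partial D$ and $D'$ by a discrete height function whose rescaled graph approximates $b$, using non-tautness (strict $\|\cdot\|_q$-Lipschitzness of the boundary profile), the Kirszbraun theorem (Proposition~\ref{propo_Kirszbraun}), and Theorem~\ref{general_lips_ext_less_general} to obtain a genuinely $q$-Lipschitz extension (a $q_{\varepsilon'}$-Lipschitz one when $E=\mathbb R$) matching $b_n$ on $\partial^R D_n$; on each interior cell ask the empirical measure to lie in a small neighborhood of the corresponding restriction of $\mu$, and estimate the integral from below via the pinned boundary estimates and $\PB^*(\cdot)\le\mathcal H(\cdot|\Phi)$ (Lemma~\ref{lemma_PB_star_uncontrained_ineq}); glue the pieces with the upper attachment lemma (Lemma~\ref{lemma_attachment}), whose error is $o(n^d)$; and certify via Proposition~\ref{propo_compare_empirical_measures} that the empirical profile of the glued configuration is close to $\mu$, so the constructed configurations land in $\mathfrak P_n^{-1}(U)$. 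Summing cell contributions and sending the cell size, the collar width, and (for $E=\mathbb R$) the slack $\varepsilon$ to zero yields $\operatorname{Leb}(D)\,\mathcal H(\operatorname{Leb}(D)^{-1}\mu(D,\cdot)|\Phi)=\mathcal H(\mu|\Phi)$ in the exponent.

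The main obstacle is precisely this lower bound: one must simultaneously match the prescribed continuous macroscopic profile $g$ together with its discrete boundary data $b_n$, match the measure-valued empirical profile $\mu$, and keep the exponential weight at the optimal value $e^{-n^d\mathcal H(\mu|\Phi)}$; reconciling these three demands is exactly what the combination of Theorem~\ref{general_lips_ext_less_general}, Lemma~\ref{lemma_attachment}, Proposition~\ref{propo_compare_empirical_measures} and the pinned boundary machinery of Section~\ref{section_limit_equalities} is built for. Once the unnormalized LDP is in place, the properties of $I$ and $\tilde I$ collected in Lemma~\ref{lemma_rate_fcts}—lower-semicontinuity, compact level sets, and $\min\tilde I=P_\Phi(D,b)$—close the argument and justify the deduction of the normalizing-constant limit described in the first paragraph.
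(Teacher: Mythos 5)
Your proposal follows essentially the same route as the paper: reduce to a large deviations principle for the unnormalized measures $(\tilde\gamma_n^*)_{n\in\mathbb N}$ with rate function $\tilde I$, establish that the supports eventually lie in a fixed compact set (the paper packages exactly this observation as the ``exponential tightness'' step), prove the pointwise upper bound by partitioning $D$ into rectangles and appealing to the lower attachment lemma together with the free boundary estimates, and prove the lower bound via a simplicial approximation of $g$, Theorem~\ref{general_lips_ext_less_general}, the upper attachment lemma, Proposition~\ref{propo_compare_empirical_measures}, and the pinned boundary bounds $\PB^*(\cdot)\leq\mathcal H(\cdot|\Phi)$. The deduction of the normalizing-constant limit and the passage from $\tilde I$ to $I$ via Lemma~\ref{lemma_rate_fcts} also coincide with the paper's proof overview.
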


Remark that Theorem~\ref{thm:sec_mr_ldp} follows immediately from this theorem
in combination with Lemma~\ref{lemma_rate_fcts}, Statement~\ref{minimizerindeedasexpected}.


\subsection{Proof overview}

Let us start with a proof
of some key properties of the rate functions $I$ and $\tilde I$.

\begin{proof}[Proof of Lemma~\ref{lemma_rate_fcts}]
	Note that $\mathcal M_\mathcal L^D$
	is closed in $(\mathcal M^D,\mathcal X^\mathfrak L)$.
	It follows immediately from the properties
	of the original specific free energy functional
	(Theorem~\ref{thm_main_sfe}) that the map
	\[
		\mathcal H(\cdot|\Phi):\mathcal M_\mathcal L^D
		\to \mathbb R\cup\{\infty\}
	\]
	is affine and lower-semicontinuous,
	and that its lower level sets
	are compact Polish spaces with respect to the $\mathcal X^\mathfrak L$-topology.

	Observe that the set $\{g\sim\mu\}\subset X^\mathfrak P$
	is convex.
	This implies that $I$ and $\tilde I$
	are convex,
	since the map $(g,\mu)\mapsto\mathcal H(\mu|\Phi)$ is affine
	on $\{g\sim\mu\}$.
	Observe that the set $\{g\sim\mu\}$
	is also closed in $\mathcal X^\mathfrak P$.
	The lower level sets of $I$ and $\tilde I$
	are compact Polish spaces because
	\[
		\{\tilde I\leq C\}=(\{g\in\operatorname{Lip}(\bar D):g|_{\partial D}=b\}\times\{\mu\in\mathcal M^D_\mathcal L:\mathcal H(\mu|\Phi)\leq C\})\cap\{g\sim\mu\},
	\]
	that is, $\{\tilde I\leq C\}$ is as a closed
	subset of a product of two
	compact Polish spaces.
	This also implies that $I$ and $\tilde I$ are lower-semicontinuous.

	The fourth statement is a simple exercise
	in measure theory; it follows from the topological
	properties of the specific free energy
	stated in Theorem~\ref{thm_main_sfe}.
	If $g\sim\mu$,
	then it is clear that
	\[
		\tilde I(g,\mu)= \int_D \mathcal H(\mu(x,\cdot)|\Phi) dx
		\geq \int_D\sigma(S(\mu(x,\cdot))) dx
		=\int_D\sigma(\nabla g(x)) dx.
	\]
	For fixed $g$, this inequality can be turned
	into an equality, by constructing
	$\mu$ in terms of $\nabla g$ and the kernel from the fourth
	statement. This proves the fifth statement.
	The final statement is now obvious.
\end{proof}

Theorem~\ref{p_LDP} states the LDP for the sequence of normalized
measures $(\gamma_n)_{n\in\mathbb N}$.
For the proof, however, it will be beneficial to consider also the
sequence of non-normalized measures $(\tilde\gamma_n)_{n\in\mathbb N}$.
Write $\tilde\gamma_n^*$ for the pushforward of $\tilde\gamma_n$
along $\mathfrak P_n$.
Theorem~\ref{p_LDP} is equivalent to the conjunction of the following two
statements:
\begin{enumerate}
	\item The minimum of $\tilde I$ is $P_\Phi(D,b)$,
	\item The sequence $(\tilde\gamma_n^*)_{n\in\mathbb N}$
	satisfies an LDP with speed $n^d$
	and rate function $\tilde I$ in $(X^\mathfrak P,\mathcal X^\mathfrak P)$.
\end{enumerate}
The first statement was proven in Lemma~\ref{lemma_rate_fcts}.
The second statement is somewhat easier to prove than the
original LDP, because it appeals to non-normalized measures only.

Let us first describe a particular basis for the topological space
\[
(X^\mathfrak P,\mathcal X^\mathfrak P)=(\operatorname{Lip}(\bar D),\mathcal X^\infty)
\times (\mathcal M^D,\mathcal X^\mathfrak L).
\]
As a basis $\mathcal B^\infty$ for $\mathcal X^\infty$,
we take the sets of the form
\[
	B_\varepsilon^\infty(g):=\{h\in\operatorname{Lip}(\bar D):\|h-g\|_\infty<\varepsilon\}
\]
where $g\in \operatorname{Lip}(\bar D)$ and $\varepsilon>0$.
 Write
 \[
 	B^\mathfrak L_\varepsilon(\mu,(R_i)_{ i},(f_j)_{j}):=\{\nu:\text{$|\mu(R_i,f_j)-\nu(R_i,f_j)|<\operatorname{Leb}(R_i)\varepsilon$ for all $i,j$}\}
	\subset \mathcal M^D
	,
 \]
 where $\varepsilon>0$,
 $\mu$ is a measure in $\mathcal M^D$,
 $(R_i)_{ i}$ is a finite collection
 of closed rectangular subsets of $D$,
 and
 $(f_j)_{j}$ is a finite collection
 of continuous cylinder functions $f_j:\Omega\to[0,1]$.
 The collection $\mathcal B^\mathfrak L$
 of such sets
 forms a basis of $\mathcal X^\mathfrak L$.
As a basis $\mathcal B^\mathfrak P$  for $\mathcal X^\mathfrak P$,
we choose the collection of
open sets of the form $B_\varepsilon^\mathfrak P(\cdot,\cdot,\cdot,\cdot):=
B^\infty_\varepsilon(\cdot)\times B^\mathfrak L_\varepsilon(\cdot,\cdot,\cdot)$.

To prove a large deviations principle,
it must first be checked that the rate function is lower-semicontinuous.
For this refer again to Lemma~\ref{lemma_rate_fcts}.
The large deviations principle
(with non-normalized measures)
is now a corollary of the following three claims:
	\begin{enumerate}
		\item \emph{Lower bound on probabilities.}
		For any $(g,\mu)\in A\in\mathcal B^\mathfrak P$,
		we have
		\[
			\liminf_{n\to\infty}n^{-d}\log\tilde\gamma_n^*(A)\geq -\tilde I(g,\mu).
		\]
		\item \emph{Upper bound on probabilities.}
		For any $(g,\mu)\in X^\mathfrak P$,
		we have
		\[
		\inf_{\text{$A\in\mathcal B^\mathfrak P$ with $(g,\mu)\in A$}}	\limsup_{n\to\infty}n^{-d}\log\tilde\gamma_n^*(A)\leq -\tilde I(g,\mu).
		\]
		\item \emph{Exponential tightness.} For all $\alpha >- \infty$, there is a compact set $K_\alpha \subset X^\mathfrak P$ such that
		\[
		\limsup_{n \to \infty}  n^{-d}\log\tilde\gamma_n^*(X^\mathfrak P \smallsetminus K_{\alpha} ) \leq \alpha
		\]
	\end{enumerate}
The next subsection contains an auxiliary result on approximations of Lipschitz functions
which is useful for proving the lower bound.
Each of the three subsequent sections addresses one of the three claims formulated above.


\subsection{Simplicial approximations of Lipschitz function}

This subsection is dedicated to providing some results on affine approximations of Lipschitz functions necessary to prove the lower bound on probabilities.
For  $x \in \mathbb R^d$,
the point $\lfloor x\rfloor\in\mathbb Z^d$
is obtained by rounding down each coordinate.

\begin{definition}
	Let $S_d$ denote the group of permutations on $\{1, \dots, d\}$.
	For $x\in\mathbb R$,
	we write $s(x)\in S_d$ for the permutation which
	rank-orders the coordinate indices of $x-\lfloor x\rfloor$.
	For $x \in \Z^d$ and $s \in S_d$, we define
	the \emph{simplex} $C(x,s)$
	to be the closure of the set
	\begin{equation*}
	\{ y \in \R^d : \lfloor y\rfloor  = x, \, s(y) = s \} .
\end{equation*}
	By a \emph{simplex of scale $\varepsilon$},
	we simply mean a scaled simplex of the form
	$\varepsilon C(x,s)$.
	A \emph{simplex domain of scale $\varepsilon$} is a
	 union of finitely many simplices of
	scale $\varepsilon$.
	If $D$ is a domain,
	then write $D_\varepsilon$ for the largest
	simplex domain of scale $\varepsilon$
	contained in $D$.
\end{definition}

\begin{definition}
	Let $D$ denote a domain,
	and $g$ a real-valued function on $D$.
	Consider $\varepsilon>0$.
	Write $F_\varepsilon=F_\varepsilon(g)$ for the unique real-valued function
	on $D_\varepsilon$ which equals $g$ on $D_\varepsilon\cap \varepsilon\mathbb Z^d$,
	interpolated linearly on each simplex.
\end{definition}

We will make use of the simplicial Rademacher theorem proven in~\cite{krieger2019deducing} for which we recall a statement here.

\begin{lemma}[Lemma 6.1 from~\cite{krieger2019deducing}] \label{lem_approx_tri}
	Consider a positive homogeneous function
	$\|\cdot\|:\mathbb R^d\to\mathbb R$ satisfying the triangle inequality.
	Let $D\subset\mathbb R^d$ be a domain and $g:D \to \R$  a $\|\cdot\|$-Lipschitz function.
	For any $\delta > 0$
	and any $\varepsilon > 0$ sufficiently small (depending on $\delta$),
	we have
	\begin{enumerate}
		\item
		$\operatorname{Leb}(D \smallsetminus D_\varepsilon) \leq \delta$,
		\item
		$\|F_{\varepsilon} -g|_{{D_\varepsilon}}\|_\infty \leq \delta \varepsilon$,
		\item
		$
		 \operatorname{Leb} (\{x\in D_\varepsilon: \|\nabla F_{\varepsilon}(x) - \nabla g(x)\|_2 \geq \delta\}
		)
		\leq \delta
		$.
	\end{enumerate}
	Moreover, $F_\varepsilon$ is $\|\cdot\|$-Lipschitz for any $\varepsilon>0$.
\end{lemma}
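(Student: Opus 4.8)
The plan is to treat Lemma~\ref{lem_approx_tri} as a quantitative, simplicial incarnation of Rademacher's theorem, and to establish the four assertions in the order (1), (4), then (2) and (3) together. As a preliminary, note that any positively homogeneous $\|\cdot\|$ satisfying the triangle inequality obeys $\|x\|\le\sum_i\bigl(x_i^+\|e_i\|+x_i^-\|{-e_i}\|\bigr)\le C\|x\|_2$ for a constant $C$ depending only on $\|\cdot\|$; hence $g$ is Lipschitz in the Euclidean sense and, by the classical Rademacher theorem, differentiable at Lebesgue-almost every point of $D$. Assertion (1) is then soft: every scale-$\varepsilon$ simplex $\varepsilon C(x,s)$ has Euclidean diameter $\varepsilon\sqrt d$, so a point $y\in D$ with $\dist(y,\mathbb R^d\smallsetminus D)>\varepsilon\sqrt d$ lies in $D_\varepsilon$ (every scale-$\varepsilon$ simplex containing $y$ is contained in $D$, hence is one of the simplices whose union is $D_\varepsilon$); since $D$ is bounded and open, $\operatorname{Leb}\bigl(\{y\in D:\dist(y,\mathbb R^d\smallsetminus D)\le t\}\bigr)\downarrow 0$ as $t\downarrow 0$ by continuity of measure, so $\operatorname{Leb}(D\smallsetminus D_\varepsilon)\le\delta$ for $\varepsilon$ small enough.

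I expect assertion (4) to be the main obstacle, as it is the only genuinely combinatorial point. Fix a scale-$\varepsilon$ simplex $S$ with vertices $p_0,\dots,p_d$ ordered so that each $p_k-p_{k-1}$ is a positive multiple of a coordinate vector. Then $F_\varepsilon$ is affine on $S$, say $F_\varepsilon(x)=c+\langle v_S,x\rangle$, and $F_\varepsilon(z)-F_\varepsilon(y)=\langle v_S,z-y\rangle$ for $y,z\in S$, where $z-y$ ranges over the difference body $P:=S-S$. The key geometric fact about the Kuhn simplex is that $P$ is the convex hull of the signed edge vectors $\{\pm(p_j-p_i):0\le i<j\le d\}$ (each difference of two vertices is a block sum of consecutive edge steps). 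The function $w\mapsto\langle v_S,w\rangle-\|w\|$ is concave (as $\|\cdot\|$, being positively homogeneous and subadditive, is convex), so its maximum over the polytope $P$ is attained at one of these edge vectors, where it equals $\bigl(g(p_j)-g(p_i)\bigr)-\|p_j-p_i\|\le 0$ (or the same with $i,j$ interchanged) because $F_\varepsilon$ agrees with the $\|\cdot\|$-Lipschitz function $g$ at the vertices. Hence $F_\varepsilon$ is $\|\cdot\|$-Lipschitz on $S$. To globalise this without worrying about convexity of $D_\varepsilon$, extend $g|_{D_\varepsilon\cap\varepsilon\mathbb Z^d}$ to a $\|\cdot\|$-Lipschitz function $\tilde g$ on the full lattice $\varepsilon\mathbb Z^d$ via the McShane–Kirszbraun formula $\tilde g(x):=\sup_y\bigl(g(y)-\|x-y\|\bigr)$ (as in Proposition~\ref{propo_Kirszbraun}), and let $\hat F$ be the simplicial interpolant of $\tilde g$ on all of $\mathbb R^d$; the per-simplex argument applies to every simplex, and, $\mathbb R^d$ being convex, summing $\hat F(q_i)-\hat F(q_{i-1})\le\|q_i-q_{i-1}\|$ along the finitely many simplex-crossings $y=q_0,q_1,\dots,q_m=z$ of a segment $[y,z]$ — the pieces being collinear, so $\sum_i\|q_i-q_{i-1}\|=\|z-y\|$ by positive homogeneity — shows $\hat F$ is $\|\cdot\|$-Lipschitz on $\mathbb R^d$. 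Since $\hat F$ restricts to $F_\varepsilon$ on $D_\varepsilon$, assertion (4) follows.

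For assertions (2) and (3), let $x_0\in D$ be a point of differentiability of $g$ and $S\ni x_0$ a scale-$\varepsilon$ simplex; writing $x_0=\sum_k\mu_k p_k$ in barycentric coordinates and expanding $g(p_k)=g(x_0)+\langle\nabla g(x_0),p_k-x_0\rangle+r_k$ with $|r_k|=o(\varepsilon)$ (uniformly in $k$, since $|p_k-x_0|\le\varepsilon\sqrt d$), one gets $F_\varepsilon(x_0)-g(x_0)=\sum_k\mu_k r_k=o(\varepsilon)$ using $\sum_k\mu_k(p_k-x_0)=0$, and, comparing affine functions on $S$, $v_S=\nabla g(x_0)+o(1)$ (the Kuhn simplex has inradius of order $\varepsilon$). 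Thus the almost-everywhere-defined maps $(F_\varepsilon-g)/\varepsilon$ and $\nabla F_\varepsilon$ converge pointwise almost everywhere on $D$ to $0$ and to $\nabla g$ respectively as $\varepsilon\to0$. On the finite-measure space $D$ this gives convergence in measure, which is precisely assertion (3); applying Egorov's theorem furnishes, for each $\delta>0$, a set $K$ with $\operatorname{Leb}(D\smallsetminus K)<\delta$ on which these convergences are uniform, so that $|F_\varepsilon-g|\le\delta\varepsilon$ on every scale-$\varepsilon$ simplex meeting $K$ once $\varepsilon$ is small. Combined with the crude bound $|F_\varepsilon-g|\le C\varepsilon\sqrt d$ that holds everywhere by uniform continuity of $g$, this yields assertion (2); the careful bookkeeping of the exceptional measures and the passage to a uniform bound over $D_\varepsilon$ is the one place where I would lean on the argument already recorded in~\cite{krieger2019deducing}.
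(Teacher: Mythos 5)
Your proposal gives a self-contained proof where the paper simply defers: the paper's entire justification is the remark that assertion (1) is ``obvious'' and that (2)--(3) are proved exactly as in~\cite{krieger2019deducing}, with assertion (4) also left to that reference. Your treatment of (1)--(3) is sound and a plausible reconstruction --- (1) uses only that $D$ is bounded and open with Lebesgue-null boundary, and (2)--(3) follow from Rademacher's theorem together with an Egorov argument, the rough everywhere bound $|F_\varepsilon - g|\le C\varepsilon$ controlling the exceptional set.

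The argument for assertion (4) has a genuine gap. You correctly note that $h(w):=\langle v_S,w\rangle-\|w\|$ is concave (linear minus sublinear), but you then conclude that its maximum over the polytope $P=S-S$ is attained at a vertex. Vertex-extremality of the maximum is the property of a \emph{convex} function on a polytope, not a concave one: a concave function can take its maximum in the relative interior of a face, as $-|w|$ does at $0$ on $[-1,1]$. Verifying $h\le 0$ at the vertices of $P$ therefore does not force $h\le 0$ throughout $P$, and the per-simplex Lipschitz bound you need does not follow from what you have checked.

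In fact the assertion is false for a general sublinear gauge $\|\cdot\|$, so no per-simplex argument of this kind can close. Take $d=2$, $\|\cdot\|=\|\cdot\|_2$, and the scale-$1$ Kuhn simplex $S$ with vertices $p_0=(0,0)$, $p_1=(1,0)$, $p_2=(1,1)$. The data $g(p_0)=0$, $g(p_1)=1$, $g(p_2)=0$ extends (McShane) to a $1$-Lipschitz function on $\mathbb R^2$ for the Euclidean metric, yet the affine interpolant on $S$ is $F(y_1,y_2)=y_1-y_2$, whose gradient has Euclidean norm $\sqrt 2$; concretely $F(1,0)-F(\tfrac12,\tfrac12)=1>\|(\tfrac12,-\tfrac12)\|_2=1/\sqrt2$, with both points in $S$. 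In your notation, $h$ vanishes at the adjacent vertices $(1,0)$ and $(0,-1)$ of the hexagon $P$ but satisfies $h(\tfrac12,-\tfrac12)=1-1/\sqrt2>0$ at the edge midpoint, illustrating exactly the failure of vertex-extremality. What does make the claim correct is an additional structural hypothesis on $\|\cdot\|$, namely coordinate additivity
\[
	\|w\| = \sum_{i=1}^d\bigl(w_i^+\|e_i\|+w_i^-\|-e_i\|\bigr),
\]
that is, $\|\cdot\|$ should be the support function of a coordinate box (the case $\|\cdot\|=K\|\cdot\|_1$ in particular). Under that hypothesis, telescoping $F(z)-F(y)=\sum_k a_k\bigl(g(p_k)-g(p_{k-1})\bigr)$ along the ordered edge vectors of the Kuhn simplex, bounding each term by the $\|\cdot\|$-Lipschitz property of $g$, and collecting yields $\|z-y\|$ with \emph{equality} in the final step; the globalization by segment-crossings that you describe then correctly extends the bound to all of $\mathbb R^d$. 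Without such a hypothesis the claim cannot be proved because it is not true; the issue lies as much in the statement as in your argument.
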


The first property is obvious,
and the proof of the second and third property is identical
to the proof in~\cite{krieger2019deducing}.


\subsection{The lower bound on probabilities}
\label{subsection:lower_bound_on_probabilities_in_the_LDP}
Fix
$(g,\mu)\in A\in\mathcal B^\mathfrak P$ and $\beta>0$;
the goal of this subsection is to prove that
\[
	\liminf_{n\to\infty}n^{-d}\log\tilde\gamma_n^*(A)\geq -\tilde I(g,\mu)-\beta.
\]
We suppose of course that $\tilde I(g,\mu)$ is finite.
For the proof, we require the following result.

%

\begin{lemma}
	\label{lemma_boring_slope_change}
	Consider some fixed $\varepsilon>0$.
	Then there exists a sufficiently
	small constant
	$\alpha>0$
	 such that the following statement holds true.
	 Suppose that $\mu\in\mathcal P_\mathcal L(\Omega,\mathcal F)$
	 satisfies $u:=S(\mu)\in \bar U_{q_\varepsilon}\subset U_q$,
	 and that $v\in U_q$
	 is another slope with $\|u-v\|_2\leq \alpha$.
	 Then there is another measure
	 $\nu\in\mathcal P_\mathcal L(\Omega,\mathcal F)$
	 such that $S(\nu)=v$
	 and $\mathcal H(\nu|\Phi)\leq \mathcal H(\mu|\Phi)+\varepsilon$
	 and
	 $\|\mu-\nu\|_{\operatorname{TV}}:=\|\mu-\nu\|_{\infty}< \varepsilon$.
	 In particular,
	 if $f:\Omega\to[0,1]$ is measurable, then
	 $|\mu(f)-\nu(f)|<\varepsilon$.
\end{lemma}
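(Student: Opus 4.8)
The plan is to take $\nu$ to be a convex combination $\nu:=(1-\delta)\mu+\delta\,\xi$, where $\delta\in(0,\varepsilon)$ is a mixing weight depending only on $\varepsilon$ and the fixed data $(\mathbb A,q,\Phi,\mathcal L)$, and $\xi\in\mathcal P_\mathcal L(\Omega,\mathcal F)$ is an auxiliary measure whose slope $s$ is chosen so that $(1-\delta)u+\delta s=v$, i.e.\ $s:=u+\delta^{-1}(v-u)$. The whole point is that every quantity in the statement is linear along the segment from $\mu$ to $\xi$: the slope functional $S$ is affine, the specific free energy $\mathcal H(\cdot|\Phi)$ is affine (Theorem~\ref{thm_main_sfe}), and $\|\mu-\nu\|_\infty=\delta\|\mu-\xi\|_\infty\le\delta$, with also $|\mu(f)-\nu(f)|\le\delta$ for any measurable $f:\Omega\to[0,1]$. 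So the total variation bound is automatic once $\delta<\varepsilon$, and—using affineness together with the universal lower bound $\mathcal H(\cdot|\Phi)\ge-C'$, $C':=\max_{x}e^*(\{x\})$ (Theorem~\ref{thm_main_sfe})—the free energy bound reads $\mathcal H(\nu|\Phi)-\mathcal H(\mu|\Phi)=\delta\big(\mathcal H(\xi|\Phi)-\mathcal H(\mu|\Phi)\big)\le\delta\big(\mathcal H(\xi|\Phi)+C'\big)$ whenever $\mathcal H(\mu|\Phi)<\infty$, and is vacuous when $\mathcal H(\mu|\Phi)=\infty$. Hence everything reduces to producing, for each slope $s$ arising this way, an $\mathcal L$-invariant measure $\xi$ of slope exactly $s$ with $\mathcal H(\xi|\Phi)$ bounded by a constant depending only on $\varepsilon$, and to fixing $\delta$ (hence $\alpha$) so that $\delta\big(\mathcal H(\xi|\Phi)+C'\big)\le\varepsilon$.

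The order of the choices is as follows. First, since $u\in\bar U_{q_\varepsilon}$ and $\bar U_{q_\varepsilon}$ is a compact subset of the open set $U_q$, there is $\rho=\rho(\varepsilon)>0$ with $\operatorname{dist}(\bar U_{q_\varepsilon},\partial U_q)\ge\rho$; using $U_q=\bigcup_{t>0}\bar U_{q_t}$ together with Proposition~\ref{propo_qqq}, we may also fix $\varepsilon_0=\varepsilon_0(\varepsilon)>0$ such that every slope lying in $U_q$ at Euclidean distance $\ge\rho/2$ from $\partial U_q$ belongs to $\bar U_{q_{\varepsilon_0}}$. Next, by the free energy estimates in the proof of Theorem~\ref{thm_main_st_general}, every $\mathcal L$-invariant measure supported on $q$-Lipschitz functions (if $E=\mathbb Z$), resp.\ on $q_{\varepsilon_0}$-Lipschitz functions (if $E=\mathbb R$), has specific free energy at most $C_0:=\max_x e^+(\{x\})$, resp.\ $C_0:=\max_x e^+_{\varepsilon_0}(\{x\})-\log\varepsilon_0$; moreover that same proof produces, for every slope $s\in U_q$ (resp.\ $s\in\bar U_{q_{\varepsilon_0}}$), such a measure $\xi$ of slope exactly $s$—namely a subsequential limit of the empirical averages $|\Pi_n\cap\mathcal L|^{-1}\sum_{x\in\Pi_n\cap\mathcal L}\delta_{\theta_x\phi^s}$, convolved with an i.i.d.\ uniform field on a small interval when $E=\mathbb R$ so that the specific entropy stays finite—which one may take in $\mathcal P_\mathcal L(\Omega,\mathcal F)$ by the standard lifting of shift-invariant gradient measures (Georgii~\cite{G11}). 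Finally set $\delta:=\min\{\varepsilon/2,\ \varepsilon/(2(C_0+C'))\}$ and $\alpha:=\delta\rho/2$. All of $\rho,\varepsilon_0,C_0,C',\delta,\alpha$ depend only on $\varepsilon$ and the fixed data, so this is a legitimate choice of $\alpha$.

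With these in hand, let $\mu$ and $v$ be as in the statement and put $s:=u+\delta^{-1}(v-u)$. Then $\|s-u\|_2=\delta^{-1}\|v-u\|_2\le\alpha/\delta=\rho/2$, so $\operatorname{dist}(s,\partial U_q)\ge\rho-\rho/2=\rho/2$ and hence $s\in\bar U_{q_{\varepsilon_0}}\subset U_q$; take the corresponding $\xi$ as above, with $\mathcal H(\xi|\Phi)\le C_0$, and set $\nu:=(1-\delta)\mu+\delta\,\xi\in\mathcal P_\mathcal L(\Omega,\mathcal F)$. Affineness of $S$ gives $S(\nu)=(1-\delta)u+\delta s=v$; affineness of $\mathcal H(\cdot|\Phi)$ together with the bounds above gives $\mathcal H(\nu|\Phi)-\mathcal H(\mu|\Phi)\le\delta(C_0+C')\le\varepsilon/2<\varepsilon$ when $\mathcal H(\mu|\Phi)<\infty$ and is trivially true when $\mathcal H(\mu|\Phi)=\infty$; and $\|\mu-\nu\|_\infty=\delta\|\mu-\xi\|_\infty\le\delta<\varepsilon$, with the same bound for $|\mu(f)-\nu(f)|$. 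I expect the only delicate points to be the bookkeeping of the constants—the dependence chain $\varepsilon\rightsquigarrow(\rho,\varepsilon_0)\rightsquigarrow(C_0,C')\rightsquigarrow\delta\rightsquigarrow\alpha$ is acyclic but must be spelled out—and the (routine, but not entirely cosmetic) verification that one can realize an element of $\mathcal P_\mathcal L(\Omega,\mathcal F)$ of exactly the prescribed slope $s$ with the stated Lipschitz support and uniform free energy bound; beyond that, the argument is pure affineness.
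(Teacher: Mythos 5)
Your proof is correct and follows the same convex-combination strategy as the paper's own (terse) argument: set $\nu=(1-\delta)\mu+\delta\,\xi$ with $\xi$ of the appropriately tilted slope and uniformly bounded specific free energy, then let affineness of $S$ and $\mathcal H(\cdot|\Phi)$ together with the universal lower bound on $\mathcal H(\cdot|\Phi)$ deliver the three conclusions. The only difference is cosmetic: the paper obtains the auxiliary measure as a minimizer with slope in a neighborhood $A$ of $\bar U_{q_\varepsilon}$, exploiting uniform boundedness of $\sigma$ on $A$, whereas you re-run the explicit construction from the proof of Theorem~\ref{thm_main_st_general} (empirical averages of translates of $\phi^s$, convolved with a small uniform field when $E=\mathbb R$) so as to bound $\mathcal H(\xi|\Phi)$ directly—either route works, and your version simply spells out the dependence chain of constants that the paper leaves implicit in the phrase ``for $t$ small''.
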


\begin{proof}
	Note that $\sigma$ is bounded uniformly
	on a neighborhood $A$ of $\bar U_{q_\varepsilon}$.
	The proof of the lemma is straightforward:
	one simply defines $\nu:=(1-t)\mu+t\mu'$
	for $t$ small and $\mu'$
	some minimizer with
	$S(\mu')\in A$ in order to adjust the slope of the measure of interest.
\end{proof}

\begin{proof}[Proof of the lower bound on probabilities]
	It suffices to consider the case that $\tilde I(g,\mu)$
	is finite.
	We claim that it is sufficient
	to consider the case
	that
	$g$ is strictly $\|\cdot\|_q$-Lipschitz
	(if $E=\mathbb R$)
	or that $g|_D$ is locally strictly
	$\|\cdot\|_q$-Lipschitz (if $E=\mathbb Z$).
	If $g$ were not $\|\cdot\|_q$-Lipschitz and
	$g\sim\mu$, then $\mu$ cannot be supported
	on $q$-Lipschitz functions, and consequently $\tilde I(g,\mu)=\infty$.
	Therefore $g$ must be $\|\cdot\|_q$-Lipschitz.
	There is some pair $(h,\nu)\in X^\mathfrak P$
	such that $h$ is strictly $\|\cdot\|_q$-Lipschitz
	(if $E=\mathbb R$)
	or such that $h|_D$ is locally strictly
	$\|\cdot\|_q$-Lipschitz (if $E=\mathbb Z$),
	and such that $\tilde I(h,\nu)<\infty$---this follows from
	the definition of a good
	asymptotic boundary profile
	and
	from Lemma~\ref{lemma_rate_fcts}.
Define $g_t:=(1-t)g+th$
and $\mu_t:=(1-t)\mu+t\nu$.
Then
$(g_t,\mu_t)\in A$ for $t$ sufficiently small
and $\limsup_{t\to 0}\tilde I(g_t,\mu_t)\leq \tilde I(g,\mu)$
as $\tilde I$ is convex.
Moreover, for any $t>0$,
the function $g_t$ has the desired properties.
Thus, we may replace $(g,\mu)$ by $(g_t,\mu_t)$
for small $t$,
by choosing $\beta$ smaller if necessary.
This proves the claim.

The proof follows the general strategy that was outlined after the statement
of Lemma~\ref{lemma_PB_star_convex}.
Let us first consider the case that $E=\mathbb R$.
We find an appropriate approximation of $g$
using the simplicial Rademacher theorem,
and then apply Lemma~\ref{lemma_boring_slope_change}
and the limit equalities to obtain the desired
lower bound on probabilities.
For the approximations,
it is necessary to take limits in three variables:
first we take $n\to\infty$,
then $\varepsilon_2\to 0$, and finally
$\varepsilon_1\to 0$.
There is also another variable $\varepsilon$;
it is not necessary to take a limit in this variable,
but it must be small for the arguments to work.

First fix $\varepsilon>0$
so small that $b$ and $g$ are $\|\cdot\|_{q_{8\varepsilon}}$-Lipschitz, and such that
all functions $b_n$ are $q_{8\varepsilon}$-Lipschitz.
We also suppose that $A=B^\mathfrak P_{8\varepsilon}(g,\mu,(R_i)_i,(f_j)_j)$,
by choosing $\varepsilon$
and $A$ smaller if necessary,
where $(R_i)_i$ is a finite family of rectangular subsets
of $D$, and $(f_j)_j$ a finite family of continuous cylinder functions $f_j:\Omega\to[0,1]$.

Consider some
 $\varepsilon_1>0$, and write
$D'$
for the points in $D$ at distance more than $\varepsilon_1$
from the complement of $D$.
Consider additionally some $\varepsilon_2>0$,
and write $D''$ for $D'_{\varepsilon_2}$:
the largest simplex domain of scale $\varepsilon_2$ contained in $D'$.
See Figure~\ref{fig:lower_bound}
for a drawing of this construction.
Write
$F=F(g)$
for the unique $\|\cdot\|_{q_{8\varepsilon}}$-Lipschitz function on $D''$
which
equals $g$ on $\varepsilon_2\mathbb Z^d\cap D''$,
and which is affine on each simplex of $D''$.
For $\varepsilon_2$ sufficiently small,
this function has a $\|\cdot\|_{q_{7\varepsilon}}$-Lipschitz
extension $\bar F$ to $\bar D$ which equals $b$ on $\partial D$.
It is clear that any such extension $\bar F$ is
contained in $ B_\varepsilon^\infty(g)$,
that is, $\|\bar F-g\|_\infty<\varepsilon$,
for $\varepsilon_1$ and $\varepsilon_2$
sufficiently small.

\begin{figure}
	\centering
  \begin{tikzpicture}
		\clip (-5, -3) rectangle (7, 5.2);
		\input{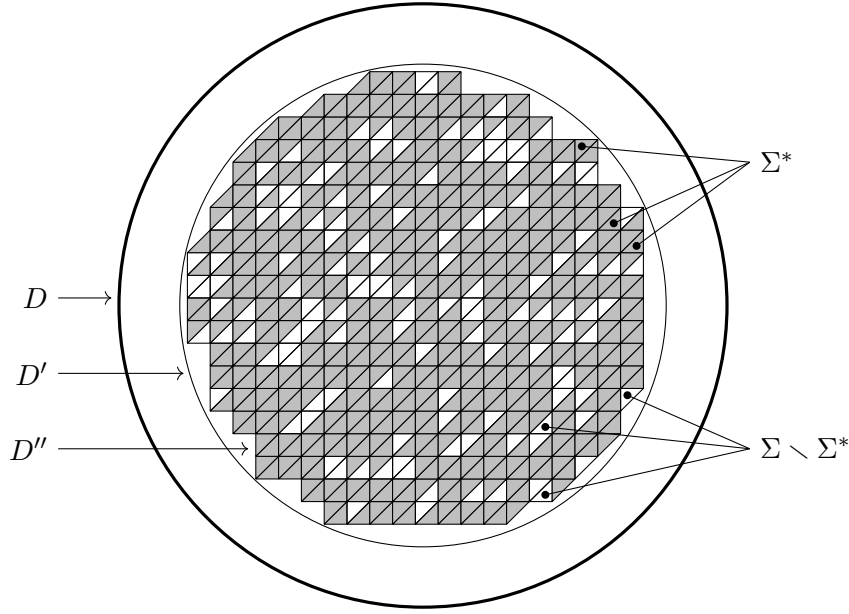}
		\draw[->] (-3.8,-0.8) node[left] {$D''$}-- (-1.3,-0.8);
		\draw[->] (-3.8,.2)node[left] {$D'$} -- (-2.15,.2);
		\draw[->] (-3.8,1.2)node[left] {$D$} -- (-3.1,1.2);

		\draw (5.3,-0.8)node[right] {$\Sigma\smallsetminus\Sigma^*$} -- (2.61,-.51);
		\fill (2.61,-.51) circle(1.5pt);
		\draw (5.3,-0.8) -- (2.61,-1.41);
		\fill (2.61,-1.41) circle(1.5pt);
		\draw (5.3,-0.8) -- (3.69,-.09);
		\fill (3.69,-.09) circle(1.5pt);

		\draw (5.3,3)node[right] {$\Sigma^*$} -- (3.81,1.89);
		\fill (3.81,1.89) circle(1.5pt);
		\draw (5.3,3) -- (3.51,2.19);
		\fill (3.51,2.19) circle(1.5pt);
		\draw (5.3,3) -- (3.09,3.21);
		\fill (3.09,3.21) circle(1.5pt);

  \end{tikzpicture}
  \caption{The sets $D''\subset D'\subset D\subset\mathbb R^d$,
	and the sets $\Sigma^*\subset\Sigma$ of simplices
	of scale $\varepsilon_2$}
	\label{fig:lower_bound}
\end{figure}

Write $\Sigma$ for the set of simplices
of scale $\varepsilon_2$ in $D''$---this is
a finite set.
The slope $\nabla F$ of $F$ is constant
on any $\Delta\in\Sigma$;
write $S(\Delta)\in \bar U_{q_{8\varepsilon}}$ for this slope.
Write $\Sigma^*$ for the set of
simplices $\Delta\in\Sigma$
for which $\|S(\Delta)-S(\mu(\Delta,\cdot))\|_2\leq \varepsilon_1$;
Lemma~\ref{lem_approx_tri} asserts
that $|\Sigma^*|/|\Sigma|\geq 1-\varepsilon_1$
for $\varepsilon_2$
sufficiently small.
See again Figure~\ref{fig:lower_bound}
for an example of the sets $\Sigma$
and $\Sigma^*$.

Choose $C$ minimal subject to
 $\|\phi^u-u|_{\mathbb Z^d}\|_\infty+1\leq C$ for all $u\in U_\Phi$.
Let $M$ denote a constant which
makes Theorem~\ref{general_lips_ext_less_general}
work for the local Lipschitz constraint
$q_{6\varepsilon}$,
and for the constants $\varepsilon$
and $C$---this constant $M$ depends on $\varepsilon$
 only. We shall also suppose that $M\geq R$, by choosing $M$ larger if necessary.
 For $\Delta\in \Sigma$
 and $n\in\mathbb N$,
 define
 $
		\Delta_n:=\Lambda^{-M}(n\Delta)$.
 Write also $D_n'':=\cup_{\Delta\in\Sigma}\Delta_n$
 and $D_n^*:=\cup_{\Delta\in\Sigma^*}\Delta_n$.
 It follows from the definition of an
  approximation that $D_n''\subset D_n$
 for $n$ sufficiently large.
 By Theorem~\ref{general_lips_ext_less_general}
 there exists, for any $n\in\mathbb N$, a $q_{6\varepsilon}$-Lipschitz
 function $F_n:D_n''\to
 E$
  	such that:
		\begin{enumerate}
			\item $|F_n(x)-nF(x/n)| \leq C$
			for all $x\in D_n''$,
			\item $\nabla F_n|_{\Delta_n} =\nabla \phi^{S(\Delta)}|_{\Delta_n}$ for all $\Delta\in\Sigma$.
		\end{enumerate}
		It is straightforward to see
		that for $n$ sufficiently large,
		the function $F_n$ extends to
		a $q_{5\varepsilon}$-Lipschitz height function $\bar F_n$
		which equals $b_n$ on the complement of $D_n$.

		We now use the existence of the function
		$\bar F_n$
		to demonstrate that
		there exists a set
		$A_n\in\mathcal F$
		such that $\mathfrak P_n(A_n)\subset A$,
		and for which we show that $\tilde\gamma_n(A_n)$
		is sufficiently large
		as $n\to\infty$.
		Define  $A_n$ to be the set of height functions $\phi$
		which are $q$-Lipschitz, and
		which satisfy the following criteria:
		\begin{enumerate}
			\item If $x\in\mathbb Z^d\smallsetminus D_n$,
			then $\phi(x)=\bar F_n(x)=b_n(x)$,
			\item If $x\in D_n\smallsetminus D_n^*$,
			then $|\phi(x)-\bar F_n(x)|\leq \varepsilon$,
			\item If $x=0_{\Delta_n}$
			for some $\Delta\in\Sigma^*$,
			then $|\phi(x)-\bar F_n(x)|\leq \varepsilon$,
			\item For each $\Delta\in\Sigma^*$,
			we have $\phi\in C_{\Delta_n,\varepsilon}^{S(\Delta)}$,
			\item
			For each $\Delta\in\Sigma^*$,
			we have $\phi\in B^\Delta_{\Delta_n}$, that is,
			 $L_{\Delta_n}(\phi)\in B^\Delta$,
			where
			\[B^\Delta:=\{\nu\in\mathcal P(\Omega,\mathcal F^\nabla):\text{$|\operatorname {Leb}(\Delta)\nu(f_j)
			-\mu(\Delta,f_j)|<
			\operatorname{Leb}(\Delta)\varepsilon$ for all $j$}\}\in\mathcal B.
			 \]
		\end{enumerate}
		It suffices to demonstrate that
		for $\varepsilon$, $\varepsilon_1$,
		and $\varepsilon_2$ sufficiently small,
		and for $n$ sufficiently large,
		we have $\mathfrak P_n(A_n)\subset A$
		and
		\[
			\liminf_{n\to\infty}n^{-d}\log\tilde\gamma_n(A_n)\geq -I(g,\mu)-\beta.
		\]
		Claim first that, in the limit, $\mathfrak P_n(A_n)\subset A$.
		This is equivalent to asking that  $\mathfrak G_n(A_n)\subset B_{8\varepsilon}^\infty(g)$
		and $\mathfrak L_n(A_n)\subset B_{8\varepsilon}^\mathfrak L(\mu,(R_i)_i,(f_j)_j)$.
		The former of the two holds true because
		$\|\bar F-g\|_\infty<\varepsilon$ and because
		$\|\bar F-\mathfrak G_n(\phi)\|_\infty$ is small in the described limit,
		uniformly over the choice of $\phi\in A_n$.
		The proof that $\mathfrak L_n(A_n)\subset B_{8\varepsilon}^\mathfrak L(\mu,(R_i)_i,(f_j)_j)$ in the
		limit relies again on Proposition~\ref{propo_compare_empirical_measures};
		observe in particular that in the limit
		most of the volume of each fixed rectangle $R_i$
		is covered by simplices in $\Sigma^*$
		which are entirely contained in $R_i$.

		In the sequel, we shall pretend that $A_n\in \mathcal E^{D_n}$ by
		restricting each height function in $A_n$ to $D_n$.
		If $\phi\in E^{D_n}$,
		then we write $\psi$ for the height function which
		restricts to $\phi$ on $D_n$
		and to $b_n$ on the complement of $D_n$.
		We aim to find an asymptotic lower bound on
		\[
		n^{-d}	\log\tilde\gamma_n(A_n)=n^{-d}\log\int_{A_n}e^{-H_{D_n}(\psi)}d\lambda^{D_n}(\phi).
		\]
		If $\phi\in A$, then $\psi$ is $q_{\varepsilon}$-Lipschitz
		whenever restricted to $\mathbb Z^d\smallsetminus\cup_{\Delta\in\Sigma^*}\Delta_n^{-R}$,
		because $\bar F_n$
		is $q_{5\varepsilon}$-Lipschitz and
		because
		$\psi$ and $\bar F_n$ differ by at most $2\varepsilon$
		at each vertex in this set.
		Therefore the upper attachment lemma
		(Lemma~\ref{lemma_attachment}) implies
		\[
			H_{D_n}(\psi)\leq H_{D_n\smallsetminus D_n^*}^0(\psi)+e_\varepsilon^+(D_n)
			+\sum_{\Delta\in\Sigma^*}
			H_{\Delta_n}^0(\psi)+e_\varepsilon^+(\Delta_n)
		\]
		for any $\phi\in A$.
		For fixed $\varepsilon$, $\varepsilon_1$, and $\varepsilon_2$,
		the terms of the form $e_\varepsilon^+(\cdot)$
		in this expression are of order $o(n^d)$ as $n\to\infty$,
		and therefore we may omit them in calculating the limit inferior.
		Moreover, since $\psi$ is $q_{\varepsilon}$-Lipschitz on $D_n\smallsetminus D_n^*$,
		the term $H_{D_n\smallsetminus D_n^*}^0(\psi)$ has an upper bound
		$C'|D_n\smallsetminus D_n^*|$, where $C'$ depends on $\varepsilon$ only.
		In particular,
		\[
			\liminf_{n\to\infty}n^{-d}\log\tilde\gamma_n(A_n)\geq \liminf_{n\to\infty}n^{-d}\left[-C'|D_n\smallsetminus D_n^*|+\log\int_{A_n}e^{-\sum_{\Delta\in\Sigma^*} H_{\Delta_n}^0(\psi)}d\lambda^{D_n}(\phi)\right].
		\]
		It follows from the definition of $A_n$,
		that the integral decomposes as follows:
		\begin{multline*}
			\int_{A_n}e^{-\sum_{\Delta\in\Sigma^*}H_{\Delta_n}^0(\psi)}d\lambda^{D_n}(\phi)
			\\=\left[\prod_{x\in D_n\smallsetminus D_n^*}\int_{\bar F_{n}(x)-\varepsilon}^{\bar F_{n}(x)+\varepsilon} d\lambda
			\right]
			\left[
			\prod_{\Delta\in\Sigma^*}\int_{\bar F_{n}(0_{\Delta_n})-\varepsilon}^{\bar F_{n}(0_{\Delta_n})+\varepsilon} d\lambda
			\right]
			\left[
			\prod_{\Delta\in\Sigma^*}
			\int_{C_{\Delta_n,\varepsilon}^{S(\Delta)}\cap B^\Delta_{\Delta_n}}
			e^{-H^0_{\Delta_n}}d\lambda^{\Delta_n-1}
			\right],
		\end{multline*}
		and therefore the logarithm of this integral equals
		\[
		(|D_n\smallsetminus D_n^*|+|\Sigma^*|)\log 2\varepsilon
		-\sum_{\Delta\in\Sigma^*}\PB_{\Delta_n,S(\Delta),\varepsilon}(B^\Delta).
		\]
		But $|\Sigma^*|$ does not depend on $n$,
		and by choosing $C'$ larger, we obtain
		\[
			\liminf_{n\to\infty}n^{-d}\log\tilde\gamma_n(A_n)\geq  \liminf_{n\to\infty}n^{-d}\left[-C'|D_n\smallsetminus D_n^*|-\sum_{\Delta\in\Sigma^*}\PB_{\Delta_n,S(\Delta),\varepsilon}(B^\Delta)\right]
			.
		\]
		It is easy to see that $n^{-d}|D_n\smallsetminus D_n^*|\to \operatorname{Leb}(D\smallsetminus \cup \Sigma^*)$
		as $n\to\infty$.
		Fix $\Delta\in\Sigma^*$.
		By definition of $\Sigma^*$,
 		we have $\|S(\Delta)-S(\mu(\Delta,\cdot))\|_2\leq \varepsilon_1$.
		Then Lemma~\ref{lemma_boring_slope_change} tells us
		that for $\varepsilon_1$ sufficiently small,
		the set $B^\Delta$ contains another
		shift-invariant measure $\nu$ of slope $S(\Delta)$ such
		that $\mathcal H(\nu|\Phi)\leq \mathcal H(\operatorname{Leb}(\Delta)^{-1}\mu(\Delta,\cdot)|\Phi)+\varepsilon$.
		In particular, this means that
		\begin{multline*}
			\limsup_{n\to\infty}n^{-d}\PB_{\Delta_n,S(\Delta),\varepsilon}(B^\Delta)
			\leq
			\operatorname{Leb}(\Delta)\left(\mathcal H(\operatorname{Leb}(\Delta)^{-1}\mu(\Delta,\cdot)|\Phi)+\varepsilon\right)
			\\=
			\mathcal H(\mu(\Delta,\cdot)|\Phi)+\operatorname{Leb}(\Delta)\varepsilon.
		\end{multline*}
		Conclude that
		\[
			\liminf_{n\to\infty}n^{-d}\log\tilde\gamma_n(A_n)\geq
			-C'\operatorname{Leb}(D\smallsetminus\cup\Sigma^*)
			-\mathcal H(\mu(\cup\Sigma^*,\cdot)|\Phi)
			-\varepsilon\operatorname{Leb}(\cup\Sigma^*)
			.
		\]
		As $\varepsilon_2\to 0$ and then $\varepsilon_1\to 0$,
		we have
		\[
			\operatorname{Leb}(\cup\Sigma^*)\to \operatorname{Leb}(D),
			\qquad
			\operatorname{Leb}(D\smallsetminus\cup\Sigma^*)
			\to
			0,
			\qquad
			\mathcal H(\mu(\cup\Sigma^*,\cdot)|\Phi)\to
			\mathcal H(\mu|\Phi).
		\]
		The desired lower bound is thus obtained by setting
		$\varepsilon$ so small that $\varepsilon\operatorname{Leb}(D)<\beta$.

		Let us finally describe what changes for $E=\mathbb Z$.
		The first part of the proof is the same,
		except that the functions $F_n$ and $\bar F_n$ are $q$-Lipschitz,
		and not $q_{6\varepsilon}$-Lipschitz or $q_{5\varepsilon}$-Lipschitz.
		The $q$-Lipschitz extension $\bar F_n$ exists for $n$ sufficiently
		large, because $g|_D$ is locally strictly $\|\cdot\|_q$-Lipschitz.
		The only thing that changes in the remainder of the proof
		is that $\lambda$ is now the counting measure
		rather than the Lebesgue measure.
		This makes the remainder of the proof easier, exactly as in the proof of Lemma~\ref{lemma_PB_star_convex}.
	\end{proof}


\subsection{The upper bound on probabilities}

\begin{proof}[Proof of the upper bound on probabilities]
Let us first consider the case $\tilde I(g,\mu)<\infty$,
in which case $\tilde I(g,\mu)=\mathcal H(\mu|\Phi)$.
Let $\Sigma$ denote a finite set of closed disjoint rectangles,
contained in $D$.
Define $R_n:=\Lambda(nR)$ for $R\in\Sigma$
and $\Sigma_n:=\cup_{R\in\Sigma}R_n$,
and note that $\Sigma_n\subset D_n$ for $n$ sufficiently large.
Now choose for each $R\in\Sigma$ an open set $B^R\in\mathcal B$
with $\mu(R,\cdot)/\operatorname{Leb}(R)\in B^R$,
and define \[A_n:=\cap_{R\in\Sigma}B^R_{R_n}.\]
It is straightforward to show that
$(g,\mu)$ has a fixed neighborhood which is contained in all sets $\mathfrak P_n(A_n)$
for $n$ sufficiently large.
Fix $\beta>0$.
It suffices to find an appropriate choice for the set of rectangles $\Sigma$
and the collection of balls $(B^R)_{R\in\Sigma}$, such that
 \[
 \limsup_{n\to\infty}n^{-d}\log
 \tilde\gamma_n(A_n)
 \leq -\mathcal H(\mu|\Phi)+\beta.
 \]

Remove all height functions from $A_n$ which
do not equal $b_n$ on $\mathbb Z^d\smallsetminus D_n$
or which are not $Kd_1$-Lipschitz;
this obviously does not change the value of $\tilde\gamma_n(A_n)$.
As in the proof of the lower bound,
we shall sometimes pretend that $A_n\in \mathcal E^{D_n}$ by
restricting each height function in $A_n$ to $D_n$.
If $\phi\in E^{D_n}$,
then we write $\psi$ for the height function which
restricts to $\phi$ on $D_n$
and to $b_n$ on the complement of $D_n$.
We are thus interested in the asymptotic behavior of
\[
  \tilde\gamma_n(A_n)=\int_{A_n}e^{-H_{D_n}(\psi)}d\lambda^{D_n}(\phi).
\]

The lower attachment lemma (Lemma~\ref{lemma_lower_attachments}) asserts that
\begin{align*}
  H_{D_n}&\geq
  H_{D_n\smallsetminus\Sigma_n}^0
  -
  e^-(D_n)
  +
  \sum_{R\in\Sigma}
  H_{R_n}^0
  -e^-(R_n)
  \\&\geq
  -\|\Xi\|\cdot|D_n\smallsetminus\Sigma_n|
  -
  e^-(D_n)
  +
  \sum_{R\in\Sigma}
  H_{R_n}^0
  -e^-(R_n).
\end{align*}
The terms of the form $e^-(\cdot)$ are of order $o(n^d)$
as $n\to\infty$. Moreover, $n^{-d}|D_n\smallsetminus\Sigma_n|\to\operatorname{Leb}(D\smallsetminus\cup\Sigma)$
as $n\to\infty$,
and
therefore
\[
\limsup_{n\to\infty}n^{-d}\log \tilde\gamma_n(A_n)
\leq \|\Xi\|\operatorname{Leb}(D\smallsetminus\cup\Sigma)
+\limsup_{n\to\infty}n^{-d}\log
\int_{A_n}e^{-\sum_{R\in\Sigma}
H_{R_n}^0(\psi)}d\lambda^{D_n}(\phi).
\]
Write $D_n^0:=D_n\smallsetminus\cup_{R\in\Sigma}(R_n\smallsetminus\{0_{R_n}\})$,
so that
$
\lambda^{D_n}=\lambda^{D_n^0}\times\prod_{R\in\Sigma}\lambda^{R_n-1}
$.
Then
\[
\int_{A_n}e^{-\sum_{R\in\Sigma}
H_{R_n}^0(\psi)}d\lambda^{D_n}(\phi)
\leq
\left[
\int_{W_n}
d
\lambda^{D_n^0}
\right]
\left[\prod_{R\in\Sigma}
\int_{B^R_{R_n}} e^{-H_{R_n}^0}d\lambda^{R_n-1}
\right],
\]
where $W_n$ is the set of $Kd_1$-Lipschitz functions
$\phi:D_n^0\to E$
such that $\phi b_n|_{\mathbb Z^d\smallsetminus D_n}$
is also $Kd_1$-Lipschitz.
Remark that
\[\log \int_{W_n}
d
\lambda^{D_n^0}
\leq |D_n^0|\log (2K+1)\]
and that
\[
\log\int_{B^R_{R_n}} e^{-H_{R_n}^0}d\lambda^{R_n-1}=-\FB_{R_n}(B^R).
\]
If we write $m:=\|\Xi\|+\log(2K+1)$,
then we have now shown that
\[
\limsup_{n\to\infty}n^{-d}\log \tilde\gamma_n(A_n)
\leq
m\operatorname{Leb}(D\smallsetminus\cup\Sigma)
-
\sum_{R\in\Sigma}\operatorname{Leb}(R)\FB(B^R).
\]
It now suffices to show that the expression
on the right is at most $-\mathcal H(\mu|\Phi)+\beta$
for an appropriate choice of the set of rectangles in $\Sigma$
and for the collection $(B^R)_{R\in\Sigma}\subset\mathcal B$.
By choosing the rectangles in $\Sigma$ such that they
exhaust most of the space, we can ensure that $\operatorname{Leb}(D\smallsetminus\cup\Sigma)\leq \beta/2m$,
and by also taking each ball $B^R$ sufficiently small,
we can ensure that $\sum_{R\in\Sigma}\operatorname{Leb}(R)\FB(B^R)$
is at least $\mathcal H(\mu|\Phi)-\beta/2$.
This proves the upper bound on probabilities.

Consider now the case that $\tilde I(g,\mu)=\infty$.
We distinguish several reasons which may cause $\tilde I(g,\mu)$ to be infinite.
If $\mu$ is shift-invariant but $\mathcal H(\mu|\Phi)=\infty$,
then the proof is the same as before.
If $\mu$ is not shift-invariant, then there is a closed rectangle $R\subset D$
such that $\mu(R,\cdot)$ is not shift-invariant,
and by including $R$ in $\Sigma$ and using the free boundary limits
for non-shift-invariant measures,
we obtain the same result. In fact, in that case it is readily seen
that $A_n$ is empty for $n$ sufficiently large.
(See also the proof of Lemma~\ref{lemma_FB_non_shift-inv}).

The remaining cases are:
either $g|_{\partial D}$ does not equal $b$,
or it is not true that $\nabla g(x)=S(\mu(x,\cdot))$
as a distribution on $D$.
Consider first the case that $g|_{\partial D}$ does not equal $b$.
Choose $\varepsilon:=\|g|_{\partial D}-b\|_\infty/2$.
In that case, it is readily seen that $\tilde\gamma_n(\mathfrak G_n^{-1}(B^\infty_\varepsilon(g)))=0$
for $n$ sufficiently large.
Finally consider the case that it is not true that $\nabla g(x)=S(\mu(x,\cdot))$
as a distribution on $D$.
In that case, there is a closed rectangle $R\subset D$
such that
the average of $\nabla g$ over $R$ does not equal
$S(\mu(R,\cdot))$.
But if $\mathfrak G_n(\phi)$
is close to $g$,
then $\mathfrak L_n(\phi)(R,\cdot)$
must have its approximate slope close to the average of $\nabla g$ over $R$.
Note that we use the words \emph{approximate slope} here rather than the word \emph{slope},
because $\mathfrak L_n(\phi)(R,\cdot)$ is not shift-invariant,
but it is almost shift-invariant in the sense that
$\mathfrak L_n(\phi)(R,f-\theta f)$ goes to zero uniformly over $\phi$ as $n\to\infty$ for
$f$ a bounded continuous cylinder function and $\theta\in\Theta(\mathcal L)$;
see the proof
of Lemma~\ref{lemma_FB_non_shift-inv}.
In particular,
by including $R$ in $\Sigma$ in the previous discussion
and choosing $B^R$ sufficiently small, it can again be seen
that $A_n$ is empty for $n$ sufficiently large, which leads to the desired bound.
\end{proof}


\subsection{Exponential tightness}
\begin{proof}[Proof of exponential tightness]
  The proof is easy.
  Fix a positive constant $\varepsilon>0$,
  and let $K$ denote the smallest constant such that $Kd_1\geq q$.
  Define
  \[
    K^\infty_\varepsilon:=\{
      g\in \operatorname{Lip}(\bar D):\|g|_{\partial D}-b\|_\infty\leq \varepsilon
    \}
    ,
    \qquad
    K^\mathfrak L:=\{
      \mu\in\mathcal M^D:\text{$\mu(D,\cdot)$ is $K$-Lipschitz}
    \}.
  \]
  It is clear that $K^\infty_\varepsilon$ is compact in
  $(\operatorname{Lip}(\bar D),\mathcal X^\infty)$,
  and that $K^\mathfrak L$ is compact in $(\mathcal M^D,\mathcal X^\mathfrak L)$.
  This means that $K^\infty_\varepsilon\times K^\mathfrak L$
  is compact in $(X^\mathfrak P,\mathcal X^\mathfrak P)$.
  As in the proof of the upper bound of probabilities,
  we observe that $\tilde\gamma_n^*$ is supported on
  $K^\infty_\varepsilon\times K^\mathfrak L$
  for $n$ sufficiently large.
  This completes the proof;
  the compact set that we have found is independent of the choice of $\alpha$
  that appeared in the original formulation of exponential tightness.
\end{proof}

\section{Proof of strict convexity}
\label{sec_strict_convex}

\subsection{The product setting}

For the proof of strict convexity of $\sigma$,
it is useful to work in the product setting $\Omega\times\Omega$,
because one is then able to study the difference $\phi_1-\phi_2$
of a pair of height functions $(\phi_1,\phi_2)$
and apply the theory of moats from Section~\ref{section:moats_heart}.
Almost all constructions and results
in the previous sections generalize to
the product setting.
An alternative way of viewing
the product setting is by considering
a height function to take values in the two-dimensional
space $E^2$ rather than $E$.
This section gives an overview of the definitions
and results for the product setting as required for the proof of strict convexity
of $\sigma$.

Write $\mathcal P^2(X, \mathcal X)$
for the set of probability measures on
$(X,\mathcal X)^2$
whenever
 $(X,\mathcal X)$ is a measurable space.
 If $\mu\in\mathcal P^2(X, \mathcal X)$,
 then write $\mu_1$ and $\mu_2$
 for the marginals of $\mu$ on the first
 and second space respectively.

\begin{definition}
  The \emph{topology of weak local convergence}
  is the coarsest topology on $\mathcal P^2(\Omega,\mathcal F^\nabla)$ that makes
  the evaluation map $\mu\mapsto\mu(f)$
  continuous for any bounded continuous cylinder function $f$ on $\Omega^2$,
  that is, a bounded function $f:\Omega^2\to\mathbb R$
  which is $\mathcal F^\nabla_\Lambda\times\mathcal F^\nabla_\Lambda$-measurable
  for some $\Lambda\subset\subset\mathbb Z^d$,
  and continuous with respect to the topology of uniform convergence
  on $\Omega^2$---the set of functions from $\mathbb Z^d$
  to $E^2$.
\end{definition}


\begin{definition}
Write $\mathcal P_\mathcal L^2(\Omega, \mathcal F^\nabla)$
for the set of $\mathcal L$-invariant probability
measures in $\mathcal P^2(\Omega, \mathcal F^\nabla)$;
a measure $\mu\in\mathcal P^2(\Omega, \mathcal F^\nabla)$
is called \emph{$\mathcal L$-invariant}
if
$\mu(A\times B)=\mu(\theta A\times\theta B)$
for any $A,B\in\mathcal F^\nabla$ and $\theta\in\Theta$.
This is equivalent to asking
that $(\phi_1,\phi_2)$ and $(\theta\phi_1,\theta\phi_2)$
have the same distribution under $\mu$.
\end{definition}

\begin{definition}
By the \emph{slope} of $\mu\in\mathcal P^2_\mathcal L(\Omega, \mathcal F^\nabla)$
we simply mean the pair
of slopes of the two marginals of $\mu$;
$S^2(\mu):=(S(\mu_1),S(\mu_2))$.
The slope functional $S^2$ is clearly strongly
affine, as in the non-product setting.
\end{definition}

\begin{definition}
  For $\mu\in\mathcal P^2(\Omega, \mathcal F^\nabla)$ and $\Lambda\subset\subset\mathbb Z^d$,
  define the \emph{free energy} of $\mu$ in $\Lambda$
  by
  \[
    \mathcal H^2_\Lambda(\mu|\Phi):=
    \mathcal H_{\mathcal F_\Lambda^\nabla\times\mathcal F_\Lambda^\nabla}
    (\mu|\lambda^{\Lambda-1}\times\lambda^{\Lambda-1})
    +
    \mu(H_\Lambda^{0,\Phi}(\phi_1)+H_\Lambda^{0,\Phi}(\phi_2)).
  \]
  Note that we immediately have
  \[
  \numberthis
  \label{eq_ineq_marginals}
      \mathcal H^2_\Lambda(\mu|\Phi)
      \geq
        \mathcal H_\Lambda(\mu_1|\Phi)
        +
          \mathcal H_\Lambda(\mu_2|\Phi),
  \]
  with equality if and only
  if the restriction of $\mu$ to $\mathcal F_\Lambda^\nabla\times\mathcal F_\Lambda^\nabla$
  decomposes as the product of $\mu_1$ and $\mu_2$,
  or if either side equals $\infty$.
  If $\mu$ is $\mathcal L$-invariant, then
    define the \emph{specific free energy} of $\mu$ by
    \[
      \mathcal H^2(\mu|\Phi)=\lim_{n\to\infty}n^{-d}\mathcal H_{\Pi_n}^2(\mu|\Phi).
      \]
      It follows immediately from~\eqref{eq_ineq_marginals}
      that $\mathcal H^2(\mu|\Phi)\geq \mathcal H(\mu_1|\Phi)+
      \mathcal H(\mu_2|\Phi)$.
      In particular,
      this implies that $\mathcal H^2(\mu|\Phi)\geq
      \sigma(S(\mu_1))+\sigma(S(\mu_2))$.
      For convenience, we shall write
      $\sigma^2(u,v):=\sigma(u)+\sigma(v)$.
      Note that
      \[
      \sigma^2(u,v):=\inf_{\text{$\mu\in\mathcal P_\mathcal L^2(\Omega,\mathcal F^\nabla)$ with $S^2(\mu)=(u,v)$}}\mathcal H^2(\mu|\Phi).
      \]
\end{definition}

With these definitions, the following results
generalize naturally to the product setting:
\begin{enumerate}
  \item
  Theorem~\ref{thm_main_sfe}
  for existence of the specific free energy,
\item Theorem~\ref{thm_main_minimizers_finite_energy} for finite energy,
where the result applies if
\[
\mathcal H^2(\mu|\Phi)=
\sigma^2(S^2(\mu))<\infty,
\]
\item
  Theorem~\ref{theorem_ergodic_decomposition},
  Proposition~\ref{propo_S_strongly_affine}, and Theorem~\ref{thm_SFE_strongly_affine}
  for ergodic decompositions,
  \item Theorem~\ref{thm_limit_equalities_overview}
  for limit equalities and
  Theorem~\ref{p_LDP}
  for the large deviations principle.
\end{enumerate}
Rather than repeating each result here, we
state clearly the generalized result that is used whenever
referring to it.

\subsection{Moats in the empirical limit}
\label{subsection_attachment_applications}

In this section, we suppose that $\sigma$ is not strictly
convex, and construct the pathological measure which derives from this assumption.
Let $K$ denote the smallest real number such that $Kd_1\geq q$,
and write $\rho$ for the uniform probability measure
on the set
$E\cap [0,4K)$, with random variable $U$.
For fixed $(\phi_1,\phi_2,U)\in \Omega\times\Omega\times E$,
we shall write $\xi=\xi(\phi_1,\phi_2,U)$
for the function
\[
  \xi:=\left\lfloor{\frac{1}{4K}}(\phi_1-\phi_1(0)-\phi_2+\phi_2(0)-U)\right\rfloor:\mathbb Z^d\to\mathbb Z.
\]
We will refer to $\xi$ as the \textit{difference function} associated to the triplet  $(\phi_1,\phi_2,U)$.
Remark that the law of $\nabla\xi$ is $\mathcal L$-invariant in $\mu\times\rho$
for any
 $\mu\in\mathcal P_\mathcal L^2(\Omega,\mathcal F)$;
 the random variable $U$
 makes the rounding operation shift-invariant
 as in the proof of Lemma~\ref{lemma_ergodic_approx_aux_1}.


\begin{theorem}
  \label{thm_beyond_moats}
  Let $\Phi$ denote a potential which is monotone and
  in $\mathcal S_\mathcal L+\mathcal W_\mathcal L$.
  Assume that $\sigma$ is affine on the line segment $[u_1,u_2]$ connecting two distinct slopes $u_1,u_2\in U_\Phi$,
  and set $u=(u_1+u_2)/2$.
Select two vertices $x\in\mathcal L$ and $y\in\mathbb Z^d$ subject only
  to $(u_1-u_2)(x)\neq 0$.
  Then
  there exists a product measure $\mu\in\mathcal P_\mathcal L^2(\Omega,\mathcal F^\nabla)$
  such that $S^2(\mu)=(u,u)$
  and $\mathcal H^2(\mu|\Phi)=\sigma^2(u,u)= 2\sigma(u)$,
  and such that with positive $\mu\times\rho$-probability,
  the following two events occur simultaneously:
  \begin{enumerate}
    \item The function $\xi$ is not constant on the set $y+\mathbb Zx$,
    \item The set $\{\xi=0\}\subset\mathbb Z^d$ has at least
     three distinct infinite
    connected components.
  \end{enumerate}
\end{theorem}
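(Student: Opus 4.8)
\textbf{Proof plan for Theorem~\ref{thm_beyond_moats}.}
The strategy is the one announced in the introduction: build the pathological product measure by combining the reflection/moat machinery of Section~\ref{section:moats_heart} with the limit equalities of Section~\ref{section_limit_equalities}, and then re-randomize the origin to produce a shift-invariant limit. First I would fix, for each large $n\in N\cdot\mathbb N$, the product measure $\mu_n:=\nu_n\times\nu_n$, where $\nu_n$ is the (normalized) pinned boundary measure $\frac1Z 1_{C_n^u\cap B_n}e^{-H_{\Pi_n}^0}\lambda^{\Pi_n-1}$ attached to the slope $u$ and to a suitable basic open set $B$ containing some fixed minimizer $\mu^*$ of slope $u$. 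By Lemma~\ref{lemma:PBL_upper_bound} (in its product generalization) and Theorem~\ref{thm_limit_equalities_overview}, $n^{-d}\mathcal H^2_{\Pi_n}(\mu_n|\Phi)\to 2\sigma(u)$. The key point where affineness of $\sigma$ on $[u_1,u_2]$ enters: because $\sigma$ is affine on that segment, the free energy cost of tilting the slope of one marginal from $u$ towards $u_1$ and the other towards $u_2$ (and back) is, to leading order, zero; hence using the washboard/attachment construction of Lemma~\ref{lemma_PB_star_convex} together with the upper attachment lemma (Lemma~\ref{lemma_attachment}) and Theorem~\ref{general_lips_ext_less_general}, one can show that $\mu_n(f_\Lambda\ge 3bn)$, where $f=\phi_1-\phi_2$ and $\Lambda$ is a small fixed connected set near the centre, decays only like $e^{o(n^d)}$—i.e. the two marginals are macroscopically separated with log-probability $o(n^d)$. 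This is the step I expect to be the main obstacle: one must carefully engineer a boundary profile for $\phi_1$ that climbs to $u_1$ on a macroscopic inner region and a complementary profile for $\phi_2$ climbing to $u_2$, glue these to the slope-$u$ boundary data along $\partial^R\Pi_n$ without gaining more than $o(n^d)$ energy (using Lemma~\ref{lemma_attachment} and Theorem~\ref{general_lips_ext_less_general}), and simultaneously keep the empirical measure inside $B$ (using Proposition~\ref{propo_compare_empirical_measures}); all the while checking the Lipschitz bookkeeping with $q_\varepsilon$ versus $q$.

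Once the lower bound $m^{-2n}\mu_n(E(2n))\le \mu_n(f_\Lambda\ge 3bn)$ from Theorem~\ref{thm:moats} (with $\Delta=\Pi_n$, $a=4K$, $b=8K$, and $m=d_1(\Lambda,\mathbb Z^d\smallsetminus\Pi_n)\asymp n$) is in hand, I would combine it with the previous paragraph: since $\mu_n(f_\Lambda\ge 3bn)=e^{o(n^d)}$ and $m^{2n}=e^{O(n\log n)}=e^{o(n^d)}$ (as $d\ge 2$), we get $\mu_n(E(n'))=e^{o(n^d)}$ with $n'=\lfloor\varepsilon n/10K\rfloor$ nested $a,b$-moats of $(f,\Lambda)$, for a fixed $\varepsilon>0$. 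Crucially, by Proposition~\ref{propo:collection}\eqref{propo:collection:in_out_dist}, any path from inside to outside of a moat spends at least $\lfloor(b-a)/2K\rfloor\ge 2$ consecutive steps in it, and distinct moats in a nested sequence are separated; an elementary counting argument then shows that the union of all $n'$ nested moats occupies at least a fixed positive fraction of $\Pi_n$. Now form $\hat\mu_n:=\frac{1}{|\mathcal L\cap\Pi_n|}\sum_{x\in\mathcal L\cap\Pi_n}\theta_x(\mu_n\times\rho)$, a shift-invariant measure on $\Omega^2\times E$; by Theorem~\ref{thm_main_sfe} (product version) and lower semicontinuity, any subsequential limit $\hat\mu$ of $\hat\mu_n$ in the weak local topology satisfies $\mathcal H^2(\hat\mu|\Phi)\le 2\sigma(u)$, hence $=2\sigma(u)$ by the reverse inequality, and $S^2(\hat\mu)=(u,u)$; moreover one checks that $\hat\mu$ is a product measure (the marginals decorrelate at the level of the specific free energy, forcing equality in~\eqref{eq_ineq_marginals} in the limit, as in Sheffield's argument). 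Let $\mu\in\mathcal P^2_\mathcal L(\Omega,\mathcal F^\nabla)$ be the marginal of $\hat\mu$ on the first two coordinates; then $\mathcal H^2(\mu|\Phi)=\sigma^2(u,u)=2\sigma(u)$ as required.

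It remains to verify the two events occur with positive $\mu\times\rho$-probability. Since $e^{o(n^d)}$-many nested moats of $(f,\Lambda)$ exist with probability bounded below by $e^{o(n^d)}$ in $\mu_n$, and since a positive density of vertices of $\Pi_n$ lies in some moat, after averaging over the origin a uniformly positive $\hat\mu_n$-probability of configurations has the origin sitting inside a moat belonging to a long nested family; translating to the limit and using that moat-membership is a local event (Proposition~\ref{propo:collection}, statement that $\{M\text{ is a moat}\}$ is $\mathcal F^2_{\bar M}$-measurable), I would deduce that with positive $\mu$-probability the origin is surrounded by infinitely many nested $a,b$-moats of $(f,\mathbf 0)$, where now $f=\phi_1-\phi_2$ without the reference-point correction—equivalently, the difference function $\xi$ of the triple $(\phi_1,\phi_2,U)$ takes many distinct values along $y+\mathbb Zx$. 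On the complement of any climbing moat $f$ is $<a=4K$ and on the inside $\ge b=8K$, so $\xi$ jumps; in particular $\xi$ is non-constant on $y+\mathbb Zx$ (here the hypothesis $(u_1-u_2)(x)\ne 0$ guarantees, via the macroscopic tilt, that the line $y+\mathbb Zx$ genuinely crosses the moats rather than running parallel to them). Finally, each of the infinitely many nested moats separates $\{\xi=0\}$: a descending moat has $f\ge b$ outside and $f<a$ inside, so the level set $\{\xi=0\}=\{0\le (\phi_1-\phi_1(0)-\phi_2+\phi_2(0)-U)/4K<1\}$ cannot connect across it; organizing these separating surfaces yields at least three distinct infinite components of $\{\xi=0\}$ with positive probability—by the Burton–Keane-type argument this is exactly the structure that will contradict the finite energy of minimizers in the final step of the proof of strict convexity. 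The hardest technical point throughout remains the energy bookkeeping in the washboard construction under the delicate $q$ versus $q_\varepsilon$ Lipschitz constraints; everything else is an assembly of the quoted results.
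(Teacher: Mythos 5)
Your high-level plan is the right one (affineness of $\sigma$ makes macroscopic deviations of $f=\phi_1-\phi_2$ log-cheap; Theorem~\ref{thm:moats} turns this into many nested moats at sub-exponential price; re-randomize the origin; apply the Burton--Keane dichotomy downstream), but two steps in the execution do not work as written, and both stem from taking a genuinely different geometry from the paper's.

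\emph{Geometry of the moats.} You force $f$ to be large on a single small central $\Lambda$ and apply Theorem~\ref{thm:moats} once with $\Delta=\Pi_n$. The paper instead pins $f\approx 0$ on the even hyperplane slabs $H_{n,k}$ and $f\geq n\delta\varepsilon_2$ on the odd slabs $\Lambda_{n,k}$ (Proposition~\ref{propo_generic_nonsym}), then applies Theorem~\ref{thm:moats} separately in each component $\Delta_{n,k}$ between consecutive even slabs via Proposition~\ref{proposition_moats_effective_application}. This produces $\Theta(n\varepsilon_2)$ nested moats inside each slab of width $\Theta(n\varepsilon_2)$, i.e.\ a \emph{positive linear density} of moats in the direction $x$. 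Because $(u_1-u_2)(x)\neq 0$---this is precisely where the hypothesis enters---the line $L^z$ is transversal to the slabs, so whenever $z$ sits in a slab, $L^z$ must cross a positive fraction of the nested moats inside a segment of length $O(n\varepsilon_2)$; re-randomizing $z$ along that segment then hits five nearby nested moats with uniformly positive conditional probability. With a single centered $\Lambda$ you do get $\Theta(\varepsilon n)$ moats covering positive volume density, but you have no control on their spacing along a fixed-length segment $\bar L^z_N$ through a typical moat point $z$: the annular moats can be far apart in the $x$-direction, so the five-moat event near a random origin is not obtained. In your proposal $(u_1-u_2)(x)\neq 0$ also has nowhere essential to act.

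\emph{Passage to the limit.} You assert that ``after averaging over the origin a uniformly positive $\hat\mu_n$-probability of configurations has the origin sitting inside a moat''. This does not follow from $\mu_n(E(2n'))\geq e^{-o(n^d)}$: that bound is compatible with $\mu_n(E(2n'))\to 0$, in which case the unconditioned averaged probability also tends to zero, and a subsequential limit of $\hat\mu_n$ need not see the moat event at all. The paper sidesteps this via the LDP (Theorem~\ref{p_LDP}): since the set $B_m=\{\mu:\mu(A_m)>\delta'/2\}$ of empirical measures has $n^{-d}\log\gamma_n^2(\mathfrak L_n^2(D,\cdot)\in B_m)=o(1)$, the LDP upper bound forces $\bar B_m$ to contain a shift-invariant product measure with $\mathcal H^2\leq 2\sigma(u)$, and any measure in $\bar B_m$ satisfies $\mu(A_m)\geq\delta'/2$. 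Your route could be repaired by first \emph{conditioning} $\mu_n$ on $E(2n')$ before averaging (conditioning costs $-\log\mu_n(E(2n'))=o(n^d)$ free energy, so the specific free energy bound survives), but as written the step is not valid. Finally, you pass directly from ``locally many nested moats'' to ``$\{\xi=0\}$ has three distinct \emph{infinite} components'' without giving the mechanism; the paper does this through the decreasing family of local events $A_m$ on boxes $\Sigma_m$, together with compactness of the lower level sets of the specific free energy and (when $E=\mathbb R$) a dominated-convergence argument to kill the boundary set $\partial A_m$.
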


In the next section, we discuss rigorously how to derive a
contradiction from this theorem (under the additional condition whenever $E=\mathbb Z$),
using Theorem~\ref{thm_main_minimizers_finite_energy}
and the argument for uniqueness of the infinite cluster  of Burton and Keane~\cite{BURTON}.
The purpose of the remainder of this section is to prove Theorem~\ref{thm_beyond_moats}.

Let us assume the setting of Theorem~\ref{thm_beyond_moats}:
$\Phi$ is a monotone potential in $\mathcal S_\mathcal L+\mathcal W_\mathcal L$,
$u_1$ and $u_2$ are distinct slopes in $U_\Phi$
such that $\sigma$ is affine on $[u_1,u_2]$,
and $u:=(u_1+u_2)/2$.
In the proof of the theorem, we shall suppose
that $y=0$, without loss of generality.
Fix $0<\varepsilon<K$ so small that $u_1,u_2\in U_{q_\varepsilon}$.
We shall use the large deviations principle
with the
good asymptotic profile $(D,b)$
where $D:=(0,1)^d\subset\mathbb R^d$ and
$b:=u|_{\partial D}$,
and with the good approximation
$(D_n,b_n)_{n\in\mathbb N}$
of $(D,b)$ defined by $D_n:=\Pi_n$
and $b_n:=\phi^u$
 for all $n\in\mathbb N$.
 As per usual, we write $\gamma_n:=\gamma_{D_n}(\cdot,b_n)$,
 and we shall also write $\gamma_n^2:=\gamma_n\times\gamma_n$.


\begin{figure}
	\centering
	\begin{subfigure}{0.5\textwidth}
		\centering
		\begin{tikzpicture}[x=5cm,y=5cm]
			\clip (-.1,-.3) rectangle (1.1,1.3);
			\fill[color=lightgray!50] (-0.1,-0.1)--(-0.1,-0.1)--(-0.1,-0.1)--(-0.1,-0.1)--cycle;
\fill[color=darkgray] (0.1,0.1)--(0.1,0.1)--(0.1,0.1)--(0.1,0.1)--cycle;
\fill[color=lightgray!50] (-0.07266666666666668,-0.1)--(-0.06066666666666667,-0.1)--(-0.1,0.018000000000000016)--(-0.1,-0.018000000000000016)--cycle;
\fill[color=gray] (0,0)--(0,0)--(0,0)--(0,0)--cycle;
\fill[color=lightgray!50] (-0.02266666666666667,-0.1)--(-0.010666666666666672,-0.1)--(-0.1,0.168)--(-0.1,0.132)--cycle;
\fill[color=darkgray] (0.1,0.1)--(0.1,0.1)--(0.1,0.1)--(0.1,0.1)--cycle;
\fill[color=lightgray!50] (0.027333333333333334,-0.1)--(0.03933333333333333,-0.1)--(-0.1,0.31800000000000006)--(-0.1,0.28200000000000003)--cycle;
\fill[color=gray] (0,0)--(0.006,0)--(0,0.018000000000000002)--(0,0)--cycle;
\fill[color=lightgray!50] (0.07733333333333334,-0.1)--(0.08933333333333333,-0.1)--(-0.1,0.46799999999999997)--(-0.1,0.43200000000000005)--cycle;
\fill[color=darkgray] (0.1,0.1)--(0.1,0.1)--(0.1,0.1)--(0.1,0.1)--cycle;
\fill[color=lightgray!50] (0.12733333333333333,-0.1)--(0.13933333333333334,-0.1)--(-0.1,0.6180000000000001)--(-0.1,0.5820000000000001)--cycle;
\fill[color=gray] (0.094,0)--(0.10600000000000001,0)--(0,0.31800000000000006)--(0,0.28200000000000003)--cycle;
\fill[color=lightgray!50] (0.17733333333333334,-0.1)--(0.18933333333333335,-0.1)--(-0.1,0.768)--(-0.1,0.7320000000000001)--cycle;
\fill[color=darkgray] (0.11066666666666669,0.1)--(0.1226666666666667,0.1)--(0.1,0.16800000000000007)--(0.1,0.13200000000000003)--cycle;
\fill[color=lightgray!50] (0.22733333333333333,-0.1)--(0.23933333333333334,-0.1)--(-0.1,0.9180000000000001)--(-0.1,0.8820000000000001)--cycle;
\fill[color=gray] (0.194,0)--(0.20600000000000002,0)--(0,0.6180000000000001)--(0,0.5820000000000001)--cycle;
\fill[color=lightgray!50] (0.2773333333333333,-0.1)--(0.28933333333333333,-0.1)--(-0.1,1.068)--(-0.1,1.032)--cycle;
\fill[color=darkgray] (0.21066666666666667,0.1)--(0.22266666666666668,0.1)--(0.1,0.46799999999999997)--(0.1,0.43199999999999994)--cycle;
\fill[color=lightgray!50] (0.32733333333333337,-0.1)--(0.3393333333333334,-0.1)--(-0.06066666666666665,1.1)--(-0.07266666666666666,1.1)--cycle;
\fill[color=gray] (0.29400000000000004,0)--(0.30600000000000005,0)--(0,0.9180000000000001)--(0,0.8820000000000001)--cycle;
\fill[color=lightgray!50] (0.37733333333333335,-0.1)--(0.38933333333333336,-0.1)--(-0.010666666666666658,1.1)--(-0.02266666666666667,1.1)--cycle;
\fill[color=darkgray] (0.3106666666666667,0.1)--(0.3226666666666667,0.1)--(0.1,0.768)--(0.1,0.7320000000000001)--cycle;
\fill[color=lightgray!50] (0.42733333333333334,-0.1)--(0.43933333333333335,-0.1)--(0.03933333333333333,1.1)--(0.02733333333333332,1.1)--cycle;
\fill[color=gray] (0.394,0)--(0.406,0)--(0.07266666666666671,1)--(0.0606666666666667,1)--cycle;
\fill[color=lightgray!50] (0.47733333333333333,-0.1)--(0.48933333333333334,-0.1)--(0.08933333333333332,1.1)--(0.07733333333333331,1.1)--cycle;
\fill[color=darkgray] (0.4106666666666667,0.1)--(0.4226666666666667,0.1)--(0.15600000000000003,0.9)--(0.14400000000000002,0.9)--cycle;
\fill[color=lightgray!50] (0.5273333333333333,-0.1)--(0.5393333333333333,-0.1)--(0.1393333333333333,1.1)--(0.1273333333333333,1.1)--cycle;
\fill[color=gray] (0.494,0)--(0.506,0)--(0.1726666666666667,1)--(0.16066666666666668,1)--cycle;
\fill[color=lightgray!50] (0.5773333333333334,-0.1)--(0.5893333333333334,-0.1)--(0.18933333333333335,1.1)--(0.17733333333333334,1.1)--cycle;
\fill[color=darkgray] (0.5106666666666667,0.1)--(0.5226666666666667,0.1)--(0.25600000000000006,0.9)--(0.24400000000000005,0.9)--cycle;
\fill[color=lightgray!50] (0.6273333333333334,-0.1)--(0.6393333333333334,-0.1)--(0.2393333333333334,1.1)--(0.2273333333333334,1.1)--cycle;
\fill[color=gray] (0.5940000000000001,0)--(0.6060000000000001,0)--(0.2726666666666668,1)--(0.26066666666666677,1)--cycle;
\fill[color=lightgray!50] (0.6773333333333333,-0.1)--(0.6893333333333334,-0.1)--(0.28933333333333333,1.1)--(0.2773333333333333,1.1)--cycle;
\fill[color=darkgray] (0.6106666666666667,0.1)--(0.6226666666666667,0.1)--(0.35600000000000004,0.9)--(0.34400000000000003,0.9)--cycle;
\fill[color=lightgray!50] (0.7273333333333334,-0.1)--(0.7393333333333334,-0.1)--(0.3393333333333334,1.1)--(0.32733333333333337,1.1)--cycle;
\fill[color=gray] (0.6940000000000001,0)--(0.7060000000000001,0)--(0.37266666666666676,1)--(0.36066666666666675,1)--cycle;
\fill[color=lightgray!50] (0.7773333333333333,-0.1)--(0.7893333333333333,-0.1)--(0.3893333333333333,1.1)--(0.3773333333333333,1.1)--cycle;
\fill[color=darkgray] (0.7106666666666667,0.1)--(0.7226666666666667,0.1)--(0.456,0.9)--(0.444,0.9)--cycle;
\fill[color=lightgray!50] (0.8273333333333334,-0.1)--(0.8393333333333334,-0.1)--(0.43933333333333335,1.1)--(0.42733333333333334,1.1)--cycle;
\fill[color=gray] (0.794,0)--(0.806,0)--(0.47266666666666673,1)--(0.4606666666666667,1)--cycle;
\fill[color=lightgray!50] (0.8773333333333334,-0.1)--(0.8893333333333334,-0.1)--(0.4893333333333334,1.1)--(0.4773333333333334,1.1)--cycle;
\fill[color=darkgray] (0.8106666666666668,0.1)--(0.8226666666666668,0.1)--(0.556,0.9)--(0.544,0.9)--cycle;
\fill[color=lightgray!50] (0.9273333333333333,-0.1)--(0.9393333333333334,-0.1)--(0.5393333333333333,1.1)--(0.5273333333333333,1.1)--cycle;
\fill[color=gray] (0.894,0)--(0.906,0)--(0.5726666666666667,1)--(0.5606666666666666,1)--cycle;
\fill[color=lightgray!50] (0.9773333333333334,-0.1)--(0.9893333333333334,-0.1)--(0.5893333333333334,1.1)--(0.5773333333333334,1.1)--cycle;
\fill[color=darkgray] (0.9,0.13200000000000012)--(0.9,0.16800000000000015)--(0.6560000000000001,0.9)--(0.6440000000000001,0.9)--cycle;
\fill[color=lightgray!50] (1.0273333333333334,-0.1)--(1.0393333333333334,-0.1)--(0.6393333333333333,1.1)--(0.6273333333333333,1.1)--cycle;
\fill[color=gray] (0.994,0)--(1,0)--(1,0.018000000000000016)--(0.6726666666666667,1)--(0.6606666666666667,1)--cycle;
\fill[color=lightgray!50] (1.0773333333333335,-0.1)--(1.0893333333333335,-0.1)--(0.6893333333333334,1.1)--(0.6773333333333333,1.1)--cycle;
\fill[color=darkgray] (0.9,0.43200000000000005)--(0.9,0.4680000000000001)--(0.756,0.9)--(0.744,0.9)--cycle;
\fill[color=lightgray!50] (1.1,-0.018000000000000016)--(1.1,0.018000000000000016)--(0.7393333333333334,1.1)--(0.7273333333333334,1.1)--cycle;
\fill[color=gray] (1,0.28200000000000025)--(1,0.3180000000000003)--(0.7726666666666668,1)--(0.7606666666666668,1)--cycle;
\fill[color=lightgray!50] (1.1,0.13200000000000012)--(1.1,0.16800000000000015)--(0.7893333333333334,1.1)--(0.7773333333333334,1.1)--cycle;
\fill[color=darkgray] (0.9,0.7320000000000003)--(0.9,0.7680000000000003)--(0.8560000000000001,0.9)--(0.8440000000000001,0.9)--cycle;
\fill[color=lightgray!50] (1.1,0.28200000000000025)--(1.1,0.3180000000000003)--(0.8393333333333335,1.1)--(0.8273333333333335,1.1)--cycle;
\fill[color=gray] (1,0.5820000000000005)--(1,0.6180000000000005)--(0.8726666666666669,1)--(0.8606666666666669,1)--cycle;
\fill[color=lightgray!50] (1.1,0.4319999999999997)--(1.1,0.46799999999999975)--(0.8893333333333333,1.1)--(0.8773333333333333,1.1)--cycle;
\fill[color=darkgray] (0.9,0.9)--(0.9,0.9)--(0.9,0.9)--(0.9,0.9)--cycle;
\fill[color=lightgray!50] (1.1,0.5819999999999999)--(1.1,0.6179999999999999)--(0.9393333333333334,1.1)--(0.9273333333333333,1.1)--cycle;
\fill[color=gray] (1,0.8820000000000001)--(1,0.9180000000000001)--(0.9726666666666668,1)--(0.9606666666666668,1)--cycle;
\fill[color=lightgray!50] (1.1,0.732)--(1.1,0.768)--(0.9893333333333334,1.1)--(0.9773333333333334,1.1)--cycle;
\fill[color=darkgray] (0.9,0.9)--(0.9,0.9)--(0.9,0.9)--(0.9,0.9)--cycle;
\fill[color=lightgray!50] (1.1,0.8820000000000001)--(1.1,0.9180000000000001)--(1.0393333333333334,1.1)--(1.0273333333333334,1.1)--cycle;
\fill[color=gray] (1,1)--(1,1)--(1,1)--(1,1)--cycle;
\fill[color=lightgray!50] (1.1,1.0320000000000003)--(1.1,1.0680000000000003)--(1.0893333333333335,1.1)--(1.0773333333333335,1.1)--cycle;
\fill[color=darkgray] (0.9,0.9)--(0.9,0.9)--(0.9,0.9)--(0.9,0.9)--cycle;
\fill[color=lightgray!50] (1.1,1.1)--(1.1,1.1)--(1.1,1.1)--(1.1,1.1)--cycle;
\fill[color=gray] (1,1)--(1,1)--(1,1)--(1,1)--cycle;
			\draw [very thick] (0,0) rectangle (1,1);
			\draw  (0.1,0.1) rectangle (.9,.9);

			\draw[very thin] (0,0) -- (0,-.06);
			\draw[very thin] (.1,0) -- (.1,-.06);
			\draw[very thin] (0,-0.03) -- (.1,-.03);
			\node[anchor=north] at (.05,-.06) {$n\varepsilon_1$};

			\draw[very thin] (.2+.1/3,-.1) -- (.2+.1/3,-.16);
			\draw[very thin] (.2+.1/3,-.13) -- (.25+.1/3,-.13);
			\draw[very thin] (.25+.1/3,-.1) -- (.25+.1/3,-.16);
			\node[anchor=north] at (.225+.1/3,-.16) {$\propto n\varepsilon_2$};

			\draw[very thin] (.55-.006+.1/3,-.1) -- (.55-.006+.1/3,-.16);
			\draw[very thin] (.55+.006+.1/3,-.1) -- (.55+.006+.1/3,-.16);
			\draw[very thin] (.55-.006+.1/3,-.13) -- (.55+.006+.1/3,-.13);
			\node[anchor=north] at (.55+.1/3,-.16) {$\propto n\varepsilon_3$};

			\draw[->] (.04,1.11) node[above] {$D_n$} -- (.04,1.01) ;

			\draw[->] (.24,1.11) node[above] {$D_n'$} -- (.24,.91) ;

			\draw (.44,1.11) node[above] {$D_n^0$} -- (.38333,.95) ;
			\draw (.44,1.11) -- (.48333,.95) ;
			\fill (.38333,.95) circle(1pt);
			\fill (.48333,.95) circle(1pt);

			\draw (.64,1.11) node[above] {$D_n^+$} -- (.56666,.85) ;
			\draw (.64,1.11)  -- (.66666,.85) ;
			\draw (.64,1.11)  -- (.76666,.85) ;
			\fill (.56666,.85) circle(1pt);
			\fill (.66666,.85) circle(1pt);
			\fill (.76666,.85) circle(1pt);

			\draw (.9,-.14) node[below] {$H_{n,k}$} -- (.81666,-.05) ;
			\draw (.9,-.14) -- (.96666,-.05) ;
			\draw (.9,-.14) -- (.86666,-.05) ;
			\draw (.9,-.14) -- (.91666,-.05) ;
			\fill (.81666,-.05) circle(1pt);
			\fill (.86666,-.05) circle(1pt);
			\fill (.96666,-.05) circle(1pt);
			\fill (.91666,-.05) circle(1pt);

			%

		\end{tikzpicture}
	\end{subfigure}\begin{subfigure}{0.5\textwidth}
		\centering
		\begin{tikzpicture}[x=5cm,y=5cm]
			\clip (-.1,-.3) rectangle (1.1,1.3);
			\fill[color=lightgray] (0.10600000000000001,0)--(0.194,0)--(0,0.5820000000000001)--(0,0.31800000000000006)--cycle;
\fill[color=darkgray] (0.11066666666666669,0.1)--(0.1226666666666667,0.1)--(0.1,0.16800000000000007)--(0.1,0.13200000000000003)--cycle;
\fill[color=lightgray] (0.20600000000000002,0)--(0.29400000000000004,0)--(0,0.8820000000000001)--(0,0.6180000000000001)--cycle;
\fill[color=darkgray] (0.21066666666666667,0.1)--(0.22266666666666668,0.1)--(0.1,0.46799999999999997)--(0.1,0.43199999999999994)--cycle;
\fill[color=lightgray] (0.30600000000000005,0)--(0.394,0)--(0.0606666666666667,1)--(0,1)--(0,0.9180000000000001)--cycle;
\fill[color=darkgray] (0.3106666666666667,0.1)--(0.3226666666666667,0.1)--(0.1,0.768)--(0.1,0.7320000000000001)--cycle;
\fill[color=lightgray] (0.406,0)--(0.494,0)--(0.16066666666666668,1)--(0.07266666666666671,1)--cycle;
\fill[color=darkgray] (0.4106666666666667,0.1)--(0.4226666666666667,0.1)--(0.15600000000000003,0.9)--(0.14400000000000002,0.9)--cycle;
\fill[color=lightgray] (0.506,0)--(0.5940000000000001,0)--(0.26066666666666677,1)--(0.1726666666666667,1)--cycle;
\fill[color=darkgray] (0.5106666666666667,0.1)--(0.5226666666666667,0.1)--(0.25600000000000006,0.9)--(0.24400000000000005,0.9)--cycle;
\fill[color=lightgray] (0.6060000000000001,0)--(0.6940000000000001,0)--(0.36066666666666675,1)--(0.2726666666666668,1)--cycle;
\fill[color=darkgray] (0.6106666666666667,0.1)--(0.6226666666666667,0.1)--(0.35600000000000004,0.9)--(0.34400000000000003,0.9)--cycle;
\fill[color=lightgray] (0.7060000000000001,0)--(0.794,0)--(0.4606666666666667,1)--(0.37266666666666676,1)--cycle;
\fill[color=darkgray] (0.7106666666666667,0.1)--(0.7226666666666667,0.1)--(0.456,0.9)--(0.444,0.9)--cycle;
\fill[color=lightgray] (0.806,0)--(0.894,0)--(0.5606666666666666,1)--(0.47266666666666673,1)--cycle;
\fill[color=darkgray] (0.8106666666666668,0.1)--(0.8226666666666668,0.1)--(0.556,0.9)--(0.544,0.9)--cycle;
\fill[color=lightgray] (0.906,0)--(0.994,0)--(0.6606666666666667,1)--(0.5726666666666667,1)--cycle;
\fill[color=darkgray] (0.9,0.13200000000000012)--(0.9,0.16800000000000015)--(0.6560000000000001,0.9)--(0.6440000000000001,0.9)--cycle;
\fill[color=lightgray] (1,0.018000000000000016)--(1,0.28200000000000025)--(0.7606666666666668,1)--(0.6726666666666667,1)--cycle;
\fill[color=darkgray] (0.9,0.43200000000000005)--(0.9,0.4680000000000001)--(0.756,0.9)--(0.744,0.9)--cycle;
\fill[color=lightgray] (1,0.3180000000000003)--(1,0.5820000000000005)--(0.8606666666666669,1)--(0.7726666666666668,1)--cycle;
\fill[color=darkgray] (0.9,0.7320000000000003)--(0.9,0.7680000000000003)--(0.8560000000000001,0.9)--(0.8440000000000001,0.9)--cycle;

			\draw [very thick] (0,0) rectangle (1,1);

			\draw (.64,1.11) node[above] {$\Lambda_{n,k}$} -- (.56666,.85) ;
			\draw (.64,1.11)  -- (.66666,.85) ;
			\draw (.64,1.11)  -- (.76666,.85) ;
			\fill (.56666,.85) circle(1pt);
			\fill (.66666,.85) circle(1pt);
			\fill (.76666,.85) circle(1pt);

			\draw (.36,-.11) node[below] {$\Delta_{n,k}$} -- (.23333,.05) ;
			\draw (.36,-.11) -- (.33333,.05) ;
			\draw (.36,-.11) -- (.43333,.05) ;
			\fill (.23333,.05) circle(1pt);
			\fill (.33333,.05) circle(1pt);
			\fill (.43333,.05) circle(1pt);

			\draw (-.2, 0)-- (1.2, .7);
			\fill (.5,.35) circle(1pt);
			\draw[->] (.62,-.13) node[below] {$z$} -- (0.5025,.34) ;
			\draw[->] (.81625,-.11) node[below] {$L^z$} -- (.68,.435);


		\end{tikzpicture}
	\end{subfigure}
\caption{Several constructions in Subsection~\ref{subsection_attachment_applications}}
\label{fig:washboard_empirical}
\end{figure}
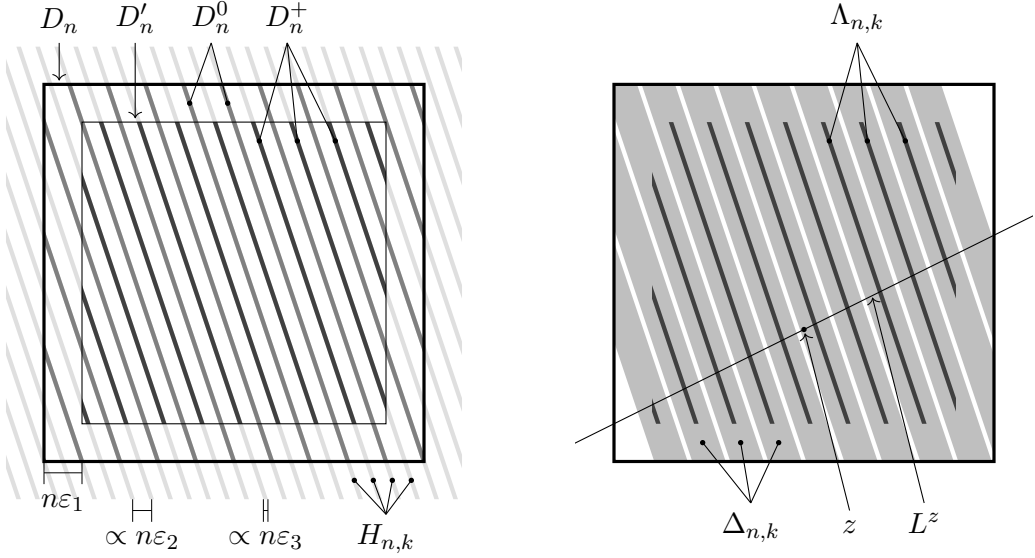

Set $t=1/2$,
and recall the definitions of $v$,
$p$, and $p_\alpha$ from the
proof of Lemma~\ref{lemma_PB_star_convex}
(Page~\pageref{washboarddefstart}).
Fix $\varepsilon_1$, $\varepsilon_2$, and $\varepsilon_3$
strictly positive
and consider $n\in\mathbb N$.
When taking limits
we shall take first $n\to\infty$,
then $\varepsilon_3\to 0$,
then $\varepsilon_2\to 0$,
and finally $\varepsilon_1\to 0$; it is again convenient to work
on different scales.
Define $D':=(\varepsilon_1,1-\varepsilon_1)^d\subset D$
and $D'_n:=nD'\cap\mathbb Z^d$.
Write $H_k$
for the affine hyperplane
$\{2v=k\varepsilon_2\}\subset\mathbb R^d$.
Note that the sets $(H_k)_{k\in\mathbb Z}$
correspond to the hyperplanes where the
gradient of $p_{\varepsilon_2}$ changes.
For $k$ even, $p_{\varepsilon_2}$
equals $u$ on $H_k$.
For $k$ odd, $p_{\varepsilon_2}$ equals
$u+\varepsilon_2/4$ on $H_k$.
Finally, write
\begin{align*}
  & H_{n,k}:=\{x\in\mathbb Z^d:d_2(x,nH_k)\leq n\varepsilon_3\},\\
  &
   D_n^0:=(\cup_{k\in 2\mathbb Z} H_{n,k})\cap
  D_n,\\&
   D_n^+:=
  (\cup_{k\in 2\mathbb Z+1} H_{n,k})\cap
  D_n'.
\end{align*}
See Figure~\ref{fig:washboard_empirical}
for an overview of this construction.

\begin{proposition}
  \label{propo_generic_nonsym}
  Assume the setting of Theorem~\ref{thm_beyond_moats}.
If $E=\mathbb Z$,
then there is a $\delta>0$ such that
\[
  n^{-d}
  \log
  \gamma_n(\text{$\phi_{ D_n^0}=\phi^u_{ D_n^0}$
  and $\phi_{ D_n^+}\geq \phi^u_{ D_n^+}+ n\delta \varepsilon_2$})
  =o(1)
\]
in the limit of $n$, $\varepsilon_3$, $\varepsilon_2$,
and $\varepsilon_1$.
If $E=\mathbb R$ and $\varepsilon>0$, then there is a $\delta>0$ such that
\[
  n^{-d}
  \log
  \gamma_n(\text{$|\phi_{ D_n^0}-\phi^u_{ D_n^0}|\leq \varepsilon$ and $\phi_{ D_n^+}\geq \phi^u_{ D_n^+}+n\delta \varepsilon_2$})
  =o(1)
\]
in the limit of $n$, $\varepsilon_3$, $\varepsilon_2$,
and $\varepsilon_1$.
\end{proposition}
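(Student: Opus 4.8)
The plan is to prove the matching lower bound $n^{-d}\log\gamma_n(E_n)\geq -o(1)$ in the iterated limit of $n$, $\varepsilon_3$, $\varepsilon_2$, $\varepsilon_1$, where $E_n$ denotes the event in the proposition; the reverse inequality $\leq o(1)$ is immediate since $\gamma_n$ is a probability measure. Writing $\gamma_n(E_n)=Z_n^{-1}\int_{E_n}e^{-H_{D_n}(\phi\,\phi^u|_{\mathbb Z^d\smallsetminus D_n})}\,d\lambda^{D_n}(\phi)$ and recalling from Theorem~\ref{thm:sec_mr_ldp} (or Theorem~\ref{p_LDP}) that $-n^{-d}\log Z_n\to P_\Phi(D,b)$, the task reduces to estimating the numerator, because $P_\Phi(D,b)=\sigma(u)$: the function $g\mapsto\int_D\sigma(\nabla g)$ subject to $g|_{\partial D}=b=u|_{\partial D}$ is minimised by the linear function $g\equiv u$ (by convexity of $\sigma$, Jensen, and the divergence theorem, and using $\operatorname{Leb}(D)=1$). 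So it suffices to show $n^{-d}\log\int_{E_n}e^{-H_{D_n}}\,d\lambda^{D_n}\geq-\sigma(u)-o(1)$. This is an instance of the lower-bound-on-probabilities machinery of Subsection~\ref{subsection:lower_bound_on_probabilities_in_the_LDP} and Lemma~\ref{lemma_PB_star_convex}, specialised to the washboard $p_{\varepsilon_2}$ with $t=\tfrac12$.

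First I would produce a continuous ``washboard with flat valleys'': a $\|\cdot\|_{q_{7\varepsilon}}$-Lipschitz function $f$ on $D$ which equals $u$ on the collar $D\smallsetminus D'$, equals $u$ on a slab of scaled width $\propto\varepsilon_3$ around each even hyperplane $H_k\cap D'$ (so that near $D_n^0$ it agrees with $\phi^u$ up to $O(1)$), and which is affine with slope $u_1$ on the even super-slabs of $D'$ and slope $u_2$ on the odd ones away from those valleys. Since $u,u_1,u_2\in U_{q_{7\varepsilon}}$ this $f$ is Lipschitz with slack, and its corners at the odd hyperplanes carry no obstruction. Then, using Theorem~\ref{general_lips_ext_less_general} together with the Kirszbraun theorem (Proposition~\ref{propo_Kirszbraun}) to glue across thin transition shells of scaled width $\propto\varepsilon_3$ separating the valley slabs from the super-slab cores, I would construct for each large $n$ a $q$-Lipschitz height function $\phi_n$ ($q_{5\varepsilon}$-Lipschitz if $E=\mathbb R$) which equals $\phi^u$ outside $D_n$ and on each valley slab (hence on $D_n^0$), has $\nabla\phi_n=\nabla\phi^{u_j}$ on the core $\Lambda_{n,m}:=\Lambda^{-M}(nS_m')$ of each super-slab (with $j$ the appropriate parity), and satisfies $\phi_n\geq\phi^u+n\delta\varepsilon_2$ on $D_n^+$ for a fixed $\delta$ (e.g.\ $\delta=\tfrac1{16}$, since the washboard climbs from $u$ to $u+\varepsilon_2/4$ across each super-slab and $\varepsilon_3\ll\varepsilon_2$).

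Next I would define a sub-event $A_n\subset E_n$: those $q$-Lipschitz (resp.\ $q_\varepsilon$-Lipschitz) $\phi$ equal to $\phi^u$ outside $D_n$ and on $D_n^0$ (resp.\ within $\varepsilon$ of $\phi^u$ on $D_n^0$), equal to $\phi_n$ (resp.\ within $\varepsilon$ of $\phi_n$) on the union of the valley slabs, the transition shells, and $D_n^+$, and with $\phi\in C^{u_j}_{\Lambda_{n,m},\varepsilon}$ and $L_{\Lambda_{n,m}}(\phi)\in B^j$ on each core, where $B^j\in\mathcal B$ is a fixed small neighbourhood of a fixed minimiser $\mu_j\in\mathcal P_\mathcal L(\Omega,\mathcal F^\nabla)$ of slope $u_j$ (which exists by compactness of the lower level sets of $\mathcal H(\cdot|\Phi)$ and continuity of $S$, Theorem~\ref{thm_main_sfe}). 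One checks $A_n\subset E_n$ directly, and that $A_n$ is substantial because each core factor below is positive. Replacing $H_{D_n}$ by $H^0_{D_n}$ at cost $e^-(D_n)=o(n^d)$ (Lemma~\ref{lemma_lower_attachments}), then applying the upper attachment lemma (Lemma~\ref{lemma_attachment}) to detach the cores, one gets $H^0_{D_n}\leq\sum_m H^0_{\Lambda_{n,m}}+C'|D_n\smallsetminus\bigcup_m\Lambda_{n,m}|+o(n^d)$ on $A_n$ (using that $\phi$ is Lipschitz off the cores). The integral over $A_n$ then factorises; the complementary region contributes an $O(n^d\varepsilon_3/\varepsilon_2)$ term after normalisation (entropically negligible), and each core contributes $-\PB_{\Lambda_{n,m},u_j,\varepsilon}(B^j)$ (no $\varepsilon$ in the discrete case), for which $\limsup_n n^{-d}\PB_{\Lambda_{n,m},u_j,\varepsilon}(B^j)\leq\operatorname{Leb}(S_m')\,\mathcal H(\mu_j|\Phi)=\operatorname{Leb}(S_m')\sigma(u_j)$ by Lemma~\ref{lemma_pb_convex_shape} (respectively Lemma~\ref{lemma_PB_star_uncontrained_ineq}). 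Combining and taking the iterated limit $n\to\infty$, $\varepsilon_3\to0$, $\varepsilon_2\to0$, $\varepsilon_1\to0$: the $\varepsilon_3/\varepsilon_2$ errors vanish, $C'\operatorname{Leb}(D\smallsetminus D')\to0$, $\sum_m\operatorname{Leb}(S_m')\to\operatorname{Leb}(D)=1$ with exactly half the mass carried by slope $u_1$ and half by $u_2$, whence $\sum_m\operatorname{Leb}(S_m')\sigma(u_{j(m)})\to\tfrac12(\sigma(u_1)+\sigma(u_2))=\sigma(u)$ precisely because $\sigma$ is affine on $[u_1,u_2]$. This yields the required lower bound.

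The main obstacle is reconciling the exact pinning $\phi=\phi^u$ on $D_n^0$ (when $E=\mathbb Z$; within $\varepsilon$ when $E=\mathbb R$) with the alternating washboard slopes in the adjacent cores. Unlike in Subsection~\ref{subsection:lower_bound_on_probabilities_in_the_LDP}, here the boundary data and the profile $\phi_n$ must simultaneously agree with $\phi^u$ on the valley slabs (where the washboard is flat of slope $u$) and transition to slope $u_j$ on the cores, and there is an $O(1)+O(n\varepsilon_3)$ discrepancy at each valley–core interface. Resolving this requires the transition shells of scaled width $\propto\varepsilon_3$ — negligible in volume as $\varepsilon_3\to0$, but wide enough to absorb the discrepancy given the $q_{7\varepsilon}$-slack — and verifying via Theorem~\ref{general_lips_ext_less_general} and the Kirszbraun theorem that a genuine $q$-Lipschitz witness $\phi_n$ with all the listed properties exists; this, together with the bookkeeping of the four-fold iterated limit so that every error term vanishes in the correct order, is where the real work lies.
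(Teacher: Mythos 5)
Your proof is correct and rests on the same central observation as the paper's: $P_\Phi(D,b)=\sigma(u)$, and $\int_D\sigma(\nabla f)\to\sigma(u)$ because $\sigma$ is affine on $[u_1,u_2]$ while the washboard spends half its area at slope $u_1$ and half at $u_2$. The routing differs. The paper takes $f$ to be the smallest $\|\cdot\|_{q_\varepsilon}$-Lipschitz function $\geq u$ equal to the pure washboard $p_{\varepsilon_2}$ on $D'$, invokes the already-proved large deviations principle (Theorem~\ref{thm:sec_mr_ldp}) as a black box to conclude $n^{-d}\log\gamma_n(\mathfrak G_n^{-1}(B^\infty_{\varepsilon'}(f)))=o(1)$, and then observes that for $\varepsilon'$ and $\varepsilon_3$ small enough such $\phi$ already satisfy the $D_n^+$ bound and are macroscopically close to $\phi^u$ on $D_n^0$; the microscopic pinning on $D_n^0$ is handled as a further conditioning, argued to cost only $o(n^d)$ by ``repeating arguments of the proof of the lower bound on probabilities.'' You instead introduce a flat-valley variant of the washboard, build an explicit $q$-Lipschitz witness $\phi_n$ via Theorem~\ref{general_lips_ext_less_general} and Kirszbraun, and rerun the entire pinned-boundary/attachment-lemma machinery from Section~\ref{section_limit_equalities} and Subsection~\ref{subsection:lower_bound_on_probabilities_in_the_LDP} directly on the event $A_n$ — effectively carrying out in full the ``repeating the arguments'' step that the paper delegates. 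Both are valid: the paper's version is terser because it reuses a proved black box; yours is more self-contained and bakes the $D_n^0$ pinning structurally into $f$, which neatly collapses the LDP-invocation-plus-conditioning into a single estimate. One minor note: since all you need is a shift-invariant minimizer $\mu_j$ of slope $u_j$ together with $\PB^*(\mu_j)\leq\mathcal H(\mu_j|\Phi)=\sigma(u_j)$, Lemma~\ref{lemma_PB_star_uncontrained_ineq} already suffices; ergodicity (and hence the invocation of Lemma~\ref{lemma_pb_convex_shape}) is not required.
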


\begin{proof}
  In fact, we shall demonstrate that any $\delta<1/4$ works.
  Write $f$ for the smallest $\|\cdot\|_{q_\varepsilon}$-Lipschitz function which satisfies $f\geq u$
  and which equals $p_{\varepsilon_2}$
  on $D'$.
  This function is well-defined
  and equals $u$ on $\mathbb R^d\smallsetminus D$
  for $\varepsilon_2$ sufficiently small (depending only on $\varepsilon_1$).

  The pressure $P_\Phi(D,b)$
   is equal to $\sigma(u)$
  because $\sigma$ is convex,
  $b=u|_{\partial D}$, and
   $\operatorname{Vol}(D)=1$.
   Moreover, $\int_D\sigma(\nabla f(x))dx$
   tends to $\sigma(u)$ in the limit of $\varepsilon_1$
   and $\varepsilon_2$,
   because $\sigma$ is affine on the line segment
   connecting $u_1$ and $u_2$,
   and because the gradient of $f$ equals $u_1$
   on roughly half of $D$,
   and $u_2$ on roughly the other half of $D$
   with respect to Lebesgue measure.
   Note that $\sigma(\nabla f)$ is bounded uniformly as $f$ is $\|\cdot\|_{q_\varepsilon}$-Lipschitz.
   This means that
   for any $\varepsilon'>0$,
   which is allowed to depend arbitrarily on $\varepsilon_1$ and $\varepsilon_2$,
   we have
   \[
      n^{-d}\log\gamma_n(\mathfrak G_n^{-1}( B_{\varepsilon'}^\infty(f)))=o(1)
   \]
   in the limit of $n$, $\varepsilon_2$,
   and $\varepsilon_1$.

   Note that for
   $\varepsilon_3$
   and $\varepsilon'$ sufficiently small
   depending on $\varepsilon_1$ and $\varepsilon_2$,
   all height functions
   $\phi\in \mathfrak G_n^{-1}(B_{\varepsilon'}^\infty(f))$
   satisfy $\phi_{ D_n^+}\geq \phi_{ D_n^+}^u+n\delta\varepsilon_2$
   (by virtue of the choice of $f$).
   Moreover, $\phi_{ D_n^0}$
   and $\phi^u_{ D_n^0}$
   must be close for such $\phi$.
   By repeating arguments of the proof of the lower bound
   on probabilities in the large deviations principle,
   it is straightforward to see that conditioning
   further on the exact values of $\phi_{ D_n^0}$ (up to $\varepsilon$ in the continuous case)
   does not decrease the value of the limit of the normalized probabilities.
   In particular, this implies the proposition.
\end{proof}

By interchanging the role of $u_1$ and $u_2$,
one obtains the same result as in Proposition~\ref{propo_generic_nonsym},
now with the inequality sign $\geq $
replaced by $\leq$,
and with $n\delta\varepsilon_2$
replaced by $-n\delta\varepsilon_2$.
By appealing to both the original proposition
and the version with replacements,
one deduces immediately the following proposition.

\begin{proposition}
  Assume the setting of Theorem~\ref{thm_beyond_moats}.
If $E=\mathbb Z$,
then there exists a $\delta>0$ such that
\[
  n^{-d}
  \log
  \gamma_n^2(\text{$(\phi_1-\phi_2)_{ D_n^0}=0$
  and
  $(\phi_1-\phi_2)_{ D_n^+}\geq n\delta \varepsilon_2$})
  =o(1)
\]
in the limit of $n$, $\varepsilon_3$, $\varepsilon_2$,
and $\varepsilon_1$.
If $E=\mathbb R$ and $\varepsilon>0$, then there is a $\delta>0$ such that
\[
  n^{-d}
  \log
  \gamma_n^2(\text{$|(\phi_1-\phi_2)_{ D_n^0}|\leq 2\varepsilon$
  and
  $(\phi_1-\phi_2)_{ D_n^+}\geq n\delta \varepsilon_2$})
  =o(1)
\]
in the limit of $n$, $\varepsilon_3$, $\varepsilon_2$,
and $\varepsilon_1$.
\end{proposition}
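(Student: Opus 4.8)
The plan is to deduce the statement directly from the two versions of Proposition~\ref{propo_generic_nonsym} — the one stated there and the one obtained by interchanging $u_1$ and $u_2$ — together with the fact that $\phi_1$ and $\phi_2$ are independent under $\gamma_n^2=\gamma_n\times\gamma_n$. First I would record that the swap $u_1\leftrightarrow u_2$ leaves the slope $u=(u_1+u_2)/2$ unchanged, hence also the pinning function $\phi^u$, the boundary data $b_n=\phi^u$, and the measures $\gamma_n$; and that it leaves the sets $D_n^0$ and $D_n^+$ unchanged as well. Indeed, $v=u_1-u_2$ merely changes sign, so each hyperplane $H_k=\{2v=k\varepsilon_2\}$ becomes $H_{-k}$; this preserves the family $(H_{n,k})_k$ and the parity of the index, hence preserves $D_n^0=(\cup_{k\in 2\mathbb Z}H_{n,k})\cap D_n$ and $D_n^+=(\cup_{k\in 2\mathbb Z+1}H_{n,k})\cap D_n'$. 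Consequently Proposition~\ref{propo_generic_nonsym}, after replacing the two constants it produces by their minimum and renaming it $\delta$, supplies a single $\delta>0$ such that in the iterated limit
\[
n^{-d}\log\gamma_n\big(\phi_{D_n^0}=\phi^u_{D_n^0},\ \phi_{D_n^+}\geq\phi^u_{D_n^+}+n\delta\varepsilon_2\big)=o(1)
\]
and
\[
n^{-d}\log\gamma_n\big(\phi_{D_n^0}=\phi^u_{D_n^0},\ \phi_{D_n^+}\leq\phi^u_{D_n^+}-n\delta\varepsilon_2\big)=o(1),
\]
with $\phi_{D_n^0}=\phi^u_{D_n^0}$ replaced by $|\phi_{D_n^0}-\phi^u_{D_n^0}|\leq\varepsilon$ throughout in the real case (with $\delta$ depending on the fixed $\varepsilon$).

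Next I would let $A_n$ denote the event in the first display, viewed as an event for the first coordinate $\phi_1$, and $B_n$ the event in the second display, viewed as an event for $\phi_2$. Since $\gamma_n^2=\gamma_n\times\gamma_n$, independence gives $n^{-d}\log\gamma_n^2(A_n\times B_n)=n^{-d}\log\gamma_n(A_n)+n^{-d}\log\gamma_n(B_n)=o(1)$ in the iterated limit. On $A_n\times B_n$ one has $\phi_1=\phi^u=\phi_2$ on $D_n^0$, so $(\phi_1-\phi_2)_{D_n^0}=0$, while on $D_n^+$ one has $\phi_1\geq\phi^u+n\delta\varepsilon_2$ and $\phi_2\leq\phi^u-n\delta\varepsilon_2$, so $(\phi_1-\phi_2)_{D_n^+}\geq 2n\delta\varepsilon_2\geq n\delta\varepsilon_2$. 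Thus $A_n\times B_n$ is contained in the event appearing in the proposition, and therefore
\[
0\ \geq\ n^{-d}\log\gamma_n^2\big((\phi_1-\phi_2)_{D_n^0}=0,\ (\phi_1-\phi_2)_{D_n^+}\geq n\delta\varepsilon_2\big)\ \geq\ n^{-d}\log\gamma_n^2(A_n\times B_n)=o(1),
\]
which forces the middle quantity to be $o(1)$ in the iterated limit. For $E=\mathbb R$ the argument is identical with $A_n,B_n$ the $\varepsilon$-relaxed events: on $A_n\times B_n$ one gets $|(\phi_1-\phi_2)_{D_n^0}|\leq|\phi_1-\phi^u|+|\phi^u-\phi_2|\leq 2\varepsilon$ on $D_n^0$ and $(\phi_1-\phi_2)_{D_n^+}\geq 2n\delta\varepsilon_2$ on $D_n^+$, giving the stated bound with the same $\delta$.

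\textbf{Main obstacle.} There is essentially no obstacle: this is the routine deduction flagged in the text. The only points that need a sentence of justification are that the $u_1\leftrightarrow u_2$ symmetry genuinely preserves $D_n^0$, $D_n^+$ and $\phi^u$ (so the two instances of Proposition~\ref{propo_generic_nonsym} refer to the same events), that one replaces the two resulting constants by their minimum, and that the elementary inclusion $A_n\times B_n\subset\{(\phi_1-\phi_2)_{D_n^0}=0,\ (\phi_1-\phi_2)_{D_n^+}\geq n\delta\varepsilon_2\}$ combined with the two-sided bound $o(1)\leq n^{-d}\log(\cdot)\leq 0$ closes the argument.
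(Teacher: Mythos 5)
Your proof is correct and follows exactly the route the paper intends: apply Proposition~\ref{propo_generic_nonsym} together with its $u_1\leftrightarrow u_2$ mirror image, use independence under the product measure $\gamma_n^2=\gamma_n\times\gamma_n$, and conclude by the event inclusion. The paper leaves this deduction implicit ("one deduces immediately"), and your write-up supplies precisely the bookkeeping — in particular the check that the swap fixes $u$, $\phi^u$, $D_n^0$, and $D_n^+$ — that makes it rigorous.
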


Recall Section~\ref{section:moats_heart} on moats;
we are now ready to apply the theory developed there.
If $k$ is odd with  $ H_{n,k}\cap D_n'$ nonempty,
then write $\Lambda_{n,k}:=H_{n,k}\cap D_n'$.
Note that $D_n^+=\cup_k\Lambda_{n,k}$.
Write also $\Delta_{n,k}$
for the connected component of $D_n\smallsetminus D_n^0$
containing $\Lambda_{n,k}$;
see Figure~\ref{fig:washboard_empirical}
for an example of the sets $\Lambda_{n,k}$
and $\Delta_{n,k}$.
Write $E_n^a(m)$
for the event that
each connected component $\Delta_{n,k}$
contains a sequence of $\lceil m\rceil$ nested $4K,4K+a$-moats
of $(\phi_1-\phi_2, \Lambda_{n,k})$.

\begin{lemma}
  \label{lemma_moats_application_in_product_setting}
  Assume the setting of Theorem~\ref{thm_beyond_moats}.
  For any $a\geq 4K$,
  there is a $\delta>0$ such that
  \[
    n^{-d}\log\gamma_n^2(E_n^a(n\delta\varepsilon_2))=o(1)
  \]
  in the limit of $n$, $\varepsilon_3$, $\varepsilon_2$,
  and $\varepsilon_1$.
\end{lemma}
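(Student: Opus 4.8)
The plan is to feed the lower bound of the preceding proposition into the moat machinery of Section~\ref{section:moats_heart}. Fix $a\ge 4K$; in the language of Theorem~\ref{thm:moats} the $4K,4K+a$-moats occurring in the statement are $a',b'$-moats with $a':=4K$ and $b':=4K+a\ge 8K$. Write $f:=\phi_1-\phi_2$, and let $\delta_1>0$ be the constant from the preceding proposition, so that, with
\[
  G_n:=\{f_{D_n^0}=0\}\cap\{f_{D_n^+}\ge n\delta_1\varepsilon_2\}
\]
(and $\{f_{D_n^0}=0\}$ replaced by $\{|f_{D_n^0}|\le 2\varepsilon\}$ when $E=\mathbb R$), one has $n^{-d}\log\gamma_n^2(G_n)=o(1)$ in the iterated limit of $n,\varepsilon_3,\varepsilon_2,\varepsilon_1$. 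Set $\Delta:=\cup_k\Delta_{n,k}$ and recall that $D_n^+=\cup_k\Lambda_{n,k}$ with $\Lambda_{n,k}\subset\Delta_{n,k}$, all of these sets being connected. The idea is to condition $\gamma_n^2$ on $\{f_{D_n^0}=0\}$ and apply Proposition~\ref{proposition_moats_effective_application}, in the product setting, to the resulting measure with the components $(\Delta_{n,k})_k$ and $(\Lambda_{n,k})_k$.

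First I would verify that the conditioned measure $\mu_n:=\gamma_n^2(\cdot\mid f_{D_n^0}=0)$ is admissible for Proposition~\ref{proposition_moats_effective_application}. Since $\Delta\subset D_n$ we have $\gamma_n=\gamma_n\gamma_\Delta$ (as $\gamma_n=\delta_{b_n}\gamma_{D_n}$ and $\gamma_{D_n}\gamma_\Delta=\gamma_{D_n}$), hence $\gamma_n^2=\gamma_n^2(\gamma_\Delta\times\gamma_\Delta)$; and because $D_n^0\subset\mathbb Z^d\smallsetminus\Delta$ the conditioning event lies in $\mathcal T_\Delta\times\mathcal T_\Delta$, so the usual DLR argument gives $\mu_n=\mu_n(\gamma_\Delta\times\gamma_\Delta)$. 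Moreover $f$ vanishes $\mu_n$-a.s.\ on $D_n^0$ by construction and on $\mathbb Z^d\smallsetminus D_n$ by the boundary condition, whence $|f_{\mathbb Z^d\smallsetminus\Delta}|\le 2\varepsilon<2K$ almost surely; the only nontrivial point here is the behaviour of $f$ on the ``frame'' components of $D_n\smallsetminus D_n^0$ meeting no $\Lambda_{n,k}$, discussed below. Proposition~\ref{proposition_moats_effective_application} then yields, for every $n'\in\mathbb N$,
\[
  m^{2n'}\,\mu_n\!\left(E_n^a(2n')\right)\ \ge\ \mu_n\!\left(f_{D_n^+}\ge 3b'n'\right),
  \qquad m:=\prod\nolimits_k d_1\!\left(\Lambda_{n,k},\mathbb Z^d\smallsetminus\Delta_{n,k}\right).
\]
Put $n':=\lfloor n\delta_1\varepsilon_2/(3b')\rfloor$ and $\delta:=\delta_1/(3b')$. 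Then $3b'n'\le n\delta_1\varepsilon_2$ and $2n'\ge\lceil n\delta\varepsilon_2\rceil$ for $n$ large, so $\mu_n(f_{D_n^+}\ge 3b'n')\ge\mu_n(f_{D_n^+}\ge n\delta_1\varepsilon_2)=\gamma_n^2(G_n)/\gamma_n^2(f_{D_n^0}=0)$ and $E_n^a(2n')\subset E_n^a(n\delta\varepsilon_2)$; combining,
\[
  \gamma_n^2\!\left(E_n^a(n\delta\varepsilon_2)\right)\ \ge\ \gamma_n^2(f_{D_n^0}=0)\,\mu_n\!\left(E_n^a(2n')\right)\ \ge\ m^{-2n'}\,\gamma_n^2(G_n).
\]

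It remains to see that $m^{-2n'}$ is subexponential on the scale $n^d$. Each factor $d_1(\Lambda_{n,k},\mathbb Z^d\smallsetminus\Delta_{n,k})$ is at most $\operatorname{diam}\Delta_{n,k}\le dn$, and the number of slabs is at most $C'/\varepsilon_2$ for a constant $C'$ depending only on $d$, $v$ and $D$, so $m\le(dn)^{C'/\varepsilon_2}$; since $n'=O(n\varepsilon_2)$ this gives $m^{2n'}\le(dn)^{C''n}$ with $C''$ a constant, and therefore $n^{-d}\log m^{2n'}=O(n^{1-d}\log n)\to 0$ as $n\to\infty$, using $d\ge2$. Plugging this together with $n^{-d}\log\gamma_n^2(G_n)=o(1)$ into the last display, and using $\gamma_n^2(E_n^a(n\delta\varepsilon_2))\le 1$, we conclude $n^{-d}\log\gamma_n^2(E_n^a(n\delta\varepsilon_2))=o(1)$ in the iterated limit. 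The continuous case $E=\mathbb R$ is identical, working with $q_\varepsilon$ in place of $q$ and with $2\varepsilon$-windows in place of exact height values; the strict inequality $2\varepsilon<2K$ guarantees that Proposition~\ref{proposition_moats_effective_application} still applies.

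The main obstacle, beyond assembling these pieces, is the geometric bookkeeping behind the claim $|f_{\mathbb Z^d\smallsetminus\Delta}|\le 2K$. Near $\partial D_n$ the set $D_n\smallsetminus D_n^0$ can acquire a bounded number of ``frame'' components containing no $\Lambda_{n,k}$, on which $f$ is only controlled up to order $n\varepsilon_2$. I would absorb each such component, together with the even-hyperplane neighbourhood separating it from the adjacent main slab, into that slab $\Delta_{n,k}$; this enlargement is harmless because $f\equiv 0$ on the separating neighbourhood, which therefore cannot lie on any $4K,4K+a$-moat, so every moat of $(f,\Lambda_{n,k})$ produced by Proposition~\ref{proposition_moats_effective_application} still lies inside the original $\Delta_{n,k}$. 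After this adjustment $\mathbb Z^d\smallsetminus\Delta$ is contained in $(\mathbb Z^d\smallsetminus D_n)\cup D_n^0$, where $f$ vanishes, and the argument above goes through unchanged.
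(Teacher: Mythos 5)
Your proof follows the same strategy as the paper's terse argument---condition on the control event, invoke Proposition~\ref{proposition_moats_effective_application}, and bound the prefactor $m^{2n'}\le n^{O(n)}$ so that $n^{-d}\log m^{2n'}\to 0$ using $d\ge 2$---and the extra bookkeeping is helpful. You also correctly flag a boundary subtlety that the paper leaves implicit: with $\Delta:=\cup_k\Delta_{n,k}$, the connected components of $D_n\smallsetminus D_n^0$ that meet no $\Lambda_{n,k}$ lie in $\mathbb Z^d\smallsetminus\Delta$, and on them $|f|$ is controlled only up to $O(Kn\varepsilon_2)$, not $2K$, so the hypothesis of Theorem~\ref{thm:moats} is not automatic.

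However, the absorption you propose does not go through ``unchanged'' as claimed. When a frame component is swallowed into $\tilde\Delta_{n,k}$ \emph{together with} the separating even-slab piece of $D_n^0$, the conditioning event $\{f_{D_n^0}=0\}$ is no longer $\mathcal T_{\tilde\Delta}\times\mathcal T_{\tilde\Delta}$-measurable---it constrains $\phi_1,\phi_2$ on $D_n^0\cap\tilde\Delta$, which is now nonempty---so the DLR step producing $\mu_n=\mu_n(\gamma_{\tilde\Delta}\times\gamma_{\tilde\Delta})$ breaks, and Theorem~\ref{thm:moats} cannot be invoked as a black box. A simpler repair keeps $\Delta$ disjoint from $D_n^0$: take $\Delta:=D_n\smallsetminus D_n^0$, so that the frames become \emph{extra} connected components of $\Delta$ carrying no $\Lambda_i$. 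Then the conditioning event is $\mathcal T_\Delta\times\mathcal T_\Delta$-measurable and $|f_{\mathbb Z^d\smallsetminus\Delta}|\le 2\varepsilon<2K$ holds on the nose; and Proposition~\ref{proposition_moats_effective_application}, though stated with a bijection between components of $\Delta$ and $\Lambda$, extends with no change to its proof when $\Delta$ has additional moat-free components, since in the proof of Theorem~\ref{thm:moats} such components simply remain inside $\Gamma(M)=\Delta\smallsetminus\cup_i\bar M_i$, where the reflection principle places no constraint on $f$. With this substitution your remaining estimates on $\delta$, $n'$, and $m$ are all correct.
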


\begin{proof}
  This follows immediately from the previous proposition
  and from Proposition~\ref{proposition_moats_effective_application}.
  Note that the prefactor which appears on the left
  in~\eqref{eq:thmgoal} is of order
  $
    n^{O(1/\varepsilon_2)\cdot O(n\delta\varepsilon_2)}$,
    because distances are bounded by $n$,
    there are at most $O(1/\varepsilon_2)$
    sets $\Lambda_{n,k}$,
    and because we enforce $n\delta\varepsilon_2$
    moats around each set $\Lambda_{n,k}$.
  In particular, keeping all constants
  other than $n$ fixed, the logarithm of this
  term is of order $O(n\log n)$,
  which disappears in the normalization
  because we normalize by $n^{-d}$ with $d\geq 2$.
\end{proof}

\begin{proof}[Proof of Theorem~\ref{thm_beyond_moats}]
Let us consider a configuration $(\phi_1,\phi_2)\in E_n^a(n\delta\varepsilon_2)$,
and focus on the collection of moats
of $f:=\phi_1-\phi_2$.
Fix $x\in\mathcal L$ with $u_1(x)-u_2(x)\neq 0$,
and define $L:=\mathbb Zx$ and $L_N:=\{-N,\dots,N\}x$.
Write $\bar L_N$ for a path through the square lattice of minimal length traversing all the
vertices in $L_N$.
Draw some vertex $z$ from $\mathcal L\cap D_n$ uniformly at random,
and write $L^z:=L+z$, $L_N^z:=L_N+z$, and $\bar L_N^z:=\bar L_N+z$.
We are interested in the line $L^z$,
and the way this line intersects the moats of $f$.
We make a series of important geometrical observations.
By saying that a quantity is \emph{uniformly positive}, we mean
that it has a strictly positive lower bound which is independent
of the four parameters, for $n$ sufficiently large
and for $\varepsilon_3$, $\varepsilon_2$, and $\varepsilon_1$
sufficiently small.

\begin{enumerate}
  \item If $a$ is at least $(4\vee 2m)K$,
  then the $d_1$-distance from the inside to the outside of a fixed
  climbing or descending $4K,4K+a$-moat
  is at least $\lfloor m\rfloor+1$,
  as $f$ is $2K$-Lipschitz
  (See Proposition~\ref{propo:collection},
  Statement~\ref{propo:collection:in_out_dist}).
  If
   $m\geq d_1(0,x)$
   and if $L^z_N$ intersects both the inside and outside of some moat,
   then $L^z_N$ must also intersect that moat.
  In particular, if $L^z$ intersects $\Lambda_{n,k}$,
  then $L^z$ must necessarily also intersect all moats surrounding $\Lambda_{n,k}$.
  In the sequel, we choose $a':=(4\vee 2 d_1(0,x))K$ and $a=3a'$.

  \item With uniformly positive probability,
  $z$ lies in $\Delta_{n,k}$
  with $L^z$ intersecting $\Lambda_{n,k}$,
  for some odd integer $k$.
  This is illustrated by Figure~\ref{fig:washboard_empirical};
  it is important here that $(u_1-u_2)(x)\neq 0$
  so that $x$ does not lie in the hyperplane
  $\{u_1-u_2=0\}$.
  Let us suppose that such an odd integer $k$ indeed exists.
  Write $m^\pm$ for the smallest and largest integer
  respectively such that
  $z+m^\pm x \in \Delta_{n,k}$.
  Then $m^+-m^-\leq O(n\varepsilon_2)$, where the constant is independent of all four parameters.
  But $\Delta_{n,k}$ contains a sequence of $\lceil n\delta\varepsilon_2\rceil$ nested
  $4K,4K+a$-moats
  of $\Lambda_{n,k}$; $L^z$ intersects each one of them.
  These moats thus have a uniformly positive density
  in the set $z+\{m^-,\dots,m^+\}x$.
  But $z$ was chosen uniformly random from $\mathcal L\cap D_n$
  and therefore we may rerandomize its position within $z+\{m^-,\dots,m^+\}x$.
  Since the moats are disjoint from one another and have a positive density within this set,
  we observe there exists a fixed constant $N\in\mathbb N$
  such that $L_N^z$ intersects at least five distinct nested moats
  with uniformly positive probability.
  In fact, each $4K,4K+a$-moat contains a
  $4K,4K+a'$-moat (Proposition~\ref{propo:collection}, Statement~\ref{moatcontainsmoat}), and $z$ is contained in such a moat with uniformly positive probability.
  Therefore,
  the event that $L_N^z$ intersects at least five distinct nested $4K,4K+a$-moats,
  and simultaneously $f(z)\in [4K,4K+a')$,
  has uniformly positive probability.

  \item
  Let us mention a first consequence of the event described above.
  Since $L_N^z$ intersects more than three distinct $4K,4K+a$-moats,
  it must intersect both the inside and outside of the middle moat.
  This moat contains both a $4K,4K+a'$-moat, as well as a
  $4K+2a',4K+3a'$-moat,
  which $L_N^z$ must both intersect.
  The value of $f$ differs by at least $a'\geq 4K$
  on these two moats. In particular, $\xi=\xi(\phi_1,\phi_2,U)$ cannot be constant on $L_N^z$,
  regardless of the value of $U$.
  Similarly, $\xi(\theta_z\phi_1,\theta_z\phi_2,U)$
  cannot be constant on $L_N$.

  \item
  Let us mention a second consequence.
  Since the set $L_N^z$ intersects
  five distinct nested $4K,4K+a$-moats,
  it must intersect both the inside and the outside  of the three middle moats.
  Fix $U\in [0,4K)$,
  and write $a'':=f(z)+U\in [4K,4K+2a')$.
  The set $\bar L_N^z$ must intersect three
  $a'',a''+4K$-moats: each of the three middle $4K,4K+a$-moats
  contains a $a'',a''+4K$-moats which $\bar L_N^z$ must also intersect.
  But these three moats correspond exactly
  to connected components
  of $\{\xi=0\}$
  for $\xi:=\xi(\theta_z\phi_1,\theta_z\phi_2,U)$,
  which are intersected by $\bar L_N$.
  We must however limit ourselves to local observations,
  as we always work in the topology of (weak) local convergence.
  Write therefore $\Sigma_m:=\{-m,\dots,m\}^d\subset\subset\mathbb Z^d$;
  we only consider $m$ so large that $\bar L_N\subset\Sigma_m$.
  The previous observation means that for any $m\in\mathbb N$,
  $\{\xi=0\}\cap \Sigma_m$ has three
  connected components which intersect both $\bar L_N$
  and $\partial^1\Sigma_m$,
  at least if $n$ is sufficiently large---this is because each moat must surround some set
  $\Lambda_{n,k}$, which grows large whenever $n$ is large.
\end{enumerate}

Let us summarize what we have done so far.
We proved that there exist constants $N\in\mathbb N$
and $\delta'>0$ with the following properties.
Choose $(\phi_1,\phi_2)\in E_n^a(n\delta\varepsilon_2)$,
and choose $z\in\mathcal L\cap D_n$ uniformly at random.
Then for fixed $m\in\mathbb N$,
the probability that for any $U\in [0,4K)$,
\begin{enumerate}
  \item $\xi:=\xi(\theta_z\phi_1,\theta_z\phi_2,U)$ is not constant on $L_N$,
  \item $\{\xi=0\}\cap \Sigma_m$ has three connected component which
  intersect both $\bar L_N$ and $\partial^1\Sigma_m$,
\end{enumerate}
is at least $\delta'$,
for $n$ sufficiently large depending on $m$,
and for $\varepsilon_3$, $\varepsilon_2$, and $\varepsilon_1$ small.

In the final part of the proof, we use this intermediate result,
as well as the large deviations principle
and compactness of the lower level sets $M_C$ of the
specific free energy, to construct the desired measure
for Theorem~\ref{thm_beyond_moats}.

Let us first consider the case $E=\mathbb Z$.
Consider $m\in\mathbb N$ so large that $\bar L_N\subset \Sigma_m$,
and write
$A_m\in\mathcal F^\nabla_{\Sigma_m}\times\mathcal F^\nabla_{\Sigma_m}$
for the event that
for any $U\in [0,4K)$,
the function
$\xi:=\xi(\phi_1,\phi_2,U)$ is not constant on $L_N$,
and that $\{\xi=0\}\cap\Sigma_m$
has three connected components intersecting both $\partial^1\Sigma_m$
and $\bar L_N$.
Write $B_m$ for the set of measures $\mu\in\mathcal P^2(\Omega,\mathcal F^\nabla)$
such that $\mu(A_m)>\delta'/2$.
Note that $B_m$ is in the basis for the tolopogy of weak local convergence
on the space of product measures $\mathcal P^2(\Omega,\mathcal F^\nabla)$.
Recall the definition of
$    \mathfrak L_n(\phi)
$
in Subsection~\ref{formal_LDP_topo},
and define, for the product setting,
\[
    \mathfrak L_n^2(\phi_1,\phi_2):=
    \int _D \delta_{(x,\theta_{[nx]_\mathcal L}\phi_1,\theta_{[nx]_\mathcal L}\phi_2)}dx\in\mathcal M_2^D,
\]
where by $\mathcal M_2^D$ we mean the set of measures
in $\mathcal M(D\times\Omega\times\Omega,\mathcal D\times\mathcal F^\nabla\times\mathcal F^\nabla)$
for which the first marginal
equals the Lebesgue measure on $D$.
By Lemma~\ref{lemma_moats_application_in_product_setting} and the intermediate result,
we know that
\[
  n^{-d}\log\gamma_n^2(\mathfrak L_n^2(D,\cdot)\in B_m)=o(1)
\]
as $n\to\infty$.
It therefore follows from the large deviations principle that
$\bar B_m$ contains a shift-invariant measure $\mu_m\in\mathcal P_\mathcal L^2(\Omega,\mathcal F^\nabla)$
with $S^2(\mu_m)=(u,u)$
and $\mathcal H^2(\mu_m|\Phi)\leq 2\sigma(u)$.
In particular, this means that $\mu_m(A_m)\geq \delta'/2$,
and in fact $\mu_m(A_{m'})\geq \delta'/2$ for all $m'\leq m$
because $A_m\subset A_{m'}$ for $m'\leq m$.
By compactness of the lower level sets of the specific free energy,
the sequence $(\mu_m)_{m\in\mathbb N}$
has a subsequential limit $\mu\in \mathcal P_\mathcal L^2(\Omega,\mathcal F^\nabla)$
in the topology of local convergence
which satisfies $S^2(\mu)=(u,u)$
and $\mathcal H^2(\mu|\Phi)\leq2\sigma(u)$.
In particular, $\mu(A_m)\geq \delta'/2$ for all $m$,
which means that $\mu$ satisfies all the requirements of Theorem~\ref{thm_beyond_moats};
the intersection $\cap_m A_m$
of the decreasing sequence $(A_m)_{m\in\mathbb N}$
is precisely the event that $\{\xi=0\}$
has three infinite level sets which intersect $\bar L_N$, regardless of the value of $U$.

In the case that $E=\mathbb R$, there is a slight complication.
If $E=\mathbb R$, then the indicator $1_{A_m}$ is not continuous with respect
to the topology of uniform convergence on $\Omega^2$,
and therefore the sets $B_m$ as defined above are not
in the basis of the topology of weak local convergence.
Introduce therefore the sequence of functions $(f_{m,k})_{k\in\mathbb N}$
where each function
$f_{m,k}:\Omega^2\to[0,1]$ is defined by
$f_{m,k}(\phi_1,\phi_2):=0\vee (1-kd_\infty(A_m,(\phi_1,\phi_2)))$;
here $d_\infty$ denotes the metric corresponding
to the norm $\|\cdot\|_\infty$ on $\Omega^2$.
Write $B_{m,k}$ for the set of product measures $\mu$
such that $\mu(f_{m,k})>\delta'/2$.
Then $B_{m,k}$ is in the basis of the topology of weak local convergence,
and we have
\[
  n^{-d}\log\gamma_n^2(\mathfrak L_n^2(D,\cdot)\in B_{m,k})=o(1)
\]
as $n\to\infty$.
Therefore $\bar B_{m,k}$ contains a measure $\mu_{m,k}$
with $S^2(\mu_{m,k})=(u,u)$
and $\mathcal H^2(\mu_{m,k}|\Phi)\leq 2\sigma(u)$.
Moreover, the sequence of measures $(\mu_{m,k})_{k\in\mathbb N}$
must
have a subsequential limit $\mu_{m}$ in the topology
of local convergence,
and this limit must satisfy $\mathcal H^2(\mu_{m}|\Phi)\leq 2\sigma(u)$,
 $S^2(\mu_{m})=(u,u)$,
 and $\mu_m(f_{m,k})\geq \delta'/2$
 for all $k$. The dominated convergence theorem says that
 $\mu_m(\bar A_m)=\mu_m(1_{\bar A_m})=\mu_m(\lim_k f_{m,k})\geq \delta'/2$.
 But $\mu_m(\partial A_m)=0$,
 since $\mu_m$ has finite specific free energy and is therefore locally
 absolutely continuous with respect to the Lebesgue measure.
 In particular, $\mu_m(A_m)\geq \delta'/2$.
 Take now a subsequential limit of the sequence $(\mu_m)_{m\in\mathbb N}$
 for the desired measure.
 For this last step, it is important that the topology of local convergence
 and the topology of weak local convergence coincide on the lower level sets of the specific free energy.
\end{proof}


\subsection{Application of the argument of Burton and Keane}

In this subsection we prove Theorem~\ref{thm_main_main},
which is equivalent to the conjunction of Theorem~\ref{theorem_strictly_convex_continuous}
and Theorem~\ref{theorem_strictly_convex_discrete}.
Recall the definition of $\rho$
and $\xi$ in the previous subsection.

\begin{lemma}
  \label{lemma_application_of_BK}
  Let $\Phi$ denote any potential in $\mathcal S_\mathcal L+\mathcal W_\mathcal L$,
  and consider a measure $\mu\in\mathcal P_\mathcal L^2(\Omega,\mathcal F^\nabla)$.
   Then one of the following properties must fail:
  \begin{enumerate}
    \item $\mu$ is ergodic and at least one of $S(\mu_1)$ and $S(\mu_2)$ lies in $U_\Phi$,
    \item $\mu$ is a minimizer in the sense that $\mathcal H^2(\mu|\Phi)=\sigma^2(S^2(\mu))<\infty$,
    \item With positive $\mu\times\rho$-probability,
    $\{\xi=0\}$ has at least three infinite components.
  \end{enumerate}
\end{lemma}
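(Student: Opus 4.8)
The plan is to assume that all three properties hold simultaneously and derive a contradiction, by running the trifurcation argument of Burton and Keane on the level sets of the integer difference function $\xi=\xi(\phi_1,\phi_2,U)$. First I would extract the finite-energy input. Property~2 says $\mu$ is a minimizer in the product sense, $\mathcal H^2(\mu|\Phi)=\sigma^2(S^2(\mu))<\infty$, so by the product-setting version of Theorem~\ref{thm_main_minimizers_finite_energy} recorded in Subsection~\ref{subsection_attachment_applications} the measure $\mu$ has finite energy: conditionally on the values of $(\phi_1,\phi_2)$ off a finite set $\Lambda$, every set of $q$-Lipschitz extensions to $\Lambda$ of positive $\lambda^\Lambda\times\lambda^\Lambda$-measure is realised with positive $\mu$-probability. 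This is the surgery tool. I would also record two elementary facts: $\phi_1-\phi_2$ is $2K$-Lipschitz, so $\xi$ changes by at most $1$ across each edge of the square lattice and the level sets $\{\xi=k\}_{k\in\mathbb Z}$ partition $\mathbb Z^d$ into discrete interfaces; and, by the computation used in the proof of Lemma~\ref{lemma_ergodic_approx_aux_1} (using that $U$ is uniform modulo $4K$ and independent of $(\phi_1,\phi_2)$), the law of this level-set partition under $\mathbb Q:=\mu\times\rho$ is $\mathcal L$-invariant.

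Next I would set up the Burton--Keane counting. Call $x\in\mathbb Z^d$ a \emph{trifurcation point} if $x$ lies in an infinite connected component $C$ of its own level set $\{\xi=\xi(x)\}$ and $C\smallsetminus\{x\}$ has at least three infinite connected components. Since distinct level sets are disjoint as subsets of $\mathbb Z^d$, the combinatorial lemma of Burton and Keane~\cite{BURTON} applies to the union over all levels and bounds the number of trifurcation points in any finite $\Lambda$ by a constant times $|\partial\Lambda|$ (the three disjoint infinite arms at each trifurcation point must exit $\Lambda$, and the resulting forest admits no crossings). Hence the $\mathbb Q$-expected number of trifurcation points in $\Pi_n$ is $O(n^{d-1})$; since the partition has $\mathcal L$-invariant law and $d\geq2$, this forces $\mathbb Q(0\text{ is a trifurcation point})=0$. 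On the other hand, Property~3 gives $\mathbb Q\bigl(\{\xi=0\}\text{ has }\geq3\text{ infinite components}\bigr)>0$, and since $\{\xi=0\}$ is related to the other level sets by a shift of $U$, the shift-invariant event ``some level set of $\xi$ has $\geq3$ infinite components'' has positive $\mathbb Q$-probability.

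Using ergodicity of $\mu$ from Property~1 I would upgrade this to an almost-sure event, and then, by a pigeonhole over translations, produce a fixed box $\Pi_N$ together with an event of positive $\mathbb Q$-probability on which three distinct infinite components $C_1,C_2,C_3$ of one level set reach $\partial\Pi_N$ through three disjoint corridors. Conditioning on this event and on the configuration outside $\Pi_N$, finite energy lets me resample $(\phi_1,\phi_2)|_{\Pi_N}$, with positive conditional probability, so that inside $\Pi_N$ the level set becomes a star joining $C_1,C_2,C_3$ at the single vertex $0$; this is a legitimate $q$-Lipschitz surgery because making $\xi$ drop by $1$ along each of three arms over $N$ steps is compatible with $\phi_1-\phi_2$ being $2K$-Lipschitz once $N$ is large (the thresholds of $\xi$ are $4K$ apart and $2K<4K$, and one has full freedom over the real values, not just the integer pattern). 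The resulting measure puts positive mass on the event that $0$ is a trifurcation point, contradicting the counting bound.

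The main obstacle is the localisation step in the previous paragraph: turning ``$\geq3$ infinite components exist somewhere with positive probability'' into a positive-probability event confined to a fixed box on which the star-shaped surgery is possible. This is exactly where both halves of Property~1 enter — ergodicity to make the ``$\geq3$ infinite components'' event almost sure, and the hypothesis that one marginal slope lies in the \emph{interior} $U_\Phi$ to guarantee that the corresponding coordinate is not frozen (recall from the proof of Theorem~\ref{thm_main_st_general} that a minimizing marginal with slope in $\partial U_\Phi$ must have some deterministic height difference), so that its level sets behave like genuinely fluctuating interfaces near $\partial\Pi_N$, the disjoint corridors exist, and the finite-energy surgery has room to act. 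A secondary technicality is that $\mathbb Q=\mu\times\rho$ is itself not ergodic (the variable $U$ is shift-invariant), so the ergodicity input must be applied to $\mu$ conditionally on $U$, using that the partition depends on $U$ only through a $U$-dependent relabelling of levels; and when $E=\mathbb Z$ one checks that the integer-valued star pattern is still realisable, which is routine.
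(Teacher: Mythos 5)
Your skeleton is the right one — Burton--Keane counting, finite energy for the surgery, ergodicity plus a slope in $U_\Phi$ to get Lipschitz ``room'' — but the surgery you propose in the penultimate paragraph does not obviously close, and it is exactly where the paper diverges from you in a way that matters.

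\textbf{Trifurcation boxes, not points.} You insist on resampling $(\phi_1,\phi_2)|_{\Pi_N}$ so that the level set ``becomes a star joining $C_1,C_2,C_3$ at the single vertex $0$''. That is a much stronger requirement than Burton--Keane needs, and it is not clear it can be realized under the individual $q$-Lipschitz constraints on $\phi_1$ and $\phi_2$. Your argument only controls $\phi_1-\phi_2$ (which indeed is $2K$-Lipschitz with $4K$-wide $\xi$-bands), but finite energy requires the resampled pair to remain in $\Omega_q^2$, i.e.\ $\phi_1$ and $\phi_2$ each $q$-Lipschitz separately. You never say how the star pattern for $\xi$ is achieved while respecting both of those. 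The paper sidesteps this by defining a \emph{trifurcation box}: a finite $\Lambda$ for which $\{\xi=a\}\smallsetminus\Lambda$ has three infinite components lying in a single component of $\{\xi=a\}$. Merging three components inside a box is much easier than collapsing them to a point, and it is exactly what the Burton--Keane forest-counting argument consumes. The surgery is then explicit and one-sided: keep $\phi_2$ fixed, and replace $\phi_1$ inside $\Sigma'_N$ by the smallest $q$-Lipschitz extension of $\phi_1|_{\mathbb Z^d\smallsetminus\Sigma'_N}$ that dominates a suitable threshold on $\{\xi=a\}\cup\Sigma'_n$; the fact that $\phi_1\pm 8nK$ patched into $\phi_1$ outside $\Sigma_N$ is $q$-Lipschitz (the pyramid/ergodic-theorem input, using $S(\mu_1)\in U_\Phi$) is precisely what guarantees this extension exists.

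\textbf{The reference point of $\xi$.} You center the star at $0$, but $\xi$ is anchored at $0$: modifying $\phi_1(0)$ or $\phi_2(0)$ changes the labelling of \emph{all} level sets. The paper shifts the surgery box to $\Sigma'_N=\Sigma_N+x$ with $x\in\mathcal L$ chosen so that $0\not\in\Sigma'_N$, so the reference value is never touched and the event ``$\Sigma'_N$ is a trifurcation box'' is well defined against the original $\xi$. Your version would need an analogous repair.

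\textbf{Role of ergodicity.} You use ergodicity to upgrade ``$\geq 3$ infinite components'' to an almost-sure event. That upgrade is unnecessary (the paper stays with positive probability throughout, which already contradicts the almost-sure absence of trifurcation boxes) and also slightly delicate because of $U$. Where ergodicity is actually used is in Theorem~\ref{thm:superergodresult}: it pins $\phi_1$ macroscopically to its slope $S(\mu_1)$, which — together with $S(\mu_1)$ lying in the \emph{interior} $U_\Phi$ and Lemma~\ref{lemma_lipschitz_q_and_norm} — gives enough slack in the $q$-Lipschitz constraint to push $\phi_1$ up by $8nK$ on a fixed finite box. That is the only place Property~1 earns its keep, and your proposal gestures at it but does not extract this quantitative form.

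So the conceptual route is the same, but the step you yourself flagged as ``the main obstacle'' is not actually solved in your proposal: the star surgery, the simultaneous Lipschitz constraints on both coordinates, and the reference-vertex issue all need the more flexible trifurcation-box formulation and the one-sided explicit extension that the paper uses.
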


The proof uses a construction from the part in~\cite{lammers2019gen} on strict convexity.

\begin{proof}[Proof of Lemma~\ref{lemma_application_of_BK}]
  For a fixed configuration $(\phi_1,\phi_2,U)$,
  a \emph{trifurcation box}
  is a finite set $\Lambda\subset\subset\mathbb Z^d$
  such that for some $a\in\mathbb Z$,
  the set
  $\{\xi=a\}\smallsetminus\Lambda$
  has three infinite connected components, which are contained in a single connected component
  of $\{\xi=a\}$.
  If $\mu$ is shift-invariant then almost surely
  $\mu\times\rho$ has no trifurcation boxes,
  due to the argument of Burton and Keane~\cite{BURTON}.
  Note that it is important for this statement that the gradient of $\xi$
  is shift-invariant in $\mu\times\rho$.
  To arrive at the desired contradiction,
  we aim to prove that trifurcation boxes occur with positive probability
  for the measure $\mu$ described in the statement of the lemma.

  Write $\Omega_q^2$ for the set of pairs of $q$-Lipschitz
  height functions.
  The natural adaptation of Theorem~\ref{thm_main_minimizers_finite_energy}
  to the product setting
  asserts that
  \[
  \numberthis
  \label{eq:auiwdonduawbndaw}
    1_{\Omega_q^2}(\lambda^\Lambda\times\lambda^\Lambda\times \mu\pi_{\mathbb Z^d\smallsetminus \Lambda})\times\rho
    \ll
    \mu\times\rho
  \]
  for any $\Lambda\subset\subset\mathbb Z^d$,
  where by $\mu\pi_{\mathbb Z^d\smallsetminus \Lambda}$
  we mean the product measure $\mu$
  restricted to the vertices in the complement of $\Lambda$, as in the non-product setting.
  Therefore it suffices to demonstrate that
  trifurcation boxes occur with positive
  measure in the measure on the left in the display, for some $\Lambda\subset\subset\mathbb Z^d$.

  Suppose, without loss of generality,
  that $S(\mu_1)\in U_\Phi$.
  Write $\Sigma_n:=\{-n,\dots,n\}^d\subset\subset \mathbb Z^d$
  for $n\in\mathbb N$.
  Then for some fixed $n\in\mathbb N$,
  three infinite components of $\{\xi=0\}$
  intersect $\Sigma_n$ with positive $\mu\times\rho$-probability.
  Moreover, as $\mu$ is ergodic with $S(\mu_1)\in U_\Phi$,
  we observe that the two functions
  \[
    \numberthis
    \label{qwdqwdqdiwnqdwinoqwnq}
    (\phi_1\pm 8nK)|_{\Sigma_n}\phi_1|_{\mathbb Z^d\smallsetminus \Sigma_N}
  \]
  are $q$-Lipschitz with high $\mu$-probability as $N\to\infty$.
  This is due to Lemma~\ref{lemma_lipschitz_q_and_norm}, Theorem~\ref{thm:superergodresult} and because $S(\mu_1)\in U_\Phi$---recall for comparison
  the pyramid construction from the proof of Lemma~\ref{lemma:PBL_upper_bound}.
  In particular, for $N\geq n$ sufficiently large,
  the $\mu\times\rho$-probability that three infinite components of
  $\{\xi=0\}$ intersect $\Sigma_n$
  and simultaneously the two functions in~\eqref{lemma:PBL_upper_bound}
  are $q$-Lipschitz, is positive.
  Now choose $x\in\mathcal L$ such that $0\not\in\Sigma_N+x$,
  and write $\Sigma'_n:=\Sigma_n+x$ and $\Sigma_N':=\Sigma_N+x$.
  Due to shift-invariance, have now proven that with positive $\mu\times\rho$-probability,
  $\Sigma_n'$ intersects three connected components of $\{\xi=a\}$
  for some $a\in\mathbb Z$,
  and the two functions in~\eqref{qwdqwdqdiwnqdwinoqwnq}
  are $q$-Lipschitz for $\Sigma_n$ and $\Sigma_N$
  replaced by $\Sigma_n'$ and $\Sigma_N'$ respectively.
  Let us write $A$ for this event.

  Let us first discuss the discrete setting $E=\mathbb Z$.
  If $(\phi_1,\phi_2,U)\in A$,
  then there exists another $q$-Lipschitz function $\phi_1'\in\Omega$
  which equals $\phi_1$ on the complement of $\Sigma_N'$,
  and such that $\{\xi=a\}\cup\Sigma_n'\subset\{\xi'=a\}$
  where $\xi':=(\phi_1',\phi_2,U)$.
  In particular, this means that $\Sigma_N'$ is a trifurcation box for $\xi'$.
  For example, one can take $\phi_1'$ to be
  the smallest $q$-Lipschitz extension
  of $\phi_1|_{\mathbb Z^d\smallsetminus\Sigma_N'}$
  to $\mathbb Z^d$ which equals at least
  \[
    \phi_2+4Ka+U+(\phi_1(0)-\phi_2(0))
  \]
  on $\{\xi=a\}\cup\Sigma_n'$.
  This proves that the event that $\Sigma_N'$ is a trifurcation box
  has positive measure in the measure on the left in~\eqref{eq:auiwdonduawbndaw} if we choose $\Lambda=\Sigma_n'$.
  If $E=\mathbb R$, then we must show that not only such a $q$-Lipschitz
  function $\phi_1'$ exists, but also that the set of such functions $\phi'_1$
  has positive Lebesgue measure.
  The original measure $\mu$ has finite specific free energy
  and therefore almost surely the height functions $\phi_1$
  and $\phi_2$ are not taut, that is,
  for every $\Lambda\subset\subset\mathbb Z^d$
  there almost surely exists a positive constant
  $\varepsilon>0$ such that the restriction of $\phi_1$
  and $\phi_2$ to $\Lambda$ are $q_\varepsilon$-Lipschitz.
  Now choose $\Lambda$ so large that $\Sigma_N'\subset\Lambda^{-R}$,
  choose $\varepsilon$ at least so small that $S(\mu_1)\in U_{q_\varepsilon}$,
  and construct the initial height function $\phi_1'$
  such that it is also $q_\varepsilon$-Lipschitz.
  It is easy to see that one can employ the remaining flexibility
  granted by Proposition~\ref{propo_qqq}, Statement~\ref{propo_qqq:flex}
  to demonstrate that the set of of suitable height
  functions has positive Lebesgue measure.
\end{proof}

\begin{theorem}
  \label{theorem_strictly_convex_continuous}
  Let $\Phi$ denote a potential which is monotone
  and in $\mathcal S_\mathcal L+\mathcal W_\mathcal L$.
  If $E=\mathbb R$,
  then $\sigma$ is strictly convex on $U_\Phi$.
\end{theorem}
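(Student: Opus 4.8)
The plan is to argue by contradiction, combining the moats construction of Theorem~\ref{thm_beyond_moats} with the uniqueness‑of‑infinite‑cluster dichotomy of Lemma~\ref{lemma_application_of_BK}. Suppose $\sigma$ fails to be strictly convex on $U_\Phi$; then there are distinct slopes $u_1,u_2\in U_\Phi$ such that $\sigma$ is affine on $[u_1,u_2]$, and we set $u:=(u_1+u_2)/2\in U_\Phi$. Since $\mathcal L$ is a full‑rank sublattice and $u_1\neq u_2$, we may pick $x\in\mathcal L$ with $(u_1-u_2)(x)\neq 0$, together with an arbitrary $y\in\mathbb Z^d$. Theorem~\ref{thm_beyond_moats} then produces a product measure $\mu\in\mathcal P_\mathcal L^2(\Omega,\mathcal F^\nabla)$ with $S^2(\mu)=(u,u)$ and $\mathcal H^2(\mu|\Phi)=\sigma^2(u,u)=2\sigma(u)<\infty$, so $\mu$ is a minimizer in the product sense; moreover, with positive $\mu\times\rho$‑probability, the difference function $\xi$ is non‑constant on $y+\mathbb Zx$ and $\{\xi=0\}$ has at least three distinct infinite connected components.

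The core step is to descend to an ergodic component of $\mu$ that still exhibits three infinite clusters. I would invoke the product analogues of Theorem~\ref{theorem_ergodic_decomposition}, Proposition~\ref{propo_S_strongly_affine} and Theorem~\ref{thm_SFE_strongly_affine}: writing $w_\mu$ for the ergodic decomposition of $\mu$, both $S^2$ and $\mathcal H^2(\cdot|\Phi)$ are strongly affine, and $\mathcal H^2(\nu|\Phi)\geq\sigma^2(S^2(\nu))$ for every $\nu$. Since $\sigma^2$ is convex, Jensen's inequality applied to $w_\mu$ gives the chain
\[
2\sigma(u)=\mathcal H^2(\mu|\Phi)=\int\mathcal H^2(\nu|\Phi)\,dw_\mu(\nu)\geq\int\sigma^2(S^2(\nu))\,dw_\mu(\nu)\geq\sigma^2\Bigl(\int S^2(\nu)\,dw_\mu(\nu)\Bigr)=\sigma^2(u,u)=2\sigma(u),
\]
which collapses to equalities throughout; hence $w_\mu$‑almost every ergodic component $\nu$ is a minimizer with $\mathcal H^2(\nu|\Phi)=\sigma^2(S^2(\nu))<\infty$. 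This is where the hypothesis $E=\mathbb R$ is used decisively: by Theorem~\ref{thm_main_st_general}, in the continuous setting $\sigma$ is finite precisely on $U_\Phi$, so finiteness of $\mathcal H^2(\nu|\Phi)$ — hence of $\mathcal H(\nu_1|\Phi)$ and $\mathcal H(\nu_2|\Phi)$, using $\mathcal H^2(\nu|\Phi)\geq\mathcal H(\nu_1|\Phi)+\mathcal H(\nu_2|\Phi)$ and that $\mathcal H(\cdot|\Phi)$ is bounded below — forces $S(\nu_1),S(\nu_2)\in U_\Phi$ for $w_\mu$‑almost every $\nu$.

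Thus $w_\mu$‑almost every component $\nu$ satisfies properties~1 and~2 of Lemma~\ref{lemma_application_of_BK}, so that lemma forces property~3 to fail: for $w_\mu$‑almost every $\nu$, the set $\{\xi=0\}$ has at most two infinite components $\nu\times\rho$‑almost surely. Integrating over $w_\mu$ — using $\mu\times\rho=\int(\nu\times\rho)\,dw_\mu(\nu)$ and that the event ``$\{\xi=0\}$ has at least three infinite connected components'' is $(\mathcal F^\nabla\times\mathcal F^\nabla\times\mathcal E)$‑measurable — we obtain that this event has $\mu\times\rho$‑probability zero, contradicting Theorem~\ref{thm_beyond_moats}. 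Hence $\sigma$ admits no affine segment inside $U_\Phi$, i.e.\ it is strictly convex there.

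I expect the only delicate part of the write‑up to be the ergodic‑decomposition bookkeeping in the second paragraph: verifying that the strong‑affineness statements and the bound $\mathcal H^2(\nu|\Phi)\geq\sigma^2(S^2(\nu))$ are indeed available in the product setting exactly as flagged, and that the ``three infinite clusters'' event is handled correctly under the decomposition of $\mu\times\rho$ and under the passage to a subsequential/ergodic limit. The substantive analytic and geometric content (the moat construction and the Burton–Keane argument) is already packaged in Theorem~\ref{thm_beyond_moats} and Lemma~\ref{lemma_application_of_BK}, so the remainder is a direct assembly.
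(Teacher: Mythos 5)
Your proof is correct and follows essentially the same route as the paper: extract the product minimizer $\mu$ from Theorem~\ref{thm_beyond_moats}, pass to a $w_\mu$-typical ergodic component $\nu$ via strong affineness of $S^2$ and $\mathcal H^2(\cdot|\Phi)$, use $E=\mathbb R$ (Theorem~\ref{thm_main_st_general}) to force both marginal slopes into $U_\Phi$, and apply Lemma~\ref{lemma_application_of_BK}. Your contrapositive framing of the final step (a.e.\ $\nu$ fails property~3, then integrate over $w_\mu$) is logically equivalent to the paper's direct one (with positive $w_\mu$-probability some $\nu$ satisfies all three incompatible properties).
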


\begin{proof}
  Let $\mu$ denote the measure from Theorem~\ref{thm_beyond_moats},
  and write $w_\mu$ for its ergodic decomposition.
  The measure $\mu$ satisfies $\mathcal H^2(\mu|\Phi)=\sigma^2(S^2(\mu))<\infty$,
  and both $\mathcal H^2(\cdot|\Phi)$
  and $S^2(\cdot)$ are strongly affine.
  This implies that
   $w_\mu$-almost every measure $\nu$
  satisfies $\mathcal H^2(\nu|\Phi)=\sigma^2(S^2(\nu))<\infty$.
  Since $E=\mathbb R$, this implies also that
  $S(\nu_1),S(\nu_2)\in U_\Phi$.

  With positive $w_\mu$-probability,
  the $\nu\times\rho$-probability that
   $\{\xi=0\}$ has
   at least three distinct infinite
  connected components, is positive.
  We have now proven the existence of a measure which satisfies all criteria of
  Lemma~\ref{lemma_application_of_BK}.
  This is the desired contradiction.
\end{proof}

\begin{theorem} \label{theorem_strictly_convex_discrete}
  Let $\Phi$ denote a potential which is monotone
  and in $\mathcal S_\mathcal L+\mathcal W_\mathcal L$.
  Consider now the discrete case $E=\mathbb Z$.
  Suppose that $\sigma$ satisfies the following property:
  for any affine map $h:(\mathbb R^d)^*\to\mathbb R$
  such that $h\leq \sigma$,
  the set $\{h=\sigma\}\cap\partial U_\Phi$
  is convex.
  Then $\sigma$ is strictly convex on $U_\Phi$.
  In particular, $\sigma$ is strictly convex on $U_\Phi$
  if at least one of the following conditions is satisfied:
  \begin{enumerate}
  	\item $\sigma$ is affine on $\partial U_\Phi$,
  	but not on $\bar U_\Phi$,
  	\item $\sigma$ is not affine on $[u_1,u_2 ]$ for any distinct $u_1,u_2 \in \partial U_\Phi$ such that  $[u_1,u_2 ] \not\subset \partial U_\Phi$.
  \end{enumerate}
\end{theorem}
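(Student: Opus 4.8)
The plan is to argue by contradiction, reusing the measure produced by Theorem~\ref{thm_beyond_moats} together with the ergodic decomposition and the Burton--Keane input of Lemma~\ref{lemma_application_of_BK}; the only genuinely new ingredient is a geometric preliminary that feeds good data into Theorem~\ref{thm_beyond_moats}. So suppose $\sigma$ is not strictly convex on $U_\Phi$; then $\sigma$ is affine on a nondegenerate segment $[u_1,u_2]\subset U_\Phi$. Picking a subgradient of $\sigma$ at $(u_1+u_2)/2$ gives an affine $h\le\sigma$ with $h=\sigma$ on $[u_1,u_2]$; set $P:=\{h=\sigma\}$ (closed, convex, $P\subset\bar U_\Phi$, $P\supset[u_1,u_2]$) and $F:=P\cap\partial U_\Phi$, which is convex by hypothesis. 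Since $P$ meets $U_\Phi$, its relative interior lies in $U_\Phi$, so $F$ lies in the relative boundary of $P$. Recall from Lemma~\ref{lemma_lipschitz_q_and_norm} that $\bar U_\Phi=\bar U_q$ is an intersection of finitely many half-spaces $H(p)=\{u:u(v_p)\le q(p)\}$, one per contributing cycle lift $p$ (with $v_p=p_n-p_0\in\mathcal L$). The preliminary step: I would choose $x\in\mathcal L$, $y\in\mathbb Z^d$ and, if necessary, replace $[u_1,u_2]$ by another nondegenerate segment still inside $P\cap U_\Phi$, so that $x=v_p$ for a cycle lift $p$ whose constraint is active on $F$, with $(u_1-u_2)(x)\ne0$ and $y=p_0$. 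If $F=\emptyset$ this is vacuous. If $F\ne\emptyset$, let $G$ be the minimal face of $\bar U_\Phi$ containing $F$ (a convex subset of the boundary lies in a face) and $I_G$ the set of cycle lifts $p$ with $G\subset\partial H(p)$, which is nonempty. Pick $w\in F$ and $q_0$ in the relative interior of $P$; then $\eta:=q_0-w$ points from $w\in\partial U_\Phi$ into $U_\Phi$, so $\eta(v_p)<0$ for every $p\in I_G$ (all constraints are strict at interior points). Replacing $[u_1,u_2]$ by a short segment through a relative-interior point of $P$ in direction $\eta$ keeps it inside $P\cap U_\Phi$ and makes $(u_1-u_2)(v_p)\ne0$ for all $p\in I_G$; if $P$ is one-dimensional this is automatic once one notes that $F$ cannot contain two distinct points (their segment is $P$, which meets $U_\Phi$, so a two-point $F$ is not convex). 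Finally fix any $p\in I_G$ and set $x:=v_p$, $y:=p_0$.

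Next I would invoke Theorem~\ref{thm_beyond_moats} with this $(u_1,u_2,x,y)$ and $u:=(u_1+u_2)/2$: it yields a product measure $\mu\in\mathcal P^2_\mathcal L(\Omega,\mathcal F^\nabla)$ with $S^2(\mu)=(u,u)$, $\mathcal H^2(\mu|\Phi)=2\sigma(u)$, and with positive $\mu\times\rho$-probability both (i) $\xi$ is non-constant on $y+\mathbb Zx$ and (ii) $\{\xi=0\}$ has at least three distinct infinite components. Let $w_\mu$ be the ergodic decomposition of $\mu$. Using that $\mathcal H^2(\cdot|\Phi)$ and $S^2(\cdot)$ are strongly affine (the product analogues of Proposition~\ref{propo_S_strongly_affine} and Theorem~\ref{thm_SFE_strongly_affine}), that $\mathcal H^2\ge\sigma^2\circ S^2$ pointwise, and that $\sigma$ is convex, Jensen's inequality forces equality throughout the chain
\[
2\sigma(u)=\mathcal H^2(\mu|\Phi)=\int\mathcal H^2(\nu|\Phi)\,dw_\mu(\nu)\ge\int\sigma^2(S^2(\nu))\,dw_\mu(\nu)\ge 2\sigma(u);
\]
hence $w_\mu$-a.e.\ $\nu$ is a product minimizer, i.e.\ $\mathcal H^2(\nu|\Phi)=\sigma^2(S^2(\nu))<\infty$, and $\int\sigma(S(\nu_j))\,dw_\mu(\nu)=\sigma(u)=h(u)=\int h(S(\nu_j))\,dw_\mu(\nu)$ for $j=1,2$; since $\sigma\ge h$, this last equality gives $S(\nu_1),S(\nu_2)\in\{h=\sigma\}=P$ for $w_\mu$-a.e.\ $\nu$.

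Now I would split $w_\mu$-a.e.\ component $\nu$ into two cases. If $S(\nu_1)\in U_\Phi$ or $S(\nu_2)\in U_\Phi$, then $\nu$ is an ergodic product minimizer with a slope in $U_\Phi$, so Lemma~\ref{lemma_application_of_BK} shows event (ii) has $\nu\times\rho$-probability zero. If instead $S(\nu_1),S(\nu_2)\in F$, then finiteness of $\mathcal H^2(\nu|\Phi)$ forces $\nu$ to be supported on pairs of $q$-Lipschitz functions; moreover $S(\nu_j)\in F\subset G\subset\partial H(p)$ gives $S(\nu_j)(v_p)=q(p)$, while for $z=p_0+kv_p$, following the translate $p+kv_p$ and using $q$-Lipschitzness gives $\phi_j(z+v_p)-\phi_j(z)\le q(p)$ $\nu$-a.s. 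Comparing with expectations, $\nu_j(\phi_j(z+v_p)-\phi_j(z))=S(\nu_j)(v_p)=q(p)$, so the inequality is a.s.\ an equality for all $k$ and $j=1,2$; hence $f:=\phi_1-\phi_2$ is constant on $p_0+\mathbb Zv_p=y+\mathbb Zx$ $\nu$-a.s., so $\xi$ is constant there for every value of $U$, i.e.\ event (i) has $\nu\times\rho$-probability zero. In either case $(\nu\times\rho)(\text{(i) and (ii)})=0$, so integrating against $w_\mu$ contradicts the conclusion of Theorem~\ref{thm_beyond_moats}. This proves strict convexity of $\sigma$ on $U_\Phi$ under the stated hypothesis.

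Finally I would check that conditions (a) and (b) each imply the hypothesis. For (b): if the hypothesis fails there is an affine $h\le\sigma$ and distinct $w_1,w_2\in\{h=\sigma\}\cap\partial U_\Phi$ with $[w_1,w_2]\not\subset\partial U_\Phi$; but $\{h=\sigma\}$ is convex, so $\sigma=h$ is affine on $[w_1,w_2]$, contradicting (b). For (a): write $\sigma=\ell$ on $\partial U_\Phi$ with $\ell$ affine, so convexity forces $\sigma\le\ell$ on $\bar U_\Phi$; a failure of the hypothesis again produces distinct $w_1,w_2\in\partial U_\Phi$ with $\sigma$ affine on $[w_1,w_2]\not\subset\partial U_\Phi$, hence $\sigma=\ell$ on $[w_1,w_2]$; then for any $q\in\bar U_\Phi$ the chord through an interior point of $[w_1,w_2]$ and $q$ hits $\partial U_\Phi$ at two points where $\sigma=\ell$, and a convex function $\le\ell$ that equals $\ell$ at those two points and at the interior point must equal $\ell$ on the whole chord, so $\sigma=\ell$ on $\bar U_\Phi$---contradicting that $\sigma$ is not affine on $\bar U_\Phi$. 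I expect the main obstacle to be the boundary-slope (Type B) case of the dichotomy: one must establish the ``freezing'' of boundary-slope minimizers from the cycle-lift description of $\partial U_q$, and---the delicate point---arrange the choice of $x$ (through the replacement segment) so that the frozen direction $v_p$ still satisfies $(u_1-u_2)(x)\ne0$; this is exactly where convexity of $\{h=\sigma\}\cap\partial U_\Phi$ is used.
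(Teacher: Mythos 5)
Your proof is correct and follows essentially the same route as the paper's: assume affineness on a segment, invoke Theorem~\ref{thm_beyond_moats}, ergodically decompose, and split each component according to whether one of its marginal slopes lies in $U_\Phi$ (Burton--Keane via Lemma~\ref{lemma_application_of_BK}) or both lie in $\partial U_\Phi$ (the frozen-direction argument along a cycle lift $p$ with $v_p$ normal to the active facet, killing event (i)). Your preliminary step (minimal face $G$, direction $\eta=q_0-w$, and the replacement segment), your Jensen argument for $S(\nu_j)\in\{h=\sigma\}$, and your verification that conditions (a) and (b) imply the convexity hypothesis, all make explicit what the paper asserts without detail, but they are elaborations rather than a different argument.
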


\begin{proof}
  Suppose that $\sigma$
  satisfies the property in the statement.
  Let $h:(\mathbb R^d)^*\to\mathbb R$
  denote an affine map such that $h\leq \sigma$,
  and such that the set $\{h=\sigma\}\cap U_\Phi$
  contains at least two slopes.
  We aim to derive a contradiction.

  Let us first cover the case that $\{h=\sigma\}\subset U_\Phi$.
  Let $\mu$ denote the measure from Theorem~\ref{thm_beyond_moats},
  with slope $S(\mu)=(u,u)$
  for some $u\in\{h=\sigma\}$.
  Write $w_\mu$ for the ergodic decomposition of $\mu$.
  Then $w_\mu$-almost surely $S(\nu_1),S(\nu_2)\in\{h=\sigma\}\subset U_\Phi$,
  and therefore the proof is the same as for the real case.

  Let us now discuss the case that $\{h=\sigma\}$
  intersects $\partial U_\Phi$.
  Recall Lemma~\ref{lemma_lipschitz_q_and_norm}.
  Since $\{h=\sigma\}\cap \partial U_\Phi$
  is convex, this intersection must be contained in the boundary
  of one of the half-spaces $H=H(p)$
  contributing to the intersection in Lemma~\ref{lemma_lipschitz_q_and_norm},
  where $p=(p_k)_{0\leq k\leq n}$
  is a path of finite length through $(\mathbb Z^d,\mathbb A)$
  with $p_n-p_0\in\mathcal L$.
  Set $y:=p_0$ and $x:=p_n-p_0$.
  If a shift-invariant measure in $\mathcal P_\mathcal L(\Omega,\mathcal F^\nabla)$
  has finite specific free energy and its slope in $\partial H(p)$,
  then the random function $\phi$ must satisfy \[
  \numberthis\label{eqiubqdwubqw}
  \phi(y+kx)-\phi(y)=kq(p):=k\sum_{k=1}^n q(p_{k-1},p_k)
  \]
  for any $k\in\mathbb Z$ almost surely.
  As $x$ is orthogonal to $\partial H(p)$,
  it is straightforward to
  find two distinct slopes $u_1,u_2\in \{h=\sigma\}\cap U_\Phi$
  such that $(u_1-u_2)(x)\neq 0$.

  Let $\mu$ denote the measure from Theorem~\ref{thm_beyond_moats},
  and write $w_\mu$ for its ergodic decomposition.
  The measure $\mu$ satisfies $\mathcal H^2(\mu|\Phi)=\sigma^2(S^2(\mu))$,
  and both $\mathcal H^2(\cdot|\Phi)$
  and $S^2(\cdot)$ are strongly affine.
  This implies that
   $w_\mu$-almost every measure $\nu$
  satisfies $\mathcal H^2(\nu|\Phi)=\sigma^2(S^2(\nu))<\infty$.
  We know that $w_\mu$-almost surely
  $S(\nu_1)$ and $S(
\nu_2
  )$ lie in $\{h=\sigma\}\subset \bar U_\Phi$,
  but it is not guaranteed that these slopes lie in
  $U_\Phi$.

  With positive $w_\mu$-probability,
  the $\nu\times\rho$-probability that
  $\xi$ is not constant on $y+\mathbb Zx$ and
  that
   $\{\xi=0\}$ has
   at least three distinct infinite
  connected components, is positive.
  But if $\xi$ is not constant on  $y+\mathbb Zx$,
  then~\eqref{eqiubqdwubqw} is false for $\phi$ having the distribution of either $\nu_1$
  or $\nu_2$, or both,
  and consequently at least one of $S(\nu_1)$ and $S(\nu_2)$
  does not lie in $\partial H(p)$.
  Conclude that with positive $w_\mu$-probability,
  at least one of $S(\nu_1)$ and $S(\nu_2)$
  lies in $U_\Phi$,
  and
  the $\nu\times\rho$-probability that $\{\xi=0\}$
  has three or more infinite connected components,
  is positive.
  We have now proven the existence of a measure which satisfies all criteria of
  Lemma~\ref{lemma_application_of_BK}.
  This is the desired contradiction.
\end{proof}

\section{Applications}

\subsection{The Holley criterion}

Each time we apply the theory,
we must verify that the specification associated
to the model of interest is monotone.
An interesting property of stochastic monotonicity is that it does not depends
on any formalism and can be checked through the Holley
criterion.
This criterion is usually stated in the context of the Ising model
or Fortuin-Kasteleyn percolation
 (see for example~\cite{grimmett2018probability}) but can be extended to
 random surfaces in a straightforward way.
 Throughout this section, we will use this criterion in combination with
  Theorem~\ref{thm_main_main}
 to prove the strict convexity of the surface tension for various interesting models.

\begin{theorem}[Holley criterion] \label{thm:Holley}
	The potential $\Phi \in \mathcal S_\mathcal L+\mathcal W_\mathcal L$ is monotone if  and only if for any two $q$-Lipschitz functions $\phi,\psi\in\Omega$
	with $\phi\leq \psi$ and for any $x\in\mathbb Z^d$,
	we have
	\[
	\gamma_{\{x\}}(\cdot,\phi)\preceq\gamma_{\{x\}}(\cdot,\psi).
	\]
\end{theorem}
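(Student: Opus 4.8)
The plan is to prove the two implications separately; the forward one is immediate, and for the converse the idea is a monotone systematic single‑site Gibbs sampler built out of the kernels $\gamma_{\{x\}}$.

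One direction is trivial: if $\Phi$ is monotone then $\gamma_{\{x\}}$ preserves $\preceq$ on the probability measures supported on $\Omega_q$, and applying this to the Dirac masses $\delta_\phi\preceq\delta_\psi$ (which are supported on $\Omega_q$ since $\phi,\psi$ are $q$-Lipschitz) gives $\gamma_{\{x\}}(\cdot,\phi)\preceq\gamma_{\{x\}}(\cdot,\psi)$. For the converse I would first record that $\gamma^\Phi$ automatically maps measures supported on $\Omega_q$ to measures supported on $\Omega_q$: a finite‑energy configuration is $q$-Lipschitz across every edge of $\mathbb A$ meeting $\Lambda$ (since $\Psi$ assigns infinite potential otherwise), it agrees with the $q$-Lipschitz boundary condition off $\Lambda$, and hence it is $q$-Lipschitz on all of $\mathbb A$, so $q$-Lipschitz by the maximality clause of Definition~\ref{def_llc}; thus only the preservation of $\preceq$ remains. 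By the coupling description of stochastic domination recalled above, it suffices to start from an order‑preserving coupling $(\phi,\psi)$ of two measures $\mu_1\preceq\mu_2$ supported on $\Omega_q$ (so $\phi\le\psi$ and both are $q$-Lipschitz almost surely; we tacitly restrict throughout to configurations for which the local Gibbs measures are defined) and to produce from it a coupling of $\mu_1\gamma_\Lambda$ and $\mu_2\gamma_\Lambda$ whose marginals remain ordered.

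Fix an enumeration $\Lambda=\{x_1,\dots,x_k\}$ and set $P:=\gamma_{\{x_1\}}\cdots\gamma_{\{x_k\}}$, the systematic single‑site sweep over $\Lambda$; by consistency $\gamma_\Lambda\gamma_{\{x_i\}}=\gamma_\Lambda$ for each $i$, so $\gamma_\Lambda(\cdot,\chi)$ is $P$-stationary for every admissible $q$-Lipschitz $\chi$. Because $E\subseteq\mathbb R$ is totally ordered, the single‑site hypothesis lets me couple the two conditional laws of the value at a vertex $x$ given ordered $q$-Lipschitz boundary conditions $\chi_1\le\chi_2$ by the quantile coupling: these laws are stochastically ordered by hypothesis and the configurations coincide off $x$, so the quantile coupling yields pointwise‑ordered $q$-Lipschitz outputs, and it is measurable in $(\chi_1,\chi_2)$. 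Chaining these $k$ couplings with fresh uniform randomness at the $k$ sites couples $P(\cdot,\phi)$ with $P(\cdot,\psi)$ with ordered outputs; applying this repeatedly to the coupling $(\phi,\psi)$ of $(\mu_1,\mu_2)$ produces, for every $n$, a coupling of $\mu_1 P^n$ and $\mu_2 P^n$ with ordered marginals, whence $\mu_1 P^n\preceq\mu_2 P^n$.

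The argument then closes provided $P^n(\cdot,\chi)\to\gamma_\Lambda(\cdot,\chi)$ for each fixed $\chi$: dominated convergence gives $\mu_i P^n(A)\to\mu_i\gamma_\Lambda(A)$ for every event $A$, and letting $n\to\infty$ in $\mu_1 P^n(A)\le\mu_2 P^n(A)$ over increasing $A$ yields $\mu_1\gamma_\Lambda\preceq\mu_2\gamma_\Lambda$, i.e.\ $\Phi$ is monotone. This convergence is the one non‑formal step, and amounts to ergodicity of $P$ on the set of $q$-Lipschitz configurations agreeing with $\chi$ off $\Lambda$, of which $\gamma_\Lambda(\cdot,\chi)$ is the unique $P$-stationary law. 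For $E=\mathbb R$ this set is a bounded convex polytope on which $\gamma_\Lambda(\cdot,\chi)$ has full support and a positive density, and ergodicity of the coordinatewise Gibbs sampler is standard (irreducibility from convexity, aperiodicity from non‑atomicity of the resampling law). For $E=\mathbb Z$ the set is finite and $P$ is aperiodic (each single‑site resampling retains the current value with positive probability), so the point is that it is connected under single‑site moves. I expect this to be the real obstacle: starting from any $\chi'$ different from the pointwise‑largest feasible extension $\chi^+$ and from a vertex where $\chi^+-\chi'>0$, one follows a chain of ``binding upper constraints'' $\chi'(z_i)=\chi'(z_{i+1})+q(z_i,z_{i+1})$; a repeat along this chain would force $\sum_i q(z_i,z_{i+1})=0$, contradicting $q(x,y)+q(y,x)>0$ together with the triangle inequality, so the chain reaches a vertex not frozen from above which can be moved strictly toward $\chi^+$, and iterating connects every configuration to $\chi^+$. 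This connectivity statement is really a fact about $q$-Lipschitz functions, in the spirit of the analysis of local Lipschitz constraints, and I would isolate it as a lemma; everything else is routine.
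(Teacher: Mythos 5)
Your proof is correct and follows essentially the same route as the paper: both reduce the claim to the convergence of a single-site Glauber dynamics built from the kernels $\gamma_{\{x\}}$, preserve $\preceq$ at each step, and isolate the connectivity of the state space under single-site moves (with the crux being the property $q(x,y)+q(y,x)>0$) as the only non-routine point. The paper uses the random-scan kernel $\kappa_\Lambda:=|\Lambda|^{-1}\sum_{x\in\Lambda}\gamma_{\{x\}}$ where you use the systematic sweep $P=\gamma_{\{x_1\}}\cdots\gamma_{\{x_k\}}$, a cosmetic difference; your cycle-of-binding-constraints argument for $E=\mathbb Z$ is a bit compressed—the telescoped identity $\sum_i q(z_{i+1},z_i)=0$ is not by itself absurd since $q$ may take negative values, so one must also apply the triangle inequality to the reversed arc of the cycle to get $q(z_i,z_{i+1})\leq -q(z_{i+1},z_i)$ and then contradict $q(z_i,z_{i+1})+q(z_{i+1},z_i)>0$—but the ingredients you cite are exactly the ones that make this work, and the paper asserts the same step as ``straightforward.''
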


\begin{proof}
	Choose $\phi$ and $\psi$ as in the statement of the theorem,
	and consider $\Lambda\subset\subset\mathbb Z^d$.
	We aim to demonstrate that
	\[
	\gamma_\Lambda(\cdot,\phi)\preceq\gamma_\Lambda(\cdot,\psi).
	\]
	Write $\kappa_\Lambda$ for the probability kernel associated with Glauber dynamics,
	that is,
	\[
			\kappa_\Lambda:=|\Lambda|^{-1}\sum_{x\in\Lambda}\gamma_{\{x\}}.
	\]
	It is clear under the assumption of the theorem that $\kappa_\Lambda$
	preserves the partial order $\preceq$ on $q$-Lipschitz measures.
	Claim now that
	\[
		\mu\kappa_\Lambda^n\to\mu\gamma_\Lambda
	\]
	in the strong topology as $n\to\infty$
	for any $q$-Lipschitz probability measure $\mu$;
	this would indeed imply the theorem.
	This is a standard fact in probability theory.
	The only detail requiring attention is that
	it is necessary
	for any $q$-Lipschitz function $\phi$,
	that
	$\gamma_\Lambda(\cdot,\phi)$-almost every height function $\psi$
	is accessible from $\phi$ by local moves,
	that is, by updating the value of $\phi$
	by one vertex in $\Lambda$ at a time,
	and such that all intermediate functions
	are also $q$-Lipschitz.
	This is straightforward to check from the definition of
	$q$---in particular, it is important that $q(x,y)+q(y,x)>0$
	for any $x,y\in\mathbb Z^d$ distinct.
\end{proof}


\subsection{Submodular potentials}

A potential $\Phi$ is said to be \emph{submodular} if for every $\Lambda\subset\subset\Z^d$,
$\Phi_{\Lambda}$ has the property that
\[
\Phi_{\Lambda} (\phi\wedge\psi) + \Phi_{\Lambda}(\phi\vee \psi)
\leq
\Phi_{\Lambda}(\phi) + \Phi_{\Lambda}(\psi).
\]
Sheffield proposes this family of potentials as a natural generalization of simply attractive potentials, and asks if similar results as the ones proved for simply attractive potentials in~\cite{S05} could be proved for finite-range submodular potentials.
It is easy to see that submodular potentials
generate monotone specifications.

\begin{lemma}
	A submodular potential is monotone.
\end{lemma}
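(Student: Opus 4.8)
The plan is to reduce to a single-site comparison via the Holley criterion (Theorem~\ref{thm:Holley}) and then read that comparison directly off submodularity. For $x\in\mathbb Z^d$, $t\in E$ and $\phi\in\Omega$, write $\phi^{x,t}\in\Omega$ for the height function equal to $t$ at $x$ and equal to $\phi$ on $\mathbb Z^d\smallsetminus\{x\}$. By Theorem~\ref{thm:Holley} it suffices to prove that whenever $\phi\leq\psi$ are $q$-Lipschitz and $x\in\mathbb Z^d$, one has $\gamma_{\{x\}}(\cdot,\phi)\preceq\gamma_{\{x\}}(\cdot,\psi)$. These two kernels are supported on height functions agreeing with $\phi$, respectively $\psi$, off $x$, and $\phi\leq\psi$; hence a monotone coupling of the laws of the value at $x$ gives the desired domination, and it is enough to compare those one-dimensional laws. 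Pushed forward under $\phi'\mapsto\phi'(x)$, the kernel $\gamma_{\{x\}}(\cdot,\phi)$ is the measure on $E$ with density proportional to $g_\phi(t):=e^{-H_{\{x\}}(\phi^{x,t})}$ with respect to $\lambda$, where $H_{\{x\}}=\sum_{\Gamma\ni x}\Phi_\Gamma$, and similarly for $\psi$.

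To compare $g_\phi$ and $g_\psi$ I would use the one-dimensional likelihood-ratio criterion — a special case of Holley's inequality on a chain — which states that if two measures on the totally ordered set $E$ have densities $g_1,g_2$ with $g_1(a)g_2(b)\geq g_1(b)g_2(a)$ for all $a\leq b$, then the second stochastically dominates the first (the criterion being insensitive to normalisation). With $g_1=g_\phi$, $g_2=g_\psi$, taking logarithms reduces the required inequality to
\[
H_{\{x\}}(\phi^{x,a})+H_{\{x\}}(\psi^{x,b})\;\leq\;H_{\{x\}}(\phi^{x,b})+H_{\{x\}}(\psi^{x,a})
\qquad\text{for all }a\leq b,
\]
which, upon writing $H_{\{x\}}$ as the (absolutely convergent for $\Xi$, finite and nonnegative for $\Psi$) sum over $\Gamma\ni x$ of the functions $\Phi_\Gamma$, it suffices to prove term by term. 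Fixing $\Gamma\ni x$ and setting $\eta_1:=\psi^{x,a}$, $\eta_2:=\phi^{x,b}$, one checks from $a\leq b$ and $\phi\leq\psi$ that $\eta_1\wedge\eta_2=\phi^{x,a}$ and $\eta_1\vee\eta_2=\psi^{x,b}$, so submodularity of $\Phi_\Gamma$ applied to $\eta_1,\eta_2$ is exactly
\[
\Phi_\Gamma(\phi^{x,a})+\Phi_\Gamma(\psi^{x,b})\;\leq\;\Phi_\Gamma(\psi^{x,a})+\Phi_\Gamma(\phi^{x,b}).
\]
Summing over $\Gamma\ni x$ yields the displayed Hamiltonian inequality, hence the likelihood-ratio inequality for $g_\phi,g_\psi$, hence $\gamma_{\{x\}}(\cdot,\phi)\preceq\gamma_{\{x\}}(\cdot,\psi)$, and Theorem~\ref{thm:Holley} gives that $\Phi$ is monotone.

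I expect the only delicate point to be the bookkeeping with the value $+\infty$: $\Phi_\Gamma$ and hence $H_{\{x\}}$ may be $+\infty$. This is routine under the paper's conventions and the structural hypotheses on $\Phi=\Psi+\Xi$ — if $\psi^{x,a}$ or $\phi^{x,b}$ is not $q$-Lipschitz at $x$ then the right-hand sides above are $+\infty$ and both inequalities are trivial; if all four of $\phi^{x,a},\phi^{x,b},\psi^{x,a},\psi^{x,b}$ are $q$-Lipschitz then, using that $\Psi\geq 0$ has finite range and $\|\Xi\|<\infty$, the term-by-term inequality together with the finiteness of the right-hand side forces every quantity to be finite and the argument goes through verbatim. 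The well-definedness of the single-site kernels as genuine probability measures is part of the setting already invoked in Theorem~\ref{thm:Holley}, so nothing further is needed there.
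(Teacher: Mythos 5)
Your proof is correct and follows essentially the same route as the paper's: reduce via the Holley criterion (Theorem~\ref{thm:Holley}) to a single-site comparison, express the single-site kernels as one-dimensional densities, and establish the likelihood-ratio (monotone-density-ratio) inequality from submodularity term by term. You merely spell out explicitly the step the paper leaves as a ``simple exercise,'' including the correct identification of $\eta_1\wedge\eta_2$ and $\eta_1\vee\eta_2$ and the treatment of infinities; both fill-ins are accurate.
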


\begin{proof}
	Let $\phi_1,\phi_2\in\Omega$
	denote $q$-Lipschitz functions
	with $\phi_1\leq\phi_2$.
	It suffices to check the Holley criterion (Theorem~\ref{thm:Holley}).
	Write $f_i$ for the Radon-Nikodym derivative of
	$
	\gamma_{\{x\}}(\cdot,\phi_i)\pi_{\{x\}}
	$ with respect to $\lambda$,
	for $i\in\{1,2\}$.
	It suffices to demonstrate that $f_1\lambda\preceq f_2\lambda$
	as measures on $(E,\mathcal E)$.
	Submodularity of $\Phi$ implies
	that $f_1(b)f_2(a)\leq f_1(a)f_2(b)$
	for $\lambda\times\lambda$-almost every $a,b\in E$
	with $a\leq b$.
	It is a simple exercise to see that this implies
	the desired stochastic domination.
\end{proof}

If $E=\mathbb R$ and $\Phi$ a submodular Lipschitz
potential fitting the framework of this article (which is a very mild requirement),
then we derive immediately from Theorem~\ref{thm_main_main} that the surface tension is strictly convex.

\begin{corollary}
	Suppose that $E=\mathbb R$
	and consider a submodular Lipschitz potential $\Phi\in\mathcal S_\mathcal L+\mathcal W_\mathcal L$.
	Then $\sigma$ is strictly convex on $U_\Phi$.
\end{corollary}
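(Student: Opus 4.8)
The plan is to deduce this corollary directly from the machinery already assembled, since essentially no new work is required. First I would invoke the lemma proved just above, which shows that any submodular potential induces a specification satisfying the Holley criterion (Theorem~\ref{thm:Holley}) and is therefore monotone over $\Omega_q$; this is exactly the meaning of ``$\Phi$ is monotone'' in the sense of the definition at the end of Section~\ref{section:class_of_models}. Thus the hypothesis that $\Phi$ is a submodular Lipschitz potential in $\mathcal S_\mathcal L+\mathcal W_\mathcal L$ places us squarely in the setting of Theorem~\ref{thm_main_main}.

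Next I would simply apply Theorem~\ref{thm_main_main}, Statement~(1): for a monotone potential $\Phi\in\mathcal S_\mathcal L+\mathcal W_\mathcal L$ with $E=\mathbb R$, the surface tension $\sigma$ is strictly convex on $U_\Phi$. Since $E=\mathbb R$ by assumption, there is no extra condition to verify (the auxiliary convexity hypothesis on $\{h=\sigma\}\cap\partial U_\Phi$ is needed only in the discrete case $E=\mathbb Z$). This immediately yields strict convexity of $\sigma$ on $U_\Phi$, which is the assertion of the corollary.

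There is no genuine obstacle here: all the substantive content lies in the monotonicity lemma (which reduces submodularity to the Holley criterion via a Radon--Nikodym derivative comparison on $(E,\mathcal E)$) and in Theorem~\ref{thm_main_main} (whose proof occupies Section~\ref{sec_strict_convex} via the moat construction and the Burton--Keane argument). The present statement is a one-line corollary obtained by feeding the former into the latter; the only thing worth remarking is that membership of $\Phi$ in $\mathcal S_\mathcal L+\mathcal W_\mathcal L$ is a hypothesis, so no Lipschitz-framework verification is needed within the proof itself.
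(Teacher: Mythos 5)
Your proposal is correct and matches the paper's intended reasoning exactly: the lemma preceding the corollary shows submodularity implies monotonicity via the Holley criterion, and Theorem~\ref{thm_main_main}(1) then gives strict convexity on $U_\Phi$ for $E=\mathbb R$ with no extra hypothesis. The paper itself treats the corollary as an immediate consequence of these two facts and offers no further argument.
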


In the remainder of this section, we focus on the case $E=\mathbb Z$.
If $E=\mathbb Z$, then we cannot immediately conclude that
the surface tension is strictly convex,
because we must fulfill the additional condition in Theorem~\ref{thm_main_main}.
We demonstrate how to derive
this extra condition for many natural discrete models.
Let $(\mathbb A,q)$ denote the local Lipschitz constraint
associated with the potential of interest
and fix $R\in\mathbb N$ minimal subject to $d_1(x,y)\leq R$
for all $\{x,y\}\in\mathbb A$.

A measure $\mu\in\mathcal P_\mathcal L(\Omega,\mathcal F^\nabla)$
is called \emph{frozen}
if for any $\Lambda\subset\subset\mathbb Z^d$,
the values of the random function $\phi_\Lambda$ in $\mu$ depend
deterministically on the boundary values $\phi_{\partial^R\Lambda}$.
Call a local Lipschitz constraint \emph{freezing}
if any measure $\mu\in\mathcal P_\mathcal L(\Omega,\mathcal F^\nabla)$
which is supported on $q$-Lipschitz functions,
and which has $S(\mu)\in\partial U_\Phi$,
is frozen.
This condition on the local Lipschitz constraint implies that any such measure has zero specific entropy,
that is,
$
\mathcal H(\mu|\lambda)=0$.
Indeed, deterministic dependence implies that
\[
\mathcal H_{\mathcal F_{\Pi_n}^\nabla}(\mu|\lambda^{\Pi_n-1})
=
\mathcal H_{\mathcal F_{\partial^R\Pi_n}^\nabla}(\mu|\lambda^{\partial^R\Pi_n-1})
=O(n^{d-1})=o(n^d)
\]
as $n\to\infty$.

\begin{lemma}
	If the local Lipschitz constraint $(\mathbb A,q)$
	is invariant by the full lattice $\mathcal L=\mathbb Z^d$,
	then it is freezing.
	In particular, the local Lipschitz constraints corresponding to dimer models,
	the six-vertex model, and $Kd_1$-Lipschitz functions for $K\in\mathbb N$,
	are freezing.
\end{lemma}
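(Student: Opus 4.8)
The plan is to prove the general statement and then read off the three examples. Let $(\mathbb A,q)$ be a local Lipschitz constraint that is $\mathbb Z^d$-invariant, let $\mu\in\mathcal P_\mathcal L(\Omega,\mathcal F^\nabla)$ be supported on $q$-Lipschitz functions with $u:=S(\mu)\in\partial U_\Phi$, and recall that $U_\Phi=U_q$ by Theorem~\ref{thm_main_st_general}, so $u\in\partial U_q$. By Lemma~\ref{lemma_lipschitz_q_and_norm}, $\bar U_q$ is an intersection of finitely many half-spaces $H(p)=\{u'\in(\mathbb R^d)^*:u'(p_n-p_0)\le q(p)\}$, one for each of finitely many paths $p=(p_k)_{0\le k\le n}$ through $(\mathbb Z^d,\mathbb A)$ with $p_n-p_0\in\mathcal L$; since $U_q$ is the interior of this polyhedron, the boundary point $u$ saturates one of the defining inequalities. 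Fix such a path and write $z:=p_n-p_0\in\mathcal L\smallsetminus\{0\}$, so $u(z)=q(p):=\sum_{k=1}^nq(p_{k-1},p_k)$.

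First I would record that $\mu(\phi(a+z)-\phi(a))=u(z)$ for every $z\in\mathcal L$ and every $a\in\mathbb Z^d$. Writing $\phi(a+z)-\phi(a)=\bigl(\phi(a+z)-\phi(z)\bigr)+\bigl(\phi(z)-\phi(0)\bigr)-\bigl(\phi(a)-\phi(0)\bigr)$ — all increments bounded since $\mu$ is $Kd_1$-Lipschitz, so the expectations are finite and additive — the middle term has expectation $u(z)$ by definition of the slope, while $\theta_z$-invariance of $\mu$ gives $\mu(\phi(a+z)-\phi(z))=\mu(\phi(a)-\phi(0))$, so the first and third terms cancel. Now take any $m\in\mathbb Z^d$. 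Since $\mathbb A$ and $q$ are $\mathbb Z^d$-invariant, $p+m$ is again a path through $(\mathbb Z^d,\mathbb A)$ with $q(p_{k-1}+m,p_k+m)=q(p_{k-1},p_k)$; because $\phi$ is $\mu$-a.s.\ $q$-Lipschitz we have $\phi(p_k+m)-\phi(p_{k-1}+m)\le q(p_{k-1},p_k)$ for each $k$, so summing, $\phi(p_0+m+z)-\phi(p_0+m)\le q(p)$. The expectation of the left side equals $u(z)=q(p)$ by the identity above, so the nonnegative random variable $q(p)-(\phi(p_0+m+z)-\phi(p_0+m))=\sum_{k=1}^n\bigl(q(p_{k-1},p_k)-(\phi(p_k+m)-\phi(p_{k-1}+m))\bigr)$ has zero mean, hence vanishes $\mu$-a.s., hence each summand vanishes $\mu$-a.s. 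Ranging over the countable set of $m\in\mathbb Z^d$, there is a single $\mu$-null set off which the increment of $\phi$ along every edge $\{p_{k-1}+m,p_k+m\}$, $m\in\mathbb Z^d$, $1\le k\le n$, is the deterministic constant $q(p_{k-1},p_k)$.

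It remains to deduce that $\mu$ is frozen. Fix $\Lambda\subset\subset\mathbb Z^d$ and $v\in\Lambda$, and form the bi-infinite path through $v$ by concatenating the translates $p+(v-p_0+jz)$, $j\in\mathbb Z$; every edge of it has the form $\{p_{k-1}+m,p_k+m\}$, so all its consecutive $\phi$-increments are deterministic. Since $z\ne 0$ this path escapes to infinity, hence leaves the finite set $\Lambda$; let $w$ be the last vertex along it (in the forward direction from $v$) lying in $\Lambda$. The edge from $w$ to the next vertex of the path lies in $\mathbb A$, hence has $d_1$-length at most $R$, so $w\in\partial^R\Lambda$; and $\phi(v)$ equals $\phi(w)$ plus a deterministic constant. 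As $v\in\Lambda$ was arbitrary, $(\phi(v))_{v\in\Lambda}$ is a deterministic function of $(\phi(w))_{w\in\partial^R\Lambda}$, i.e.\ $\mu$ is frozen, so $(\mathbb A,q)$ is freezing. For the examples: for $Kd_1$-Lipschitz functions one has $q=Kd_1$ and $\mathbb A$ the square lattice; for dimer models and the six-vertex model the transformations of Subsection~\ref{subsec_main_Lip_note} realise the height functions as $q$-Lipschitz functions for $q(x,y)=\sum_i 0\vee(y-x)_i$ on the square lattice. In each case $(\mathbb A,q)$ is $\mathbb Z^d$-invariant, so the first part applies.

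The step requiring the most care is the middle paragraph: one must be attentive to the gradient-measure formalism, and in particular to the identity $\mu(\phi(a+z)-\phi(a))=u(z)$, which is where $\mathcal L$-invariance of $\mu$ — rather than the stronger $\mathbb Z^d$-invariance, which is only assumed of $(\mathbb A,q)$ — is precisely what is available and what makes the argument valid for an arbitrary full-rank sublattice $\mathcal L$. The remainder (collecting countably many almost-sure statements into one, and the combinatorial step of exiting $\Lambda$ along the concatenated path) is routine bookkeeping.
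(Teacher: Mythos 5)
Your proof is correct and follows the same overall route as the paper: locate the saturated facet $H(p)$ of $\bar U_q$ via Lemma~\ref{lemma_lipschitz_q_and_norm}, upgrade the saturated linear constraint to an almost-sure deterministic increment of $\phi$ along the path, and use the bound $d_1\leq R$ on edges of $\mathbb A$ to escape a finite $\Lambda$ into $\partial^R\Lambda$. Where the paper delegates the almost-sure-equality step to the proof of Theorem~\ref{theorem_strictly_convex_discrete} and finishes by observing that, with $\mathcal L=\mathbb Z^d$, the cycle lift in question collapses to a single edge of $\mathbb A$ (so that $d_1(0,x)\leq R$ directly), you instead make both steps explicit: you use the nonnegative-with-zero-mean argument edge by edge to get determinism of every translated edge increment, and then concatenate translates of $p$ into a bi-infinite path and track it out of $\Lambda$. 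This never requires $p$ to be a single edge, and as you note, your version of the argument is valid for an arbitrary full-rank sublattice $\mathcal L$ as long as $(\mathbb A,q)$ is $\mathbb Z^d$-invariant, which is a genuine (if mild) gain in generality over the literal reading of the paper's proof, and matches the phrasing of the note in Theorem~\ref{thm:main_results_submodular}. One small gap: you should say a word about why the saturated inequality has $p_n-p_0\neq 0$ (any defining half-space with $p_n-p_0=0$ is either all of $(\mathbb R^d)^*$ or empty, and the latter is excluded since $U_q\neq\emptyset$, so such a constraint can never be the one that is saturated).
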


\begin{proof}
	Fix $\mu\in\mathcal P_\mathcal L(\Omega,\mathcal F^\nabla)$
	with $S(\mu)\in\partial U_\Phi$ and supported on $q$-Lipschitz
	functions.
	As in the proof of Theorem~\ref{theorem_strictly_convex_discrete},
	there is a path $p=(p_k)_{0\leq k\leq n}$
	of finite length through $(\mathbb Z^d,\mathbb A)$
	with $x:=p_n-p_0\in\mathcal L\smallsetminus\{0\}$,
	such that
	\[
	\phi(p_0+y+kx)-\phi(p_0+y)
	\]
	is deterministic in $\mu$ for any $y\in\mathcal L$ and $k\in\mathbb Z$.
	Moreover, this path is a \emph{cycle lift}
	as defined in the proof of Lemma~\ref{lemma_lipschitz_q_and_norm}.
	Since $\mathcal L=\mathbb Z^d$,
	this means that
	$
	\phi(y+kx)-\phi(y)
	$
	is deterministic for any $y\in\mathbb Z^d$,
	and that $d_1(0,x)\leq R$.
	In particular, $\phi_\Lambda$
	depends deterministically on $\phi_{\partial^R\Lambda}$
	in $\mu$ for any $\Lambda\subset\subset\mathbb Z^d$.
\end{proof}

The final goal of this section is to prove the following theorem.

\begin{theorem}
	\label{thm_submodular_discrete_strictly_convex}
	Suppose that $E=\mathbb Z$,
	and that $\Phi\in\mathcal S_\mathcal L+\mathcal W_\mathcal L$
	is a submodular Lipschitz potential
	with a freezing local Lipschitz constraint.
	Then the associated surface tension $\sigma$ is strictly convex on $U_\Phi$.
\end{theorem}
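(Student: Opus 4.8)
The plan is to route the statement through Theorem~\ref{thm_main_main}. By the preceding lemma a submodular potential is monotone, so that theorem applies; since $E=\mathbb{Z}$ it remains only to verify the extra condition there, namely that for every affine $h\colon(\mathbb{R}^d)^*\to\mathbb{R}$ with $h\le\sigma$ the set $\{h=\sigma\}\cap\partial U_\Phi$ is convex. Because $\{h=\sigma\}=\{\sigma\le h\}$ is convex and $\sigma\ge h$ everywhere, a short argument — taking a subgradient support of $\sigma$ at the midpoint of an affine piece, which is automatically a global lower bound — shows that this condition is equivalent to statement (b) of Theorem~\ref{thm_main_main}: $\sigma$ is not affine on $[u_1,u_2]$ for any distinct $u_1,u_2\in\partial U_\Phi$ with $[u_1,u_2]\not\subseteq\partial U_\Phi$. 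So I would assume, for contradiction, that such a segment exists.

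Next I would set up the geometry. One first checks that the relative interior $(u_1,u_2)$ lies in $U_\Phi$. By Lemma~\ref{lemma_lipschitz_q_and_norm}, $\bar U_\Phi=\bar U_q$ is an intersection of finitely many half-spaces $H(p)=\{u:u(x_p)\le q(p)\}$ with $x_p:=p_n-p_0\in\mathcal{L}$ (after multiplying by $N$ if necessary). Choose $p$ with $u_1\in\partial H(p)$. Since $\partial H(p)\cap\bar U_q\subseteq\partial U_\Phi$ while $[u_1,u_2]\not\subseteq\partial U_\Phi$, we must have $u_2\notin\partial H(p)$, hence $(u_1-u_2)(x_p)\ne0$; set $x:=x_p$ and $y:=p_0$. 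Then I would pick distinct $v_1,v_2\in(u_1,u_2)\subseteq U_\Phi$ (so $\sigma$ is still affine on $[v_1,v_2]$ and $(v_1-v_2)(x)\ne0$) and apply Theorem~\ref{thm_beyond_moats} with $v_1,v_2,x,y$, obtaining a shift-invariant product measure $\mu\in\mathcal{P}_\mathcal{L}^2(\Omega,\mathcal{F}^\nabla)$ with $S^2(\mu)=(v,v)$, $v:=(v_1+v_2)/2\in U_\Phi$, with $\mathcal{H}^2(\mu|\Phi)=2\sigma(v)$, and such that with positive $\mu\times\rho$-probability $\xi$ is non-constant on $y+\mathbb{Z}x$ while $\{\xi=0\}$ has at least three infinite components.

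Passing to the ergodic decomposition — using strong affinity of $S^2$ and $\mathcal{H}^2$, and that $\mu$ is a product so both its marginals are minimizers of slope $v$ — I would select an ergodic component $\nu$ which is a minimizer, still displays the above event with positive probability, and whose marginal slopes $S(\nu_1),S(\nu_2)$ lie in $\{\sigma=\ell\}$ for a fixed affine support $\ell$ of $\sigma$ at $v$ (by the barycenter/Jensen-equality argument), so in particular $u_1,u_2\in\{\sigma=\ell\}$ as well. If $S(\nu_1)\in U_\Phi$ or $S(\nu_2)\in U_\Phi$, then $\nu$ satisfies all three hypotheses of Lemma~\ref{lemma_application_of_BK}, which is the desired contradiction. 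Hence both marginal slopes lie in $\{\sigma=\ell\}\cap\partial U_\Phi$; since $\mathcal{H}^2(\nu|\Phi)<\infty$ both $\nu_1,\nu_2$ are supported on $q$-Lipschitz functions, so by the freezing hypothesis both are frozen, in particular of zero specific entropy.

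The main obstacle is closing this last case. The relevant mechanism is that a shift-invariant measure of finite specific free energy whose slope lies in a facet hyperplane $\partial H(p')$ is rigid along $x_{p'}$ (the observation already used in the proof of Theorem~\ref{theorem_strictly_convex_discrete}): if one can guarantee that both $S(\nu_1)$ and $S(\nu_2)$ lie on the \emph{same} facet $\partial H(p)$ selected above, then $\phi_i(y+kx)-\phi_i(y)=kq(p)$ deterministically for $i=1,2$, so $\phi_1-\phi_2$, and therefore $\xi$, is constant on $y+\mathbb{Z}x$ — contradicting what we arranged. Reducing to this single-facet situation is precisely where the zero combinatorial entropy of boundary measures is used: one argues that $\sigma$ restricted to $\partial U_\Phi$ is controlled entirely by the specific energy functional, and then exploits the convexity of $\{\sigma=\ell\}$ together with the polytope structure of $\bar U_q$ — e.g. by choosing $\ell$ so that its exposed face has minimal dimension among those meeting $\partial U_\Phi$ in a non-convex set — to force $\{\sigma=\ell\}\cap\partial U_\Phi$, hence the admissible marginal slopes of the components, into one facet hyperplane. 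Once this bookkeeping is in place the argument closes exactly as in Theorem~\ref{theorem_strictly_convex_discrete}; and the final assertion, that a $\mathbb{Z}^d$-invariant $q$ is freezing, is the lemma preceding the theorem, so nothing further is required there.
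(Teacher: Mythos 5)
Your overall frame — argue by contradiction, producing an affine $h\le\sigma$ with $\{h=\sigma\}\cap\partial U_\Phi$ non-convex, and aim to rule this out — matches the paper's setup, and the reduction you state between the convexity condition and condition~(b) of Theorem~\ref{thm_main_main} is correct. But the execution then diverges in a way that leaves a real gap, and you end up not using the two ingredients that are actually needed.

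The step you flag yourself (``the main obstacle'') is the fatal one. You want to re-run the moat/Burton--Keane machinery of Theorem~\ref{theorem_strictly_convex_discrete}, and you correctly observe that in the case where both marginal slopes of the ergodic component $\nu$ land in $\partial U_\Phi$, the argument would close if those slopes lay in a single facet hyperplane $\partial H(p)$, because then the rigidity $\phi_i(y+kx)-\phi_i(y)=kq(p)$ would force $\xi$ constant along $y+\mathbb Zx$. But the convexity of $\{h=\sigma\}\cap\partial U_\Phi$ is \emph{precisely} what produces that single-facet containment in the proof of Theorem~\ref{theorem_strictly_convex_discrete}, and you are in the case where this convexity fails: the intersection is genuinely spread across several facets, and a sample $\nu$ from the ergodic decomposition of the moat measure may well have $S(\nu_1)$ and $S(\nu_2)$ on different facets, with no common direction $x$ giving rigidity for both marginals. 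Your sketched repair (``choose $\ell$ of minimal exposed-face dimension'', ``$\sigma$ on the boundary controlled by the specific energy'') does not identify a mechanism that would collapse the admissible slopes onto one facet; it is not a proof, and I do not see how it could become one, because the non-convexity you assumed is an obstruction to exactly this collapse. Relatedly, you say ``since $\mu$ is a product both its marginals are minimizers of slope $v$'', but the moat measure of Theorem~\ref{thm_beyond_moats} is a limit of products, not itself a product, so the marginal statement needs care; and the claim that a chosen ergodic component still witnesses the three-infinite-clusters event with positive probability needs a uniformity argument over $U\in[0,4K)$ that you do not supply.

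More importantly, the mechanism you leave aside is the one the paper uses. Beyond Theorem~\ref{thm_main_main}, the paper's proof rests on two facts specific to the submodular, freezing, $E=\mathbb Z$ setting that never appear in your plan: Corollary~\ref{cor_adwbowaoboivnwioeafnrnierioen}, which uses submodularity to average two ergodic boundary-slope measures into an ergodic interior-slope measure whose \emph{specific energy} is at most the corresponding average; and Lemma~\ref{lem:frozen}, which says an ergodic minimizer with slope in $U_\Phi$ must have strictly negative specific entropy. Given these, the paper avoids the moat argument entirely: from $\{h=\sigma\}\cap\partial U_\Phi$ non-convex it extracts boundary exposed points whose convex hull meets $U_\Phi$, observes each supports an ergodic minimizer of zero specific entropy (freezing), averages them via Corollary~\ref{cor_adwbowaoboivnwioeafnrnierioen} to obtain an ergodic interior-slope measure with $\langle\mu|\Phi\rangle\le\sigma(S(\mu))$, concludes this forces $\mathcal H(\mu|\lambda)=0$, and contradicts Lemma~\ref{lem:frozen}. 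In your proposal submodularity is used only to obtain monotonicity and the freezing hypothesis only to say ``zero entropy''; neither is leveraged in a load-bearing way, which is a sign the route you chose is not the right one for this theorem.
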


We first prove two auxiliary lemmas.

\begin{lemma}
	If $E=\mathbb Z$ and $\Phi$ a submodular gradient potential,
	then
	\[
		\Phi_\Lambda(\lceil{\textstyle\frac{\phi_1+\phi_2}{2}}\rceil)
		+
		\Phi_\Lambda(\lfloor{\textstyle\frac{\phi_1+\phi_2}{2}}\rfloor)
		\leq
		\Phi_\Lambda(\phi_1)
		+
		\Phi_\Lambda(\phi_2)
	\]
	for any $\phi_1,\phi_2\in\Omega$
	and $\Lambda\subset\subset\mathbb Z^d$.
\end{lemma}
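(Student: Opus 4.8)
The plan is to reduce the statement to an elementary fact about a single submodular function on a finite integer lattice, and then to exploit that, because $\Phi$ is a \emph{gradient} potential, this function is additionally invariant under the global shift by the all-ones vector. First I would fix $\Lambda\subset\subset\mathbb Z^d$ and regard $F:=\Phi_\Lambda$ as a function $F:\mathbb Z^\Lambda\to\mathbb R\cup\{\infty\}$ (legitimate since $\Phi_\Lambda$ is $\mathcal F_\Lambda$-measurable), writing $\mathbf 1$ for the all-ones vector in $\mathbb Z^\Lambda$. Applying the submodularity hypothesis to pairs of height functions extending arbitrary $x,y\in\mathbb Z^\Lambda$ gives $F(x\wedge y)+F(x\vee y)\le F(x)+F(y)$; and since $\Phi_\Lambda$ is $\mathcal F_\Lambda^\nabla$-measurable, $F(x+\mathbf 1)=F(x)$. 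With $\phi:=\phi_1|_\Lambda$ and $\psi:=\phi_2|_\Lambda$, the claim becomes $F(\lceil\tfrac{\phi+\psi}{2}\rceil)+F(\lfloor\tfrac{\phi+\psi}{2}\rfloor)\le F(\phi)+F(\psi)$, with $\lceil\cdot\rceil,\lfloor\cdot\rfloor$ coordinatewise.

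The conceptual core is a single auxiliary inequality: for all $P,Q\in\mathbb Z^\Lambda$,
\[
  F\big((P-\mathbf 1)\vee Q\big)+F\big(P\wedge(Q+\mathbf 1)\big)\le F(P)+F(Q).
\]
This is where the gradient property (rather than mere submodularity) enters, and its verification is short: apply submodularity to the pair $(P,\,Q+\mathbf 1)$ to get $F\big(P\wedge(Q+\mathbf 1)\big)+F\big(P\vee(Q+\mathbf 1)\big)\le F(P)+F(Q+\mathbf 1)$, then use shift-invariance to rewrite $F(Q+\mathbf 1)=F(Q)$ and $F\big(P\vee(Q+\mathbf 1)\big)=F\big((P-\mathbf 1)\vee Q\big)$.

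With this tool in hand I would run the following iteration. Set $x_0:=\phi$, $y_0:=\psi$, and recursively $x_{k+1}:=(x_k-\mathbf 1)\vee y_k$, $y_{k+1}:=x_k\wedge(y_k+\mathbf 1)$. The auxiliary inequality gives that $F(x_k)+F(y_k)$ is non-increasing in $k$. A coordinatewise computation shows the sum $x_k+y_k=x_0+y_0$ is conserved, while the difference $d_k:=x_k-y_k$ satisfies $d_{k+1}(i)=d_k(i)-2$ when $d_k(i)\ge 1$ and $d_{k+1}(i)=-d_k(i)$ when $d_k(i)\le 0$; hence at each vertex $i$ the pair $\{x_k(i),y_k(i)\}$ becomes, for $k$ large, either the singleton $\{\tfrac{s(i)}{2}\}$ when $s(i):=x_0(i)+y_0(i)$ is even, or the two-element set $\{\lceil\tfrac{s(i)}{2}\rceil,\lfloor\tfrac{s(i)}{2}\rfloor\}$ when $s(i)$ is odd. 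Since $\Lambda$ is finite there is a single $K$ beyond which this holds everywhere, so $x_K\vee y_K=\lceil\tfrac{\phi+\psi}{2}\rceil$ and $x_K\wedge y_K=\lfloor\tfrac{\phi+\psi}{2}\rfloor$. One more application of submodularity then gives
\[
  F\big(\textstyle\lceil\tfrac{\phi+\psi}{2}\rceil\big)+F\big(\textstyle\lfloor\tfrac{\phi+\psi}{2}\rfloor\big)=F(x_K\vee y_K)+F(x_K\wedge y_K)\le F(x_K)+F(y_K)\le F(x_0)+F(y_0),
\]
which is the assertion; the case $F(\phi_1)+F(\phi_2)=\infty$ is trivial, so infinite values cause no difficulty.

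The hard part will not be any single estimate but the bookkeeping in the last step: one must verify carefully that the iteration reaches the coordinatewise floor/ceiling pair and not some other fixed configuration — in particular, at vertices where $x_0(i)+y_0(i)$ is odd the dynamics does not converge but oscillates between $(\lceil\rceil,\lfloor\rfloor)$ and its transpose, which is exactly why one takes the meet and join of $x_K$ and $y_K$ at the end rather than $(x_K,y_K)$ itself. (As an aside, the first two steps amount to observing that a submodular, $\mathbf 1$-invariant function is $\mathrm L^\natural$-convex, after which the coordinatewise midpoint inequality is classical; I would nonetheless present the self-contained iteration above.)
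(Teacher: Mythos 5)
Your proof is correct, but it takes a genuinely different route from the paper's. The paper changes variables to $\psi^\pm := \phi_1 \pm \phi_2$, defines $F(\psi^+,\psi^-) := \Phi_\Lambda\bigl(\tfrac{\psi^++\psi^-}{2}\bigr) + \Phi_\Lambda\bigl(\tfrac{\psi^+-\psi^-}{2}\bigr)$, and reduces everything to a symmetrization argument: $F$ is invariant under translating $(\psi^+,\psi^-)$ by any pair of integers with even sum (the gradient property) and under $\psi^-\mapsto -\psi^-$, while submodularity says $F$ is non-increasing under $\psi^-\mapsto|\psi^-|$; composing these finitely many times replaces $\psi^-|_\Lambda$ by its parity function and gives the inequality. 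You instead stay in the original coordinates, combine submodularity with $\mathbf 1$-shift invariance to get the translation-submodularity inequality $F((P-\mathbf 1)\vee Q)+F(P\wedge(Q+\mathbf 1))\le F(P)+F(Q)$---the $\mathrm L^\natural$-convexity condition, as you observe---and iterate it to the coordinatewise floor/ceiling pair, cleaning up the residual period-two oscillation at odd-parity vertices with a final application of plain submodularity. These are essentially the same contraction viewed from two vantage points (the paper's iteration, unwound, is $(\phi_1,\phi_2)\mapsto(\phi_1\vee(\phi_2+\mathbf 1),\phi_1\wedge(\phi_2+\mathbf 1))$, which matches yours up to a constant drift of the sum $\phi_1+\phi_2$), but your formulation is cleaner, gives a manifestly monotone quantity at each step, and makes the discrete-convexity mechanism explicit, whereas the paper's version is self-contained and avoids any appeal to $\mathrm L^\natural$-convexity. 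One detail worth spelling out when you write this up: a uniform stopping time exists because $K\ge\max_{x\in\Lambda}|\phi_1(x)-\phi_2(x)|+1$ suffices, so this is precisely where finiteness of $\Lambda$ is used.
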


\begin{proof}
	Write $\xi^\pm:=\phi_1\pm\phi_2$,
	so that $\phi_1=(\xi^++\xi^-)/2$
	and $\phi_2=(\xi^+-\xi^-)/2$.
	Write
	\[
		F(\psi^+,\psi^-):=\Phi_\Lambda({\textstyle\frac{\psi^++\psi^-}{2}})+\Phi_\Lambda({\textstyle\frac{\psi^+-\psi^-}{2}})
	\]
	for any
	 $\psi^+,\psi^-\in \Omega$
	with $\psi^++\psi^-\equiv 0\mod 2$.
	For example, the right hand side of the display in the statement of the lemma
	equals $F(\xi^+,\xi^-)$,
	and the left hand side equals $F(\xi^+,p\circ\xi^-)$,
	where $p:\mathbb Z\to\{0,1\}$
	is the \emph{parity function}
	which maps even integers to $0$ and odd integers to $1$.
	Therefore it suffices to demonstrate that
	\[
		F(\psi^+,p\circ\psi^-)\leq F(\psi^+,\psi^-)
	\]
	for any $\psi^+,\psi^-\in \Omega$
 with $\psi^++\psi^-\equiv 0\mod 2$.

 Observe that $F$ has the following four properties:
 \begin{enumerate}
 	\item \emph{Translation invariance}: $F(\psi^++a_1,\psi^-+a_2)=F(\psi^+,\psi^-)$ for any $a_1,a_2\in\mathbb Z$
	with $a_1+a_2$ even, because $\Phi$ is a gradient specification,
	\item \emph{Inversion invariance}:
	$F(\psi^+,-\psi^-)=F(\psi^+,\psi^-)$; replacing $\psi^-$ by $-\psi^-$
	corresponds to interchanging the sum and difference of $\psi^+$ and $\psi^-$,
	\item \emph{Submodularity}:
	$F(\psi^+,|\psi^-|)\leq F(\psi^+,\psi^-)$;
	equivalent to submodularity of $\Phi$,
	\item \emph{Locally measurable}: $F(\psi^+,\psi^-)$ depends on
	$\psi^\pm_\Lambda$ only.
 \end{enumerate}
 By applying the three operations on the pair $(\psi^+,\psi^-)$ finitely many
 times, one can turn the original pair into a new pair $(\psi^+,\hat\psi^-)$,
 where $\hat\psi^-_\Lambda=(p\circ\psi^-)_\Lambda$.
 In particular, since each operation can only decrease the value of $F$,
 we have
 \[
		F(\psi^+,p\circ\psi^-)=F(\psi^+,\hat\psi^-)\leq F(\psi^+,\psi^-)
 \]
 as desired.
\end{proof}

\begin{corollary}
	\label{cor_adwbowaoboivnwioeafnrnierioen}
	Suppose that $E=\mathbb Z$
	and that $\Phi\in\mathcal S_\mathcal L+\mathcal W_\mathcal L$
	is submodular.
	If $\mu_1,\mu_2\in\mathcal P_\mathcal L(\Omega,\mathcal F^\nabla)$
	are ergodic, then there exists an ergodic measure $\nu\in\mathcal P_\mathcal L(\Omega,\mathcal F^\nabla)$
	with
	\[
	S(\nu)=\frac{S(\mu_1)+S(\mu_2)}2
	\qquad\text{and}\qquad
	\langle \nu|\Phi\rangle\leq \frac{\langle \mu_1|\Phi\rangle+\langle \mu_2|\Phi\rangle}2.
	\]
\end{corollary}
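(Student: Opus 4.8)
The plan is to realise $\nu$ as an ergodic component of the shift-invariant measure obtained by coupling $\mu_1$ and $\mu_2$ independently and then taking a \emph{randomised integer average} of the two height functions, using the submodularity inequality of the preceding lemma to control the energy. I work in a lift with $\phi(0)=0$ almost surely under each $\mu_i$. Let $\varrho$ be the uniform law on $\{0,1\}$, sample a triple $(\phi_1,\phi_2,a)$ from $\mu_1\times\mu_2\times\varrho$, and set
\[
  \psi:=\left\lfloor\tfrac12\bigl(\phi_1+\phi_2+a\bigr)\right\rfloor ,
\]
so that $\psi=\lfloor(\phi_1+\phi_2)/2\rfloor$ when $a=0$ and $\psi=\lceil(\phi_1+\phi_2)/2\rceil$ when $a=1$; write $\nu'$ for the law of $\nabla\psi$. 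The auxiliary variable $a$ makes the vertexwise rounding shift-invariant exactly as in the proof of Lemma~\ref{lemma_ergodic_approx_aux_1}, so $\nu'\in\mathcal P_\mathcal L(\Omega,\mathcal F^\nabla)$. Since $q$ is integral, $\psi$ is $q$-Lipschitz whenever $\phi_1,\phi_2$ are; hence, assuming both $\langle\mu_i|\Phi\rangle$ finite (the only interesting case), $\psi$ has finite energy. Using $\E_a\lfloor(k+a)/2\rfloor=k/2$ one gets $\E[\psi(x)-\psi(0)]=\tfrac12\bigl(S(\mu_1)+S(\mu_2)\bigr)(x)$ for all $x\in\mathcal L$, so $S(\nu')=\tfrac12\bigl(S(\mu_1)+S(\mu_2)\bigr)$.

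For the energy I would apply the preceding lemma with $\Lambda=\Gamma$ for each $\Gamma\subset\Pi_n$: averaging over $a$,
\[
  \E_a\bigl[\Phi_\Gamma(\psi)\bigr]
  =\tfrac12\Bigl(\Phi_\Gamma\bigl(\lceil\tfrac{\phi_1+\phi_2}{2}\rceil\bigr)+\Phi_\Gamma\bigl(\lfloor\tfrac{\phi_1+\phi_2}{2}\rfloor\bigr)\Bigr)
  \leq\tfrac12\bigl(\Phi_\Gamma(\phi_1)+\Phi_\Gamma(\phi_2)\bigr),
\]
and summing over $\Gamma\subset\Pi_n$ and integrating against $\mu_1\times\mu_2$ gives $\nu'(H_{\Pi_n}^0)\leq\tfrac12\bigl(\mu_1(H_{\Pi_n}^0)+\mu_2(H_{\Pi_n}^0)\bigr)$; dividing by $n^d$ and letting $n\to\infty$ yields $\langle\nu'|\Phi\rangle\leq\tfrac12\bigl(\langle\mu_1|\Phi\rangle+\langle\mu_2|\Phi\rangle\bigr)$. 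To pass to an ergodic measure, write $w_{\nu'}$ for the ergodic decomposition of $\nu'$ (Theorem~\ref{theorem_ergodic_decomposition}). Since $\mu_1,\mu_2$ are ergodic and $K$-Lipschitz, Theorem~\ref{thm:superergodresult} gives that $\mu_i$-a.e.\ sample has asymptotic slope $S(\mu_i)$, hence $\nu'$-a.e.\ $\psi$ has asymptotic slope $\tfrac12\bigl(S(\mu_1)+S(\mu_2)\bigr)$; by the characterisation of the slope of an ergodic component in Theorem~\ref{thm:superergodresult} (used exactly as in the proof of Lemma~\ref{lemma_ergodic_approx_aux_1}), $w_{\nu'}$-a.e.\ ergodic component has slope $\tfrac12\bigl(S(\mu_1)+S(\mu_2)\bigr)$. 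Since $\langle\cdot|\Phi\rangle$ is strongly affine (as noted in the proof of Theorem~\ref{thm_SFE_strongly_affine}) and bounded below, $\int\langle\nu|\Phi\rangle\,dw_{\nu'}(\nu)=\langle\nu'|\Phi\rangle\leq\tfrac12\bigl(\langle\mu_1|\Phi\rangle+\langle\mu_2|\Phi\rangle\bigr)$, so some ergodic component $\nu$ satisfies the energy bound; it has the required slope, which finishes the proof. If some $\langle\mu_i|\Phi\rangle=\infty$ the energy bound is vacuous and any ergodic component of $\nu'$ works, so assuming finiteness costs nothing.

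The main obstacle is bookkeeping rather than conceptual: verifying that $\nabla\psi$ is genuinely $\mathcal L$-invariant despite the coordinatewise rounding — this is where the uniform variable $a$ is indispensable, and where one really relies on the computation already carried out in the proof of Lemma~\ref{lemma_ergodic_approx_aux_1} — and making sure the asymptotic slope of $\nu'$-typical samples is identified correctly, so that \emph{every} ergodic component inherits the averaged slope instead of merely averaging to it. Everything else is a routine transcription of the submodularity inequality and of the strong-affineness arguments already in place.
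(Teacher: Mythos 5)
Your proof is correct and follows the paper's argument essentially verbatim: same randomised integer average of an independent coupling of $\mu_1$ and $\mu_2$ (your $a\in\{0,1\}$ playing the role of the paper's $X$, with $\lfloor(k+a)/2\rfloor$ reproducing the $\lceil\cdot\rceil/\lfloor\cdot\rfloor$ split), same appeal to the preceding submodularity lemma for the energy, same use of Theorem~\ref{thm:superergodresult} to identify the slope of every ergodic component, and same extraction of a good component via strong affineness of $\langle\cdot|\Phi\rangle$. The only cosmetic divergence is that you additionally record the exact identity $S(\nu')=\tfrac12(S(\mu_1)+S(\mu_2))$ via $\E_a\lfloor(k+a)/2\rfloor=k/2$ before passing to the ergodic decomposition, whereas the paper goes directly to the asymptotic-slope characterisation of components; both suffice.
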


\begin{proof}
	Write $\hat\mu\in\mathcal P_\mathcal L(\Omega,\mathcal F^\nabla)$ for the following measure:
	to sample from $\hat\mu$, sample first
	a pair $(\phi_1,\phi_2)$ from $\mu_1\times\mu_2$,
	and sample $X$ from $\{0,1\}$
	independently and uniformly at random;
	the final sample $\psi$
	from $\hat\mu$ is now defined by
	\[
	\psi:=
		\begin{cases}
			\lceil{\textstyle\frac{\phi_1-\phi_1(0)+\phi_2-\phi_2(0)}{2}}\rceil
			&\text{if $X=0$,}\\
				\lfloor{\textstyle\frac{\phi_1-\phi_1(0)+\phi_2-\phi_2(0)}{2}}\rfloor
				&\text{if $X=1$.}
		\end{cases}
	\]
	Since $\phi_1-\phi_1(0)$
	and $\phi_2-\phi_2(0)$
	are asymptotically close to $S(\mu_1)$ and $S(\mu_2)$
	respectively in the measure $\mu_1\times\mu_2$ in the sense of Theorem~\ref{thm:superergodresult},
	it is clear that $\psi$ is asymptotically close to $(S(\mu_1)+S(\mu_2))/2$
	in $\hat\mu$ (see also the proof of Lemma~\ref{lemma_ergodic_approx_aux_1}).
	In particular, $S(\nu)=(S(\mu_1)+S(\mu_2))/2$
	for $w_{\hat\mu}$-almost every $\nu$  in the ergodic decomposition of $\hat\mu$.
	By the previous lemma,
	we have
	\[
		\langle \hat\mu|\Phi\rangle\leq \frac{\langle \mu_1|\Phi\rangle+\langle \mu_2|\Phi\rangle}2.
	\]
	As $\langle\cdot|\Phi\rangle$ is strongly affine,
	we have $\langle\nu|\Phi\rangle\leq \langle\hat\mu|\Phi\rangle$
	with positive $w_{\hat\mu}$-probability.
	This proves the existence of the desired measure $\nu$.
\end{proof}

\begin{lemma}\label{lem:frozen}
	Consider the case that $E=\mathbb Z$,
	$\Phi$ a potential in $\mathcal S_\mathcal L+\mathcal W_\mathcal L$,
	and $\mu$ an ergodic minimizer with $S(\mu)\in U_\Phi$.
	Then $\mathcal H(\mu|\lambda)<0$.
\end{lemma}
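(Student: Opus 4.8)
The plan is to argue by contradiction: assume $\mathcal H(\mu|\lambda)=0$ and contradict the finite-energy property of minimizers (Theorem~\ref{thm_main_minimizers_finite_energy}). Since $\mu$ is a minimizer it has finite specific free energy and is therefore $K$-Lipschitz for $K$ minimal subject to $Kd_1\ge q$ (recall from Section~\ref{section:ergo_decomp} that $\mathcal H(\mu|\Phi)=\infty$ otherwise), so $\mathcal H(\mu|\lambda)$ is well-defined; moreover, since $E=\mathbb Z$ and $\lambda^{\Pi_n-1}$ is a counting measure, $\mathcal H(\mu|\lambda)=-\lim_{n\to\infty}n^{-d}H(\mu\pi^\nabla_{\Pi_n})$, where $H$ denotes the Shannon entropy of the gradient field restricted to the box.

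\emph{Step 1: $\mu$ is frozen.} Fix $L\in N\cdot\mathbb N$, and for $n\in L\cdot\mathbb N$ tile $\Pi_n$ into $(n/L)^d$ translates of $\Pi_L$ by vectors in $\mathcal L$. By the chain rule for Shannon entropy, together with the fact that conditioning on a larger $\sigma$-algebra decreases entropy and that $\mu$ is shift-invariant, one obtains $H(\mu\pi^\nabla_{\Pi_n})\ge (n/L)^d\,\mathbb E_\mu\!\big[H(\mu^\phi\pi^\nabla_{\Pi_L})\big]$, where $\mu^\phi$ denotes the regular conditional probability distribution of the gradient field on $\Pi_L$ given its values on $\mathbb Z^d\smallsetminus\Pi_L$ (this conditional entropy is finite because $\mu$ is $K$-Lipschitz). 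Dividing by $n^d$ and letting $n\to\infty$ along $L\cdot\mathbb N$ gives $0=-\mathcal H(\mu|\lambda)\ge L^{-d}\,\mathbb E_\mu[H(\mu^\phi\pi^\nabla_{\Pi_L})]\ge 0$, so this conditional entropy vanishes; as $H\ge 0$, the measure $\mu^\phi\pi^\nabla_{\Pi_L}$ is $\mu$-a.s.\ a point mass. Since every $\Lambda\subset\subset\mathbb Z^d$ lies in a translate of $\Pi_L$ for $L$ large, and conditioning on all of $\phi|_{\mathbb Z^d\smallsetminus\Lambda}$ only decreases entropy further, we conclude that for every $\Lambda\subset\subset\mathbb Z^d$ the conditional distribution of $\phi|_\Lambda$ given $\phi|_{\mathbb Z^d\smallsetminus\Lambda}$ is $\mu$-a.s.\ a Dirac mass (note that conditioning on the full exterior fixes a reference value adjacent to $\Lambda$, so a point mass for the gradient field on $\Lambda$ is the same as a point mass for $\phi|_\Lambda$).

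\emph{Step 2: large boxes have a unique $q$-Lipschitz completion.} By Theorem~\ref{thm_main_minimizers_finite_energy}, $1_{\Omega_q}(\mu\pi_{\mathbb Z^d\smallsetminus\Lambda}\times\lambda^\Lambda)\ll\mu$. Disintegrating both sides over the configuration $b:=\phi|_{\mathbb Z^d\smallsetminus\Lambda}$, the left-hand measure restricts on the fiber over $b$ to the counting measure on the set $Z_\Lambda(b)$ of $q$-Lipschitz extensions of $b$ into $\Lambda$, while $\mu$ restricts to $\mu^\phi\pi_\Lambda$; hence $Z_\Lambda(b)\subset\operatorname{supp}(\mu^\phi\pi_\Lambda)$ for $\mu$-a.e.\ $\phi$. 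Combined with Step 1 this forces $|Z_\Lambda(\phi|_{\mathbb Z^d\smallsetminus\Lambda})|=1$ for $\mu$-a.e.\ $\phi$ and every $\Lambda\subset\subset\mathbb Z^d$. (Alternatively one may route this through $\mu^\phi\pi_\Lambda\in\mathcal A_{\Lambda,\phi}$ from Theorem~\ref{thm_mr_minimizers}, the bounded max-diameter of $\mathcal A_{\Lambda,\phi}$ in Proposition~\ref{propo_all_abs_cts_wrt_one_another}, and $\gamma_\Lambda(\cdot,\phi)\pi_\Lambda\in\mathcal A_{\Lambda,\phi}$.)

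\emph{Step 3: contradiction via the interior slope.} Recall $U_\Phi=U_q$ (Theorem~\ref{thm_main_st_general}), so $u:=S(\mu)$ lies in the interior of the set of $q$-Lipschitz slopes. Since $\mu$ is ergodic and $K$-Lipschitz, Theorem~\ref{thm:superergodresult} yields, for every $\varepsilon>0$, that $\mu$-a.s.\ $\|\phi_{\Pi_n}-\phi(0)-u|_{\Pi_n}\|_\infty\le\varepsilon n$ for all large $n$. Fixing $\varepsilon>0$ small and applying the ``pyramid'' estimate used in the proof of Lemma~\ref{lemma:PBL_upper_bound} (a consequence of Lemma~\ref{lemma_lipschitz_q_and_norm} and $u\in U_q$; cf.\ \eqref{eq:phipyr}), one finds that the largest and smallest $q$-Lipschitz extensions of $\phi|_{\mathbb Z^d\smallsetminus\Pi_n}$ into $\Pi_n$—which exist by the Kirszbraun theorem (Proposition~\ref{propo_Kirszbraun}) and agree with $\phi$ on $\mathbb Z^d\smallsetminus\Pi_n$—differ at the centre vertex of $\Pi_n$ by more than $1$ once $n$ is large enough. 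Hence $|Z_{\Pi_n}(\phi|_{\mathbb Z^d\smallsetminus\Pi_n})|\ge 2$ for $\mu$-a.e.\ $\phi$, contradicting Step 2. Therefore $\mathcal H(\mu|\lambda)<0$.

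The main obstacle I expect lies in Step 3: one must make quantitative the statement that an interior slope together with ergodicity (via the $\ell^\infty$-control of Theorem~\ref{thm:superergodresult}) leaves genuine room for at least two $q$-Lipschitz completions of a large box, i.e.\ one has to track how the two extremal $q$-Lipschitz extensions behave under an $O(\varepsilon n)$ perturbation of a near-linear boundary condition. This is precisely the pyramid mechanism already isolated in the paper around \eqref{eq:phipyr}, so it should go through, but it is the step that does the real work. Steps~1 and~2 are routine once one is careful with the gradient-versus-value bookkeeping and with the disintegration of the $\sigma$-finite measure appearing in Theorem~\ref{thm_main_minimizers_finite_energy}.
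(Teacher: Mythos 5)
Your proof is correct, but it takes a genuinely different route from the paper's. The paper argues by forming the diagonal coupling $\hat\mu\in\mathcal P^2_\mathcal L(\Omega,\mathcal F^\nabla)$ with $\phi_1=\phi_2$ almost surely: under the hypothesis $\mathcal H(\mu|\lambda)=0$, this coupling has specific free energy $\mathcal H^2(\hat\mu|\Phi)=\mathcal H(\mu|\lambda)+2\langle\mu|\Phi\rangle=2\mathcal H(\mu|\Phi)=\sigma^2(u,u)$, so it is a minimizer in the product setting, and the product-space finite-energy property forces $\phi_1\neq\phi_2$ with positive probability on any box with more than one $q$-Lipschitz completion — contradicting the diagonal structure. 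Your argument instead works entirely in the single-copy setting: you extract the ``frozen'' structure directly from zero specific entropy via the chain rule for Shannon entropy and subadditivity (your Step~1), then combine this with the disintegrated form of the finite-energy property (your Step~2), and only then invoke the interior-slope/ergodicity room-to-breathe estimate, which is the same mechanism the paper uses (your Step~3, the paper's last sentence). What the paper's route buys is economy: it is a two-line reuse of machinery (the product-setting finite-energy theorem and the strongly affine specific free energy) that it has already built for the main strict-convexity argument, and it sidesteps the bookkeeping of conditional gradient entropies. What your route buys is transparency — it makes explicit the entropic content of ``frozen'' (Dirac conditionals) without needing the product formalism at all — at the modest cost of the off-by-one-reference-point care in the gradient chain rule for Step~1, which is exactly the kind of issue Lemma~\ref{lemma_fe_superadditivity} already handles and which you flag correctly. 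Both proofs rely on the same underlying contradiction and both go through.
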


\begin{proof}
	Suppose that $\mu$ does have zero combinatorial entropy;
	we aim to derive a contradiction.
	Write $u:=S(\mu)$, and
	write $\hat\mu\in\mathcal P_\mathcal L^2(\Omega,\mathcal F^\nabla)$
	for the unique measure which has $\mu$
	as its first marginal, and in which $\phi_1$ and $\phi_2$
	are equal almost surely.
	Then
	$S^2(\hat\mu)=(u,u)$
	and
	$\mathcal H^2(\hat\mu|\Phi)=2\langle\mu|\Phi\rangle=2\mathcal H(\mu|\Phi)=\sigma^2(S^2(\hat\mu))<\infty$,
	that is, $\hat\mu$ is a minimizer in the product setting.
	The adaptation of Theorem~\ref{thm_main_minimizers_finite_energy}
	to the product setting implies that
	\[
	1_{\Omega_q^2}(\hat\mu\pi_{\mathbb Z^d\smallsetminus\Lambda}\times\lambda^\Lambda\times\lambda^\Lambda)
	\ll
	\hat\mu
	\]
	for any $\Lambda\subset\subset\mathbb Z^d$,
	where $\Omega_q^2$ is the set of pairs of $q$-Lipschitz height functions.
	However, since $\mu$ is ergodic with slope in $U_\Phi$,
	we can find some $\Lambda\subset\subset\mathbb Z^d$
	such that with positive $\hat\mu$-probability
	$\phi_1|_{\mathbb Z^d\smallsetminus\Lambda}$
	has more than a single $q$-Lipschitz extension to $\mathbb Z^d$.
	This contradicts that $\phi_1$ and $\phi_2$
	are almost surely equal in $\hat\mu$.
\end{proof}

We are now ready to prove the second main theorem of this section.

\begin{proof}[Proof of Theorem~\ref{thm_submodular_discrete_strictly_convex}]
Recall Theorem~\ref{thm_main_main}.
If $\sigma$ is not strictly convex,
then there is an affine map $h:(\mathbb R^d)^*\to\mathbb R$
with $h\leq \sigma$ and such that $\{h=\sigma\}\cap \partial U_\Phi$
is not convex.
Write $H$ for the exposed points of $\{h=\sigma\}\subset (\mathbb R^d)^*$
which are also in $\partial U_\Phi$.
Then the convex envelope of $H$ intersects $U_\Phi$.

Note that each slope in $H$ is also an exposed point of $\sigma$.
This means that for each slope in $H$, there is an ergodic minimizer $\mu$
of that slope.
Moreover, since $\mathcal H(\mu|\lambda)=0$
for any $\mu$ with $S(\mu)\in H\subset \partial U_\Phi$,
we must have $\langle\mu|\Phi\rangle=h(S(\mu))=\sigma(S(\mu))$
for any such measure $\mu$.
The fact that the convex envelope of $H$ intersects $U_\Phi$,
together with
Corollary~\ref{cor_adwbowaoboivnwioeafnrnierioen},
implies that there exists an ergodic measure $\mu\in\mathcal P_\mathcal L(\Omega,\mathcal F^\nabla)$
with $S(\mu)\in U_\Phi$
and $\langle \mu|\Phi\rangle\leq h(S(\mu))\leq \sigma(S(\mu))$.
But it is only possible that $\langle \mu|\Phi\rangle\leq \sigma(S(\mu))$
if $\langle \mu|\Phi\rangle=\sigma(S(\mu))$
and if $\mu$ is a minimizer with $\mathcal H(\mu|\lambda)=0$.
This contradicts Lemma~\ref{lem:frozen}.
\end{proof}

\subsection{Tree-valued graph homomorphisms}
\label{subsection:applications_tree}

The flexibility of the main theorem in this article can also be used to prove statements about the behavior of random functions taking values in target spaces other than $\Z$ and $\R$.
A noteworthy example is the model of tree-valued graph homomorphisms described in~\cite{MT16}.
Let $k\geq 2$ denote a fixed integer,
and let $\mathcal T_k$ denote the $k$-regular tree,
that is, a tree in which every vertex has exactly $k$ neighbors.
In this context, tree-valued graph homomorphisms are functions from $\Z^d$ to
the vertices of $\mathcal T_k$ which also map the edges of the square lattice to
the edges of the tree.
Regular trees are natural objects in
several fields of mathematics:
in group theory, for example, they arise as Cayley graphs
of free groups on finitely many generators.
As a significant result in~\cite{MT16}, the authors characterize
the surface tension for the model (there named \emph{entropy}) and show that
it is equivalent to the number of graph homomorphisms with nearly-linear boundary
conditions.
In this section we will confirm the conjecture from~\cite{MT16}, which states that
this entropy function is strictly convex.
We must first show how the model and the corresponding surface tension fit into the framework of this paper.
A tree-valued graph homomorphism can be represented
by an integer-valued graph homomorphism after introducing an infinite-range
potential to compensate for the ``loss of information''.

Let us first introduce some definitions.
Write $d_{\mathcal T_k}$ for the graph metric on $\mathcal T_k$.
Let $g$ denote a fixed \emph{bi-infinite geodesic} through $\mathcal T_k$,
that is, a $\mathbb Z$-indexed sequence of vertices
$g=(g_n)_{n\in\mathbb Z}\subset\mathcal T_k$
such that
$d_{\mathcal T_k}(g_n,g_m)=|m-n|$ for any $n,m\in\mathbb Z$.
Let $p:\mathcal T_k\to\mathbb Z$ denote the projection of
the tree onto $g$,
defined such that $p(x)$ minimizes $d_{\mathcal T_k}(x,g_{p(x)})$
for any $x\in\mathcal T_k$.
Write $h$ for the \emph{horocyclic height function} on $\mathcal T_k$;
this is the function $h:\mathcal T_k\to\mathbb Z$
defined by $h(x):=p(x)+d_{\mathcal T_k}(x,g_{p(x)})$ (see also~\cite{grimmett2018locality}).
In other words, if $x=g_n$ for some $n\in\mathbb Z$,
then $h(x)=n$,
and $h$ increases by one every time one moves away from the geodesic $g$.
The function $h$ can also be characterized as follows:
each vertex $x\in\mathcal T_k$ has a unique neighbor $y$ such that
$h(y)=h(x)-1$,
and $h(z)=h(x)+1$ for every other neighbor $z$ of $x$.

The graphs $\mathbb Z^d$, $\mathbb Z$, and $\mathcal T_k$
are bipartite, we shall call the two parts
the \emph{even vertices} and \emph{odd vertices}
respectively; the set of even vertices is the part containing
 $0$ if the graph is $\mathbb Z^d$
or $\mathbb Z$,
and the part containing $g_0$ if the graph is $\mathcal T_k$.
By a \emph{graph homomorphism} we mean a map from
$\mathbb Z^d$ to $\mathbb Z$ or $\mathcal T_k$
which preserves the parity of the vertices,
and which maps edges to edges.
Write $\Omega$ and $\tilde\Omega$
respectively for the
set of graph homomorphisms from $\mathbb Z^d$
to either $\mathbb Z$ or $\mathcal T_k$.
For fixed $\phi\in\Omega$ and $n\in\mathbb Z$,
we call some set $\Lambda\subset\subset\mathbb Z^d$
an \emph{$n$-upper level set} if $\Lambda$ is
a connected component of $\{\phi\geq n\}\subset\mathbb Z^d$ in the square lattice graph.
An $n$-upper level set is also called an \emph{$n$-level set}
or simply a \emph{level set}.


\begin{figure}
	\centering
  \newcommand{\fw}{.475\textwidth}
	\begin{subfigure}{\fw}
		\centering
    \includegraphics[width=\textwidth]{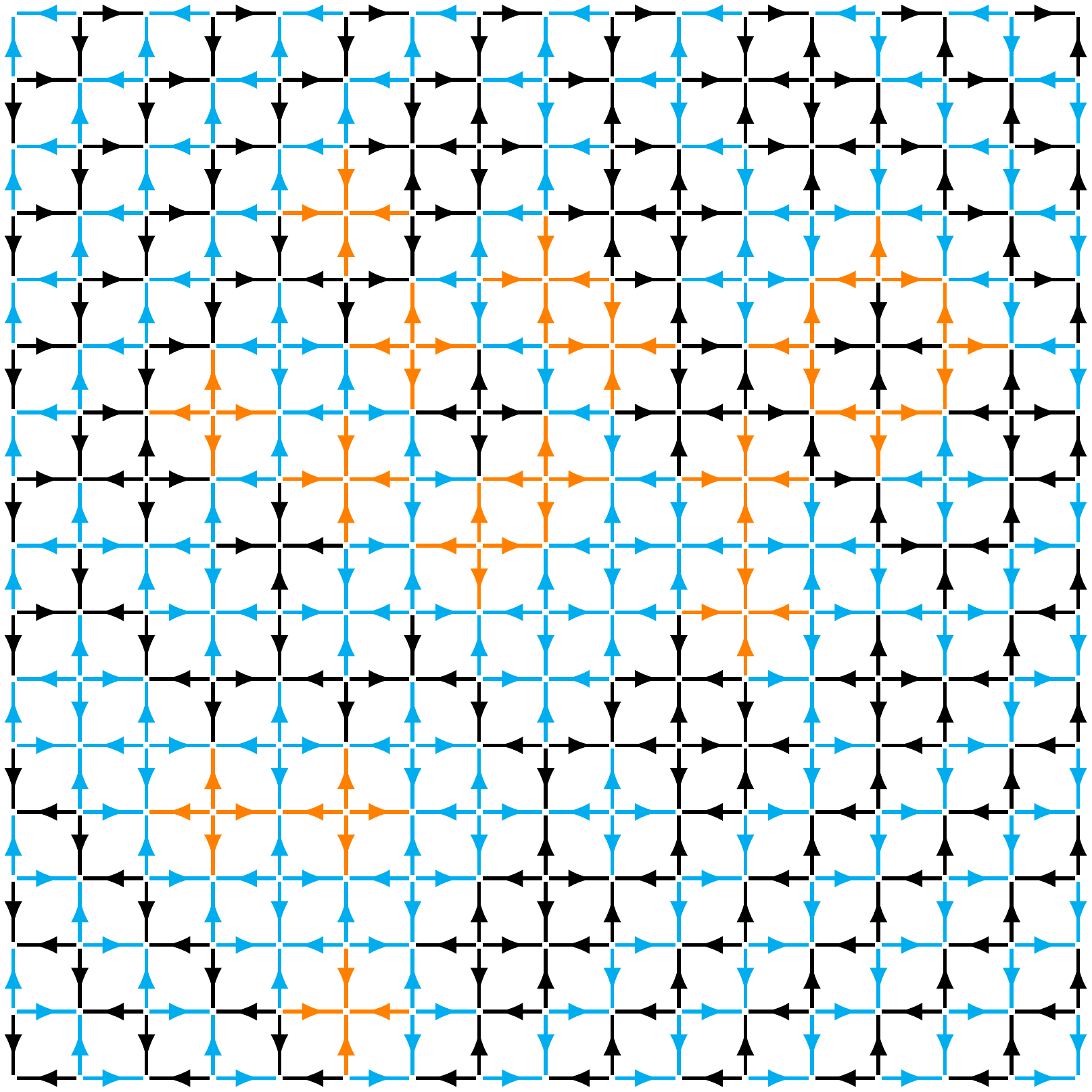}
		\caption*{The gradient of the graph homomorphism}
	\end{subfigure}
  \hfill
  \begin{subfigure}{\fw}
		\centering
    \includegraphics[width=\textwidth]{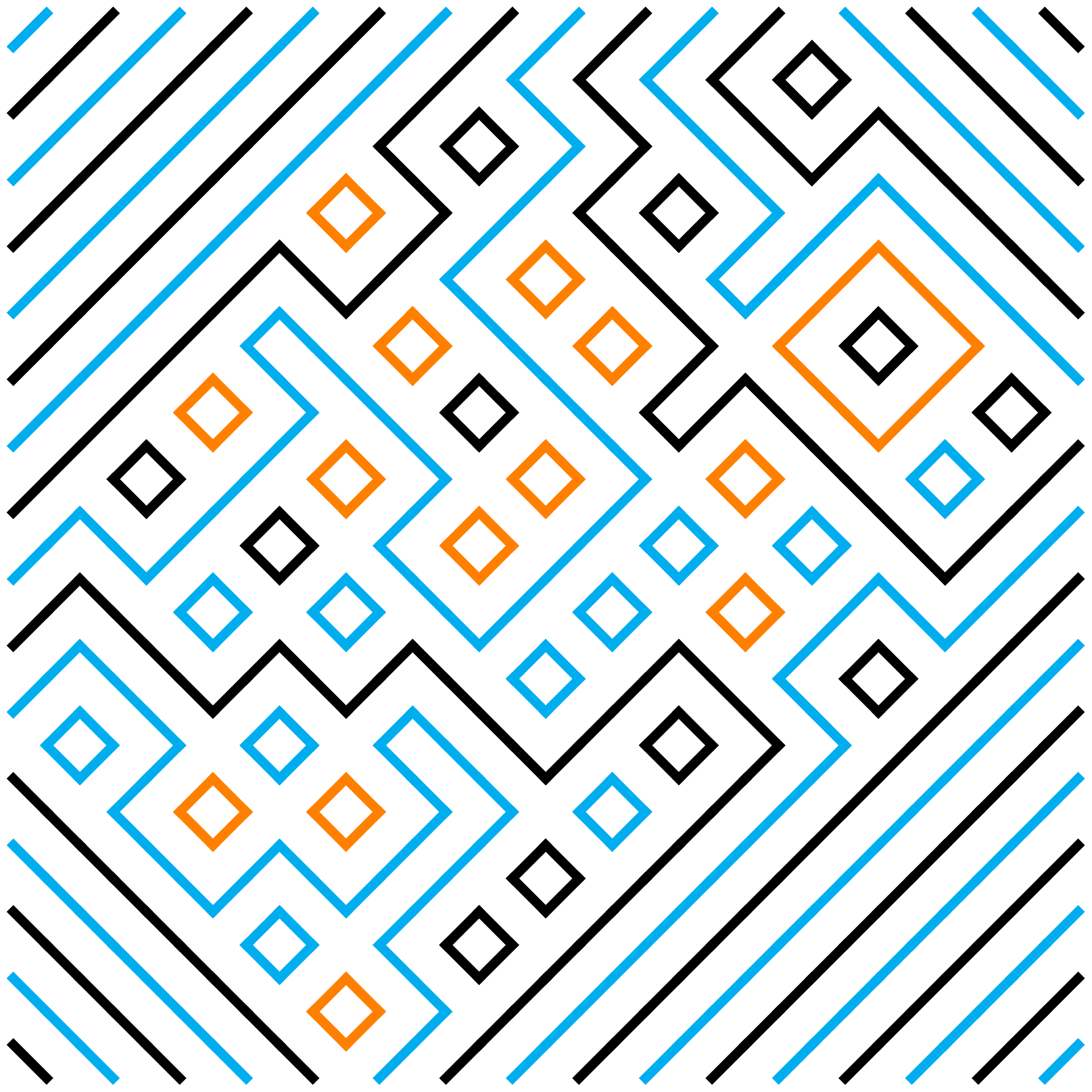}
		\caption*{The boundaries of the upper level sets}
	\end{subfigure}
	\caption{A random $\mathcal T_3$-valued graph homomorphism}
	\label{fig:tree_valued_small}
\end{figure}

Write $U$ for the set of slopes $u\in(\mathbb R^d)^*$
such that $|u(e_i)|<1$ for each element $e_i$
in the natural basis of $\mathbb R^d$.
For fixed $u\in\bar U$,
write $\phi^u\in\Omega$ for the graph homomorphism
defined by
\[
	\phi^u(x):=
	\lfloor u(x)\rfloor+
	\begin{cases}
		0&\text{if $d_1(0,x)\equiv \lfloor u(x)\rfloor \mod 2$,}\\
		1&\text{if $d_1(0,x)\equiv \lfloor u(x)\rfloor + 1 \mod 2$,}
	\end{cases}
\]
and write $\tilde\phi^u\in\tilde\Omega$
for the graph homomorphism defined by $\tilde\phi^u(x)=g_{\phi^u(x)}$.

 It is shown in Section~3 of~\cite{MT16} that the entropy function $\operatorname{Ent}:\bar U\to [-\log k,0]$ associated to the model of graph homomorphisms from $\Z^d$ to $\mathcal T_k$ can be estimated by counting for each slope
 $u\in\bar U$ the number of graph homomorphisms $\phi:\mathbb Z^d\to\mathcal T_k$
 which equal $\tilde\phi^u$ on the complement of $\Pi_n$.
More precisely, for $u\in\bar U$, we have
\[
\operatorname{Ent}(u)=\lim_{n \to \infty}-n^{-d}\log |\{\tilde\phi\in\tilde\Omega:\tilde\phi_{\mathbb Z^d\smallsetminus\Pi_n}=\tilde\phi_{\mathbb Z^d\smallsetminus\Pi_n}^u\}|.
\]
Notice that counting the number of functions in this
set is similar to considering the normalizing constant in
the definition of the specification,
as we frequently do in this paper.
Before proceeding, let us already remark that $\operatorname{Ent}(u)=0$
for $u\in\partial U$.
Indeed, for such $u$, the set in the display contains only a single element:
	the original function $\tilde\phi^u$.
	It is also easy to see that $\operatorname{Ent}$
	is not identically zero on $\bar U$.
	Consider, for example, the slope $u=0$,
	and consider the set of all graph homomorphisms $\tilde\phi$ which equal $\tilde\phi^u$
	on the complement of $\Pi_n$ and which
	map all the even
	vertices of the square lattice to $g_0\in\mathcal T_k$.
	Then this set contains
	at least $k^{\lfloor n^d/2\rfloor}$
	functions, proving that $\operatorname{Ent}(u)\leq -\frac12\log k<0$.

We now get to the heart of the case.
	Let us use the horocyclic height function to count the set in the previous display in a different way.
	Suppose that some graph homomorphism $\phi\in\Omega$
	equals $\phi^u$ on the complement of $\Pi_n$.
	How many graph homomorphisms $\tilde\phi\in\tilde\Omega$
	do there exist which satisfy $h\circ\tilde\phi=\phi$
	and equal $\tilde\phi^u$ on the complement
	of $\Pi_n$?
	It turns out that this number must be precisely
	$(k-1)^{F_{\Pi_n}(\phi)}$,
	where $F_\Lambda(\phi)$ denotes the number of level sets of $\phi$
	which are entirely contained in $\Lambda$,
	for any $\Lambda\subset\subset\mathbb Z^d$.
	Indeed, each time we see an $n$-level set
	of $\phi$,
	the function $\tilde\phi$ must be constant on the outer boundary
	of that $n$-level set---say with value $x\in\mathcal T_k$---and there
	are $k-1$ neighbors of $x$ which lead to an increase of the horocyclic height function
	by exactly one.
	In particular, we have
	\[
  \numberthis
  \label{eq:integer_def_for_Ent}
	\operatorname{Ent}(u)=\lim_{n \to \infty}-n^{-d}
	\log
	\sum_{\phi\in\Omega,\,
	\phi_{\mathbb Z^d\smallsetminus\Pi_n}=\phi_{\mathbb Z^d\smallsetminus\Pi_n}^u}
	(k-1)^{F_{\Pi_n}(\phi)}.
	\]
  See Figure~\ref{fig:tree_valued_small}
  for a sample of the model, with
  the gradient of the graph homomorphism on the left,
  and with the boundaries of the level sets of
  the horocyclic height function on the right.
	We have now reduced to a problem expressed entirely in terms of integer-valued
	functions.
	In fact, we do no longer require $k$ to be an integer,
	although we do require that $k\geq 2$.
	In the remainder of this section, we construct a potential $\Phi$
	which fits into our class $\mathcal S_\mathcal L+\mathcal W_\mathcal L$
	and which is monotone,
	and such that $U_\Phi=U$ and $\sigma=\operatorname{Ent}$.
	This proves that $\sigma$ and $\operatorname{Ent}$
	are strictly convex on $U_\Phi=U$.
	In fact, the specification induced by the potential that we construct is not perfectly monotone,
	but we shall demonstrate that it is sufficiently monotone
	for us to deduce that $\sigma$ is strictly convex.

Unfortunately, we cannot hope to use a potential that counts the level sets directly.
The reason is that there is no upper bound on the number of level sets containing a single point;
such a potential would always sum to infinity.
However, each finite level set has a uniquely defined outer boundary,
and each vertex is contained in only finitely many outer boundaries.
This means that counting outer boundaries of finite level sets is equivalent to counting
finite level sets, and the potential that does so is well-defined
and fits our framework, as we will show.
It is not possible through this method to count infinite level sets,
but we shall demonstrate how to work around this apparent difficulty.

We shall now describe how to characterize the
outer boundary of a finite level set.
This is not entirely straightforward due to
the connectivity properties of the square lattice.
By the $*$-graph on $\mathbb Z^d$, we mean the graph in which two vertices $x$ and $y$
are neighbors if and only if $\|x-y\|_\infty=1$.
For example, each vertex has $3^d-1$ distinct $*$-neighbors.
On every single occasion that we mention a graph-related notion, we mean the
usual square lattice graph, unless we explicitly mention the $*$-graph.
Due to the connectivity properties of the square lattice,
we have the following proposition.

\begin{proposition}
Suppose that $\Lambda\subset\subset\mathbb Z^d$
is finite and connected,
and that its complement $\Delta:=\mathbb Z^d\smallsetminus\Lambda$
is $*$-connected.
Define $\partial^*\Delta$ to be the set of vertices $x\in\mathbb Z^d$
such that:
\begin{enumerate}
	\item Either $x\in\Lambda=\mathbb Z^d\smallsetminus\Delta$ and $*$-adjacent to $\Delta$,
	\item Or $x\in\Delta$ and adjacent to $\Lambda=\mathbb Z^d\smallsetminus\Delta$.
\end{enumerate}
Then $\partial^*\Delta\cap\Lambda=\partial^*\Delta\cap(\mathbb Z^d\smallsetminus\Delta)$ is connected,
and so is $\partial^*\Delta$.
\end{proposition}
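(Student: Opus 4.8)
The plan is to strip away the formal parts of the statement and isolate one genuine combinatorial fact about the square lattice. First I note that the displayed identity $\partial^*\Delta\cap\Lambda=\partial^*\Delta\cap(\mathbb Z^d\smallsetminus\Delta)$ is immediate, since $\Lambda=\mathbb Z^d\smallsetminus\Delta$, so there is really only one set at stake; write $I:=\partial^*\Delta\cap\Lambda$ for the inner part and $O:=\partial^*\Delta\cap\Delta$ for the outer part (the vertices of $\Delta$ that are square-lattice-adjacent to $\Lambda$), so that $\partial^*\Delta$ is the disjoint union $I\cup O$. Since $\Lambda$ is a nonempty finite subset of the infinite connected graph $\mathbb Z^d$, the set $O$ is nonempty; moreover any $y\in O$ has a square-lattice neighbour $x\in\Lambda$, and this $x$ is square-lattice-adjacent --- hence $*$-adjacent --- to the vertex $y\in\Delta$, so $x\in I$. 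Thus $I\neq\varnothing$ and every vertex of $O$ is square-lattice-adjacent to a vertex of $I$. Therefore connectedness of $\partial^*\Delta$ in the square lattice follows at once from connectedness of $I$ in the square lattice, and only the latter requires work.

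The statement that remains is: if $\Lambda\subset\subset\mathbb Z^d$ is connected in the nearest-neighbour (square-lattice) graph and its complement $\Delta$ is connected in the $*$-graph, then the inner $*$-boundary $I=\{x\in\Lambda:x\text{ is }*\text{-adjacent to }\Delta\}$ is connected in the square lattice. I would deduce this from Tim\'ar's boundary-connectivity theorem: the square lattice and the $*$-graph on $\mathbb Z^d$ form a pair of graphs of the kind covered by that theorem, and its hypotheses (one graph of the pair connecting $\Lambda$, the other connecting the complement) together with its conclusion (the inner boundary is connected in the first graph) match our situation exactly. In dimension $d=2$ this is the classical discrete Jordan-curve phenomenon and admits an elementary, hands-on proof; in general $d$ it seems cleanest to invoke the known result.

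The main obstacle is precisely this last step: upgrading from $*$-connectivity to genuine square-lattice connectivity of the inner boundary. The weak form --- that $I$ is $*$-connected --- is easy: given two vertices of $I$, take a $*$-path in $\Delta$ joining their respective $*$-neighbours and ``shadow'' it by vertices of $I$. Square-lattice connectivity is the subtle point, because a square-lattice path inside $\Lambda$ need not stay near the boundary, and a $*$-path in $\Delta$ need not project to a square-lattice path through $I$ on the $\Lambda$-side. A naive induction on $|\Lambda|$ --- deleting from $\Lambda$ a non-cut vertex that also lies in $I$, so that the complement stays $*$-connected --- stumbles on the facts that such a vertex need not exist and that removing an \emph{interior} vertex of $\Lambda$ can disconnect the $*$-complement; Tim\'ar's argument circumvents this using the structure of the graph pair and an induction of a different flavour, and for the purposes of this section citing that theorem is enough.
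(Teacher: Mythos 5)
Your reduction to square-lattice connectivity of the inner $*$-boundary $I:=\partial^*\Delta\cap\Lambda$ is clean and correct, and the step from connectivity of $I$ to connectivity of $\partial^*\Delta$ (because every vertex of $O:=\partial^*\Delta\cap\Delta$ has a square-lattice neighbour in $I$) is also fine. Since the paper states the proposition with no proof, your argument is the only proof on offer, so the burden rests entirely on the cited step.

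That cited step is where I see a gap. The forms of Tim\'ar's boundary-connectivity theorem that I am aware of conclude \emph{$*$-connectivity} of the relevant boundary, not square-lattice connectivity: with $\Lambda$ finite and NN-connected and $\Delta=\Lambda^c$ $*$-connected, the theorem yields that $\{x\in\Delta : x\sim_*\Lambda\}$ (and likewise the inner boundary $I$) is $*$-connected. You assert the strictly stronger statement that $I$ is connected in the square lattice, claiming ``the inner boundary is connected in the first graph,'' whereas I believe Tim\'ar's conclusion lives in the second, coarser graph. A $*$-connected set is not in general NN-connected, and the diagonal steps are exactly where this leaks. The distinction is not cosmetic here: the lemma immediately following the proposition needs square-lattice connectivity of $I$ in order to conclude that the constraint $\phi\geq n$ on $I$ puts $I$ inside a single NN-component of $\{\phi\geq n\}$; with only $*$-connectivity of $I$, a diagonal step between two vertices of $I$ could jump over a vertex of $\Lambda$ at which $\phi<n$. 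So you need to either quote the precise theorem (from Tim\'ar, or e.g.\ Deuschel--Pisztora) that really delivers square-lattice connectivity of the inner $*$-boundary under these exact hypotheses, or supply the additional argument that upgrades $*$-connectivity of $I$ to square-lattice connectivity --- and that upgrade is, I suspect, the genuine content of the proposition rather than a routine corollary.
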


Consider a finite nonempty connected set $\Lambda\subset\subset\mathbb Z^d$.
Write $\Lambda^\infty$ for the \emph{outside} of $\Lambda$,
that is, the unique unbounded $*$-connected component of the complement of $\Lambda$.
Write also $\bar\Lambda$ for the complement of $\Lambda^\infty$:
this set is finite and connected,
and contains $\Lambda$.
The pair $(\bar\Lambda,\Lambda^\infty)$ will play the role of $(\Lambda,\Delta)$
in the previous proposition.
The set $\partial^*\Lambda^\infty$ can obviously be written as the disjoint union
of $\partial^*\Lambda^\infty\cap\Lambda^\infty$
and
$\partial^*\Lambda^\infty\cap\bar\Lambda$.
Claim that $\partial^*\Lambda^\infty\cap\bar\Lambda=\partial^*\Lambda^\infty\cap\Lambda$.
Indeed, if $x\in\partial^*\Lambda^\infty\cap\bar\Lambda$
is not in $\Lambda$,
then it should be in $\Lambda^\infty$ as it is $*$-adjacent to $\Lambda^\infty$;
this proves the claim.
This also means that all vertices in $\partial^*\Lambda^\infty\cap\Lambda^\infty$
are adjacent to $\Lambda$.

Suppose now that $\Lambda$ is also an $n$-level set of some graph homomorphism $\phi\in\Omega$.
Then $\phi$ must equal exactly $n-1$ on $\partial^*\Lambda^\infty\cap\Lambda^\infty$,
and $\phi$ must be at least $n$ on $\partial^*\Lambda^\infty\cap\bar\Lambda=\partial^*\Lambda^\infty\cap\Lambda$.
We have now proven the following lemma.

\begin{lemma}
	Suppose that $\Delta\subsetneq\mathbb Z^d$ is $*$-connected
	and cofinite, with its complement connected.
	Then
	\[
	\begin{split}
	&\{\phi\in\Omega:\text{$\Delta$ is the outside of a $n$-level set of $\phi$ for some $n\in\mathbb Z$}\}
	\\&\qquad=
	\{\phi\in\Omega:
		\text{$\phi_{\partial^*\Delta\cap\Delta}= n-1$
		and $\phi_{\partial^*\Delta\smallsetminus\Delta}\geq n$
		for some $n\in\mathbb Z$}
	\}
	\in\mathcal F_{\partial^*\Delta}^\nabla.
	\end{split}
	\]
	Moreover, no two level sets of $\phi$
  produce the same outside boundary $\partial^*\Delta$.
\end{lemma}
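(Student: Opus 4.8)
The plan is to prove the two inclusions separately, then measurability, then the injectivity ("moreover") statement. Throughout, write $\Lambda' := \mathbb Z^d \smallsetminus \Delta$, which is finite, nonempty, and connected by hypothesis, and set $B := \partial^*\Delta \smallsetminus \Delta = \partial^*\Delta \cap \Lambda'$, the set of vertices of $\Lambda'$ that are $*$-adjacent to $\Delta$. By the preceding proposition (applied with the roles $(\Lambda,\Delta)$ played by $(\Lambda',\Delta)$), the set $B$ is connected in the square lattice, and it is nonempty because $\mathbb Z^d$ is connected and $\Lambda'$ is a nonempty proper subset.

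The inclusion "$\subseteq$" is essentially the content of the discussion preceding the lemma: if $\Lambda$ is an $n$-level set of $\phi$ with $\Lambda^\infty = \Delta$, then $\bar\Lambda = \Lambda'$; the set $\partial^*\Delta \cap \Delta$ consists of vertices of $\Lambda^\infty$ adjacent to $\Lambda$, which lie outside $\{\phi \ge n\}$, so $\phi = n-1$ there by the graph-homomorphism property, while $\partial^*\Delta \smallsetminus \Delta = B \subseteq \Lambda \subseteq \{\phi \ge n\}$. Hence $\phi$ lies in the right-hand side. For the reverse inclusion, suppose $\phi \in \Omega$ satisfies $\phi_{\partial^*\Delta \cap \Delta} = n-1$ and $\phi_{\partial^*\Delta \smallsetminus \Delta} \ge n$ for some $n \in \mathbb Z$. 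Since $\phi \ge n$ on the connected nonempty set $B$, there is a unique connected component $\Lambda$ of $\{\phi \ge n\}$ containing $B$, and I claim $\Lambda$ is finite with $\Lambda^\infty = \Delta$. First, $\Lambda \subseteq \Lambda'$: a square-lattice path inside $\Lambda$ from $B$ into $\Delta$ would have a first vertex in $\Delta$, necessarily adjacent to $\Lambda'$ and hence in $\partial^*\Delta \cap \Delta$, where $\phi = n-1 < n$, contradicting that the path stays in $\{\phi \ge n\}$; so $\Lambda$, being connected and meeting $\Lambda'$, is contained in $\Lambda'$ and therefore finite, and $\Lambda^\infty$, $\bar\Lambda$ are well-defined. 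Second, $\Delta \subseteq \Lambda^\infty$, since $\Delta$ is an unbounded $*$-connected subset of $\mathbb Z^d \smallsetminus \Lambda$. Third, $\Lambda^\infty \subseteq \Delta$: otherwise pick $w \in \Lambda^\infty \cap \Lambda'$; a $*$-path inside $\Lambda^\infty$ from $w$ to a vertex of $\Delta$ (nonempty and contained in $\Lambda^\infty$) has a last vertex $w'$ in $\Lambda'$, which is then $*$-adjacent to $\Delta$, so $w' \in B \subseteq \Lambda \subseteq \bar\Lambda$, contradicting $w' \in \Lambda^\infty$. Hence $\Lambda^\infty = \Delta$, so $\Delta$ is the outside of the $n$-level set $\Lambda$.

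Measurability of the right-hand side in $\mathcal F^\nabla_{\partial^*\Delta}$ is then immediate: the defining condition constrains $\phi$ only on $\partial^*\Delta$, and since $\partial^*\Delta \cap \Delta \ne \emptyset$ the integer $n-1$ is forced to be the common value of $\phi$ on that set, so the condition reads "$\phi$ is constant on $\partial^*\Delta \cap \Delta$ and exceeds that constant by at least $1$ on $\partial^*\Delta \smallsetminus \Delta$", which depends only on the height differences among vertices of $\partial^*\Delta$. Finally, for the "moreover" statement, suppose $\Lambda_1 \ne \Lambda_2$ are level sets of the same $\phi$ producing the same outside. By the first inclusion, $\phi$ is constant on the nonempty set $\partial^*\Delta \cap \Delta$, equal to $n_i - 1$ where $n_i$ is the height of $\Lambda_i$, so $n_1 = n_2 =: n$; moreover, as recorded before the lemma, $B = \partial^*\Delta \smallsetminus \Delta = \partial^*\Lambda_i^\infty \cap \bar\Lambda_i = \partial^*\Lambda_i^\infty \cap \Lambda_i \subseteq \Lambda_i$ for $i = 1, 2$, so $B \subseteq \Lambda_1 \cap \Lambda_2$. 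Since $B$ is nonempty and connected and $\Lambda_1, \Lambda_2$ are connected components of the same set $\{\phi \ge n\}$, it follows that $\Lambda_1 = \Lambda_2$, a contradiction; the same argument, with $\partial^*\Lambda^\infty$ in place of $\Lambda^\infty$, recovers $n$ and $B$ from $\phi|_{\partial^*\Lambda^\infty}$ and hence gives the boundary version.

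Every step here is elementary once the preceding proposition is in hand; the only point that demands care is keeping the two graph structures on $\mathbb Z^d$—the square lattice and the $*$-graph—straight in the path-crossing arguments, together with the precise definitions of $\bar\Lambda$, $\Lambda^\infty$, and $\partial^*\Delta$. So the main obstacle is bookkeeping rather than any genuine difficulty, and the work has effectively been front-loaded into the proposition on connectivity of $\partial^*\Delta$.
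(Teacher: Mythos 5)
Your proof is correct, and on the part the paper actually argues—the forward inclusion—it follows the same line as the discussion preceding the lemma. Where you go further is in spelling out the reverse inclusion (the three-step argument $\Lambda\subseteq\Lambda'$, $\Delta\subseteq\Lambda^\infty$, $\Lambda^\infty\subseteq\Delta$), the measurability of the right-hand side in $\mathcal F^\nabla_{\partial^*\Delta}$, and the injectivity in the ``moreover'' clause; the paper asserts the lemma after establishing only the forward direction, so your write-up fills genuine gaps rather than reproving what is there. One small point worth making explicit at the end: the hand-wave ``the same argument recovers $n$ and $B$'' is indeed correct, but the precise mechanism is that from $\phi|_S$ with $S=\partial^*\Lambda^\infty$ one has $n-1=\min_S\phi$ (attained on the nonempty set $S\cap\Lambda^\infty$ and exceeded on $S\cap\Lambda$), hence $B=\{v\in S:\phi(v)\geq n\}$, and then $\Lambda$ is pinned down as the connected component of $\{\phi\geq n\}$ containing $B$ by your reverse inclusion. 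Spelling that out would remove the only place a reader might pause.
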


Define the potential
$\Xi=(\Xi_\Lambda)_{\Lambda\subset\subset\mathbb Z^d}$ by
\[
	\Xi_\Lambda(\phi)=-\log(k-1)
\]
if $\Lambda=\partial^*\Delta^\infty$ for some finite level set $\Delta$
of $\phi$,
and $\Xi_\Lambda(\phi)=0$ otherwise.
For fixed $x\in\mathbb Z^d$ and $\phi\in\Omega$,
there are at most $3^d$ finite level sets $\Delta$
of $\phi$ such that $x\in\partial^*\Delta^\infty$.
In particular, this means
that $\|\Xi\|\leq 3^d\log (k-1)$.
Moreover, since $\Xi_\Lambda\equiv 0$
whenever $\Lambda\subset\subset\mathbb Z^d$
is not connected,
it is clear that
$e^-(\Lambda)\leq |\partial\Lambda|\cdot \|\Xi\|$.
In particular, $e^-$ is an amenable function, which means that $\Xi\in\mathcal W_\mathcal L$.
Remark that
$
	H_\Lambda^\Xi(\phi)
$ equals $-\log(k-1)$ times the number of finite level sets $\Delta$ of $\phi$
for which $\partial^*\Delta^\infty$ intersects $\Lambda$.
Unfortunately, it is not possible to count infinite level sets with this construction;
this is a small inconvenience that we must circumvent.

Write $\Psi$ for the potential which forces graph homomorphisms,
that is,
$\Psi_{\Lambda}(\phi)=\infty$ if $\Lambda=\{x,y\}$
is an edge of the square lattice and $|\phi(y)-\phi(x)|\neq 1$,
and $\Psi_\Lambda(\phi)=0$ otherwise.
This potential belongs to $\mathcal S_\mathcal L$, modulo the detail explained
in Subsection~\ref{subsec_main_Lip_note},
which we shall simply ignore here.

\begin{lemma}
  For any integer $k\geq 2$,
  the surface tension $\sigma$ associated to the potential $\Phi:=\Psi+\Xi$
  equals the entropy function $\operatorname{Ent}$.
\end{lemma}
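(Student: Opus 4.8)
The plan is to show that, up to a subexponential correction, the partition function $Z^\Phi_{\Pi_n}(\phi^u)$ coincides with the combinatorial quantity whose exponential rate defines $\operatorname{Ent}(u)$, and then to evaluate that rate through the large deviations principle. Fix $u\in U$. Since $E=\mathbb Z$, the reference measure $\lambda^{\Pi_n}$ is counting measure, so $Z^\Phi_{\Pi_n}(\phi^u)=\sum_\phi e^{-H^\Phi_{\Pi_n}(\phi)}$, the sum running over all $\phi$ agreeing with $\phi^u$ on $\mathbb Z^d\smallsetminus\Pi_n$. The factor $e^{-H^\Psi_{\Pi_n}(\phi)}$ equals $1$ if $\phi$ is a graph homomorphism and $0$ otherwise, and $e^{-H^\Xi_{\Pi_n}(\phi)}=(k-1)^{G_n(\phi)}$, where $G_n(\phi)$ denotes the number of finite level sets $\Delta$ of $\phi$ with $\partial^*\Delta^\infty\cap\Pi_n\neq\emptyset$; this uses the preceding lemma, which identifies $\Xi_\Gamma(\phi)=-\log(k-1)$ with ``$\Gamma$ is the outer boundary of a finite level set of $\phi$'' and asserts that distinct finite level sets have distinct outer boundaries. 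Hence $Z^\Phi_{\Pi_n}(\phi^u)=\sum_\phi(k-1)^{G_n(\phi)}$ over graph homomorphisms $\phi$ with $\phi=\phi^u$ off $\Pi_n$, to be compared with $\sum_\phi (k-1)^{F_{\Pi_n}(\phi)}$ over the same index set, whose normalised logarithm converges to $-\operatorname{Ent}(u)$ by~\eqref{eq:integer_def_for_Ent}.

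The main obstacle is the combinatorial comparison: I would show $|G_n(\phi)-F_{\Pi_n}(\phi)|=O(n^{d-1})$ uniformly over admissible $\phi$, so that the two sums have the same exponential rate after dividing the logarithm by $n^d$. One direction is easy: a level set contained in $\Pi_n$ is finite and contains a vertex of its own outer boundary, so it is counted by $G_n$, giving $F_{\Pi_n}\leq G_n$. For the reverse, the key geometric fact is that a finite level set $\Delta$ of $\phi$ cannot escape $\Pi_n$: if $\Delta\not\subseteq\Pi_n$ then $\Delta$ has a vertex outside $\Pi_n$, where $\phi=\phi^u$; for $u\in U$ the upper level sets of $\phi^u$ are half-space-like---squeezed between two parallel lattice half-spaces---so the part of such an upper level set lying outside $\Pi_n$ is connected and infinite (a half-space of $\mathbb Z^d$ with a finite box removed is connected and infinite when $d\geq2$), which forces $\Delta$ to be infinite, a contradiction. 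The sole exception is the degenerate slope $u=0$, where $\phi^0$ has singleton level sets at odd vertices; these contribute only $O(n^{d-1})$ extra terms, all within bounded distance of $\partial\Pi_n$. It follows that $-n^{-d}\log Z^\Phi_{\Pi_n}(\phi^u)\to\operatorname{Ent}(u)$ for every $u\in U$.

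Next I would invoke the large deviations principle to identify this rate with $\sigma$. First, $U_\Phi=U_q=U$: the equality $U_\Phi=U_q$ is Theorem~\ref{thm_main_st_general}, and $U_q=U$ is immediate from the form of the Lipschitz constraint attached to $\Psi$, via the identification of Subsection~\ref{subsec_main_Lip_note}. Fix $u\in U$ and apply Theorem~\ref{p_LDP} with $D:=(0,1)^d$, $b:=u|_{\partial D}$, $D_n:=\Pi_n$ and $b_n:=\phi^u$. The profile $(D,b)$ is good: as $u\in U=U_q$, the linear function $u$ is strictly $\|\cdot\|_q$-Lipschitz, so $(D,b)$ is non-taut; and $(D_n,b_n)$ is a good approximation since $\|\phi^u-u|_{\mathbb Z^d}\|_\infty\leq1$ and every approximation is good when $E=\mathbb Z$. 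Theorem~\ref{p_LDP} then gives $-n^{-d}\log Z_n\to P_\Phi(D,b)$ with $Z_n=Z_{D_n}(b_n)=Z^\Phi_{\Pi_n}(\phi^u)$. Finally $P_\Phi\big((0,1)^d,u|_{\partial D}\big)=\min_{g|_{\partial D}=u}\int_D\sigma(\nabla g(x))\,dx=\sigma(u)$: by convexity of $\sigma$ and Jensen's inequality $\int_D\sigma(\nabla g)\,dx\geq\sigma\big(\tfrac1{\operatorname{Vol}(D)}\int_D\nabla g\,dx\big)\operatorname{Vol}(D)=\sigma(u)$, using $\operatorname{Vol}(D)=1$ and $\int_D\partial_i g\,dx=u(e_i)\operatorname{Vol}(D)$ by the divergence theorem (as $g=u$ on $\partial D$), with equality for $g=u$. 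Combining the two rate computations yields $\sigma(u)=\operatorname{Ent}(u)$ for all $u\in U=U_\Phi$.

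It remains to treat $u\in\partial U$. Here $\operatorname{Ent}(u)=0$, because the only graph homomorphism agreeing with $\tilde\phi^u$ off $\Pi_n$ is $\tilde\phi^u$ itself; on the other hand $\sigma$ is continuous on $\overline{U_\Phi}$ by Theorem~\ref{thm_main_st_general}, and $\operatorname{Ent}$ is continuous on $\bar U$ by~\cite{MT16}, so the identity on the open set $U$ extends to $\bar U$ by continuity. (Only the identity on $U_\Phi=U$ is needed for the strict convexity conclusion of Theorem~\ref{thm:main_tree_valued_statement}.)
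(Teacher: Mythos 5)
Your plan is the same as the paper's: use the LDP to express $\sigma(u)$ as the exponential rate of the partition function $Z^\Phi_{\Pi_n}(\phi^u)$, identify this partition function combinatorially with $\sum_\phi(k-1)^{F_{\Pi_n}(\phi)}$ up to an $O(n^{d-1})$ correction, invoke~\eqref{eq:integer_def_for_Ent}, and extend to $\bar U$ by continuity. Your Jensen-inequality justification that $P_\Phi((0,1)^d,u|_{\partial(0,1)^d})=\sigma(u)$ is a correct piece of bookkeeping that the paper silently assumes, so no complaint there.

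However, your ``key geometric fact'' --- that for $u\in U\smallsetminus\{0\}$ a finite level set of $\phi$ cannot escape $\Pi_n$ --- is false, and the problem is not confined to $u=0$. Take $d=2$ and $u=(1/2,0)$. Then $\phi^u(0,3)=1$ while $\phi^u(0,2)=\phi^u(0,4)=\phi^u(1,3)=\phi^u(-1,3)=0$, so $\{(0,3)\}$ is a finite $1$-level set of $\phi^u$, and the same holds at every $(0,x_2)$ with $x_2$ odd. The issue is a conflation: the set $\{\phi^u\geq m\}$ is indeed squeezed between two lattice half-spaces, but its \emph{connected components} need not be half-space-like --- near the boundary slab $\{m-1\leq u\leq m+1\}$ there are generically isolated finite components, because the parity/rounding correction in the definition of $\phi^u$ can lift or drop a vertex across the threshold. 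So finite level sets do escape $\Pi_n$ for generic $u$; the correct statement is the one you announce at the outset, $|G_n(\phi)-F_{\Pi_n}(\phi)|=O(n^{d-1})$, and it must be proved by controlling those escapees rather than showing they do not exist. A sound route: any finite level set of $\phi$ not contained in $\Pi_n$ but with outer boundary meeting $\Pi_n$ either intersects $\partial^1\Pi_n$ or is a finite level set of $\phi^u$ whose filled-in closure is adjacent to $\Pi_n$; in the first case one shows that each $x\in\partial^1\Pi_n$ lies in at most $O(1)$ finite level sets (because $x$ is adjacent to a vertex $x'\notin\Pi_n$ where $\phi=\phi^u$, and for $m\leq\phi^u(x')-O(1)$ the vertex $x'$ already lies in the infinite half-space-like component of $\{\phi^u\geq m\}$, so the $m$-level set through $x$ is infinite); in the second case the finite level sets of $\phi^u$ sit within bounded distance of the slab $\{|u-m|\leq 1\}$ and only $O(n^{d-1})$ of their outer boundaries reach $\Pi_n$. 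To be fair, the paper itself asserts the $O(n^{d-1})$ bound without proof, so you are attempting to fill a real gap --- but the geometric claim as you have written it is incorrect.
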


\begin{proof}
  We prove that $\sigma(u)=\operatorname{Ent}(u)$
  for $u\in U_\Phi$,
  the result extends to all $u\in \bar U_\Phi$
  because both $\sigma$ and $\operatorname{Ent}$
  are continuous on $\bar U_\Phi$.
   Due to Theorem~\ref{thm:sec_mr_ldp}, we know that
   \begin{align*}
     \sigma(u)=P_\Phi((0,1)^d,u|_{\partial (0,1)^d})&=
     \lim_{n\to\infty}
     -n^{-d}\log
     \int_{E^{\Pi_n}}
     e^{-H_\Lambda^\Phi(\psi\phi^u_{\mathbb Z^d\smallsetminus\Pi_n})}
     d\lambda^{\Pi_n}(\psi)
\\&=
\lim_{n\to\infty}
-n^{-d}\log
\sum_{\phi\in\Omega,\,
\phi_{\mathbb Z^d\smallsetminus\Pi_n}=\phi_{\mathbb Z^d\smallsetminus\Pi_n}^u}
e^{-H_{\Pi_n}^\Xi(\phi)}.
   \end{align*}
   But the logarithm of
   the ratio of $e^{-H_{\Pi_n}^\Xi(\phi)}$
   with $(k-1)^{F_{\Pi_n}(\phi)}$
   is of order $O(n^{d-1})=o(n^d)$
   uniformly over $\phi$ as $n\to\infty$,
   so that the equality $\sigma(u)=\operatorname{Ent}(u)$
   follows from~\eqref{eq:integer_def_for_Ent}.
\end{proof}

\begin{definition}
	Write $\Omega_-$ for the set of graph homomorphisms
	$\phi\in\Omega$ which have no infinite level sets.
\end{definition}

\begin{lemma}
	The specification induced by the potential $\Phi:=\Psi+\Xi$
	 is stochastically monotone over $\Omega_-$ for any $k\geq 2$.
\end{lemma}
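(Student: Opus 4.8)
The plan is to verify a version of the Holley criterion (Theorem~\ref{thm:Holley}) adapted to the set $\Omega_-$. I would first observe that $\Omega_-$ is preserved under finite-site resampling, so that each kernel $\gamma_\Lambda$ maps $\mathcal P_{\Omega_-}$ into itself: if $\phi\in\Omega_-$ and $\phi'$ agrees with $\phi$ outside a finite set, then any infinite connected component $C$ of an upper level set $\{\phi'\ge n\}$ has the property that $C$ minus that finite set is infinite and contained in $\{\phi\ge n\}$, which would force $\phi$ to have an infinite upper level set, contrary to $\phi\in\Omega_-$. Rerunning the Glauber-dynamics argument in the proof of Theorem~\ref{thm:Holley} with $\Omega_q$ replaced throughout by $\Omega_-$ --- the intermediate configurations in the local-move decomposition are finite modifications of the boundary condition and hence automatically remain in $\Omega_-$ --- then reduces the statement to single-site monotonicity: $\gamma_{\{x\}}(\cdot,\phi)\preceq\gamma_{\{x\}}(\cdot,\psi)$ for every $x\in\mathbb Z^d$ and every $\phi\le\psi$ in $\Omega_-$.

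For the single-site estimate I would exploit the graph-homomorphism constraint. The values $\phi(y)$, $y\sim x$, all lie in $\{\phi(x)-1,\phi(x)+1\}$, so the admissible values $v$ of a resampled $\phi(x)$ form a set of at most two consecutive same-parity integers: it is $\{\phi(x)\}$ if both neighbour values occur, $\{\phi(x),\phi(x)+2\}$ if $x$ is a strict local minimum, and $\{\phi(x)-2,\phi(x)\}$ if $x$ is a strict local maximum. On graph homomorphisms the $\Psi$-contribution to the Hamiltonian vanishes, so $\gamma_{\{x\}}(v)\propto(k-1)^{F_\phi(v)}$, where $F_\phi(v)\le 3^d$ counts the finite upper level sets $\Delta$ of the resulting configuration with $x\in\partial^*\Delta^\infty$. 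The decisive computation is that, when $x$ is a local minimum at value $v$, raising it to $v+2$ changes this count by $2-m$: only the levels $v+1$ and $v+2$ are affected, the move creates the singleton level set $\{x\}$ at level $v+2$ (contributing $+1$), and it fills the hole at $x$ at level $v+1$, merging into one the $m$ finite level sets at level $v+1$ that are adjacent to $x$ and contain $x$ in their outside (contributing $1-m$).

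The desired domination then follows by a brief case analysis on the local structure of $\phi$ and $\psi$ at $x$. If the two admissible-value sets are disjoint, or one of the laws is a Dirac mass, the domination is immediate from $\psi(x)\ge\phi(x)$, the parity of $\psi(x)-\phi(x)$, and the fact that a neighbour carrying value $\phi(x)+1$ in $\phi$ still carries it in $\psi$ whenever $\psi(x)=\phi(x)$. Otherwise both laws live on the same pair $\{u,u+2\}$, which forces $\phi$ and $\psi$ to take value $u+1$ at every neighbour of $x$; since $k-1\ge1$, stochastic domination is then equivalent to $m_\psi\le m_\phi$, where $m_\chi$ denotes the number of finite upper level sets at level $u+1$ of the configuration obtained from $\chi$ by setting the value at $x$ equal to $u$ that are adjacent to $x$ and contain $x$ in their outside. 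This last inequality holds because the $\psi$-configuration dominates the $\phi$-configuration (they agree on $x$ and its neighbours while $\psi\ge\phi$ elsewhere), and enlarging a configuration can only coarsen the partition of $\mathbb Z^d$ into upper-level-set components and shrink their outsides.

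I expect the single-site computation to be the main obstacle: one must track carefully how the upper level sets at the affected level merge when the hole at $x$ is filled, and how the $\partial^*$-outsides behave under enlargement of the configuration, using the connectivity properties of $\partial^*$ established earlier in this subsection. Note also that restricting to $\Omega_-$ is precisely what keeps all the level-set counts finite and makes this mechanism available, which is why the statement is phrased over $\Omega_-$ rather than over all graph homomorphisms.
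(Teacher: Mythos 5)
Your proposal is correct and follows essentially the same route as the paper: verify the Holley criterion, observe that only the strict local extremum case is non-trivial, express the single-site odds ratio as $(k-1)$ raised to the change in the relevant finite level-set count (your $2-m$ matches the paper's $2-X_i$), and conclude from the fact that increasing the boundary configuration can only merge the level sets adjacent to the resampled vertex. Your extra step of verifying that $\Omega_-$ is preserved under finite resampling is a sensible explicit check that the paper takes for granted; otherwise the two arguments coincide.
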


\begin{proof}
  We use the Holley criterion (Theorem~\ref{thm:Holley})
  to prove that $\gamma_\Lambda$ preserves $\preceq$;
  we suppose that $\Lambda=\{0\}$ without loss of generality.
	Let $\phi_1,\phi_2\in\Omega_-$ denote graph homomorphisms
	without infinite level sets, and which satisfy
	 $\phi_1\leq \phi_2$.
	 Notice that the only case where the local Gibbs measure $\gamma_\Lambda(\cdot,\phi)$
	 is not a Dirac measure, is if there exist a $n \in \Z$ such that $\phi(x)=n$
	 for any neighbor $x$ of $0$.
	 If this is not the case for $\phi_1$ or $\phi_2$
	 then the proof is trivial;
	 we reduce to the case that $\phi_1(x)=\phi_2(x)=1$
	 for any neighbor $x$ of $0$ in $\mathbb Z^d$.
	 It remains to show that $\gamma_\Lambda(\cdot,\phi_1)\preceq\gamma_\Lambda(\cdot,\phi_2)$.
	 Without loss of generality, $\phi_1(0)=\phi_2(0)=0$.

	 Write $\psi$ for the random function in either local Gibbs measure.
	 Since $\phi_i(x)=1$ for any neighbor $x$ of $0$ and
	 for $i\in\{1,2\}$,
	 the function $\psi$ can only take two values with positive probability:
	 they are $0$ and $2$.
	 What we thus must show is that the quantity
	 \[
	 		a_i:=\frac{
				\gamma_\Lambda(\psi(0)=2,\phi_i)
				}
				{
					\gamma_\Lambda(\psi(0)=0,\phi_i)
				}
	 \]
	 satisfies $a_1\leq a_2$.
	 Claim that
	 $
	 	a_i=(k-1)^{2-X_i}
		$,
		where $X_i$ is the number of $1$-level sets of $\phi_i$ which are adjacent to $0$.
		If $\psi(0)=0$,
		then all $1$-level sets adjacent to $0$
		are counted separately,
		and $\{0\}$ is not a level set.
		If $\psi(0)=2$,
		then we count two level sets:
		the set $\{0\}$ is a $2$-level set,
		and all neighbors of $0$
		are contained in the same $1$-level set.
		All other level sets remain unaffected.
		This proves the claim.
		We must therefore prove that $X_1\geq X_2$.
		This is clear:
		increasing the values of $\phi$ can only
		increase the size of the $1$-level set
		containing a fixed vertex $x$,
		and potentially merge several $1$-level sets.
		In particular, it can only decrease the number of $1$-level
		sets adjacent to $0$.
\end{proof}

\begin{theorem}
	The surface tension $\sigma$
	associated to the potential $\Phi$
	defined above, is strictly convex on $U_\Phi$
	whenever $k\geq 2$.
\end{theorem}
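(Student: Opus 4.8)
The plan is to deduce the theorem from the machinery of Section~\ref{sec_strict_convex}, essentially as in the proof of Theorem~\ref{thm_main_main}, after handling two discrepancies: the specification $\gamma=\gamma^\Phi$ is only monotone \emph{over $\Omega_-$} rather than over all of $\Omega_q$ (and $\Phi$ is not submodular), and we are in the discrete case $E=\mathbb Z$, so the extra hypothesis of Theorem~\ref{thm_main_main}(2) must be checked. We have already established that $\Phi=\Psi+\Xi\in\mathcal S_\mathcal L+\mathcal W_\mathcal L$, that $\sigma=\operatorname{Ent}$, and that $U_\Phi=U$, so it suffices to prove that $\operatorname{Ent}$ is strictly convex on $U$.

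First I would verify condition~(a) of Theorem~\ref{thm_main_main}(2): that $\operatorname{Ent}$ is affine on $\partial U$ but not on $\bar U$. The restriction $\operatorname{Ent}|_{\partial U}$ vanishes identically, hence is affine. If $\operatorname{Ent}$ were also affine on $\bar U$, then---since $\bar U$ is a cube and an affine function vanishing on the boundary of a cube vanishes identically (evaluate at two opposite face centres in each coordinate direction)---$\operatorname{Ent}$ would vanish on all of $\bar U$, contradicting $\operatorname{Ent}(0)\leq-\tfrac12\log k<0$. Thus condition~(a) holds and the extra hypothesis for $E=\mathbb Z$ is met.

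Next I would re-run the proof of Theorem~\ref{theorem_strictly_convex_discrete}, which rests on Theorem~\ref{thm_beyond_moats} (the moat construction of Section~\ref{section:moats_heart}) and on Lemma~\ref{lemma_application_of_BK} (the Burton--Keane contradiction). The Burton--Keane step uses only finite energy of minimizers---Theorem~\ref{thm_main_minimizers_finite_energy} and its product-setting analogue, valid for any $\Phi\in\mathcal S_\mathcal L+\mathcal W_\mathcal L$ irrespective of monotonicity---and shift-invariance, so it applies verbatim. The only place monotonicity enters is the reflection principle (Lemma~\ref{lemma:reflection}) inside the moat machinery, and there it is invoked only for measures obtained from the local Gibbs measures $\gamma_n:=\gamma_{D_n}(\cdot,b_n)$, their squares, and conditionings thereof living on finite boxes. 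The point is that if each $b_n$ lies in $\Omega_-$, then $\gamma_n$, $\gamma_n\times\gamma_n$, and every measure produced from them by conditioning on local events and applying further kernels $\gamma_\Gamma\times\gamma_\Gamma$ with $\Gamma\subset\subset\mathbb Z^d$ remain supported on $\Omega_-$ (resp.\ $\Omega_-^2$), since resampling inside a finite box cannot create an infinite level set; so monotonicity over $\Omega_-$ is exactly what Lemma~\ref{lemma:reflection} and Theorem~\ref{thm:moats} need here. To arrange $b_n\in\Omega_-$ in the proof of Theorem~\ref{thm_beyond_moats} I would replace $\phi^u$ (which is unbounded, hence not in $\Omega_-$) by $\tilde b_n$, a graph homomorphism that agrees with $\phi^u+C_n$ on a large box $\Pi_{(1+2\delta)n}$---with $C_n\sim cn$ chosen so that $\phi^u+C_n\geq 0$ there---and then tapers down to $\phi^0$, equalling $\phi^0$ outside $\Pi_{(1+3\delta)n}$; such a graph homomorphism exists and lies in $\Omega_-$, because it is bounded below by $0$ and so its only upper level sets with an infinite component are the trivial ones $\{\,\cdot\geq m\}=\mathbb Z^d$ with $m\leq 0$. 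Two routine checks remain: (i) $(\Pi_n,\tilde b_n)$ is still a good approximation of a good asymptotic profile, now with boundary $u|_{\partial(0,1)^d}+c$, and the additive constant does not affect gradients, hence leaves the rate function and the pressure unchanged; (ii) replacing $\phi^u+C_n$ by $\tilde b_n$ alters $H_{\Pi_n}^\Phi$ by only $o(n^d)$, because the two boundary data agree on $\Pi_{(1+2\delta)n}$ and so the only affected terms are $\Xi_\Gamma$ with $\Gamma=\partial^*\Delta^\infty$ a level-set contour meeting $\Pi_n$ and crossing the annulus $\Pi_{(1+2\delta)n}\setminus\Pi_n$; being connected such a contour meets $\partial\Pi_n\cup\partial^1\Pi_n$, and summability of $\Xi$ bounds the total by $O(|\partial\Pi_n|\,\|\Xi\|)=O(n^{d-1})$. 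Hence the LDP inputs to Theorem~\ref{thm_beyond_moats} (Proposition~\ref{propo_generic_nonsym} and its product version, which use no monotonicity) are unaffected, while the moat machinery now applies with monotonicity over $\Omega_-$. Feeding the resulting limit measure---on which the $\Omega_-$-property is lost in the limit but which retains, with positive probability, the three-infinite-components event for $\{\xi=0\}$ and the non-constancy of $\xi$ along a suitable line---into Lemma~\ref{lemma_application_of_BK} yields the contradiction, so $\operatorname{Ent}$ is strictly convex on $U$.

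The main obstacle I expect is the bookkeeping in step~(ii) together with genuinely checking that monotonicity over $\Omega_-$ suffices at every point of the construction: one must confirm that each measure appearing in Section~\ref{section:moats_heart} and Subsection~\ref{subsection_attachment_applications}, when run with the capped data $\tilde b_n$, is supported on $\Omega_-^2$, and that the capping---a modification on a region of volume $O(n^d)$---perturbs the relevant $n^{-d}$-normalised free energies by $o(1)$ in the iterated limit, so that the LDP, the limit equalities, and the three-parameter limit all carry through unchanged.
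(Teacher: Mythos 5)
Your overall strategy matches the paper's: check condition~(a) of Theorem~\ref{thm_main_main}(2) using $\operatorname{Ent}|_{\partial U}\equiv 0$ and $\operatorname{Ent}(0)<0$, then observe that monotonicity is used only inside Lemma~\ref{lemma_moats_application_in_product_setting} and fix this by replacing the boundary data $\phi^u$ with a suitable boundary condition in $\Omega_-$. That is exactly the paper's plan. However, your specific construction of the replacement boundary condition is wrong, and in a way that points to a misunderstanding of $\Omega_-$.

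You build $\tilde b_n$ so that it is \emph{bounded below by $0$} (agreeing with $\phi^u+C_n$ near $\Pi_n$ and tapering down to $\phi^0$ far away), and you assert that it lies in $\Omega_-$ ``because it is bounded below by $0$.'' But this is backwards. Being bounded below forces $\{\tilde b_n\geq m\}=\mathbb Z^d$ for every $m\leq\min\tilde b_n$, and $\mathbb Z^d$ is a single infinite connected component of that upper level set. By the paper's definition, $\Omega_-$ excludes exactly this: a graph homomorphism lies in $\Omega_-$ only if \emph{every} connected component of \emph{every} upper level set is finite, which requires the function to tend to $-\infty$ away from any box, not to stay bounded. (Indeed $\phi^0$ itself is not in $\Omega_-$, since $\{\phi^0\geq 0\}=\mathbb Z^d$.) The paper's choice---$\phi^u_n$ equal to the \emph{smallest} graph homomorphism agreeing with $\phi^u$ on $\Pi_n\cup\partial\Pi_n$---has exactly the opposite shape to yours: it decreases at unit rate outward, so $\{\phi^u_n\geq m\}$ is a finite set for every $m\in\mathbb Z$, placing $\phi^u_n$ genuinely in $\Omega_-$. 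Your $\tilde b_n$ is not in $\Omega_-$, so the monotonicity-over-$\Omega_-$ lemma does not apply to $\gamma_{\Pi_n}(\cdot,\tilde b_n)$ as you claim; dismissing the infinite level sets as ``trivial'' is not justified by anything proved, and would require a separate argument that the monotonicity lemma tolerates them (the lemma's proof computes $a_i=(k-1)^{2-X_i}$ under the assumption that all relevant level sets are finite, and does break down when some are infinite). The remainder of your bookkeeping (your steps~(i) and~(ii)) is sound in spirit and is in fact more explicit than the paper's one-line remark, but it is built on the wrong boundary data; with $\phi^u_n$ in place of $\tilde b_n$, no capping constant $C_n$ or tapering annulus is needed and those verifications simplify considerably.
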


\begin{proof}
	We must circumvent the problem that the specification $\gamma$
	induced by $\Phi$ is monotone only after restricting it to the set $\Omega_-$.
Remark that Theorem~\ref{thm:moats}
and Proposition~\ref{proposition_moats_effective_application}
remain true in this context if the measure $\mu$ in the statement
of Theorem~\ref{thm:moats} is supported on $\Omega_-$.
The only time that monotonicity is used in the proof for strict convexity of $\sigma$,
is in the application of these two results in Lemma~\ref{lemma_moats_application_in_product_setting}.
Recall that the local Gibbs measure $\gamma_n$ in the statement of Lemma~\ref{lemma_moats_application_in_product_setting}
was defined to be $\gamma_{\Pi_n}(\cdot,\phi^u)$;
this is now problematic because $\phi^u$
does have infinite level sets.
This can be easily solved by the following modification.
Define $\phi^u_n$ to be the smallest graph homomorphism which
equals $\phi^u$ on the set $\Pi_n\cup\partial\Pi_n$.
It is easy to check that $\{\phi^u_n\geq m\}$
is finite for any $n\in\mathbb N$ and $m\in\mathbb Z$;
in particular, $\phi_n^u\in\Omega_-$.
Moreover, the sequence $(\Pi_n,\phi^u_n)_{n\in\mathbb N}$
is as much an approximation of $((0,1)^d,u|_{\partial (0,1)^d})$
as the original sequence $(\Pi_n,\phi^u)_{n\in\mathbb N}$.
In particular, all of the same arguments apply if we simply replace each local Gibbs measure
$\gamma_n=\gamma_{\Pi_n}(\cdot,\phi^u)$
	by
	$\gamma_{\Pi_n}(\cdot,\phi^u_n)$.
  We had already seen that $\sigma=0$ on $\partial U_\Phi$
  and $\sigma(0)<0$, which proves that $\sigma$ is strictly convex.
\end{proof}


\subsection{Stochastic monotonicity in the six-vertex model}


\begin{figure}
	\centering
  \newcommand{\fw}{.15\textwidth}
	\newcommand{\da}[4]{
	  \draw[very thick](#1,#2)--(#3,#4);
		\draw[very thick,-Latex](#1,#2)--(#1+0.7*#3-0.7*#1,#2+0.7*#4-0.7*#2);
	}
	\begin{subfigure}{\fw}
		\centering
		\begin{tikzpicture}
			\da{-1}{0}{0}{0}
			\da{0}{0}{0}{1}
			\da{0}{-1}{0}{0}
			\da{0}{0}{1}{0}
		\end{tikzpicture}
		\caption*{$a_+$}
	\end{subfigure}
	\begin{subfigure}{\fw}
		\centering
		\begin{tikzpicture}
			\da{0}{0}{-1}{0}
			\da{0}{1}{0}{0}
			\da{0}{0}{0}{-1}
			\da{1}{0}{0}{0}
		\end{tikzpicture}
		\caption*{$a_-$}
	\end{subfigure}
	\begin{subfigure}{\fw}
		\centering
		\begin{tikzpicture}
			\da{-1}{0}{0}{0}
			\da{0}{1}{0}{0}
			\da{0}{0}{0}{-1}
			\da{0}{0}{1}{0}
		\end{tikzpicture}
		\caption*{$b_+$}
	\end{subfigure}
	\begin{subfigure}{\fw}
		\centering
		\begin{tikzpicture}
			\da{0}{0}{-1}{0}
			\da{0}{0}{0}{1}
			\da{0}{-1}{0}{0}
			\da{1}{0}{0}{0}
		\end{tikzpicture}
		\caption*{$b_-$}
	\end{subfigure}
	\begin{subfigure}{\fw}
		\centering
		\begin{tikzpicture}
			\da{-1}{0}{0}{0}
			\da{0}{0}{0}{1}
			\da{0}{0}{0}{-1}
			\da{1}{0}{0}{0}
		\end{tikzpicture}
		\caption*{$c_+$}
	\end{subfigure}
	\begin{subfigure}{\fw}
		\centering
		\begin{tikzpicture}
			\da{0}{0}{-1}{0}
			\da{0}{1}{0}{0}
			\da{0}{-1}{0}{0}
			\da{0}{0}{1}{0}
		\end{tikzpicture}
		\caption*{$c_-$}
	\end{subfigure}
	\caption{The six types of arrow configurations and their weights}
	\label{fig:six_vertex}
\end{figure}

Consider the two-dimensional square lattice.
An \emph{arrow configuration}
is an orientation of
each edge of the square lattice, in such a way that each vertex
has exactly two incoming edges and two outgoing edges.
This means that there are six configurations
for the four edges incident to a fixed vertex;
see Figure~\ref{fig:six_vertex}.
Each of these six types receives a weight,
and one studies the probability measure where the probability of observing
an arrow configuration is proportional to the product of the weights over the vertices in that configuration.
This is the six-vertex model, which is the subject of an extensive literature.
Each arrow configuration has an associated height function,
which assigns integers to the faces of the square lattice,
and is defined as follows:
the height of the face to the right of an arrow is always exactly one
more than the height of the face to the left of it,
and the height of a fixed reference face is set to zero.
It is straightforward to see that this uniquely defines the height functions
associated to an arrow configuration.
The six-vertex model can thus be considered a Lipschitz random surface.
Our main theorem asserts that the surface tension of this random surface model is strictly
convex, if the specification is monotone.
It is a straightforward exercise to demonstrate that the specification
is monotone if and only if
\[
c_+c_- \geq \max\{a_+a_-,b_+b_-\};
\]
this is verified through checking the Holley criterion (Theorem~\ref{thm:Holley}).
Informally,
this means that the specification is monotone if the model
prefers vertices for which the four values of the adjacent faces
are as close to each other as possible.
Finally, we should mention that from the perspective of the specification,
there is some gauge equivalence in the choice of the six weights;
for details we refer to the work of Sridhar~\cite[Section~2.2]{sridhar2016limit}.

\begin{theorem}
  \label{thm:six-v_statement}
  The potential $\Phi\in\mathcal S_\mathcal L$
  corresponding to the six-vertex model is monotone if and only if
   $c_+c_- \geq \max\{a_+a_-,b_+b_-\}$,
   in which case $\sigma$ is strictly convex on
   $U_\Phi$.
\end{theorem}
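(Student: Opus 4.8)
The plan is to prove the two assertions separately: the equivalence of monotonicity with the weight inequality, and strict convexity of $\sigma$ in the monotone case.

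For the equivalence I would invoke the Holley criterion (Theorem~\ref{thm:Holley}), after placing the six-vertex model into the Lipschitz framework. Its height functions live on the faces of $\mathbb{Z}^2$, which form a copy of $\mathbb{Z}^2$ on which adjacent values differ by $\pm 1$, and the vertices of the original lattice correspond to unit plaquettes of faces; applying the transformation $\phi \mapsto (\phi+h)/2$ with $h(x) = x_1+x_2$ of Subsection~\ref{subsec_main_Lip_note}, the admissible configurations become the $q$-Lipschitz functions for $q(x,y) := \sum\nolimits_i 0 \vee (y-x)_i$, and the vertex weights become a finite-range gradient potential. After normalizing the six weights to lie in $(0,1]$ one checks routinely that the resulting potential $\Phi$ belongs to $\mathcal{S}_\mathcal{L}$ (finite range, positivity, and the only infinite potential values being the edge constraints). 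By Theorem~\ref{thm:Holley}, monotonicity is then equivalent to $\gamma_{\{x\}}(\cdot,\phi) \preceq \gamma_{\{x\}}(\cdot,\psi)$ for every face $x$ and every pair of $q$-Lipschitz functions $\phi \leq \psi$. The conditional $\gamma_{\{x\}}(\cdot,\phi)$ is non-degenerate only for a restricted family of boundary configurations, in each of which $\phi(x)$ takes one of exactly two values, and the odds ratio between those two values is a product over the four vertices incident to the plaquette of $x$ of ratios of two of the weights. Enumerating the finitely many boundary configurations consistent with $\phi \leq \psi$ and verifying that this odds ratio is non-decreasing in the boundary is a finite computation which yields precisely $c_+c_- \geq \max\{a_+a_-, b_+b_-\}$ (here one fixes a representative of the weights modulo the gauge freedom described in~\cite[Section~2.2]{sridhar2016limit}).

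For strict convexity I would observe that when $c_+c_- \geq \max\{a_+a_-, b_+b_-\}$ the potential $\Phi$ is in fact submodular. The family of $q$-Lipschitz functions is preserved by $\wedge$ and $\vee$, so the part of $\Phi$ enforcing the constraint is automatically submodular, and it suffices to check submodularity of the plaquette weight part, i.e.\ that the weight function is log-supermodular on the valid plaquette configurations; this is once more a finite check reducing to the same inequality. Then Theorem~\ref{thm:main_results_submodular} applies directly: the constraint $q(x,y) = \sum\nolimits_i 0 \vee (y-x)_i$ is $\mathbb{Z}^2$-invariant, hence freezing, so for $E = \mathbb{Z}$ the surface tension $\sigma$ is strictly convex on $U_\Phi$.

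The main obstacle is the bookkeeping in these finite verifications: keeping the vertex/plaquette duality of the six-vertex model straight, correctly listing the admissible local configurations and the monotone couplings between them, and handling the gauge ambiguity so that the inequality emerges in the stated symmetric form. None of this is conceptually deep, but it is the part most prone to error; a secondary point requiring care is checking that the six-vertex potential genuinely satisfies the axioms of $\mathcal{S}_\mathcal{L}$, which must be in place before the general machinery (Theorems~\ref{thm:Holley} and~\ref{thm:main_results_submodular}) can be invoked.
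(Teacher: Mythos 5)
Your proposal takes essentially the same route as the paper: map to the Lipschitz setting via $\phi\mapsto(\phi+h)/2$, verify the Holley criterion for the monotonicity equivalence, observe that under the weight inequality the potential is submodular (the paper phrases this via the simply attractive representation), and conclude via Theorem~\ref{thm:main_results_submodular} with freezing from $\mathbb Z^2$-invariance. The paper's exposition is terser and leaves these verifications to the reader, but the logic is identical.
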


Although it is not directly stated in \emph{Random Surfaces}~\cite{S05},
the potential $\Phi$ can be written as a simply attractive potential
whenever $c_+c_- \geq \max\{a_+a_-,b_+b_-\}$.
Therefore this theorem should be considered an alternative proof
rather than a novel result.

\renewcommand\optionalindent{}
\section*{Acknowledgment}
\addcontentsline{toc}{section}{Acknowledgment}

The authors would like to thank
Nathana\"el
Berestycki,
Georg Menz,
James Norris,
Scott Sheffield,
Fabio Toninelli,
and
Peter Winkler
for many useful discussions,
as well as
Richard Kenyon
for providing the authors with simulations of the $5$-vertex model.
The authors are especially grateful to Nathana\"el
Berestycki for enabling them to collaborate on this project.

The first author was supported by the Department of
Pure Mathematics and Mathematical Statistics, University of Cambridge, the UK
Engineering and Physical Sciences Research Council grant EP/L016516/1,
and the Shapiro Visitor Program of the Department of Mathematics, Dartmouth College.
The second author was supported by
the UK
Engineering and Physical Sciences Research Council grant EP/L018896/1
and the Peter Whittle Fund.

\bibliographystyle{amsalpha}
\bibliography{bib,bib_clean,bib_ldp}

\providecommand{\bysame}{\leavevmode\hbox to3em{\hrulefill}\thinspace}
\providecommand{\MR}{\relax\ifhmode\unskip\space\fi MR }
\providecommand{\MRhref}[2]{%
  \href{http://www.ams.org/mathscinet-getitem?mr=#1}{#2}
}
\providecommand{\href}[2]{#2}
\begin{thebibliography}{AKM16}

\bibitem[AKM16]{adams2016strict}
Stefan Adams, Roman Koteck{\'y}, and Stefan M{\"u}ller, \emph{Strict convexity
  of the surface tension for non-convex potentials}, arXiv preprint
  arXiv:1606.09541v1 (2016).

\bibitem[BK89]{BURTON}
R.~M. Burton and M.~Keane, \emph{Density and uniqueness in percolation}, Comm.
  Math. Phys. \textbf{121} (1989), no.~3, 501--505.

\bibitem[CD12]{AIHPB_2012__48_3_819_0}
Codina Cotar and Jean-Dominique Deuschel, \emph{Decay of covariances,
  uniqueness of ergodic component and scaling limit for a class of $\nabla \phi
  $ systems with non-convex potential}, Ann. Inst. Henri Poincar\'{e} Probab.
  Stat. \textbf{48} (2012), no.~3, 819--853.

\bibitem[CDM09]{cotar2009strict}
Codina Cotar, Jean-Dominique Deuschel, and Stefan M\"{u}ller, \emph{Strict
  convexity of the free energy for a class of non-convex gradient models},
  Comm. Math. Phys. \textbf{286} (2009), no.~1, 359--376.

\bibitem[CKP01]{CKP01}
Henry Cohn, Richard Kenyon, and James Propp, \emph{A variational principle for
  domino tilings}, J. Amer. Math. Soc. \textbf{14} (2001), no.~2, 297--346.

\bibitem[Dat09]{DATTA}
Nilanjana Datta, \emph{Min-and max-relative entropies and a new entanglement
  monotone}, IEEE Trans. Inform. Theory \textbf{55} (2009), no.~6, 2816--2826.

\bibitem[DF05]{MR2227242}
A.~Dembo and T.~Funaki, \emph{Lectures on probability theory and statistics},
  Lecture Notes in Mathematics, vol. 1869, Springer-Verlag, 2005, Lectures from
  the 33rd Probability Summer School held in Saint-Flour, July 6--23, 2003,
  Edited by Jean Picard.

\bibitem[DS89]{MR997938}
Jean-Dominique Deuschel and Daniel~W. Stroock, \emph{Large deviations}, Pure
  and Applied Mathematics, vol. 137, Academic Press, 1989.

\bibitem[DS10]{de2010minimizers}
Daniela {De Silva} and Ovidiu Savin, \emph{Minimizers of convex functionals
  arising in random surfaces}, Duke Math. J. \textbf{151} (2010), no.~3,
  487--532.

\bibitem[DZ10]{MR2571413}
Amir Dembo and Ofer Zeitouni, \emph{Large deviations techniques and
  applications}, Stochastic Modelling and Applied Probability, vol.~38,
  Springer-Verlag, 2010.

\bibitem[FS97]{funaki1997motion}
T.~Funaki and H.~Spohn, \emph{Motion by mean curvature from the
  {G}inzburg-{L}andau {$\nabla \phi$} interface model}, Comm. Math. Phys.
  \textbf{185} (1997), no.~1, 1--36.

\bibitem[Geo11]{G11}
Hans-Otto Georgii, \emph{Gibbs measures and phase transitions}, De Gruyter
  Studies in Mathematics, vol.~9, Walter de Gruyter, 2 ed., 2011.

\bibitem[GL18]{grimmett2018locality}
Geoffrey~R. Grimmett and Zhongyang Li, \emph{Locality of connective constants},
  Discrete Math. \textbf{341} (2018), no.~12, 3483--3497.

\bibitem[GMT17]{MR3606736}
Alessandro Giuliani, Vieri Mastropietro, and Fabio~Lucio Toninelli,
  \emph{Height fluctuations in interacting dimers}, Ann. Inst. Henri
  Poincar\'{e} Probab. Stat. \textbf{53} (2017), no.~1, 98--168.

\bibitem[GMT19]{GMT2019}
\bysame, \emph{Non-integrable dimers: Universal fluctuations of tilted height
  profiles}, arXiv preprint arXiv:1904.07526v3 (2019).

\bibitem[Gri18]{grimmett2018probability}
Geoffrey Grimmett, \emph{Probability on graphs: Random processes on graphs and
  lattices}, 2 ed., Institute of Mathematical Statistics Textbooks, vol.~8,
  Cambridge University Press, 2018.

\bibitem[KMT19]{krieger2019deducing}
Andrew Krieger, Georg Menz, and Martin Tassy, \emph{Deducing a variational
  principle with minimal a priori assumptions}, arXiv preprint
  arXiv:1910.01042v2 (2019).

\bibitem[Lam19]{lammers2019gen}
Piet Lammers, \emph{A generalisation of the honeycomb dimer model to higher
  dimensions}, arXiv preprint arXiv:1905.13216v2 (2019).

\bibitem[LT19]{lammers2019variational}
Piet Lammers and Martin Tassy, \emph{Variational principle for weakly dependent
  random fields}, arXiv preprint arXiv:1907.05414v2 (2019).

\bibitem[MT20]{MT16}
Georg Menz and Martin Tassy, \emph{A variational principle for a non-integrable
  model}, Probab. Theory Related Fields (2020).

\bibitem[RS15]{MR3309619}
Firas {Rassoul-Agha} and Timo Sepp\"{a}l\"{a}inen, \emph{A course on large
  deviations with an introduction to {G}ibbs measures}, Graduate Studies in
  Mathematics, vol. 162, American Mathematical Society, 2015.

\bibitem[She05]{S05}
Scott Sheffield, \emph{Random surfaces}, Ast{\'e}risque \textbf{304} (2005).

\bibitem[Sri16]{sridhar2016limit}
Ananth Sridhar, \emph{Limit shapes in the six vertex model}, Ph.D. thesis,
  University of California, Berkeley, 2016.

\end{thebibliography}
\addcontentsline{toc}{section}{References}


\end{document}